\documentclass[11pt]{amsart}

\usepackage{graphicx}
\usepackage{framed}

\usepackage{dsfont}
\usepackage{mathrsfs}

\usepackage[utf8]{inputenc}

\usepackage[all]{xy}
\usepackage{pb-diagram, pb-xy}

\usepackage{tikz}
\usepackage{pgffor}
\usetikzlibrary{arrows,backgrounds}
\usetikzlibrary{calc}

\usepackage{enumitem}
\setlist[itemize]{noitemsep}

\usepackage{amssymb}
\usepackage{amsthm}


\usepackage{hyperref}

\usepackage{calc}


\newtheorem{thm}{Theorem}[section]
\newtheorem{lem}[thm]{Lemma}
\newtheorem{cor}[thm]{Corollary}
\newtheorem{prop}[thm]{Proposition}

\newtheorem{conj}[thm]{Conjecture}
\newtheorem*{theorem*}{Theorem}
\newtheorem*{proposition*}{Proposition}
\newtheorem*{corollary*}{Corollary}
\newtheorem*{conjecture*}{Conjecture}

\theoremstyle{definition}
\newtheorem{definition}[thm]{Definition}
\newtheorem{example}[thm]{Example}
\newtheorem{remark}[thm]{Remark}


\def\Z{\mathbb{Z}}  

\def\1{\mathds{1}}




\newcommand{\frakg}{{\mathfrak g}}

\newcommand{\calC}{{\mathcal C}}

\newcommand{\calE}{{\mathcal E}}

\newcommand{\calT}{{\mathcal T}}

\newcommand{\one}{{\mathbf{1}}}

\DeclareMathOperator{\sdim}{sdim}


\usepackage{xypic,dsfont}


\hypersetup{
    colorlinks,
    citecolor=red,
    filecolor=black,
    linkcolor=blue,
    urlcolor=black
}



\setcounter{tocdepth}{1}

\begin{document}

\sloppy

\thispagestyle{empty}
\title[Homotopy quotients]{Homotopy quotients and comodules of supercommutative Hopf algebras}
\author{Thorsten Heidersdorf} 
\address{T.H.: Max-Planck Institut f\"ur Mathematik, Bonn}
\email{heidersdorf.thorsten@gmail.com}
\curraddr{T.H.: Institut f\"ur Mathematik, Universit\"at Bonn}
\author{Rainer Weissauer} 
\address{R.W.: Mathematisches Institut, Universit\"at Heidelberg}
\email{weissaue@mathi.uni-heidelberg.de}


\begin{abstract} We study model structures on the category of comodules of a supercommutative Hopf algebra $A$ over fields of characteristic 0. Given a graded Hopf algebra quotient $A \to B$ satisfying some finiteness conditions, the Frobenius tensor category $\mathcal{D}$ of graded $B$-comodules with its stable model structure induces a monoidal model structure on $\mathcal{C}$. We consider the corresponding homotopy quotient $\gamma: \mathcal{C} \to Ho \mathcal{C}$ and the induced quotient $\mathcal{T} \to Ho \mathcal{T}$ for the tensor category $\mathcal{T}$ of finite dimensional $A$-comodules. Under some mild conditions we prove vanishing and finiteness theorems for morphisms in $Ho \mathcal{T}$. We apply these results in the $Rep (GL(m|n))$-case and study its homotopy category $Ho \mathcal{T}$ associated to the parabolic subgroup of upper triangular block matrices. We construct cofibrant replacements and show that the quotient of $Ho\mathcal{T}$ by the negligible morphisms is again the representation category of a supergroup scheme. 

\end{abstract}

\thanks{2010 {\it Mathematics Subject Classification}: 16T15, 17B10, 18D10, 18E40, 18G55, 20G05, 55U35}

\maketitle

\vspace{-1.6cm}

\tableofcontents

\section{  Introduction}

\subsection{Model structures for representations of supergroups} \label{sec:intro-ov} 

The monoidal structures of representation categories of algebraic supergroups such as the general linear supergroup $GL(m|n)$ are poorly understood. In this article we study a monoidal model structure naturally appearing in the representation theory of  $GL(m|n)$. This model structure gives an abstract way to think about resolutions by Kac modules. The associated homotopy category (in the sense of Quillen) in turn is interesting since its monoidal structure can be seen as an approximation of that of $Rep(GL(m|n))$. This construction can be generalized to other affine supergroup schemes. More precisely we construct a model category for every pair $H \subseteq G$ where $G_0 \subseteq H \subseteq G$ and where $G$ is a quasireductive supergroup \cite{Serganova-quasireductive}. For the sake of generality we put the construction of our model category in an abstract setting. Then our construction can be seen as a way to construct model structures on an abelian Frobenius category $\mathcal{C}$, but before we wade into technical matters, let us consider the $GL(m|n)$-case first. 


\medskip\noindent
We work in this case over an algebraically closed field of characteristic 0. The supergroup $GL(m|n)$ contains the parabolic subgroup $P(m|n)^+$ of upper triangular block matrices. An irreducible representation of weight $\lambda$ of the even subgroup $G_0 = GL(m) \times GL(n)$ can be trivially extended to $P(m|n)^+$ and then induced to $GL(m|n)$. This parabolic induction  yields the so-called Kac modules \[ V(\lambda) = Ind_{P(m|n)^+}^{\ GL(m|n)} L_P(\lambda),\] the universal highest weight modules in $\mathcal{T}_{m|n} = Rep(GL(m|n))$. They are the standard modules in the highest weight category $\mathcal{T}_{m|n}$. The twisted dual $V(\lambda)^*$ (see section \ref{sec:gl-m-n-cofibrant}) (also called anti Kac module)  is then the corresponding costandard module. The full subcategory $\mathcal{T}_+$  of representations with a filtration by Kac modules (simply called Kac objects) and the full subcategory $\mathcal{T}_-$ of representations with a filtration by anti Kac modules (called anti Kac objects) are orthogonal in the sense that \[ Ext^1(\mathcal{T}_+, \mathcal{T}_-) =0\] Furthermore $\mathcal{T}_+ \cap \mathcal{T}_- = Proj$ (every tilting module is projective). 


\medskip\noindent
It can easily be shown that a module $M$ is in $\mathcal{T}_-$ if and only if its restriction to $P(m|n)^+$ is projective, i.e. zero in the stable category of $Rep(P(m|n)^+)$ (or equivalently injective, since projectives and injectives coincide in $Rep(P(m|n)^+)$ and also $Rep(GL(m|n))$). The stable category itself can be realized as the homotopy category of a model category. Roughly speaking a \emph{model category} is a category with three classes $\mathcal{L}$ (the \emph{cofibrations}), $\mathcal{R}$ (the \emph{fibrations}) and $\mathcal{W}$ (the \emph{weak equivalences}) of morphisms enjoying various lifting properties. Its homotopy category is then the localization by the class of weak equivalences. The categories $Rep(P(m|n)^+)$ and $Rep(GL(m|n))$ are related by the faithful restriction functor $Res$, its left adjoint $Coind$ and its right adjoint $Ind$. It can be easily seen that $Coind \cong Ind$ and hence both are exact. This situation remains true if we allow all algebraic  representations of $GL(m|n)$ (those corresponding to arbitrary comodules of $k[GL(m|n)]$), not just the finite dimensional ones. Indeed every algebraic representation is an inductive limit of finite dimensional ones, and hence the algebraic representations can simply be identified with the ind-category $Rep(GL(m|n))^{\infty}$. By the functorial construction of the ind-category, the functors $Res, Coind$ and $Ind$ extend with the same properties. 

\medskip\noindent
We are now in the following setting: Two abelian categories \[ \mathcal{C} = Rep(GL(m|n))^{\infty}, \\ \mathcal{D} = Rep(P(m|n)^+)^{\infty} \] which are related by two exact functors $U = Res$ and $F = Coind$ \[ Hom_{\mathcal{C}}(FX,Y) \cong Hom_{\mathcal{D}}(X,UY).\]

A standard construction in model category allows to transfer a model structure on a model category $\mathcal{D}$ to a model structure on a category $\mathcal{C}$ provided there is a Quillen adjunction between them: An adjunction $(F,U)$ such that F maps cofibrations to cofibrations and U maps fibrations to fibrations. In the case the model structure on $\mathcal{D}$ is cofibrantly generated, these conditions simplify, but in our specific situation we will have to sacrifice the finite dimensionality and pass to the ind categories in order to satisfy the requirements. 

\medskip\noindent
Hence the stable model structure on $Rep(P(m|n)^+)^{\infty}$ defines a model structure on $Rep(GL(m|n))^{\infty}$. Every model category comes with two distinguished classes of objects, the \emph{fibrant} objects, namely the objects $X$ such that $0 \to X$ (where $0$ is the initial object) is in $\mathcal{L}$, and the \emph{cofibrant} objects $X$ where $X \to *$ (where $*$ is the terminal object) is in $\mathcal{R}$. In our model category every object is fibrant, so we also consider the trivially fibrant objects $X$ where $X \to *$ is in $\mathcal{R} \cap \mathcal{W}$. The cofibrant objects define the full subcategory $\mathcal{C}_+$ and the trivially fibrant objects the full subcategory $\mathcal{C}_-$. Then $\mathcal{C}_-$ is the ind-category of the anti Kac objects $\mathcal{T}_-$ and $\mathcal{C}_+$ is the ind-category of the Kac objects $\mathcal{T}_+$. These two categories satisfy \[ Ext^1(\mathcal{C}_+,\mathcal{C}_-) = 0\] and form a cotorsion pair on $\mathcal{T}_{m|n}^{\infty}$.

\medskip\noindent
Every model category $\mathcal{C}$ can be localized by the weak equivalences $\mathcal{W}$. The localization is called the \emph{homotopy category} $Ho\mathcal{C}$ of $\mathcal{C}$. By definition of our model structure on $\mathcal{T}_{m|n}^{\infty}$, a morphism $f$ is in $\mathcal{W}$ if and only if $Res(f)$ is a weak equivalence for the stable model structure on $Rep(P(m|n)^+)^{\infty}$. It follows that the kernel of the functor $\mathcal{T}_{m|n}^{\infty} \to Ho\mathcal{T}_{m|n}^{\infty}$ to the homotopy category consists of the trivially fibrant objects $\mathcal{C}_-$, the inductive limits of objects in $\mathcal{T}_{m|n}$ with an anti Kac filtration. If we take instead the parabolic subgroup $P(m|n)^-$ of lower triangular block matrices for the definition of our model structure,  the roles of $\mathcal{C}_+$, $\mathcal{T}_+$, $\mathcal{C}_-$ and $\mathcal{T}_-$ switch.

\medskip\noindent
One of the most important features of a model category is that every object $X$ has a \emph{cofibrant replacement} $QX \to X$ with $QX \in \mathcal{C}_+$ and kernel in $\mathcal{C}_-$. While these cofibrant replacements may seem to be a bit abstract on first sight, note that in the $GL(m|n)$-case a cofibrant replacement, say of $L(\lambda)$, is given by an exact sequence \[ 0 \to A \to QL(\lambda) \to L(\lambda) \to 0 \] with $QL(\lambda) \in \mathcal{T}_+^{\infty}$ and $A \in \mathcal{T}_-^{\infty}$. The minimal cofibrant replacements (called the \emph{minimal model} of $L(\lambda)$), if they exist, have a rather special form. We show in lemma \ref{lem:existence-minimal-model} that atypical irreducible objects have a minimal model. To capture this we define a canonical degree filtration \ref{sec:degree-filtration} (analog to the weight filtration in algebraic geometry) for each object with a Kac filtration. By lemma \ref{degree-filtration} each object $M \in \mathcal{T}_+$ has a filtration by submodules $F_i(M) \in \calT_+$ such that \[ \ldots \subseteq F_{i-1}(M) \subseteq F_i(M) \subseteq F_{i+1}(M) \subseteq \ldots \] and \[ F_i(M)/ F_{i-1}(M) = \bigoplus_{\lambda} V(\lambda) \] holds for certain Kac modules $V(\lambda) \in \calT_+$ of degree $deg(\lambda) = i$. This degree filtration extends to $\mathcal{C}_+$ and in particular to the minimal model of $L(\lambda)$. The minimal model lies in the exact subcategory $\mathcal{C}_+^{pol}$: objects $M$ with a degree filtration $F$ such that $F_k(M) = 0$ for some $k \in \mathbb{N}$ and $\dim gr_i^F(M) < C \cdot P(i)$ for all $i$ where $C = C(M)$ is a constant and $P = P(M)$ a polynomial. To a Kac module $V(\lambda)$ we assign the power series \[ q^{-deg(\lambda)} [V(\lambda)] \in K_0(\mathcal{T})[[q^{-1}]] \] and extend this to sequential inductive limits of Kac-modules of polynomial growth via the degree filtration. Likewise we can associate a power series to any object in $\mathcal{C}_-$ of polynomial growth. To link this to irreducible representations, we assign to $L(\lambda)$ the power series $q^{-deg(\lambda)} [L(\lambda)]$. This extends to the exact subcategory $\mathcal{C}^{pol} \subset \mathcal{C}$ of inductive limits of polynomial growth of finite dimensional modules. Under the ring homomorphism $K_0(\mathcal{C}_+^{pol})$ to $K_0(\mathcal{C}^{pol})$ given by \[ q^{-deg(\lambda)} [V(\lambda)] \mapsto \sum_L  q^{-deg(L)} [L]\] where $L$ runs over the irreducible constituents of $V(\lambda)$, the minimal model \[0 \to A \to \Omega L(\lambda) \to  L(\lambda) \to 0 \] 
gives via identifications in the power series ring the formula
\[ [L(\lambda)] = [\Omega L(\lambda)] - [A].\] Since the class of a Kac object $V$ is the same as the one of the anti Kac object $V^*$ for the twisted dual $()^*$ on $\mathcal{T}_{m|n}$, this can be seeing as analogous to the resolutions of $L(\lambda)$ by Kac objects used for example by Serganova \cite{Serganova-character}. 

\medskip\noindent
A particular important feature of the $GL(m|n)$-case is the existence of a monoidal structure on $Ho\mathcal{C}$ such that $\mathcal{C} \to Ho\mathcal{C}$ is a tensor functor. We discuss this in more detail in section \ref{sec:intro-gl}.


\subsection{Induced model structures coming from Frobenius pairs} We axiomatize the setting of the $GL(m|n)$-case now in the hope that the abstract mechanism to operate with Kac modules of $\mathcal{T}_{m|n}$ might be useful in other contexts. The representation categories are replaced by two abelian Frobenius categories $\mathcal{C}, \mathcal{D}$ \begin{align*}  Rep(GL(m|n))^{\infty} & \rightsquigarrow \mathcal{C} \\ Rep(P(m|n)^+)^{\infty} & \rightsquigarrow \mathcal{D} \end{align*} and the functors $Res$ and $Coind$ by two functors $U$ and $F$ \begin{align*} Res & \rightsquigarrow U \\ Coind & \rightsquigarrow F.\end{align*} We are assume now that we are given the following data - called a \emph{Frobenius pair} ($\mathcal{C},\mathcal{D}$) - as in section \ref{sec:induced-1}:
\begin{enumerate}
\item two abelian Frobenius categories $\mathcal{C}, \ \mathcal{D}$ such that $\mathcal{D}$ satisfies the additional conditions \ref{stable-f-1} - \ref{stable-f-4} (described in section \ref{sec:stable-cat}), and 
\item an adjoint pair of functors $U,F$ between them satisfying \[ Hom_{\mathcal{C}}(FX,Y) \cong Hom_{\mathcal{D}}(X,UY)\] such that $U$ and $F$ are exact and $U$ is faithful.
\end{enumerate}

\medskip\noindent
Under these conditions the cofibrantly generated stable model structure on $\mathcal{D}$ induces a model structure on $\mathcal{C}$. More precisely the model structure $({\mathcal L},{\mathcal R},{\mathcal W})$ on $\mathcal C$ satisfies $f\in {\mathcal W}$ if and only if $U(f)\in \mathcal W_D$ where the latter is defined via stable equivalence.

\medskip\noindent
If we denote by $\mathcal{C}_+$ the cofibrant objects and by $\mathcal{C}_-$ the trivially fibrant objects in this model structure, this defines a cotorsion pair in the sense of \cite{Beligiannis-Reiten} \cite{Hovey-cotorsion-1} on $\mathcal{C}$, and so in particular \[ Ext^1(\mathcal{C}_+, \mathcal{C}_-) =0.\] We stress that we do not obtain just a single model structure on $\mathcal{C}$ in this way. Indeed any such pair $(\mathcal{C},\mathcal{D})$ will give rise to a different cotorsion pair, a different model structure and a different homotopy category. 

\begin{theorem*} (Theorem \ref{thm:homotopy-stable}) The homotopy category $Ho \mathcal{C}$ of the model category $\mathcal{C}$ is equivalent as a triangulated category to the stable category $\mathcal{C}_+/P_{\mathcal{C}}$.
\end{theorem*}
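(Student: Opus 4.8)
The plan is to realize the homotopy category as the stable category of the Frobenius exact category $\mathcal{C}_+$ of cofibrant objects, the equivalence being induced by restricting the localization functor $\gamma: \mathcal{C}\to Ho \mathcal{C}$ to $\mathcal{C}_+$. First I would record that every object of $\mathcal{C}$ is fibrant: since $U$ is exact it sends the map $X\to 0$ to $U(X)\to 0$, which is a fibration because every object of $\mathcal{D}$ is fibrant for the stable structure. Hence the class of fibrant objects is all of $\mathcal{C}$, the trivially fibrant objects are exactly the objects weakly equivalent to $0$, that is $\mathcal{C}_-$, and the two complete cotorsion pairs attached to the abelian model structure constructed in Section~\ref{sec:induced-1} become $(\mathcal{C}_+,\mathcal{C}_-)$ and $(\mathcal{C}_+\cap\mathcal{C}_-,\,\mathcal{C})$. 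Reading off the second pair gives $\mathcal{C}_+\cap\mathcal{C}_-={}^{\perp}\mathcal{C}=P_{\mathcal{C}}$, the class of projective-injective objects of the Frobenius category $\mathcal{C}$.

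Next I would check that $\mathcal{C}_+$ is a Frobenius category whose projective-injective objects are exactly $P_{\mathcal{C}}$. Being the left half of a cotorsion pair, $\mathcal{C}_+$ is extension closed and hence an exact subcategory of $\mathcal{C}$. Since $U$ is right adjoint to the exact functor $F$ it preserves injective objects, so it carries $P_{\mathcal{C}}$ into $P_{\mathcal{D}}$; together with the exactness of $U$ this shows that $\mathcal{C}_-=\{X:U(X)\in P_{\mathcal{D}}\}$ is coresolving, i.e. the cotorsion pair $(\mathcal{C}_+,\mathcal{C}_-)$ is hereditary. Using heredity, completeness (special $\mathcal{C}_-$-preenvelopes and $\mathcal{C}_+$-precovers), and the identity $\mathcal{C}_+\cap\mathcal{C}_-=P_{\mathcal{C}}$ from the first step, one checks that $\mathcal{C}_+$ has enough projectives and enough injectives, both classes being $P_{\mathcal{C}}$; so $\mathcal{C}_+/P_{\mathcal{C}}$ is triangulated by Happel's theorem.

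Then I would identify $Ho \mathcal{C}$ with $\mathcal{C}_+/P_{\mathcal{C}}$ as an additive category. Any morphism factoring through an object of $P_{\mathcal{C}}\subseteq\mathcal{C}_-$ is sent to zero by $\gamma$ (objects of $\mathcal{C}_-$ are weakly equivalent to $0$), so $\gamma$ restricted to $\mathcal{C}_+$ descends to a functor $\Phi: \mathcal{C}_+/P_{\mathcal{C}}\to Ho \mathcal{C}$; essential surjectivity of $\Phi$ follows from the existence of cofibrant replacements $QX\in\mathcal{C}_+$ with $\gamma(QX)\cong\gamma(X)$. For full faithfulness I would invoke Quillen's theorem: for $X\in\mathcal{C}_+$ and arbitrary $Y$ (all objects being fibrant) one has $Ho \mathcal{C}(\gamma X,\gamma Y)\cong\mathcal{C}(X,Y)/{\simeq}$, and I would build an explicit cylinder object for $X$ out of a special $\mathcal{C}_-$-preenvelope to see that $f\simeq g$ if and only if $f-g$ factors through an object of $\mathcal{C}_+\cap\mathcal{C}_-=P_{\mathcal{C}}$; this is the standard computation for abelian model categories.

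Finally I would verify that $\Phi$ is exact. The suspension of $\gamma X$ in $Ho \mathcal{C}$ is computed from a cofiber sequence $0\to X\to CX\to\Sigma X\to 0$ in which $X\to CX$ is a cofibration into a trivially fibrant object; such a sequence is precisely a special $\mathcal{C}_-$-preenvelope, hence $CX\in\mathcal{C}_+\cap\mathcal{C}_-=P_{\mathcal{C}}$ and $\Sigma X$ is the cosyzygy $\Omega^{-1}X$ computed in $\mathcal{C}_+$. Thus $\Phi$ intertwines the shift functors, and it carries the standard triangles of $\mathcal{C}_+/P_{\mathcal{C}}$ (the images of short exact sequences in $\mathcal{C}_+$) to the cofiber triangles of $Ho \mathcal{C}$, so it is an equivalence of triangulated categories. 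I expect the main obstacles to be the second and third steps: establishing heredity of the cotorsion pair and the degeneracy $\mathcal{C}_+\cap\mathcal{C}_-=P_{\mathcal{C}}$, and then the cylinder-object computation identifying the homotopy relation with factorization through $P_{\mathcal{C}}$; once these are in hand the statement is the familiar dictionary between abelian model categories and stable categories of Frobenius categories (cf.\ \cite{Hovey-cotorsion-1}).
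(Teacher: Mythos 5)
Your proposal is correct and follows essentially the same route as the paper: the paper's proof of Theorem~\ref{thm:homotopy-stable} simply cites Beligiannis--Reiten (Chapter VI Theorem 2.1 for the Frobenius property of $\mathcal{C}_+$, and Chapter VIII Theorem 4.2 and Corollary 4.5 for the identification of $Ho\mathcal{C}$ with the stable category), which is exactly the abelian-model-category/cotorsion-pair dictionary you are re-deriving. The only small gap is in your first step, where you assert that $U(X)\to 0$ being a fibration implies $X\to 0$ is a fibration; this does hold for the transferred model structure because $p\in J\mathrm{inj}$ iff $U(p)\in J_{\mathcal{D}}\mathrm{inj}$ by the $(F,U)$-adjunction applied to $J=F(J_{\mathcal{D}})$, but it should be stated rather than tacitly used (the paper itself proves fibrancy by showing directly in Lemma~\ref{lemma-1} that the fibrations are the epimorphisms).
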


\subsection{Comodule categories} We apply this construction to the case where $\mathcal{C}$ is the category of (graded) comodules of a supercommutative Hopf algebra $A$ over a field $k$ (with $char(k) \neq 2$). Then $\mathcal{C}$ is the ind-category of the category $\mathcal{T}$ of finite-dimensional comodules, and it is a Frobenius category if $\mathcal{T}$ is one. If $A \to B$ is a quotient of supercommutative Hopf algebras, then their comodule categories $\mathcal{C}$ and $\mathcal{D}$ are related by induction and restriction functors. These need not satisfy the strong conditions imposed above. We call a pair $(A,B)$ or the corresponding pair of affine supergroup schemes $(H,G)$ where $H \subset G$, a Frobenius pair if the comodule categories $\mathcal{C} = Comod(A)$ and $\mathcal{D} = Comod(B)$ are Frobenius categories and $G_0 \subset H \subset G$ where $G_0$ is the underlying even algebraic group of $G$. In particular for any Frobenius pair the functor $Res: Rep(G)^{\infty} \to Rep(H)^{\infty}$ has a left and right adjoint which are isomorphic. In this situation we obtain a model structure on $\mathcal{C}$ as in section \ref{sec:induced}. 


\subsection{Monoidal model structures} So far we have not used any monoidal properties of our categories. Since the comodule category $\mathcal{C}$ is a tensor category, we want of course that $Ho\mathcal{C}$ is again a tensor category such that the localization functor $\mathcal{C} \to Ho\mathcal{C}$ is a tensor functor. This requires that the model structure on $\mathcal{C}$ is compatible with the usual tensor product on $\mathcal{C}$ and $\mathcal{C}$ carries a monoidal model structure.

\begin{theorem*} (Theorem \ref{thm:monoidal}) $\mathcal{C}$ is a monoidal model category and the functor $\gamma: \mathcal{C} \to Ho\mathcal{C}$ is a tensor functor.
\end{theorem*}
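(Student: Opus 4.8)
The plan is to verify the pushout-product axiom (SM7) for the model structure on $\mathcal{C}$, together with the unit axiom, and then deduce that $\gamma$ is monoidal from the general theory of monoidal model categories. Recall that the model structure on $\mathcal{C}$ is the one induced via the Frobenius pair $(\mathcal{C},\mathcal{D})$: a map $f$ lies in $\mathcal{W}$ iff $U(f)$ is a stable equivalence in $\mathcal{D}$, and the cofibrations are generated by the image under $F$ of the generating cofibrations of the stable model structure on $\mathcal{D}$. So the first step is to pin down explicit generating (trivial) cofibrations $I$ and $J$ for the stable model structure on $\mathcal{D}$ (these come from conditions \ref{stable-f-1}--\ref{stable-f-4} on $\mathcal{D}$: roughly the inclusions of the form $0\to X$ and $X\to I(X)$ into injective hulls), push them forward by $F$ to get generating sets $FI$, $FJ$ on $\mathcal{C}$, and then check that the pushout-products $f\,\square\, g$ of two such generators again lie in the cofibrations (resp.\ trivial cofibrations). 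This is where the hypothesis that $F$ is exact and that the tensor product on $\mathcal{C}$ is exact in each variable is used: $F(X)\otimes F(Y)$ and the relevant pushouts are controlled because $\otimes$ preserves the short exact sequences defining cofibrations, and because in the comodule setting $\mathcal{C}_+$ (the cofibrant objects, i.e.\ the ind-Kac objects) is closed under $\otimes$ — a Kac object tensored with anything algebraic is again filtered by Kac objects, since parabolic induction satisfies a tensor identity $\mathrm{Ind}(L_P(\lambda))\otimes M \cong \mathrm{Ind}(L_P(\lambda)\otimes \mathrm{Res}\,M)$.

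Concretely I would proceed as follows. First, reduce SM7 to generators: by the standard argument (Hovey, \emph{Model Categories}, Cor.\ 4.2.5), it suffices to check that $\square$ of two generating cofibrations is a cofibration and that $\square$ of a generating cofibration with a generating trivial cofibration is a trivial cofibration. Second, since cofibrations in $\mathcal{C}$ are (retracts of) transfinite compositions of pushouts of maps in $FI$, and a generating cofibration in $\mathcal{D}$ is (up to the stable structure) an inclusion $A\hookrightarrow B$ with cokernel a nice object, the pushout-product $FA\otimes FC \cup_{FA\otimes FD} FB\otimes FD \to FB\otimes FC$ has cokernel $(B/A)\otimes \text{(stuff)}$-type terms; using exactness of $F$ and of $\otimes$, I identify this cokernel as again an object built from $F$ of injectives/acyclics, hence the map is a cofibration. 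Third, for the trivial-cofibration case I use the characterization $f\in\mathcal{W}\Leftrightarrow U(f)$ a stable equivalence together with the projection/tensor identity relating $U(F(X)\otimes Y)$ to $X\otimes U(Y)$ in $\mathcal{D}$, which reduces the statement to the (known) fact that the stable model structure on the Frobenius category $\mathcal{D}$ is itself monoidal — i.e.\ $-\otimes Z$ preserves stable equivalences, equivalently tensoring with any object preserves projective-injective objects and preserves short exact sequences. Fourth, the unit axiom: the unit $\one$ of $\mathcal{C}$ is cofibrant here (the trivial comodule is a Kac object), so the unit axiom is automatic; if $\one$ were not cofibrant one would check that the cofibrant replacement $Q\one\otimes X\to X$ is a weak equivalence, again via $U$ and the exactness.

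The main obstacle I anticipate is the trivial-cofibration half of SM7 — showing $-\otimes Z$ preserves weak equivalences, or more precisely that the pushout-product of a cofibration with an acyclic cofibration is acyclic. Since weak equivalences are detected by $U$ and stable equivalence in $\mathcal{D}$, the crux is a tensor-projection formula: one needs $U(F(X)\otimes_{\mathcal{C}} Y)\cong X\otimes_{\mathcal{D}} U(Y)$ (the analogue of $\mathrm{Res}(\mathrm{Coind}(X)\otimes Y)\cong X\otimes \mathrm{Res}(Y)$), plus the fact that tensoring in $\mathcal{D}$ with an arbitrary comodule sends projective-injectives to projective-injectives — true in the comodule/Frobenius setting because projective-injectives there are summands of $A\otimes(-)$-type objects, and the tensor identity again applies. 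Once this projection formula is in hand, everything reduces to the monoidality of the stable model structure on $\mathcal{D}$, which follows formally from $\mathcal{D}$ being a Frobenius tensor category in which $\otimes$ is biexact. Finally, that $\gamma:\mathcal{C}\to Ho\,\mathcal{C}$ is a tensor functor is then the general statement (Hovey, Thm.\ 4.3.2) that the localization of a monoidal model category at its weak equivalences is monoidal and the localization functor is (strong) monoidal, combined with Theorem \ref{thm:homotopy-stable} identifying $Ho\,\mathcal{C}$ with $\mathcal{C}_+/P_{\mathcal{C}}$; one checks the induced tensor on $\mathcal{C}_+/P_{\mathcal{C}}$ agrees with the one descended from $\mathcal{C}$.
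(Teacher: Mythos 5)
Your plan matches the paper's proof in its essentials: reduce the pushout-product axiom to generators via Hovey Cor.\ 4.2.5, rely on a projection/tensor-identity formula relating $F$, $U$ and $\otimes$, and verify the unit axiom by pushing the weak equivalence through $U$. The paper's version of the projection formula is the statement $F(A\otimes UB)\cong F(A)\otimes B$ (Lemma \ref{induction-lemma}), proved via the internal Hom identity $U\,\mathcal{H}om(B,C)\cong\mathcal{H}om(UB,UC)$ and a splitting $A=A_0\otimes\Lambda^\bullet$ of the Hopf algebra; you state the adjoint form $U(F(X)\otimes Y)\cong X\otimes U(Y)$, which is equivalent and serves the same purpose. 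Where you differ slightly is the $I\,\square\,J$ case: the paper argues directly in $\mathcal{C}$ that $f\otimes\mathrm{id}_Z$ for $Z\in F(\mathcal{P}_{\mathcal{D}})\subset\mathcal{P}_{\mathcal{C}}=\mathcal{I}_{\mathcal{C}}$ has projective source, target and cokernel (using that $\mathcal{I}_{\mathcal{C}}$ is a $\otimes$-ideal and $F$ preserves projectives), hence is a split mono with projective cokernel; you propose to detect it via $U$ and quote monoidality of the stable structure on $\mathcal{D}$. Both work; the paper's is more self-contained since it never needs the stable structure on $\mathcal{D}$ to be verified monoidal.

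One genuine error to flag: you assert that the unit axiom is ``automatic'' because ``the trivial comodule is a Kac object'' and hence $\one$ is cofibrant. That is false in general. In the model structure at hand the cofibrant objects $\mathcal{C}_+$ are the (ind-)Kac objects, i.e.\ objects admitting a Kac filtration. An irreducible object like $\one$ has a Kac filtration only if it literally \emph{is} a Kac module, which fails already for $GL(1|1)$ where $V(\one)$ has length two with socle $Ber^{-1}$. So $\one$ is not cofibrant (atypical simples never are), and the unit axiom has real content. Your fallback sentence (check that $Q\one\otimes X\to X$ is a weak equivalence by applying $U$ and using exactness) is exactly what the paper does, but you should not present the cofibrancy of $\one$ as the main route — it is not an option here.
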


We may pass from a supercommutative Hopf algebra to the associated affine supergroup scheme $G$. Then the category $\mathcal{T}$ of finite dimensional comodules is equivalent as a tensor category to the finite-dimensional algebraic representations of $G$. If $G$ is an algebraic supergroup and has reductive even part (e.g. $G$ is a basic supergroup such as $GL(m|n)$ or $OSp(m|2n))$), then the algebraic representations $\mathcal{T}$ are a Frobenius category; and so our construction yields a cotorsion pair and a model structure on $\mathcal{C} \simeq Rep(G)^{\infty}$ for any embedded subgroup $G_0 \subset H \subset G$ with reductive even part $H_0$. The kernel of $\gamma: \mathcal{C} \to Ho \mathcal{C}$ are then simply the representations which restrict to a projective representation on the subgroup. While the construction of the model structure works in this generality, one needs to choose $H$ carefully to get an interesting theory similar to the $GL(m|n)$-case.

\subsection{Categories with weight truncations} Now let $\mathcal T$ be the category of finite dimensional representations of a
(connected) algebraic supergroup $G$ with reductive $G_0$ over an algebraically closed field
$k$ of characteristic $char(k)=0$ and let $\mathcal C$ be its ind-category. There is a notion of weights 
with an ordering $\leq $ defined between weights. Then we can define ${\mathcal T}^{\leq w}$ and ${\mathcal C}^{\leq w}$ to be the full subcategories
of objects whose simple subquotients $L(\lambda)$ all satisfy $\lambda \leq w$. The tuple $(\mathcal C$, $\mathcal D$, $U$, $F$,$\leq$) forms a Quillen adjunction with weight truncation as in definition \ref{def:weights}.

\begin{theorem*} (Theorem \ref{thm:cofibrant})  Every object in ${\mathcal C}^{\leq w}$ has a cofibrant replacement $Z$ where $Z$ is a direct sum of an injective object and an object in ${\mathcal C}^{\leq w}$.
\end{theorem*}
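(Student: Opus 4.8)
The plan is to exploit the cotorsion–pair description of the model structure. By what is recalled in the excerpt, a cofibrant replacement of $X$ is an exact sequence $0 \to A \to Z \to X \to 0$ with $Z \in \mathcal{C}_+$ and $A \in \mathcal{C}_-$, and such replacements exist; only the shape of $Z$ is at issue. The decisive structural fact is that $\mathcal{C}$ is Frobenius, so injective objects are projective; hence they lie in $\mathcal{C}_+$ and $\mathrm{Ext}^1(I,-) = \mathrm{Ext}^1(-,I) = 0$ for $I$ injective. It therefore suffices to construct a cofibrant replacement $Z$ for which the quotient $Z/Z^{\leq w}$ of $Z$ by its largest subobject lying in $\mathcal{C}^{\leq w}$ is \emph{injective}: the sequence $0 \to Z^{\leq w} \to Z \to Z/Z^{\leq w} \to 0$ then splits because its right-hand term is projective, and $Z \cong Z^{\leq w} \oplus (Z/Z^{\leq w})$ has the asserted form. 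I would first reduce to the case that $X$ has finite length. An arbitrary $X \in \mathcal{C}^{\leq w}$ is the filtered union of its finite-dimensional subobjects, which all lie in $\mathcal{T}^{\leq w}$; the assignment $M \mapsto M^{\leq w}$ is a subfunctor of the identity commuting with filtered colimits, the cofibrant replacement attached to the cofibrantly generated model structure can be taken functorially and commuting with filtered colimits (its generating cofibrations have finitely presentable domains, being images under $Coind$ of the finitely presentable generators of the stable model structure on $\mathcal{D}$), and since the comodule category $\mathcal{C}$ is locally finite a filtered colimit of injective objects of $\mathcal{C}$ is again injective. Hence if each finite-length piece $X_i$ admits a cofibrant replacement $Z_i$ with $Z_i/Z_i^{\leq w}$ injective, the induced replacement $QX = \mathrm{colim}_i Z_i$ has $QX/(QX)^{\leq w} = \mathrm{colim}_i (Z_i/Z_i^{\leq w})$ injective.

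For $X$ of finite length I would induct on the composition length, the base case being $X = L(\lambda)$ with $\lambda \leq w$. There I start from the standard object $V(\lambda)$, which by the axioms of a category with weights lies in $\mathcal{C}_+ \cap \mathcal{C}^{\leq \lambda}$ and surjects onto $L(\lambda)$ with kernel $N$ whose composition factors all have weight $< \lambda$. The inductive hypothesis gives a cofibrant replacement $0 \to A_N \to Z_N \to N \to 0$ with $Z_N/Z_N^{\leq w}$ injective, and I splice it with $V(\lambda) \twoheadrightarrow L(\lambda)$: choosing a special $\mathcal{C}_-$-preenvelope $Z_N \hookrightarrow B$ (so $B \in \mathcal{C}_-$ and $B/Z_N \in \mathcal{C}_+$) and forming the pushout of $Z_N \hookrightarrow B$ along the composite $Z_N \twoheadrightarrow N \hookrightarrow V(\lambda)$, one gets a cofibrant replacement $0 \to B/A_N \to Z \to L(\lambda) \to 0$; the weight bound $V(\lambda) \in \mathcal{C}^{\leq\lambda}$ together with the weight axioms are then used to see that $Z/Z^{\leq w}$ is injective. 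For the inductive step, given $0 \to X' \to X \to X'' \to 0$ with $X',X''$ shorter, a horseshoe argument — valid because the cotorsion pair $(\mathcal{C}_+,\mathcal{C}_-)$ is hereditary — produces a cofibrant replacement $Z$ of $X$ sitting in $0 \to QX' \to Z \to QX'' \to 0$, where $QX', QX''$ are cofibrant replacements of the desired form. Writing $QX' \cong Y' \oplus I'$ and $QX'' \cong Y'' \oplus I''$ with $Y' = (QX')^{\leq w}$, $Y'' = (QX'')^{\leq w}$ in $\mathcal{C}^{\leq w}$ and $I', I''$ injective with $(I')^{\leq w} = (I'')^{\leq w} = 0$, the extension class of $Z$ in $\mathrm{Ext}^1(QX'', QX')$ has all of its components zero except the one in $\mathrm{Ext}^1(Y'',Y')$ (because $I''$ is projective and $I'$ is injective); hence $Z \cong I' \oplus I'' \oplus E$ with $E$ the corresponding extension of $Y''$ by $Y'$. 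As $\mathcal{C}^{\leq w}$ is extension closed, $E \in \mathcal{C}^{\leq w}$, so $Z^{\leq w} = E$ and $Z/Z^{\leq w} \cong I' \oplus I''$ is injective, completing the induction.

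I expect the genuine obstacle to be the base case, equivalently: controlling the weights of the acyclic kernel $A \in \mathcal{C}_-$ so as to see that the part of the cofibrant replacement lying outside $\mathcal{C}^{\leq w}$ is an injective summand. A priori $A$, and hence $Z$, carries weights that need not be $\leq w$. The inputs I would use are that objects of $\mathcal{C}_-$ restrict to injective (equivalently projective) $H$-modules, that $G_0 \subseteq H$ has reductive even part, and the weight axioms imposed on $\mathcal{C}$ and $\mathcal{D}$: these should force the ``weight $> w$'' layer produced by the construction to be relatively injective, and, being injective over $H$ with $\mathcal{C}$ locally finite, to be replaceable by an honest injective of $\mathcal{C}$, after which the splitting is formal. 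The remaining points — that the horseshoe really yields a cofibrant replacement in the stated short exact sequence (hereditariness of the cotorsion pair), that a functorial cofibrant replacement commuting with filtered colimits is available, and that $\mathcal{C}^{\leq w}$ contains enough cofibrant objects (standard objects, up to injective summands) to run the induction — are routine consequences of the Frobenius model structure and the weight hypotheses but have to be recorded.
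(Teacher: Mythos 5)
Your proposal is a genuinely different approach from the paper, and it has gaps that would need to be closed before it qualified as a proof.

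\textbf{Mismatch of hypotheses.} The paper proves Theorem \ref{thm:cofibrant} under the hypothesis that $\mathcal{C}, \mathcal{D}$ are ``categories with weights'' in the sense of Definition \ref{def:weights}. That definition only asks for a full subcategory ${\mathcal C}^{\leq w}$ closed under limits and colimits and compatible with $U$ and $F$; there is explicitly \emph{no} requirement that ${\mathcal C}^{\leq w}$ admit standard objects $V(\lambda)$, that simple objects be indexed by an interval-finite poset, or that kernels of $V(\lambda) \twoheadrightarrow L(\lambda)$ carry smaller weights. Those features are only imposed later, in assumptions \ref{A-1}--\ref{A-4} of Section \ref{sec:finiteness-theorems}, when the paper goes on to prove the vanishing and finiteness theorems. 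Your base case, and with it the entire induction, is built on exactly these assumptions. So even if your argument were otherwise complete, it would prove the theorem only under strictly stronger hypotheses than the paper's, and your phrase ``by the axioms of a category with weights'' misreads what those axioms are.

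\textbf{The induction is not well-founded as written.} Even granting highest-weight-type axioms, inducting on composition length with base case $X = L(\lambda)$ is circular: in the base case you already invoke the inductive hypothesis on the kernel $N$ of $V(\lambda) \twoheadrightarrow L(\lambda)$, which typically has composition length larger than $1$. What is actually needed is a transfinite or Noetherian induction on the weight poset (handling all simples of weight $< \lambda$ before $\lambda$), which in turn requires some form of descending-chain or interval-finiteness hypothesis on $\Lambda$ that is absent from Definition \ref{def:weights}. You would need to restructure the argument as a two-layer induction and make explicit the well-foundedness hypotheses you are using.

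\textbf{The acknowledged gap is the actual content of the theorem.} You write that ``the genuine obstacle'' is controlling the weights of the acyclic kernel $A \in \mathcal{C}_-$, and you sketch reasons why it ``should'' work but give no argument. This is precisely where the paper's proof lives. The paper avoids your induction entirely: it runs a single small-object-argument construction $G^0 \hookrightarrow G^1 \hookrightarrow \cdots \hookrightarrow G^\infty$, but with two modifications that are the whole point. First, each generating cofibration $i : A_i \to B_i$ with $A_i \in \mathcal{D}^{\leq w}$ is replaced by the ``redundant'' monomorphism $\nu_i \oplus i^{\leq w} : A_i \to I(A_i) \oplus B_i^{\leq w}$, forcing the newly glued material to split as an injective piece plus a piece landing in ${\mathcal C}^{\leq w}$. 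Second, the ``weight factorization'' step (Step 6) shows that any morphism $F(A) \to X$ with $X \in {\mathcal C}^{\leq w}$ automatically factors over $F(A^{\leq w})$, which lets the paper verify the right-lifting-property in $\mathcal{R} \cap \mathcal{W}$ against arbitrary generating cofibrations while only having glued on the weight-truncated ones. The clean-decomposition lemma of Section \ref{sec:clean} is then what yields the asserted $Z \cong N \oplus I$ form of $G^\infty$ in Step 4. None of this appears in your sketch, and no amount of horseshoe lemma or hereditary-cotorsion-pair bookkeeping supplies it.

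\textbf{A smaller point.} You also assert that the functorial cofibrant replacement commutes with filtered colimits. This does not follow merely from the generating cofibrations having finitely presentable domains; the small object argument introduces transfinite compositions of pushouts of arbitrary coproducts, and the replacement $Q$ commuting with filtered colimits requires a separate (and not automatic) verification. The paper sidesteps this by constructing the replacement of an arbitrary $X \in {\mathcal C}^{\leq w}$ directly rather than passing through a colimit of finite-length pieces.

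Bottom line: your strategy is recognizable and your diagnosis of where the difficulty lies is correct, but the difficulty is exactly what you leave unproved, your hypotheses are strictly stronger than the theorem's, and your induction scheme needs a well-foundedness that Definition \ref{def:weights} does not provide.
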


This theorem is important because it gives of some control on the morphism spaces in $Ho\mathcal{C}$ since \[ Hom_{Ho\mathcal{C}}(X,Y) \simeq Hom_{\mathcal{C}}(QX,Y)/\sim \]

by theorem \ref{thm:homotopy-stable}. As an application we deduce the following important vanishing theorem:

\begin{theorem*} (Theorem \ref{thm:main}) If  $(\mathcal C$, $\mathcal D$, $U$, $F$,$\leq$) forms a Quillen adjunction with weight truncation  $$ Hom_{Ho\mathcal C}(QL(\mu),L(\lambda))=0 \ \ \mbox{ hence } \ \ [L(\mu),L(\lambda)]=0$$  for any irreducible objects $L(\lambda),  \ L(\mu)$ for which $\mu<\lambda$.
\end{theorem*}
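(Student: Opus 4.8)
The plan is to deduce the statement formally from Theorem~\ref{thm:homotopy-stable} and Theorem~\ref{thm:cofibrant}, reducing it to an elementary observation about the weight order. Since a cofibrant replacement $QL(\mu)\to L(\mu)$ is a weak equivalence, $QL(\mu)$ and $L(\mu)$ become isomorphic in $Ho\mathcal{C}$, so it is enough to show $Hom_{Ho\mathcal{C}}(QL(\mu),L(\lambda))=0$; moreover, by Theorem~\ref{thm:homotopy-stable} (equivalently, via the displayed formula $Hom_{Ho\mathcal{C}}(X,Y)\simeq Hom_{\mathcal{C}}(QX,Y)/{\sim}$) this group is the image of $Hom_{\mathcal{C}}(QL(\mu),L(\lambda))$ in the stable category $\mathcal{C}_+/P_{\mathcal{C}}$, i.e.\ the quotient by the morphisms factoring through a projective--injective object.

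Next I would use the cofibrant replacement supplied by Theorem~\ref{thm:cofibrant}. Since $L(\mu)\in\mathcal{C}^{\leq\mu}$, that theorem (applied with $w=\mu$) lets us take $QL(\mu)=Z=I\oplus M$ with $I$ injective and $M\in\mathcal{C}^{\leq\mu}$. As $Hom$ is additive this splits the computation:
\[ Hom_{Ho\mathcal{C}}(QL(\mu),L(\lambda))\;\cong\;Hom_{Ho\mathcal{C}}(I,L(\lambda))\;\oplus\;Hom_{Ho\mathcal{C}}(M,L(\lambda)), \]
and I treat the two summands separately.

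For the injective summand: $I$ is a direct summand of the cofibrant object $Z$, hence cofibrant, and being injective in the Frobenius category $\mathcal{C}$ it is projective--injective, hence a zero object of $\mathcal{C}_+/P_{\mathcal{C}}\simeq Ho\mathcal{C}$ (any morphism out of $I$ factors through $\mathrm{id}_I$, and so through an object of $P_{\mathcal{C}}$); thus $Hom_{Ho\mathcal{C}}(I,L(\lambda))=0$. For the remaining summand I claim that already $Hom_{\mathcal{C}}(M,L(\lambda))=0$: a nonzero morphism $M\to L(\lambda)$ would have as image a nonzero subobject of the simple object $L(\lambda)$, which (being finite dimensional, hence Noetherian in $\mathcal{C}$) has no proper nonzero subobject, so the morphism would be an epimorphism exhibiting $L(\lambda)$ as a simple quotient --- in particular a simple subquotient --- of $M$; but $M\in\mathcal{C}^{\leq\mu}$ then forces $\lambda\leq\mu$, contradicting $\mu<\lambda$ since $\leq$ is a partial order. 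Hence $Hom_{Ho\mathcal{C}}(M,L(\lambda))$, being a quotient of $Hom_{\mathcal{C}}(M,L(\lambda))=0$ (here $M$ is cofibrant and $L(\lambda)$ fibrant, so the formula applies), vanishes as well. Combining the two summands gives $Hom_{Ho\mathcal{C}}(QL(\mu),L(\lambda))=0$, and therefore $[L(\mu),L(\lambda)]=0$.

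The argument is a formal consequence of the two cited theorems, so the real work is entirely upstream in Theorem~\ref{thm:cofibrant}; the only point in the present proof that deserves a line of care is the handling of simple subquotients of the possibly infinite comodule $M\in\mathcal{C}$. One should make explicit that a simple quotient of an ind-object counts as a simple subquotient in the sense defining $\mathcal{C}^{\leq w}$ --- most cleanly by replacing $M$ by a finite dimensional subobject surjecting onto $L(\lambda)$ and noting that its simple subquotients occur among those of $M$. Everything else (additivity of $Hom$ over the direct sum, and the vanishing of morphisms out of a projective--injective object in the stable category) is routine.
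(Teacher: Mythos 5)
Your proposal is correct and follows essentially the same route as the paper: the authors likewise decompose the cofibrant replacement from Theorem \ref{thm:cofibrant} as $QL(\mu)=N\oplus I$ with $N\in\mathcal{C}^{\leq\mu}$ and $I$ injective, kill the injective summand in the stable category, and observe that $Hom_{\overline{\mathcal C}}(N,L(\lambda))$ is a quotient of $Hom_{\mathcal C}(N,L(\lambda))=0$ by weight reasons. Your extra care about why a simple quotient of $N$ counts as a simple subquotient is a fair elaboration of the paper's terser "by weight reasons," not a different argument.
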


Under some additional natural conditions \ref{A-1} - \ref{A-4} formulated in section \ref{sec:finiteness-theorems} we deduce from this 

\begin{corollary*} (Theorem \ref{thm:end(1)}) If assumptions \ref{A-1} - \ref{A-4} hold, $[L(\lambda), L(\lambda)] = k \cdot id_{L(\lambda)}$. 
\end{corollary*}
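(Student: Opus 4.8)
The plan is to turn $[L(\lambda),L(\lambda)]$ into an honest $Hom$-space in $\mathcal{C}$ and then compute its dimension by counting composition factors. By definition $[L(\lambda),L(\lambda)]=Hom_{Ho\mathcal{C}}(L(\lambda),L(\lambda))$, and since the cofibrant replacement $p\colon QL(\lambda)\to L(\lambda)$ is a weak equivalence this equals $Hom_{Ho\mathcal{C}}(QL(\lambda),L(\lambda))$. By Theorem \ref{thm:homotopy-stable} and the resulting identity $Hom_{Ho\mathcal{C}}(X,Y)\simeq Hom_{\mathcal{C}}(QX,Y)/{\sim}$ we get
\[
[L(\lambda),L(\lambda)]\;\simeq\;Hom_{\mathcal{C}}(QL(\lambda),L(\lambda))/{\sim}\,,
\]
where two maps are identified when their difference factors through a projective-injective object. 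The class of $p$ spans a line $k\cdot id_{L(\lambda)}$; if that class is zero (equivalently $L(\lambda)$ is acyclic, e.g. projective-injective) both sides vanish and there is nothing to prove, so I may assume $[p]\neq 0$. It then suffices to show $\dim_k Hom_{\mathcal{C}}(QL(\lambda),L(\lambda))=1$.

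For the non-trivial (atypical) case I would replace $QL(\lambda)$ by the minimal model $\Omega L(\lambda)$ of Lemma \ref{lem:existence-minimal-model} and bound $Hom_{\mathcal{C}}(\Omega L(\lambda),L(\lambda))$ against a composition series: a nonzero map to the simple object $L(\lambda)$ is surjective, so using Schur's Lemma $End_{\mathcal{C}}(L(\lambda))=k$ (among the standing assumptions \ref{A-1}--\ref{A-4}) one has
\[
\dim_k Hom_{\mathcal{C}}(\Omega L(\lambda),L(\lambda))\;=\;[\,\mathrm{head}\,\Omega L(\lambda):L(\lambda)\,]\;\leq\;[\,\Omega L(\lambda):L(\lambda)\,].
\]
Now I invoke the degree filtration $F_\bullet$ on $\Omega L(\lambda)\in\mathcal{C}_+^{pol}$ of Lemma \ref{degree-filtration}: by minimality its top graded piece is $V(\lambda)$ with multiplicity one, and every lower graded piece $gr^F_i$ with $i<deg(\lambda)$ is a sum of Kac modules $V(\mu)$ with $\mu\leq\lambda$ and $deg(\mu)<deg(\lambda)$, hence with $\mu<\lambda$ strictly. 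The composition factors of such a $V(\mu)$ all have the form $L(\nu)$ with $\nu\leq\mu<\lambda$, so none of them is $L(\lambda)$, while $[V(\lambda):L(\lambda)]=1$ since $V(\lambda)$ has simple head $L(\lambda)$. Therefore $[\Omega L(\lambda):L(\lambda)]=1$, whence $Hom_{\mathcal{C}}(\Omega L(\lambda),L(\lambda))=k\cdot p$ and $[L(\lambda),L(\lambda)]=k\cdot id_{L(\lambda)}$.

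One can also bypass the minimal model and argue with the cofibrant replacement $QL(\lambda)=J\oplus Z$ of Theorem \ref{thm:cofibrant}, $J$ injective and $Z\in\mathcal{C}^{\leq\lambda}$: maps out of the projective summand $J$ are null-homotopic, so the problem reduces to $Hom_{\mathcal{C}}(Z,L(\lambda))$; writing $0\to N\to Z\xrightarrow{\beta}L(\lambda)\to 0$ with $\beta$ the $Z$-component of $p$ (surjective because $[p]\neq 0$) and applying $Hom_{\mathcal{C}}(-,L(\lambda))$, one is left with the vanishing $Hom_{\mathcal{C}}(N,L(\lambda))=0$. This follows either from the degree filtration as above (no copy of $L(\lambda)$ survives in the kernel $N$), or from Theorem \ref{thm:main}: a hypothetical nonzero map $N\to L(\lambda)$, composed with $N\hookrightarrow Z\simeq QL(\lambda)$, would produce a nonzero class in $Hom_{Ho\mathcal{C}}(QL(\mu),L(\lambda))$ for some constituent $\mu<\lambda$ of $N$, contradicting Theorem \ref{thm:main}.

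The step I expect to be the real obstacle is the structural input used in the second paragraph: that in the Kac/degree filtration of the minimal model $V(\lambda)$ occurs exactly once and in top degree while every other Kac layer $V(\mu)$ has $\mu<\lambda$, so that $L(\lambda)$ has multiplicity one as a composition factor of $\Omega L(\lambda)$. This rests on the analysis of the degree filtration on $\mathcal{C}_+$ together with the hypotheses \ref{A-1}--\ref{A-4}; granting it, everything else — the identification of $Ho\mathcal{C}$ with the stable category, Schur's Lemma, and the elementary $Hom$-count against a composition series — is purely formal.
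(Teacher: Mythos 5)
Your reduction of $[L(\lambda),L(\lambda)]$ to an honest $Hom$-space is fine, but the step you yourself flag as the obstacle is a genuine gap, and it is exactly the point where the paper's proof goes a different (and valid) way. Your multiplicity count rests on the degree filtration of Lemma \ref{degree-filtration} and on the claim that the top graded piece of the minimal model is $V(\lambda)$ with multiplicity one. Neither is available here: the theorem is stated under the axiomatic hypotheses \ref{A-1}--\ref{A-4} for a general category with weights, whereas the degree filtration is a $GL(m|n)$-specific construction from Part II (it is proved via Brundan--Stroppel's Kazhdan--Lusztig combinatorics), and even in that case the "multiplicity one in top degree" assertion about the minimal model is not established anywhere and would itself need proof. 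Your fallback via Theorem \ref{thm:cofibrant} has the same defect in a sharper form: there the kernel $N$ of $Z\twoheadrightarrow L(\lambda)$ is only known to lie in $\mathcal{C}^{\leq\lambda}$, not $\mathcal{C}^{<\lambda}$, so nothing prevents $L(\lambda)$ from occurring in the head of $N$ and contributing extra maps; and the attempted rescue via Theorem \ref{thm:main} is not a valid deduction, since a nonzero map $N\to L(\lambda)$ need not factor through a single simple constituent $L(\mu)$ of $N$, and in any case the constituents of $N$ are only $\leq\lambda$.

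The missing ingredient is assumption \ref{A-4} itself, which you never use in an essential way: it provides an epimorphism $K_+(\lambda)\to L(\lambda)$ with $K_+(\lambda)\in\mathcal{T}_+$ whose kernel $K$ lies in $\mathcal{C}^{<\lambda}$ \emph{strictly}. The paper's proof applies $[-,L(\lambda)]$ to the resulting distinguished triangle; the vanishing of $[K,L(\lambda)]$ and $[K[1],L(\lambda)]$ (by the weight argument of Theorem \ref{thm:main}, using Theorem \ref{thm:translation} to keep $K[1]$ in $\mathcal{C}^{<\lambda}$) gives $[L(\lambda),L(\lambda)]\cong[K_+(\lambda),L(\lambda)]$, and since $K_+(\lambda)$ is cofibrant and may be taken clean, Corollary \ref{thm:hom-formula} identifies this with $Hom_{\mathcal{C}}(K_+(\lambda),L(\lambda))$, which is one-dimensional because the kernel lies in $\mathcal{C}^{<\lambda}$ and hence admits no nonzero map to $L(\lambda)$. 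Replacing $K_+(\lambda)$ by the cofibrant replacement or the minimal model loses precisely the strict inequality on the kernel that makes the argument close.
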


While some of these theorems hold in greater generality, the important case for us occurs if $\mathcal{C} = Rep(G)^{\infty}$ where $G$ is an algebraic supergroup with reductive even part $G_0$, e.g. \[ G = GL(m|n), \ OSp(m|2n), \ P(n), \ Q(n) \] or one of the exceptional simple supergroups. Any subgroup $H$ with $G_0 \subset H \subset G$ defines a model structure on $Rep(G)^{\infty}$ (e.g. the upper triangular block matrices in $G$ or a maximal parabolic containing $G_0$), but it is not even clear in the $OSp(m|2n)$-case what the appropriate analogue of $P(m|n)^+ \subset GL(m|n)$ should be. Put ${\mathcal T}=Rep_k(G)$ and ${\mathcal T}_H=Rep_k(H)$ (or a related tensor category 
$Rep_k(\mu,G)$ etc.). 
Attached to the pair $(H,G)$ we consider the ind categories $\mathcal C$ of $\mathcal T$ and $\mathcal D$ of ${\mathcal T}_H$. We consider the following chain of functors
$$ \gamma: {\mathcal C} \to Ho{\mathcal C} = {\mathcal C_+}/\sim_{stable} $$
Let ${\mathcal H}=Ho\mathcal T$ be the full triangulated tensor subcategory of $Ho\mathcal C$
generated by the image of $\mathcal T$ under $\gamma$.
Then there is the functor
$$ \gamma: {\mathcal T \to \mathcal H }=Ho\mathcal T \ $$
which in general is neither surjective nor injective on the set of morphisms. 

\begin{conjecture*}
(Conjecture \ref{thm:hom-finite}) If assumptions \ref{A-1} - \ref{A-4} hold and $\mathcal{C} = Rep(G)^{\infty}$ for an algebraic supergroup with reductive even part,, $\dim [X,Y] < \infty$ for any $X,Y \in \mathcal{T}$.
\end{conjecture*}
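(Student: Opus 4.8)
The plan is to combine a dévissage to irreducible objects with the formula $\Hom_{Ho\mathcal C}(X,Y)\simeq \Hom_{\mathcal C}(QX,Y)/\!\sim$ of Theorem~\ref{thm:homotopy-stable}, applied to a cofibrant replacement $QL(\mu)$ small enough to control. First I would reduce to the case $X=L(\mu)$, $Y=L(\lambda)$ irreducible. Since the model structure on $\mathcal C$ comes from a cotorsion pair and hence is an abelian model structure, a short exact sequence $0\to X'\to X\to X''\to 0$ in $\mathcal T$ is a conflation in the Frobenius category $\mathcal C$ and maps under $\gamma$ to a distinguished triangle $\gamma X'\to \gamma X\to \gamma X''\to$ in the triangulated category $Ho\mathcal C$. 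Thus $[\,\cdot\,,Y]$ and $[X,\,\cdot\,]$ are cohomological, and exactness of $[X'',Y]\to[X,Y]\to[X',Y]$ (and of the analogous sequence in the second variable) gives $\dim[X,Y]\le \dim[X',Y]+\dim[X'',Y]$; inducting on $\ell(X)+\ell(Y)$, $\ell$ the composition length, reduces everything to $\dim[L(\mu),L(\lambda)]<\infty$ for irreducibles.

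The typical case is immediate. If $L(\mu)$ is typical it is projective and injective in $\mathcal T$, hence in $\mathcal C=Rep(G)^{\infty}$; as $Res$ has the exact adjoints $Coind$ and $Ind$ it preserves projectives and injectives, so $U(L(\mu))$ is projective-injective in $\mathcal D$, i.e.\ stably trivial, whence $\gamma L(\mu)=0$ and $[L(\mu),L(\lambda)]=0$; the symmetric argument handles $\lambda$ typical. (One may also note, using Theorem~\ref{thm:cofibrant} to write $QL(\mu)=I\oplus Z$ with $I$ injective and $Z\in\mathcal C^{\leq\mu}$, that $[L(\mu),L(\lambda)]=0$ unless $\lambda\le\mu$, and then invoke Theorems~\ref{thm:main} and~\ref{thm:end(1)} for $\mu<\lambda$ and $\mu=\lambda$; this still leaves $\mu>\lambda$.) So it remains to bound $[L(\mu),L(\lambda)]$ with $\mu,\lambda$ both atypical.

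For atypical $\mu$, Lemma~\ref{lem:existence-minimal-model} supplies the minimal model $\Omega L(\mu)$, a cofibrant replacement of $L(\mu)$ lying in $\mathcal C_+^{pol}$: it carries an exhaustive degree filtration $F_\bullet$ (section~\ref{sec:degree-filtration}) with $F_k=0$ for some $k$, each $F_i$ finite dimensional, and $gr_i^F\Omega L(\mu)=\bigoplus_{\deg(\nu)=i}V(\nu)^{\oplus a_\nu(i)}$ subject to a polynomial bound $\sum_\nu a_\nu(i)\dim V(\nu)<C\cdot P(i)$, which in particular forces only finitely many $\nu$ to occur in each $gr_i^F$. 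Put $d=\deg(\lambda)$. Since each Kac module $V(\nu)$ has simple head $L(\nu)$, a dévissage along the Kac filtration shows $\Hom_{\mathcal C}(-,L(\lambda))$ annihilates any object built from Kac modules of degree $\ne d$; in particular $\Hom_{\mathcal C}(F_{d-1},L(\lambda))=0$ and $\Hom_{\mathcal C}(F_i/F_d,L(\lambda))=0$ for all $i$. Hence every map $F_i\to L(\lambda)$ kills $F_{d-1}$, and from $0\to gr_d^F\to F_i/F_{d-1}\to F_i/F_d\to 0$ we obtain
\[ \Hom_{\mathcal C}(F_i,L(\lambda))=\Hom_{\mathcal C}(F_i/F_{d-1},L(\lambda))\hookrightarrow \Hom_{\mathcal C}(gr_d^F,L(\lambda))=k^{a_\lambda(d)}, \]
a bound independent of $i$. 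As $\Omega L(\mu)=\varinjlim_i F_i$ and $\Hom_{\mathcal C}(-,L(\lambda))$ turns this colimit into a limit, $\dim\Hom_{\mathcal C}(\Omega L(\mu),L(\lambda))\le a_\lambda(d)\le C\cdot P(d)<\infty$, so $[L(\mu),L(\lambda)]$, a quotient of this space, is finite dimensional, and combined with the reductions above this yields $\dim[X,Y]<\infty$ for all $X,Y\in\mathcal T$.

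The only non-formal ingredient — and the reason this is merely a conjecture — is the input just used: that every atypical irreducible admits a cofibrant replacement of polynomial growth, equivalently that its minimal model exists and lies in $\mathcal C_+^{pol}$. For $\mathcal C=Rep(GL(m|n))^{\infty}$ this is precisely the degree-filtration machinery surrounding Lemma~\ref{lem:existence-minimal-model}, and the argument above then goes through verbatim. For a general algebraic supergroup $G$ with reductive $G_0$ and an embedded $G_0\subset H\subset G$ one has no a priori control on the growth of the stably trivial resolution producing the cofibrant replacement, and a replacement in $\mathcal C^{\leq\mu}$ furnished by Theorem~\ref{thm:cofibrant} alone may have $L(\lambda)$ in its head with infinite multiplicity; the crux is therefore to establish the polynomial bound $\dim gr_i^F<C\cdot P(i)$, presumably out of assumptions~\ref{A-1}--\ref{A-4} by a finiteness analysis of the relevant weight-space combinatorics. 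Once this is available, the rest of the proof is formal.
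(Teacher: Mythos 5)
You are right that this statement is only a conjecture, and the paper likewise offers no complete proof: it gives a reduction argument in the remark following Conjecture \ref{thm:hom-finite} and then verifies the $GL(m|n)$ case separately. Your route through both halves is legitimate but genuinely different from the paper's. For the d\'evissage, the paper first uses rigidity of $Ho\mathcal{T}$ to rewrite $[X,Y]\simeq[X\otimes Y^{\vee},\one]$ and then devisses only the first variable by the socle triangle; your two-variable induction on $\ell(X)+\ell(Y)$, using that $\gamma$ sends conflations to distinguished triangles, is an equally valid and slightly more self-contained alternative. The more substantial divergence is in the irreducible case. The paper leans directly on assumptions \ref{A-1}--\ref{A-4}: the triangle $K\to K_+(\lambda)\to L(\lambda)\to K[1]$ with $K\in\mathcal{C}^{<\lambda}$, combined with Theorem \ref{thm:translation} (the shift lowers weight) and interval finiteness of the poset, sets up an induction on weights that reduces everything to $[L,\one]$ for irreducible $L$ of weight incomparable to $w(\one)$, the term $[K_+(\lambda),\one]$ being finite because $K_+(\lambda)$ is cofibrant, clean and finite dimensional. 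Your argument never uses $K_\pm(\lambda)$ at all; instead you pass to the minimal model of Lemma \ref{lem:existence-minimal-model} and bound $\Hom_{\mathcal C}(\Omega L(\mu),L(\lambda))$ by the multiplicity of $V(\lambda)$ in one graded piece of the degree filtration. That computation is correct, and it recovers the paper's estimate $\dim[L(\lambda),L(\mu)]\le[QL(\lambda):V(\mu)]$ from the introduction; note that you need less than polynomial growth --- finiteness of the single multiplicity $a_\lambda(d)$, i.e.\ membership in $\mathcal{C}_+^{fin}$, already suffices.

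One caveat on where you locate the gap. The degree filtration and the categories $\mathcal{C}_+^{pol}\subset\mathcal{C}_+^{fin}$ are not part of the axiomatic framework of \ref{A-1}--\ref{A-4}: they are constructed in Part II only for $GL(m|n)$, and their existence rests on the Brundan--Stroppel computation of $Ext^1$ between Kac modules. So the missing ingredient in the general case is not merely a growth bound on an already-existing filtration, but the existence of any canonical degree-type filtration in the first place; even formulating your hypothesis requires structure the axioms do not provide. The paper's own proof for $GL(m|n)$ also avoids minimal models entirely: Theorem \ref{thm:good-replacement} produces, for $QX=\Omega\otimes X$ and any $Y\in\mathcal{T}$, a finite-codimension subobject $K'\subset Kern(q_X)$ with $\Hom_{\mathcal C}(K',Y)=0$ by a crude weight bound, so every class in $[X,Y]=\Hom_{\mathcal C}(QX,Y)/\!\sim$ factors through the finite-dimensional $QX/K'$; this yields finiteness for all pairs at once, with no reduction to irreducibles needed.
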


This conjecture is a theorem in the $GL(m|n)$-case.

\subsection{The $GL(m|n)$-case} \label{sec:intro-gl} In part II of our paper we study the aforementioned case of the Frobenius pair $(P(m|n)^+,GL(m|n))$. Our main technical result theorem \ref{thm:good-replacement} is that for any object $X$ in $\mathcal T$ there exists 
a cofibrant replacement $q_X:QX \to X$ with particularly nice properties.

\begin{theorem*} (Theorem \ref{thm:good-replacement}) For any object $X$ in $\mathcal T$ there exists 
a cofibrant replacement $q_X:QX \to X$ in $\mathcal C$ with the following property:
For any $Y$ in $\mathcal T$ there exists a subobject $K' \in QX$ of finite codimension
contained
in $Kern(q_X)$ such that $K'\in \mathcal C_-$ and such that  $Hom_{\mathcal C}(K',Y) =0$.
\end{theorem*}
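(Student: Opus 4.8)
The plan is to construct $QX$ explicitly using the Kac-module machinery available in the $GL(m|n)$-case, and then to control its kernel weight by weight. Start from any cofibrant replacement $0 \to A_0 \to Q_0X \to X \to 0$ with $Q_0X \in \mathcal{C}_+^{\infty}$ and $A_0 \in \mathcal{C}_-^{\infty}$, which exists by the general model-category formalism (Theorem \ref{thm:homotopy-stable} and the discussion of cofibrant replacements). Using the degree filtration on $Q_0X$ from Lemma \ref{degree-filtration}, the graded pieces $gr_i^F(Q_0X)$ are direct sums of Kac modules $V(\lambda)$ with $\deg(\lambda) = i$; by Theorem \ref{thm:good-replacement}-style control (or by passing to the minimal model from Lemma \ref{lem:existence-minimal-model}) one arranges that only finitely many Kac modules appear in each degree and that the whole replacement sits in $\mathcal{C}_+^{pol}$. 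The key point is that the kernel $A_0 = \mathrm{Kern}(q_X)$ then also acquires a degree filtration with finite-dimensional graded pieces $gr_i^F(A_0)$, each a finite direct sum of anti-Kac modules $V(\mu)^*$.

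Next I would exhibit the required finite-codimension subobject $K'$. Fix $Y \in \mathcal{T}$; it has finitely many weights, bounded above by some weight $w$ in the ordering on $X(T)$. The idea is that $\mathrm{Hom}_{\mathcal{C}}(V(\mu)^*, Y) \neq 0$ forces a weight of $V(\mu)^*$ — in particular the highest weight $\mu$ — to be $\leq$ some weight of $Y$, hence to lie in a bounded region of the weight lattice. Since $A_0$ has a filtration by the $V(\mu)^*$ and the degree filtration exhausts $A_0$, only finitely many of the anti-Kac summands $V(\mu)^*$ occurring in $A_0$ can have a nonzero map to $Y$; call their span (inside $A_0$, and hence inside $QX$) the ``bad part''. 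Because each $gr_i^F(A_0)$ is finite-dimensional and the bad anti-Kac modules are confined to finitely many degrees $i$, the bad part is finite-dimensional, so its complementary subobject $K' \subseteq A_0 \subseteq QX$ — obtained as the sum of all the remaining terms of a fixed anti-Kac filtration, or as the preimage of a cofinite subfiltration — has finite codimension in $QX$. By construction $K' \in \mathcal{C}_-$ (it is an inductive limit of anti-Kac objects, a subobject of $A_0 \in \mathcal{C}_-$, and $\mathcal{C}_-$ is closed under the relevant subobjects) and $\mathrm{Hom}_{\mathcal{C}}(K', Y) = 0$ since every anti-Kac constituent of $K'$ kills $Y$ and $\mathrm{Hom}$ out of an inductive limit with such filtration is controlled by its layers (using $\mathrm{Ext}^1$-vanishing $\mathrm{Ext}^1(\mathcal{C}_+, \mathcal{C}_-) = 0$, or more directly left-exactness of $\mathrm{Hom}(-,Y)$ together with the injectivity/projectivity of the layers).

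I expect the main obstacle to be the bookkeeping needed to pass from ``$\mathrm{Hom}(V(\mu)^*, Y) \neq 0$ only for finitely many $\mu$'' at the level of associated graded to an actual \emph{subobject} $K'$ of $QX$ of finite codimension lying entirely in $\mathrm{Kern}(q_X) = A_0$ and lying in $\mathcal{C}_-$. The subtlety is that a degree- or anti-Kac-filtration need not split, so one cannot simply ``delete'' the bad summands; instead one must choose a filtration of $A_0$ refining the degree filtration whose bottom finitely many terms absorb all bad anti-Kac layers, and then take $K'$ to be $A_0$ modulo nothing but rather the union of the remaining subfiltration — equivalently, show that the quotient $A_0 / K'$ (which collects the bad layers) is finite-dimensional, and that $\mathrm{Hom}(K',Y)=0$. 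Verifying that $\mathrm{Hom}(K',Y)$ genuinely vanishes (not merely that each graded piece does) requires the left-exactness argument plus the fact that $Y$, being finite-dimensional, commutes with the filtered colimit; this is where the hypothesis $Y \in \mathcal{T}$ rather than $Y \in \mathcal{C}$ is essential and must be used carefully. A secondary technical point is ensuring that the cofibrant replacement can be chosen in $\mathcal{C}_+^{pol}$ with finite-dimensional degree-graded pieces in the first place, which is exactly the content one imports from the degree-filtration and minimal-model results cited in the introduction.
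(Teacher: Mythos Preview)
Your outline has the right flavour (bound the kernel by weights/degrees and cut below the bound), but it misses the paper's organising idea and leaves real gaps.

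The paper does \emph{not} analyse a general cofibrant replacement of $X$. It first builds, once and for all, an explicit cofibrant replacement $q:\Omega\to\one$ of the unit (Lemma~\ref{thm:cofib-of-1}), whose kernel $K$ carries an explicit descending chain $K=K^{(0)}\supset K^{(1)}\supset\cdots$ with $K^{(i)}\cong K\otimes(I\otimes I^*)^{\otimes i}$, where $I=\ker(V(\one)\to\one)$. Because every irreducible constituent of $K$ and of $I\otimes I^*$ has strictly negative weight, for each $Y\in\mathcal T$ one gets $\Hom_{\mathcal C}(K^{(n)},Y)=0$ for $n$ large. The theorem for arbitrary $X$ is then one line: set $QX=\Omega\otimes X$, $q_X=q\otimes\id_X$, and $K'=K^{(n)}\otimes X$ with $n$ chosen so that $\Hom_{\mathcal C}(K^{(n)},X^\vee\otimes Y)=0$. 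That $K'\in\mathcal C_-$ is automatic since $\mathcal C_-$ is a tensor ideal, and finite codimension is clear since $K/K^{(n)}\in\mathcal T$. The reduction to $X=\one$ via the monoidal structure is exactly what makes the argument short; you never use it.

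Against that, two concrete gaps in your sketch:

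\textbf{(i)} To get finite-dimensional degree layers in $Q_0X$ you invoke ``Theorem~\ref{thm:good-replacement}-style control'', which is circular, and the fallback to Lemma~\ref{lem:existence-minimal-model} only supplies minimal models for \emph{irreducible} $X$. Without this finiteness you cannot conclude that the ``bad'' anti-Kac layers have finite total dimension, and the finite-codimension claim for $K'$ fails.

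\textbf{(ii)} Your passage from ``each anti-Kac layer of $K'$ satisfies $\Hom(-,Y)=0$'' to ``$\Hom_{\mathcal C}(K',Y)=0$'' is not justified by the tools you cite. Left-exactness of $\Hom(-,Y)$ lets you propagate vanishing along an \emph{ascending} exhaustive filtration, but the degree filtration you inherit on $A_0\in\mathcal C_-$ is descending, and the ``complementary subobject'' you describe does not exist when the filtration fails to split. One can repair this (produce an ascending anti-Kac filtration of $K'$, use that costandard multiplicities are filtration-independent, then take the colimit), but you do none of it, and the appeal to $\Ext^1(\mathcal C_+,\mathcal C_-)=0$ is irrelevant here since $Y$ is not assumed to lie in $\mathcal C_-$. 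The paper's direct weight bound on $K^{(n)}$ (all constituents lie below any prescribed threshold) sidesteps this entirely.
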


Of course this is proven by giving an explicit construction of a cofibrant replacement $q: \Omega \to \one$ for the trivial representation in lemma \ref{thm:cofib-of-1} and then forming $QX = \Omega \otimes X$. An immediate consequence is the finite dimensionality of $[X,Y]$ for any $X,Y \in \mathcal{T}$.  

\medskip\noindent
We already explained in the beginning of the introduction that these cofibrant replacements should be seen as an abstract version of resolutions of objects by Kac objects. Similarly the dimension of the Hom space $[L(\lambda),L(\mu)]$ has a more concrete interpretation in the $GL(m|n)$-case.  Note that \[ [L(\lambda),L(\mu)] = Hom_{\mathcal C_+}(QL(\lambda),L(\mu)) = Hom_{\mathcal{C}}(QL(\lambda),L(\mu)) \] if $QL(\lambda)$ is clean by corollary \ref{thm:hom-formula}. Using that $QL(\lambda) \in Ind(\mathcal T_+)$ and that \[ Hom_{\mathcal{C}}(V(\lambda),V(\mu)^*) = \delta_{\lambda\mu}k \] by \cite[Proposition 3.6.2]{Germonie}, one  proves \[ \dim Hom_{\mathcal C}(QL(\lambda),V(\mu)^*) = [QL(\lambda):V(\mu)],\] the latter being the multiplicity of $V(\mu)$ in $QL(\lambda)$. Since every morphism of $QL(\lambda) \to L(\mu)$ extends via $L(\mu) \hookrightarrow V(\mu)^*$ to a morphism to $V(\mu)^*$, we obtain \[ \dim [L(\lambda),L(\mu)] \ \leq \ [QL(\lambda):V(\mu)].\] We do not know any direct representation theoretic interpretation of $\dim [L(\lambda),L(\mu)]$.


\subsection{A second interpretation of $Ho\mathcal{T}$} The construction of well-behaved cofibrant replacements of objects in $\mathcal{T} = \mathcal{T}_{m|n}$ also enables us to give a different interpretation of $Ho \mathcal{T}$:

\begin{theorem*} (Theorem \ref{thm:gl-m-n-homotopy}) $Ho \mathcal{T}$ is equivalent as a tensor category to the Verdier quotient of the stable category $\overline{\mathcal{T}}$ by the thick tensor ideal $\mathcal{T}_-$ of anti Kac modules.
\end{theorem*}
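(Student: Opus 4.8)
The plan is to produce a tensor--triangulated functor $\overline{\mathcal{T}}/\mathcal{T}_-\to Ho\mathcal{T}$ and then show it is an equivalence by a computation of Hom groups between finite dimensional objects.

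First I would build the comparison functor. The composite $\mathcal{T}\hookrightarrow\mathcal{C}\xrightarrow{\gamma}Ho\mathcal{C}$ is a tensor functor by Theorem \ref{thm:monoidal}, and it is exact in the sense appropriate to a stable model structure --- short exact sequences in $\mathcal{T}$ go to distinguished triangles --- because by Theorem \ref{thm:homotopy-stable} the triangulation of $Ho\mathcal{C}=\mathcal{C}_+/P_{\mathcal{C}}$ is the one induced from conflations in the Frobenius category $\mathcal{C}$ and $\gamma$ respects it. Since the projective--injective objects of $\mathcal{T}$ are tilting, hence anti Kac, hence contained in $\mathcal{T}_-\subseteq\mathcal{C}_-=\ker\gamma$, this composite factors through the stable category $\overline{\mathcal{T}}$, giving a tensor--triangulated functor $G\colon\overline{\mathcal{T}}\to Ho\mathcal{C}$. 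I would then record that the image of $\mathcal{T}_-$ in $\overline{\mathcal{T}}$ is a thick tensor ideal --- a tensor ideal because $V(\lambda)^*\otimes Z$ has an anti Kac filtration for every $Z\in\mathcal{T}$, and thick because anti Kac objects are closed under direct summands and $Proj_{\mathcal{T}}\subseteq\mathcal{T}_-$ --- and that $G$ annihilates it because $\mathcal{T}_-\subseteq\ker\gamma$. The universal property of the Verdier quotient then yields a tensor--triangulated functor $\Phi\colon\overline{\mathcal{T}}/\mathcal{T}_-\to Ho\mathcal{C}$ whose essential image lies in $Ho\mathcal{T}$ and contains $\gamma(\mathcal{T})$. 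Because the objects of $\overline{\mathcal{T}}/\mathcal{T}_-$ are exactly the objects of $\mathcal{T}$, once $\Phi$ is shown to be fully faithful its essential image is automatically a triangulated tensor subcategory of $Ho\mathcal{C}$ containing $\gamma(\mathcal{T})$, hence equal to $Ho\mathcal{T}$; and the tensor structure transports for free since $\Phi$ was built monoidally. So the theorem reduces to showing that
\[ \Hom_{\overline{\mathcal{T}}/\mathcal{T}_-}(X,Y)\ \longrightarrow\ \Hom_{Ho\mathcal{C}}(X,Y)=[X,Y] \]
is bijective for all $X,Y\in\mathcal{T}$.

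The conceptual input I would use is that a morphism $f$ of $\mathcal{T}$ is a weak equivalence for the model structure on $\mathcal{C}$ exactly when $Res_{P(m|n)^+}(f)$ is a stable isomorphism --- equivalently, since $Res_{P(m|n)^+}$ is exact and $\mathcal{T}_-=\{M:Res_{P(m|n)^+}M\text{ projective}\}$, exactly when the cone of $f$ in $\overline{\mathcal{T}}$ lies in $\mathcal{T}_-$. Thus $\overline{\mathcal{T}}/\mathcal{T}_-$ is the localization of $\overline{\mathcal{T}}$ at precisely the morphisms internal to $\mathcal{T}$ that $\gamma$ inverts, and the real content of the theorem is that these fractions may be computed inside $\mathcal{T}$ --- which is exactly what the good cofibrant replacements of Theorem \ref{thm:good-replacement} supply. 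Concretely, for fixed $X,Y\in\mathcal{T}$ I would take a clean cofibrant replacement $q_X\colon QX\to X$ adapted to $Y$ as in Theorem \ref{thm:good-replacement}: a subobject $K'\subseteq Kern(q_X)$ of finite codimension in $QX$ with $K'\in\mathcal{C}_-$ and $\Hom_{\mathcal{C}}(K',Y)=0$. Setting $\bar X:=QX/K'$, a finite dimensional module, $q_X$ factors as a surjection $p\colon\bar X\to X$ whose kernel $N:=Kern(q_X)/K'$ is finite dimensional, and the explicit form $QX=\Omega\otimes X$ together with the degree filtration of $\Omega$ from Lemma \ref{thm:cofib-of-1} lets one arrange that moreover $N\in\mathcal{T}_-$; then $p$ is a weak equivalence by the previous observation, so $[X,Y]=\Hom_{Ho\mathcal{C}}(\bar X,Y)$, and using Corollary \ref{thm:hom-formula} (clean replacements compute $[X,Y]$ as an honest Hom group), the vanishing $\Hom_{\mathcal{C}}(K',Y)=0$, and the fact that a map between finite dimensional objects factors through a projective--injective of $\mathcal{C}$ exactly when it factors through one of $\mathcal{T}$, this collapses to $[X,Y]=\Hom_{\overline{\mathcal{T}}}(\bar X,Y)$. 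On the other side $p$ is an isomorphism $\bar X\xrightarrow{\sim}X$ in $\overline{\mathcal{T}}/\mathcal{T}_-$, and the canonical map $\Hom_{\overline{\mathcal{T}}}(\bar X,Y)\to\Hom_{\overline{\mathcal{T}}/\mathcal{T}_-}(\bar X,Y)$ is bijective: surjectivity is a straightening of a roof $\bar X\xleftarrow{s}Z\to Y$ with cone of $s$ in $\mathcal{T}_-$, refining $s$ by a good replacement of $Z$ invisible to $Y$; injectivity says a map $\bar X\to Y$ that factors through an anti Kac object and dies in the quotient already dies in $\overline{\mathcal{T}}$, again by invisibility. Chasing the identifications, this composite of isomorphisms is the map induced by $\Phi$, and carrying the construction out compatibly with composition upgrades it to a natural isomorphism of Hom functors, so $\Phi$ is an equivalence.

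The hard part will be the choice of $K'$ in the last paragraph: it must simultaneously have finite codimension in $QX$, be invisible to the prescribed $Y$, lie in $\mathcal{C}_-$, and leave behind an anti Kac error term $Kern(q_X)/K'$. This quadruple requirement is what allows the single module $\bar X$ to serve both as a representative computing $[X,Y]$ and as a Verdier roof over $X$; it is not furnished by any abstract ``localization commutes with restriction to finite dimensional objects'' principle but has to be read off from the explicit construction of $\Omega$ and its degree filtration in Lemma \ref{thm:cofib-of-1} and Theorem \ref{thm:good-replacement}. A secondary, more clerical difficulty is checking that the adapted replacements can be chosen compatibly with composition of morphisms, so that one obtains an isomorphism of functors rather than just a family of bijections; by contrast the tensor-functor part of the statement is automatic once $\Phi$ is known to be a monoidal equivalence (Theorem \ref{thm:monoidal}).
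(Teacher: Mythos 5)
Your proposal is correct and follows essentially the same route as the paper: construct the comparison functor out of the Verdier quotient via the universal property (the paper writes it explicitly on roofs), and then prove full faithfulness by using the adapted cofibrant replacement of Theorem \ref{thm:good-replacement} to replace $QX$ by the finite-dimensional quotient $QX/K'$, which is exactly the object $Z'$ appearing in the paper's argument. Your identification of the quadruple requirement on $K'$ as the crux, supplied by the explicit $\Omega$ and its filtration rather than by abstract localization principles, matches the paper's reasoning.
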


This theorem should not be read as the statement that we should view $Ho\mathcal{T}$ simply as that Verdier quotient. Instead both interpretations have their advantages. While theorem \ref{thm:gl-m-n-homotopy} gives a more concrete description of $Ho\mathcal{T}$, the important cofibrant replacements (i.e. the infinite resolutions by Kac objects) live naturally in the model theoretic interpretation.

\medskip\noindent
The category $Ho{\mathcal T}$ is a $k$-linear rigid symmetric monoidal category with
$End_{Ho{\mathcal T}}(\one)=k$, and the tensor ideal of negligible morphisms
$\mathcal N$ (the largest proper tensor ideal of $Ho\mathcal{T}$) is defined (see section \ref{sec:semisimple}).

\begin{theorem*} (Theorem \ref{thm:semisimple}) For $GL(m|n)$ the quotient $Ho \mathcal T / \mathcal N$ is the semisimple representation category of an affine supergroup scheme.
\end{theorem*}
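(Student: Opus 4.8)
The plan is to use the Tannakian formalism: a $k$-linear, rigid, symmetric monoidal, abelian (or pseudo-abelian and semisimple) category with $\End(\one) = k$ is equivalent to the representation category of an affine (super)group scheme once we know it is semisimple with finitely many or at least enough simple objects. So the strategy divides into two parts: (i) show that $Ho\mathcal{T}/\mathcal{N}$ is a semisimple abelian symmetric monoidal category, and (ii) invoke a super-Tannakian reconstruction theorem (e.g. Deligne's theorem for super-Tannakian categories, or the version over an algebraically closed field of characteristic $0$) to identify it with $\Rep$ of an affine supergroup scheme.

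First I would record the structural facts about $Ho\mathcal{T}$ already established earlier in the paper: it is $k$-linear, rigid, symmetric monoidal with $\End_{Ho\mathcal{T}}(\one) = k$, and by the finiteness theorem the Hom-spaces $[X,Y]$ are finite-dimensional. Then I would recall the general principle (due to André--Kahn, and going back to the construction of the category of "motives modulo numerical equivalence") that for a rigid symmetric monoidal category $\mathcal{A}$ over a field with finite-dimensional Hom-spaces and $\End(\one)=k$, the quotient $\mathcal{A}/\mathcal{N}$ by the ideal of negligible morphisms is semisimple; more precisely, when $\mathcal{A}$ is additionally Karoubian (or after Karoubi completion), $\mathcal{A}/\mathcal{N}$ is an abelian semisimple category. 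I would check that the hypotheses are met here: negligible morphisms form a tensor ideal because the categorical trace is well-defined (rigidity plus $\End(\one)=k$), and $\mathcal{N}$ is the unique maximal proper tensor ideal. The key input is that a morphism which is not negligible has, after composing suitably, a nonzero trace, which forces idempotents to split off genuine summands — this is exactly the standard argument that the quotient is semisimple.

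Next I would verify the symmetric monoidal structure descends to the quotient and that the quotient functor is a tensor functor (clear, since $\mathcal{N}$ is a tensor ideal), so $Ho\mathcal{T}/\mathcal{N}$ is again $k$-linear rigid symmetric monoidal with $\End(\one)=k$, and now semisimple abelian. At this point I would apply Deligne's super-Tannakian reconstruction: a $k$-linear (with $k$ algebraically closed of characteristic $0$) abelian rigid symmetric monoidal category with $\End(\one)=k$ in which every object has finite length and — crucially — which satisfies the condition that the length/dimension of objects is suitably bounded (Deligne's condition: each object is annihilated by a Schur functor, equivalently the category is of "moderate growth") is equivalent to the category of representations of an affine supergroup scheme. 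For a semisimple category this moderate growth condition should follow from the finite-dimensionality already present, or can be checked directly on the simple objects coming from $\mathcal{T}_{m|n}$ since their (super)dimensions are integers. I would also note $\End(\one)=k$ remains true in the quotient since $\one$ has no negligible endomorphisms (the identity has trace $= \dim \one = 1 \ne 0$).

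The main obstacle I anticipate is verifying Deligne's finiteness/moderate-growth hypothesis for $Ho\mathcal{T}/\mathcal{N}$, i.e. that every object is killed by some Schur functor, since a priori the objects of $Ho\mathcal{T}$ are infinite resolutions and it is not immediately obvious that their images in the semisimple quotient behave like finite-dimensional superspaces. I expect this to be handled by showing that the simple objects of the quotient are precisely (quotients of) the $\gamma(L(\lambda))$ for $L(\lambda)$ non-negligible in $\mathcal{T}_{m|n}$, and that the tensor product decomposition in the quotient is controlled by that in $Rep(GL(m|n))$ modulo negligibles; then the bound on Schur functors transfers from the classical superdimension bound. A secondary point requiring care is confirming that $Ho\mathcal{T}$ (before quotienting) is genuinely rigid and that taking duals is compatible with $\gamma$ — this should follow from $\gamma$ being a tensor functor and $\mathcal{T}_{m|n}$ being rigid, but one must ensure the cofibrant-replacement machinery does not destroy rigidity, which is presumably addressed when the monoidal model structure is set up (Theorem \ref{thm:monoidal}).
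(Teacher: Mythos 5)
Your proposal contains a genuine gap at the crucial step. You assert as a "general principle (due to André--Kahn)" that for a rigid symmetric monoidal $k$-category $\mathcal{A}$ with finite-dimensional Hom-spaces and $\End(\one)=k$, the quotient $\mathcal{A}/\mathcal{N}$ is automatically semisimple. This is false as a general statement, and it is not what André--Kahn prove. What André--Kahn prove (\cite[7.1.4]{Andre-Kahn-nilpotence}, recalled in the paper) is only that $\mathcal{N}$ is the largest proper tensor ideal, and that \emph{if} some tensor quotient is semisimple then it must be the one by $\mathcal{N}$; they do \emph{not} prove that the quotient always is semisimple. The correct criterion --- \cite[Théorème 1]{Andre-Kahn-erratum}, stated in the paper as Proposition \ref{ak-semisimplicity-criterion} --- requires, in addition to the hypotheses you list, the existence of a $k$-linear tensor functor $H:\mathcal{A}\to\mathcal{V}$ into an \emph{abelian} rigid symmetric monoidal category with finite-dimensional Homs in which traces of nilpotent endomorphisms vanish. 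Semisimplicity of $\mathcal{A}/\mathcal{N}$ then follows from $\mathcal{R}\subset\mathcal{N}$, which this criterion delivers. Your proposal never constructs such a functor $H$, so the semisimplicity step is unsupported.

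The paper's proof supplies exactly this missing ingredient, and it is the substantive content of the theorem. It proceeds by (1) explicitly computing, by hand, that $Ho\mathcal{T}_{m|1}/\mathcal{N}$ is a semisimple supertannakian category (section \ref{sec:gl-m-1-semisimple}), using the concrete classification of indecomposables for $GL(m|1)$; (2) building a restriction functor $\mathrm{res}:Ho\mathcal{T}_{m|n}\to Ho\mathcal{T}_{m-n+1|1}$ from the outer-block embedding $GL(m-n+1|1)\hookrightarrow GL(m|n)$ (section \ref{sec:restriction}); and (3) taking $H$ to be the composite $Ho\mathcal{T}_{m|n}\to Ho\mathcal{T}_{m-n+1|1}\to Ho\mathcal{T}_{m-n+1|1}/\mathcal{N}$, which lands in a semisimple supertannakian (hence abelian, finite-dimensional-Hom, nilpotent-trace-vanishing) category. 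Only then does the André--Kahn criterion give $\mathcal{R}\subset\mathcal{N}$ and semisimplicity. Your concluding paragraphs --- descent of the monoidal structure, Schur finiteness inherited via the tensor functor $\gamma$, rigidity of $Ho\mathcal{T}$ from Theorem \ref{thm:monoidal} / Lemma \ref{lem:rigidity}, Deligne's super-Tannakian theorem --- are all correct and match the paper, but the core reduction to the $GL(m|1)$ base case via restriction is absent, and without it the argument does not close.
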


Parts of our motivation to study $Ho\mathcal{T}$ comes from our search of understanding the complicated monoidal structure of $\mathcal{T}_{m|n}$. Recent results in this area include the classification of thick ideals of $\mathcal{T}_{m|n}$ in \cite{BKN-spectrum}, a semisimplicity theorem about  the Duflo-Serganova functor \cite{Heidersdorf-Weissauer-tensor}, a structural computation of tensor products up to superdimension $0$  \cite{Heidersdorf-semisimple} \cite{Heidersdorf-Weissauer-tannaka} and explicit tensor product decompositions for special classes of representations \cite{Heidersdorf-mixed-tensors}. One of the interesting aspects of the homotopy category is that it is the natural habitat of the Duflo-Serganova cohomology functor $DS: \mathcal{T}_{m|n} \to \mathcal{T}_{m-1|n-1}$ and its extension $DS: \mathcal{C}_{m|n} \to \mathcal{C}_{m-1|n-1}$ to the ind completion. Indeed the $DS$ functor factorizes over the homotopy category and induces a tensor functor (see section \ref{sec:DS})  \begin{align*} DS:& Ho {\mathcal C}_{m|n} \to Ho {\mathcal C}_{m-1|n-1}\\ DS:& Ho {\mathcal T}_{m|n} \to Ho {\mathcal T}_{m-1|n-1}\end{align*} which might allow us to study the homotopy categories or their semisimple quotients inductively. Despite all these results the overall understanding of the tensor category $\mathcal{T}$ is rather poor. Passing to $Ho\mathcal{T}$ simplifies the monoidal structure at the prize of complicating the category, as the following special case shows.


\subsection{The $GL(m|1)$-case} In the final sections we look at the $GL(m|1)$-case. We compute morphism spaces between irreducible objects and use this to determine the indecomposable objects in $Ho \mathcal{T}_{m|1}$. In the homotopy category $Ho \mathcal{T}_{m|1}$ every indecomposable representation of non-vanishing superdimension becomes isomorphic to an irreducible representation. Hence the irreducible representations in $Ho{\mathcal T}/\mathcal{N}$ are parametrized by the atypical weights. Therefore $Ho \mathcal{T}/ \mathcal{N}$ agrees with the quotient $\mathcal{I}_{m|1}/\mathcal{N}$ where $\mathcal{I}_{m|1}$ is the full tensor subcategory of $Rep(GL(m|1))$ of direct summands in iterated tensor products of irreducible representations. The latter has been determined in \cite{Heidersdorf-semisimple} and we obtain  

\begin{proposition*} (Proposition \ref{thm:gl-m-1-group}) $Ho{\mathcal T}/\mathcal{N}$ is monoidal equivalent to the super representations of $GL(m|1) \times GL(1)$. 
\end{proposition*}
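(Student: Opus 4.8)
The plan is to leverage the prior reductions described in the excerpt, which identify $Ho{\mathcal T}/{\mathcal N}$ with the quotient ${\mathcal I}_{m|1}/{\mathcal N}$ of the tensor subcategory of $Rep(GL(m|1))$ generated under tensor products and summands by the irreducibles. So the task reduces to reading off the answer for ${\mathcal I}_{m|1}/{\mathcal N}$ from the structural results in \cite{Heidersdorf-semisimple}. First I would recall, in the $GL(m|1)$-case, the classification of atypical irreducibles: every atypical block has a single atypical weight up to the action of relevant symmetries, and the irreducible $L(\lambda)$ in each atypical block is (up to parity shift and one-dimensional twists) controlled by a small amount of combinatorial data. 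The key point established earlier in the paper is that in $Ho{\mathcal T}_{m|1}$ every indecomposable of non-vanishing superdimension becomes isomorphic to an irreducible; hence modulo ${\mathcal N}$ the simple objects of the quotient are exactly indexed by the atypical weights, together with the typical part which is already semisimple.

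Next I would identify the monoidal structure. The strategy is to exhibit a tensor functor from $Rep(GL(m|1)\times GL(1))$ to $Ho{\mathcal T}/{\mathcal N}$ and show it is an equivalence by checking it is essentially surjective, fully faithful, and symmetric monoidal. The extra $GL(1)$ factor should account for the one-dimensional determinant-type twists $\det^{\otimes j}$ and the $\bbZ$-grading by $q$-degree that survives in the homotopy category; the $GL(m|1)$ factor (really its super representations, so allowing the parity involution) should account for the remaining atypical data. Concretely, I would match simple objects: the simple objects of $Rep(GL(m|1)\times GL(1))$ are external tensor products $L(\mu)\boxtimes k_j$, and I would send these to the classes of the corresponding atypical irreducibles in $Ho{\mathcal T}/{\mathcal N}$, using the decomposition of tensor products of irreducibles in ${\mathcal I}_{m|1}$ up to superdimension $0$ from \cite{Heidersdorf-semisimple} to see that the fusion rules agree. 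Since both sides are semisimple ($Ho{\mathcal T}/{\mathcal N}$ is the representation category of an affine supergroup scheme by Theorem \ref{thm:semisimple}), full faithfulness reduces to matching $\dim Hom$ on simple objects, which is immediate once the objects are matched, and the monoidal comparison reduces to comparing decomposition of tensor products of simples on both sides.

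The main obstacle I expect is bookkeeping the precise parametrization: one must verify that the atypical weights of $GL(m|1)$, after collapsing each indecomposable to an irreducible and after quotienting by negligibles, are in bijection with the simple objects of $Rep(GL(m|1)\times GL(1))$ \emph{compatibly with the tensor product}, i.e. that the monoid of atypical-weight classes under fusion is exactly the character monoid of a group of the form $GL(m|1)\times GL(1)$ and not some other affine supergroup scheme with the same underlying category. This is where the explicit $GL(m|1)$ computations enter: one needs the tensor product decomposition of $L(\lambda)\otimes L(\mu)$ modulo superdimension-zero summands to recognize the group, and one must track carefully how the determinant twist and the degree grading interact to produce precisely one extra $GL(1)$ and no more. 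Once the simple-object dictionary and the fusion rules are pinned down, the equivalence of symmetric monoidal categories follows formally from semisimplicity and Tannakian reconstruction, so the essential content is the combinatorial identification via \cite{Heidersdorf-semisimple}.
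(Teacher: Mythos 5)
Your approach is essentially the paper's: both reduce to comparing the irreducible objects and fusion rules of $Ho\mathcal{T}/\mathcal{N}$ with those of the semisimplified tensor subcategory generated by irreducible $GL(m|1)$-modules (which \cite{Heidersdorf-semisimple} has already identified), and then lift the resulting isomorphism of Grothendieck semirings to a monoidal equivalence using semisimplicity; the paper delegates exactly that lifting step to \cite[Theorem 5.12]{Heidersdorf-semisimple}. One thing to flag, though: the group in question is $GL(m-1)\times GL(1)$, a purely even algebraic group, and ``super representations'' means $Rep\bigl(\mathbb{Z}/2\mathbb{Z}\ltimes(GL(m-1)\times GL(1)),\epsilon=(-1,e)\bigr)$. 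The ``$GL(m|1)\times GL(1)$'' in the quoted statement is a slip that the paper's body corrects, and your gloss about ``the $GL(m|1)$ factor (really its super representations, so allowing the parity involution)'' inherits that slip---$GL(m|1)$ is already a supergroup, so tacking on a parity involution to form its super representations does not parse. Concretely, the atypical blocks of $\mathcal{T}_{m|1}$ are in bijection with irreducible $GL(m-1)$-representations via mixed tensors, and the extra $GL(1)$ records the Berezinian twist; the $\mathbb{Z}/2\mathbb{Z}$ supplies the super grading. With that correction, your outline matches the paper.
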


The entire quotient $\mathcal{T}_{m|1}/\mathcal{N}$ is tensor equivalent to the super representations of $(GL(m-1) \times GL(1) \times GL(1))$. Hence passing to $Ho \mathcal{T}$ simplifies the complete quotient $\mathcal{T}_{m|1}/\mathcal{N}$, but retains all the information about the tensor products of irreducible representations. This might generalize to the $GL(m|n)$-case. In fact let $\mathcal{I}_{m|n}$ denote the full tensor subcategory of $\mathcal{T}_{m|n}$ of direct summands in iterated tensor products of irreducible representations and consider the full triangulated subcategory $Ho \mathcal{I}_{m|n}$ in $Ho \mathcal{T}_{m|n}$ generated by the image of $\mathcal{I}_{m|n}$. Then it seems plausible that \[ \mathcal{I}_{m|n}/ \mathcal{N} \simeq Ho \mathcal{I}_{m|n}/ \mathcal{N}. \] In particular each indecomposable object of non-vanishing superdimension in $\mathcal{I}_{m|n}$ would stay indecomposable in $Ho \mathcal{C}$. 

\subsection{Outlook} Apart from the last question the constructions and results in this article offer a number of interesting questions. It would be interesting to study such model structues for other basic supergroups such as $OSp(m|2n)$. Even in this case it is unclear what the appropriate replacements of $P$ and the Kac-modules would be. We discuss some further conjectures in Section \ref{sec:further-conj}. In particular we wonder whether for basic classical $G$ and $H$ satisfying $G_0 \subset H \subset G$ the quotient $Ho \mathcal T / \mathcal N$ is still the semisimple representation category of an affine supergroup scheme as in the $GL(m|n)$-case.

It would be interesting to understand the homotopy category geometrically. One possibility would be to understand the Balmer spectrum $Spc(Ho \mathcal T)$ \cite{Balmer}.

Maybe the most interesting question is whether a modified construction also defines a model structure (and hence a cotorsion pair) on the ind-category of $Rep(G)$ if $H \subset G$ are finite groups (considered as algebraic groups) over a field of characteristic $p \neq 2$ such that $|H|$ is not prime to $p$. We have not explored this further. The present construction uses some properties that are not satisfied in positive characteristic.

\subsection{Acknowledgements} The authors thank the referee of an earlier version for helpful remarks. Version 2 differs in minor ways from Version 1. In particular we added Lemma \ref{inj-lemma} and assumed $G_0$ pro-reductive in Section \ref{sec:frobenius}. Most importantly we assume characteristic 0 throughout. The work of Thorsten Heidersdorf was partially funded by the Deutsche Forschungsgemeinschaft (DFG, German Research Foundation) under Germany's Excellence Strategy - EXC-2047/1 - 390685813. 



\part{Induced model structures on categories of comodules}


\section{  Background on model categories}

\subsection{Model structures} For categories we assume that the morphisms between two objects form a set. A category is small if its objects define a set.  A category $\mathcal C$ has all small 
limits and colimits, if limits and colimits exist for all functors from small categories
to $\mathcal C$. A category $\mathcal C$ with small limits and colimits is a 
model category in the sense of \cite{Hovey} if $\mathcal C$ has a model structure.
A model structure consists of classes $\mathcal W, \mathcal{L}, \mathcal{R}$ of morphisms
(weak equivalences, cofibrations, fibrations)
such that if two of three morphisms $f,g,f\circ g$ are in $\mathcal W$ also the third is in $\mathcal W$.

We require the following three axioms \cite[Definition 1.1.3]{Hovey}: the retract axiom, the lifting axiom and the factorization axiom. 
The {\it lifting axiom} postulates the existence of a lifting $h$ for commutative diagrams 
$$ \xymatrix@+7mm{ A \ar[d]^i\ar[r]^f & C \ar[d]^p \cr
B \ar@{.>}[ru]^h\ar[r]^g & D \cr} $$
where $i \in {\mathcal W}\cap {\mathcal L}$  and $p\in \mathcal R$,
or where $i \in {\mathcal L}$  and $p \in {\mathcal W}\cap {\mathcal R}$ (trivial fibrations).
The {\it factorization axiom} states that every morphism $f$ can be (functorially) written in the form 
$f=\psi\circ \varphi$ for certain $\varphi \in {\mathcal W}\cap {\mathcal L}$ and $\psi\in \mathcal R$, and also
for certain $\varphi \in {\mathcal L}$  and $\psi \in {\mathcal W}\cap {\mathcal R}$.
The {\it retract axiom} states 
that $\mathcal L$, $\mathcal R$ or $\mathcal W$ are stable under retracts.


\subsection{Morphisms, fibrations and cofibrations}

We recall the following definitions \cite[Definition 2.1.7]{Hovey}:

\begin{definition} Let $I$ be a set of morphisms in $\mathcal{C}$.
\begin{enumerate}
\item A morphism $p:C\to D$ is called $I$-injective, if it has the right lifting property (see diagram above) with respect to all morphisms $i: A\to B$ in $ I$. The class of $I$-injective maps is denoted $Iinj$.
\item A morphism $i:A\to B$ is called $I$-projective, if $i$ has the left lifting property with respect to all morphisms $p: C\to D$ in $I$. The class of $I$-projective maps is denoted $Iproj$.
\item A morphism is an $I$-cofibration if it has the left lifting property with respect to all $I$-injective maps. The class of $I$-cofibrations is the class $(Iinj)proj$ and is denoted $Icof$.
\item A morphism is an $I$-fibration if it has the left lifting property with respect to all $I$-projective maps. The class of $I$-fibrations is the class $(Iproj)inj$ and is denoted $Ifib$.
\end{enumerate}
\end{definition}

The trivial fibrations $\mathcal R\cap \mathcal W$ are the $\mathcal L$-injective morphisms and
$\mathcal L\cap \mathcal W$ are the $\mathcal R$-projective morphisms
(\cite[lemma 1.1.10]{Hovey}). Since $f = p\circ i$ with $p\in \mathcal R\cap\mathcal W$
and $i\in \mathcal L$ implies $i\in \mathcal W$ for $f\in \mathcal W$, hence
$\mathcal L$ and $\mathcal R$ determine
$$\mathcal W = (\mathcal R \cap \mathcal W)\circ (\mathcal L \cap \mathcal W)\ .$$
$\mathcal L$ and $\mathcal R$ are closed under compositions,
(trivial) cofibrations are stable under pushout and (trivial) fibrations
are stable under pullback \cite[Corollary 1.1.11]{Hovey}. $\mathcal C$ has initial objects $0$ and terminal objects $*$.

\begin{definition} Let $\mathcal C_+$ resp. $\mathcal C_-$ denote the full subcategory of objects $X$ in $\mathcal C$ for
which $0 \to X$ is in $\mathcal L$ resp. $X\to *$ is in $\mathcal R\cap \mathcal W$. Objects in $\mathcal C_+$ are called cofibrant and objects in $\mathcal C_-$ trivially fibrant.
\end{definition}


\subsection{Cofibrant generation}

Suppose $\mathcal C$ admits arbitrary small limits and colimits. An object $X\in \mathcal C$ is called small, if it is $\kappa$-small for some cardinal $\kappa$: for all $\kappa$-filtered ordinals $\lambda$
and all $\lambda$-sequences $Y_i$ 
of morphisms in $I$ the canonical morphism
$co\lim_{i<\lambda} Hom(X,Y_i) \to Hom(X,co\lim_{i<\lambda} Y_i)$ is an isomorphism (\cite[p.29]{Hovey}). 
There is a similar notion for smallness with respect to a subcollection of the morphisms of $\mathcal C$.
A model category is cofibrantly generated, if  there exist {\it sets} of morphisms $J$ and $I$, such that the domains of the morphisms in $I$ (resp. $J$)
are small with respect to $I$-cellular (resp. $J$-cellular) maps and if
\begin{itemize}
\item The fibrations $\mathcal R$ are the $J$-injective morphisms $Jinj$
\item The trivial fibrations $\mathcal W\cap \mathcal R$ are the $I$-injective morphisms $Iinj$.
\end{itemize}
Then ${\mathcal L} = (Iinj)proj=Icof$ and ${\mathcal L\cap W} = (Jinj)proj=Jcof$. Hence $\mathcal L$ and $\mathcal L\cap \mathcal W$  are uniquely determined by $I$ and $J$, such that $I\subset \mathcal L$ and $J\subset \mathcal L\cap \mathcal W$.


\subsection{Quillen adjoint functors} 

Quillen adjoint functors are adjoint functors $F: \mathcal D \to \mathcal C$ and $U: \mathcal C \to \mathcal D$ 
$$ Hom_{\mathcal C}(FX,Y) = Hom_{\mathcal D}(X,UY) $$
between model categories $\mathcal D$ and $\mathcal C$
such that one of the following three equivalent conditions holds (see \cite[p.43]{Dwyer-Spalinski})
\begin{itemize}
\item $F$ maps (trivial) cofibrations to (trivial) cofibrations
\item $U$ maps (trivial) fibrations to (trivial) fibrations.
\item $F$ maps cofibrations to cofibrations and $U$ maps fibrations to fibrations.
\end{itemize}
If $\mathcal D$ is cofibrantly generated
by $J$ and $I$ this holds if $FI \subset \mathcal L_{\mathcal C}$ and 
$FJ \subset \mathcal L_{\mathcal C} \cap \mathcal W_{\mathcal C}$ (see \cite[p.14 and p.36, Lemma 2.1.20]{Hovey}). 


\subsection{Monoidal model structure} \label{sec:monoidal-model-def}

A symmetric monoidal category $(\mathcal C,\otimes)$ (see \cite[p.101 ff]{Hovey})  will be called
closed monoidal, if  
 internal ${\mathcal H}om$'s exist with functorial isomorphisms
$$ Hom_{\mathcal C}(X\otimes Y,Z) \cong  Hom_{\mathcal C}(X,{\mathcal  H}om(Y, Z)) \ $$
and the properties of \cite[Definition 4.1.13]{Hovey}. A model structure on a closed symmetric monoidal category $\mathcal C$ is called a symmetric monoidal model category if it satisfies the following two conditions \cite[Definition 4.2.6]{Hovey}:
\begin{enumerate}
\item The tensor functor ${\mathcal C \times \mathcal C} \to \mathcal C$ is a Quillen bifunctor \cite[Definition 4.2.1]{Hovey}.
\item If the unit $\one_{\mathcal C}$ is not cofibrant, factor $0\to \one$ into a cofibration and a trivial fibration $q:Q\one \to \one$. Then we require that
$q\otimes id: Q\one \otimes X \to \one\otimes X$ and $id\otimes q: X\otimes Q\one \to X\otimes \one$ are in $\mathcal W$ for all cofibrant $X$). 
\end{enumerate}



\section{  The stable module category of a Frobenius category}\label{sec:stable-cat}

An abelian (or more generally an exact) category $\mathcal D$ is a \emph{Frobenius category} if it has enough
projectives and enough injectives, and if the subcategories $\mathcal P_{\mathcal D}$ of projective objects and the subcategory $\mathcal I_{\mathcal D}$ of injective objects coincide $ \mathcal P_{\mathcal D}=\mathcal I_{\mathcal D}$.

\bigskip\noindent
Attached to an exact category $\mathcal D$ with enough injective objects is its stable category $\overline {\mathcal D}$ \cite{Happel} which is a suspended category. It has the same objects as $\mathcal D$. A morphism of $\overline {\mathcal D}$ is an equivalence class $\overline f$ of a morphism $f:X\to Y$ in $\mathcal D$ modulo the subgroup of morphisms factoring through an injective module. 
Objects $X,Y\in \mathcal D$ are called stably equivalent
if they become isomorphic in $\overline{\mathcal D}$.
The suspension
$SX=X[1]$ of $X$ is defined via an exact sequence $(X,IX,SX)$, where $i:X\to IX$ is an injective resolution and $SX=IX/i(X)$. Any morphism $f:X\to Y$ lifts to a morphism
$If: IX\to IY$, hence defines a suspension morphism $SX\to SY$ whose equivalence
class $Sf$ is well defined in $\overline{\mathcal D}$, i.e. independent of the resolutions and the choice of the 
lift $If$. Associated to an exact sequence $(X,Y,Z,j,p)$ in $\mathcal D$ is a standard
triangle $(X,Y,Z,i,p,\partial)$ in $\overline{\mathcal D}$, where $\partial: Z \to SX$
is the well defined class of the right vertical arrow
$$ \xymatrix@+5mm{X \ar@{=}[d]\ar@{^{(}->}[r]^j &  Y \ar@{.>}[d]\ar@{->>}[r]^p &  Z \ar@{.>}[d]^\partial  \cr
X \ar@{^{(}->}[r]^i &  IX \ar@{->>}[r] &  SX   \cr }$$
  in $\overline{\mathcal D}$.
A triangle $(A,B,C,a,b,c)$ in $\overline{\mathcal D}$ is called distinguished, 
if it is isomorphic to a standard triangle in $\overline{\mathcal D}$. Thus $\overline{\mathcal D}$
becomes a suspended category. 
If the exact category $\mathcal D$ is a Frobenius category, $\overline {\mathcal D}$ is a triangulated
category (see \cite{Chen}, \cite{Happel} or \cite[p.9]{Keller-use}). This is shown by using the loop functor defined by a projective resolution in a similar way. The following lemma is well-known.

\begin{lem} If  $\overline {\mathcal D}$ is the stable category of a Frobenius category ${\mathcal D}$, then for $n\geq 1$ \[Ext^n_{\mathcal D}(A,B) \ \cong \ Hom_{\overline {\mathcal D}}(A,B[n])\ .\]
\end{lem}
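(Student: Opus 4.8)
The plan is to establish the isomorphism $\Ext^n_{\mathcal D}(A,B) \cong \Hom_{\overline{\mathcal D}}(A,B[n])$ by dimension shifting, using the defining triangles of the suspension functor. First I would recall that since $\mathcal D$ is Frobenius, injective and projective objects coincide, and a morphism in $\overline{\mathcal D}$ is zero precisely when it factors through an injective-projective object. For $n=1$, I would argue directly: given an extension $0 \to B \to E \to A \to 0$, the associated standard triangle in $\overline{\mathcal D}$ together with the connecting map $\partial : A \to SB = B[1]$ gives a well-defined class; conversely a map $A \to B[1]$ in $\overline{\mathcal D}$ can be represented using the injective resolution $0 \to B \to IB \to SB \to 0$ by pulling back along $A \to SB$, producing an extension of $A$ by $B$. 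The main content is that these two assignments are mutually inverse and additive, which is the standard correspondence between $\Ext^1$ and the group of homotopy classes of connecting maps; equivalence classes of extensions modulo the Baer sum correspond exactly to $\Hom_{\overline{\mathcal D}}(A, B[1])$ because two extensions are equivalent iff the induced maps to $B[1]$ differ by a map factoring through an injective (i.e. a split piece), which is precisely the relation defining morphisms in $\overline{\mathcal D}$.

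For $n \geq 2$ I would proceed by induction. Choose an injective resolution $0 \to B \to IB \to SB \to 0$; since $IB$ is injective one has $\Ext^{n}_{\mathcal D}(A, IB) = 0$ for $n \geq 1$, so the long exact sequence of $\Ext^\bullet_{\mathcal D}(A, -)$ applied to this short exact sequence yields $\Ext^n_{\mathcal D}(A,B) \cong \Ext^{n-1}_{\mathcal D}(A, SB)$ for $n \geq 2$ (and for $n=1$ after accounting for the cokernel of $\Hom(A,IB) \to \Hom(A,SB)$, which recovers the $n=1$ case above). On the homotopy side, the suspension functor on $\overline{\mathcal D}$ satisfies $\Hom_{\overline{\mathcal D}}(A, B[n]) = \Hom_{\overline{\mathcal D}}(A, (SB)[n-1])$ essentially by definition of $[n]$ as iterated suspension, with $B[1] = SB$. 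Combining, $\Ext^n_{\mathcal D}(A,B) \cong \Ext^{n-1}_{\mathcal D}(A,SB) \cong \Hom_{\overline{\mathcal D}}(A,(SB)[n-1]) \cong \Hom_{\overline{\mathcal D}}(A,B[n])$ by the inductive hypothesis applied to the pair $(A, SB)$, anchored at the $n=1$ base case.

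One technical point to handle carefully is well-definedness: the suspension $SB$ depends on the chosen injective resolution only up to (non-canonical, but canonical in $\overline{\mathcal D}$) isomorphism, and likewise the lift $Sf$ of a morphism is only well-defined in $\overline{\mathcal D}$ — but this is exactly the setting in which both sides of the claimed isomorphism live, so the dimension-shift isomorphisms are natural in the appropriate sense. I would also note that the long exact $\Ext$-sequence is available since $\mathcal D$ is assumed abelian (or exact with enough injectives), so $\Ext^\bullet_{\mathcal D}$ is computed as usual.

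The main obstacle I anticipate is the base case $n=1$: verifying precisely that the Baer-sum group structure on extension classes matches the abelian group structure of $\Hom_{\overline{\mathcal D}}(A,B[1])$, and that the equivalence relation on extensions is exactly "agree modulo maps factoring through injectives." This is classical (it is the Frobenius-category analogue of the identification of $\Ext^1$ with $\Hom$ in the derived category), so I would cite Happel or Keller for the details rather than reproving it; the higher $n$ then follows formally by the dimension-shift argument above. Since the statement is flagged as well-known, a proof at the level of "base case plus dimension shift, with references for the classical $n=1$ identification" is appropriate here.
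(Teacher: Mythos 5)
Your proof is correct: the base case $n=1$ (extensions of $A$ by $B$ modulo Baer equivalence correspond to maps $A\to SB$ modulo those lifting to $IB$, i.e.\ modulo maps factoring through an injective) together with dimension shifting via $Ext^n_{\mathcal D}(A,B)\cong Ext^{n-1}_{\mathcal D}(A,SB)$ and $B[n]=(SB)[n-1]$ is exactly the standard argument. The paper itself offers no proof — it states the lemma as well-known, deferring to Happel and Keller — and your write-up is precisely the proof found in those references, so there is nothing to compare beyond noting that your level of detail (base case plus formal induction, with the classical $n=1$ identification cited) is appropriate.
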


If $\mathcal D$ has arbitrary coproducts, the stable category
$\overline{\mathcal D}$ has arbitrary coproducts as well since the coproduct of objects $X_i, \ i\in I$, in $\overline{\mathcal D}$
is represented by the coproduct $\bigoplus_{i\in I} X_i$ in $\mathcal D$ (both categories have the same objects). Furthermore coproducts of projective objects are projective. From \cite[Proposition 1.6.8 and Lemma 3.2.10]{Neeman} we easily conclude

\begin{lem} Let $\mathcal D$ be a Frobenius category with arbitrary coproducts. Let
 $\mathcal H $ be a quotient category of the triangulated stable category $\overline{\mathcal D}$
of $\mathcal D$ devided by a thick triangulated subcategory. Then $\mathcal H$ is pseudo-abelian and has arbitrary
coproducts, and the functor $\mathcal D \to \mathcal H$ commutes with arbitrary coproducts.
\end{lem}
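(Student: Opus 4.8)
The plan is to establish the two conclusions — that $\mathcal H$ is pseudo-abelian (idempotent complete), has arbitrary coproducts, and that the quotient functor $\mathcal D \to \mathcal H$ preserves coproducts — by combining the preceding observations with the two cited results from Neeman. First I would record the setup: by the immediately preceding discussion, $\overline{\mathcal D}$ is a triangulated category with arbitrary coproducts, represented on objects by coproducts in $\mathcal D$, and the projective-injective objects are closed under coproducts. The quotient $\mathcal H = \overline{\mathcal D}/\mathcal S$ by a thick triangulated subcategory $\mathcal S$ is then a triangulated category via the usual Verdier construction, and the Verdier localization functor $Q \colon \overline{\mathcal D} \to \mathcal H$ is triangulated and essentially surjective.

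For the coproduct statement, the key input is \cite[Lemma 3.2.10]{Neeman} (or the surrounding material on Bousfield localization / Verdier quotients of cocomplete triangulated categories): if $\mathcal S$ is a thick subcategory of a triangulated category with coproducts, then — once one knows the quotient is again a genuine triangulated category with coproducts — the localization functor $Q$ preserves coproducts. Concretely, I would argue that a coproduct $\bigoplus_i X_i$ computed in $\overline{\mathcal D}$ maps under $Q$ to a coproduct in $\mathcal H$: this follows because $Q$ is a left adjoint up to the usual calculus of fractions, or more directly because coproducts in $\overline{\mathcal D}$ are triangulated coproducts and $Q$ sends triangulated coproducts to triangulated coproducts when $\mathcal S$ is thick and closed under nothing extra. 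Composing with the canonical functor $\mathcal D \to \overline{\mathcal D}$ (which by the paragraph above represents coproducts by coproducts in $\mathcal D$) then gives that $\mathcal D \to \mathcal H$ commutes with arbitrary coproducts.

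For pseudo-abelianness, the key input is \cite[Proposition 1.6.8]{Neeman}: a triangulated category with countable (hence arbitrary) coproducts is idempotent complete, because any idempotent $e \colon A \to A$ splits off as a direct summand of the homotopy colimit of the telescope $A \xrightarrow{e} A \xrightarrow{e} \cdots$, which exists once countable coproducts do. So I would apply this proposition directly to $\mathcal H$: having just shown $\mathcal H$ has arbitrary coproducts, Neeman's argument applies verbatim and yields that every idempotent in $\mathcal H$ splits, i.e. $\mathcal H$ is pseudo-abelian.

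The main obstacle — and really the only non-formal point — is verifying that the Verdier quotient $\mathcal H = \overline{\mathcal D}/\mathcal S$ genuinely has arbitrary coproducts and that $Q$ preserves them; one must be a little careful because in general Verdier quotients of cocomplete triangulated categories need not be cocomplete (set-theoretic issues, or failure of the quotient to have a calculus of fractions that interacts well with colimits). Here the situation is benign because $\mathcal S$ is assumed thick (not merely triangulated) so the quotient is well-behaved, and because all coproducts in $\overline{\mathcal D}$ are honestly represented in $\mathcal D$; still, this is exactly where I would cite \cite[Lemma 3.2.10]{Neeman} and its hypotheses carefully, rather than wave hands. Everything else — the triangulated structure on $\mathcal H$, essential surjectivity of $Q$, and the final composition $\mathcal D \to \overline{\mathcal D} \to \mathcal H$ — is standard and can be dispatched in a sentence or two.
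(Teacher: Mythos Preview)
Your approach is essentially the same as the paper's: the paper simply says the lemma follows from \cite[Proposition 1.6.8 and Lemma 3.2.10]{Neeman}, and you correctly identify which cited result handles which conclusion (1.6.8 for idempotent completeness via telescopes, 3.2.10 for coproducts in the Verdier quotient).

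There is, however, a real gap in your justification of the coproduct step. You write that ``the situation is benign because $\mathcal S$ is assumed thick (not merely triangulated),'' but thickness --- closure under direct summands --- has nothing to do with whether the Verdier quotient inherits coproducts. What Neeman's Lemma 3.2.10 (and the ensuing Corollary 3.2.11) actually require is that $\mathcal S$ be \emph{localizing}, i.e.\ thick \emph{and closed under arbitrary coproducts}. Only then does one know that $\overline{\mathcal D}/\mathcal S$ has coproducts and that the quotient functor preserves them. Your discussion of ``set-theoretic issues'' misdiagnoses the obstruction: the point is not size but that if $\mathcal S$ is not closed under coproducts, a family of morphisms in $\mathcal S$ need not assemble to a single morphism whose cone lies in $\mathcal S$, so the image of $\bigoplus X_i$ need not satisfy the universal property in the quotient.

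The paper's lemma, as literally stated, has the same imprecision; but in every application in the paper the thick subcategory in question (e.g.\ $\overline{\mathcal T}_-$, or the projectives) is visibly closed under coproducts, so the localizing hypothesis is available. When you write up the argument, you should state and verify that $\mathcal S$ is localizing rather than appeal to thickness.
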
 

A particular example of a Frobenis category is the category of left $R$-modules of a Frobenius ring as in \cite[2.2]{Hovey}. In this setting the stable category of $R$-mod carries a cofibrantly generated model structure \cite[Theorem 2.2.12]{Hovey}. A general Frobenius category $\mathcal D$ carries a model structure for which the associated homotopy category is the stable category \cite{Li-stable}. Under mild additional conditions $\overline{\mathcal D}$ is the homotopy category of a cofibrantly generated model structure on $\mathcal D$.  

\medskip\noindent
Assume the properties \ref{stable-f-1}-\ref{stable-f-4}.
\begin{enumerate}[label=\textbf{FC.\arabic*}]
\item \label{stable-f-1} $\mathcal D$ has small limits and colimits
\item \label{stable-f-2} Any object of $\mathcal D$ is  small with respect to $\mathcal D$ in the sense of \cite[p.29]{Hovey}
\item \label{stable-f-3} There exists a {\it set} $P_{\mathcal D}$ 
of projective objects in $\mathcal D$ being generators of $\mathcal D$ in the sense
that any nontrivial object $X$ of $\mathcal D$ admits a nontrivial morphism
$P\to X$ for some $P\in I$. We call $P_{\mathcal D}$ an admissible set of projectives
and denote by $J$ the set of monomorphisms $0\to P$ for $P\in P_{\mathcal D}$.
\item \label{stable-f-4} There exists a {\it set} $I$ of monomorphisms $i:A\to B$ in $\mathcal D$ containing $J$ such that $X\in \mathcal D$ is in $\mathcal I_{\mathcal D}$ if and only if $i^*: Hom_{\mathcal D}(B,X)\to Hom_{\mathcal D}(A,X)$ is surjective for all $i\in I$.  We call $I$ an admissible set of monomorphisms.  \end{enumerate}

We ignore whether these conditions can be relaxed. The proof of the following lemma is analog to the proofs of \cite{Hovey} and \cite{Li-stable} and will be skipped.

\begin{lem} \label{lemma-stable-cat} A Frobenius category $\mathcal D$ with the properties FC1-4 carries a  cofibrantly generated model structure called the stable model structure. The fibrations are the epimorphisms, the trivial fibrations are the split epimorphisms with kernel in $\mathcal I_{\mathcal D}$. The cofibrations are the monomorphisms, and the trivial cofibrations are the split monomorphisms with cokernel in $\mathcal P_{\mathcal D}$. Every object is fibrant and cofibrant. The associated homotopy category $Ho\mathcal D$ is the stable category $\overline{\mathcal D}$ of $\mathcal D$. The morphisms in $\mathcal W$  consist of the morphisms for which $Ext^i(X,f)$ and $Ext^i(f,X)$
are isomorphisms for all objects $X\in \mathcal D$ and all $i\geq 1$. The model structure is cofibrantly generated by $I$ and $J$ as above.
\end{lem}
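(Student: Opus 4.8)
The plan is to produce the model structure from Kan's recognition theorem for cofibrantly generated model categories (\cite[Theorem 2.1.19]{Hovey}), taking $I$ and $J$ from \ref{stable-f-3}--\ref{stable-f-4} as the generating cofibrations and generating trivial cofibrations, and taking $\mathcal W$ to be the class of \emph{stable equivalences}: the morphisms of $\mathcal D$ that become isomorphisms in $\overline{\mathcal D}$. Since $\mathcal W$ is the preimage of the isomorphisms under the functor $\mathcal D\to\overline{\mathcal D}$, it automatically has the two-out-of-three property and is closed under retracts, and \ref{stable-f-1} supplies the completeness and cocompleteness. So everything reduces to computing the lifting classes $Jinj$, $Iinj$, $J$-cell and checking the remaining hypotheses of the recognition theorem.

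First I would compute the lifting classes. As $J$ consists of the maps $0\to P$ with $P\in P_{\mathcal D}$, a map $p$ lies in $Jinj$ exactly when $Hom(P,p)$ is surjective for all $P\in P_{\mathcal D}$, which — since $P_{\mathcal D}$ is a set of projective generators — is equivalent to $p$ being an epimorphism (projectivity gives one direction; for the converse a nonzero map from a generator to $\mathrm{coker}(p)$ lifts through $p$ and yields a contradiction). Next, \ref{stable-f-4} says precisely that the objects $X$ with $X\to *$ in $Iinj$ are the injectives; a diagram chase (apply the lifting property to squares with zero bottom leg to extend a map into $\ker p$) promotes this to: $p\colon C\to D$ lies in $Iinj$ iff $p$ is an epimorphism with $\ker p\in\mathcal I_{\mathcal D}$, and then $0\to\ker p\to C\to D\to 0$ splits because its kernel is injective; in particular such $p$ is a stable equivalence and an epimorphism, so $Iinj\subseteq\mathcal W\cap Jinj$. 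A pushout of $0\to P$ is the summand inclusion $X\to X\oplus P$ with projective — hence injective — cokernel, and a transfinite composite of such maps is again a split monomorphism with projective cokernel (the cokernel being a coproduct of the intermediate ones), hence a stable equivalence; and it lies in $Icof$ since $J\subseteq I$, so $J\text{-cell}\subseteq\mathcal W\cap Icof$. The mixing condition holds in the form $\mathcal W\cap Jinj\subseteq Iinj$: an epimorphism that is a stable equivalence sits in a distinguished triangle $\ker p\to C\to D\to(\ker p)[1]$ in $\overline{\mathcal D}$ whose middle arrow is invertible, forcing $Hom_{\overline{\mathcal D}}(-,\ker p)=0$, whence $\ker p$ is injective and $p$ splits. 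Together with the smallness of the $I$- and $J$-domains (immediate from \ref{stable-f-2}, the $J$-domains being initial), these are all the hypotheses of \cite[Theorem 2.1.19]{Hovey}, which therefore produces a cofibrantly generated model structure with $\mathcal W$ the stable equivalences, fibrations $Jinj=$ epimorphisms, trivial fibrations $Iinj=$ split epimorphisms with injective kernel, cofibrations $Icof$ and trivial cofibrations $Jcof$.

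It then remains to read off the concrete descriptions and the homotopy category. Split epimorphisms with injective kernel have the right lifting property against every monomorphism, so every monomorphism is a cofibration; conversely, given a cofibration $i\colon A\to B$, lifting in the square with right leg the trivial fibration $IA\oplus B\to B$ (projection) and top leg $a\mapsto(\iota a,\,ia)$, where $\iota\colon A\hookrightarrow IA$ is a chosen embedding into an injective, produces a map $B\to IA\oplus B$ whose first component is a retraction of $\iota$ through $i$, so $i$ is a monomorphism. Hence the cofibrations are the monomorphisms. The trivial cofibrations are the monomorphisms that are stable equivalences, and for such a monomorphism the same triangle argument shows the cokernel is stably zero, hence projective, hence the sequence splits; so the trivial cofibrations are the split monomorphisms with projective cokernel (and coincide with $Jcof$). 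Both $0\to X$ and $X\to *\,(=0)$ are respectively a monomorphism and an epimorphism, so every object is cofibrant and fibrant, and therefore $Hom_{Ho\mathcal D}(X,Y)=Hom_{\mathcal D}(X,Y)/\!\sim$ with $\sim$ the homotopy relation. Using the explicit cylinder object obtained by factoring the fold map $X\oplus X\to X$ as $X\oplus X\to X\oplus IX$, $(x,x')\mapsto(x+x',\iota x)$, followed by the projection $X\oplus IX\to X$ (a monomorphism followed by a split epimorphism with injective kernel), one checks directly that $f\sim g$ if and only if $f-g$ factors through an injective object; this identifies $Ho\mathcal D$ with $\overline{\mathcal D}$, compatibly with the triangulated structures (the model structure is stable, $0$ being a zero object and suspension/loop being the cosyzygy/syzygy equivalences). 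Finally the isomorphism $Ext^n_{\mathcal D}(A,B)\cong Hom_{\overline{\mathcal D}}(A,B[n])$ for $n\geq 1$ turns the condition "$f$ is a stable equivalence" into "$Ext^i(X,f)$ and $Ext^i(f,X)$ are isomorphisms for all $X$ and all $i\geq 1$": one direction is immediate from functoriality, and for the converse the cone $Z$ of $\bar f$ then has $Hom_{\overline{\mathcal D}}(Z,-)=0$, read off from the long exact sequences, so $Z\cong 0$.

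The step I expect to be the main obstacle — and the reason the proof is only called "analogous to \cite{Hovey} and \cite{Li-stable}" — is matching the abstract classes produced by the recognition theorem with the concrete ones: that $Iinj$ is exactly the split epimorphisms with injective kernel, that the trivial cofibrations are exactly the split monomorphisms with projective cokernel, the mixing condition $\mathcal W\cap Jinj\subseteq Iinj$, and on the homotopy side the verification that the model-categorical homotopy relation is precisely "differ by a morphism through an injective object". Everything else is bookkeeping with \ref{stable-f-1}--\ref{stable-f-4} and the recognition theorem.
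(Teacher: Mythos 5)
Your proposal is correct and follows exactly the route the paper intends: the paper explicitly defers this proof to Hovey \cite[Theorem 2.2.12]{Hovey} and Li \cite{Li-stable}, and your argument is precisely that scheme — apply Hovey's recognition theorem \cite[Theorem 2.1.19]{Hovey} with $\mathcal W$ the stable equivalences, compute $Jinj$, $Iinj$ from \ref{stable-f-3}--\ref{stable-f-4}, verify the mixing condition via the triangle $\ker p\to C\to D\to$ in $\overline{\mathcal D}$, and identify $Ho\mathcal D\cong\overline{\mathcal D}$ through the cylinder $X\oplus IX$. The details you supply (in particular the lifting square forcing cofibrations to be monomorphisms and the cone argument for the $Ext$-characterization of $\mathcal W$) are the ones the paper's reference to the ``analogous'' proof leaves implicit.
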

  


\section{  Induced model structure}\label{sec:induced}

\subsection{Induced model structures} \label{sec:induced-1}

The following construction of D. M. Kan can be used to lift model structures from a cofibrantly generated model category to another \cite[Theorem 11.3.1, Theorem 11.3.2]{Hirschhorn}. Let $\mathcal D$ be a cofibrantly generated model category generated by 
$J_{\mathcal D}$ and $I_{\mathcal D}$ (we may assume $ I_{\mathcal D}$ contains $J_{\mathcal D}$). Let
$\mathcal C$ be a category with small limits and colimits and adjoint functors
$$  F: {\mathcal D \to \mathcal C} \quad , \quad  U: \mathcal C \to \mathcal D $$
such that $Hom(FX,Y)=Hom(X,UY)$. Suppose $U$ is faithful.
Put $J=F(J_{\mathcal D})$ and $I=F(I_{\mathcal D})$ and suppose
$$  U(J cof) \ \subset \ \mathcal W_{\mathcal D}  $$
(and certain smallness conditions in $\mathcal C$
automatically fulfilled in our later cases).
Then there exists a model structure $({\mathcal L},{\mathcal R},{\mathcal W})$ on $\mathcal C$ generated by $J$ and $I$
such that: $f\in {\mathcal W}$ if and only if $U(f)\in \mathcal W_{\mathcal D}$. The functors $(F,U)$ define a Quillen
adjunction. 

\bigskip\noindent
{\it The abelian case}. We will only apply this in the case where $\mathcal C$ and $\mathcal D$ are abelian
categories. So let us always assume this in the following. Recall that for
an adjoint pair of functors  $F:\mathcal D\to C$ and $U:\mathcal C\to \mathcal D$ between abelian categories
$F$ preserves colimits and $U$ preserves limits (e.g.\cite[p.137]{Gelfand-Manin}). Hence, if 
$F$ is left exact ($U$ is right exact), then $F$ (resp. $U$) is exact
and $U$ preserves injectives (resp. $F$ preserves projectives). 
The adjunction morphisms $ad:  FUX\to X$ are epimorphisms if $U$ is faithful $U(Z)=0 \Leftrightarrow Z=0$ (\cite[p.61 formula II.16]{Gelfand-Manin}).  Similarly the adjunction morphisms 
$Y\to UFY $ are monomorphisms if $F$ is faithful $F(Z)=0 \Leftrightarrow  Z=0$.

\begin{prop} \label{thm:induced-model} Assume that $\mathcal C$ and  $\mathcal D$ are abelian Frobenius categories, and that $\mathcal D$ satisfies the finiteness assumptions \ref{stable-f-1} - \ref{stable-f-4}. Assume also that $F$ and $U$ are exact.
Consider the stable model structure on $\mathcal D$
 cofribrantly generated by the set $J_{\mathcal D}=\{{0\to X, X\in P_{\mathcal D}} \}$ and by a set $I_{\mathcal D}\supset J_{\mathcal D}$
 of admissible monomorphisms, where $\mathcal W_{\mathcal D}$ is defined by stable equivalence.
\begin{itemize} 
\item Then \[ J cof \subset  {\mathcal W} \cap \{ split\ mono\}.\]
\item There exists a model structure $({\mathcal L},{\mathcal R},{\mathcal W})$ on $\mathcal C$ generated by $J$ and $I$ (as defined above) such that: $f\in {\mathcal W}$ if and only if $U(f)\in \mathcal W_D$. The functors $(F,U)$ define a Quillen
adjunction.
\end{itemize}
\end{prop}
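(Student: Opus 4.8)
The plan is to verify the two bulleted claims in Proposition~\ref{thm:induced-model} and then invoke Kan's transfer theorem \cite[Theorem 11.3.1, Theorem 11.3.2]{Hirschhorn}, whose hypotheses are: (i) $\mathcal{C}$ has small limits and colimits; (ii) the adjunction $(F,U)$ with $U$ faithful; (iii) the domains of the generating (trivial) cofibrations permit the small object argument in $\mathcal{C}$; and (iv) the key condition $U(Jcof) \subset \mathcal{W}_{\mathcal{D}}$. Conditions (i)--(iii) are handed to us (abelian categories with the stated smallness automatically fulfilled, per the parenthetical remark in section~\ref{sec:induced-1}), so the real content is condition (iv), and the first bullet is precisely the stronger statement that implies it.

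First I would analyze $J = F(J_{\mathcal{D}})$ explicitly. Since $J_{\mathcal{D}} = \{0 \to P : P \in P_{\mathcal{D}}\}$ and $F$ is a left adjoint, $J = \{0 \to FP : P \in P_{\mathcal{D}}\}$. Because $F$ is exact and preserves projectives (as noted in the abelian-case discussion, since $F$ exact and left adjoint means $U$ preserves injectives $=$ projectives, dually $F$ preserves projectives), each $FP$ is projective in $\mathcal{C}$, hence injective since $\mathcal{C}$ is Frobenius. Now $Jcof = (Jinj)proj$. The standard computation for a set of maps out of the initial object shows that $Jcof$ consists exactly of the split monomorphisms whose cokernel is a retract of a coproduct of the objects $FP$, i.e. a projective-injective object in $\mathcal{C}$ (one builds these via pushout along $0 \to \coprod FP$ and retracts; this is the $\mathcal{C}$-analogue of the description in Lemma~\ref{lemma-stable-cat} of the trivial cofibrations of the stable model structure). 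So any $i \in Jcof$ is a split mono $A \to B$ with $B \cong A \oplus Q$, $Q$ projective-injective in $\mathcal{C}$.

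Next I would apply $U$. Since $U$ is exact and additive, $U(i): UA \to UB \cong UA \oplus UQ$ is still a split monomorphism, with cokernel $UQ$. The point is that $UQ$ is injective in $\mathcal{D}$: $U$ preserves injectives because it is a right adjoint of the exact functor $F$ (again from the abelian-case paragraph: $F$ left exact, hence exact, hence $U$ preserves injectives). As $\mathcal{D}$ is Frobenius, $UQ$ is also projective, so $UQ \in \mathcal{P}_{\mathcal{D}} $ up to the ``admissible'' caveat — but for membership in $\mathcal{W}_{\mathcal{D}}$ all that matters is that $UQ$ is projective-injective. By Lemma~\ref{lemma-stable-cat}, a morphism lies in $\mathcal{W}_{\mathcal{D}}$ precisely when $Ext^i(X,f)$ and $Ext^i(f,X)$ are isomorphisms for all $X$ and all $i \geq 1$; a split mono with projective-injective cokernel is a split mono whose cokernel is zero in the stable category, hence is a stable equivalence (its cofiber $UQ$ is contractible). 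Therefore $U(i) \in \mathcal{W}_{\mathcal{D}}$, and moreover $U(i)$ is visibly a split mono, which gives the first bullet: $Jcof \subset \mathcal{W} \cap \{\text{split mono}\}$ once we also note $U(i) \in \mathcal{W}_{\mathcal{D}}$ combined with the definition $f \in \mathcal{W} \iff U(f) \in \mathcal{W}_{\mathcal{D}}$ that the transfer produces — but careful: the membership in $\mathcal{W}$ is a conclusion of the transfer, so I would phrase the first bullet as $J cof \subset U^{-1}(\mathcal{W}_{\mathcal{D}}) \cap \{\text{split mono}\}$, which is exactly the hypothesis (iv) needed, and then the second bullet follows by quoting Kan's theorem, with $\mathcal{W}$ defined by $f \in \mathcal{W} \iff U(f) \in \mathcal{W}_{\mathcal{D}}$ and $(F,U)$ automatically Quillen.

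The main obstacle is the honest identification of $Jcof$: one must check that left-lifting against all $Jinj$-maps forces the split-mono-with-projective-injective-cokernel form, rather than something larger. This is the small-object-argument bookkeeping — $Jinj$ consists of the epimorphisms that are split with kernel injective on the appropriate generating set, and one runs the cellular construction over the coproducts $\coprod FP$ — and it is where the hypothesis that $\mathcal{C}$ is Frobenius (so projectives $=$ injectives, and one has enough of them) is genuinely used to conclude that the cellular complexes built from the $FP$ stay inside the projective-injectives. Everything else (exactness of $F,U$; $U$ preserving injectives; the characterization of $\mathcal{W}_{\mathcal{D}}$) is either hypothesis or already recorded in the excerpt, so I would keep those steps terse and concentrate the write-up on the $Jcof$ description and the transfer citation.
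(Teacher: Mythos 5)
Your proposal is correct and follows essentially the same route as the paper: identify $Jcof$ as split monomorphisms with projective (hence injective) cokernel, apply $U$ and use $U(\mathcal P_{\mathcal C}) \subset \mathcal P_{\mathcal D}$ to land in the trivial cofibrations of the stable model structure on $\mathcal D$, then invoke Kan's transfer theorem. The only cosmetic difference is that you obtain the description of $Jcof$ via the cellular/small-object characterization (retracts of pushouts of $0 \to \coprod FP$), while the paper first identifies $Jinj$ with the epimorphisms (Lemma~\ref{lemma-1}, using faithfulness of $U$ and that $P_{\mathcal D}$ is a generating set) and then obtains $Jcof = (Jinj)proj$ from the lifting-property argument of Lemma~\ref{lemma-2}, modelled on \cite[Lemma 2.2.11]{Hovey}.
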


\begin{definition} \label{def:frobenius-pair} We call a pair of abelian Frobenius categories $\mathcal C$ and  $\mathcal D$ with an adjoint pair of functors $U,F$ satisfying the conditions of proposition \ref{thm:induced-model} a Frobenius pair.
\end{definition}

We only have to prove part (1) of proposition \ref{thm:induced-model}. This claim immediately follows from the next two lemmas, since $U({\mathcal P_{\mathcal C}}) \subset
\mathcal P_{\mathcal D}$.

\begin{lem} \label{lemma-1}The morphisms $p:\mathcal{C} \to \mathcal{D}$ in ${\mathcal R} = Jinj$ are the epimorphisms in $\mathcal C$.
Every object of $\mathcal C$ is fibrant.
\end{lem}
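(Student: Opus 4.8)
The plan is to identify the class $\mathcal{R} = Jinj$ explicitly. Recall that $J = F(J_{\mathcal D})$ where $J_{\mathcal D} = \{0 \to P : P \in P_{\mathcal D}\}$, so a morphism $p : C \to D$ lies in $Jinj$ precisely when it has the right lifting property with respect to each $0 \to FP$ for $P \in P_{\mathcal D}$. First I would unwind this lifting condition: a lifting problem against $0 \to FP$ amounts, after using the adjunction $Hom_{\mathcal C}(FP, -) \cong Hom_{\mathcal D}(P, U(-))$, to the statement that $p_* : Hom_{\mathcal D}(P, UC) \to Hom_{\mathcal D}(P, UD)$ is surjective for every $P \in P_{\mathcal D}$. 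Since $P_{\mathcal D}$ is a set of projective generators of $\mathcal D$ (condition \ref{stable-f-3}), surjectivity on all $Hom_{\mathcal D}(P, -)$ for $P$ in the generating set is equivalent to $U(p)$ being an epimorphism in $\mathcal D$: indeed if $U(p)$ were not epi, its cokernel would be a nonzero object receiving a nontrivial map from some $P \in P_{\mathcal D}$, contradicting surjectivity; conversely if $U(p)$ is epi, projectivity of each $P$ lifts any map $P \to UD$ through $UC$.

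Next I would translate ``$U(p)$ is an epimorphism in $\mathcal D$'' into ``$p$ is an epimorphism in $\mathcal C$''. One direction is immediate: $U$ is exact (hypothesis of Proposition \ref{thm:induced-model}), so it preserves epimorphisms, giving $p$ epi $\Rightarrow U(p)$ epi. For the converse, suppose $U(p)$ is epi. Since $F$ and $U$ are exact between abelian categories and $U$ is faithful, the counit $ad : FUX \to X$ is an epimorphism for every $X$ (as recorded in the excerpt, \cite[II.16]{Gelfand-Manin}). Form the cokernel $q : D \to \mathrm{coker}(p)$ in $\mathcal C$; then $U(q)$ is a cokernel of $U(p)$ by exactness of $U$, hence $U(q) = 0$, hence $U(\mathrm{coker}(p)) = 0$, and faithfulness of $U$ forces $\mathrm{coker}(p) = 0$, i.e. $p$ is epi. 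Combining the two paragraphs: $Jinj$ is exactly the class of epimorphisms of $\mathcal C$.

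Finally, ``every object of $\mathcal C$ is fibrant'' means $X \to *$ lies in $\mathcal R = Jinj$ for every $X$; since $*$ is the zero object $0$ in the abelian category $\mathcal C$ and $X \to 0$ is trivially an epimorphism, this is immediate from the identification just established.

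I expect the only subtle point to be the equivalence between surjectivity of $Hom_{\mathcal D}(P, UC) \to Hom_{\mathcal D}(P, UD)$ over the \emph{set} $P_{\mathcal D}$ of generators and genuine epimorphy of $U(p)$ — one must use that $P_{\mathcal D}$ generates (not merely that its members are projective) to rule out a nonzero cokernel, and that each member is projective to get the lift in the other direction. Everything else is a routine application of the adjunction and of exactness/faithfulness of $U$, so this step is the main (mild) obstacle.
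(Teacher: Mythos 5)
Your proposal is correct and follows essentially the same route as the paper's own proof: unwind the lifting condition against $0 \to FP$ via the adjunction, use that $P_{\mathcal D}$ is a set of projective generators (condition \ref{stable-f-3}) to translate the Hom-surjectivity into "$U(p)$ is an epimorphism," and then use exactness and faithfulness of $U$ to conclude $p$ is an epimorphism. The paper phrases it a bit more compactly by working directly with the cokernel $Q$ of $p$ in $\mathcal C$ (observing $Q=0 \iff UQ=0$ by faithfulness, then deriving a contradiction from a nontrivial $P \to UQ$), whereas you explicitly isolate the intermediate equivalence "$Jinj \iff U(p)$ epi"; this is a cosmetic difference. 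Your treatment of the converse inclusion (epimorphisms lie in $Jinj$) and of fibrancy, both of which the paper leaves largely implicit, is accurate.
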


\begin{proof} Let $Q$ be the kokernel of $p$. Notice $Q=0$ if and only if $UQ=0$, since $U$ is faithful. If $UQ$ were not zero,
choose a nontrivial morphisms $u:P\to UQ$ for $P\in \mathcal P_{\mathcal D}$ by \ref{stable-f-3}, lift it to
a morphism $P\to UD$ (projectivity of $P$) and
consider the adjoint morphism $g: FP \to D$.  Since $0 \to FP$ is in $J$, for $p\in \mathcal R$ the morphism $g$ lifts by definition
to a morphism $h:FP\to C$. This implies that the adjoint morphism $FP \to Q$ attached to
$u:P\to UQ$ is zero, since it factorizes over the zero morphism $C\to D\to Q$.
Hence $u$ itself is zero. Contradiction.
\end{proof}
 
The proof of the next lemma is similar to \cite[Lemma 2.2.11]{Hovey} and will be skipped.

\begin{lem} \label{lemma-2} The morphisms $i:A\to B$ in ${\mathcal R}proj = Jcof$ are the split monomorphisms in $\mathcal C$ with projective kokernel.
\end{lem}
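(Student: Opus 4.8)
The plan is to characterize $J\mathrm{cof}=(\mathcal{R})\mathrm{proj}$ directly using the description of $\mathcal{R}$ obtained in Lemma \ref{lemma-1}, namely that $\mathcal{R}$ is exactly the class of epimorphisms in $\mathcal{C}$. So I must show: a morphism $i\colon A\to B$ in $\mathcal{C}$ has the left lifting property against all epimorphisms if and only if $i$ is a split monomorphism with projective cokernel. This is the analogue in $\mathcal{C}$ of \cite[Lemma 2.2.11]{Hovey}, and the argument should be formally the same once Lemma \ref{lemma-1} is in hand; the only point requiring care is that $\mathcal{C}$ is a general abelian Frobenius category rather than a module category, so I should phrase everything in terms of abelian-category diagram chasing (or the embedding theorem) rather than elements.

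For the ``if'' direction: suppose $i\colon A\to B$ is a split monomorphism with cokernel $P$ projective, so $B\cong A\oplus P$. Given a commutative square with $i$ on the left and an epimorphism $p\colon C\to D$ on the right, I have $f\colon A\to C$ and $g\colon B\to D$ with $pf=gi$. Restricting $g$ to the summand $P\subset B$ and using projectivity of $P$, lift $g|_P$ along $p$ to a map $P\to C$; then $f$ on the $A$-summand together with this lift on the $P$-summand assembles (via the splitting $B\cong A\oplus P$) to a map $h\colon B\to C$, and one checks $hi=f$ and $ph=g$ on each summand. Hence $i\in(\mathcal{R})\mathrm{proj}=J\mathrm{cof}$.

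For the ``only if'' direction: suppose $i\colon A\to B$ lies in $J\mathrm{cof}$. First, testing against the canonical epimorphism $B\twoheadrightarrow B/A$ (which is in $\mathcal{R}$ by Lemma \ref{lemma-1}) with the square whose top map is $0\colon A\to B$ wait — more cleanly, factor $i$ using the factorization axiom as $i = p\circ j$ with $j\in\mathcal{L}$ and $p\in\mathcal{R}\cap\mathcal{W}$, or better: lift the identity square to conclude $i$ is a split monomorphism (apply the left lifting property of $i$ against the epimorphism $\mathrm{coker}$-type map; the standard trick is to lift $\mathrm{id}_A$ along $i$ against a suitable epimorphism, e.g. against $B\twoheadrightarrow\mathrm{coker}(i)$ composed appropriately, or against an injective hull). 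Concretely: since $\mathcal{C}$ has enough injectives, embed $A\hookrightarrow E$ with $E$ injective; the square with $A\xrightarrow{\iota} E$ on top, $E\xrightarrow{0} 0$ on the right? — $0\to 0$ is not epi unless $E=0$. The correct move is the retract argument: $i$ is a retract of a map of the form $A\to A\oplus P$ built from $I$-cells, but here $I$-cells over $J$-cells — since $J = F(J_{\mathcal{D}}) = \{0\to FP' : P'\in P_{\mathcal{D}}\}$, every relative $J$-cell complex $A\to B$ is of the form $A\hookrightarrow A\oplus (\text{coproduct of }FP')$, a split monomorphism with projective cokernel (coproducts of projectives in $\mathcal{C}$ are projective, since $F$ preserves projectives and coproducts and $P_{\mathcal{D}}$ is a set of projective generators). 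By the retract argument (\cite[Corollary 2.1.15]{Hovey}), every map in $J\mathrm{cof}$ is a retract of such a $J$-cell complex; and a retract of a split monomorphism with projective cokernel is again a split monomorphism with projective cokernel (retracts of split monos are split monos, and the cokernel of the retract is a retract of the original projective cokernel, hence projective since $\mathcal{C}$ is abelian Frobenius so projectives are closed under summands). This completes the characterization.

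The main obstacle I anticipate is purely bookkeeping: verifying that a relative $J$-cell complex in $\mathcal{C}$ really does have the simple form ``coproduct with a projective,'' which hinges on $J$ consisting exactly of the maps $0\to FP'$ and on $F$ commuting with the relevant colimits and preserving projectives (both already established in the excerpt's discussion of the abelian case), and then pushing the ``split mono with projective cokernel'' property through retracts in a general abelian Frobenius category without recourse to elements. Since the excerpt explicitly says this proof ``is similar to \cite[Lemma 2.2.11]{Hovey} and will be skipped,'' a brief indication along these lines is all that is expected.
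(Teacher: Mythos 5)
Your proof is correct, and the retract-argument route you settle on is exactly what the paper's pointer to \cite[Lemma 2.2.11]{Hovey} is meant to invoke. In the ``only if'' direction you visibly wander through a couple of dead ends before landing there, but the version you finish with is sound: the domains of $J$ are all $0$, so the small object argument applies trivially; a relative $J$-cell complex is a map $A\to A\oplus Q$ with $Q$ a coproduct of the projective objects $FP'$ (hence projective); and retracts of split monomorphisms with projective cokernel are again such (the retraction of $i$ is obtained by precomposing the retraction of the cell complex with the section $B\to C$, and the cokernel of $i$ is a direct summand of the projective cokernel of the cell complex).

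It is worth recording that the direct approach you abandoned does succeed with the right choice of epimorphism, and is a bit shorter. To see that $i\colon A\to B$ in $J\mathrm{cof}$ is split, lift in the square with $\mathrm{id}_A$ on top, the epimorphism $A\to 0$ on the right (this lies in $\mathcal R$ by Lemma \ref{lemma-1}), the zero map $B\to 0$ on the bottom and $i$ on the left; the lift $h\colon B\to A$ satisfies $h i=\mathrm{id}_A$. To see that $Q=\mathrm{coker}(i)$ is projective, write $\pi\colon B\to Q$; given an epimorphism $q\colon M\to N$ and $g\colon Q\to N$, lift in the square with $0\colon A\to M$ on top, $g\pi\colon B\to N$ on the bottom, $i$ on the left and $q$ on the right. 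The lift $h\colon B\to M$ kills $i$, hence factors through $\pi$ as $h=\bar h\pi$, and $q\bar h=g$ since $\pi$ is epi. Either route is fine; the direct one avoids cell complexes entirely.
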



\subsection{Explicit description of the induced model structure}

In fact we now explicitly describe these model structures via
\cite[2.1.19]{Hovey}. Again let us first ignore some important finiteness conditions
that have to be imposed. The reason is that these finiteness conditions are satisfied for categories of comodules of supercommutative
Hopf algebras $A$ (where we apply this). We usually call the functor $U$ the restriction functor.

\medskip\noindent
Step I. Define $\mathcal W$ as before by
$ f\in \mathcal W $ if and only if $U(f)\in \mathcal W_{\mathcal D}$. Then the two out of three property holds and $\mathcal W$ is closed under retracts

\medskip\noindent
Step II. Define $J=F(J_{\mathcal D})$ and $I=F(I_{\mathcal D}) \supset J$ as in the last section.
Put ${\mathcal R} = Jinj$ and ${\mathcal L} = Icof$.
Notice $I \subset Icof={\mathcal L}$. 
For $(\mathcal L,\mathcal R,\mathcal W)$ to be a model structure it suffices by the smallness property stated above that
\begin{enumerate}
\item $Jcof \subset \mathcal W$ (obvious by lemma \ref{lemma-2}), 
\item $Iinj \subset \mathcal W$, and then also $\subset \mathcal W\cap \mathcal R$ since ${\mathcal R}=Jinj \supset Iinj$ 
\item $(Jinj) \cap {\mathcal W =: \mathcal R\cap \mathcal W} 
\subset Iinj$ 
\end{enumerate}
The last property might be replaced by $ (Icof) \cap {\mathcal W} =: {\mathcal L \cap \mathcal W} \subset Jcof$, also denoted property 3'. Since all four conditions necessarily hold in a model
category, this shows that conditions 1,2,3 imply 3'. Since $J\subset I$ in our situation, we also have  $Jcof \subset Icof = \mathcal L$ and therefore by property 1 
$$ (Icof) \cap {\mathcal W} = {\mathcal L \cap \mathcal W}  = Jcof$$
once we have shown the properties 2 and 3.

\medskip\noindent
{\it Property 2}. $Iinj \subset \mathcal W$ follows from the next

\begin{lem} \label{lemma-3} $Iinj $ consists of the epimorphisms $p\in \mathcal R$ with kernel $K$ so that $UK$ is injective. 
\end{lem}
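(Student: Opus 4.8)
The plan is to identify $Iinj$ by unwinding the definitions and using the structure of the generating set $I = F(I_{\mathcal D})$. Recall $I_{\mathcal D} \supset J_{\mathcal D}$, so $I \supset J$, hence $Iinj \subseteq Jinj = \mathcal R$; by Lemma \ref{lemma-1} these are already the epimorphisms in $\mathcal C$. So it remains only to characterize, among epimorphisms $p \colon C \to D$, which ones have the right lifting property against all of $I$, and to show this is equivalent to $UK$ being injective in $\mathcal D$, where $K = \ker(p)$.

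First I would use the adjunction to translate the lifting problem. A lifting problem for $p$ against $F(i)$ with $i \colon A \to B$ in $I_{\mathcal D}$ is, by $Hom_{\mathcal C}(F(-),-) \cong Hom_{\mathcal D}(-,U(-))$, the same as a lifting problem for $U(p) \colon UC \to UD$ against $i$ in $\mathcal D$. So $p \in Iinj$ if and only if $U(p)$ has the right lifting property against every $i \in I_{\mathcal D}$, i.e. (by the very definition of $I_{\mathcal D}$ in \ref{stable-f-4}, which is exactly the left lifting property characterization of the stable model structure) if and only if $U(p)$ is a trivial fibration in the stable model structure on $\mathcal D$. By Lemma \ref{lemma-stable-cat} the trivial fibrations of the stable model structure on $\mathcal D$ are the split epimorphisms with injective kernel. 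Since $U$ is exact, $\ker(U(p)) = U(\ker p) = UK$, so this says: $U(p)$ is a split epimorphism and $UK \in \mathcal I_{\mathcal D}$.

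Next I would reconcile ``$U(p)$ split epi with $UK$ injective'' with ``$p$ epi with $UK$ injective.'' One direction is trivial: a split epimorphism is an epimorphism, and $p$ epi follows already from $p \in \mathcal R = Jinj$. For the converse, if $p \colon C \to D$ is an epimorphism in $\mathcal C$ with $UK$ injective, then applying the exact functor $U$ gives a short exact sequence $0 \to UK \to UC \to UD \to 0$ in $\mathcal D$, which splits because $UK$ is injective; hence $U(p)$ is automatically a split epimorphism. Therefore the condition ``$U(p)$ trivial fibration'' is equivalent to ``$p$ epimorphism with $UK$ injective,'' which is the assertion of the lemma. Along the way one also gets Property 2 for free: a trivial fibration in the stable model structure is in particular a weak equivalence, so $U(p) \in \mathcal W_{\mathcal D}$, i.e. $p \in \mathcal W$.

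The main obstacle, such as it is, is purely bookkeeping: one must be careful that condition \ref{stable-f-4} is stated exactly as the lifting characterization of $I_{\mathcal D}$-injectivity (it is — it says $X \in \mathcal I_{\mathcal D}$ iff $i^* \colon Hom(B,X) \to Hom(A,X)$ is surjective for all $i \in I_{\mathcal D}$, which is the right lifting property of $X \to 0$, and the general statement about $Iinj$ then follows from the standard analysis of cofibrantly generated structures used in Lemma \ref{lemma-stable-cat}), and that the adjunction really does convert $F(i)$-lifting problems into $i$-lifting problems naturally in $i$. Neither of these is genuinely difficult; the proof is a short chain of equivalences once the translation via adjunction and exactness of $U$ is set up. I would state it in three lines: $p \in Iinj \iff U(p) \in I_{\mathcal D}inj \iff U(p)$ is a split epi with injective kernel $\iff p$ is an epi with $UK$ injective.
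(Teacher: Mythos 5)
Your proof is correct and follows essentially the same route as the paper's: use the adjunction to translate a lifting problem for $p$ against $F(i)$ into one for $U(p)$ against $i$ in $\mathcal D$, then invoke the characterization (from Lemma \ref{lemma-stable-cat}) of $I_{\mathcal D}$-injective maps in the stable model structure as split epimorphisms with injective kernel, together with exactness of $U$ to identify that kernel with $UK$. The only difference is that you spell out the reconciliation of ``split epimorphism with injective kernel'' versus ``epimorphism with injective kernel'' (immediate, since $UK$ injective forces the short exact sequence $0\to UK\to UC\to UD\to 0$ to split), a step the paper leaves implicit.
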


\begin{proof}
Recall that $I$ was the class of all morphisms $F(i)$ for admissible monomorphisms $i:A\to B$. 
Consider an $I$-injective morphism $p:X\to Y$
$$ \xymatrix{ F(A) \ar[r]\ar[d]_{F(i)} & X \ar[d]^p\cr
F(B) \ar@{.>}[ur]^h \ar[r] & Y \cr
 }$$
 Recall $J\subset I$, hence $Iinj \subset Jinj$, and therefore
 $p$ must be an epimorphism by lemma \ref{lemma-1}.
 Put $K=kern(p)$ and $Q=Kokern(i)$.
We now pass to the restriction
 diagram using adjunction
$$ \xymatrix{  & UK \ar[d]\cr
A \ar[r]\ar[d]_{i} & UX \ar[d]^{U(p)}\cr
B \ar@{.>}[ur] \ar[r]\ar[d] & UY \cr
Q  & }$$
Hence $U(p)$ has to be $I_{\mathcal D}$-injective with respect to the class $I_{\mathcal D}$ of all admissible monomorphisms
in $\mathcal D$. We have shown that this is equivalent to $U(p)$ is a split
epimorphism with injective kernel $UK$. 
Hence $K$ is in $\mathcal C_-$ and $p:X\to Y$ is 
in $\mathcal W$. 
\end{proof}

{\it Property 3} is now an immediate consequence of lemma \ref{lemma-1} and \ref{lemma-3}.

\begin{cor} An object $K$ is in $\mathcal C_-$  
if and only if $UK$ is injective ( if and only if $K\to 0$ is in ${\mathcal R \cap \mathcal W} =Iinj $).
\end{cor}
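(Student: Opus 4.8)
The plan is to obtain the corollary as an immediate specialization of Lemma \ref{lemma-3}, applied to the unique morphism $p \colon K \to 0$, where $0$ is the zero object of the abelian category $\mathcal{C}$ and hence serves simultaneously as the initial object and as the terminal object $*$. So the statement should not require any new argument beyond unwinding definitions and citing results already in hand.

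Concretely, first I would note that in an abelian category $p\colon K \to 0$ is always an epimorphism — in fact a split epimorphism, the composite $0 \to K \to 0$ being the identity of $0$ — and that its kernel is $K$ itself. By Lemma \ref{lemma-1} every object of $\mathcal{C}$ is fibrant, so $p \in \mathcal{R} = Jinj$ automatically. Then I would invoke Lemma \ref{lemma-3}, which identifies $Iinj$ with the class of epimorphisms $p \in \mathcal{R}$ whose kernel $K$ satisfies that $UK$ is injective; feeding in $p\colon K \to 0$ gives the equivalence $(K \to 0) \in Iinj \iff UK \text{ injective}$.

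Finally I would chain this with two facts already established in the running discussion: on the one hand $Iinj = \mathcal{R}\cap\mathcal{W}$ (Property 2 gives $Iinj \subset \mathcal{W}\cap\mathcal{R}$ and Property 3 the reverse inclusion), and on the other hand the definition of $\mathcal{C}_-$ as the full subcategory of objects $X$ for which $X\to * = 0$ lies in $\mathcal{R}\cap\mathcal{W}$. Composing the equivalences yields $K \in \mathcal{C}_- \iff (K\to 0)\in\mathcal{R}\cap\mathcal{W} = Iinj \iff UK$ is injective, which is exactly the claim. I do not expect any real obstacle here; the only point requiring a word of care is the harmless identification of the terminal object $*$ with the zero object $0$, which is legitimate precisely because $\mathcal{C}$ is abelian, and one could equally well verify the statement directly from the characterization of $\mathcal{W}$ in terms of $\mathrm{Ext}$-vanishing (a morphism $UK \to 0$ being a stable equivalence iff $UK$ is injective, using that injectives and projectives coincide in the Frobenius category $\mathcal{D}$).
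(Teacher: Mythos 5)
Your proof is correct and matches the paper's intent: the corollary appears immediately after Lemma \ref{lemma-3} (and after the observation that Property 3 follows), with no separate proof given, precisely because it is the specialization you describe, using $Iinj = \mathcal{R}\cap\mathcal{W}$ from Properties 2 and 3 and Lemma \ref{lemma-3} applied to $p\colon K\to 0$. Your extra care about identifying $*$ with $0$ in the abelian setting is a harmless clarification and does not change the route.
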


Obviously  $\mathcal I_{\mathcal C} \subset {\mathcal C}_- $ and $\mathcal C_-$ is closed under retracts. 

\begin{cor} The morphisms in ${\mathcal R \cap \mathcal W} =Iinj $ are the epimorphisms
 $p$ with kernel in $K\in \mathcal C_-$.
\end{cor}

Property 3'  now holds automatically, and using lemma \ref{lemma-2} gives 

\begin{cor} The morphisms in $\mathcal L \cap \mathcal W $ are the  split
monomorphisms with projective kokernel.
\end{cor}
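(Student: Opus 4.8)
The plan is simply to assemble facts that are already in place. By Lemmas~\ref{lemma-1} and~\ref{lemma-3}, Properties~2 and~3 of the preceding discussion hold, so $(\mathcal L,\mathcal R,\mathcal W)$ is a cofibrantly generated model structure on $\mathcal C$ with generating sets $J$ and $I$; in particular the "Property~$3'$" recorded above, namely $\mathcal L\cap\mathcal W\subset Jcof$, holds automatically (the trivial cofibrations of a model category are exactly the maps with the left lifting property against the fibrations $\mathcal R=Jinj$). For the reverse inclusion one does not even need the model structure: $J\subset I$ gives $Jcof\subset Icof=\mathcal L$, while $Jcof\subset\mathcal W$ is Property~1, which is immediate from Lemma~\ref{lemma-2} — a split monomorphism in $\mathcal C$ with projective cokernel is carried by the exact functor $U$ to a split monomorphism in $\mathcal D$ with cokernel in $\mathcal P_{\mathcal D}=\mathcal I_{\mathcal D}$, hence to a stable equivalence, so it lies in $\mathcal W$. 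Therefore $\mathcal L\cap\mathcal W=Jcof$.

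It then remains only to quote Lemma~\ref{lemma-2} once more, which already identifies $Jcof={\mathcal R}proj$ with precisely the split monomorphisms in $\mathcal C$ whose cokernel is projective. Combining this with $\mathcal L\cap\mathcal W=Jcof$ yields the corollary.

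There is no genuine obstacle here: all the substance — the explicit descriptions of $Jinj$, $Jcof$ and $Iinj$ — was already discharged in Lemmas~\ref{lemma-1}--\ref{lemma-3}, and the present corollary is a one-line bookkeeping consequence. The only subtlety worth a remark is the logical order, namely that the identity $\mathcal L\cap\mathcal W=Jcof$ is being invoked as the abstract property of a cofibrantly generated model category, which is licensed precisely because Properties~1--3 have just been verified for $(\mathcal L,\mathcal R,\mathcal W)$.
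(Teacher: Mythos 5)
Your proof is correct and takes essentially the same route as the paper: verify Properties 1–3 via Lemmas \ref{lemma-1}–\ref{lemma-3} to obtain the cofibrantly generated model structure, deduce $\mathcal L\cap\mathcal W=Jcof$ from the generation and $J\subset I$, and then identify $Jcof$ via Lemma \ref{lemma-2}. Your brief aside justifying $Jcof\subset\mathcal W$ (using that $U$ carries $\mathcal P_{\mathcal C}$ into $\mathcal P_{\mathcal D}$, since $U$ is right adjoint to the exact $F$ and both categories are Frobenius) is a welcome expansion of what the paper records only in passing.
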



We now describe the class $\mathcal L$. We start with a technical lemma. In an abelian category $\mathcal C$ we consider diagrams
$$ \xymatrix{ A \ar@{^{(}->}[d]^i\ar[r]^f & C \ar@{->>}[d]^p \cr
B \ar@{.>}[ru]^h\ar[r]^g & D \cr} $$
with  epimorphism $p$ (with kernel $K$) and monomorphism $i$ (with kokernel $Q$)
and look for liftings $h$ making the diagram commutative.

\begin{lem} \label{lem:lifting} \cite[Chapter VIII Lemma 3.1]{Beligiannis-Reiten} Under the assumption $Ext^1(Q,K)=0$ the diagram above has the lifting property in the situation above ($i$ monomorphism and $p$ epimorphism).
\end{lem}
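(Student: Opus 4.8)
The plan is to reformulate the lifting problem as a splitting problem for a short exact sequence whose class lies in $Ext^1(Q,K)$. First I would form the pullback $P = C\times_D B$ of $p$ along $g$, with projections $\pi\colon P\to B$ and $\rho_C\colon P\to C$ satisfying $p\rho_C = g\pi$. Since kernels are preserved under pullback and $p$ is an epimorphism with kernel $K$, the map $\pi$ is again an epimorphism with kernel $K$. The commutativity $pf=gi$ together with the universal property of the pullback yields a morphism $\iota\colon A\to P$ with $\rho_C\iota = f$ and $\pi\iota = i$. The key observation is that producing the desired lift $h\colon B\to C$ is equivalent to producing a section $s\colon B\to P$ of $\pi$ with $si=\iota$: given such an $s$, set $h=\rho_C s$; then $ph = p\rho_C s = g\pi s = g$ and $hi=\rho_C si = \rho_C\iota = f$.

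Next I would analyze $\iota$. Because $\pi\iota=i$ is a monomorphism, $\iota$ is a monomorphism and $\iota(A)\cap K=0$ inside $P$; moreover $P/(\iota(A)+K)\cong B/i(A)=Q$ since $P/K\cong B$. Dividing by $\iota(A)$ therefore gives a short exact sequence $0\to K\to P/\iota(A)\to Q\to 0$. By the hypothesis $Ext^1(Q,K)=0$ this sequence splits, so there is a retraction $\bar r\colon P/\iota(A)\to K$ restricting to the identity on $K$. Composing with the quotient $P\to P/\iota(A)$ produces $r\colon P\to K$ with $r|_K=\id_K$ and $r\iota=0$.

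From $r$ I would build the section. Let $j\colon K\hookrightarrow P$ be the inclusion; then $jr\colon P\to P$ is idempotent, so $P = \ker(r)\oplus K$. The restriction of $\pi$ to $\ker(r)$ has kernel $\ker(r)\cap K = \ker(r|_K)=0$ and is surjective because $\pi$ kills $K$ and is surjective on $P$; hence $\pi$ induces an isomorphism $\ker(r)\xrightarrow{\sim}B$. Let $s\colon B\to P$ be the inverse of this isomorphism followed by the inclusion $\ker(r)\hookrightarrow P$. Then $\pi s=\id_B$, and since $r\iota=0$ forces $\iota$ to factor through $\ker(r)$, one checks $si=\iota$. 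This $s$ is precisely the section required in the first step, completing the proof.

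I expect the only real subtlety to be the bookkeeping in a general abelian category: the ingredients ("kernels preserved by pullback", "$\iota(A)\cap K=0$", "$P=\ker(r)\oplus K$", the identification $P/(\iota(A)+K)\cong Q$) are all standard but must be phrased either via the Freyd--Mitchell embedding or directly through universal properties and the splitting lemma rather than with elements. The one genuinely non-formal input is recognizing that the obstruction to the lift is exactly the extension class of $0\to K\to P/\iota(A)\to Q\to 0$, so that the hypothesis $Ext^1(Q,K)=0$ is precisely what makes the obstruction vanish.
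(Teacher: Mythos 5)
The paper gives no internal proof of this lemma---it simply cites \cite[Chapter VIII Lemma 3.1]{Beligiannis-Reiten}---so there is nothing in the paper to compare against. Your argument is correct and is a standard pullback proof: form $P = C\times_D B$, observe that a compatible lift $h$ is equivalent to a section $s$ of $\pi\colon P\to B$ satisfying $si=\iota$, and identify the obstruction to such a section with the class of the extension $0\to K\to P/\iota(A)\to Q\to 0$ in $\operatorname{Ext}^1(Q,K)$. The subobject manipulations you flag ($\iota(A)\cap K=0$, $P/(\iota(A)+K)\cong Q$, the idempotent splitting $P=\ker(r)\oplus K$) are routine and can be done diagrammatically or via an embedding theorem, exactly as you note. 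One remark worth having: an equally common two-step formulation reaches the same conclusion without building $P$. From $0\to K\to C\to D\to 0$ one gets the connecting class $\delta(g)\in\operatorname{Ext}^1(B,K)$; its restriction $i^*\delta(g)=\delta(pf)$ vanishes because $pf$ already lifts to $f$, so $\delta(g)$ lies in the image of $\operatorname{Ext}^1(Q,K)=0$ and $g$ lifts to some $h'\colon B\to C$. The defect $h'i-f$ then factors through $K$ and extends along $i$ to some $\beta\colon B\to K$, since $\operatorname{Hom}(B,K)\to\operatorname{Hom}(A,K)$ is onto (again by $\operatorname{Ext}^1(Q,K)=0$), and $h=h'-j\beta$ is the required lift. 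Your pullback argument packages both uses of the hypothesis into the single extension $0\to K\to P/\iota(A)\to Q\to 0$.
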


Define $\mathcal L_-$ to be the class of morphisms $${\mathcal L}_- = \{{\mathcal C}_- \to 0\}proj \ . $$
Then ${\mathcal L}=Icof \subset \mathcal L_-$ is a consequence of lemma \ref{lemma-3}. 
Since $F$ and $U$ are exact, $F$ preserves monomorphisms and projectives, and $U$ preserves injectives and monomorphisms (since $U$ was supposed to be faithful). 
If $\mathcal C$ has enough injectives, this implies that any morphism
in $\mathcal L_-$ is a monomorphism. Indeed it suffices to observe that any $A\in \mathcal C$ can be embedded into an injective object $L$ of $\mathcal C$. Since $\mathcal I_{\mathcal{C}} \subset \mathcal C_-$
the lifting property then forces any $i \in \mathcal L_-$ to be a monomorphism. 

\begin{lem} $\mathcal L$
is the class
of monomorphisms in $\mathcal C$ whose kokernel is in $\mathcal C_+$.
\end{lem}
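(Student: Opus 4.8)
The claim is that $\mathcal{L}$ (which we have identified with $Icof$) consists exactly of the monomorphisms in $\mathcal{C}$ whose cokernel lies in $\mathcal{C}_+$. The plan is to prove the two inclusions separately, using the cotorsion-pair structure $Ext^1(\mathcal{C}_+,\mathcal{C}_-)=0$ and the characterization of $Iinj$ from Lemma \ref{lemma-3} together with the lifting Lemma \ref{lem:lifting}.

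\emph{First inclusion: $\mathcal{L} \subseteq \{\text{monos with cokernel in }\mathcal{C}_+\}$.} We already know from the discussion above that $\mathcal{L} = Icof \subseteq \mathcal{L}_-$ and that every morphism in $\mathcal{L}_-$ is a monomorphism (using that $\mathcal{C}$ has enough injectives and $\mathcal{I}_{\mathcal{C}}\subseteq\mathcal{C}_-$). So let $i:A\to B$ be in $\mathcal{L}$ with cokernel $Q$; I must show $Q\in\mathcal{C}_+$, i.e. $0\to Q$ is in $\mathcal{L}$. Since $\mathcal{L}=Icof$ is closed under pushout (cofibrations are stable under pushout, \cite[Corollary 1.1.11]{Hovey}), and the square with $i:A\to B$, $A\to 0$, $0\to Q$, $B\to Q$ is a pushout of $i$ along $A\to 0$, the morphism $0\to Q$ is again in $\mathcal{L}$. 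Hence $Q\in\mathcal{C}_+$.

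\emph{Second inclusion: any monomorphism $i:A\to B$ with cokernel $Q\in\mathcal{C}_+$ lies in $\mathcal{L}=Icof$.} By definition of $Icof$ I must produce a lifting in every square
$$ \xymatrix{ A \ar@{^{(}->}[d]^i\ar[r]^f & C \ar@{->>}[d]^p \cr B \ar@{.>}[ru]^h\ar[r]^g & D \cr} $$
with $p\in Iinj$. By Lemma \ref{lemma-3}, $p$ is an epimorphism with kernel $K$ such that $UK$ is injective, i.e. $K\in\mathcal{C}_-$. Now apply Lemma \ref{lem:lifting}: the lifting exists provided $Ext^1_{\mathcal{C}}(Q,K)=0$. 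But $Q\in\mathcal{C}_+$ and $K\in\mathcal{C}_-$, so this vanishing is exactly the cotorsion-pair orthogonality $Ext^1(\mathcal{C}_+,\mathcal{C}_-)=0$ established for the induced model structure. Hence $h$ exists and $i\in Icof=\mathcal{L}$.

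\emph{Main obstacle.} The genuinely load-bearing input is the orthogonality $Ext^1(\mathcal{C}_+,\mathcal{C}_-)=0$ feeding into Lemma \ref{lem:lifting}; everything else is formal manipulation of the lifting axioms. One should double-check that $Ext^1$ here means the Yoneda $Ext^1$ in the ambient abelian category $\mathcal{C}$ (so that Lemma \ref{lem:lifting} applies verbatim), and that the identification $\mathcal{C}_-=\{K: UK\text{ injective}\}$ from the corollary after Lemma \ref{lemma-3} is used in the right direction. A secondary point to verify is that $0\to Q$ being in $\mathcal{L}$ is literally the definition of $Q$ being cofibrant, which is immediate from the \texttt{definition} of $\mathcal{C}_+$ above; no subtlety there.
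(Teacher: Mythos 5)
Your proof is correct and follows essentially the same two-inclusion strategy as the paper: the forward inclusion via the pushout-stability of cofibrations to show $Q\in\mathcal C_+$, and the reverse inclusion by combining Lemma~\ref{lemma-3} (description of $Iinj$) with the $Ext^1$-lifting Lemma~\ref{lem:lifting}.

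One small presentational caveat: you cite the orthogonality $Ext^1(\mathcal C_+,\mathcal C_-)=0$ as ``the cotorsion-pair orthogonality established for the induced model structure,'' but in the paper's logical order the cotorsion-pair theorem (\ref{thm:main-homotopy}) is proved \emph{after} this lemma. This is not actually circular, because the needed vanishing $Ext^1(Q,K)=0$ for $Q\in\mathcal C_+$, $K\in\mathcal C_-$ follows already from the definitions at hand: an extension $0\to K\to E\to Q\to 0$ with $K\in\mathcal C_-$ yields an epimorphism $E\to Q$ in $Iinj=\mathcal R\cap\mathcal W$ (by Lemma~\ref{lemma-3} and its corollary), and $0\to Q\in\mathcal L=Icof$ has by definition the left lifting property against $Iinj$, giving a splitting. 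It would be cleaner to derive the orthogonality this way rather than forward-referencing the cotorsion-pair section; otherwise the argument matches the paper's.
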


\begin{proof} We know $\mathcal L\subset \mathcal L_-$ and any morphism $i:A\to B$ in $\mathcal L_-$ is a monomorphism, since $\mathcal C$ has enough injectives. Let $Q$ denote the kokernel of $i$. Since the class of cofibrations is always closed under pushouts (\cite[cor.1.1.11]{Hovey}), $i\in \mathcal L$ implies that $0\to B/i(A)=Q$ is in $\mathcal L$, i.e. cofibrant
$$ \xymatrix{ A \ar[r]\ar[d]^{i\in \mathcal L}  & 0 \ar[d] \cr B \ar[r]  & B/i(A)\cr } $$
Hence $Q \in \mathcal C_+$ is a necessary condition (any extension $E$ of $Q$ by $K\in \mathcal C_-$
gives rise to a morphisms $p:E\to Q$ in $\mathcal R\cap \mathcal W$). For the converse we have to show the lifting property for epimorphisms $p$
with kernel in $\mathcal C_-$. The claim follows from lemma \ref{lem:lifting} since
$Q\in \mathcal C_+$ and $K\in \mathcal C_-$
\end{proof}


\begin{cor} $\mathcal C_- \cap \mathcal C_+ = \mathcal{I}_{\mathcal C} $.
\end{cor}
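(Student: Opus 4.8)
\textbf{Plan for the proof of $\mathcal C_- \cap \mathcal C_+ = \mathcal I_{\mathcal C}$.}
The plan is to prove the two inclusions separately, and in fact only one of them requires genuine work. The inclusion $\mathcal I_{\mathcal C} \subseteq \mathcal C_- \cap \mathcal C_+$ has essentially already been recorded: we noted just above that $\mathcal I_{\mathcal C} \subseteq \mathcal C_-$, and we should remark that dually every injective object of $\mathcal C$ is also cofibrant, i.e. $\mathcal I_{\mathcal C} \subseteq \mathcal C_+$. This last point follows from the characterization of $\mathcal C_+$ as the monomorphisms out of $0$ with cokernel in $\mathcal C_+$ (trivially $0 \to L$ has cokernel $L$, so one only needs $L \in \mathcal C_+$), but more directly: by the previous lemma $\mathcal L$ consists of monomorphisms with cokernel in $\mathcal C_+$, and for an injective $L$ the morphism $0 \to L$ lifts against every epimorphism, so in particular against every $p \in \mathcal R$, hence $0 \to L$ is a cofibration and $L \in \mathcal C_+$.

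For the reverse inclusion $\mathcal C_- \cap \mathcal C_+ \subseteq \mathcal I_{\mathcal C}$, let $X$ be an object that is both cofibrant and trivially fibrant. The key idea is to exploit the cotorsion-type orthogonality $Ext^1(\mathcal C_+,\mathcal C_-) = 0$ (recorded in the introduction; it also follows here since $\mathcal L = Icof$ and $\mathcal C_- \to 0$ lies in $Iinj$, so the lifting lemma \ref{lem:lifting} applies symmetrically). Indeed, since $X \in \mathcal C_+$ and $X \in \mathcal C_-$ simultaneously, we get $Ext^1(X, X) = 0$. First I would embed $X$ into an injective object: choose a short exact sequence $0 \to X \to L \to Z \to 0$ with $L \in \mathcal I_{\mathcal C}$. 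Since $\mathcal I_{\mathcal C} \subseteq \mathcal C_-$ and $\mathcal C_-$ is closed under... — actually the cleaner route is to observe that $Z \in \mathcal C_+$, because $\mathcal C_+$ is closed under cokernels of monomorphisms between cofibrant objects (both $X$ and $L$ are cofibrant, and the cokernel of a cofibration between cofibrant objects is cofibrant, as $0 \to L \to Z$ is a composite of cofibrations hence a cofibration). Then $Ext^1(Z, X) = 0$ because $Z \in \mathcal C_+$ and $X \in \mathcal C_-$, so the sequence $0 \to X \to L \to Z \to 0$ splits. Therefore $X$ is a retract of the injective object $L$, and since $\mathcal I_{\mathcal C}$ is closed under retracts, $X \in \mathcal I_{\mathcal C}$.

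The main obstacle, and the step to be careful about, is making precise the claim that $\mathcal C_+$ is closed under the relevant cokernels — i.e. that if $0 \to X \to L$ is a monomorphism with $X, L \in \mathcal C_+$ then $L/X \in \mathcal C_+$. The justification is that $\mathcal L$ (cofibrations) is closed under pushout along $X \to 0$: pushing $X \hookrightarrow L$ out along $X \to 0$ yields $0 \to L/X$, which is therefore a cofibration, i.e. $L/X \in \mathcal C_+$; here one uses $X \in \mathcal C_+$ only to know $0 \to X$ is a cofibration so that $0 \to L$ is a composite of cofibrations, together with stability of cofibrations under pushout (\cite[Corollary 1.1.11]{Hovey}). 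With that in hand the splitting argument via $Ext^1(\mathcal C_+,\mathcal C_-)=0$ finishes the proof, and everything reduces to the orthogonality statement and closure of $\mathcal I_{\mathcal C}$ under retracts, both already available.
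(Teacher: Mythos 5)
Your overall plan (embed $X\in\mathcal C_-\cap\mathcal C_+$ into an injective, then use $Ext^1(\mathcal C_+,\mathcal C_-)=0$ to split) is reasonable, but the key step is circular: you need $Z=L/X\in\mathcal C_+$ to apply the orthogonality, and your justification of this — ``push $X\hookrightarrow L$ out along $X\to 0$ and use stability of cofibrations under pushout'' — presupposes that $X\hookrightarrow L$ is a cofibration, which by the preceding lemma is \emph{equivalent} to $Z\in\mathcal C_+$. The alternative reading ``$0\to L$ is a composite of cofibrations $0\to X\to L$'' has the same problem: you would already need $X\to L$ to be a cofibration. So the step does not actually establish anything; you have not shown $Z\in\mathcal C_+$, and without it $Ext^1(Z,X)$ is not forced to vanish (indeed one easily checks $Z\in\mathcal C_-$, which is the wrong side of the cotorsion pair).

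The fix is to dualize your construction. Use the projective cover $0\to K\to P\twoheadrightarrow X\to 0$ with $P\in\mathcal P_{\mathcal C}=\mathcal I_{\mathcal C}$ instead of the injective hull. Apply $U$: since $X\in\mathcal C_-$ means $UX\in\mathcal I_{\mathcal D}$, the sequence $0\to UK\to UP\to UX\to 0$ splits, so $UK$ is a direct summand of the injective object $UP$ and hence injective, i.e.\ $K\in\mathcal C_-$. Now the orthogonality applies with the roles in the correct order: $X\in\mathcal C_+$ and $K\in\mathcal C_-$ give $Ext^1(X,K)=0$, so $P\twoheadrightarrow X$ splits and $X$ is a retract of $P$, hence in $\mathcal I_{\mathcal C}$. (Your argument for the easy inclusion is fine, but note that ``$0\to L$ lifts against every epimorphism'' is the \emph{projective} lifting property; it applies to injective $L$ only because $\mathcal I_{\mathcal C}=\mathcal P_{\mathcal C}$, which is worth saying explicitly.)

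For comparison, the paper takes a shorter route that avoids cotorsion reasoning altogether: it characterizes $\mathcal L\cap\mathcal W$ in two ways — as the monomorphisms with cokernel in $\mathcal C_+\cap\mathcal C_-$ (combining the description of $\mathcal L$ with the fact that $U$ of a monic weak equivalence has injective cokernel), and, from lemma \ref{lemma-2}, as the split monomorphisms with projective cokernel — and then specializes both to $i\colon 0\to Q$. Your approach, once repaired, is a genuine alternative based on the cotorsion orthogonality; the paper's is more self-contained at this point in the text and needs no embedding into injectives.
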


\begin{proof} From the description of $\mathcal L$ from above and the description of $\mathcal W$ we conclude: $i\in \mathcal L\cap \mathcal W$ if and only $i$ is monomorphisms with kokernel $Q\in \mathcal C_+\cap \mathcal C_-$. On the other hand we know that $\mathcal L\cap \mathcal W$ are the split monomorphisms with injective kokernel.
Considering this for the monomorphisms $i: 0\to Q$, the corollary follows.
\end{proof}

\begin{cor}\label{model-structure-descr} \begin{enumerate} 
\item The fibrations $\mathcal{R}$ are the epimorphisms.
\item The trivial fibrations $\mathcal{R} \cap \mathcal{W}$ are the epimorphisms with kernel in $\mathcal{C}_-$.
\item The cofibrations $\mathcal{L}$ are the monomorphisms with kokernel in $\mathcal{C}_+$.
\item The trivial cofibrations $\mathcal{L} \cap \mathcal{W}$ are the split monomorphisms with kokernel in $\mathcal{C}_+^{triv} = \mathcal{P}_{\mathcal{C}}$.
\end{enumerate}
\end{cor}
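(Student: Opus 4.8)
The plan is to recognize that all four assertions have already been established in the course of this subsection, so the proof is a matter of assembling the preceding lemmas and corollaries and of unwinding the notation $\mathcal{C}_+^{triv}$. Statement (1) is exactly Lemma \ref{lemma-1}, which identifies $\mathcal{R} = Jinj$ with the class of epimorphisms of $\mathcal{C}$, and statement (3) is exactly the lemma identifying $\mathcal{L}$ with the monomorphisms whose kokernel lies in $\mathcal{C}_+$ (proved there via $\mathcal{L} \subset \mathcal{L}_-$, closure of cofibrations under pushout, and Lemma \ref{lem:lifting}).

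For (2), I would either cite directly the corollary already stating that the members of $\mathcal{R} \cap \mathcal{W} = Iinj$ are the epimorphisms with kernel in $\mathcal{C}_-$, or give the two-line argument: by (1) a trivial fibration is an epimorphism $p$ with some kernel $K$, and $p \in \mathcal{W}$ means $U(p) \in \mathcal{W}_{\mathcal{D}}$; by Lemma \ref{lemma-3} (equivalently by Lemma \ref{lemma-stable-cat} applied to the stable model structure on $\mathcal{D}$) this holds if and only if the kernel $UK$ of $U(p)$ is injective, which by the earlier corollary means precisely $K \in \mathcal{C}_-$.

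For (4), the first step is to unwind the definition of $\mathcal{C}_+^{triv}$: it is the class of objects $X$ for which $0 \to X$ lies in $\mathcal{L} \cap \mathcal{W}$. By Lemma \ref{lemma-2}, $\mathcal{L} \cap \mathcal{W} = Jcof$ consists of the split monomorphisms with projective kokernel, so $0 \to X$ lies there exactly when $X$ is a retract of a projective object, i.e. $X \in \mathcal{P}_{\mathcal{C}}$; conversely every $P \in \mathcal{P}_{\mathcal{C}}$ yields $0 \to P$ in $Jcof$. Hence $\mathcal{C}_+^{triv} = \mathcal{P}_{\mathcal{C}}$, and (4) is then precisely the corollary ``the morphisms in $\mathcal{L} \cap \mathcal{W}$ are the split monomorphisms with projective kokernel'', rephrased through this identification.

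There is no genuine obstacle here; the corollary is a summary, and the only point deserving a word of care is that the characterization of $\mathcal{L}$ invoked enough injectives in $\mathcal{C}$, which is guaranteed because $\mathcal{C}$ is an abelian Frobenius category. As a tidy-up one may also note that $\mathcal{P}_{\mathcal{C}} = \mathcal{I}_{\mathcal{C}}$ by the Frobenius hypothesis, which reconciles (4) with the earlier corollary $\mathcal{C}_+ \cap \mathcal{C}_- = \mathcal{I}_{\mathcal{C}}$: the trivially cofibrant objects that are simultaneously trivially fibrant are exactly the injectives.
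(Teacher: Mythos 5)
Your proposal is correct, and it matches the (implicit) approach of the paper: the corollary is indeed a summary of the preceding lemmas and corollaries in this subsection, so the paper supplies no separate proof. Your assembly of Lemma \ref{lemma-1} for (1), the corollary on $Iinj$ for (2), the lemma characterizing $\mathcal{L}$ for (3), and the identification $\mathcal{L}\cap\mathcal{W}=Jcof$ via Lemma \ref{lemma-2} for (4), together with the unwinding of $\mathcal{C}_+^{triv}$ as the $X$ with $0\to X\in\mathcal{L}\cap\mathcal{W}$, is exactly the right bookkeeping; the remark reconciling $\mathcal{P}_{\mathcal{C}}=\mathcal{I}_{\mathcal{C}}$ with $\mathcal{C}_+\cap\mathcal{C}_-=\mathcal{I}_{\mathcal{C}}$ is a nice sanity check but not required.
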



\subsection{Cotorsion pairs} Model categories whose morphisms satisfy certain good properties as in corollary \ref{model-structure-descr} give rise to cotorsion pairs as described in \cite{Beligiannis-Reiten} \cite{Hovey-cotorsion-1}. Recall from \cite[Chapter V Definition 3.1]{Beligiannis-Reiten} the following definition: A cotorsion pair in an abelian category $\mathcal{C}$ is a pair of  full subcategories $(\mathcal{X},\mathcal{Y})$ closed under isomorphisms and direct summands satisfying

\begin{enumerate}
\item $Ext^1(\mathcal{X},\mathcal{Y}) = 0$.
\item For any object $C \in \mathcal{C}$ there exists a short exact sequence \[ \xymatrix{ 0 \ar[r] & Y_C \ar[r] & X_C \ar[r] & C \ar[r] & 0 } \] with $Y_C \in \mathcal{Y}$ and $X_C \in \mathcal{X}$.
\item For any object $C \in \mathcal{C}$ there exists a short exact sequence \[ \xymatrix{ 0 \ar[r] & C \ar[r] & Y^C \ar[r] & X^C \ar[r] & 0  } \] with $Y^C \in \mathcal{Y}$ and $X^C \in \mathcal{X}$.
\end{enumerate}

Furthermore \cite[Chapter I Definition 2.1]{Beligiannis-Reiten} a torsion pair in $\mathcal{C'}$ ($\mathcal{C'}$ triangulated with suspension $\Sigma$) is a pair of strict full subcategories $(\mathcal{X},\mathcal{Y})$ satisfying

\begin{enumerate}
\item $Hom(\mathcal{X},\mathcal{Y}) = 0$.
\item $\Sigma \mathcal{X} \subset \mathcal{Y}$ and $\Sigma^{-1}(\mathcal{Y}) \subset \mathcal{Y}$.
\item For any $C \in \mathcal{C'}$ there exists a triangle \[ \xymatrix{ X_C \ar[r] & C \ar[r] & Y^C \ar[r] & \Sigma(X_C) } \] in $\mathcal{C}$ such that $X_C \in \mathcal{X}$ and $Y^C \in \mathcal{Y}$.
\end{enumerate}

\begin{thm} \label{thm:main-homotopy} \begin{enumerate}[label=\textbf{S.\arabic*}]
\item  \label{cof-trivial} $\mathcal{C}_+^{triv} = Proj$.

\item \label{cotorsion} The pairs $(\mathcal{C}_+,\mathcal{C}_-)$ and $(\mathcal{C}_+^{triv},\mathcal{C})$ are cotorsion pairs in $\mathcal{C}$.

\item \label{torsion} $(\mathcal{C}_+,\mathcal{C}_-)$ and $(\mathcal{C}_+^{triv} = Proj,\mathcal{C})$ define torsion pairs in the triangulated category $\overline{\mathcal{C}}$. In particular any map between a cofibrant object $X$ and a trivially fibrant object $Y$ factors through a projective object.

\item \label{cof-fib-prop}  $\mathcal{C}_-$ and $\mathcal{C}_+$ are closed under extensions and cokernels of monics and kernels of epis. 

\item \label{cof-frobenius} $\mathcal{C}_+$ is a Frobenius category.
\end{enumerate}
\end{thm}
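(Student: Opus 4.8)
The five assertions are a package of structural facts about the induced model structure, and each reduces to combining the explicit descriptions in Corollary \ref{model-structure-descr} with the basic model-category formalism recalled in Section \ref{sec:monoidal-model-def} and the stable-category formalism of Section \ref{sec:stable-cat}. My plan is to prove them roughly in the stated order, since later parts lean on earlier ones.

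For \ref{cof-trivial} I would simply unwind definitions: $\mathcal{C}_+^{triv}$ consists of objects $X$ for which $0 \to X$ lies in $\mathcal{L} \cap \mathcal{W}$; by Corollary \ref{model-structure-descr}(4) this means $0\to X$ is a split monomorphism with cokernel in $\mathcal{P}_{\mathcal{C}}$, i.e. $X \cong X/0 \in \mathcal{P}_{\mathcal{C}} = Proj$. (One should also note $\mathcal{C}_+^{triv} = \mathcal{C}_+ \cap \mathcal{C}_-$, which by the corollary preceding \ref{model-structure-descr} equals $\mathcal{I}_{\mathcal{C}}$; since $\mathcal{C}$ is Frobenius, $\mathcal{I}_{\mathcal{C}} = \mathcal{P}_{\mathcal{C}}$, giving the same conclusion and reconciling the notation.) For \ref{cotorsion}: the orthogonality $Ext^1(\mathcal{C}_+,\mathcal{C}_-)=0$ was already recorded (it follows from Lemma \ref{lem:lifting} together with the descriptions of $\mathcal{L}$ and $\mathcal{C}_-$), and $Ext^1(Proj,\mathcal{C})=0$ is automatic. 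The two-sided approximation sequences are exactly the factorizations provided by the model structure: factoring $0 \to C$ as $0 \to X_C \to C$ with $0\to X_C$ a cofibration and $X_C \to C$ a trivial fibration gives, by Corollary \ref{model-structure-descr}(1),(2), the sequence $0 \to Y_C \to X_C \to C \to 0$ with $X_C \in \mathcal{C}_+$ and kernel $Y_C \in \mathcal{C}_-$; dually, factoring $C \to *$ (using $\mathcal W \cap \mathcal R$ vs $\mathcal L$, resp. $\mathcal R$ vs $\mathcal L \cap \mathcal W$) yields the coresolutions. The same factorizations for the pair $(Proj,\mathcal{C})$ use parts (3),(4) of the corollary.

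For \ref{torsion} I would pass to the stable category $\overline{\mathcal{C}}$ (a triangulated category since $\mathcal{C}$ is Frobenius) and transport the cotorsion data: $Hom_{\overline{\mathcal{C}}}(\mathcal{C}_+,\mathcal{C}_-)$ measures maps modulo those factoring through $\mathcal{I}_{\mathcal{C}}$, and the key point — "any map $X \to Y$ with $X \in \mathcal{C}_+$, $Y \in \mathcal{C}_-$ factors through a projective" — is precisely the statement that such a map is $0$ in $\overline{\mathcal{C}}$; this I would extract from $Ext^1(\mathcal{C}_+,\mathcal{C}_-)=0$ by the standard argument (embed $Y$ in an injective $IY$, form the pushout/lift, and use that the obstruction to lifting along $Y \hookrightarrow IY$ lies in an $Ext^1$-group that vanishes). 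The stability conditions $\Sigma \mathcal{C}_+ \subset \mathcal{C}_-$ etc. and the triangle axiom follow from the short exact sequences of \ref{cotorsion} by rotating the associated standard triangles, once one checks $\mathcal{C}_\pm$ are closed under the suspension/loop in $\overline{\mathcal{C}}$ — which is where \ref{cof-fib-prop} enters. So I would prove \ref{cof-fib-prop} next: closure of $\mathcal{C}_-$ and $\mathcal{C}_+$ under extensions, cokernels of monics, and kernels of epis. Closure of $\mathcal{C}_-$ under extensions is immediate from $\mathcal{C}_- = \{K : UK \in \mathcal{I}_{\mathcal{D}}\}$ since $U$ is exact and $\mathcal{I}_{\mathcal{D}}$ is closed under extensions (injectives form a closed-under-extension class in any abelian category with enough injectives); closure of $\mathcal{C}_+$ under extensions is the "$\mathcal{L}$ closed under pushout/composition" formalism phrased for objects. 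For cokernels of monics and kernels of epis within each class: given $0 \to A \to B \to C \to 0$ with two of $A,B,C$ in $\mathcal{C}_-$, apply $U$ and use the three-lemma-style closure properties of injectives (an injective is a direct summand of anything containing it), and dually for $\mathcal{C}_+$ use the lifting Lemma \ref{lem:lifting} together with $Ext^1(\mathcal{C}_+,\mathcal{C}_-)=0$ to split off the relevant pieces.

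Finally, for \ref{cof-frobenius}: I would show $\mathcal{C}_+$, with the exact structure inherited from $\mathcal{C}$ (admissible monos/epis those in $\mathcal{C}_+$ that are monos/epis in $\mathcal{C}$ with kernel, resp. cokernel, in $\mathcal{C}_+$ — well-defined by \ref{cof-fib-prop}), has enough relative projectives and enough relative injectives, and that the two classes coincide. The relative projectives and injectives of $\mathcal{C}_+$ should both turn out to be $\mathcal{C}_+ \cap \mathcal{C}_- = \mathcal{I}_{\mathcal{C}} = \mathcal{P}_{\mathcal{C}}$: these are (honestly) injective and projective in $\mathcal{C}$, hence a fortiori relatively so in the full subcategory $\mathcal{C}_+$, and there are enough of them because every $X \in \mathcal{C}_+$ embeds in an injective $IX$ of $\mathcal{C}$ (which lies in $\mathcal{C}_+$) with cokernel $IX/X$, which lies in $\mathcal{C}_+$ by \ref{cof-fib-prop} (cokernel of a mono between objects of $\mathcal{C}_+$); dually $X$ is a quotient of a projective $PX \in \mathcal{C}_+$ — here one must check $PX$ can be chosen in $\mathcal{C}_+$, e.g. by taking a projective cover or, more robustly, by using the cotorsion approximation $0 \to Y \to X' \to X \to 0$ with $X'$ projective-over-$X$ in an appropriate sense — with kernel in $\mathcal{C}_+$ by the kernel-of-epi clause. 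The main obstacle I anticipate is exactly this last point, establishing "enough projectives" for the \emph{relative} exact structure on $\mathcal{C}_+$: unlike the injective side, there is no immediate ambient source of projective objects mapping onto an arbitrary cofibrant object while staying inside $\mathcal{C}_+$, so one has to argue carefully, most likely by combining the trivial-cofibration factorization (Corollary \ref{model-structure-descr}(4), which produces for any $X$ a split mono $X \to Z$ with $Z/X \in \mathcal{P}_{\mathcal{C}}$... applied in the right variance) with $\ref{cof-fib-prop}$. Once enough projectives and injectives in $\mathcal{C}_+$ are in hand and both are identified with $\mathcal{I}_{\mathcal{C}}$, the Frobenius property is immediate.
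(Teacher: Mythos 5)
Your proposal is correct in substance but takes a genuinely different route from the paper: the paper's proof of theorem \ref{thm:main-homotopy} consists entirely of citations to Beligiannis--Reiten (Chapter VIII Lemma 2.2 for \ref{cof-trivial}, Chapter VIII Proposition 3.4 for \ref{cotorsion}, Chapter VIII Corollary 3.7 and Chapter I Proposition 2.6 for \ref{torsion}, Chapter V Lemma 2.4 and Chapter I Corollary 2.9 for \ref{cof-fib-prop}, Chapter VI Theorem 2.1 for \ref{cof-frobenius}), whereas you re-derive everything by hand from corollary \ref{model-structure-descr}, the factorization axiom, and the vanishing $Ext^1(\mathcal{C}_+,\mathcal{C}_-)=0$. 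In effect you unwind the abelian-model-category/cotorsion-pair dictionary explicitly; this is more self-contained and makes visible exactly which features of the induced model structure each assertion uses, at the cost of length and of having to manage the interdependencies among the five parts yourself (you correctly notice that \ref{torsion} and \ref{cof-frobenius} lean on \ref{cof-fib-prop}).

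Three spots in your sketch need tightening. In \ref{torsion} the object to resolve is the cofibrant $X$, not $Y$: from the coresolution $0\to X\to Y^X\to X^X\to 0$ of \ref{cotorsion} one gets $Y^X\in\mathcal{C}_-\cap\mathcal{C}_+=\mathcal{I}_{\mathcal{C}}$ (once closure of $\mathcal{C}_+$ under extensions is known), and any $f:X\to Y$ with $Y\in\mathcal{C}_-$ extends over $X\hookrightarrow Y^X$ because the obstruction lies in $Ext^1(X^X,Y)=0$; embedding $Y$ into $IY$ does not by itself produce a factorization of $f$ through an injective. In \ref{cof-fib-prop}, for cokernels of monics and kernels of epis in $\mathcal{C}_+$ your argument yields $Ext^1(C,\mathcal{C}_-)=0$ for the third term $C$, but to conclude $C\in\mathcal{C}_+$ you must additionally invoke Salce's trick: split the approximation sequence $0\to Y_C\to X_C\to C\to 0$ from \ref{cotorsion} and use that $\mathcal{C}_+$ is closed under retracts. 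Finally, the obstacle you anticipate in \ref{cof-frobenius} (enough relative projectives) dissolves once \ref{cof-fib-prop} is available: any epimorphism $P\twoheadrightarrow X$ from a projective of $\mathcal{C}$ onto $X\in\mathcal{C}_+$ has kernel in $\mathcal{C}_+$ by the kernels-of-epis clause, hence is already an admissible epimorphism of the exact structure on $\mathcal{C}_+$.
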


\begin{proof} 

\ref{cof-trivial}: By \cite[Chapter VIII Lemma 2.2]{Beligiannis-Reiten} $\mathcal{C}_+ \cap \mathcal{C}_- = \mathcal{C}^{triv}_-$, therefore $\mathcal{C}_+^{triv} = Proj$.

\ref{cotorsion} is \cite[Chapter VIII Proposition 3.4]{Beligiannis-Reiten}.

\ref{torsion} follows from \cite[Chapter VIII Corollary 3.7]{Beligiannis-Reiten} and \cite[Chapter I Proposition 2.6]{Beligiannis-Reiten} (see also \cite[Chapter VIII Proposition 2.1]{Beligiannis-Reiten}). 

\ref{cof-fib-prop} follows from \cite[Chapter V Lemma 2.4]{Beligiannis-Reiten} and \cite[Chapter I Corollary 2.9]{Beligiannis-Reiten}. 

\ref{cof-frobenius} is \cite[Chapter VI Theorem 2.1]{Beligiannis-Reiten}.

\end{proof}

\begin{remark} The torsion pair is hereditary (see \cite[Chapter I Definition 2.5]{Beligiannis-Reiten}) and the categories $\mathcal{C}_-$ and $\mathcal{C}_+$  are resolving and coresolving.
\end{remark}



\subsection{Permanence properties of cofibrant objects}

We discuss some easy consequences for $\mathcal C_+$ and $\mathcal C_-$.

\begin{lem} For a monomorphism in $\mathcal C_+$ $$i: N\hookrightarrow N' \ $$
any morphism $$ \varphi: N \to M \quad , \quad M\in \mathcal C_- $$
can be extended to a morphism $\varphi': N' \to M$ such that $\varphi'\circ i =\varphi$.
\end{lem}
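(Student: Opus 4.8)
The statement to prove is: given a monomorphism $i\colon N\hookrightarrow N'$ in $\mathcal C_+$ and a morphism $\varphi\colon N\to M$ with $M\in\mathcal C_-$, there is an extension $\varphi'\colon N'\to M$ with $\varphi'\circ i=\varphi$. The natural strategy is to reduce this to the lifting property of the model structure (or equivalently to $Ext^1$-vanishing). First I would form the cokernel $Q=N'/i(N)$, so that we have a short exact sequence $0\to N\to N'\to Q\to 0$. Because $N$ and $N'$ both lie in $\mathcal C_+$ and $\mathcal C_+$ is closed under cokernels of monics (part \ref{cof-fib-prop} of Theorem \ref{thm:main-homotopy}), we get $Q\in\mathcal C_+$.

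Now consider the square with $i\colon N\to N'$ on the left, $\varphi\colon N\to M$ on top, and the terminal map $M\to *$ on the right:
$$ \xymatrix{ N \ar@{^{(}->}[d]^i \ar[r]^\varphi & M \ar[d] \cr N' \ar@{.>}[ru] \ar[r] & * \cr} $$
The map $M\to *$ is in $\mathcal R\cap\mathcal W$ precisely because $M\in\mathcal C_-$ (by definition of $\mathcal C_-$, or by the Corollary characterizing $\mathcal C_-$ via $UM$ injective). By the lifting axiom of the model structure, a lift $N'\to M$ exists as soon as $i\in\mathcal L$, i.e. as soon as $i$ is a cofibration. But by Corollary \ref{model-structure-descr}(3) the cofibrations are exactly the monomorphisms with cokernel in $\mathcal C_+$, and we just checked $Q=\mathrm{coker}(i)\in\mathcal C_+$. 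Hence $i\in\mathcal L$, the lift $\varphi'\colon N'\to M$ exists, and by commutativity of the upper triangle $\varphi'\circ i=\varphi$, as desired.

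Alternatively, and equivalently, one can argue directly via $Ext$: applying $Hom(-,M)$ to $0\to N\to N'\to Q\to 0$ gives an exact sequence $Hom(N',M)\to Hom(N,M)\to Ext^1(Q,M)$, and $Ext^1(Q,M)=0$ since $Q\in\mathcal C_+$, $M\in\mathcal C_-$ and $Ext^1(\mathcal C_+,\mathcal C_-)=0$ (part \ref{cotorsion}, or Lemma \ref{lem:lifting} applied to the lifting square above). Either way the surjectivity of $Hom(N',M)\to Hom(N,M)$ yields the extension. I expect no real obstacle here: the only point requiring a moment's care is the stability of $\mathcal C_+$ under taking cokernels of monomorphisms, which is exactly what \ref{cof-fib-prop} of Theorem \ref{thm:main-homotopy} provides; everything else is a direct invocation of the model-category axioms together with the explicit description of cofibrations and of $\mathcal C_-$ established earlier in this section.
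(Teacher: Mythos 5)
Your proof is correct, but it takes a genuinely different route from the paper's. The paper's argument is very short: by part \ref{torsion} of Theorem \ref{thm:main-homotopy}, any morphism from a cofibrant object $N$ to a trivially fibrant object $M$ factors through an injective object $I$, and since $I$ is injective the factor $N \to I$ extends along the monomorphism $i$ to $N' \to I$; composing back with $I \to M$ gives the desired $\varphi'$. You instead form the cokernel $Q = N'/i(N)$, invoke closure of $\mathcal C_+$ under cokernels of monics (part \ref{cof-fib-prop}) to get $Q \in \mathcal C_+$, and then conclude either via the model-category lifting axiom applied to $i \in \mathcal L$ against $M \to *$ in $\mathcal R \cap \mathcal W$, or equivalently via $Ext^1(Q,M) = 0$ from the cotorsion pair. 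Both approaches rest on the same theorem but draw on complementary parts of it: the paper exploits the factorization property of the torsion pair in the stable category, while you exploit the $Ext^1$-vanishing of the cotorsion pair together with closure under cokernels. Your version is a bit more verbose but arguably more transparent, since it makes explicit exactly why $i$ is a cofibration rather than absorbing everything into "factors through an injective."
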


\begin{proof} We may replace $\varphi : N \to M$ by
$\varphi: N \to I$ for some injective object by theorem \ref{thm:main-homotopy}. Then the assertion becomes obvious. 
\end{proof}

\begin{cor} $\mathcal C_+$ is closed under sequential direct limits
with monomorphic transition maps. Similarly $\mathcal C_-$
is closed under sequential projective limits with surjective transition maps.
\end{cor}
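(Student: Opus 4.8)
**Proof proposal for the corollary: $\mathcal{C}_+$ is closed under sequential direct limits with monomorphic transition maps, and $\mathcal{C}_-$ under sequential projective limits with surjective transition maps.**

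The plan is to prove the statement about $\mathcal{C}_+$ directly from the preceding lemma, and then dualize. First I would set up notation: let $N_0 \hookrightarrow N_1 \hookrightarrow N_2 \hookrightarrow \cdots$ be a sequence of objects of $\mathcal{C}_+$ with monomorphic transition maps $i_k \colon N_k \hookrightarrow N_{k+1}$, and let $N = \varinjlim_k N_k$. Since $\mathcal{C}$ is cocomplete and the transition maps are monic, each canonical map $N_k \to N$ is a monomorphism, and $N/N_k \cong \varinjlim_{\ell \geq k} N_\ell/N_k$. To show $N \in \mathcal{C}_+$, by the description of cofibrant objects via the cotorsion pair $(\mathcal{C}_+,\mathcal{C}_-)$ in Theorem \ref{thm:main-homotopy}, it suffices to show $Ext^1(N, M) = 0$ for all $M \in \mathcal{C}_-$; equivalently — and this is the more hands-on route — I would show that every extension $0 \to M \to E \to N \to 0$ with $M \in \mathcal{C}_-$ splits, by constructing a section $N \to E$ as a colimit of compatible sections $N_k \to E$.

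The key step is the construction of these compatible sections. Pulling back the extension along $N_k \hookrightarrow N$ gives $0 \to M \to E_k \to N_k \to 0$; since $N_k \in \mathcal{C}_+$ and $M \in \mathcal{C}_-$ we have $Ext^1(N_k,M) = 0$, so each $E_k$ splits, giving a section $s_k \colon N_k \to E_k \hookrightarrow E$. The subtlety is that the $s_k$ need not be compatible with the transition maps. To fix this I would build them inductively: having chosen $s_k \colon N_k \to E$ lifting the inclusion $N_k \hookrightarrow N$, I want $s_{k+1} \colon N_{k+1} \to E$ with $s_{k+1}\circ i_k = s_k$. The difference of $s_{k+1}^{\mathrm{any}}\circ i_k$ and $s_k$ lands in $M$, defining a morphism $\varphi \colon N_k \to M$ with $M \in \mathcal{C}_-$; here I invoke the preceding lemma, which extends $\varphi$ along the monomorphism $i_k \colon N_k \hookrightarrow N_{k+1}$ (both in $\mathcal{C}_+$) to $\varphi' \colon N_{k+1} \to M$, and then $s_{k+1} := s_{k+1}^{\mathrm{any}} - \iota\circ\varphi'$ is the corrected section. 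The colimit $s = \varinjlim_k s_k \colon N \to E$ is then a well-defined section of $E \to N$, so the sequence splits, proving $N \in \mathcal{C}_+$.

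For the dual statement I would run the formally dual argument in $\mathcal{C}^{op}$, or equivalently work directly: given a tower $\cdots \twoheadrightarrow M_2 \twoheadrightarrow M_1 \twoheadrightarrow M_0$ in $\mathcal{C}_-$ with surjective transition maps, $M = \varprojlim_k M_k$ has each projection $M \to M_k$ an epimorphism with kernel $\varprojlim$ of the kernels, and one shows $Ext^1(X, M) = 0$ for all $X \in \mathcal{C}_+$ by lifting, using the dual extension lemma for $\mathcal{C}_-$ (epimorphisms in $\mathcal{C}_-$ and morphisms from $\mathcal{C}_+$), which holds by the same reasoning applied to injective objects since $\mathcal{I}_{\mathcal{C}} \subset \mathcal{C}_-$. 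I expect the main obstacle to be purely bookkeeping: verifying that the inductively corrected sections $s_k$ genuinely assemble to a morphism out of the colimit (i.e. that the correction terms do not destroy compatibility already achieved at lower stages), and making sure the use of the preceding lemma is legitimate, i.e. that $N_k \hookrightarrow N_{k+1}$ really is a monomorphism \emph{in} $\mathcal{C}_+$ — but this last point is immediate since $\mathcal{C}_+$ is a full subcategory and the transition maps were assumed monic in $\mathcal{C}$.
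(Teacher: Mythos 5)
Your proof is correct and follows essentially the same route as the paper's: pull back the extension along each $N_k \hookrightarrow N$, split, then inductively correct the sections using the preceding extension lemma so that they become compatible, and pass to the colimit; the dual statement is handled formally. The one place where you express concern — that the correction at stage $k+1$ might disturb compatibility achieved at earlier stages — is not an issue, since the correction $s_{k+1} := s_{k+1}^{\mathrm{any}} - \iota\circ\varphi'$ only redefines the section at level $k+1$ and is chosen precisely so that $s_{k+1}\circ i_k = s_k$; the maps $s_0,\dots,s_k$ are left untouched. Also note that in deducing $N\in\mathcal C_+$ from $\mathrm{Ext}^1(N,\mathcal C_-)=0$, you are implicitly using completeness of the cotorsion pair (a deflation $0\to Y_N\to X_N\to N\to 0$ with $X_N\in\mathcal C_+$, $Y_N\in\mathcal C_-$ must split, so $N$ is a retract of $X_N$), which is available from Theorem~\ref{thm:main-homotopy}; the paper's proof uses this silently as well, so this is no gap, merely worth making explicit.
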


\begin{proof} Suppose $X=co\lim_i X_i$ with $X_i\in \mathcal C_+$.
Consider an extension $E$ with class in $Ext^1(X,M)$ and $M\in \mathcal C_-$.
Its pullback to each $X_i$ splits. Let $\lambda_i: X_i \to E$ be a splitting morphism. It is unique 
up to a morphism $X_i\to M$. Since $\varphi: \lambda_{i+1}\vert_{X_i} - \lambda_i :X_i \to M$
can be extended to $X_{i+1}$ by the last lemma, one can choose $\lambda_{i+1}$ in such a 
way that $\varphi =0$. Proceeding inductively gives a compatible system of splittings which define a splitting $\lambda: X=co\lim_i X_i \to E$. Hence $Ext^1(X,M)=0$. The second assertition can be proven similarly.
\end{proof}

\begin{cor} For $C\in \mathcal C_+$ we have $Ext^i(C,M)=0$
for all $M\in\mathcal C_-$ and all $i\geq 1$.
\end{cor}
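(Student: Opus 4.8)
The plan is to bootstrap from the $i=1$ case, which is already available. For $i=1$, the statement $\Ext^1(C,M)=0$ for $C\in\mathcal{C}_+$ and $M\in\mathcal{C}_-$ is exactly the defining $\Ext^1$-orthogonality of the cotorsion pair $(\mathcal{C}_+,\mathcal{C}_-)$ established in Theorem \ref{thm:main-homotopy}\ref{cotorsion}. So the content here is purely the passage to higher $\Ext$, and the natural tool is dimension shifting combined with the permanence property \ref{cof-fib-prop} (equivalently the remark that $\mathcal{C}_-$ is coresolving).

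First I would dimension-shift on the second variable. Given $M\in\mathcal{C}_-$, embed $M$ into an injective object $L$ of $\mathcal{C}$ (recall $\mathcal{C}$ has enough injectives, being an abelian Frobenius category), and let $M'$ be the cokernel, so there is a short exact sequence $0\to M\to L\to M'\to 0$. Since $\mathcal{I}_{\mathcal{C}}\subset\mathcal{C}_-$ and $\mathcal{C}_-$ is closed under cokernels of monics by \ref{cof-fib-prop}, we get $M'\in\mathcal{C}_-$. Now apply $\Hom(C,-)$ to this sequence: for $i\geq 1$, since $\Ext^i(C,L)=0$ ($L$ injective), the long exact sequence gives an isomorphism $\Ext^i(C,M')\cong\Ext^{i+1}(C,M)$. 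Hence, by induction on $i$, the vanishing of $\Ext^1(C,M)$ for all $M\in\mathcal{C}_-$ propagates: $\Ext^{i+1}(C,M)\cong\Ext^i(C,M')=0$ once we know $\Ext^i(\mathcal{C}_+,\mathcal{C}_-)=0$, because $M'$ is again an arbitrary-enough object of $\mathcal{C}_-$. The base case $i=1$ is Theorem \ref{thm:main-homotopy}\ref{cotorsion}, and the inductive step only uses that the class $\mathcal{C}_-$ is stable under forming these cokernels, which we have just checked.

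The argument is essentially routine once the permanence property is in hand, so I do not expect a genuine obstacle; the only point requiring a little care is confirming that $\mathcal{C}$ has enough injectives so that the embedding $M\hookrightarrow L$ into an injective of the ambient category exists — but this was already used in the proof that $\mathcal{L}$ consists of the monomorphisms with cokernel in $\mathcal{C}_+$, so it is available here too. Alternatively one could dimension-shift on the first variable using a short exact sequence $0\to C'\to P\to C\to 0$ with $P$ projective and $C'\in\mathcal{C}_+$ (using that $\mathcal{C}_+$ is closed under kernels of epis by \ref{cof-fib-prop}), together with $\Ext^i(P,M)=0$; this gives $\Ext^i(C,M)\cong\Ext^{i-1}(C',M)$ for $i\geq 2$ and reduces again to the $i=1$ case. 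Either direction works; I would present the cokernel-in-the-second-variable version as it is the shortest.
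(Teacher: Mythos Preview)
Your proof is correct and follows essentially the same approach as the paper: dimension-shift on the second variable by embedding $M$ into an injective, use that the cokernel remains in $\mathcal{C}_-$, and induct from the $i=1$ case given by the cotorsion pair. The paper's version is terser but the argument is the same.
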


\begin{proof} Choose a monomorphism $M\hookrightarrow I$ with $I\in \mathcal I_{\mathcal C} \subset C_-$. Then  $I/M \in \mathcal C_-$, hence
$Ext^2(C,M)\cong Ext^1(C,I/M) = 0$. Then proceed by induction on $i$ to complete the proof. 
\end{proof}


\section{  The homotopy category $Ho\mathcal{C}$} \label{sec:homotopy-category}

We now discuss properties of the homotopy category of $\mathcal C$ and give a more explicit description as the stable category of the category of cofibrant objects. We first recall some background about cofibrant replacements and the homotopy category.

\subsection{Cofibrant replacements} Every object $X \in \mathcal{C}$ has a cofibrant replacement $q:QX \to X$ where $QX$ is cofibrant and $q$ is a trivial fibration. We denote by $Q$ the cofibrant replacement functor $\mathcal{C} \to \mathcal{C}_+$, $X \mapsto QX$. Note that the morphism $q$ defines an extension
$$ 0 \to K \to QX \to  X \to 0 $$
with kernel $K\in \mathcal C_-$, i.e. $U(K)\in \mathcal I_{\mathcal D}$.

\subsection{The homotopy category} The homotopy category of $\mathcal{C}$ is the localization $Ho \mathcal{C} = \mathcal{C}[\mathcal{W}^{-1}]$ by the weak equivalences. The functor $\gamma: \mathcal{C} \to Ho \mathcal{C}$ is then the identity on objects and takes morphisms in $\mathcal{W}$ to isomorphisms. It is universal with this property in the sense of \cite[Lemma 1.2.2]{Hovey}. For $X,Y \in \mathcal{C}$ there are natural isomorphisms \[ Hom_{Ho\mathcal{C}}(\gamma X, \gamma Y) \simeq Hom_{\mathcal{C}}(QX,RY)/\sim \] where $\sim$ denotes the homotopy equivalence \cite[Theorem 1.2.10]{Hovey}. We often abbreviate $Hom_{Ho\mathcal{C}}(X,Y)$ or $Hom_{\mathcal{C}}(\gamma X, \gamma Y)$ as $[X,Y]$.

\medskip\noindent

The category $\mathcal E = \mathcal C_+$ is an exact subcategory of the abelian category
$\mathcal C$ in the sense of \cite{Keller-use} (section 4 and 5)  (and similarly is $\mathcal C_-$). In particular $\mathcal E$ is closed under extensions. $\mathcal E$ contains $\mathcal I_{\mathcal C}= \mathcal P_{\mathcal C}$. Since
$\mathcal P_{\mathcal{C}} \subset \mathcal P_{\mathcal{E}}$ and $\mathcal I_{\mathcal{C}} \subset \mathcal I_{\mathcal{E}}$, the category $\mathcal E$ has enough injectives and projectives. 

\begin{thm} \label{thm:homotopy-stable} \begin{enumerate}[label=\textbf{H.\arabic*}]
\item \label{frob} $\mathcal{C}_+$ is a Frobenius category. Hence $\overline{\mathcal C}_+=\mathcal E/\mathcal I_{\mathcal E}$ 
is a triangulated category. 
\item \label{stable-cat}$ Ho\mathcal C$ is a triangulated category and quasi-equivalent to the stable category of $\mathcal C_+$
$$ Ho{\mathcal C} = \overline{\mathcal C}_+ \ , $$
and the functor ${\mathcal C}\to Ho\mathcal C$ is $k$-linear. In particular $$ [X,Y] = Hom(QX,Y)/\sim_{stable}. $$
\item \label{triangulated} The functors $F,U$ preserve distinguished triangles and the shift functor.
\end{enumerate}
\end{thm}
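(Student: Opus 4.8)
\textbf{Proof proposal for Theorem \ref{thm:homotopy-stable}.}

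The plan is to verify the three assertions essentially by appealing to the general theory of model categories built on a cotorsion pair, which is exactly the situation Theorem \ref{thm:main-homotopy} puts us in. For \ref{frob}, I would simply invoke \ref{cof-frobenius} of Theorem \ref{thm:main-homotopy}: $\mathcal{C}_+$ is a Frobenius category, so its stable category $\overline{\mathcal C}_+ = \mathcal E/\mathcal I_{\mathcal E}$ is triangulated by the standard fact (Happel, Keller) recalled in Section \ref{sec:stable-cat}. I should double-check that the injective-projective objects of the exact category $\mathcal E = \mathcal C_+$ are precisely $\mathcal P_{\mathcal C} = \mathcal I_{\mathcal C}$; this is \ref{cof-trivial} together with the remark that $\mathcal C_+^{triv} = \mathcal C_+\cap\mathcal C_-$, so the observation in the paragraph preceding the theorem that $\mathcal E$ has enough projectives and injectives closes this point.

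For \ref{stable-cat}, the key input is that in our model structure every object is fibrant (Lemma \ref{lemma-1}), so $RY = Y$ can be taken for the fibrant replacement, and hence by \cite[Theorem 1.2.10]{Hovey} we get $[X,Y] = Hom_{\mathcal C}(QX,Y)/\sim$. The task is then to identify the homotopy relation $\sim$ on $Hom_{\mathcal C}(QX, Y)$ with the stable equivalence relation, i.e. with factoring through an injective object of $\mathcal C_+$. Here I would use the general principle (Gillespie, Hovey's correspondence between abelian model structures and cotorsion pairs) that when the cofibrant–trivially fibrant objects coincide with the projectives, two maps out of a cofibrant object into a fibrant object are left-homotopic if and only if their difference factors through a projective. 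Concretely, a cylinder object for $QX$ (a cofibrant object) can be produced from the factorization axiom applied to $QX \oplus QX \to QX$, and one checks that the two inclusions differ, after composing to $Y$, by a map factoring through an object of $\mathcal C_+^{triv} = \mathcal P_{\mathcal C}$. Restricting to $X, Y$ in the image of $\gamma$ and using that $Q$ is a functor $\mathcal C \to \mathcal C_+$, this exhibits an equivalence of categories $Ho\mathcal C \xrightarrow{\ \sim\ } \overline{\mathcal C}_+$. The triangulated structure then transports along this equivalence; alternatively, since $Ho\mathcal C$ of a pointed model category is canonically a triangulated category (it is here stable because the suspension is invertible, the loop and suspension being computed via the short exact sequences defining $\mathcal C_-$-kernels and $\mathcal C_+$-cokernels), one only needs that the equivalence with $\overline{\mathcal C}_+$ matches the two triangulated structures, which follows because both are built from the same class of short exact sequences in $\mathcal C_+$. $k$-linearity of $\gamma$ is immediate since localization at $\mathcal W$ preserves the additive, in fact $k$-linear, structure (the construction of $Ho\mathcal C$ via $QX$ and the homotopy relation is additive).

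For \ref{triangulated}, I would argue that $U$ and $F$ are exact functors between abelian Frobenius categories that preserve projective-injective objects: $U(\mathcal I_{\mathcal C}) \subset \mathcal I_{\mathcal D}$ and $F(\mathcal P_{\mathcal D}) \subset \mathcal P_{\mathcal C}$ by the discussion of the abelian case in Section \ref{sec:induced-1}. An exact functor carrying injectives to injectives induces a functor on stable categories that is triangulated (it commutes with the suspension defined via injective resolutions and sends standard triangles to standard triangles); one then transfers this along the equivalences $Ho\mathcal C \simeq \overline{\mathcal C}_+$ and $Ho\mathcal D = \overline{\mathcal D}$. The one thing to be careful about is that $U$ and $F$ need not map $\mathcal C_+$ into $\mathcal D_+$ or vice versa, so I would phrase this at the level of the ambient stable categories $\overline{\mathcal C}, \overline{\mathcal D}$ and use that, by \ref{torsion} of Theorem \ref{thm:main-homotopy}, $Ho\mathcal C$ is a localization (a Verdier quotient) of $\overline{\mathcal C}$ by the thick subcategory $\mathcal C_-$, so it suffices that $U, F$ are compatible with these quotient functors — which holds since $U$ sends $\mathcal C_-$ to injectives (hence to zero in $\overline{\mathcal D}$) by the Corollary characterizing $\mathcal C_-$, and similarly on the $F$ side.

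The main obstacle I anticipate is the careful identification in \ref{stable-cat} of the model-categorical homotopy relation on $Hom_{\mathcal C}(QX, Y)$ with the stable equivalence relation, and the verification that the induced equivalence $Ho\mathcal C \simeq \overline{\mathcal C}_+$ is triangulated rather than merely additive; this requires tracking cylinder objects through the factorization axiom and matching the connecting maps of the two triangulated structures, and it is where Hovey's theory of abelian model structures and cotorsion pairs does the real work.
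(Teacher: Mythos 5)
Your proposal is correct and rests on the same underlying machinery as the paper, but the presentation differs in how self-contained it is. The paper's own proof is a pure citation: \textbf{H.1} is \cite[Ch.~VI Thm.~2.1]{Beligiannis-Reiten}, \textbf{H.2} is \cite[Ch.~VIII Thm.~4.2, Cor.~4.5]{Beligiannis-Reiten}, and \textbf{H.3} is dismissed as a formal consequence of the first two. You instead reconstruct the content of those Beligiannis--Reiten results from Hovey's model-category formalism: identifying the left-homotopy relation on $Hom_{\mathcal C}(QX,Y)$ with factoring through $\mathcal P_{\mathcal C}$ via a cylinder object from the factorization axiom, and matching the two triangulated structures on $Ho\mathcal C$ and $\overline{\mathcal C}_+$ by noting both arise from the same short exact sequences in $\mathcal C_+$. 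For \textbf{H.3} you also supply the implicit ingredients the paper omits: that $U,F$ are exact, that $U(\mathcal I_{\mathcal C})\subset\mathcal I_{\mathcal D}$ and $F(\mathcal P_{\mathcal D})\subset\mathcal P_{\mathcal C}$, and that $U$ kills $\mathcal C_-$ in $\overline{\mathcal D}$, which is exactly what makes the induced functors compatible with suspension and standard triangles. Your parenthetical worry that $U,F$ might not respect $\mathcal C_\pm$ versus $\mathcal D_\pm$ is in fact vacuous here, since $\mathcal D$ carries the stable model structure in which every object is cofibrant; but routing the argument through the Verdier quotient $\overline{\mathcal C}/\mathcal C_-$ as you do is harmless and arguably cleaner. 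The trade-off is clear: the paper's proof is terse and leans on the reader already knowing Beligiannis--Reiten, while yours is more elementary and transparent but leaves some of the technical steps (cylinder object verification, matching connecting maps) as explicit to-do items rather than finished arguments.
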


\begin{proof} \ref{frob} follows from \cite[Chapter VI Theorem 2.1]{Beligiannis-Reiten}. \ref{stable-cat} follows from \cite[Chapter VIII Theorem 4.2, Corollary 4.5]{Beligiannis-Reiten}. \ref{triangulated} follows from \ref{frob} and \ref{stable-cat}.
\end{proof}



\section{  Comodules and supercommutative Hopf algebras}


The main example of a Frobenius pair in this paper comes from representation theory. 
Let $\pi: A\to B$ be a surjective homomorphism of supercommutative Hopf algebras. It corresponds to an embedding $H \subset G$ of supergroups. This embeddings induces a restriction functor $Res: Rep(G) \to Rep(H)$ which is an exact faithful tensor functor. We discuss its left and right adjoint (called coinduction and induction). Under suitable conditions on $H$ and $G$ this will give rise to a Frobenius pair.

\subsection{$k$-linear tensor categories with finiteness conditions} \label{sec:finiteness-conditions}

Let $k$ be a field of characteristic $0$. For a $k$-linear abelian category $\mathcal T$ consider the following finiteness conditions 

\medskip\noindent
{\it Condition (F)}. Every object has finite length and all morphism spaces $Hom(X,Y)$ have finite $k$-dimension. 

\medskip\noindent
{\it Condition (G).} The category $\mathcal{T}$ is monoidal and there exists an object $X$ such that any object is a subquotient of some tensor power of $X$.

\medskip\noindent
{\it Modules}. If conditions (F) and (G) hold, then 
the abelian category admits a projective generator (\cite[Proposition 2.14]{Deligne-Festschrift}). Hence by Morita's theorem it is  equivalent to the category of right modules of some $k$-algebra of finite rank (\cite[Corollary 2.17]{Deligne-Festschrift}). In particular there exist enough injectives and projectives. By assumption (F) the category is nice in the sense
of \cite[p.371]{Germonie}: Indecomposable injective objects in $\mathcal T$ have unique simple quotients (and by duality unique simple submodules) and have local endomorphism rings. Any object satisfies the Krull-Schmidt theorem, i.e. is isomorphic to a finite direct sum of indecomposable objects. The isomorphism classes in $\mathcal T$ of these indecomposable objects (up to permutation of the summands) do not depend on the decomposition. Up to isomorphism objects have unique projective covers resp. injective hulls. For any object $X$ there are only finitely many indecomposable objects $Y$ with
$Hom(Y,X)\neq 0$. 

\medskip\noindent
{\it Comodules}. By \cite[Theorem 2.13]{Simson} a $k$-linear abelian category $\mathcal T$ is $k$-linear equivalent to the category of 
of $k$-finite dimensional comodules over some $k$-coalgebra $A$ if and only the finiteness conditions (F) hold.
By \cite[p.141 -146]{Batchelor} in the category $\mathcal{C}$ of all comodules of a fixed $k$-coalgebra $A$ an injective comodule $L$ in $\mathcal C'$ is a direct sum $L =\bigoplus_\mu J_\mu$ of indecomposable injective subcomodules $J_\mu$, and each $J_\mu$ contains a unique simple subcomodule $X_\mu$. Under some further assumptions (see Section \ref{sec:frobenius} below) all simple 
$A$-comodules are finite dimensional, and the comodules $J_\mu$ are injective hulls in $\mathcal T$ and hence are $k$-finite dimensional. Hence injective objects $L\in \mathcal C$ are direct sums of $k$-finite dimensional injective
objects. In particular $\mathcal C$ has enough injectives.


\subsection{Supercommutative Hopf algebras}
For a supercommutative Hopf algebra $A$ \cite{Masuoka-hopf} \cite{Weissauer-semisimple} over an arbitrary field $k$
let now  
\begin{itemize}
\item $\mathcal T$ the category
of $k$-finite dimensional graded $A$-comodules, and
\item $\mathcal C=\mathcal T^\infty$ be the abelian category of all graded $A$-comodules.
\end{itemize}
These $k$-linear tensor categories $\mathcal T$ always satisfy the finiteness condition (F).
Condition (G) holds if $A$ is finitely generated over $k$ and $char(k)\neq 2$. 
Condition (G) follows from the fact that every faithful representation of an algebraic supergroup is a tensor generator as in the algebraic group case \cite[Proposition 2.20]{Deligne-Milne} \cite[Section 3.5]{Waterhouse}, and its proof is virtually the same as in the classical case \cite[Proposition 3.1]{Deligne-hodge} using \cite[Proposition 9.3.1]{Westra} that every finite-dimensional representation $V$ of $G$ is a submodule of $k[G]^{\dim V}$ (See \cite[Section 7]{Comes-Heidersdorf}).
We list some general important properties of these categories 
\begin{enumerate}
\item $\mathcal C$ has all small limits and colimits (\cite[Corollary 2.5.6]{Hovey}).
\item All objects in $\mathcal C$ are small
(\cite[cor 2.5.7]{Hovey}).
\item Any $X\in\mathcal C$ is an inductive union $X = co\lim_i X_i$ of  $X_i\in \mathcal T$  (\cite[Lemma 2.5.1]{Hovey}). It is easy to see that $\mathcal C= \mathcal T^\infty$ is the ind-category of $\mathcal T$
in the sense of \cite{Deligne-tensorielles}.
\item $\mathcal C$ is a closed symmetric monoidal category
(\cite[ p. 63]{Hovey} and \cite[2.5.1]{Hovey}).
\item The abelian category $\mathcal C$ only depends on the
coalgebra structure of $A$. 
\item An $A$-module $L\in \mathcal C$ is injective if and only if $L$ has the lifting property 
for all monomorphisms $i:A\to B$ in $\mathcal T$ (\cite[Proposition 2.5.8]{Hovey}). The isomorphism classes
of monomorphisms in $\mathcal T$ are a set (\cite[Definition 2.5.12]{Hovey}).
\item $\mathcal I_{\mathcal C}$ is a tensor ideal (\cite[Proposition 2.5.8]{Hovey}).
\item $A$ is injective (\cite[Proposition 2.5.8]{Hovey}).
\item The isomorphism classes of indecomposable injective modules are a set
of generators. Each of them is a direct summand of $A$
\item Coproducts of injectives are injective (\cite[Proposition 2.5.8]{Hovey}).  $\mathcal C$ has enough injectives.
\item Every comodule $M$ has a nontrivial socle $soc(M)$, the sum of the simple subcomodules
of $M$. The socle can be described by a wedge construction, and 
defines a left exact functor  $soc: \mathcal C \to \mathcal C$ \cite[p.142]{Batchelor}.
\item Indecomposable injective comodules have irreducible socle.
\item Properties \ref{stable-f-1}-\ref{stable-f-4} hold for the category $\mathcal C$.
\end{enumerate}
Let $\mathcal C'$ be a full subcategory of $\mathcal C$ containing $\mathcal T$ closed under direct summands. Then an injective module $L\in \mathcal I_{\mathcal C'}$ is injective in the category of all $A$-comodules.

\begin{remark} We point out that all direct limits (colimits) in the comodule category are sequential direct limits. In particular direct limits are exact. We will use this frequently without further justification. 
\end{remark}

\subsection{Frobenius categories}\label{sec:frobenius}

By the usual dictionary between supercommutative Hopf algebras and affine supergroup schemes, the category $\mathcal T$ 
of $k$-finite dimensional graded $A$-comodules is equivalent to the finite-dimensional representations $Rep(G)$ for an affine supergroup scheme $G$, and $\mathcal C=\mathcal T^\infty$ is equivalent to the category of all algebraic representations. For an affine supergroup scheme denote by $G_0$ the underlying even group scheme.

From now on we also make the crucial 

\medskip\noindent
{\bf Assumption}. {\it $G$ is quasireductive, i.e. $G_0$ is reductive.}

\medskip\noindent
Since $Rep(G_0)$ is reductive, this implies that $\mathcal{T} \cong Rep(G)$ is a Frobenius category. This also applies to the subcategories $Rep(G,\epsilon)$ in the sense of Deligne \cite{Deligne-tensorielles}.


Under these assumptions one has the following lemma.

\begin{lem} \label{inj-lemma} Any injective comodule $I$ is a direct sum of finite dimensional indecomposable injective comodules.
\end{lem}

\begin{proof} In the context of representations of quasireductive
groups we now also use the language of modules instead of comodules following \cite{Serganova-quasireductive} \cite{Jantzen}.
 By \cite{Serganova-quasireductive} (see Theorem 9.2) \[ k[G] \cong \bigoplus_L I(L)^{m(L)} \] for some multiplicity $m(L)$ where the sum runs over all finite dimensional injective hulls $I(L)$ for finite dimensional irreducible $L$. We can embed $I$ into a module of the form $V \otimes k[G]$ for a super vector space $V$. Indeed, by the adjointness of induction and coinduction between modules for $k[G]$ and $k[G_0]$ this is reduced to the well-known case $G= G_0$  \cite[Proposition 3.10a]{Jantzen} in view of $Ind_{G_0}^G (k[G_0]) = Ind_{G_0}^G (Ind_1^{G_0} (1)) = k[G]$. Since $I$ is injective, it is the image of an idempotent $\varphi \in End(V \otimes_k k[G])$. 
Let $\overline\varphi$ denote its restriction to the socle of $V \otimes_k k[G])$.
Note that the usual properties of socle functors carry over from \cite[I.3]{Jantzen} to the super setting; in particular
\begin{enumerate}
\item Any indecomposable injective module is determined by its simple socle up to isomorphism.
\item If $M$ is an injective module  and $\bar{\varphi} \in End(soc(M))$ an idempotent, then there exists an idempotent lift $\psi \in End(M)$ with $\psi|_{soc(M)} = \bar{\varphi}$.
\end{enumerate}
Therefore $I$ is isomorphic to the image of an idempotent lift $\psi$ of an idempotent $\bar{\varphi}$ of $V \otimes_k \bigoplus L^{m(L)}$. The idempotent $\bar{\varphi}$
preserves the isotopic components $V \otimes_k L^{m(L)}$. Hence we can assume that the same holds for the lift $\psi$. 
If we replace  $V^{m(L)}$ by $V$, this allows to assume that $I$ is the image of an idempotent $\psi$ of some \lq$L$-{isotypic}\rq\ injective module $V\otimes_k I(L)$ for a fixed simple module $L$. The endomorphism ring of $L$ defines a finite field extension $K$ of $k$. 
The functor $Hom(L,-)$ is a functor from the category of $L$-isotypic modules of the form $V \otimes_k L$ to the category of $K$-right vector spaces $W$, defining a quasi-equivalence of categories by the converse functor $W\mapsto W\otimes_K L$.
$Hom(L,-)$ maps the idempotent $\bar{\varphi}$ of $V\otimes_k L$ to an idempotent  $e$ of $W=Hom(L,V \otimes L) = V\otimes_k K$ 
in $End_K(W)$. The idempotent $e$ defines a $K$-vector subspace
$W'= e(W)$ of $W$. Using a $K$-basis of $W'$, 
we obtain $W'\cong \bigoplus_J K$ for some index set $J$ and hence  
 $\overline\varphi(V\otimes_k I(L))\cong  (\bigoplus_J K)\otimes_K L \cong  
 \bigoplus_J L$. This implies
 $\psi(V\otimes_k I(L))\cong (\bigoplus_J k) \otimes_k I(L) = \bigoplus_J I(L) $. Adding all isotypic summands indexed by $L$ gives the result for $I$. 
\end{proof}


\begin{lem} $\mathcal P_{\mathcal T}\subset \mathcal P_{\mathcal C}$. Projective objectives in $\mathcal C={\mathcal T}^\infty$ are injective $\mathcal P_{\mathcal C} \subset {\mathcal I}_{\mathcal C}$, and there are enough projectives
in $\mathcal C$. $P$ is projective in $\mathcal C$ if and only if $P$ is a retract of a direct sum of 
projectives in $\mathcal T$.
\end{lem}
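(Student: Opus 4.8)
The plan is to establish the four assertions of the lemma in sequence, exploiting that $\mathcal{C} = \mathcal{T}^\infty$ is the ind-category of $\mathcal{T}$ and that in $\mathcal{T}$ projectives and injectives coincide (the Frobenius assumption on $\mathcal{T}$), together with the already-listed facts about $\mathcal{C}$, in particular that coproducts of injectives are injective, that $\mathcal{C}$ has enough injectives, that every object is a sequential direct limit of objects of $\mathcal{T}$, and that $\mathcal{I}_{\mathcal{C}}$ consists exactly of retracts of direct sums of finite-dimensional indecomposable injectives.

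First I would prove $\mathcal{P}_{\mathcal{T}} \subset \mathcal{P}_{\mathcal{C}}$. Let $P \in \mathcal{P}_{\mathcal{T}}$ and let $f\colon P \to M$ be a morphism in $\mathcal{C}$ with $M = \mathrm{colim}_i M_i$, $M_i \in \mathcal{T}$ the canonical filtered colimit; since $P$ is finite-dimensional (objects of $\mathcal{T}$ have finite length, and here finite $k$-dimension), $P$ is $\kappa$-small, so $f$ factors through some $M_i$. Given an epimorphism $g\colon N \twoheadrightarrow M$ in $\mathcal{C}$, pull it back along $M_i \hookrightarrow M$ to get an epimorphism onto $M_i$ from a subobject of $N$; restricting further to a finite-dimensional subobject surjecting onto $M_i$ (possible since $M_i$ has finite length and $N$ is a union of its finite-dimensional subobjects) and using projectivity of $P$ in $\mathcal{T}$, I lift $f$ through $g$. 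This shows $P$ is projective in $\mathcal{C}$. Next, $\mathcal{P}_{\mathcal{C}} \subset \mathcal{I}_{\mathcal{C}}$: since $\mathcal{T}$ is Frobenius, $\mathcal{P}_{\mathcal{T}} = \mathcal{I}_{\mathcal{T}}$, so each projective of $\mathcal{T}$ is a finite-dimensional injective of $\mathcal{C}$ (it is a direct summand of $A$ by the listed properties); hence any direct sum of projectives of $\mathcal{T}$ is injective in $\mathcal{C}$ by the coproduct-of-injectives property, and a retract of such is injective too. Combined with the last assertion of the lemma this gives $\mathcal{P}_{\mathcal{C}} \subset \mathcal{I}_{\mathcal{C}}$.

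For "enough projectives in $\mathcal{C}$": given $X \in \mathcal{C}$, write $X = \mathrm{colim}_i X_i$ with $X_i \in \mathcal{T}$; choose projective covers (or just projective surjections) $P_i \twoheadrightarrow X_i$ in $\mathcal{T}$, and assemble $\bigoplus_i P_i \twoheadrightarrow X$ — one checks surjectivity because every finite-dimensional subobject of $X$ lifts through one of the $X_i$. Here $\bigoplus_i P_i$ is projective in $\mathcal{C}$ by the first part, so $\mathcal{C}$ has enough projectives. Finally, the characterization: any projective $P$ of $\mathcal{C}$ admits, by the previous step, a surjection $\bigoplus_i P_i \twoheadrightarrow P$ from a direct sum of projectives of $\mathcal{T}$; since $P$ is projective this surjection splits, so $P$ is a retract of $\bigoplus_i P_i$. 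Conversely, any retract of a direct sum of projectives of $\mathcal{T}$ is projective in $\mathcal{C}$ because direct sums of projectives are projective (projectivity is tested against epimorphisms and commutes with coproducts in the target via the smallness/filtered-colimit argument) and retracts of projectives are projective.

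The main obstacle I anticipate is the smallness/compactness bookkeeping: making precise that a map out of a finite-dimensional object into a colimit factors through a finite stage, and that surjectivity of the assembled map $\bigoplus_i P_i \to X$ holds, both of which rest on the fact (already recorded in the excerpt) that all colimits here are sequential and exact and that every object is the union of its finite-dimensional subobjects. Once that is in hand, everything else is a formal diagram chase combined with the Frobenius property of $\mathcal{T}$ and the listed injectivity properties of $\mathcal{C}$; in particular the identification $\mathcal{P}_{\mathcal{C}} \subset \mathcal{I}_{\mathcal{C}}$ is immediate from $\mathcal{P}_{\mathcal{T}} = \mathcal{I}_{\mathcal{T}}$ plus stability of injectivity under coproducts and retracts.
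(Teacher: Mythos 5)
Your proposal is correct, and for the first assertion ($\mathcal P_{\mathcal T}\subset\mathcal P_{\mathcal C}$, by reducing a lifting problem over a finite-dimensional target to a finite-dimensional subobject of the source) and for the final deduction $\mathcal P_{\mathcal C}\subset\mathcal I_{\mathcal C}$ (via $\mathcal P_{\mathcal T}=\mathcal I_{\mathcal T}$, stability of injectivity under coproducts, and closure under retracts) it coincides with the paper's argument. Where you genuinely diverge is in producing enough projectives: you simply take projective surjections $P_i\twoheadrightarrow X_i$ in $\mathcal T$ for a presentation $X=\bigcup_i X_i$ and observe that the assembled map $\bigoplus_i P_i\to X$ is surjective because its image contains every $X_i$; since a coproduct of projectives is projective (as $Hom(\bigoplus_i P_i,-)=\prod_i Hom(P_i,-)$ is exact), this already gives a projective cover by a direct sum of objects of $\mathcal P_{\mathcal T}$. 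The paper instead lifts the transition maps $X_i\to X_{i+1}$ to maps $g_i:P_i\to P_{i+1}$, stabilizes by replacing $P_{i+1}$ with $P_{i+1}\oplus P_i$ so that the lifted transitions become (split) monomorphisms, and then identifies the sequential colimit $\mathrm{colim}_i\tilde P_i$ with a direct sum $\bigoplus P_i'$ of finite-dimensional projectives. Both routes end at the same place -- a surjection onto $X$ from a direct sum of finite-dimensional projective-injectives -- but yours avoids the colimit bookkeeping entirely and is the more economical argument; the paper's version has the mild extra feature that the covering object is itself a colimit compatible with the chosen presentation of $X$, which is in the spirit of constructions used elsewhere in the paper but is not needed for any of the four assertions of the lemma. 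Your remaining steps (splitting the surjection onto a projective $P$ to exhibit it as a retract, and the converse) are exactly as in the paper.
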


\begin{proof} 1) Projectives $P$ in $\mathcal T$ are
projective in $\mathcal C$. Simply check the lifting property for $g:P\to Y$ with respect to an epimorphism
$p:X\to Y$ in $\mathcal C$. For this replace $Y$ by $g(P)$ and $X$ by $p^{-1}(g(P))$. 
Since $g(P)\in \mathcal T$, there exists a  subobject $X'\in \mathcal T$ of $X$ such that  $p:X'\to Y'$
is an epimorphism. So the lifting property in $\mathcal C$ reduces to the lifting property in $\mathcal T$.

\medskip\noindent
2) Given $X = co \lim_i X_i$ in $\mathcal C$ (for objects $X_i\in \mathcal T$) with injective transition maps $f_i:X_i \to X_{j}$
choose  $P_i\in \mathcal P_{\mathcal T}$ together with surjections
$\pi_i: P_i \to X_i$
$$ \xymatrix@+7mm{ \tilde P_{i+1}= P_{i+1} \oplus P_{i} \ar@{->>}[r] & P_{i+1} \ar@{->>}[r]^{\pi_{i+1}} &   X_{i+1} \cr
& P_{i} \ar@{_{(}->}[ul]^{g_i\oplus id}\ar@{.>}[u]^{g_i}\ar[ur]\ar@{->>}[r]^{\pi_i} &   X_{i} \ar@{^{(}->}[u]_{f_i} \cr }$$
The transition morphisms $f_i$ can be lifted to morphisms $g_i:P_i \to P_{i+1}$. [$f_i\circ \pi_i: P_i \to X_{i+1}$ can be lifted
to a morphism $g_i:P_i \to P_{i+1}$ such that 
$\pi_{i+1}\circ g_i = f_i\circ \pi_i$,
since $P_i$ is projective.] Now replace $P_{i+1}$ by $\tilde P_{i+1} = P_{i+1} \oplus P_{i}$ and  
$\pi_{i+1}$ by $\tilde\pi_{i+1} = \pi_{i+1}\circ proj_1$ and $g_i$
by $\tilde g_i= g_i\oplus id_{P_i}$. Repeating this inductively all $\tilde g_i$ become monomorphisms between
finite dimensional projective modules. Its limit $P= co\lim_i \tilde P_i$
is isomorphic to a direct sum 
of finite dimensional projective modules. Choose a well ordering.
The image of $\tilde P_i$ in $\tilde P_{i+1}$ is isomorphic to $\tilde P_i$, hence injective.
Therefore it has a complement $P'_{i+1}$, again finite dimensional
projective and injective. If $P_0=0$, then $P= \bigoplus P'_i$;
and we have constructed a surjection $P \to X$. Direct sums of projective objects are
projective, hence $P$ is projective and $\mathcal C$ has enough projectives.

\medskip\noindent
3) Now all $P'_i$ are projective and in $\mathcal T$, hence also injective. A direct sum of injective objects in the category of $A$-comodules is injective. This proves that
$P$ is also injective. Now suppose $X$ was a projective object. Then $X$ is a summand of $P$, since
$P$ surjects onto $X$ by our construction.  Hence $X$ is a summand of an injective object, hence itself
injective.
\end{proof}

Since any $I\in \mathcal I_{\mathcal C}$ is a direct sum of finite dimensional injective objects (and each of them is in $\mathcal I_{\mathcal T}= \mathcal P_{\mathcal T}$), any injective object is a direct sum of projective objects. A direct sum of projectives is projective. Hence $\mathcal I_{\mathcal T} \subset \mathcal P_{\mathcal T}$. Together with the previous lemma this gives

\begin{cor} If $\mathcal T$ is a Frobenius category, then $\mathcal C$ is a Frobenius category.
\end{cor}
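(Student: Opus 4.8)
The plan is to verify the three defining properties of a Frobenius category for $\mathcal{C}$: enough projectives, enough injectives, and $\mathcal{P}_{\mathcal{C}} = \mathcal{I}_{\mathcal{C}}$. The first and third of these are already essentially established in the preceding discussion, so the proof is a short assembly.

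First, $\mathcal{C}$ has enough injectives: this is property (10) in the list of general properties of comodule categories in section~6.2 (coproducts of injectives are injective and $\mathcal{C}$ has enough injectives), and does not use the Frobenius hypothesis on $\mathcal{T}$. Second, $\mathcal{C}$ has enough projectives and $\mathcal{P}_{\mathcal{C}} \subset \mathcal{I}_{\mathcal{C}}$: both are part of the lemma immediately above, where it is shown that any $X = \mathrm{colim}_i X_i$ admits a surjection from a direct sum $P = \bigoplus_i P'_i$ of finite-dimensional objects that are simultaneously projective and injective in $\mathcal{T}$, hence $P$ is projective in $\mathcal{C}$ (direct sums of projectives are projective), and moreover any projective object of $\mathcal{C}$ is a retract of such a $P$ and therefore injective.

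It remains to prove the reverse inclusion $\mathcal{I}_{\mathcal{C}} \subset \mathcal{P}_{\mathcal{C}}$. Take $I \in \mathcal{I}_{\mathcal{C}}$. By property (11) of section~6.2, $I$ decomposes as a direct sum $I = \bigoplus_\mu J_\mu$ of finite-dimensional indecomposable injective comodules; each $J_\mu$ lies in $\mathcal{I}_{\mathcal{T}}$. Since $\mathcal{T}$ is a Frobenius category by assumption, $\mathcal{I}_{\mathcal{T}} = \mathcal{P}_{\mathcal{T}}$, so each $J_\mu \in \mathcal{P}_{\mathcal{T}} \subset \mathcal{P}_{\mathcal{C}}$ by the lemma. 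As a direct sum of projective objects of $\mathcal{C}$ is again projective in $\mathcal{C}$, we conclude $I \in \mathcal{P}_{\mathcal{C}}$. Combining, $\mathcal{P}_{\mathcal{C}} = \mathcal{I}_{\mathcal{C}}$, and together with the existence of enough projectives and enough injectives this shows $\mathcal{C}$ is a Frobenius category.

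The only point requiring care — and the closest thing to an obstacle — is the structural input that every injective object of $\mathcal{C}$ splits as a direct sum of \emph{finite-dimensional} indecomposable injectives; this is what lets us reduce to $\mathcal{I}_{\mathcal{T}}$ and invoke the Frobenius property of $\mathcal{T}$. That fact, however, is already recorded (property (11), via the Batchelor reference \cite{Batchelor}) and holds for comodule categories of arbitrary coalgebras, so no further work is needed here; the remainder is formal.
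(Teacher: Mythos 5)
Your proof is correct and follows essentially the same route as the paper: the inclusion $\mathcal{P}_{\mathcal{C}}\subset\mathcal{I}_{\mathcal{C}}$ and the existence of enough projectives come from the preceding lemma, and the reverse inclusion $\mathcal{I}_{\mathcal{C}}\subset\mathcal{P}_{\mathcal{C}}$ is obtained exactly as in the paper by decomposing an injective comodule into finite-dimensional indecomposable injectives, invoking $\mathcal{I}_{\mathcal{T}}=\mathcal{P}_{\mathcal{T}}$, and using that direct sums of projectives are projective. No gaps.
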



\subsection{The stable model structure} The assumption for the existence of the stable model structure  are satisfied (see lemma \ref{lemma-stable-cat}): Let $J$ be the set of isomorphism classes of indecomposable
injective $A$-comodules (these are all finite dimensional) (or better the morphisms $0 \to Inj$). Let $I$ be the set of isomorphism classes of monomorphisms $i:A\to B$ between finite dimensional $A$-comodules. Then
$I$ and $J$ define the cofibrantly generated stable model structure.  We will later show (in a slightly more general context) that this stable model structure is a monoidal model structure.


\subsection{Induced model structure}\label{sec:induced-model-structure} 

We now specialize the construction of the induced model structure of subsection \ref{sec:induced-1} to the comodule category $\mathcal C = \mathcal{T}^{\infty}$. We assume in this section that the comodule categories are Frobenius categories. Let $\pi: A\to B$ be a surjective homomorphism of supercommutative Hopf algebras. It corresponds to an embedding $H \subset G$ of supergroups. This embeddings induces a restriction functor $Res: Rep(G) \to Rep(H)$ which is an exact faithful tensor functor. We discuss its left and right adjoint (called coinduction and induction) and their extensions to the ind-category.

\subsection{Frobenius extensions and adjunctions}

We review some facts about Frobenius extensions following \cite{Bell-Farnsteiner}. Let $S \subset R$ be a subring and $\alpha$ an automorphism of $S$. If $M$ is an $S$-module, let $_{\alpha}M$ denote the $S$-module with action $s * m = \alpha(s)m$. Then $Hom_S(R, _{\alpha}S)$ is the set of additive maps $f:R \to S$ such that $f(rs) = \alpha(s)f(r)$ for all $s \in S, r \in R$. It is an $(R,S)$-bimodule via $(r \cdot f \cdot s) = f(xr)s$.

\begin{definition} $R$ is an $\alpha$-Frobenius extension of $S$ if 
\begin{enumerate}
\item $R$ is a finitely generatedd $S$-module and
\item there exists an isomorphism $\varphi:R \to Hom_S(R, _{\alpha}S)$ of $(R,S)$-bimodules.
\end{enumerate}
The Frobenius extenions is said to be free if $R$ is a free $S$-module.
\end{definition}

Now let $\mathfrak{g} = \mathfrak{g}_0 \oplus \mathfrak{g}_1$ be a finite dimensional Lie superalgebra over  $k$ and let $\mathfrak{h} = \mathfrak{h}_0 \oplus \mathfrak{h}_1$ be a subalgebra with $\mathfrak{g}_0 \subset \mathfrak{h}$ (and hence $\mathfrak{g}_0 = \mathfrak{h}_0$).

\medskip\noindent
The map $f: \mathfrak{h}_0 \to \mathfrak{gl}(\mathfrak{g}/\mathfrak{h})$ defined by $f(a)(y + \mathfrak{h}) = [a,y] + \mathfrak{h}$ for $a \in \mathfrak{h}, y \in \mathfrak{g}$ is a homomorphism of Lie superalgebras. We also define the linear functional \[ \lambda(a) = - str(f(a))\] which vanishes on $[\mathfrak{h},\mathfrak{h}] + \mathfrak{h}_1$. There is a unique automorphism $\alpha$ of $\mathcal{U}(\mathfrak{h})$ such that \[ \alpha(a) = \begin{cases} a + \lambda(a) 1 & \text{ for } a \in \mathfrak{h}_0 \\ (-1)^n a & \text{ for } a \in \mathfrak{h}_1 \end{cases} \] where $n$ is the codimension of $\mathfrak{h}$ in $\mathfrak{g}$.

\begin{thm} (\cite[Theorem 2.2]{Bell-Farnsteiner}) \label{thm:frob-ext} Let $\mathfrak{h} \subset \mathfrak{g}$ be a sub Lie superalgebra of codimension $n$ that contains $\mathfrak{g}_0$. Then the extension $\mathcal{U}(\mathfrak{g}): \mathcal{U}(\mathfrak{h})$ is a free $\alpha$-Frobenius extension.
\end{thm}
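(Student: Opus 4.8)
The plan is to adapt the classical computation of Bell–Farnsteiner directly to the super setting, verifying the two defining conditions of an $\alpha$-Frobenius extension for $\mathcal{U}(\mathfrak{h}) \subset \mathcal{U}(\mathfrak{g})$. First I would fix a homogeneous basis $y_1,\dots,y_n$ of a complement of $\mathfrak{h}$ in $\mathfrak{g}$, ordered so that the even $y_i$ come first, and apply the super PBW theorem: choosing a homogeneous ordered basis of $\mathfrak{h}$ and appending the $y_i$, one obtains that $\mathcal{U}(\mathfrak{g})$ is a free left (and right) $\mathcal{U}(\mathfrak{h})$-module with basis the ordered monomials $y_1^{a_1}\cdots y_n^{a_n}$, where $a_i \in \{0,1,2,\dots\}$ for even $y_i$ and $a_i \in \{0,1\}$ for odd $y_i$. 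In particular this basis is finite precisely when $\mathfrak{h}_1 = \mathfrak{g}_1$ would fail, but note $\mathfrak{g}_0 \subset \mathfrak{h}$ forces the complement to lie in $\mathfrak{g}_1$, so all $y_i$ are odd, $a_i \in \{0,1\}$, and the PBW basis is the $2^n$ squarefree monomials $y_{i_1}\cdots y_{i_k}$. Hence $\mathcal{U}(\mathfrak{g})$ is free of rank $2^n$ over $\mathcal{U}(\mathfrak{h})$, giving condition (1) (and freeness).

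For condition (2) I would exhibit an explicit $(\mathcal{U}(\mathfrak{h}),\mathcal{U}(\mathfrak{h}))$-bimodule isomorphism $\varphi : \mathcal{U}(\mathfrak{g}) \to \Hom_{\mathcal{U}(\mathfrak{h})}(\mathcal{U}(\mathfrak{g}), {}_{\alpha}\mathcal{U}(\mathfrak{h}))$. The natural candidate is built from the "top coefficient" functional: using the PBW decomposition $\mathcal{U}(\mathfrak{g}) = \bigoplus_{S \subseteq \{1,\dots,n\}} \mathcal{U}(\mathfrak{h})\, y_S$ with $y_S = y_{i_1}\cdots y_{i_k}$ for $S = \{i_1 < \cdots < i_k\}$, let $\pi : \mathcal{U}(\mathfrak{g}) \to \mathcal{U}(\mathfrak{h})$ be the projection onto the component of the top monomial $y_{\{1,\dots,n\}}$. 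One then defines $\varphi(u)(v) = \pi(vu)$ (up to a sign convention and a twist), checks it is right $\mathcal{U}(\mathfrak{h})$-linear into ${}_{\alpha}\mathcal{U}(\mathfrak{h})$, left $\mathcal{U}(\mathfrak{g})$-linear, and that it maps the $\mathcal{U}(\mathfrak{h})$-basis $\{y_S\}$ of $\mathcal{U}(\mathfrak{g})$ to the dual basis of $\Hom_{\mathcal{U}(\mathfrak{h})}(\mathcal{U}(\mathfrak{g}),\mathcal{U}(\mathfrak{h}))$ (after the $\alpha$-twist), hence is bijective.

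The main obstacle — and the only place the automorphism $\alpha$ and the functional $\lambda$ genuinely enter — is computing the failure of the top-coefficient functional $\pi$ to be $\mathcal{U}(\mathfrak{h})$-bilinear on the nose: moving an element $a \in \mathfrak{h}_0$ past the product $y_1 \cdots y_n$ of all the odd complement generators picks up, via the adjoint action of $\mathfrak{h}_0$ on $\mathfrak{g}/\mathfrak{h}$, exactly the supertrace $\str(f(a)) = -\lambda(a)$ as a correction on the top component, while moving an odd $a \in \mathfrak{h}_1$ past $y_1\cdots y_n$ contributes the sign $(-1)^n$. This is precisely the super analogue of the determinant-of-the-adjoint computation in the even case, and I expect it to be the technical heart: one expands $a\, y_1 \cdots y_n$ by repeatedly using $a y_i = (-1)^{|a||y_i|} y_i a + [a,y_i]$ and tracks which terms survive in the top PBW component modulo lower monomials and modulo $\mathcal{U}(\mathfrak{h})_{\geq 1}$. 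Once this identity is established, it shows $\pi(av) = \alpha(a)\pi(v)$ and hence that $\varphi$ lands in $\Hom_{\mathcal{U}(\mathfrak{h})}(\mathcal{U}(\mathfrak{g}), {}_\alpha\mathcal{U}(\mathfrak{h}))$ and is a bimodule map; combined with the rank count from PBW, this completes the proof. Since this is cited verbatim from \cite[Theorem 2.2]{Bell-Farnsteiner}, in the write-up I would simply invoke their argument rather than reproduce it.
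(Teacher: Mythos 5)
Your proposal is correct and follows essentially the same line as Bell--Farnsteiner's original proof (which is what the paper invokes here: the paper itself gives no proof for this theorem, only the citation). The key observations you identify --- that $\mathfrak{g}_0 \subset \mathfrak{h}$ forces a purely odd homogeneous complement so that super PBW yields a free rank-$2^n$ module, and that the twist $\alpha$ is exactly what compensates for the failure of the top-coefficient projection to be $\mathcal{U}(\mathfrak{h})$-bilinear, with the supertrace of the adjoint action on $\mathfrak{g}/\mathfrak{h}$ producing $\lambda$ and the sign $(-1)^n$ coming from moving an odd element past the odd top monomial --- are precisely the points of the cited argument.
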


\begin{thm} (\cite[Page 96]{Nakayama-Tsuzuku}) For any $\alpha$-Frobenius extension there is a natural equivalence \[ R \otimes_S V \cong Hom_S(R, \ _{\alpha}V).\]
\end{thm}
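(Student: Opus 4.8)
The statement to prove is: for any $\alpha$-Frobenius extension $S \subset R$ there is a natural equivalence $R \otimes_S V \cong \Hom_S(R, {}_\alpha V)$ of functors on $S$-modules.

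\textbf{Proof proposal.}

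The plan is to build the isomorphism directly from the defining bimodule isomorphism $\varphi \colon R \xrightarrow{\ \sim\ } \Hom_S(R, {}_\alpha S)$ of the Frobenius extension, and then check naturality and functoriality formally. First I would set up the two functors carefully: on the left, $V \mapsto R \otimes_S V$, where $R$ is regarded as an $(R,S)$-bimodule, so the tensor product is a left $R$-module; on the right, $V \mapsto \Hom_S(R, {}_\alpha V)$, where ${}_\alpha V$ denotes $V$ with the twisted left $S$-action $s \ast v = \alpha(s) v$ (extending the earlier notation ${}_\alpha S$ from modules to general $V$), and $\Hom_S$ is taken of left $S$-modules, with the left $R$-module structure coming from the right $R$-action on $R$ by $(r \cdot f)(x) = f(xr)$. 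Both are additive functors $S\text{-mod} \to R\text{-mod}$.

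The key step is to produce, for each $S$-module $V$, a natural map
\[
\Psi_V \colon R \otimes_S V \longrightarrow \Hom_S(R, {}_\alpha V), \qquad
\Psi_V(r \otimes v)(x) = \varphi(r)(x)\cdot v,
\]
where $\varphi(r)(x) \in S$ acts on $v \in {}_\alpha V$ via the twisted action, i.e. $\varphi(r)(x) \cdot v := \alpha(\varphi(r)(x)) v$ — one has to be slightly careful about which action is being used, and I would pin this down so that the $S$-balancing $\Psi_V(rs \otimes v) = \Psi_V(r \otimes sv)$ holds; this uses the bimodule property $\varphi(rs)(x) = \alpha(s)\varphi(r)(x)$ (this is exactly the defining relation for elements of $\Hom_S(R, {}_\alpha S)$). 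Once $\Psi_V$ is well-defined, I would check it is a map of left $R$-modules: $\Psi_V(r'r\otimes v)(x) = \varphi(r'r)(x)v = \varphi(r)(xr')v = (r'\cdot \Psi_V(r\otimes v))(x)$, using that $\varphi$ is a left $R$-module map. Naturality in $V$ is immediate from the formula since a morphism $g\colon V\to W$ of $S$-modules induces $\alpha$-twisted compatible maps ${}_\alpha V \to {}_\alpha W$.

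It remains to show $\Psi_V$ is an isomorphism. Here I would exploit that $R$ is finitely generated as an $S$-module together with the isomorphism $\varphi$: choosing a "dual basis" type presentation, or more cleanly, observing that both sides are additive in $V$ and that $\Psi_V$ is an isomorphism when $V = S$ — indeed $\Psi_S \colon R \otimes_S S \cong R \to \Hom_S(R, {}_\alpha S)$ is essentially $\varphi$ itself — and then extending. The cleanest route is: since $R$ is a finitely generated (in the free case, free) $S$-module, the functor $\Hom_S(R, -)$ commutes with the relevant colimits/finite direct sums and one reduces to the case $V = S$ by a standard argument (write $V$ as a cokernel of a map of free modules if $R$ is $S$-projective, or simply use that $R$ is a finite direct sum of copies of $S$ in the free case and both functors are additive). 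The main obstacle I anticipate is purely bookkeeping: tracking the twist $\alpha$ through every action — left versus right, on $R$ versus on $V$ versus on $S$ — so that the $S$-balancing and $R$-linearity of $\Psi_V$ come out correctly with the twist landing in the right place. Once the conventions are fixed consistently with the earlier definition of ${}_\alpha M$ and of the $(R,S)$-bimodule structure on $\Hom_S(R,{}_\alpha S)$, everything else is formal. I would also remark that this is the classical Frobenius reciprocity statement (see \cite{Nakayama-Tsuzuku}) and that in our application $R = \mathcal{U}(\mathfrak{g})$, $S = \mathcal{U}(\mathfrak{h})$ via Theorem \ref{thm:frob-ext}, so that induction $R\otimes_S -$ and coinduction $\Hom_S(R,-)$ agree up to the twist by $\alpha$, which on the supergroup side accounts for the one-dimensional character by which $\mathrm{Coind}$ and $\mathrm{Ind}$ differ.
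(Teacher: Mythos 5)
The paper offers no proof of this statement; it is cited verbatim from Nakayama--Tsuzuku, so there is no in-paper argument to compare against. Your outline is the standard one and the strategy — define $\Psi_V$ directly from $\varphi$, verify $S$-balancing and left $R$-linearity, note $\Psi_S$ recovers $\varphi$ under $R\otimes_S S\cong R$, and then extend to all $V$ by additivity together with finite generation (or freeness) of $R$ over $S$ — is sound.

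There is, however, a small but genuine error in the explicit formula and in the identity you invoke to justify balancing. With the paper's conventions the correct map is the \emph{untwisted}
\[
\Psi_V(r\otimes v)(x) \;=\; \varphi(r)(x)\,v,
\]
where $\varphi(r)(x)\in S$ acts on $v$ by the original $S$-action on $V$; inserting an extra $\alpha$ as you propose ($\varphi(r)(x)\cdot v:=\alpha(\varphi(r)(x))v$) breaks $S$-balancing, since then $\Psi_V(rs\otimes v)(x)=\alpha(\varphi(r)(x)s)v$ while $\Psi_V(r\otimes sv)(x)=\alpha(\varphi(r)(x))sv$, and these differ unless $\alpha(s)=s$. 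You also conflate two distinct identities: right $S$-linearity of $\varphi$ gives $\varphi(rs)(x)=\varphi(r)(x)\,s$ (from the bimodule rule $(f\cdot s)(x)=f(x)s$), whereas membership of $\varphi(r)$ in $Hom_S(R,{}_\alpha S)$ gives $\varphi(r)(xs)=\alpha(s)\varphi(r)(x)$. You write the second where you need the first. The first identity yields $S$-balancing of the untwisted $\Psi_V$; the second shows $\Psi_V(r\otimes v)$ actually lands in $Hom_S(R,{}_\alpha V)$, since $\Psi_V(r\otimes v)(xs)=\varphi(r)(xs)v=\alpha(s)\varphi(r)(x)v=\alpha(s)\Psi_V(r\otimes v)(x)$; and left $R$-linearity follows from left $R$-linearity of $\varphi$ exactly as you compute. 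Once the formula is corrected, the reduction to $V=S$ via additivity and $S$-freeness of $R$ goes through as you say.
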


In particular the left and right adjoint of $Res: Rep(\mathfrak{g}) \to Rep(\mathfrak{h})$ are isomorphic and therefore exact. For the special case of induction and coinduction in the $\mathfrak{gl}(m|n)$-case see \cite[Proposition 2.1.1]{Germonie} where also the automorphism $\alpha$ is described explicitely.

\medskip\noindent
If $G$ is an affine supergroup and $\mathfrak{g}$ its Lie superalgebra, we have a fully faithful tensor functor $Rep(G) \to Rep(\mathfrak{g})$ (note that $Rep(G) =  Rep(G_0,\mathfrak{g})$, the $\mathfrak{g}$-representations such that the restriction to $\mathfrak{g}_0$ is an algebraic representation of $G_0$). Consider an embedding $G_0 \subset H \subset G$ of affine supergroups. Then $\mathcal{U}(\mathfrak{h}) \subset \mathcal{U}(\mathfrak{g})$ is a Frobenius extension by theorem \ref{thm:frob-ext}. For $V \in Rep(H) \subset Rep(\mathfrak{h})$, its induction and coinduction is again algebraic, i.e. in $Rep(G)$.

\begin{cor} Let $H$ be an affine supergroup such that $G_0 \subset H \subset G$. Then $Res$ has a left and right adjoint and $Coind \cong Ind$. Both adjoints are exact.
\end{cor}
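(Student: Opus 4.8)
The plan is to reduce the statement about affine supergroups to the already-established result for enveloping algebras, Theorem \ref{thm:frob-ext}, together with the equivalence $R \otimes_S V \cong \Hom_S(R, {}_\alpha V)$. First I would recall the setup: since $G_0 \subset H \subset G$ are affine supergroups, their Lie superalgebras satisfy $\mathfrak{g}_0 = \mathfrak{h}_0 \subset \mathfrak{h} \subset \mathfrak{g}$ with $\mathfrak{h}$ of finite codimension $n$ in $\mathfrak{g}$. By Theorem \ref{thm:frob-ext}, $\mathcal{U}(\mathfrak{g}) : \mathcal{U}(\mathfrak{h})$ is a free $\alpha$-Frobenius extension, so at the level of Lie superalgebra representations the functor $Res: Rep(\mathfrak{g}) \to Rep(\mathfrak{h})$ has left adjoint $\mathcal{U}(\mathfrak{g}) \otimes_{\mathcal{U}(\mathfrak{h})} (-)$ and right adjoint $\Hom_{\mathcal{U}(\mathfrak{h})}(\mathcal{U}(\mathfrak{g}), -)$, and these are isomorphic after the twist by $\alpha$, hence both exact.

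The key step is to check that when one starts with an \emph{algebraic} representation $V$ of $H$ (i.e. $V \in Rep(H) = Rep(H_0, \mathfrak{h})$ viewed inside $Rep(\mathfrak{h})$), the induced/coinduced $\mathfrak{g}$-module is again algebraic, i.e. lies in $Rep(G) = Rep(G_0, \mathfrak{g})$. This is a statement about the restriction to $\mathfrak{g}_0 = \mathfrak{h}_0$: since $\mathcal{U}(\mathfrak{g})$ is a free finitely generated $\mathcal{U}(\mathfrak{h})$-module, by the PBW theorem we may choose a homogeneous basis indexed by monomials in a complement of $\mathfrak{h}_1$ in $\mathfrak{g}_1$ (there is no even complement), so $\mathcal{U}(\mathfrak{g}) \otimes_{\mathcal{U}(\mathfrak{h})} V$ is, as an $\mathcal{U}(\mathfrak{h}_0)$-module, a finite direct sum of twists of $V$ by the adjoint action of $H_0$ on $\Lambda^\bullet(\mathfrak{g}_1/\mathfrak{h}_1)$. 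Each such twist is again algebraic for $H_0 = G_0$ because the adjoint action of $G_0$ on $\mathfrak{g}_1/\mathfrak{h}_1$ is algebraic and the category of algebraic $G_0$-representations is closed under tensor products, exterior powers and extensions. The same argument applies to coinduction via $\Hom_{\mathcal{U}(\mathfrak{h})}(\mathcal{U}(\mathfrak{g}), V)$, which by the Nakayama--Tsuzuku equivalence is isomorphic to $\mathcal{U}(\mathfrak{g}) \otimes_{\mathcal{U}(\mathfrak{h})} {}_\alpha V$.

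Having verified that $Coind$ and $Ind$ land in $Rep(G)$, the adjunction properties are inherited: for $W \in Rep(G) \subset Rep(\mathfrak{g})$ and $V \in Rep(H) \subset Rep(\mathfrak{h})$ the adjunction isomorphisms $\Hom_{Rep(\mathfrak{g})}(Coind\, V, W) \cong \Hom_{Rep(\mathfrak{h})}(V, Res\, W)$ and the dual one restrict to isomorphisms in $Rep(G)$, $Rep(H)$ because $Rep(G) \to Rep(\mathfrak{g})$ and $Rep(H) \to Rep(\mathfrak{h})$ are fully faithful. The isomorphism $Coind \cong Ind$ descends from the corresponding isomorphism on the enveloping-algebra level (both are $\mathcal{U}(\mathfrak{g}) \otimes_{\mathcal{U}(\mathfrak{h})} {}_\alpha(-)$ after the identification), and exactness is preserved since the forgetful functors $Rep(G) \to Rep(\mathfrak{g})$, $Rep(H) \to Rep(\mathfrak{h})$ are exact and reflect exact sequences.

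I expect the main obstacle to be the algebraicity check in the second step: one must argue carefully that $\mathfrak{g}_1/\mathfrak{h}_1$ carries an algebraic $G_0$-action integrating the $\mathfrak{h}_0$-action $f$, and that PBW gives a $G_0$-equivariant (not merely $\mathfrak{h}_0$-equivariant) filtration or decomposition of $\mathcal{U}(\mathfrak{g}) \otimes_{\mathcal{U}(\mathfrak{h})} V$ whose graded pieces are $V \otimes \Lambda^k(\mathfrak{g}_1/\mathfrak{h}_1)$. Once this $G_0$-structure is identified, closure of algebraic representations under the relevant operations is routine; but making the $G_0$-equivariance of the PBW decomposition precise — rather than just the $\mathfrak{g}_0$-equivariance — is the point that needs genuine care, and is presumably why the authors restrict to the case $G_0 \subset H \subset G$ with everything affine.
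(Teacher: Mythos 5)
Your proof follows the same route as the paper: the Bell--Farnsteiner theorem gives the $\alpha$-Frobenius structure of $\mathcal{U}(\mathfrak{h})\subset\mathcal{U}(\mathfrak{g})$, Nakayama--Tsuzuku identifies $\Hom_{\mathcal{U}(\mathfrak{h})}(\mathcal{U}(\mathfrak{g}),-)$ with $\mathcal{U}(\mathfrak{g})\otimes_{\mathcal{U}(\mathfrak{h})}{}_\alpha(-)$ (hence $Coind\cong Ind$ and exactness), and one must check that the resulting functors carry $Rep(H)$ into $Rep(G)$. The paper asserts the algebraicity step without proof, whereas your PBW argument fills it in, and your closing observation — that one needs $G_0$-equivariance of the PBW decomposition, not merely $\mathfrak{g}_0$-equivariance — is precisely the point that requires care.
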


\subsection{Limit to the ind-category and the induced model structure} \label{sec:ind-passage} We recall from \cite[Proposition 6.1.9]{Kashiwara-Shapira} that the passage from a category $\mathcal{C}$ to $Ind(\mathcal{C})$ is functorial in the sense that there is a unique extension of a functor $F:\mathcal{C} \to \mathcal{C}'$ to the ind-category \[ \xymatrix{ \mathcal{C} \ar[d] \ar[r]^F & \mathcal{C}' \ar[d] \\ Ind(\mathcal{C}) \ar[r]^{IF} & Ind(\mathcal C'). } \] On objects $X = (X_i)_i$ in $Ind(\mathcal C)$ the extension $IF$ of $F$ is defined by \[ IF(X) = \varinjlim_i F(X_i).\] Recall that the morphism spaces in $Ind(\mathcal{C})$ are given by \[ Hom_{Ind(\mathcal{C})} (X,Y) = \varprojlim_i \varinjlim_j Hom_{\mathcal{C}}(X_i,Y_j).\] Then the map \[ IF: Hom_{Ind(\mathcal{C})} (X,Y) \to Hom_{Ind(\mathcal{C}')}(IF(X),IF(Y))\] is given by \[ \varprojlim_i \varinjlim_j Hom_{\mathcal{C}}(X_i,Y_j) \to \varprojlim_i \varinjlim_j Hom_{\mathcal{C}}(FX_i,FY_j).\]

\medskip\noindent
Therefore the functors $Res$ and $Ind \cong Coind$ extend to the ind-category of $Rep(G)$ with the same adjunction properties. Since $\varinjlim$ is an exact functor, the extended functors are exact as well. Using the equivalence between $Rep(G)$ and $Comod(A)$ for $A = k[G]$ we get the same adjunction properties for the Hopf algebra quotient $A \to B$ corresponding to $H \subset G$. We use the notation \begin{align*} U & = I Res: Rep(G)^{\infty} \to Rep(H)^{\infty}, \\ F & = I Coind: Rep(H)^{\infty} \to Rep(G)^{\infty}\end{align*} and likewise for the corresponding functors on the comodule categories.

\begin{cor} Let $H$ be an affine supergroup such that $G_0 \subset H \subset G$ and the associated Hopf algebra quotient $A \to B$.  For $\mathcal{C} = Comod(A)$ and $\mathcal{D} = Comod(B)$ we have \[ Hom_{\mathcal C}(FX,Y) = Hom_{\mathcal D}(X,UY) \ \] where $U$ and $F$ are exact and $U$ is faithful.
\end{cor}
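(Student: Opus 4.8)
The plan is to assemble this from the corollary of the previous subsection together with the functoriality of ind-completion recalled in section~\ref{sec:ind-passage}. That corollary gives, for the embedding $G_0\subset H\subset G$, that $Res\colon Rep(G)\to Rep(H)$ has a left and a right adjoint which coincide ($Coind\cong Ind$) --- this rests on the $\alpha$-Frobenius extension $\mathcal U(\mathfrak h)\subset\mathcal U(\mathfrak g)$ of Theorem~\ref{thm:frob-ext} and on the fact that induction/coinduction of an algebraic $H$-representation is again algebraic --- and that both $Res$ and $Coind$ are exact. Moreover $Res$ is faithful: it is induced by the surjection $A\to B$ of coalgebras, so on underlying vector spaces it is the identity, and a morphism of comodules that vanishes as a linear map vanishes.

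First I would transport everything to the comodule categories via the tensor equivalences $Rep(G)\simeq Comod(k[G])$ and $Rep(H)\simeq Comod(k[H])$, and then apply the unique extension of a functor to its ind-category (\cite[Proposition 6.1.9]{Kashiwara-Shapira}, as spelled out in section~\ref{sec:ind-passage}) to obtain $U=IRes$ and $F=ICoind$ between $\mathcal C=Comod(A)=\mathcal T^\infty$ and $\mathcal D=Comod(B)=\mathcal{T}_H^{\infty}$. The heart of the argument is to check that the adjunction survives this passage. Writing $X=(X_i)_i$ and $Y=(Y_j)_j$ and using the formulas from section~\ref{sec:ind-passage}, the natural isomorphisms $Hom_{\mathcal C}(Coind\,X_i,Y_j)\cong Hom_{\mathcal D}(X_i,Res\,Y_j)$, being natural in $i$ and $j$, induce upon taking $\varprojlim_i\varinjlim_j$ a natural isomorphism $Hom_{\mathcal C}(FX,Y)=Hom_{Ind(\mathcal C)}(FX,Y)\cong Hom_{Ind(\mathcal D)}(X,UY)=Hom_{\mathcal D}(X,UY)$; this is precisely the statement that the ind-extension of an adjoint pair is again adjoint.

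Next I would record exactness of $U$ and $F$. By the remark following the list of structural properties of $\mathcal C$, all colimits in these comodule categories are sequential, hence $\varinjlim$ is exact; since $Res$ and $Coind$ are exact on the finite-dimensional level and a short exact sequence in $\mathcal C$ (resp.\ $\mathcal D$) can be presented as a levelwise short exact sequence of ind-objects over a common filtered index, applying the functor levelwise and then passing to the colimit preserves exactness. Finally $U$ is faithful: for each $i$ the map $\varinjlim_j Hom_{\mathcal D}(X_i,Y_j)\to\varinjlim_j Hom_{\mathcal D}(Res\,X_i,Res\,Y_j)$ is a filtered colimit of the injections coming from faithfulness of $Res$, hence injective, and $\varprojlim_i$ of injections is injective, so $U$ is injective on morphism spaces. (Alternatively, $U$ is exact and reflects the zero object, since $Res$ does.)

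The only step requiring genuine care is the second one --- tracking that the adjunction bijections remain compatible through the $\varprojlim_i$ and $\varinjlim_j$ --- but this is entirely mechanical given the explicit description of $Ind(\mathcal C)$ recalled in section~\ref{sec:ind-passage} and needs no new ideas beyond the setup already in place.
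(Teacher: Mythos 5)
Your proposal is correct and follows essentially the same route as the paper: invoke the preceding corollary ($Coind\cong Ind$ exact, coming from the Frobenius extension of Theorem \ref{thm:frob-ext}), extend to the ind-categories via \cite[Proposition 6.1.9]{Kashiwara-Shapira}, and deduce that adjunction, exactness and faithfulness persist under $\varprojlim_i\varinjlim_j$. The paper merely asserts these persistence statements, whereas you verify them explicitly, but no new idea is involved.
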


\noindent
Now let $H \subset G$ be the pair of affine supergroups with Hopf algebra quotient $A \to B$.

\begin{definition} \label{def-frob-pair} We call $(H,G)$ as well as the pair of corresponding supercommutative Hopf algebras $(A,B)$  a \textit{Frobenius pair} if $G_0 \subset H \subset G$ and the categories $\mathcal{C} = Comod(A)$ and $\mathcal{D} = Comod(B)$ are Frobenius categories.
\end{definition}

In particular for any Frobenius pair the functor $Res: Rep(G)^{\infty} \to Rep(H)^{\infty}$ has a left and right adjoint which are isomorphic. By definition any Frobenius pair $(H,G)$ defines a pair of comodule categories $(\mathcal{C},\mathcal{D})$ which is a Frobenius pair in the sense of section \ref{sec:induced}. In particular the homotopy category of $(H,G)$ is defined.

\begin{example} Our main example of a Frobenius pair $(H,G)$ in part \ref{part-gl} is the following: Let $H=P$ be the  parabolic subgroup of upper triangular block matrices in the general linear supergroup $G=GL(m|n)$.
\end{example}


\begin{definition} \label{def:induced} {\bf The induced model structure}. Let $A$ be a supercommutative Hopfalgebra $A$ over a field $k$ with a quotient $A\to B$ such that $(A,B)$ is a Frobenius pair. Let $\mathcal C$ be the category of all graded comodules over $A$ endowed with the induced model category structure via the quotient $A\to B$ where we use the stable model structure on $Comod(B)$. We call this model structure on $\mathcal C$ the \it{induced model structure} attached to $(A,B)$.
\end{definition}

\subsection{Setup and further conventions} \label{sec:conventions}
In the following sections \ref{sec:clean} - \ref{sec:isogenies} we will assume that we are in the setting of definition \ref{def:induced}, i.e. $\mathcal{C} = \mathcal{T}^{\infty}$ is the category of comodules of a supercommutative Hopf algebra. However many results hold if we just assume that $\mathcal{T}$ is a $k$-linear abelian Frobenius category satisfying conditions (F) and (G). Then $\mathcal{T}$ is equivalent to the category of finite dimensional comodules of some coalgebra $A$ (see section \ref{sec:finiteness-conditions}) and we denote by $\mathcal{C}$ its ind completion. We assume then that $\mathcal{C}$ carries a model structure induced by the stable model structure from a Frobenius pair $(\mathcal{C},\mathcal{D})$ in the sense of definition \ref{def:frobenius-pair}. An exception are the results of section \ref{sec:monoidal-model-structure}: We have not verified that the model structure on $\mathcal{C}$ is a monoidal model structure in the more general situation. 

\medskip\noindent
For a $k$-linear abelian Frobenius category $\mathcal{T}$ satisfying conditions (F) and (G) we put \[ \mathcal{T}_- = \mathcal{C}_- \cap \mathcal{T}, \ \mathcal{T}_+ = \mathcal{C}_+ \cap \mathcal{T}. \] We denote the full triangulated subcategory generated by the image of $\mathcal{T}$ in $Ho\mathcal{C}$ by $Ho\mathcal{T}$.


\section{  The monoidal model structure} \label{sec:monoidal-model-structure}

\subsection{Monoidal model structures} We assume in this section that we are in the setting of definition \ref{def:induced}.

\begin{thm} \label{thm:monoidal} The induced model structure \ref{def:induced} is a monoidal model structure.
\end{thm}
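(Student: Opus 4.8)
The plan is to verify the two conditions in the definition of a symmetric monoidal model structure (section \ref{sec:monoidal-model-def}): first, that the tensor product $\mathcal{C}\times\mathcal{C}\to\mathcal{C}$ is a Quillen bifunctor, and second, the unit condition involving a cofibrant replacement $q:Q\one\to\one$. Since the model structure on $\mathcal{C}$ is cofibrantly generated by $I=F(I_{\mathcal D})$ and $J=F(J_{\mathcal D})$, by \cite[Corollary 4.2.5]{Hovey} it suffices to check the pushout-product axiom on the generating sets: for $i\colon A\to B$ and $i'\colon A'\to B'$ in $I$, the pushout-product $i\,\square\,i'\colon (A\otimes B')\amalg_{A\otimes A'}(B\otimes A')\to B\otimes B'$ is in $\mathcal{L}=Icof$, and if in addition $i$ or $i'$ lies in $J$ (equivalently is a trivial cofibration), then $i\,\square\,i'$ is in $\mathcal{L}\cap\mathcal{W}$.

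First I would use the explicit description of the model structure from Corollary \ref{model-structure-descr}: cofibrations are monomorphisms with cokernel in $\mathcal{C}_+$, trivial cofibrations are split monomorphisms with cokernel in $\mathcal{P}_{\mathcal{C}}=\mathcal{I}_{\mathcal{C}}$. So the pushout-product axiom translates into: if $Q=\mathrm{coker}(i)\in\mathcal{C}_+$ and $Q'=\mathrm{coker}(i')\in\mathcal{C}_+$, then $i\,\square\,i'$ is a monomorphism whose cokernel $Q\otimes Q'$ lies in $\mathcal{C}_+$; and if moreover $Q'\in\mathcal{I}_{\mathcal{C}}$ then $Q\otimes Q'\in\mathcal{I}_{\mathcal{C}}$ and the map splits. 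The monomorphism statement and the identification of the cokernel of a pushout-product with $Q\otimes Q'$ are formal facts in an abelian tensor category with $\otimes$ exact in each variable (which holds here since $\mathcal{C}$ is the comodule category of a Hopf algebra, so $\otimes_k$ is exact). The second, trivial-cofibration, case reduces to the statement that $\mathcal{I}_{\mathcal{C}}$ is a tensor ideal, which is exactly property (7) in the list in section on comodules, namely $\mathcal{I}_{\mathcal C}\otimes\mathcal{C}\subset\mathcal{I}_{\mathcal C}$. Hence the real content is:

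\begin{lem*} If $X,Y\in\mathcal{C}_+$ then $X\otimes Y\in\mathcal{C}_+$.
\end{lem*}

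\noindent The key step, and the main obstacle, is proving this closure of $\mathcal{C}_+$ under $\otimes$. By Corollary \ref{model-structure-descr} and the corollary characterizing $\mathcal{C}_-$, an object $Z$ is in $\mathcal{C}_+$ iff $Ext^1_{\mathcal C}(Z,M)=0$ for all $M\in\mathcal{C}_-$, equivalently (using closed monoidal structure and that $\mathcal{C}$ has internal Hom) iff for each $M\in\mathcal{C}_-$ every extension of $Z$ by $M$ splits. Using the adjunction $Hom_{\mathcal C}(X\otimes Y,M)\cong Hom_{\mathcal C}(X,\IHom(Y,M))$ and the analogous $Ext$-level statement — valid because $Y$ is flat, so $-\otimes Y$ is exact and has exact right adjoint $\IHom(Y,-)$, giving $Ext^1_{\mathcal C}(X\otimes Y,M)\cong Ext^1_{\mathcal C}(X,\IHom(Y,M))$ — one reduces to showing that $\IHom(Y,M)\in\mathcal{C}_-$ whenever $Y\in\mathcal{C}_+$ and $M\in\mathcal{C}_-$. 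Now $M\in\mathcal{C}_-$ means $U(M)\in\mathcal{I}_{\mathcal D}$; I would like to conclude $U(\IHom_{\mathcal C}(Y,M))$ is injective, using some projection-formula type compatibility of $U$ (restriction) with internal Hom, reducing to the fact that $\IHom_{\mathcal D}(U(Y),-)$ preserves injectives since $U(Y)$ is flat over $\mathcal{D}$. The care needed here — making the internal-Hom/adjunction bookkeeping rigorous in the ind-category of comodules and checking $U\circ\IHom_{\mathcal C}(Y,-)\cong\IHom_{\mathcal D}(U(Y),U(-))$ (a form of Frobenius reciprocity, which uses $Coind\cong Ind$) — is where the argument is delicate; with this in hand, $Ext^1_{\mathcal C}(X\otimes Y,M)=0$ follows from $X\in\mathcal{C}_+$, giving $X\otimes Y\in\mathcal{C}_+$. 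An alternative, perhaps cleaner, route for the lemma: every $X\in\mathcal{C}_+$ admits by Theorem \ref{thm:cofibrant} (or directly) a presentation built from cofibrant replacements, and since the cofibrant replacement of $\one$ will be constructed explicitly later (lemma \ref{thm:cofib-of-1}) with $Q\one\otimes X$ cofibrant, one can bootstrap; but for a self-contained proof here the $Ext$/internal-Hom argument is preferable.

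Finally, for the unit condition: we factor $0\to\one$ as a cofibration followed by a trivial fibration $q\colon Q\one\to\one$, so by Corollary \ref{model-structure-descr} the kernel $K$ of $q$ lies in $\mathcal{C}_-$, i.e. $U(K)\in\mathcal{I}_{\mathcal D}$. For cofibrant $X$ we must show $q\otimes\id_X\colon Q\one\otimes X\to X$ and $\id_X\otimes q$ are weak equivalences, i.e. that $U(K\otimes X)=U(K)\otimes U(X)$ is injective (then $K\otimes X\in\mathcal{C}_-$, so the short exact sequence $0\to K\otimes X\to Q\one\otimes X\to X\to 0$ exhibits $q\otimes\id_X$ as a trivial fibration, hence in $\mathcal W$). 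Since $U(K)\in\mathcal{I}_{\mathcal D}$ and $\mathcal{I}_{\mathcal D}$ is a tensor ideal in $\mathcal{D}$ (property (7) again, now for $B$-comodules), $U(K)\otimes U(X)\in\mathcal{I}_{\mathcal D}$, which finishes it — note this case does not even need $X$ cofibrant, and does not use the lemma. Assembling: pushout-product on $I$ gives the Quillen bifunctor condition, and the unit condition is the tensor-ideal computation; together these are precisely the two axioms of Definition 4.2.6 in \cite{Hovey}, proving $\mathcal{C}$ is a monoidal model category.
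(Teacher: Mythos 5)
Your proposal is correct, and it takes a genuinely different route from the paper's. Both proofs begin by invoking \cite[Corollary 4.2.5]{Hovey} to reduce the pushout-product axiom to the generating sets $I=F(I_{\mathcal D})$ and $J=F(J_{\mathcal D})$, and both handle the $J$-case and the unit axiom by the same tensor-ideal argument. The divergence is in the $I\square I$ case. The paper first proves Lemma~\ref{induction-lemma}, the projection formula $F(A\otimes UB)\cong F(A)\otimes B$, and then uses it directly: it rewrites $F(i)\square F(i')$ as $F\bigl(U(F(i))\square i'\bigr)$, observes that a pushout-product of monomorphisms is a monomorphism, and concludes the result lies in $I\subset\mathcal L$. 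You instead invoke the explicit description of $\mathcal L$ from Corollary~\ref{model-structure-descr}, identify $\mathrm{coker}(i\square i')=Q\otimes Q'$ via biexactness of $\otimes$, and reduce to the assertion that $\mathcal C_+$ is closed under tensor products, which you prove via the $\mathrm{Ext}^1$-characterization of $\mathcal C_+$ (valid by the cotorsion-pair property from Theorem~\ref{thm:main-homotopy}) together with the derived internal-Hom adjunction $\mathrm{Ext}^1(X\otimes Y,M)\cong\mathrm{Ext}^1(X,\IHom(Y,M))$ and the fact that $\IHom(-,\mathcal C_-)\subset\mathcal C_-$. What you buy with this route is the tensor-ideal property of $\mathcal C_+$ as an explicit intermediate statement, which the paper records only afterwards as a corollary. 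What you pay is that the step you yourself flag as delicate, namely $U\circ\IHom_{\mathcal C}(Y,-)\cong\IHom_{\mathcal D}(U(Y),U(-))$, is precisely the content of (the proof of) Lemma~\ref{induction-lemma}, so the two routes consume the same technical input; the paper's version is the more economical use of it, whereas yours is the more modular one. There is no gap: you correctly identify the projection-formula compatibility as the only nontrivial ingredient needed beyond formal facts about cotorsion pairs and biexact tensor products.
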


In order to prove this theorem we have to verify  that the model structure on $\mathcal C$ satisfies the pushout-product axiom and the unit axiom \cite[Definition 4.2.6]{Hovey} as in section \ref{sec:monoidal-model-def}. The unit axiom - a condition on the cofibrant replacement $q:Q\one\to \one$ of the unit object - is easily verified: Indeed $f\otimes id_Z\in \mathcal W$ if and only if $U(f\otimes id_Z)=U(f)\otimes id_{U(Z)}\in \mathcal W_{\mathcal D}$. The latter  is obvious, since stable equivalence is preserved by tensor products with $id_K$ (for any $K\in \mathcal D$).
So it remains to show that the pushout-product axiom \cite[Definition 4.2.6.1]{Hovey} holds.
For this it is very convenient  (\cite[Cor.4.2.5]{Hovey}) that the model structure is cofibrantly generated by $J$ and $I$, where
 $J$ is the set of morphisms $0\to Z$ for $Z\in F(\mathcal I_{\mathcal D})$
 and $I\subset \mathcal L_+$ defined in section \ref{sec:induced-1}.

\medskip 
We start with a technical lemma.

\begin{lem} \label{induction-lemma} We have natural isomorphisms
$\varphi: F(A\otimes UB) \cong F(A)\otimes B$ in $\mathcal C$.
\end{lem}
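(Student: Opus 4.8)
The isomorphism I want is a ``projection formula'' for the Frobenius pair $(\mathcal{C},\mathcal{D})$, and the natural strategy is to build it first on finite-dimensional objects (where $F = ICoind$ is computed by the explicit Frobenius-extension formula $R \otimes_S (-) \cong \Hom_S(R, {}_\alpha(-))$ of Theorem~\ref{thm:frob-ext} and the following theorem) and then extend to all of $\mathcal{C}$ by passing to the ind-category, using that $F$, $U$ and $\otimes$ all commute with the relevant filtered colimits. So first I would reduce to the case $A, B$ finite-dimensional comodules: since every object of $\mathcal{C}$ is a filtered colimit of objects in $\mathcal{T}$, since $F = IF$ is defined on ind-objects by $F(\varinjlim A_i) = \varinjlim F(A_i)$, since $U$ likewise commutes with filtered colimits (being exact and, being a restriction functor, clearly colimit-preserving), and since $\otimes$ is a left adjoint in each variable hence commutes with colimits, both sides of $\varphi$ are the filtered colimit of the corresponding expression for $A_i \otimes U(B_j)$ versus $F(A_i) \otimes B_j$. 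Naturality of the comparison map will then propagate it from $\mathcal{T}$ to $\mathcal{C}$.

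\textbf{Construction of the map.} For the map itself I would use adjunction rather than bare hands. By the adjunction $\Hom_{\mathcal{C}}(FX,Y) \cong \Hom_{\mathcal{D}}(X, UY)$, to give a morphism $F(A \otimes UB) \to F(A) \otimes B$ it suffices to give a morphism $A \otimes UB \to U(F(A) \otimes B)$ in $\mathcal{D}$. Now $U$ is a tensor functor (it is a restriction functor), so $U(F(A) \otimes B) \cong UF(A) \otimes U(B)$, and the unit of the adjunction supplies $A \to UF(A)$; tensoring with $\mathrm{id}_{UB}$ gives the required morphism $A \otimes UB \to UF(A) \otimes UB \cong U(F(A)\otimes B)$, hence by adjunction a canonical natural transformation $\varphi: F(A \otimes UB) \to F(A)\otimes B$. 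This construction is manifestly natural in $A$ and $B$, so the only remaining point is that $\varphi$ is an isomorphism.

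\textbf{Proving $\varphi$ is an isomorphism.} Here I would work at the level of the underlying $\mathcal{U}(\h)$-/$\mathcal{U}(\g)$-modules (equivalently using $Rep(G) \hookrightarrow Rep(\g)$ faithfully, as set up before Corollary~\ref{thm:frob-ext}) and invoke Theorem~\ref{thm:frob-ext}: $\mathcal{U}(\g)$ is a free $\alpha$-Frobenius extension of $\mathcal{U}(\h)$, so $F(M) = \mathcal{U}(\g) \otimes_{\mathcal{U}(\h)} M$ for $M \in Rep(\h)$ (using $Coind \cong Ind$). Then $\varphi$ becomes the standard natural isomorphism
\[
\mathcal{U}(\g) \otimes_{\mathcal{U}(\h)} \bigl(M \otimes_k U(N)\bigr) \;\xrightarrow{\ \sim\ }\; \bigl(\mathcal{U}(\g) \otimes_{\mathcal{U}(\h)} M\bigr) \otimes_k N
\]
for $M \in Rep(\h)$, $N \in Rep(\g)$, sending $u \otimes (m \otimes n) \mapsto \sum (u_{(1)} \otimes m) \otimes u_{(2)}n$ where $\Delta(u) = \sum u_{(1)} \otimes u_{(2)}$ is the coproduct; its inverse sends $(u \otimes m) \otimes n \mapsto \sum (u_{(1)} \otimes m) \otimes S(u_{(2)}) n$. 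One checks these are mutually inverse $\mathcal{U}(\g)$-module maps by the usual Hopf-algebra identities (counit and antipode axioms), and that $\varphi$ so described agrees with the adjunction-theoretic $\varphi$ of the previous paragraph by evaluating on the generator $1 \otimes (m \otimes n)$. Finally, one verifies that the resulting map is a morphism of $G$-representations (it is $\mathcal{U}(\g)$-linear and compatible with the $G_0$-structure since all functors involved preserve algebraicity), hence lives in $\mathcal{C}$ rather than merely in $Rep(\g)^\infty$.

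\textbf{Main obstacle.} I expect the genuinely delicate point is not the algebra of the projection formula but the bookkeeping at the interface: checking that the explicit $\mathcal{U}(\g)$-module isomorphism is compatible with the grading/$G_0$-structure so that it is a morphism in $Comod(A)$, and — once one works in $\mathcal{C}$ directly without the Lie-algebra model — that the colimit interchange is legitimate (exactness and commutation of $\varinjlim$ with $\otimes$ and with $F$, $U$). Both are routine given the setup of Section~\ref{sec:ind-passage}, but they are where an honest proof spends its effort; the twist by $\alpha$ in the Frobenius extension is harmless here because it only affects the left $\mathcal{U}(\h)$-action being balanced over, not the outer tensor factor.
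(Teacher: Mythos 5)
Your construction of $\varphi$ via the unit $A \to UFA$ and monoidality of $U$ is exactly the paper's. For the proof that $\varphi$ is an isomorphism, however, you take a genuinely different route. The paper stays entirely in the comodule world: applying adjunction twice it reduces the claim to a natural isomorphism $U{\mathcal H}om(B,C) \cong {\mathcal H}om(UB,UC)$, and proves that by unravelling the definition of the comodule structure on the internal Hom in terms of tame modules over the duals of the two Hopf algebras, using the splitting $A = A_0 \otimes \Lambda^{\bullet}(\theta_1,\ldots,\theta_s)$ of a supercommutative Hopf algebra to compare the two actions. You instead pass to the enveloping-algebra model, where $F = \mathcal{U}(\g)\otimes_{\mathcal{U}(\h)}(-)$ by the Frobenius-extension results, invoke the classical tensor identity $u\otimes(m\otimes n)\mapsto \sum(u_{(1)}\otimes m)\otimes u_{(2)}n$ for the Hopf subalgebra $\mathcal{U}(\h)\subset\mathcal{U}(\g)$, and then extend to all of $\mathcal C$ by filtered colimits. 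Both arguments are sound. Yours is more explicit and avoids the dual-algebra bookkeeping, but it is tied to the affine-supergroup incarnation of the Frobenius pair (it needs induction on algebraic representations to be computed by the Lie-theoretic formula, which the paper only establishes in that setting); the paper's route has the practical advantage that its intermediate isomorphism $U{\mathcal H}om(B,C)\cong{\mathcal H}om(UB,UC)$ is precisely what gets reused in the corollaries immediately following the lemma (e.g.\ that $C\in\mathcal C_-$ implies ${\mathcal H}om(B,C)\in\mathcal C_-$), which your proof would not supply. One small slip: your inverse map should land in $F(A\otimes UB)$, i.e.\ it should read $(u\otimes m)\otimes n\mapsto \sum u_{(1)}\otimes\bigl(m\otimes S(u_{(2)})n\bigr)$, and in the super setting both formulas carry the usual Koszul signs; neither point affects the argument.
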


\begin{proof} We construct $\varphi $  by adjunction. For this it suffices to construct
a homomorphism in $Hom_{\mathcal D}(A\otimes UB, UFA \otimes UB)$, namely
$ ad(A\to UFA) \otimes id_{UB} $. It is enough to show that $U(\varphi)$ is an isomorphism. Alternatively use twice adjunction
$$ Hom(F(A\otimes B),C) = Hom(A,{\mathcal H}om(UB,UC)) $$
and similarly
$$ Hom(F(A)\otimes B,C) = Hom(A,U{\mathcal H}om(B,C)) \ ,$$
so it suffices to give an isomorphism
$$ U{\mathcal H}om(B,C) \cong {\mathcal H}om(UB,UC) \ .$$ 
The proof is now basically an unravelling of the definition of the comodule structure of ${\mathcal H}om(B,C)$ as in \cite[ p. 63]{Hovey}. First notice that $B^*\hookrightarrow A^*$ becomes a subalgebra where $A^*$, $B^*$ are the duals of $A$, $B$ \cite[2.5.1]{Hovey}.
Comodules $V$ of $A$ define tame $A^*$-modules $V^*$, and this defines an equivalence of categories (see \cite[Proposition 2.5.5]{Hovey}). The functor $U$ corresponds to the restricting the tame $A^*$-module to the corresponding tame $B^*$-module. The internal Hom-module
is constructed in loc. cit. as a tame $A^*$-module $Hom_k(M,N)$ attached to tame $A^*$-modules $M,N$. From the definition of the $A^*$-action it can be verified that the defining action commutes with the restriction to the subalgebra $B^*$ (the restriction functor $U$). The formula involves a basis $b_i$ of $B$ with dual basis $b_i^*$ of $B^*$
and
\begin{itemize}
\item $\chi^*: B^* \to B^*$ (dual of the antipode $\chi:B\to B$)
\item $\Delta^*: B^* \to Hom_k(B,B^*)$ (dual of comultiplication)
\item $f:M\to N$ $k$-linear
\item $u\in B^*$, so that $ b^u_i:= \chi*(\delta^*(u)(b_i)) \in B^*$
\end{itemize}
with $uf \in Hom_k(M,N)$ defined by 
$$ (uf)(x) = \sum_i b_i^*(f( b^u_i x )) $$ 
for $x\in M$.
The similar formula for the action of $A^*$ reduces to this formula defining the action of $B^*$, if
there is a $k$-basis $a_i$ of $A$, such that the dual basis $a_i^*$ contains the dual
basis $b_i^*$ as a subset. This is possible provided there exists a
splitting $A =B \otimes_k V$ for a finite dimensional $k$-algebra $V$ (see \cite{Weissauer-semisimple}) such that the quotient map $A \to B$ is induced from an algebra map $V \to k$. This is the situation
we are considering. Indeed by \cite[Page 16]{Weissauer-semisimple} $A$ can be written as the tensor product of two supercommutative Hopf algebras \[ A = A_0 \otimes \Lambda^{\bullet}(\theta_1,\ldots \theta_s) \] where $A_0 = k[G_0] = A/J$ and the $\{ \theta_i \}$ are an $A_0$-basis of $J/J^2$. 
\end{proof}

We now prove theorem \ref{thm:monoidal}

\begin{proof} We use \cite[Corollary 4.2.5]{Hovey}. For the definition of $f \square g$ (the pushout-product) we refer to \cite[Definition 4.2.1]{Hovey}.

\medskip\noindent
1) We first verify $f \square g \subset \mathcal L\cap \mathcal W$ for $f \in I$ and
 $g\in J$ or vice versa: For $g: 0\to Z$ and $f:X\to Y$ we have
 $$ f\square g = f \otimes id_Z : X\otimes Z \to Y\otimes Z \ .$$
Now $Z \in \mathcal P_{\mathcal C} = \mathcal I_{\mathcal C}$, since $F$ maps projectives to projectives.
Hence $X\otimes Z$ and $Y\otimes Z$ are in $\mathcal I_{\mathcal D}$. Since any $f\in I$ is a monomorphism,
also the morphism $f \otimes id_Z $ is a monomorphism. This shows $f \otimes id_Z $ is a split 
monomorphims with projective kokernel. Hence it is in $\mathcal L\cap \mathcal W$. 

\medskip\noindent
2) Next we show $f\square g\in \mathcal L$ for $f,g \in I$.
For this we use the previous lemma \ref{induction-lemma}. For $f=X\to Y\in I$ and $f'=F(i):F(A)\to F(B)$ in $I$ we find
that $f\square f'$ 
$$  X\otimes F(B) \bigoplus_{X \otimes F(A)} Y \otimes F(A) \to Y\otimes F(B) $$
can be identified with the morphism
$$  F(UX\otimes B) \bigoplus_{F(UX \otimes A)} F(UY \otimes A) \to F(UY\otimes B) $$
using lemma \ref{induction-lemma}. But this last morphism is $F(U(f)\square i )$ for the pushout product $U(f)\square i$ of
 $U(f):UX\to UY$  and $i:A\to B$. Since the pushout square $u\square v$ of two monomorphisms $u,v$ is a monomorphism, $U(f)\square i$ is a monomorphism. Hence
 we see that $F(U(f)\square i )\in I \subset \mathcal L$. 
 \end{proof}
 
\begin{cor} $C\in \mathcal C_-$ implies ${\mathcal H}om(B,C)\in \mathcal C_-$.
\end{cor}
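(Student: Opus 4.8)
The plan is to reduce this to a property of the restriction functor $U$, since membership in $\mathcal{C}_-$ is detected by $U$. Recall from the corollary following Lemma~\ref{lemma-3} that an object $K$ of $\mathcal{C}$ lies in $\mathcal{C}_-$ if and only if $UK$ is injective in $\mathcal{D}$. Hence it suffices to prove that $U\,{\mathcal H}om(B,C)$ is an injective object of $\mathcal{D}$ under the hypothesis that $UC$ is injective.

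The second ingredient is already at hand: in the proof of Lemma~\ref{induction-lemma} one constructs, as an intermediate step, a natural isomorphism $U\,{\mathcal H}om(M,N) \cong {\mathcal H}om(UM,UN)$ in $\mathcal{D}$ for objects $M,N$ of $\mathcal{C}$. Applying it with $M=B$ and $N=C$ identifies $U\,{\mathcal H}om(B,C)$ with ${\mathcal H}om(UB,UC)$, so the claim follows from the general fact that, in the comodule category $\mathcal{D}$, the internal Hom ${\mathcal H}om(W,E)$ of an arbitrary object $W$ into an injective object $E$ is again injective.

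For this last fact I would use the closed-monoidal adjunction $Hom_{\mathcal{D}}(Y,{\mathcal H}om(W,E)) \cong Hom_{\mathcal{D}}(Y\otimes W,E)$, natural in $Y$. The functor $Y \mapsto Y\otimes W$ is exact on $\mathcal{D}$ (tensor products of comodules are formed over the field $k$), and $Hom_{\mathcal{D}}(-,E)$ is exact because $E$ is injective; therefore $Y \mapsto Hom_{\mathcal{D}}(Y,{\mathcal H}om(W,E))$ is exact, which is precisely the injectivity of ${\mathcal H}om(W,E)$. Taking $W=UB$ and $E=UC$ then yields that $U\,{\mathcal H}om(B,C)$ is injective, hence ${\mathcal H}om(B,C)\in\mathcal{C}_-$. (When $W$ is finite dimensional one could instead write ${\mathcal H}om(W,E)\cong W^{\vee}\otimes E$ and invoke that $\mathcal{I}_{\mathcal{D}}$ is a tensor ideal, but the exactness argument covers all cases at once.)

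I do not anticipate a real obstacle here: the only points needing a moment's care are that the isomorphism from Lemma~\ref{induction-lemma} is available for arbitrary objects $M,N$ (not merely the special ones appearing in that statement) and that the exactness of $-\otimes W$ is legitimately invoked in the comodule setting; both are routine.
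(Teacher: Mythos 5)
Your proposal is correct and follows essentially the same route as the paper: detect membership in $\mathcal C_-$ via $U$, transport the internal Hom across $U$ using the isomorphism $U\,{\mathcal H}om(B,C)\cong{\mathcal H}om(UB,UC)$ from the proof of Lemma \ref{induction-lemma}, and conclude via the adjunction between $\otimes$ and ${\mathcal H}om$ that ${\mathcal H}om(UB,UC)$ is injective when $UC$ is. Your extra detail on the exactness argument for injectivity of ${\mathcal H}om(W,E)$ is just an unpacking of the paper's one-line appeal to the same adjunction.
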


\begin{proof} Since $UC\in \mathcal I_{\mathcal D}$ also ${\mathcal H}om(UB,UC) \in \mathcal I_{\mathcal D}$
using that ${\mathcal H}om$ is right adjoint to the tensor functor.
Hence $U{\mathcal H}om(B,C) \in \mathcal I_{\mathcal D}$ by the isomorphism established in the proof of lemma \ref{induction-lemma}. 
\end{proof}

\begin{cor} $B\in \mathcal C_-$ implies ${\mathcal H}om(B,C)\in \mathcal C_-$. In particular, 
$\mathcal T_- = \mathcal{C}_- \cap \mathcal{T}$ is closed under duality.
\end{cor}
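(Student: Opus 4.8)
The plan is to reduce the statement, just as in the preceding corollary, to an injectivity assertion inside $\mathcal{D}$ — but one that is contravariant in the first ${\mathcal H}om$-slot, and therefore genuinely uses the Frobenius structure rather than a formal adjunction. Recall that $X\in\mathcal{C}_-$ if and only if $UX$ is injective in $\mathcal{D}$, and that the proof of Lemma~\ref{induction-lemma} exhibited a natural isomorphism $U{\mathcal H}om(B,C)\cong {\mathcal H}om(UB,UC)$ in $\mathcal{D}$. Hence it suffices to prove: if $I\in\mathcal{I}_{\mathcal{D}}$, then ${\mathcal H}om(I,Z)\in\mathcal{I}_{\mathcal{D}}$ for every object $Z$ of $\mathcal{D}$. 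The contrast with the previous corollary is essential: there one only needed ${\mathcal H}om(Y,I)$ to be injective, which is immediate because $Hom_{\mathcal{D}}(-,{\mathcal H}om(Y,I))=Hom_{\mathcal{D}}(-\otimes Y,I)$ is a composite of exact functors; with the variance the other way here, no such trick applies.

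To prove the claim I would first reduce to finite-dimensional $I$. Since $\mathcal{D}$ is the comodule category of a supercommutative Hopf algebra, every injective object decomposes as a direct sum $I=\bigoplus_\mu J_\mu$ of finite-dimensional indecomposable injectives (Section~\ref{sec:finiteness-conditions}). The functor $-\otimes Y$ preserves coproducts, being a left adjoint of ${\mathcal H}om(Y,-)$, so a Yoneda argument gives ${\mathcal H}om(\bigoplus_\mu J_\mu,Z)\cong\prod_\mu{\mathcal H}om(J_\mu,Z)$; a product of injective objects is injective, so it is enough to treat a finite-dimensional injective $J$. Being finite-dimensional, $J$ is dualizable and ${\mathcal H}om(J,Z)\cong J^\ast\otimes Z$. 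The finite-dimensional objects of $\mathcal{D}$ form a Frobenius category on which the duality $(-)^\ast$ is an exact anti-autoequivalence, so it preserves the common class of projective-injective objects; hence $J^\ast$ is again injective. Finally $\mathcal{I}_{\mathcal{D}}$ is a tensor ideal, so $J^\ast\otimes Z\in\mathcal{I}_{\mathcal{D}}$. This proves the claim, and with it the first assertion.

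The "in particular" then follows at once: for $V\in\mathcal{T}_-=\mathcal{C}_-\cap\mathcal{T}$ we have $V\in\mathcal{C}_-$, so applying the first part with $C=\one$ shows $V^\ast={\mathcal H}om(V,\one)\in\mathcal{C}_-$; and $V^\ast$ is finite-dimensional, so $V^\ast\in\mathcal{C}_-\cap\mathcal{T}=\mathcal{T}_-$.

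I expect the only real obstacle to be the injectivity of $J^\ast$. This is the one point where the Frobenius hypothesis on $\mathcal{D}$ enters essentially, and it is precisely where the naive expectation — that the internal Hom out of an injective need not be injective — is overturned. The remaining ingredients (the transfer along $U$, the splitting into the summands $J_\mu$, and the tensor-ideal step) are routine.
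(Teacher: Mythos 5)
Your proof is correct, but it takes a genuinely different route from the paper, and your opening remark — that ``with the variance the other way here, no such trick applies'' — is actually mistaken: the paper's argument \emph{is} an adjunction trick, just wielded from the other side. The paper observes that $UB\in\mathcal{I}_{\mathcal{D}}=\mathcal{P}_{\mathcal{D}}$, that $\mathcal{P}_{\mathcal{D}}$ is a tensor ideal, and that $Ext^i_{\mathcal{D}}(X\otimes UB,UC)\cong Ext^i_{\mathcal{D}}(X,\mathcal{H}om(UB,UC))$. Since $X\otimes UB$ is projective for every $X$, the left-hand side vanishes for $i\geq 1$, so $\mathcal{H}om(UB,UC)$ is injective for \emph{arbitrary} $UC$ — no decomposition, no rigidity, no reduction to finite-dimensional pieces. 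What the Frobenius structure buys is exactly the identification $\mathcal{I}_{\mathcal{D}}=\mathcal{P}_{\mathcal{D}}$ that lets one move the ideal property from the injective side to the tensored slot. Your alternative — decompose $UB$ into finite-dimensional indecomposable injectives, convert $\mathcal{H}om(J,Z)$ to $J^\vee\otimes Z$ by rigidity, and observe that $J^\vee$ is again injective because duality is an exact anti-autoequivalence of a Frobenius category — is a perfectly sound argument and has the virtue of being very concrete, but it invokes much more structure (the Batchelor decomposition, rigidity of finite-dimensional objects, products of injectives) than the paper's two-line homological reduction. Both proofs rely on the same isomorphism $U\mathcal{H}om(B,C)\cong\mathcal{H}om(UB,UC)$ from Lemma~\ref{induction-lemma} and the same transfer along $U$, so the divergence is confined to the key injectivity claim in $\mathcal{D}$.
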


\begin{proof} $UB\in \mathcal I_{\mathcal D} =  \mathcal P_{\mathcal D}$ implies also ${\mathcal H}om(UB,UC) \in \mathcal I_{\mathcal D}$ by adjunction, since $\mathcal P_{\mathcal D}$ is a tensor ideal.
Hence $U{\mathcal H}om(B,C) \in \mathcal I_{\mathcal D}$.
\end{proof}

\subsection{Monoidal model categories and homotopy quotients} For a symmetric monoidal model category the homotopy functor 
$$ \gamma: {\mathcal C} \to Ho\mathcal C $$
is a tensor functor \cite[Page 116, Theorem 4.3.2]{Hovey}. If $\mathcal C$ is a pointed symmetric monoidal model category, then
$Ho \mathcal C$ is a closed monoidal pre-triangulated category (\cite[Page 174, Theorem 6.6.3]{Hovey}). 

\medskip\noindent
We recall some basic facts about rigid categories. If $X^{\vee}$ is a left-dual object to an object $X$ in a monoidal category in the sense of \cite[Definition 2.10.1]{EGNO}, there exist morphisms $ev_X: X^{\vee} \otimes X \to \one$ (the evaluation) and $coev_X: \one \to X \otimes X^{\vee}$ (the coevaluation) and similarly for right dual objects. A left or right-dual object is unique up to isomorphism. If $X,Y$ admit left or right duals, the dual morphism $f^{\vee}: Y^{\vee} \to X^{\vee}$ is defined. An object in a monoidal category is called rigid if it has
left and right duals. A monoidal category $\mathcal{C}$ is called rigid if every object is
rigid. For $\mathcal{C} = Comod(A)$ the notions of left and right dual coincice.

\begin{lem} \label{lem:rigidity} The homotopy category $Ho\mathcal{T}$ is rigid. 
\end{lem}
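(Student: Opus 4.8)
The plan is to show that the tensor functor $\gamma: \mathcal{T} \to Ho\mathcal{T}$ transports the existing rigidity of $\mathcal{T}$ to $Ho\mathcal{T}$. Since $\mathcal{T} = Rep(G)$ (or the comodule category $\mathcal{T}$) is a rigid symmetric monoidal category — every finite dimensional comodule has a dual $X^\vee$ with evaluation and coevaluation satisfying the triangle identities — and $\gamma$ is a tensor functor by Theorem~\ref{thm:monoidal} (more precisely by \cite[Theorem 4.3.2]{Hovey}, since the induced model structure is monoidal), the first step is simply to observe that $\gamma(X^\vee)$ is a dual of $\gamma(X)$ in $Ho\mathcal{C}$: apply $\gamma$ to $ev_X$ and $coev_X$, use that $\gamma$ is monoidal to rewrite $\gamma(X \otimes X^\vee) \cong \gamma(X) \otimes \gamma(X^\vee)$ and $\gamma(\one) \cong \one$, and the triangle identities pass through because $\gamma$ is a functor respecting composition and the monoidal structure. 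Hence every object in the image of $\gamma$ restricted to $\mathcal{T}$ is rigid in $Ho\mathcal{C}$.

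Next I would propagate rigidity from the image of $\mathcal{T}$ to all of $Ho\mathcal{T}$. By the convention in section~\ref{sec:conventions}, $Ho\mathcal{T}$ is defined as the full \emph{triangulated tensor} subcategory of $Ho\mathcal{C}$ generated by $\gamma(\mathcal{T})$. So it suffices to check that the class of rigid objects in a triangulated tensor category is closed under (a) the tensor product, (b) shifts, (c) direct summands (retracts), and (d) cones/extensions along distinguished triangles. Points (a)–(c) are formal: $X^\vee \otimes Y^\vee$ is a dual of $X \otimes Y$; $X^\vee[-1]$ (with appropriate sign) is a dual of $X[1]$ since the shift is a tensor-triangulated autoequivalence; and a retract of a rigid object is rigid since duals restrict to retracts. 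Point (d) — that the cone of a morphism between rigid objects is rigid in a tensor-triangulated category — is the standard fact that the rigid (=strongly dualizable) objects form a thick triangulated tensor subcategory; this is where one uses that $Ho\mathcal{C}$ is a closed symmetric monoidal pre-triangulated category (\cite[Theorem 6.6.3]{Hovey}) so that internal Hom exists and the natural map $\IHom(X,\one) \otimes Y \to \IHom(X,Y)$ can be tested to be an isomorphism on the generating rigid objects.

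Concretely, the cleanest route for (d) is: in the closed symmetric monoidal triangulated category $Ho\mathcal{C}$, an object $X$ is rigid iff the canonical morphism $D(X) \otimes X \to \IHom(X,X)$ (equivalently $D(X) \otimes Z \to \IHom(X,Z)$ for all $Z$) is an isomorphism, where $D(X) = \IHom(X,\one)$; both $D(-) \otimes Z$ and $\IHom(-,Z)$ are exact contravariant functors on $Ho\mathcal{C}$ (the first because $Ho\mathcal{C}$ is pre-triangulated and tensoring is exact, the second by the closed structure), so the full subcategory of $X$ for which this morphism is an isomorphism is triangulated and thick; since it contains $\gamma(\mathcal{T})$ by the first paragraph, it contains all of $Ho\mathcal{T}$.

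\textbf{Main obstacle.} The genuinely delicate point is verifying (d), i.e.\ that rigid objects form a thick triangulated subcategory of $Ho\mathcal{C}$ — this requires knowing that $Ho\mathcal{C}$ is a closed symmetric monoidal \emph{triangulated} (not merely pre-triangulated) category with a compatible internal Hom, and that the characterization ``$X$ rigid $\iff$ $D(X)\otimes(-) \to \IHom(X,-)$ is iso'' holds here. Since \cite[Theorem 6.6.3]{Hovey} gives closed monoidal pre-triangulated structure and Theorem~\ref{thm:homotopy-stable} identifies $Ho\mathcal{C}$ with the triangulated stable category $\overline{\mathcal{C}}_+$, the triangulated structure is available; one then has to check the internal Hom of $Ho\mathcal{C}$ is exact in each variable, which follows from its construction as a derived functor of the exact internal Hom on $\mathcal{C}$, or more directly from \cite[Theorem 4.3.2]{Hovey}. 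Everything else is formal tensor-category bookkeeping.
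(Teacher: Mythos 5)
Your proposal is correct, and its first step coincides with the paper's: both arguments begin by observing that $\gamma$ is a tensor functor (via the monoidal model structure), so duals, evaluations and coevaluations in $\mathcal{T}$ are carried to duality data in $Ho\mathcal{C}$, making every object of $\gamma(\mathcal{T})$ rigid. Where you diverge is in how rigidity is propagated to all of $Ho\mathcal{T}$. The paper only verifies closure under direct summands, by an explicit idempotent computation: writing $id_X = e + (1-e)$, dualizing $e$ to $e^{\vee}$, and checking via the adjunction $Hom(A\otimes X,B)\cong Hom(A,X^{\vee}\otimes B)$ that $eX$ and $(1-e)X$ are rigid. You instead invoke the standard fact that in a closed symmetric monoidal triangulated category the strongly dualizable objects form a thick triangulated tensor subcategory, detected by whether $D(X)\otimes Z\to \IHom(X,Z)$ is an isomorphism; since this subcategory contains $\gamma(\mathcal{T})$, it contains the triangulated tensor subcategory it generates, namely $Ho\mathcal{T}$. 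Your route is more work (one must check that $\IHom$ in $Ho\mathcal{C}$ is exact in each variable and that the comparison map is a transformation of exact functors, which you correctly flag as the delicate point), but it buys closure under cones and shifts, which the paper's proof does not address even though $Ho\mathcal{T}$ is defined as the \emph{triangulated} subcategory generated by $\gamma(\mathcal{T})$; the paper's retract argument alone only covers objects that are summands of images of $\mathcal{T}$. In that sense your argument is not just an alternative but a completion of the one in the text.
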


\begin{proof} If $X$ is rigid and $F$ a tensor functor, then it is easy to see that $F(X^{\vee})$ is dual to $F(X)$. Suppose $X$ is rigid. If $X = X_1 \oplus X_2$ we obtain $id_X = e + (1-e)$ for $e^2 = e$ where $e$ is the idempotent projecting to $X_1$. Then $(e^{\vee})^2 =  e^{\vee}$ where $e^{\vee}: X_1^{\vee} \to X^{\vee}$. Furthermore \[ id_{X^{\vee}} = (id_X)^{\vee} = e^{\vee} + (id_{X^{\vee}} - e^{\vee}).\] Since $X$ is rigid, we have natural adjunction morphisms \cite[Proposition 2.10.8]{EGNO} \[ Hom(A \otimes X,B) \cong Hom(A,X^{\vee} \otimes B).\] We then obtain the commutative diagram \[ \xymatrix{ Hom(A \otimes X,B) \ar[r]^{\cong} & Hom(A,X^{\vee} \otimes B) \\ Hom(A \otimes X_1,B) \ar[u] \ar[r]^{\cong} & Hom(A,X_1^{\vee} \otimes B) \ar[u] }\] where the left vertical map is induced by $e$ and the right vertical map by $e^{\vee}$. This implies that $eX = X_1$ and $(1-e)X = X_2$ are rigid. 
\end{proof}

\begin{lem} For objects $Y$ in $\mathcal T$ and $X,Z\in {\mathcal C}$ one has
$$ [X\otimes Y, Z]  \ = \ [X, Y^\vee \otimes Z] \ .$$
\end{lem}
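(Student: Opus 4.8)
The statement is a ``relative rigidity'' or adjunction identity in the homotopy category: for $Y\in\mathcal T$ with dual $Y^\vee$ (a finite-dimensional comodule, hence again in $\mathcal T$), we want $[X\otimes Y,Z]=[X,Y^\vee\otimes Z]$ naturally in $X,Z\in\mathcal C$. The first thing I would note is that $-\otimes Y$ and $-\otimes Y^\vee$ are an adjoint pair on $\mathcal C$ itself, with unit and counit built from $coev_Y:\one\to Y\otimes Y^\vee$ and $ev_Y:Y^\vee\otimes Y\to\one$ (these exist because $Y$ is rigid in $\mathcal T$, since $\mathcal T=Comod(A)$ is rigid; here $Y^\vee$ means the comodule dual). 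So on $\mathcal C$ we already have a functorial isomorphism $Hom_{\mathcal C}(X\otimes Y,Z)\cong Hom_{\mathcal C}(X,Y^\vee\otimes Z)$. The task is to transport this through the localization $\gamma:\mathcal C\to Ho\mathcal C$.

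\textbf{Key steps.} First I would use Theorem \ref{thm:homotopy-stable}: $Ho\mathcal C\simeq\overline{\mathcal C}_+$ and $[A,B]=Hom_{\mathcal C}(QA,B)/\!\sim_{stable}$, where $QA$ is a cofibrant replacement. So I need a cofibrant replacement of $X\otimes Y$. By Theorem \ref{thm:monoidal} the model structure is monoidal and $\gamma$ is a tensor functor; moreover tensoring a cofibration by any object keeps it a cofibration (pushout-product axiom applied with $0\to Y$), and $-\otimes Y$ is exact, so if $q_X:QX\to X$ is a cofibrant replacement with kernel in $\mathcal C_-$, then $q_X\otimes\id_Y:QX\otimes Y\to X\otimes Y$ has cofibrant source and kernel $\Ker(q_X)\otimes Y$. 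I must check this kernel is in $\mathcal C_-$, i.e. that $U(\Ker(q_X)\otimes Y)=U(\Ker q_X)\otimes U(Y)$ is injective in $\mathcal D$; this holds because $\mathcal P_{\mathcal D}=\mathcal I_{\mathcal D}$ is a tensor ideal, so tensoring an injective by anything stays injective. Hence $QX\otimes Y$ is a cofibrant replacement of $X\otimes Y$, giving $[X\otimes Y,Z]=Hom_{\mathcal C}(QX\otimes Y,Z)/\!\sim_{stable}$. Now apply the $\mathcal C$-level adjunction: $Hom_{\mathcal C}(QX\otimes Y,Z)\cong Hom_{\mathcal C}(QX,Y^\vee\otimes Z)$. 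The right side, modulo stable equivalence, is by definition $[X,Y^\vee\otimes Z]$ (using the same replacement $QX$ of $X$).

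\textbf{The main obstacle.} The one point needing genuine care is that the $\mathcal C$-level adjunction isomorphism $Hom_{\mathcal C}(QX\otimes Y,Z)\cong Hom_{\mathcal C}(QX,Y^\vee\otimes Z)$ descends to the stable quotients, i.e. is compatible with the relation $\sim_{stable}$ (factoring through $\mathcal P_{\mathcal C}=\mathcal I_{\mathcal C}$) on both sides. For this I would argue: the adjunction sends a map factoring through an injective $I$ (as $QX\otimes Y\to I\to Z$) to the map $QX\to Y^\vee\otimes QX\otimes Y\xrightarrow{\id\otimes(\text{that map})}Y^\vee\otimes Z$; since $\mathcal I_{\mathcal C}$ is a tensor ideal, $Y^\vee\otimes I$ is injective, and the composite factors through it, hence is stably trivial. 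The converse direction is symmetric using that $Y^{\vee\vee}\cong Y$. One should also record naturality in $X$ and $Z$, which is immediate since every construction (cofibrant replacement functor $Q$, tensoring, the $ev/coev$ units) is functorial. With these compatibilities in hand the displayed equality follows by stringing together the identifications above. I expect the bookkeeping with $\sim_{stable}$ to be the only nonroutine part; everything else is a direct application of Theorems \ref{thm:homotopy-stable} and \ref{thm:monoidal} together with the fact that $\mathcal P_{\mathcal C}$ is a tensor ideal.
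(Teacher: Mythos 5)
Your argument is correct, but it is considerably more explicit than what the paper intends: the lemma is stated without proof because it is meant to follow formally from the two results immediately preceding it. Namely, $\gamma:\mathcal C\to Ho\mathcal C$ is a tensor functor (theorem \ref{thm:monoidal}), tensor functors preserve duals, and $Y\in\mathcal T$ is rigid in $\mathcal C$; hence $\gamma(Y)$ is rigid in $Ho\mathcal C$ with dual $\gamma(Y^\vee)$, and the displayed identity is just the duality adjunction of \cite[Proposition 2.10.8]{EGNO} applied inside the monoidal category $Ho\mathcal C$ --- no cofibrant replacements need to be exhibited. Your route instead unwinds $[-,-]$ via theorem \ref{thm:homotopy-stable}, builds the replacement $QX\otimes Y\to X\otimes Y$ by hand, applies the adjunction at the level of $Hom_{\mathcal C}$, and checks compatibility with $\sim_{stable}$; this buys a concrete description of the isomorphism (useful later, e.g.\ in theorem \ref{thm:good-replacement}, where exactly this kind of replacement $QX=\Omega\otimes X$ is used) at the cost of more bookkeeping. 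Two small corrections to your write-up. First, ``tensoring a cofibration by any object keeps it a cofibration (pushout-product axiom applied with $0\to Y$)'' is not right as stated: $0\to Y$ is a cofibration only when $Y\in\mathcal C_+$, which is not assumed. The fact you actually need --- $QX\in\mathcal C_+$ and $Y\in\mathcal C$ imply $QX\otimes Y\in\mathcal C_+$ --- is the paper's lemma that $\mathcal C_\pm$ are tensor ideals (proved via $Ext^1_{\mathcal C}(Y\otimes Z,\mathcal C_-)=Ext^1(Y,{\mathcal H}om(Z,\mathcal C_-))=0$, not via the pushout-product axiom). Second, your descent argument for $\sim_{stable}$ is fine, but it amounts exactly to the statement that $\mathcal I_{\mathcal C}=\mathcal P_{\mathcal C}$ is a tensor ideal, which the paper records among the standard properties of comodule categories; citing that directly would reduce your ``main obstacle'' paragraph to one line.
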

 

\subsection{Some monoidal properties of $\mathcal{C}_{\pm}$}

The injective $A$-comodules $\mathcal I_{\mathcal C}$ define a  tensor ideal: $A\in \mathcal I_{\mathcal C} $ and $B\in \mathcal C$ implies $A\otimes B \in \mathcal I_{\mathcal C}$. Therefore the tensor product functor $- \otimes Y$ is an exact functor and preserves injectives. By the Frobenius property it also preserves projectives.

\begin{lem}$$Z\in {\mathcal I_{\mathcal C}} \quad  \Longrightarrow \quad {\mathcal  H}om(Y, Z)\in \mathcal I_{\mathcal C} \ $$ and $$ {\mathcal  H}om(Y, Z)\in \mathcal P_{\mathcal C}$$ for $Y\in \mathcal P_{\mathcal C}$
\end{lem}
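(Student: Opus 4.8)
The plan is to reduce both claims to the already-established fact that $\mathcal{I}_{\mathcal{C}}$ is a tensor ideal, together with the isomorphism $U\,\mathcal{H}om(Y,Z)\cong \mathcal{H}om(UY,UZ)$ obtained in the proof of lemma \ref{induction-lemma}. Recall that by the corollary ``$\mathcal C_- \cap \mathcal C_+ = \mathcal{I}_{\mathcal C}$'' and the preceding corollary ``an object $K$ is in $\mathcal C_-$ if and only if $UK$ is injective'', an object $W\in\mathcal C$ lies in $\mathcal I_{\mathcal C}$ if and only if it is in $\mathcal C_+\cap\mathcal C_-$; and since we will already know $\mathcal{H}om(Y,Z)\in\mathcal C_-$ from the corollary ``$C\in\mathcal C_-$ implies $\mathcal H om(B,C)\in\mathcal C_-$'' applied with $C=Z\in\mathcal I_{\mathcal C}\subset\mathcal C_-$, it suffices to show $\mathcal{H}om(Y,Z)\in\mathcal C_+$, or directly that $U\mathcal{H}om(Y,Z)$ is injective in $\mathcal D$.

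First I would prove the injectivity statement. Since $Z\in\mathcal I_{\mathcal C}$ and $U$ is exact and preserves injectives (as $F$ is exact, hence $U$ preserves injectives by the discussion in section \ref{sec:induced-1}), we have $UZ\in\mathcal I_{\mathcal D}$. Now $\mathcal I_{\mathcal D}=\mathcal P_{\mathcal D}$ is a tensor ideal in $\mathcal D$ (this holds for the comodule category $\mathcal D=Comod(B)$ by the general properties of comodule categories, item stating $\mathcal I_{\mathcal C}$ is a tensor ideal, applied to $B$ in place of $A$). Hence $\mathcal{H}om(UY,UZ)$, being right adjoint to $-\otimes UY$ evaluated on an object of the tensor ideal, lies in $\mathcal I_{\mathcal D}$: explicitly, $\mathrm{Hom}_{\mathcal D}(-,\mathcal{H}om(UY,UZ))=\mathrm{Hom}_{\mathcal D}(-\otimes UY,UZ)$ is exact because $UZ$ is injective and $-\otimes UY$ is exact. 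Using the natural isomorphism $U\mathcal{H}om(Y,Z)\cong\mathcal{H}om(UY,UZ)$ from lemma \ref{induction-lemma}, we conclude $U\mathcal{H}om(Y,Z)\in\mathcal I_{\mathcal D}$, so $\mathcal{H}om(Y,Z)\in\mathcal C_-$; combined with $\mathcal{H}om(Y,Z)\in\mathcal C_-$ already, the real content is that it is also in $\mathcal C_+$ — but in fact the corollary after theorem \ref{thm:monoidal} gives this directly, and by the characterization $\mathcal C_+\cap\mathcal C_-=\mathcal I_{\mathcal C}$ we get $\mathcal{H}om(Y,Z)\in\mathcal I_{\mathcal C}$.

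For the second claim, let $Y\in\mathcal P_{\mathcal C}=\mathcal I_{\mathcal C}$ and $Z\in\mathcal P_{\mathcal C}$; I want $\mathcal{H}om(Y,Z)\in\mathcal P_{\mathcal C}$. By the first part (applied with the roles arranged so that one argument is injective), or more simply: $UY\in\mathcal P_{\mathcal D}=\mathcal I_{\mathcal D}$, and $\mathcal P_{\mathcal D}$ is a tensor ideal, so $\mathcal{H}om(UY,UZ)$ is projective in $\mathcal D$ by the dual of the adjunction argument — $\mathrm{Hom}_{\mathcal D}(\mathcal{H}om(UY,UZ),-)\cong\mathrm{Hom}_{\mathcal D}(UY,\mathcal{H}om(UZ,-))$ wait, this needs rigidity of $UY$; instead use that $\mathcal{H}om(UY,UZ)\cong UY^\vee\otimes UZ$ when $UY$ is dualizable, and dualizable projectives have dualizable (hence projective, since $\mathcal P_{\mathcal D}$ is a tensor ideal) duals. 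Since $Y$ is a finite-dimensional comodule up to retracts of sums, $UY$ is dualizable, so this applies. Thus $U\mathcal{H}om(Y,Z)\cong\mathcal{H}om(UY,UZ)\in\mathcal P_{\mathcal D}=\mathcal I_{\mathcal D}$, giving $\mathcal{H}om(Y,Z)\in\mathcal C_-$; and $\mathcal{H}om(Y,Z)\in\mathcal C_+$ as before, so $\mathcal{H}om(Y,Z)\in\mathcal I_{\mathcal C}=\mathcal P_{\mathcal C}$.

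The main obstacle I anticipate is making the adjunction/tensor-ideal argument fully rigorous at the level of $\mathcal D$ and then transporting it across the isomorphism $U\mathcal{H}om(Y,Z)\cong\mathcal{H}om(UY,UZ)$ — in particular checking that this isomorphism is natural enough and that the hypotheses of lemma \ref{induction-lemma} (the existence of the splitting $A=B\otimes_k V$) are in force here, and handling the dualizability of $UY$ cleanly (via the fact that injective comodules are sums of finite-dimensional ones, so $Y$ is a retract of a finite sum of finite-dimensional projective-injectives, each dualizable). Everything else — exactness of $U$, preservation of injectives, the tensor-ideal property of $\mathcal I_{\mathcal D}$, and the already-proved membership $\mathcal{H}om(Y,Z)\in\mathcal C_-$ — is available from earlier in the paper.
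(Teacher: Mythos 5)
Your reduction to the cotorsion-pair characterization $\mathcal I_{\mathcal C}=\mathcal C_+\cap\mathcal C_-$ has a genuine gap: you establish $\mathcal Hom(Y,Z)\in\mathcal C_-$ (twice, correctly — once via the corollary after theorem \ref{thm:monoidal}, once via $U\mathcal Hom(Y,Z)\cong\mathcal Hom(UY,UZ)\in\mathcal I_{\mathcal D}$), but you never prove the other half, $\mathcal Hom(Y,Z)\in\mathcal C_+$. The corollaries following theorem \ref{thm:monoidal} that you cite only yield membership in $\mathcal C_-$; nothing available at this point in the paper gives $Ext^1(\mathcal Hom(Y,Z),\mathcal C_-)=0$ for arbitrary $Y$, and there is no adjunction that moves a first-argument $Ext$-vanishing condition across $\mathcal Hom(Y,-)$ without rigidity of $Y$. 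Membership in $\mathcal C_-$ alone (equivalently, $U$ of the object being injective in $\mathcal D$) is strictly weaker than injectivity in $\mathcal C$ — anti Kac objects are the standard counterexample — so the conclusion does not follow from what you have shown.

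The paper's proof sidesteps all of this and stays entirely inside $\mathcal C$: since $\mathcal P_{\mathcal C}$ is a tensor ideal and $-\otimes Y$ is exact, a projective resolution $P_\bullet\to X$ gives a projective resolution $P_\bullet\otimes Y\to X\otimes Y$, whence the adjunction $Hom_{\mathcal C}(X\otimes Y,Z)\cong Hom_{\mathcal C}(X,\mathcal Hom(Y,Z))$ derives to $Ext^i_{\mathcal C}(X\otimes Y,Z)\cong Ext^i_{\mathcal C}(X,\mathcal Hom(Y,Z))$. For $Z$ injective the left side vanishes for $i=1$ and all $X$, which is exactly injectivity of $\mathcal Hom(Y,Z)$; for $Y$ projective (hence injective by the Frobenius property) the object $X\otimes Y$ is projective, so the left side again vanishes, giving the second statement. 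I would also note that your argument imports the Frobenius-pair and induced-model-structure machinery (including lemma \ref{induction-lemma}, which needs the splitting $A=B\otimes_k V$), none of which is needed; and your reading of the second statement as requiring $Z\in\mathcal P_{\mathcal C}$ as well is stronger than intended — the hypothesis is only on $Y$.
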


\begin{proof} Since the
functors $Hom_{\mathcal C}(X\otimes Y,Z)$  and $Hom_{\mathcal C}(X,{\mathcal  H}om(Y, Z))$
are right exact in $X$, using projective resolutions of $X$ in $\mathcal C$, we obtain  
$$ Ext_{\mathcal C}^i(X\otimes Y,Z) \cong  Ext^i_{\mathcal C}(X,{\mathcal  H}om(Y, Z)) \ . $$ 
Now  $Z\in \mathcal I_{\mathcal C}$ is equivalent to $Ext^1({\mathcal C},Z)=0$. By the last  formula this implies $$Z\in {\mathcal I_{\mathcal C}} \quad  \Longrightarrow \quad {\mathcal  H}om(Y, Z)\in \mathcal I_{\mathcal C}\ .$$
An analogous argument gives the second statement.
\end{proof}

\begin{lem} ${\mathcal H}om(\mathcal C_+, \mathcal C_-) \subset \mathcal I_{\mathcal C}$. 
\end{lem}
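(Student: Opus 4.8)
The plan is to deduce the statement from three ingredients already available in the excerpt: the $Ext$-level internal-Hom adjunction $Ext^i_{\mathcal C}(X\otimes Y,Z)\cong Ext^i_{\mathcal C}(X,{\mathcal H}om(Y,Z))$ established above (proved by resolving the first argument projectively and using that $-\otimes Y$ is exact and preserves projectives), the corollary that ${\mathcal H}om(V,K)\in\mathcal C_-$ whenever $K\in\mathcal C_-$ (for arbitrary $V$), and the vanishing $Ext^i_{\mathcal C}(C,M)=0$ for $C\in\mathcal C_+$, $M\in\mathcal C_-$ and $i\ge 1$. Combined with the cotorsion-pair description of $\mathcal C_+$ as the class of objects whose $Ext^1$ into $\mathcal C_-$ vanishes, and the standard fact that $W\in\mathcal I_{\mathcal C}$ iff $Ext^1_{\mathcal C}(V,W)=0$ for all $V$ (enough injectives), the result falls out in two short steps.

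First I would establish that $\mathcal C_+$ is a tensor ideal: if $X\in\mathcal C_+$ and $V\in\mathcal C$ is arbitrary, then for every $K\in\mathcal C_-$ the $Ext$-adjunction gives
\[ Ext^1_{\mathcal C}(V\otimes X,K)\ \cong\ Ext^1_{\mathcal C}(X,{\mathcal H}om(V,K)). \]
Since ${\mathcal H}om(V,K)\in\mathcal C_-$ by the corollary above and $X\in\mathcal C_+$, the right-hand side vanishes. Thus $Ext^1_{\mathcal C}(V\otimes X,\mathcal C_-)=0$; because $(\mathcal C_+,\mathcal C_-)$ is a cotorsion pair (Theorem \ref{thm:main-homotopy}), splitting the approximation sequence $0\to Y\to W\to V\otimes X\to 0$ with $Y\in\mathcal C_-$, $W\in\mathcal C_+$ shows $V\otimes X$ is a summand of $W$, hence $V\otimes X\in\mathcal C_+$.

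Second, let $X\in\mathcal C_+$ and $Z\in\mathcal C_-$. For any $V\in\mathcal C$ the same adjunction yields
\[ Ext^1_{\mathcal C}(V,{\mathcal H}om(X,Z))\ \cong\ Ext^1_{\mathcal C}(V\otimes X,Z). \]
By the first step $V\otimes X\in\mathcal C_+$, and since $Z\in\mathcal C_-$ the right-hand side is $0$. As $V$ was arbitrary and $\mathcal C$ has enough injectives, ${\mathcal H}om(X,Z)$ is injective, i.e. lies in $\mathcal I_{\mathcal C}$. (Equivalently one may record that ${\mathcal H}om(X,Z)\in\mathcal C_+$; together with the earlier corollary ${\mathcal H}om(X,Z)\in\mathcal C_-$ and the identity $\mathcal C_+\cap\mathcal C_-=\mathcal I_{\mathcal C}$ this gives the same conclusion.)

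The one point needing care — and the step I expect to be the main obstacle — is the tensor-ideal claim for $\mathcal C_+$, since it is not among the listed permanence properties of cofibrant objects and must be obtained by routing the internal-Hom adjunction through the $Ext^1$-orthogonality characterization of $\mathcal C_+$ rather than through any direct closure property. Once this is in hand, the statement of the lemma is an immediate application of the same adjunction.
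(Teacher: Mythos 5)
Your proof is correct and follows the same strategy as the paper: pass morphisms through the $Ext$-level internal-Hom adjunction $Ext^1(V\otimes X,Z)\cong Ext^1(V,\mathcal{H}om(X,Z))$, invoke the orthogonality $Ext^1(\mathcal C_+,\mathcal C_-)=0$ from the cotorsion pair, and conclude injectivity of $\mathcal{H}om(X_+,X_-)$ via vanishing of $Ext^1(-,\mathcal{H}om(X_+,X_-))$. The one substantive difference is that the paper's one-line proof silently relies on $V\otimes X_+\in\mathcal C_+$, i.e.\ on $\mathcal C_+$ being a tensor ideal, which the paper only records as a \emph{subsequent} lemma (whose proof is independent of the present statement, so there is no circularity, just an ordering quirk; the paper's proof also appears to have transposed $X_+$ and $X_-$ in $\mathcal{H}om$). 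You correctly flagged that tensor-ideal step as the crux and proved it in place — by precisely the Ext-adjunction-plus-cotorsion mechanism the paper later uses for that lemma — so your version is the self-contained and logically ordered rendering of the intended argument.
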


\begin{proof} For
$X\in\mathcal C$ and fixed $X_\pm\in \mathcal C_\pm$  we have $Ext^1(X,{\mathcal H}om(X_+,X_-))=
Ext^1_{\mathcal C}(X\otimes  X_+,X_-) \subset Ext^1({\mathcal C_+,\mathcal C_-})=0$.
\end{proof}

\begin{lem} $\mathcal C_{\pm}$ are tensor ideals
\[ X \in {\mathcal C_{\pm}}, Y \in {\mathcal C} \ \Longrightarrow\ X\otimes Y \in \mathcal C_{\pm}.\]
\end{lem}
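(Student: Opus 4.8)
The plan is to prove that $\mathcal C_\pm$ are tensor ideals by reducing each statement to a property of the restriction functor $U$ together with the projection formula of Lemma \ref{induction-lemma}.

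For $\mathcal C_-$: recall from the corollary in section \ref{sec:induced} that $X \in \mathcal C_-$ if and only if $U(X) \in \mathcal I_{\mathcal D}$. Since $\mathcal I_{\mathcal D} = \mathcal P_{\mathcal D}$ is a tensor ideal in $\mathcal D = Comod(B)$ (this is property (7) in the list of section on comodules, applied to $B$), and since $U$ is a tensor functor, we get $U(X \otimes Y) = U(X) \otimes U(Y) \in \mathcal I_{\mathcal D}$ whenever $U(X) \in \mathcal I_{\mathcal D}$. Hence $X \otimes Y \in \mathcal C_-$. This handles the $\mathcal C_-$ case cleanly.

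For $\mathcal C_+$: here I would use the characterization $\mathcal C_+$ via the cotorsion pair, i.e. $X \in \mathcal C_+$ iff $Ext^1(X, M) = 0$ for all $M \in \mathcal C_-$ (one direction is Theorem \ref{thm:main-homotopy}\ref{cotorsion}; conversely if $Ext^1(X,\mathcal C_-) = 0$ then writing the cofibrant replacement $0 \to K \to QX \to X \to 0$ with $K \in \mathcal C_-$, the sequence splits, so $X$ is a summand of the cofibrant object $QX$, hence cofibrant). Now for $X \in \mathcal C_+$, $Y \in \mathcal C$ and $M \in \mathcal C_-$, adjunction for internal Hom gives $Ext^1(X \otimes Y, M) = Ext^1(X, \mathcal Hom(Y, M))$; here I use that $- \otimes Y$ is exact (the injectives form a tensor ideal, noted at the start of section \ref{sec:monoidal-model-structure}) so the isomorphism $Hom(X \otimes Y, M) \cong Hom(X, \mathcal Hom(Y,M))$ derives to an $Ext$-isomorphism. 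By the corollary just proved that $C \in \mathcal C_-$ implies $\mathcal Hom(B,C) \in \mathcal C_-$ — wait, that corollary requires the first argument to be in $\mathcal C_-$, not $M$; the correct input is the preceding ``$B \in \mathcal C_-$'' corollary — actually I need $\mathcal Hom(Y, M) \in \mathcal C_-$ for arbitrary $Y$ and $M \in \mathcal C_-$, which is exactly the first corollary in section \ref{sec:monoidal-model-structure} (with $B \rightsquigarrow Y$, $C \rightsquigarrow M$). So $\mathcal Hom(Y,M) \in \mathcal C_-$, hence $Ext^1(X, \mathcal Hom(Y,M)) = 0$ since $X \in \mathcal C_+$, hence $Ext^1(X \otimes Y, M) = 0$ for all $M \in \mathcal C_-$, hence $X \otimes Y \in \mathcal C_+$.

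The main obstacle is bookkeeping: making sure the right corollary is invoked (the $\mathcal Hom(\mathcal C_+, \mathcal C_-) \subset \mathcal I_{\mathcal C}$ lemma and the two ``$\mathcal Hom(B,C)$'' corollaries have subtly different hypotheses on which argument lies in $\mathcal C_\pm$), and checking that the internal-Hom adjunction genuinely upgrades to an $Ext^1$ statement, which needs exactness of $-\otimes Y$. Both are routine given the results already established, so no serious difficulty is expected; the proof is essentially two short arguments, one for each of $\mathcal C_+$ and $\mathcal C_-$.

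\begin{proof} We treat the two cases separately. For $\mathcal C_-$: by the corollary in section \ref{sec:induced}, $X \in \mathcal C_-$ if and only if $U(X) \in \mathcal I_{\mathcal D}$. Since $U$ is a tensor functor, $U(X \otimes Y) = U(X) \otimes U(Y)$, and since $\mathcal I_{\mathcal D} = \mathcal P_{\mathcal D}$ is a tensor ideal in $\mathcal D$, we conclude $U(X \otimes Y) \in \mathcal I_{\mathcal D}$, i.e. $X \otimes Y \in \mathcal C_-$.

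For $\mathcal C_+$: first note that $X \in \mathcal C_+$ if and only if $Ext^1(X, M) = 0$ for all $M \in \mathcal C_-$. Indeed, the ``only if'' direction is the cotorsion pair property of Theorem \ref{thm:main-homotopy}, and conversely, if $Ext^1(X, \mathcal C_-) = 0$, then the cofibrant replacement sequence $0 \to K \to QX \to X \to 0$ with $K \in \mathcal C_-$ splits, exhibiting $X$ as a direct summand of the cofibrant object $QX$, hence cofibrant. Now let $X \in \mathcal C_+$, $Y \in \mathcal C$ and $M \in \mathcal C_-$. Since the injective comodules form a tensor ideal, the functor $-\otimes Y$ is exact, so taking a projective resolution of $X$ and using the adjunction $Hom_{\mathcal C}(X \otimes Y, M) \cong Hom_{\mathcal C}(X, \mathcal Hom(Y, M))$ yields
\[ Ext^1_{\mathcal C}(X \otimes Y, M) \cong Ext^1_{\mathcal C}(X, \mathcal Hom(Y, M)). \]
By the first corollary of section \ref{sec:monoidal-model-structure}, $\mathcal Hom(Y, M) \in \mathcal C_-$. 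Since $X \in \mathcal C_+$, the right-hand side vanishes. As $M \in \mathcal C_-$ was arbitrary, $X \otimes Y \in \mathcal C_+$ by the criterion above.
\end{proof}
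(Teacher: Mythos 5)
Your proof is correct and follows essentially the same route as the paper: for $\mathcal C_-$ the paper also just uses $U(X\otimes Y)=UX\otimes UY\in \mathcal I_{\mathcal D}\otimes \mathcal D\subset \mathcal I_{\mathcal D}$, and for $\mathcal C_+$ it likewise combines the corollary $M\in\mathcal C_-\Rightarrow \mathcal Hom(Y,M)\in\mathcal C_-$ with the adjunction $Ext^1(X\otimes Y,\mathcal C_-)=Ext^1(X,\mathcal Hom(Y,\mathcal C_-))=0$. Your write-up is merely more explicit about the converse of the $Ext^1$-criterion for cofibrancy and about why the Hom-adjunction upgrades to $Ext^1$, both of which are fine.
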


\begin{proof}
For $X \in \mathcal{C}_-$ this follows from $U(X\otimes Y)=UX \otimes UY \in \mathcal I_{\mathcal D} \otimes D \subset \mathcal I_{\mathcal D}$. The statement about $\mathcal{C}_+$ follows from $X\in {\mathcal C_- \Rightarrow H}om(Z,X)\in \mathcal C_-$ and
$Ext^1_{\mathcal C}(Y\otimes  Z,{\mathcal C_-})=Ext^1(Y,{\mathcal H}om(Z,{\mathcal C_-}))=0$.
\end{proof}

\begin{lem} \label{lem:dim0} Suppose a full subcategory  $\mathcal C' \subset \mathcal C$ is a tensor ideal, closed under retracts such that $\mathcal C'$ is not quasi-equivalent to $\mathcal C$. Then for rigid objects $X\in \mathcal C'$ the dimension $\dim(X)$ vanishes.
\end{lem}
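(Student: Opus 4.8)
The plan is to exploit the fact that the dimension $\dim(X)$ of a rigid object $X$ in a symmetric monoidal category is the trace of the identity, i.e. the scalar $\mathrm{ev}_X \circ (\mathrm{id}_{X^\vee} \otimes c_{X,X^\vee}) \circ \mathrm{coev}_X \in \mathrm{End}(\one)$ obtained from the evaluation and coevaluation morphisms and the braiding. This scalar lives in $\mathrm{End}(\one) = k$ (recall that for comodule categories and their homotopy quotients $\mathrm{End}(\one) = k$), and the key observation is that the composite morphism witnessing it factors \emph{through the object $X$} (or $X \otimes X^\vee$): more precisely, $\mathrm{coev}_X$ has source $\one$ and target $X \otimes X^\vee$, and the endomorphism $\dim(X)\cdot\mathrm{id}_{\one}$ factors as $\one \to X \otimes X^\vee \to X^\vee \otimes X \to \one$, so in particular it factors through the object $X \otimes X^\vee$. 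Since $X \in \mathcal{C}'$ and $\mathcal{C}'$ is a tensor ideal, $X \otimes X^\vee \in \mathcal{C}'$, hence the endomorphism $\dim(X) \cdot \mathrm{id}_{\one}$ of $\one$ factors through an object of $\mathcal{C}'$.

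Next I would argue that the unit object $\one$ is \emph{not} in $\mathcal{C}'$. If $\one \in \mathcal{C}'$, then since $\mathcal{C}'$ is a tensor ideal we would get $Y \cong \one \otimes Y \in \mathcal{C}'$ for every $Y \in \mathcal{C}$, so $\mathcal{C}' = \mathcal{C}$ up to the closure under retracts we are already assuming — contradicting the hypothesis that $\mathcal{C}'$ is not quasi-equivalent to $\mathcal{C}$. (Here I am using that a full tensor ideal closed under retracts containing the unit is all of $\mathcal{C}$; the "quasi-equivalent" caveat in the statement is exactly to allow for the passage between $\mathcal{C}$ and an equivalent category, and equality on the nose of full subcategories suffices.) Therefore the identity $\mathrm{id}_{\one}$ does not factor through any object of $\mathcal{C}'$: if it did, $\one$ would be a retract of an object in $\mathcal{C}'$, and $\mathcal{C}'$ is closed under retracts, forcing $\one \in \mathcal{C}'$.

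Now combine the two points: $\dim(X)\cdot \mathrm{id}_{\one}$ factors through an object of $\mathcal{C}'$, but $\mathrm{id}_{\one}$ does not. If $\dim(X) \neq 0$, then $\dim(X)$ is invertible in $k$ (a field), so $\mathrm{id}_{\one} = \dim(X)^{-1}\cdot\bigl(\dim(X)\cdot \mathrm{id}_{\one}\bigr)$ also factors through that same object of $\mathcal{C}'$, a contradiction. Hence $\dim(X) = 0$.

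I expect the main obstacle to be purely bookkeeping: verifying carefully that the trace morphism computing $\dim(X)$ genuinely factors through the object $X \otimes X^\vee$ (rather than merely through a tensor product involving $X^\vee$), and checking that the ambient category really is symmetric monoidal so that the notion of dimension is well defined and the braiding is available — but both are immediate from the monoidal model structure established in Theorem~\ref{thm:monoidal} and the rigidity available for the objects in question. The only genuinely conceptual point is the elementary lemma that an endomorphism $\phi$ of $\one$ which factors through some $Z \in \mathcal{C}'$ must be a scalar multiple of a factoring of $\mathrm{id}_{\one}$ when $\phi$ is invertible, and that invertibility of the scalar $\dim(X)$ in $k$ then yields the factorization of $\mathrm{id}_{\one}$ itself; this is where the hypothesis that $\mathcal{C}'$ is a retract-closed tensor ideal properly smaller than $\mathcal{C}$ gets used.
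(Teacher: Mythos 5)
Your proof is correct and is essentially the paper's own argument: the paper likewise observes that $\dim(X)\neq 0$ lets one rescale $ev$ so that $coev$ and $\dim(X)^{-1}ev$ exhibit $\one$ as a retract of $X\otimes X^\vee\in\mathcal C'$, whence $\one\in\mathcal C'$ and $\mathcal C'=\mathcal C$, a contradiction. Your phrasing in terms of "$\mathrm{id}_{\one}$ factoring through an object of $\mathcal C'$" is the same retract argument in different words, and your use of $\mathrm{End}(\one)=k$ matches the paper's implicit normalization.
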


\begin{proof}  $\dim(X)\neq 0$ implies that the maps $coev$ and $\chi(X)^{-1}ev$ defines
a split summand $\one \subset X\otimes X^\vee$. If $X\in \mathcal C'$, then also $X\otimes X^\vee
\in \mathcal C'$, hence also any direct summand is in $\mathcal C'$. But $\one\in \mathcal C'$ would imply
$X = \one \otimes X \in \mathcal C'$ for all $X\in \mathcal C$. This proves the claim.
\end{proof} 

\begin{lem} Morphisms from $\mathcal C_-$ to $\mathcal  T_+$ are stably equivalent to
zero if ${\mathcal T_+}^\vee = {\mathcal T}_+$. \
\end{lem}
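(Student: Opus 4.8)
The claim is that any morphism $f\colon C_- \to T_+$ with $C_- \in \mathcal{C}_-$ and $T_+ \in \mathcal{T}_+$ is stably equivalent to zero (i.e.\ factors through an injective/projective object) under the hypothesis ${\mathcal T_+}^\vee = {\mathcal T}_+$. The natural approach is to dualize. First I would use the rigidity of $\mathcal{T}$ (we may work with $Y = T_+^\vee$, which lies in $\mathcal{T}_+$ by hypothesis) to rewrite the morphism via the internal-Hom adjunction: a morphism $C_- \to T_+$ corresponds, after passing to duals, to a morphism in terms of $\mathcal{H}om(-,-)$ or to a morphism $T_+^\vee \to \mathcal{H}om(C_-,\one)$-type object; the cleanest version is that $\Hom_{\mathcal C}(C_-, T_+) \cong \Hom_{\mathcal C}(\one, C_-^\vee \otimes T_+)$ or, more usefully here, that dualizing sends $f$ to $f^\vee\colon T_+^\vee \to C_-^\vee$. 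The point is that $C_-^\vee$ lies in $\mathcal{C}_-$: by the corollary just proven (``$B\in \mathcal C_-$ implies ${\mathcal H}om(B,C)\in \mathcal C_-$'', equivalently $\mathcal{T}_-$ closed under duality, and more generally since $\mathcal{C}_-$ is a tensor ideal and $U$ is monoidal), the dual of an object with $U(-)$ injective again has $U(-)$ injective. So $f^\vee$ is a morphism from $T_+^\vee \in \mathcal{T}_+ \subset \mathcal{C}_+$ to $C_-^\vee \in \mathcal{C}_-$.

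Now I would invoke the torsion-pair statement \ref{torsion} of Theorem \ref{thm:main-homotopy}: ``any map between a cofibrant object $X$ and a trivially fibrant object $Y$ factors through a projective object.'' Applied to $X = T_+^\vee \in \mathcal{C}_+$ and $Y = C_-^\vee \in \mathcal{C}_-$, this says $f^\vee$ factors through an object of $\mathcal{P}_{\mathcal C} = \mathcal{I}_{\mathcal C}$, i.e.\ $f^\vee$ is stably equivalent to zero in $\overline{\mathcal C}$. Finally I would transport this back across the duality: since $\mathcal{I}_{\mathcal C}$ is a tensor ideal and duality $(-)^\vee$ is an exact contravariant monoidal autoequivalence (on the subcategory of rigid objects, which includes everything finite-dimensional and, via $\mathcal{H}om$, the relevant infinite objects here), a morphism factoring through an injective has a dual that factors through the dual of that injective, which is again injective. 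Hence $f = (f^\vee)^\vee$ factors through an injective object, so $\overline{f} = 0$ in $\overline{\mathcal C}$, which is the assertion.

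\textbf{Main obstacle.} The technical care needed is in the rigidity/duality bookkeeping: objects of $\mathcal{C}_-$ are typically infinite-dimensional (inductive limits), so ``dualizing'' must be understood via the internal Hom $\mathcal{H}om(-,\one)$ rather than naive dualization, and one must check that the adjunction $\Hom_{\mathcal C}(C_-, T_+) \cong \Hom_{\mathcal C}(T_+^\vee, \mathcal{H}om(C_-,\one))$ (using that $T_+$ is rigid, so $T_+ \cong \mathcal{H}om(T_+^\vee,\one)$ on the right) behaves well and that it carries ``factors through injective'' to ``factors through injective''. This is exactly where the hypothesis ${\mathcal T_+}^\vee = {\mathcal T}_+$ and the two corollaries about $\mathcal{H}om$ preserving $\mathcal{C}_-$ are used. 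An alternative, avoiding infinite duals entirely, is to write $C_- = \varinjlim X_i$ with $X_i \in \mathcal{T}$ and reduce to the finite-dimensional case — but then one must argue that $\mathcal{T}_-$-components can be arranged (via the socle/injective-hull structure recalled in section \ref{sec:finiteness-conditions}) so that each $X_i$ may be taken in $\mathcal{T}_-$, which requires a small amount of extra work; I would mention this route as a remark but carry out the internal-Hom argument as the main proof.
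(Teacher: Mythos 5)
Your proof is correct in outline, but it is a genuinely different route from the one the paper takes, and the paper's route is lighter. You dualize the morphism $f\colon X_-\to X_+$ to $f^\vee=\mathcal{H}om(f,\one)\colon X_+^\vee\to X_-^\vee$, place the endpoints in $\mathcal{C}_+$ (via the hypothesis $\mathcal{T}_+^\vee=\mathcal{T}_+$) and $\mathcal{C}_-$ (via the preceding corollary on $\mathcal{H}om(B,-)$), invoke the torsion-pair part \ref{torsion} of Theorem~\ref{thm:main-homotopy} to factor $f^\vee$ through a projective, and then transport back; the price is the double-dual bookkeeping you flag as the ``main obstacle'' — strictly one should use the naturality square $f=\eta_{X_+}^{-1}\circ (f^\vee)^\vee\circ\eta_{X_-}$ rather than the literal equation $f=(f^\vee)^\vee$, since $X_-$ need not be reflexive. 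The paper instead never dualizes the infinite object $X_-$ at all: from $\varphi\colon X_-\to X_+$ it forms the adjoint $\Phi = ev\circ(\varphi\otimes\id)\colon X_-\otimes X_+^\vee\to\one$, observes that $X_-\otimes X_+^\vee$ lies in $\mathcal{C}_-$ (tensor ideal, since $X_-\in\mathcal{C}_-$) and in $\mathcal{C}_+$ (tensor ideal, since $X_+^\vee\in\mathcal{T}_+$ by hypothesis), hence in $\mathcal{C}_+\cap\mathcal{C}_-=\mathcal{P}_{\mathcal C}$, so $\Phi$ lifts along the projective cover $P(\one)\to\one$; tensoring this lift with $X_+$ and precomposing with $\id_{X_-}\otimes coev_{X_+}$ exhibits $\varphi$ as factoring through the projective $P(\one)\otimes X_+$. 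Both arguments use the same raw ingredients (tensor-ideal properties of $\mathcal{C}_\pm$, $\mathcal{C}_+\cap\mathcal{C}_-=\mathcal{P}_{\mathcal C}$, and the hypothesis on $\mathcal{T}_+^\vee$), but the paper's version stays inside the rigid calculus on the small side and avoids any appeal to $\mathcal{C}_-^\vee$ or to the torsion pair; you may wish to note this as an alternative when you write up, since it removes the reflexivity caveat.
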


\begin{proof} For $X_+\in \mathcal T_+$ suppose $X_+^\vee = {\mathcal H}om(X_+,\one) \in \mathcal T_+$.
For $X_+\in \mathcal C_+$ and $$  \varphi: X_- \to X_+ $$
the associated morphism $\Phi= ev\circ (\varphi\otimes X_-^\vee)$
$$ \Phi: X_-\otimes X_+^\vee \to \one \ $$
factorizes uniquely over the projective hull $p:P(\one)\to \one$, since
$X_-\otimes X_+^\vee \subset \mathcal I_{\mathcal C} = \mathcal P_{\mathcal C}$. Notice that here we use that $\one$
(the class of the trivial comodule) is simple.
Hence $\varphi$, which is the composite
$ X_- \to X_+\otimes X_+^\vee \otimes X_+ \to P(\one)\otimes X_+ \to X_+ $,
factorizes over the projective object $P(\one)\otimes X_+$.
\end{proof}




\section{  Clean decompositions} \label{sec:clean} 

We retain the conventions of section \ref{sec:conventions}.

\begin{definition}
An object $X$ of $\mathcal C$ is \emph{clean} if it does not contain an injective subobject. 
\end{definition}

\begin{lem}Suppose $N,M$ are clean objects.
Then $M\oplus N$ is clean.
\end{lem}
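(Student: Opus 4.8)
The statement to prove is: if $N, M$ are clean objects of $\mathcal C$ (i.e.\ neither contains a nonzero injective subobject), then $M \oplus N$ is clean. The plan is to argue by contradiction: suppose $M \oplus N$ contains a nonzero injective subobject $I$. Since $\mathcal C$ is a comodule category of a coalgebra (or more generally any injective in $\mathcal C$ is a direct sum of finite-dimensional indecomposable injectives, as recorded in section \ref{sec:finiteness-conditions}), we may assume after passing to a direct summand that $I$ is a \emph{finite-dimensional indecomposable} injective, hence has simple socle, say $\mathrm{soc}(I) = L$ for a simple object $L$.

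First I would reduce to the case where everything lives inside finite-dimensional subobjects: write $M = \mathrm{colim}_i M_i$ and $N = \mathrm{colim}_i N_i$ as inductive unions of finite-dimensional subobjects; since $I$ is finite-dimensional, $I \hookrightarrow M \oplus N$ factors through some $M_i \oplus N_i$, so it suffices to treat the case $M, N \in \mathcal T$ finite-dimensional. Now the key step is a Krull--Schmidt argument as the lemma's statement suggests: decompose $M \oplus N = M \oplus N$ and note that $I$, being indecomposable injective, is (by the Krull--Schmidt property valid under (F), invoked in section \ref{sec:finiteness-conditions}) a direct summand of $M \oplus N$ once we know it is a subobject — more precisely, an injective subobject of any object splits off as a direct summand. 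So $M \oplus N \cong I \oplus C$ for some $C$. Then the indecomposable summand $I$ of $M \oplus N$ must, by uniqueness of Krull--Schmidt decompositions, be isomorphic to an indecomposable summand of $M$ or of $N$. Hence $M$ (say) has $I$ as a direct summand, so $M$ contains the injective $I$ as a subobject, contradicting cleanness of $M$.

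The one point that needs a little care — and which I expect to be the main (minor) obstacle — is the legitimacy of replacing the possibly infinite-dimensional injective subobject of $M \oplus N$ by a finite-dimensional indecomposable one and the passage to finite-dimensional $M_i, N_i$: one must check that a nonzero injective subobject of $M \oplus N$ does contain (indeed splits off) a nonzero finite-dimensional indecomposable injective summand, which follows because every injective in $\mathcal C$ decomposes as a direct sum of finite-dimensional indecomposable injectives (section \ref{sec:finiteness-conditions}, property (11)) and such a summand, being finite-dimensional, factors through some $M_i \oplus N_i$. Once that reduction is in place, the rest is the routine Krull--Schmidt bookkeeping alluded to in the statement, using that an injective subobject is always a direct summand and that $\mathcal T$ satisfies Krull--Schmidt under condition (F).
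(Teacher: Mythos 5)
Your proof is correct and takes exactly the approach the paper indicates (the paper simply remarks that "the lemma follows easily from a Krull-Schmidt decomposition" without giving details). The reduction to a finite-dimensional indecomposable injective living inside some $M_i \oplus N_i$, followed by splitting off $I$ as a summand and invoking Krull--Schmidt uniqueness to place $I$ inside $M_i$ or $N_i$, is a faithful and complete fleshing-out of that remark.
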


\begin{proof} Suppose $I$ is an injective summand of $M\oplus N$, hence a direct factor.
$I=\bigoplus_\nu I_\nu$ decomposes into a direct sum of indecomposable finite dimensional injective objects $I_\nu$. Hence we can assume that $I$ is the
the finite dimensional indecomposable injective hull of a simple object $S$. Then $S$ injects into $M_0\oplus N_0$ for some finite dimensional subcomodules $M_0\subset M$ and $N_0\subset N$. Now $M_0\oplus N_0 = I \oplus K$. Decomposing $M_0$ and $N_0$ into a direct sum of indecomposables then allows to apply the Krull-Schmidt theorem. It shows that $I$
must be isomorphic to one of the composition factors of $M_0$ or $N_0$, say of $M_0$. Hence, being injective
and projective, $I$ must be a direct summand of $M_0$. Hence $I\subset M$ contradicting our assumptions.
\end{proof}

\begin{lem} Any object $M\in \mathcal C$
decomposes into a direct sum $M= I \oplus N$, where $I$ is injective and $N$ 
is clean, i.e. does not contain an injective subobject. For two such decompositions
 $M=I\oplus N = I'\oplus N'$ the morphisms $\beta,\gamma$ are isomorphisms
$$ \xymatrix{  &   & 0 &  &  \cr
&  & I' \ar[u] & &   \cr
 0 \ar[r] &  I \ar[ur]^\beta_\sim\ar[r] & M \ar[u]\ar[r] & N \ar[r] & 0  \cr
 &   & N' \ar[u]\ar[ur]_\gamma^\sim & &   \cr
 & & 0 \ar[u] & &  \cr} $$
Finally, $M=I\oplus N$ and $M' = I' \oplus N'$ in $\mathcal C$ are stably
equivalent if and only if their clean components $N$ and $N'$ are isomorphic in $\mathcal C$.
\end{lem}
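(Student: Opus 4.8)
The statement has three parts: (i) existence of a decomposition $M = I \oplus N$ with $I$ injective and $N$ clean; (ii) essential uniqueness of such a decomposition; (iii) the characterization of stable equivalence in terms of the clean components. I will attack them in this order, since (iii) follows formally from (ii).

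For existence (i): I would build the injective summand greedily. Let $I \hookrightarrow M$ be an injective subobject which is maximal among injective subobjects — here one must be careful that such a maximal object exists. Since $\mathcal{C}$ is the ind-category of $\mathcal{T}$, every injective object of $\mathcal{C}$ is a direct sum of finite-dimensional indecomposable injectives (property (11) of the comodule category list, or the discussion of $\mathcal{I}_{\mathcal{C}}$ in section \ref{sec:finiteness-conditions}), so one can use a Zorn's lemma argument over families of indecomposable injective subobjects whose sum inside $M$ is direct; the socle functor $soc$ and the fact that indecomposable injectives have irreducible socle control that the sum stays direct and that maximality can be achieved. Since injective objects are also injective in $\mathcal{C}$, the inclusion $I \hookrightarrow M$ splits, say $M = I \oplus N$. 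If $N$ contained a nonzero injective subobject $J$, then $I \oplus J$ would be an injective subobject of $M$ strictly larger than $I$ (using that $J \cap I = 0$ inside $M$), contradicting maximality; hence $N$ is clean. The earlier lemma that a direct sum of clean objects is clean is not needed here but confirms consistency.

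For uniqueness (ii): given $M = I \oplus N = I' \oplus N'$, consider the composite $\beta\colon I \hookrightarrow M \twoheadrightarrow I'$ (projection along $N'$) and $\gamma\colon N' \hookrightarrow M \twoheadrightarrow N$ (projection along $I$), as in the displayed diagram. I would show $\beta$ is an isomorphism: its kernel is $I \cap N'$, an injective (being a summand-candidate; more precisely a subobject of the injective $I$, hence injective since in these categories subobjects of injectives that are summands... — better: $\ker\beta = I \cap N'$ is a subobject of $N'$, and I claim it is injective, because $I \cap N' = \ker(I \hookrightarrow M \to I')$ is a direct summand of $I$). Since $N'$ is clean, $I \cap N' = 0$, so $\beta$ is monic; symmetrically, using cleanness of $N$, the cokernel of $\beta$ vanishes (the cokernel is a quotient of $I'$ isomorphic to a subobject of $N$ via the complementary projection, hence injective, hence zero). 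So $\beta$ is an isomorphism, and then $\gamma$ is too by the snake/five lemma applied to the diagram, or directly since $M/I \cong N$ and $M/I' \cong N'$ compatibly.

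For (iii): if $N \cong N'$ in $\mathcal{C}$ then $M = I \oplus N$ and $M' = I' \oplus N'$ become isomorphic to $N$ and $N'$ respectively in $\overline{\mathcal{C}}$ (injective = projective summands die in the stable category), so $M$ and $M'$ are stably equivalent. Conversely, suppose $M$ and $M'$ are stably equivalent, i.e. isomorphic in $\overline{\mathcal{C}}$. Then there are morphisms $f\colon M \to M'$, $g\colon M' \to M$ with $gf - \id_M$ and $fg - \id_{M'}$ factoring through injectives. Restricting to the clean summands, $\bar f\colon N \to N'$ and $\bar g\colon N' \to N$ satisfy that $\bar g \bar f - \id_N$ factors through an injective object; but $N$ is clean, and I would argue that an endomorphism of a clean object which differs from the identity by something factoring through an injective is itself an isomorphism — because $\mathrm{Hom}(N, I) $-type corrections cannot affect the "essential" part: more concretely, $\bar g\bar f$ is then invertible (its "defect" lands in a complement; use Fitting/Krull--Schmidt on $N$, which applies since every object of $\mathcal{C}$ is an increasing union of finite-dimensional ones and these satisfy Krull--Schmidt), so $\bar f$ is split mono and, symmetrically, split epi, hence an isomorphism $N \cong N'$.

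\textbf{Main obstacle.} The delicate point is the existence of a \emph{maximal} injective subobject of $M$ in step (i) and, relatedly, showing in steps (ii)--(iii) that the relevant kernels/cokernels/defects really are injective and hence killed by cleanness — this requires handling possibly infinite direct sums of indecomposable injectives carefully and invoking that injectivity in $\mathcal{C}$ is detected on finite-dimensional monomorphisms (the comodule properties listed above) together with the Krull--Schmidt property on the finite-dimensional level. Once one is comfortable that "subobject that is a direct summand of an injective is injective" and that clean objects have no such nonzero summands, the rest is diagram chasing.
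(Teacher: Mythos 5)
Your overall strategy matches the paper's: produce $I$ greedily, prove the cross projections $\beta,\gamma$ are isomorphisms, and deduce the stable-equivalence statement formally. But there is a genuine gap at the crux of the uniqueness step. You assert that $\ker\beta = I\cap N'$ is a direct summand of $I$, hence injective, hence zero because $N'$ is clean. The middle implication is the only nontrivial one and you give no reason for it: the kernel of a morphism between injective objects is not in general a direct summand (already for $k[\epsilon]/\epsilon^2$-modules, multiplication by $\epsilon$ on the regular module is an endomorphism of an injective whose kernel is the non-injective simple). Your own parenthetical signals that you are unsure why it should be a summand, but the argument then proceeds as though it were established, so $\beta$ monic — and with it the whole uniqueness claim — is left unproved.

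The paper circumvents this via the socle. Each indecomposable injective summand $I_\nu$ of $I$ has essential simple socle. Let $\alpha\colon M\to N'$ be the projection along $I'$. If $\alpha(soc(I_\nu))\neq 0$ then $\alpha$ is monic on the simple $soc(I_\nu)$, hence monic on all of $I_\nu$ by essentiality, exhibiting $I_\nu$ as an injective subobject of the clean $N'$ — a contradiction. So $\alpha$ kills $soc(I)$, i.e. $soc(I)\subseteq I'$, whence $\beta$ is monic on $soc(I)$ and then monic on $I$ (essentiality again), giving $I\cap N'=\ker\beta=0$ for free. Only after that does one identify $\operatorname{coker}\beta\cong I'/\beta(I)$ — now genuinely injective because $\beta(I)\cong I$ is an injective subobject of $I'$ — with a direct summand of $N$ via the snake lemma, and invoke cleanness of $N$; your cokernel sketch is compatible with this once $\beta$ is monic. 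On existence, your Zorn-type search for a maximal injective subobject is a different route from the paper's: there one writes $M=\operatorname{colim}M_i$ over finite-dimensional subobjects, builds compatible Krull--Schmidt decompositions $M_i=I_i\oplus N_i$ with nested $I_i$ by transfinite induction, passes to the limit, and notes that any finite-dimensional injective inside $N=\operatorname{colim}N_i$ would have to sit in some clean $N_i$, which is impossible. Both can be made to work; the colimit version keeps all the bookkeeping at the finite-dimensional level where Krull--Schmidt and the socle machinery are readily available, rather than having to justify that a chain of injective subobjects of $M$ has an injective upper bound.
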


\begin{proof}
Existence. Any $M\in \mathcal T$ decomposes by Krull-Schmidt into a finite direct sum of indecomposable objects and the claim follows easily. In general $M =co\lim M_i$ for $M_i\in \mathcal T$ with monomorphic transition morphisms. Since injectives are projective, the argument above shows 
that clean decompositions $M_i = I_i \oplus N_i$ can be chosen such that the transition morphism
map the injective subobjects $I_i$ into the chosen injective subobject
using transfinite induction. Hence $I=co\lim I_i$ is a subobject of $M$
with quotient $M/I \cong N= co\lim N_i$. Since $I$ is injective, we get $M\cong I\oplus N$
and $N$ is clean. If $N$ contains a nontrivial injective object, then in particular an indecomposable
injective object. Therefore it suffices that $N$ does not contain an injective object $I$ from $\mathcal T$.
But this is obvious, since such an object $I$ must be contained in some $N_i$. This is impossible, since all $N_i$ are clean.

Suppose $I\oplus N = M = I' \oplus N'$ are two decompositions. 
Let the projections from $M$ to $N'$ and $I'$ denote $\alpha$ and $\beta$.
Any injective object decompses as $I=\bigoplus_\nu I_\nu$ for indecomposable injectives $I_\nu\in \mathcal T$.
The socle of each $I_\nu$ is a simple object $L_\nu \in \mathcal T$.
Hence if $\alpha(soc(I_\nu))\neq 0$, this implies that $\alpha: I_\nu \to N'$
is injective. But this is impossible, since $N'$ is clean, and implies $\alpha(soc(I_\nu))=0$.
Since this holds for all summands $I_\nu$, it implies $\alpha(soc(I'))=0$.
Therefore $\beta$ must be injective on $soc(I)$, and hence $\beta: I\to I'$
is injective, since otherwise $0\neq soc(Kern(\beta)) \subset soc(I)$ would be annihilated
by $\beta$. On the other hand $R'=Kern(\beta)$ which forces $R'\cap \beta(I)=0$. 
By the snake lemma we get an exact kokernel sequence
$$ 0 \to R' \to M/I \to I'/\beta(I) \to 0 \ .$$
Since $M/I = R$ and since $I'/\beta(I)$ is injective, this implies
$R \cong R' \oplus (I'/\beta(I))$. Accordingly $R'$ is clean and we obtain
$R\cong R'$ and $I'/\beta(I)=0$. Therefore $\beta: I \cong I'$   
must be an isomorphism. The last assertion is obvious. 
\end{proof}




\section{ Construction of cofibrant replacements} \label{sec:cofibrant}

Suppose $\mathcal{T}$ is an abelian $k$-linear Frobenius category satisfying $(F)$ and $(G)$ as in section \ref{sec:conventions} and $\mathcal{C} = \mathcal{T}^{\infty}$. Recall that $\mathcal{C}$ is equipped with an induced model structure coming from a Frobenius pair. In particular we have two abelian model categories $\mathcal C$ and $\mathcal D$ 
with a Quillen adjuntion given by $F$ and $U$. Let
$\mathcal C$ be cofibrantly generated by $J=F({\mathcal F_D})$ and $I=F({\mathcal I_{\mathcal D}})$.

\begin{definition} \label{def:weights} Let ${\mathcal C}^{\leq w}$ and ${\mathcal D}^{\leq w}$ be full subcategories of $\mathcal C$ 
and  $\mathcal D$ which are  
closed under limits and colimits, hence in particular under pushouts. Assume the following two conditions:
\begin{enumerate}
\item \it The restriction functor $U: \mathcal C \to  \mathcal D$
satisfies $U: {\mathcal C}^{\leq w} \to {\mathcal D}^{\leq w}$. 

\item \it The induction functor $F:\mathcal D \to \mathcal C$ satisfies
$F: {\mathcal D}^{\leq w} \to {\mathcal C}^{\leq w}$.
\end{enumerate}
Then we say that $(\mathcal C$, $\mathcal D$, $U$, $F$,$\leq$) forms a \emph{Quillen adjunction with weight truncation}.
\end{definition}

\begin{remark} Consider the category $\mathcal T$ of finite dimensional representations of a
(connected) algebraic supergroup $G$ with reductive $G_0$ over an algebraically closed field
$k$ of characteristic $char(k)=0$.  Let $\mathcal C$ be its ind-category. There is a notion of weights 
with an ordering $\leq $ defined between weights. Then we can define ${\mathcal T}^{\leq w}$ and ${\mathcal C}^{\leq w}$ to be the full subcategories
of objects whose simple subquotients $L(\lambda)$ all satisfy $\lambda \leq w$. 
This ordering need not be  the usual weight ordering, but could be quite arbitrary.
\end{remark}

\bigskip\noindent
{\it Weight truncation}. Given $M\in \mathcal C$ there exist objects $M_{\leq w}$ and $M^{\leq w}$ in ${\mathcal T}^{\leq w}$, such that
$M_{\leq w}$ is the maximal subobject of $M$ in ${\mathcal T}^{\leq w}$ and $M^{\leq w}$ is the maximal quotient object
in ${\mathcal T}^{\leq w}$. There exists an obvious morphism $can: M_{\leq w} \to M^{\leq w} $
functorial in $M$. These constructions are functorial in the following sense:
For a morphism $f:M\to N$ the composed morphism $M\to N\to N^{\leq w}$ uniquely factorizes
over $M\to M^{\leq w}$. The induced morphism $M^{\leq w} \to N^{\leq w}$ will again be denoted $f^{\leq w}$
by abuse of notation
$$ \xymatrix@+3mm{  M_{\leq w} \ar[d]^f \ar@{^{(}->}[r] & M \ar[d]^f \ar@{->>}[r] & M^{\leq w} \ar[d]^{f^{\leq w}} \cr
N_{\leq w} \ar@{^{(}->}[r] & N \ar@{->>}[r] & N^{\leq w} \cr }$$
If $i$ is a monomorphism, $i^{\leq w}$ need not be a monomorphism.

\begin{thm} \label{thm:cofibrant} Any morphism $p$ in ${\mathcal C}^{\leq w}$ can be factorized  into a morphism in $\varphi \in \mathcal L$ and a morphism in $\psi \in\mathcal R\cap \mathcal W$, such that $\varphi:X\to Z$
where $Z$ is a direct sum of an injective object and an object in ${\mathcal C}^{\leq w}$. Every object in ${\mathcal C}^{\leq w}$ has a cofibrant replacement $Z$ of this form.
\end{thm}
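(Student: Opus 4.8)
The plan is to run the small object argument / Quillen's factorization construction in $\mathcal C$, but to perform it inside $\mathcal C^{\leq w}$ by replacing each $\mathcal D$-level building block by its weight truncation. Recall that $\mathcal C$ is cofibrantly generated by $I = F(\mathcal I_{\mathcal D})$ and $J = F(\mathcal F_{\mathcal D})$, and that the factorization axiom produces $p = \psi \circ \varphi$ with $\varphi$ an $I$-cell complex (a transfinite composition of pushouts of coproducts of maps in $I$) and $\psi \in Iinj = \mathcal R \cap \mathcal W$. The issue is that a generic $I$-cell complex built on $X \in \mathcal C^{\leq w}$ need not stay in $\mathcal C^{\leq w}$: the generating cofibrations $F(i)$ for $i: A \hookrightarrow B$ in $\mathcal I_{\mathcal D}$ attach cells $F(B)$ which can carry arbitrarily large weights.

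First I would reduce to the object case: factor $X \to *$, i.e. $0 \to X$ in $\mathcal C$; the general morphism $p$ follows by the usual relative argument, or is not even needed for the stated application. So let $X \in \mathcal C^{\leq w}$. The key step is to build a cofibrant replacement $Z \to X$ by a modified small object argument: at each stage, instead of attaching all cells $F(i): F(A) \to F(B)$ requested by the lifting problems against $X$, I attach $F(B)_{\leq w}$ and $F(B)^{\leq w}$ appropriately, using the weight truncation functors. More precisely, by Definition \ref{def:weights} the subcategory $\mathcal C^{\leq w}$ is closed under limits, colimits and in particular pushouts, and $F$ maps $\mathcal D^{\leq w}$ into $\mathcal C^{\leq w}$; so if I work only with those $i \in \mathcal I_{\mathcal D}$ whose target already lies in $\mathcal D^{\leq w}$, every pushout stays in $\mathcal C^{\leq w}$, and hence the transfinite colimit $Z'$ does too. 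The map $Z' \to X$ so produced is a cofibration whose cofibre computation, together with the weight truncation $M \mapsto M^{\leq w}$ applied to $X$ and its resolution, ensures that $Z' \to X$ has the right lifting property against the restricted generating set. To repair the cokernel/kernel so that $Z' \to X$ becomes an honest trivial fibration (i.e. epi with kernel in $\mathcal C_-$, by Corollary \ref{model-structure-descr}), I would then do one further standard step: add an injective summand. Concretely, using that $\mathcal C$ has enough injectives and that $\mathcal I_{\mathcal C} \subset \mathcal C_-$, I enlarge $Z'$ to $Z = Z' \oplus I$ with $I$ injective so that the combined map $Z \to X$ is a trivial fibration; by construction $Z$ is a direct sum of an injective object and the object $Z' \in \mathcal C^{\leq w}$, which is exactly the claimed form.

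The order of the steps is therefore: (1) reduce to $0 \to X$; (2) run the restricted small object argument inside $\mathcal C^{\leq w}$, using closure of $\mathcal C^{\leq w}$ under pushouts and transfinite composition and the fact that $F$ preserves the $\leq w$ condition, obtaining a cofibration $Z' \hookrightarrow$ (something) with $Z' \in \mathcal C^{\leq w}$; (3) verify, via the weight truncation functors $M \mapsto M_{\leq w}, M^{\leq w}$ and their functoriality, that the lifting problems against $X$ that could force weights $> w$ are already solved inside $\mathcal C^{\leq w}$, so that no large cells are needed; (4) correct by a direct sum with an injective object to upgrade to a genuine cofibrant replacement, invoking $\mathcal I_{\mathcal C} \subset \mathcal C_-$ and Corollary \ref{model-structure-descr}(2).

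The main obstacle is step (3): controlling the cokernel of the partial resolution. In the ordinary small object argument one attaches cells indexed by \emph{all} lifting problems, and the resulting map is automatically a trivial fibration; here, by restricting to targets in $\mathcal D^{\leq w}$ I may lose this, so I must argue separately that the map I build still satisfies $Iinj$ — equivalently (Lemma \ref{lemma-3}) that its kernel restricts to an injective $\mathcal D$-object. The point that makes this work is that $U$ maps $\mathcal C^{\leq w}$ into $\mathcal D^{\leq w}$ and that the lifting problems $F(A) \to X$ with $A \in \mathcal D^{\leq w}$ factor through the maximal quotient $X^{\leq w} = X$; detecting injectivity of the kernel only requires testing against the admissible monomorphisms in $\mathcal D$, and these can be truncated to $\mathcal D^{\leq w}$ without loss because $U$ of everything in sight already lives there. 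Verifying this compatibility carefully — that truncating the generating set does not shrink the class $Iinj$ \emph{for maps between objects of} $\mathcal C^{\leq w}$ — is the technical heart of the argument; once it is in place, steps (1),(2),(4) are routine.
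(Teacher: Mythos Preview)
Your overall architecture is right: this is a modified small object argument, and the point is that the lifting problems one must solve to certify $\psi\in Iinj$ can be ``pushed down'' into weight $\leq w$. But the way you propose to do step (3), and your step (4), both miss the actual obstruction.

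The claim ``these can be truncated to $\mathcal D^{\leq w}$ without loss because $U$ of everything in sight already lives there'' is where the gap is. Given a generating monomorphism $j:A_j\hookrightarrow B_j$ in $\mathcal D$ and a lifting problem against $p_\infty:G^\infty\to Y$ with $G^\infty,Y\in\mathcal C^{\leq w}$, weight factorization does let you replace $A_j$ by $A_j^{\leq w}$ and factor the bottom map through $B_j^{\leq w}$. However, the induced map $j^{\leq w}:A_j^{\leq w}\to B_j^{\leq w}$ need \emph{not} be a monomorphism, so it is not one of your restricted generating cofibrations, and solving lifting problems only against honest monomorphisms with target in $\mathcal D^{\leq w}$ does not suffice. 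Equivalently: $UK\in\mathcal D^{\leq w}$ being injective relative to monomorphisms inside $\mathcal D^{\leq w}$ does not make it injective in $\mathcal D$. The paper's fix is not to truncate $j$ but to \emph{thicken} it: replace $j$ by
\[
(\nu_j,\,j^{\leq w}):\ A_j^{\leq w}\ \hookrightarrow\ I(A_j^{\leq w})\oplus B_j^{\leq w},
\]
where $\nu_j$ is the injective hull. This is always a monomorphism, has source in $\mathcal D^{\leq w}$, and one checks (this is the content of the paper's Steps 7--9, including a separate argument to peel off the injective summand of $G^\infty$) that a lift for this modified map yields a lift for the original $j$. Correspondingly, in the construction one attaches at each stage cells of the form $F(I(A_i)\oplus B_i^{\leq w})$, not $F(B_i^{\leq w})$; the injective summand of $Z$ is produced \emph{by} the construction (from the $F(I(A_i))$ pieces), not bolted on at the end.

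This also means your step (4) does not work as stated. If the restricted small object argument only gives you a map $Z'\to X$ satisfying RLP against the truncated generators, adding an injective summand to $Z'$ afterwards does not retroactively make the kernel $\mathcal D$-injective; you would need a fresh argument, and in effect you would be redoing the paper's construction. The essential missing idea is the injective-hull trick above.
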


\begin{proof} Step 1). Recall that $\mathcal C$ is cofibrantly generated by $J=F(J_{\mathcal D})$ 
and $I=F(I_{\mathcal D})$. For a morphism $p:X\to Y$ between
objects in ${\mathcal C}^{\leq w}$ we will construct a factorization
$p = \psi \circ \varphi$ with $\varphi \in \mathcal L$ and $\psi\in \mathcal R\cap \mathcal W$
for $\varphi: X\to Z$ such that
$$  Z \in {\mathcal C}^{\leq w} \ .$$   
Fix $p:G^0=X\to Y$. To each $i\in I$ corresponds  
a morphism $$ i: A_i \hookrightarrow B_i $$
in $\mathcal D$. Let $\nu_i: A_i \hookrightarrow I(A_i)$ denote the injective hull.
Let $I^{\leq w} \subset I$ be the subset of all monomorphisms 
$i:A_i\to B_i$ in $\mathcal D$, where $A_i \in {\mathcal D}^{\leq w}$. 
For $i$ consider the set $S(i)$ of all morphisms $f,g$ in $\mathcal C$, such that
 $(f,g)$ makes the following diagram 
commutative $$ \xymatrix@+5mm{ F(A_i) \ar@{^{(}->}[d]_{F(\nu_i\oplus i^{\leq w})}\ar[r]^f & G^0 \oplus A_i\ar[d]^{p,0} \cr
F(I(A_i)\oplus B_i^{\leq w}) \ar[r]^-{g} & Y \cr} $$ for
$F(i^{\leq w}):F(A_i^{\leq w})\to F(B_i^{\leq w})$ and $p:G^0=X\to Y$. 
Every $f=(f_X,f_{A_i})$ has two components, where $f_{A_i}$ can be chosen
unconditionally, for example $f_{A_i}=id_{A_i}$ or $f_{A_i}=0$.

\medskip\noindent
Step 2). Put \[ L= \bigoplus_{i\in I^{\leq w}}\bigoplus_{(f,g)\in S(i)} F(A_i)\] and  \[I= \bigoplus_{i\in I^{\leq w}}\bigoplus_{(f,g)\in S(i)} F(I(A_i)).\]
We extend the map $p:X\to Y$ to a map $X\oplus L \to Y$ which is zero on $L$. It factorizes over the 
the pushout $G^1 \in {\mathcal C}^{\leq w}$ as indicated in the next diagram
$$ \xymatrix{ \bigoplus_{i\in I^{\leq w}} \bigoplus_{(f,g)\in S(i)} F(A_i) \ar@{^{(}->}[d]_{\bigoplus_{I^{\leq w}, S(i)} F(\nu_i\oplus i^{\leq w})} 
\ar@{^{(}->}[r]^-{\bigoplus (f)}&   G^0\oplus L \ar@{^{(}->}[d]^{i_1} \cr
\bigoplus_{i\in I^{\leq w}} \bigoplus_{(f,g)\in S(i)} F(I(A_i)\oplus B_{i}^{\leq w})  \ar[r]^-{\bigoplus g} &   G^1 \ar[d]^{p_1}\cr
 &  Y \cr}
$$
Since $F(\nu_i)$ is a monomorphism,  the left vertical map (abbreviated $F(M)\to F(N)$ in the following) is in $\mathcal L$, and therefore also 
its pushout $i_1\in \mathcal L$. From the injectivity of $i_1$ we get an exact sequence
$$  0 \to (G^0\oplus L) \to G^1 \to F(N)/F(M) \to 0 \ .$$
Hence $G^1/(X\oplus L)$, and then also $G^1/X$, is in $\mathcal C_+$. 

\medskip\noindent
On the other hand
$$ 0 \to I \to G^1 \to (  (X\oplus L) \oplus (\bigoplus B_i^{\leq w}))/ F(M) \to 0 $$
since the upper horizontal map is injective.  The right hand side $R$ is in ${\mathcal C}^{\leq w}$
by our assumptions. Thus by a clean decomposition of $R$ we get
$$ G^1 \cong N^1  \oplus I^1 \quad , \quad (N^1\in {\mathcal C}^{\leq w} \mbox{ clean }  , I^1\in \mathcal I_{\mathcal C}) \ .$$

\medskip\noindent
Step 3). Now iterate the construction by replacing $p:G^0=X\to Y$ by $p_1:G^1 \to Y$, and so on.
This gives a chain of diagrams with $i_k\in \mathcal L$ and $G^k \in {\mathcal C}^{\leq w}$
$$ \xymatrix@+5mm{ G^k  \ar[d]^{p_k}\ar@{^{(}->}[r]^{i_k} & G^{k+1} \ar[d]^{p_{k+1}} \cr
Y \ar@{=}[r] & Y \cr} $$
Put $G^\infty = co\lim_k G_k$. Then there exists a canonical factorization of $p$
$$  X=G^0 \hookrightarrow \cdots G^k \hookrightarrow G^{k+1} \hookrightarrow  ...  G^\infty  \to  Y \ .$$
The morphism $X\to G^\infty$ is in $\mathcal L$: It is a monomorphism.
Its cokernel is a sequential colimit of objects in $\mathcal C_+$ with injective transition maps. Hence its cokernel is in $\mathcal C_+$. 

\medskip\noindent
Step 4). We claim $G^\infty \cong N \oplus I $ with clean $N\in {\mathcal C}^{\leq w}$
and injective $I$. To see this we write the transition monomorphism
$G^k \to G^{k+1}$ as a matrix. Then the matrix entry
$$ \gamma: I^k \to N^{k+1} $$
is not a monomorphism. Indeed, since $I^k$ is a direct sum of irreducible
injective objects with irreducible socle and since $N^{k+1}$ is clean,
 $\gamma(soc(I^k))=0$. Thus the matrix entry
$$ \alpha: I^k \to I^{k+1} $$
must satisfy $\alpha: soc(I^k) \hookrightarrow I^{k+1}$. 
This forces $$ \alpha: I^k \hookrightarrow I^{k+1} \ .$$
Hence we may write $I^{k+1}= \alpha(I^k)\oplus I'$. Define $$ \tilde\gamma: I^{k+1} \to \alpha(I^k) \cong I^k \to N^{k+1} \ $$
and the isomorphism
$$ \rho = \begin{pmatrix}  id_{I^{k+1}} & -\tilde\gamma \cr 0  &  id_{N^{k+1}} \cr \end{pmatrix}\ .$$
Then $\rho\circ i_k$ maps $I^k$ into $I^{k+1}$. 
Replacing $i_{k+1}$ by $i_{k+1}\circ \rho^{-1}$, we may therefore assume
$$ i_k : I^k  \hookrightarrow I^{k+1} \ .$$
In the limit we obtain an exact sequence
$$ 0 \to co\lim_k I^k \to co\lim_k G^k \to co\lim_k N_k \to 0 \ .$$
Since $I=co\lim_k I^k$ is injective, and since $N = co\lim_k N^k$ is clean
and in ${\mathcal C}^{\leq w}$, our claim follows. 

\medskip\noindent
Step 5). We want to show that $p_\infty: G^\infty \to Y$ is in $\mathcal R\cap W$.
For this consider a possible diagram  for some $j \in I$ (i.e. a monomorphism
$j:A_j\to B_j$ in $\mathcal D$)

\medskip\noindent
Diagram 1:
$$ \xymatrix@+5mm{ F(A_j) \ar[d]_{F(j)}\ar[r]^f & G^\infty \ar[d]^{p_\infty} \cr
F(B_j) \ar@{.>}[ru]^{?}\ar[r]^g & Y \cr} $$
We have to show that there exists a lifting $h$ in order to prove $p_\infty\in \mathcal R\cap W$. 
Since each $F(A_j)$ is finite dimensional, the morphism $f: F(A_j) \to
G^\infty$ factorizes in the form $f= p_k \circ f'$ over some $G^k$. This gives the left square of the next diagram
$$ \xymatrix@+5mm{ F(A_j) \ar[d]_{F(j)}\ar[r]^{f'} & G^k \ar[d]^{p_k} \ar[r]^{i_k} & G^{k+1} \ar[d]^{p_{k+1}} \cr
F(B_j) \ar@{.>}[rru]\ar[r]^g & Y \ar@{=}[r] & Y \cr} $$
To find the desired lift to $G^\infty$, it would suffice to find a lifting $h: F(B_j) \to G^{k+1}$ of the outer diagram. To discuss this we switch sides to obtain the new diagram
$$ \xymatrix@+5mm{ F(A_j) \ar[d]_{F(j)}\ar[r]^{f'} & G^k \ar[d]^{i_k} \ar[r]^{p_k} & Y \ar@{=}[d] \cr
F(B_j) \ar@/_10mm/[rr]_g\ar@{.>}[r]^h & G^{k+1} \ar[r]^{p_{k+1}}& Y \cr} $$
To construct such an $h$, making this diagram commutative, amounts to construct a morphism $$ h: F(B_j) \to G^{k+1}= \Bigl( G^k \oplus L^k \oplus F(N^k)\Bigr) / F(M^k)
 $$
 
\medskip\noindent
Step 6) ({\it Weight factorization}). Any morphism $f:F(A)\to X\in {\mathcal C}^{\leq w}$ corresponds to a morphism $A \to UX$.
Since $UX\in {\mathcal D}^{\leq w}$, this morphism uniquely factorizes over the quotient $A\to A^{\leq w}$. Hence by adjunction $f:F(A) \to X$ canonically factorizes in the form
$$ \xymatrix{ f: F(A) \ar@{.>}[r] & F(A^{\leq w}) \ar[r] & X }\ ,$$
and similar for $B$.

\medskip\noindent
Step 7). By the weight factorization property, and since $G^\infty$ and $Y$ are in ${\mathcal C}^{\leq w}$, 
diagram 1 factorizes over the following

\medskip\noindent
Diagram 2:
$$ \xymatrix@+5mm{ F(A_j^{\leq w}) \ar[d]_{F(j^{\leq w})}\ar[r]^f & G^\infty \ar[d]^{p_\infty} \cr
F(B_j^{\leq w}) \ar@{.>}[ur]^{h'}\ar[r]^-{g} & Y \cr} $$
Hence a lift $h'$ in diagram 2 provides us with a lift
in our original diagram 1. This reduction step allows us to assume
$$ A_j = A_j^{\leq w} $$
and $j$ by $j^{\leq w}:A_j=A_j^{\leq w} \to B_j^{\leq w}$ without restriction of generality.
Since this new $j^{\leq w}$ need not be a monomorphism any longer, we now replace diagram 2 by the more convenient

\medskip\noindent
Diagram 3: $h=(f,h')$
$$ \xymatrix@+8mm{ F(A_j^{\leq w}) \ar[d]_{F(\nu_j \ \oplus\ j^{\leq w})}\ar[r]^f & G^\infty \ar[d]^{p_\infty} \cr
F(I(A_j^{\leq w})\oplus B_j^{\leq w}) \ar@{.>}[ur]^h\ar[r]^-{g\circ pr_2} & Y \cr} $$
{\it Claim}. Any lift $h$ in diagram 3 gives a lift $h'$ of diagram 2.

\medskip\noindent
{\it Proof of the claim}. The lift $h=(h_1,h_2)$ is given by a pair of morphisms $h_1: F(I(A_j))\to G^\infty$
and $h_2: F(B_j^{\leq w})\to G^\infty$ such that
\begin{itemize}
\item $p_\infty\circ f = g \circ F(j^{\leq w})$ 
\item $p_\infty\circ h_2 = g$
\item $ p_\infty \circ h_1 =0$
\item $ h_2\circ F(j^{\leq w}) = f - h_1\circ F(\nu_j)$.
\end{itemize}
The desired lift $h'$ should satisfy
\begin{itemize}
\item $p_\infty\circ f = g \circ F(j^{\leq w})$ 
\item $p_\infty\circ h' = g$
\item $ h' \circ F(j^{\leq w}) = f $.
\end{itemize}
For this we put
$ h'  =  h_2 + \lambda$,
where $\lambda: F(B_j^{\leq w})\to G^\infty$ is chosen 
such that $$p_\infty\circ \lambda=0$$
$$ \lambda \circ F(j^{\leq w}) =  h_1 \circ F(\nu_j) \ .$$
To construct $\lambda$ by adjunction, we construct the corresponding
$\Lambda: B_j^{\leq w} \to UG^\infty$ via $\lambda = adj \circ F(\Lambda)$.  
For the definition of $\Lambda$
consider the next diagram
$$ \xymatrix@+5mm{ A_j^{\leq w}\ar@/_15mm/[dd]_{j^{\leq w}} \ar[r]^{\nu_j} \ar@{^{(}->}[d]^j& I(A_j^{\leq w}) \ar[d]\ar[r]^{H_1} &  UG^\infty \ar@{=}[d] \cr
 B_j  \ar@{->>}[d] \ar@{.>}[ur]^{\exists u}\ar[r] & I(A_j^{\leq w})^{\leq w} \ar@{.>}[r]^{\exists v} &  UG^\infty \ar@{=}[d] \cr
  B_j^{\leq w} \ar@{.>}[rr]^\Lambda & &  UG^\infty \cr} $$
where $u$ exists, since $I(A_j)$ is injective. The existence of the morphisms $v,\Lambda$ follows from
weight factorization, if $G^\infty$ is in ${\mathcal C}^{\leq w}$ and then $UG^\infty \in {\mathcal D}^{\leq w}$. Since
we only know that $G^\infty = N^\infty \oplus I^\infty$ with $N^\infty \in {\mathcal C}^{\leq w}$,
we still have to show that 
we may assume $G^\infty= N^\infty$ without restriction of generality.
For this we should temporily return to the beginning of step 5)
as we do in the next step.

\medskip\noindent
Step 8) (Replacing $G^\infty$ by $N=N^\infty$). Consider the lifting condition
for a diagram
$$    \xymatrix@+5mm{ F(A_j) \ar[d]^j\ar[r]^{(f_N,f_I)} &   N \oplus I \ar[d]^{p_N+p_I}\cr
F(B_j) \ar@{.>}[ur]_{(h_N,h_I)}\ar[r]^g &   Y  \cr} $$
Since $I$ is injective and $j$ is a monomorphism, we can choose once and for all $h_I: B_j
\to I$. The lifting condition then is equivalent to
$$ h_N \circ F(j) = f_N \quad , \quad p_N \circ h_N = g - p_I\circ h_I \ .$$
This allows us to replace our lifting problem for $G^\infty$ to a lifting property
with $G^\infty$ replaced by $N^\infty$ by using the new diagram
$$    \xymatrix@+5mm{ F(A_j) \ar[d]^j\ar[r]^{f_N} &   N  \ar[d]^{p_N}\cr
F(B_j) \ar@{.>}[ur]_{h_N}\ar[r]^{g-p_I\circ h_I} &   Y  \cr} $$
instead of the old one.

\medskip\noindent
Step 9). Now we combine the results from step 5) -step 8).
We can assume $A_j=A_j^{\leq w}$ and we can replace $j:A_j \to B_j$
by the redundant morphism $(\nu_j,j^{\leq w}): A_j \to I(A_j)\oplus B_j^{\leq w}$.
This defines an element in $j\in I^{\leq w}$. 
So it suffices now finally to find a lift for very special morphisms in $I^{\leq w}$ as in the next

\medskip\noindent
Diagram 4:
$$ \xymatrix@+5mm{ F(A_j) \ar[d]_{F(\nu_j\oplus j^{\leq w})}\ar[r]^f & G^\infty \ar[d]^{p_\infty} \cr
F(I(A_j)\oplus B_j^{\leq w}) \ar@{.>}[ur]^{?}\ar[r]^-{g} & Y \cr} \quad , \quad  j \in I^{\leq w}\ .$$
instead of diagram 1. As explained in step 5) the morphism $f$ factorizes over some
$f':F(A_j) \to G^k$, and it suffices to find the desired lift 
$h:F(I(A_j)\oplus B_j^{\leq w}) \to G^{k+1}$ on the next level $k+1$ of the construction of $G^\infty$.
 
\medskip\noindent 
The data from diagram 4 give a triple $(j,f,g)$ defining one of the summands that appear
in the construction of the pushout $G^{k+1}$. Notice $j\in I^{\leq w}$ and
choose $f=(f',0): F(A_j) \to G^k \oplus L^k$.
This being said,
we map $F(I(A_j)\oplus B_j^{\leq w})$  into the summand corresponding to the index $(j,f,g)$ by the identity map, and we map it 
into all other summands by zero. Thus we obtain a commutative diagram 
$$ \xymatrix@+2mm{ F(A_j) \ar@/^14mm/[rr]^{f}\ar[d]_{F(j)} \ar[r]^-{(j,f,g)} & \bigoplus_{i\in I^{\leq w}}\bigoplus_{(f,g)\in S(i)} F(A_i) \ar[d]_{\bigoplus_{I^{\leq w}, S(i)} F(\nu_i\oplus i^{\leq w})} \ar[r]^-{\bigoplus f}&   G^k\oplus L^k \ar[d]^{i_k} \cr
F(I(A_j)\oplus B_j^{\leq w})  \ar@/_14mm/[rr]^{h} \ar[r]^-{(j,f',g)} &  \bigoplus_{i\in I^{\leq w}} \bigoplus_{(f,g)\in S(i)} F(I(A_i)\oplus B_i^{\leq w}) \ar[r]^-{\bigoplus g} &   G^{k+1} \ar[d]^{p_k}\cr
  & &  Y \cr}
$$
Then $ i_k \circ f = h\circ F(j)$ and $p_{k+1}\circ h = g$. Composing $h$ with the morphism
$G^{k+1} \to G^\infty$ defines the lift we were looking for. 
\end{proof}

%



\section{Minimal models} \label{sec:minimal}

Suppose $\mathcal{T}$ is an abelian $k$-linear Frobenius category satisfying (F) and (G) as in section \ref{sec:conventions} and $\mathcal{C}_+ \cap \mathcal{C}_- = \mathcal{P} = \mathcal{I}$.

\subsection{Minimal cofibrant replacements} Let $\mathcal C$ be an abelian model category in  which every object is fibrant.

\begin{definition} \label{def:minimal-model} Let $\mathcal C$ be an abelian model category in  which every object is fibrant. A cofibrant replacement  $q:QX\to X $ of $X\in \mathcal C$ is called
\emph{minimal} (or \emph{minimal model}) if an element $\varphi\in End_{\mathcal C}(QX)$ is a unit, i.e. in $Aut_{\mathcal C}(QX)$, if and only if $\varphi$ is in $\mathcal W$. 
\end{definition}

\begin{lem} \label{lem:mini} Suppose $q:QX \to X$ is minimal. Then any cofibrant replacement
$q':Q'X \to X$ is of the following form: there exists a projective object $P$
and an isomorphism $\tau: Q'X \cong P \oplus QX$ such that $q'=q\circ pr_{QX}\circ \tau$.
\end{lem}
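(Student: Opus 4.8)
The plan is to compare $q'$ with the minimal replacement $q$ in the usual model-categorical way and then recognize the extra summand as a projective object.

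First I would invoke the lifting axiom. Since $QX$ is cofibrant we have $0\to QX\in\mathcal L$, and $q':Q'X\to X$ is a trivial fibration (being a cofibrant replacement), so the square with corners $0,\ Q'X,\ QX,\ X$ admits a lift $\alpha:QX\to Q'X$ with $q'\alpha=q$; symmetrically there is $\beta:Q'X\to QX$ with $q\beta=q'$. By two-out-of-three $\alpha,\beta\in\mathcal W$, hence $\sigma:=\beta\alpha$ satisfies $q\sigma=q$ and is therefore in $\mathcal W$. This is exactly where minimality enters: by Definition~\ref{def:minimal-model}, $\sigma\in\mathcal W$ forces $\sigma\in\operatorname{Aut}_{\mathcal C}(QX)$. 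Note also that $q\sigma=q$ then yields $q\sigma^{-1}=q$.

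Next I would split $\alpha$. Setting $\gamma:=\sigma^{-1}\beta:Q'X\to QX$ one has $\gamma\alpha=\sigma^{-1}\beta\alpha=\mathit{id}_{QX}$, so $\alpha$ is a split monomorphism with retraction $\gamma$. The idempotent $e=\alpha\gamma$ on $Q'X$ gives a direct sum decomposition $Q'X=\ker\gamma\oplus\alpha(QX)$, and identifying $\alpha(QX)$ with $QX$ via $\alpha^{-1}$ produces an isomorphism $\tau:Q'X\xrightarrow{\ \sim\ }P\oplus QX$ with $P:=\ker\gamma=\ker\beta$ (the last equality because $\sigma^{-1}$ is invertible), normalized so that $pr_{QX}\circ\tau=\gamma$ and $\tau^{-1}(p,x)=p+\alpha(x)$. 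The factorization formula is then immediate: $q\circ pr_{QX}\circ\tau=q\gamma=q\sigma^{-1}\beta=q\beta=q'$, which is precisely the asserted identity.

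It remains to check that $P$ is projective, and this is really the only step needing an argument. For $p\in\ker\beta$ we have $q'(p)=q\beta(p)=0$, so $P\subseteq K':=\ker q'$; since $q'$ is a trivial fibration, $K'\in\mathcal C_-$. From $Q'X=P\oplus\alpha(QX)$ together with $P\subseteq K'$ one gets $K'=P\oplus(K'\cap\alpha(QX))$ (every $k\in K'$ writes as $p+r$ with $p\in P\subseteq K'$, hence $r\in K'$), so $P$ is a retract of $K'$ and therefore $P\in\mathcal C_-$ because $\mathcal C_-$ is closed under retracts. On the other hand $P$ is a retract of $Q'X\in\mathcal C_+$, so $P\in\mathcal C_+$. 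Hence $P\in\mathcal C_+\cap\mathcal C_-=\mathcal I_{\mathcal C}=\mathcal P_{\mathcal C}$, i.e.\ $P$ is projective. The main obstacle is thus not a single hard step but assembling these pieces correctly — in particular observing that the splitting of $\alpha$ furnished by minimality has its kernel inside $\ker q'$, which is what makes the complementary summand trivially fibrant and hence injective-projective.
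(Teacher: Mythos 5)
Your proof is correct and follows essentially the same route as the paper's: lift $\alpha,\beta$ against the trivial fibrations, use minimality to invert $\sigma=\beta\alpha$, split $\alpha$ via the retraction $\sigma^{-1}\beta$, and identify the complement $P$ as an object of $\mathcal C_+\cap\mathcal C_-=\mathcal P_{\mathcal C}$. The only (harmless) divergence is in how $P\in\mathcal C_-$ is obtained: you exhibit $P$ as a direct summand of $\ker q'\in\mathcal C_-$, whereas the paper reads it off from $\alpha$ being a split monomorphism in $\mathcal W$ (hence a trivial cofibration with projective cokernel) — your version actually makes explicit a step the paper leaves terse.
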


\begin{proof} Recall from Corollary \ref{model-structure-descr} that the fibrations are the epimorphisms. Suppose $q:QX\to X$ is minimal. Suppose $q': Q'X \to X$ is another cofibrant replacement. Then
by the lifting property there exist morphisms $\alpha\in \mathcal W$ and $\beta\in \mathcal W$ (not uniquely) such that
$ q = q' \circ \alpha$ and $q' = q \circ \beta$. Hence
$q = q \circ (\beta \circ \alpha)$. Notice that $\varphi= \beta \circ \alpha \in End_{\mathcal C}(QX)$
is in $\mathcal W$. If
$q:QX\to X$ is minimal, the $\varphi$ is an isomorphism of $QX$. Then 
$$  \alpha: QX \to Q'X \quad , \quad s\circ \alpha = id_{QX} $$
is a retract of $Q'X$
for  $ s=\varphi^{-1}\circ \beta $. Hence $\alpha$ is a 
monomorphism in $\mathcal W$, and $QX$ is 
a direct summand of $Q'X$
$$  Q'X = QX \oplus P $$
where $P$ is in $\mathcal C_-$. But $P$ is in $\mathcal C_+$, hence in $\mathcal P_{\mathcal C}$.
Then $P= Kern(s)$ is also in the kernel of $q':Q'X\to X$,
since  $q' = q \circ \beta = q \circ \varphi \circ s$. Hence
$$ Kern(q') = Kern(q) \oplus P \quad , \quad P \in \mathcal P_{\mathcal C} \ .$$ 
\end{proof}

\begin{remark} \label{rem:minimal} Suppose $QX = A \oplus P$ for some projective $P\neq 0$.
Then $QX \to A \to QX$ is in $End_{\mathcal C}(QX)\cap \mathcal W$, but not
in $Aut_{\mathcal C}(QX)$. Hence $QX$ is not minimal. Hence minimal models are unique
up to isomorphism.
\end{remark}

\begin{remark} Not every indecomposable object has a minimal model, see example \ref{ex:minimal} for an example in the $GL(m|1)$-case.
\end{remark}

\begin{lem} A cofibrant replacement $q:QX \to X$ is minimal if and only if $QX$ is clean (i.e. without
injective subobject). A minimal model $q:QX\to X$ is unique up to isomorphism.
\end{lem}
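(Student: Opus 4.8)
The plan is to establish the two directions of the equivalence ``$q:QX\to X$ minimal $\iff$ $QX$ clean'' and then deduce uniqueness as a formal consequence of the first part together with the clean decomposition lemma.

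First I would prove that if $QX$ is not clean then $q$ is not minimal; this is essentially Remark \ref{rem:minimal}. Indeed, if $QX$ contains an injective (hence projective, since $\mathcal I_{\mathcal C}=\mathcal P_{\mathcal C}$) subobject, then by injectivity that subobject splits off, so we may write $QX = A\oplus P$ with $P\in\mathcal P_{\mathcal C}$ nonzero. The composite $\varphi: QX\twoheadrightarrow A\hookrightarrow QX$ lies in $\mathit{End}_{\mathcal C}(QX)$ and in $\mathcal W$ — it is a weak equivalence because its kernel and cokernel are both isomorphic to $P\in\mathcal P_{\mathcal C}\subset\mathcal C_-$, so $U\varphi$ is a stable equivalence — but $\varphi$ is not an automorphism. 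Hence $q$ is not minimal.

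Conversely, suppose $QX$ is clean; I must show every $\varphi\in\mathit{End}_{\mathcal C}(QX)\cap\mathcal W$ is an automorphism. Since $\varphi\in\mathcal W$ and $QX\in\mathcal C_+$ (so $\mathit{Ext}^i(QX,\mathcal C_-)=0$ by the corollary in Section \ref{sec:induced}), $\varphi$ induces an isomorphism on $\mathit{Hom}(QX,-)$ modulo maps factoring through projectives; in particular $\bar\varphi$ is an isomorphism in the stable category $\overline{\mathcal C}$. But the first thing to pin down is the structure of such a $\varphi$: a weak equivalence in $\mathcal C$ between cofibrant objects, by the factorization and two-out-of-three properties, has kernel and cokernel in $\mathcal C_-\cap\mathcal C_+=\mathcal I_{\mathcal C}$. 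I would argue directly: write $\varphi$ via its image, $0\to QX/\ker\varphi\hookrightarrow QX$; here $\ker\varphi$ and $\mathrm{coker}\,\varphi$ are injective. Since $QX$ is clean it contains no nonzero injective subobject, so $\ker\varphi=0$, i.e. $\varphi$ is a monomorphism with injective cokernel $I$. Then $0\to QX\xrightarrow{\varphi}QX\to I\to 0$ splits (as $QX$, being cofibrant, also — no: it splits because $I$ is injective? that gives a splitting $QX\cong \varphi(QX)\oplus ?$; rather, $I$ injective does not split the sequence). Instead: the sequence splits because $QX$ on the left is... I would use that the sequence $0\to QX\to QX\to I\to 0$ has $I$ injective so it need not split; but the inclusion $\varphi(QX)\hookrightarrow QX$ realizes $QX$ as an extension of the injective $I$ by $\varphi(QX)\cong QX$, and since $QX$ is clean while $I$ is injective, the only way $I$ embeds as a subquotient consistent with cleanness forces $I=0$ — more carefully, pick a clean decomposition $QX = N\oplus J$; cleanness says $J=0$, and if $I\neq0$ then $QX$ surjects onto the nonzero injective $I$, which by projectivity of $I$ gives $I$ as a direct summand of $QX$, contradicting cleanness. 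Hence $I=0$ and $\varphi$ is an isomorphism, so $q$ is minimal.

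Finally, uniqueness up to isomorphism follows from Lemma \ref{lem:mini}: given two minimal models $q:QX\to X$ and $q':Q'X\to X$, apply that lemma (valid since $q$ is minimal) to get $Q'X\cong P\oplus QX$ with $P$ projective; but $Q'X$ is clean by the ``minimal $\Rightarrow$ clean'' direction just proved, forcing $P=0$, so $Q'X\cong QX$ compatibly with the structure maps. The main obstacle I anticipate is the careful bookkeeping in the converse direction — showing a weak self-equivalence of a clean cofibrant object has vanishing kernel and cokernel and hence is an isomorphism — where one must combine the description of $\mathcal W$ via $U$ and stable equivalence (Corollary \ref{model-structure-descr} and the description of $\mathcal C_-$) with the clean decomposition lemma of Section \ref{sec:clean}; everything else is formal.
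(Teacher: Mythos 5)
Your overall strategy is the same as the paper's — ``minimal $\Rightarrow$ clean'' from Remark \ref{rem:minimal}, ``clean $\Rightarrow$ minimal'' by analysing a weak self-equivalence $\varphi$ of $QX$, and uniqueness via Lemma \ref{lem:mini} — and the first and third parts of your argument are fine. But the second part rests on a false intermediate claim, and that is a genuine gap.

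You assert that a weak equivalence in $\mathcal{C}$ between cofibrant objects has kernel and cokernel in $\mathcal{C}_- \cap \mathcal{C}_+ = \mathcal{I}_{\mathcal{C}}$, and you then use cleanness of $QX$ only to kill those injective pieces. That assertion is not true in the generality in which you state it. Take $P$ an indecomposable non-simple projective object of $\mathcal{T}$ (for instance the projective cover $P(0)$ of the trivial representation in $\mathcal{T}_{1|1}$) and let $\varphi$ be a nonzero nilpotent endomorphism of $P$. Then $P$ is cofibrant, $U(P)$ is injective in $\mathcal{D}$ and hence zero in $\overline{\mathcal{D}}$, so every endomorphism of $P$ — in particular $\varphi$ — lies in $\mathcal{W}$; yet $\ker\varphi$ and $\mathrm{coker}\,\varphi$ are proper subquotients of $P$ and are not injective. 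So injectivity of $\ker\varphi$ cannot be extracted from cofibrancy together with the factorization and two-out-of-three axioms; it is precisely the place where the cleanness hypothesis and the clean decomposition lemma must enter, and in your write-up you gesture at that lemma but never actually invoke it. The paper's route is to factor $\varphi = u \circ v$ with $v \in \mathcal{L}\cap\mathcal{W}$ a split monomorphism with projective cokernel $I'$ and $u \in \mathcal{R}\cap\mathcal{W}$ an epimorphism onto $QX$ with kernel $I \in \mathcal{C}_-$, then to note $M \in \mathcal{C}_+$ (being an extension of $QX$ by $I'$), so $I \in \mathcal{C}_- \cap \mathcal{C}_+ = \mathcal{I}_{\mathcal{C}}$; both exact sequences through $M$ therefore split and exhibit two decompositions of $M$ into an injective and a clean piece (each clean piece being a copy of $QX$), and the clean decomposition lemma of Section \ref{sec:clean} then gives that the diagonal map $\gamma = u \circ v = \varphi$ is an isomorphism. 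Once one has $\ker\varphi = 0$, your argument that $\mathrm{coker}\,\varphi$ is projective and must vanish by cleanness is correct (a monomorphism between cofibrants with cofibrant cokernel that lies in $\mathcal{W}$ is a trivial cofibration, hence split with projective cokernel), but the input $\ker\varphi = 0$ is exactly what you have not established.
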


\begin{proof} If $QX$ is minimal, then $X$ is clean by remark \ref{rem:minimal}. If $QX$ is clean,
let $\gamma :QX \to QX$ to be an endomorphism, and assume $\gamma\in \mathcal W$. It can be factorized
$\gamma= u\circ v$, where $u\in \mathcal R\cap \mathcal W$ and $v\in \mathcal L\cap \mathcal W$.
Hence $v: QX \to M$ is a monomorphism with injective kokernel $I'$, and $u:M\to QX$ 
is an epimorphism
with kernel $I\in \mathcal C_-$.  Then $M\in \mathcal C_+$, since $QX$ and $I'$ are in $\mathcal C_+$,
This implies $I\in \mathcal C_-\cap \mathcal C_+$, hence $I$ is injective too.
$$ \xymatrix@C=1.8em@R=1.8em{  &   & 0 &  &  \cr
&  & I' \ar[u] & &   \cr
 0 \ar[r] &  I \ar[ur]^\beta\ar[r] & M \ar[u]\ar[r]^u & QX \ar[r] & 0  \cr
 &   & QX \ar[u]^v\ar[ur]_\gamma & &   \cr
 & & 0 \ar[u] & &  \cr} $$
Since $QX$ is clean, the morphisms $\beta$ and $\gamma$ are isomorphisms.
This proves $\gamma\in Aut_{\mathcal C}(QX)$.
\end{proof}

Consider an arbitrary cofibrant replacement
$$ 0 \to K \to QX \to X \to 0 $$
with $QX\in\mathcal C_+$ and $K\in\mathcal C_-$. 
We decompose $QX = I\oplus N$ into an injective $I$ and a clean
summand $N$. Notice $N,I \in \mathcal C_+$. Then there are the following exact sequences
$$ \xymatrix{ 0 \ar[r] & Q_1   \ar[r] & X \ar[r] & Q_2 \ar[r] &   0 \cr
 0 \ar[r] & N   \ar@{->>}[u] \ar[r] & N\oplus I \ar@{->>}[u]\ar[r] & I \ar[r] \ar@{->>}[u] &   0 \cr
  0 \ar[r] & K\cap N \ar@{^{(}->}[u]  \ar[r] & K \ar@{^{(}->}[u] \ar[r] & im(K)\ar@{^{(}->}[u] \ar[r] &   0 \cr} $$
  
\begin{lem} \label{lem:existence-minimal-model} Every simple object $X\in \mathcal T$ not in $\mathcal C_-$ has a minimal model in $\mathcal C$. If $(\mathcal C$, $\mathcal D$, $U$, $F$,$\leq$) forms a \emph{Quillen adjunction with weight truncation} and if the simple object $X$ has weight $\leq w$, then the minimal model
$QX$ of $X$ is in the subcategory ${\mathcal C}^{\leq w}$.
\end{lem}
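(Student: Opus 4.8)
The plan is to obtain the minimal model by starting from \emph{any} cofibrant replacement and throwing away its injective summand. So I would first fix a cofibrant replacement $q\colon QX\to X$, with associated short exact sequence $0\to K\to QX\to X\to 0$ and $K\in\mathcal C_-$, and apply the clean decomposition lemma of Section~\ref{sec:clean} to write $QX=N\oplus I$ with $I$ injective and $N$ clean; both summands lie in $\mathcal C_+$ because $\mathcal C_+$ is closed under retracts. The whole task is then to show that the restricted map $q|_N\colon N\to X$ is again a trivial fibration: once we know this, $N$ is a cofibrant replacement of $X$, and since $N$ is clean the preceding lemma identifies $q|_N$ as the minimal model, unique up to isomorphism.

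Since $X$ is simple and $q$ is an epimorphism, the subobject $q(N)\subseteq X$ is either $0$ or $X$, and the case $q(N)=0$ is the one delicate point, where the hypothesis $X\notin\mathcal C_-$ is used. In that case $N\subseteq K$, so the modular law for the subobjects $N\subseteq K$ and $I$ of $QX=N\oplus I$ gives $K=N\oplus(K\cap I)$; hence $N$ is a direct summand of $K\in\mathcal C_-$, so $N\in\mathcal C_-$, and therefore $N\in\mathcal C_+\cap\mathcal C_-=\mathcal I_{\mathcal C}$, which together with cleanness forces $N=0$. But then $QX=I$ is injective, and applying the exact functor $U$ to $0\to K\to I\to X\to 0$ — where $UK$ and $UI$ are injective, so the sequence splits — exhibits $UX$ as a direct summand of the injective object $UI$, i.e.\ $X\in\mathcal C_-$, contradicting the hypothesis. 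Hence $q(N)=X$.

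It remains to treat $q(N)=X$: here $K+N=QX$, so $q|_N\colon N\to X$ is an epimorphism with kernel $K\cap N$, and the composite $K\hookrightarrow QX\twoheadrightarrow QX/N\cong I$ is also an epimorphism with kernel $K\cap N$, yielding $0\to K\cap N\to K\to I\to 0$. Applying $U$ and using that $UK$, $UI$ are injective (so the sequence splits), $U(K\cap N)$ is a direct summand of the injective $UK$, hence injective, so $K\cap N\in\mathcal C_-$. By Corollary~\ref{model-structure-descr}, $q|_N$ is a trivial fibration, $N\in\mathcal C_+$ is clean, and so $N$ is the minimal model. For the weights statement I would simply replace the initial arbitrary cofibrant replacement by the one from Theorem~\ref{thm:cofibrant}: $QX\cong I_0\oplus Z$ with $I_0$ injective and $Z\in\mathcal C^{\le w}$; writing $Z=I_1\oplus N$ with $N$ clean, the argument above identifies the minimal model with $N$, and since $\mathcal C^{\le w}$ is closed under direct summands (it is closed under limits, hence contains $\ker(1-e)$ for idempotents $e$), $N$ is a direct summand of $Z\in\mathcal C^{\le w}$ and therefore lies in $\mathcal C^{\le w}$; uniqueness of the minimal model finishes the proof. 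The only genuine obstacle is excluding the degenerate case $q(N)=0$, which is exactly what $X\notin\mathcal C_-$ achieves; the rest is a diagram chase relying on the exactness of $U$ and the fact that $U$ preserves injectives.
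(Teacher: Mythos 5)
Your proposal is correct and follows essentially the same route as the paper: decompose a cofibrant replacement $QX=N\oplus I$ into clean and injective parts, show the clean summand $N$ is again a cofibrant replacement (and hence the minimal model) by analyzing the two alternatives $q(N)=0$ and $q(N)=X$ forced by simplicity of $X$, with the hypothesis $X\notin\mathcal C_-$ ruling out the first. The only cosmetic differences are that in the degenerate case the paper derives $X\in\mathcal C_-$ directly via closure of $\mathcal C_-$ under cokernels of monomorphisms, while you pass through $N=0$ and $QX=I$; in the main case the paper splits $K\twoheadrightarrow I$ already in $\mathcal C$ using projectivity of $I$, whereas you apply $U$ first; and for the weight statement you are slightly more explicit than the paper about extracting the clean summand from the $Z\in\mathcal C^{\le w}$ piece and invoking closure of $\mathcal C^{\le w}$ under direct summands — a point the paper passes over rather quickly with the (strictly speaking unjustified) assertion ``$q(I)=0$.''
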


\begin{proof} For the first assertion assume that $X$ is a simple object in ${\mathcal C}$ in the diagram above. Then the top horizontal exact sequence implies that either $Q_1=0,Q_2\cong X$ 
or $Q_1\cong X, Q_2=0$:

\bigskip\noindent
a) In the case $Q_1=0$ we have $N=K\cap N$, hence $K = (K\cap N)\oplus (K\cap I)$.
Thus $K\cap N \in \mathcal C_-$ is a summand of $K\in \mathcal C_-$. 
Hence $K\cap N$ and $im(K)$ are both in $\mathcal C_-$.
Secondly $q$ factorizes $ N\oplus I \to (N\oplus I)/K \cong I/im(K) \cong X$.
Since $I$ and $im(K)$ are in $\mathcal C_-$, this would imply $$ X\in \mathcal C_- \ .$$
Then $QX\in \mathcal  C_-$, hence $QX \in \mathcal C_-\cap \mathcal C_+ = \mathcal I_{\mathcal C}$.
Thus $$ X \cong 0 \quad , \quad \mbox{ in } Ho\mathcal C \ .$$
b) In the other case $Q_2=0$ we have $Im(K)=I$, hence $Im(K)$ is projective.
The projection $K\to im(K)$ therefore splits by a morphism $s:im(K)\to K$.
Then $K\cong s(im(K)) \oplus (K\cap N) \in \mathcal C_-$ again
implies $(K\cap N)\in \mathcal C_-$. On the other hand there is the
exact sequence
$$ 0 \to (K\cap N) \to N \to N/(N\cap K) \cong X \to 0 \ .$$
Since $N\in \mathcal C_+$ and $(K\cap N)\in \mathcal C_-$, this implies
that $N\to X$ is a cofibrant replacement. Since $N$ is clean,
this proves the first statement. On the other hand there exist a cofibrant replacement
$q:QX \to X$ with $QX\cong N\oplus I$, where $I$ is injective and $N$ is of weight $\leq w$ as shown for arbitrary objects in ${\mathcal C}^{\leq w}$ in section \ref{sec:cofibrant}. 
Since we know that $X$ admits a minimal model, then $q(I)=0$ and $(QX=N, q\vert_N)$ defines a minimal model, where $QX \in {\mathcal C}^{\leq w}$.
\end{proof}

\subsection{More on morphisms in $Ho \mathcal{T}$}

By theorem \ref{stable-cat} $[X,Y] = Hom(QX,Y)/\sim_{stable}$. Two maps $f:QX \to Y, \ g:QX \to Y$ are equivalent if and only if their difference factors over a projective module $P$ \[ \xymatrix{ f-g:QX \ar[r] \ar[dr] & Y \\ & P \ar[u] } \] Suppose $Y\in\mathcal T$. It suffices 
to consider any projective cover $\pi_Y: P(Y) \to Y$ instead of arbitrary $P$. Indeed, assume  $QX \to Y$ factorizes over $\pi: P\to Y$ as above. Then $\pi$ factorizes over the surjection $\pi_Y: P(Y) \to Y$ (since $P$ is projective).
If  $\lambda: QX \to P(Y)$ is surjective, then
$P(Y)$ splits and becomes a summand of $QX$. Suppose $QX$ is clean and $Y$ is simple, then the image of
$\lambda$ is contained in the radical of $P(Y)$ and hence $f=g$. 

\begin{cor} \label{thm:hom-formula} If $QX$ is clean and $Y\in \mathcal T$ is simple, then $$[X,Y]= Hom_{\mathcal C}(QX,Y)\ .$$
\end{cor}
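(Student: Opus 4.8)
The plan is to unwind the definition of the stable equivalence relation on $\Hom_{\mathcal C}(QX,Y)$ and show that, under the hypotheses that $QX$ is clean and $Y$ is simple, the only map $QX\to Y$ factoring through a projective object is the zero map. By Theorem \ref{thm:homotopy-stable} \ref{stable-cat} we have $[X,Y]=\Hom_{\mathcal C}(QX,Y)/\!\sim_{stable}$, where $f\sim_{stable}g$ iff $f-g$ factors through an object of $\mathcal P_{\mathcal C}=\mathcal I_{\mathcal C}$. So it suffices to prove: if $\varphi:QX\to Y$ factors through a projective object, then $\varphi=0$. This gives injectivity of the natural surjection $\Hom_{\mathcal C}(QX,Y)\to[X,Y]$, hence the asserted equality.

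First I would reduce to the projective cover. Suppose $\varphi:QX\to Y$ factors as $QX\xrightarrow{\lambda}P\xrightarrow{\psi}Y$ with $P$ projective. Since $Y\in\mathcal T$ is simple, it has a projective cover $\pi_Y:P(Y)\to Y$ in $\mathcal C$ (existence of projective covers in $\mathcal C=\mathcal T^\infty$ follows from the discussion of comodule categories / condition (F) in section \ref{sec:finiteness-conditions}, together with the lemma that $\mathcal C$ has enough projectives). Because $P$ is projective and $\pi_Y$ is an epimorphism, $\psi$ lifts through $\pi_Y$ to some $\tilde\psi:P\to P(Y)$, so $\varphi$ factors as $QX\xrightarrow{\mu}P(Y)\xrightarrow{\pi_Y}Y$ with $\mu=\tilde\psi\circ\lambda$. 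Thus without loss of generality we may assume $\varphi=\pi_Y\circ\mu$ for a single map $\mu:QX\to P(Y)$.

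Next I would analyze $\mu:QX\to P(Y)$ using cleanness of $QX$. Consider the image $\mathrm{im}(\mu)\subseteq P(Y)$. If $\mu$ were surjective onto $P(Y)$, then since $P(Y)$ is projective (hence, by the Frobenius property, injective), the surjection $QX\twoheadrightarrow P(Y)$ would split, making $P(Y)$ a direct summand of $QX$; but $P(Y)\neq 0$ is injective, contradicting cleanness of $QX$. Hence $\mu$ is not surjective, so $\mathrm{im}(\mu)$ is a proper submodule of $P(Y)$. Since $P(Y)$ is the projective cover of the simple module $Y$, it is indecomposable with simple top $P(Y)/\mathrm{rad}\,P(Y)\cong Y$, so every proper submodule of $P(Y)$ is contained in $\mathrm{rad}\,P(Y)$. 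Therefore $\mathrm{im}(\mu)\subseteq\mathrm{rad}\,P(Y)=\ker(\pi_Y)$, which forces $\varphi=\pi_Y\circ\mu=0$.

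The main obstacle — really the only point requiring care — is the surjectivity dichotomy in the middle step: one must be sure that a \emph{non}-surjective $\mu$ has image inside the radical. This is where the structure of the projective cover of a simple object in a category satisfying (F) is used (indecomposability of $P(Y)$ with unique maximal submodule $\mathrm{rad}\,P(Y)$), as recorded in the ``nice category'' discussion of section \ref{sec:finiteness-conditions}. Everything else is formal: lifting through a projective, splitting a surjection onto an injective, and invoking cleanness to exclude an injective summand of $QX$. I would also remark, as the excerpt's preceding paragraph already does, that if $\lambda:QX\to P(Y)$ were surjective then $P(Y)$ would split off $QX$, which is the cleanness contradiction; so only the radical case survives and $f=g$.
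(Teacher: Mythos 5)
Your proof is correct and takes essentially the same route as the paper's: reduce the stable equivalence to factoring through the projective cover $P(Y)$, observe that a surjective map $QX\to P(Y)$ would split off an injective summand of $QX$ (contradicting cleanness), and conclude that the image lies in $\mathrm{rad}\,P(Y)=\ker\pi_Y$, so the map to $Y$ vanishes. You merely spell out more explicitly than the paper does why a non-surjective map into the indecomposable $P(Y)$ must land in its radical.
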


In particular \[ [V,Y]  = Hom_{\mathcal C}(V,Y) \] for clean $V \in \mathcal{C}_+$.

\begin{cor} Suppose $X$ admits a minimal model and suppose $Y$ is simple.
Then a morphism $f:X\to Y$ becomes zero in $Ho\mathcal C$ if and only if $f$ is zero in $\mathcal C$.
\end{cor}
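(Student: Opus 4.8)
The plan is to reduce the statement to Corollary \ref{thm:hom-formula} together with the fact that a trivial fibration is an epimorphism. First I would fix, once and for all, a minimal model $q\colon QX\to X$ of $X$. By the lemma characterizing minimal cofibrant replacements, $QX$ is then clean, so the hypotheses of Corollary \ref{thm:hom-formula} are met and the natural map $Hom_{\mathcal C}(QX,Y)\to [X,Y]$ is a bijection; in other words $[X,Y]=Hom_{\mathcal C}(QX,Y)$ with no identification of morphisms taking place.

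Next I would trace through the action of the localization functor $\gamma\colon \mathcal C\to Ho\mathcal C$ on the morphism $f\colon X\to Y$. Since $q\in\mathcal W$, the image $\gamma(q)$ is invertible in $Ho\mathcal C$, and therefore $\gamma(f)=\gamma(f\circ q)\circ\gamma(q)^{-1}$; in particular $\gamma(f)=0$ if and only if $\gamma(f\circ q)=0$. Under the identification $Ho\mathcal C(\gamma QX,\gamma Y)=Hom_{\mathcal C}(QX,Y)/\sim_{stable}$ of Theorem \ref{thm:homotopy-stable}, the class $\gamma(f\circ q)$ is represented by the honest morphism $f\circ q\colon QX\to Y$, and by Corollary \ref{thm:hom-formula} (using again that $QX$ is clean and $Y$ simple) this class is determined by $f\circ q$ itself, i.e. it vanishes precisely when $f\circ q=0$ in $\mathcal C$. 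Combining these observations, $\gamma(f)=0$ if and only if $f\circ q=0$ in $\mathcal C$.

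Finally, $q$ is a trivial fibration, hence a fibration, hence an epimorphism by Corollary \ref{model-structure-descr}, so $f\circ q=0$ forces $f=0$ in $\mathcal C$. The reverse implication is immediate from the functoriality (and additivity) of $\gamma$. The only step demanding real care is the bookkeeping in the middle paragraph --- checking that $\gamma(f)$ genuinely corresponds to $f\circ q$ under the chain of identifications between $[X,Y]$, $Ho\mathcal C(\gamma QX,\gamma Y)$ and $Hom_{\mathcal C}(QX,Y)$ --- and the hypothesis that $X$ admits a minimal model is used exactly once, namely to ensure $QX$ is clean so that Corollary \ref{thm:hom-formula} is available.
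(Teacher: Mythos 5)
Your proposal is correct and takes essentially the same route as the paper: use the minimal model to guarantee $QX$ is clean, invoke Corollary \ref{thm:hom-formula} to identify $[X,Y]$ with $Hom_{\mathcal C}(QX,Y)$ (no stable identifications), deduce $f\circ q=0$ in $\mathcal C$, and conclude $f=0$ from surjectivity of $q$. If anything, your reduction from "$\gamma(f)=0$" to "$f\circ q=0$ in $\mathcal C$" via the invertibility of $\gamma(q)$ is a touch more explicit than the paper's appeal to $Ho\mathcal C$ as a localization of $\overline{\mathcal C}$ by $\mathcal R\cap\mathcal W$, but the underlying argument is the same.
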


\begin{remark} In the setting where $(\mathcal C$, $\mathcal D$, $U$, $F$,$\leq$) forms a Quillen adjunction with weight truncation this applies if $X$ and $Y$ are simple objects in $\mathcal C$.
\end{remark}

\begin{proof}
The category $Ho\mathcal C$ is obtained as the localization of the 
stable category $\overline{\mathcal C}$ by $\mathcal R\cap W$. Hence $f$ becomes zero
if and only if there exists $s\in \mathcal R\cap W$ such that $s\circ f=0$ in the stable
category. Indeed we may even assume that $s=q$ is the cofibrant replacement
$q:QX \to X$. In other words $q\circ f=0$ in $Hom_{\overline C}(QX,Y)$. 
Now suppose that $X$ admits a minimal model, i.e. suppose
that $QX$ is clean, and suppose that $Y$ is simple. Then this amounts
to $q\circ f=0$ in $Hom_{\mathcal C}(QX,Y)$ by corollary \ref{thm:hom-formula}.
But $QX\to X$ is surjective. Hence $q\circ f=0$ implies $f=0$ in $\mathcal C$.
\end{proof}


\section{Categories with weights and vanishing theorems} \label{sec:vanishing}

\subsection{Vanishing theorem} We retain the setup of section \ref{sec:conventions}. The ind-category $\mathcal C$ decomposes into blocks ${\mathcal C}^\Lambda$.
Consider a poset structure on the set $\Lambda$ of isomorphism classes $\lambda$ of simple objects
in a block. 
For $\lambda\in \Lambda$ define full subcategories ${\mathcal C}^{\leq \lambda} \subset {\mathcal C}^\Lambda$  and similarly for $\mathcal T$ to consist of all objects
with simple subquotients $\leq \lambda$. We define ${\mathcal C}^{< \lambda} \subset {\mathcal C}^\Lambda$   and similarly for $\mathcal T$ to consist of all objects
with simple subquotients $\leq \lambda$ but $\neq \lambda$. Then by definition
$$Hom_{\mathcal C}({\mathcal C}^{< \lambda}, L(\mu))=0$$ for all $\lambda \leq \mu$.
The subcategories ${\mathcal C}^{< \lambda}$ and ${\mathcal C}^{\leq \lambda}$ are closed under limits and colimits. Suppose that the blocks $\Lambda_{\mathcal{C}}$ of $\mathcal{C}$ can be identified with the blocks $\Lambda_{\mathcal{C}}$ of $\mathcal{D}$ via the functors $F$ and $U$. Then it makes sense to assume $F: \mathcal{D}^{\leq \lambda} \to \mathcal{C}^{\leq \lambda}$ and likewise for $U$. The tuple $(\mathcal C$, $\mathcal D$, $U$, $F$,$\leq$) forms a Quillen adjunction with weight truncation.


\begin{thm} \label{thm:main} Under these assumptions  $$ Hom_{Ho\mathcal C}(QL(\mu),L(\lambda))=0 \ \ \mbox{ hence } \ \ [L(\mu),L(\lambda)]=0$$  for any irreducible objects $L(\lambda), \ L(\mu)$ for which $\mu<\lambda$.
\end{thm}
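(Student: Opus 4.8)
The statement asserts that $\mathrm{Hom}_{Ho\mathcal{C}}(QL(\mu),L(\lambda)) = 0$ whenever $\mu < \lambda$, and by Theorem~\ref{thm:homotopy-stable} this morphism space equals $\mathrm{Hom}_{\mathcal{C}}(QL(\mu),L(\lambda))/\sim_{stable}$. The central idea is to exploit the weight-controlled cofibrant replacement: I first apply Theorem~\ref{thm:cofibrant} to the simple object $L(\mu)$, which has weight $\leq \mu$, to produce a cofibrant replacement $q : QL(\mu) \to L(\mu)$ with $QL(\mu) \cong N \oplus I$, where $I$ is injective and $N \in \mathcal{C}^{\leq \mu}$ is clean. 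By Lemma~\ref{lem:existence-minimal-model} (using that $L(\mu)$ is simple and, in the nontrivial case, not in $\mathcal{C}_-$) we may in fact take $QL(\mu) = N$ to be the minimal model, so $QL(\mu) \in \mathcal{C}^{\leq \mu}$ and is clean.

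\textbf{Key steps.} First, dispose of the trivial case: if $L(\mu) \in \mathcal{C}_-$ then $L(\mu) \cong 0$ in $Ho\mathcal{C}$, so $QL(\mu)$ is injective and every map to anything is stably zero; the conclusion is immediate. So assume $L(\mu) \notin \mathcal{C}_-$ and take $QL(\mu)$ to be its minimal model, which lies in $\mathcal{C}^{\leq \mu}$ and is clean. Second, invoke Corollary~\ref{thm:hom-formula}: since $QL(\mu)$ is clean and $L(\lambda) \in \mathcal{T}$ is simple, we have $[L(\mu),L(\lambda)] = \mathrm{Hom}_{\mathcal{C}}(QL(\mu),L(\lambda))$ — no passage to stable quotient is needed. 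Third, observe that $QL(\mu) \in \mathcal{C}^{\leq \mu} \subseteq \mathcal{C}^{<\lambda}$, where the inclusion uses the hypothesis $\mu < \lambda$ (every simple subquotient of $QL(\mu)$ has weight $\leq \mu$, hence $< \lambda$, so $QL(\mu) \in \mathcal{C}^{<\lambda}$ as defined at the start of Section~\ref{sec:vanishing}). Fourth, apply the defining property of the poset filtration stated just before the theorem: $\mathrm{Hom}_{\mathcal{C}}(\mathcal{C}^{<\lambda},L(\lambda)) = 0$ for $\lambda \leq \lambda$. Combining, $\mathrm{Hom}_{\mathcal{C}}(QL(\mu),L(\lambda)) = 0$, hence $[L(\mu),L(\lambda)] = 0$; and since this Hom vanishes already before taking the stable quotient, also $\mathrm{Hom}_{Ho\mathcal{C}}(QL(\mu),L(\lambda)) = 0$.

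\textbf{Main obstacle.} The delicate point is making sure the weight bound on $QL(\mu)$ is genuinely available and compatible with the block/poset structure used in the vanishing input. Theorem~\ref{thm:cofibrant} gives a cofibrant replacement whose clean part lies in $\mathcal{C}^{\leq w}$, but to conclude $QL(\mu) \in \mathcal{C}^{\leq \mu}$ one must know that $L(\mu)$ admits a \emph{minimal} model (so that the injective summand $I$ is killed by $q$ and can be discarded), which is exactly Lemma~\ref{lem:existence-minimal-model}. One then has to check that the abstract ``categories with weights'' truncation $\mathcal{C}^{\leq \mu}$ appearing there coincides with the blockwise poset subcategory $\mathcal{C}^{\leq \mu}$ of Section~\ref{sec:vanishing}, which is precisely what the paragraph preceding the theorem arranges (identifying the blocks of $\mathcal{C}$ and $\mathcal{D}$ and assuming $F,U$ respect the $\mathcal{C}^{\leq \lambda}$). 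Once these identifications are in place the argument is a short chain of citations; the conceptual work has all been front-loaded into Theorems~\ref{thm:cofibrant}, \ref{thm:homotopy-stable}, Lemma~\ref{lem:existence-minimal-model} and Corollary~\ref{thm:hom-formula}.
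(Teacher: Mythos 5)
Your proof is correct and follows essentially the same route as the paper: both arguments rest on the weight-controlled cofibrant replacement $QL(\mu)=N\oplus I$ with clean part $N\in\mathcal{C}^{\leq\mu}$ from Theorem~\ref{thm:cofibrant}, kill the injective summand, and conclude from $\mathrm{Hom}_{\mathcal{C}}(\mathcal{C}^{<\lambda},L(\lambda))=0$. The only (immaterial) difference is that you discard $I$ by passing to the minimal model via Lemma~\ref{lem:existence-minimal-model} after treating the case $L(\mu)\in\mathcal{C}_-$ separately, whereas the paper simply notes that $\mathrm{Hom}_{\overline{\mathcal{C}}}(I,L(\lambda))=0$ because $I$ vanishes in the stable category.
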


\begin{proof}  For  $X\in {\mathcal C}^{\leq \mu}$ we choose a cofibrant replacement $QX=N\oplus I$, whose
clean component $N$ is in ${\mathcal C}^{\leq \mu}$ as shown in section \ref{sec:cofibrant}. Then $Hom_{Ho\mathcal C}(QL(\mu),L(\lambda))$ is $$  Hom_{\overline{\mathcal C}}(QL(\mu),L(\lambda))= Hom_{\overline{\mathcal C}}(I,L(\lambda)) \oplus 
Hom_{\overline{\mathcal C}}(N,L(\lambda))=0 \ ,$$ 
since $I=0$ in $\overline{\mathcal C}$ and since $Hom_{\overline{\mathcal C}}(N,L(\lambda))$ 
is a quotient of $Hom_{\mathcal C}(N,L(\lambda))$, which is zero by weight reasons.
\end{proof}

\subsection{Finiteness theorems} \label{sec:finiteness-theorems}

We now discuss some finiteness theorems or conjectures under certain additional conditions. These theorems hold in our main example ($P(m|n)^+$, $GL(m|n)$). It is plausible that they all hold if  $(\mathcal C$, $\mathcal D$, $U$, $F$,$\leq$) forms a Quillen adjunction with weight truncation. 

\medskip
For a Frobenius pair ($\mathcal C, \mathcal D$) consider the following chain of functors
$$ \gamma: {\mathcal C} \to Ho{\mathcal C} = {\mathcal C_+}/\sim_{stable} .$$
Let ${\mathcal H}=Ho\mathcal T$ be the full triangulated tensor subcategory of $Ho\mathcal C$
generated by the image of $\mathcal T$ under $\gamma$.
Then there is the functor
$$ \gamma: {\mathcal T \to \mathcal H }=Ho\mathcal T \ $$
which in general is neither surjective nor injective on the set of morphisms.
We assume now the following additional assumptions on our categories with weights:

\medskip\noindent
{\bf Assumptions I}. {Let $\mathcal C$ and $\mathcal D$ be Frobenius categories with an interval finite poset $\Lambda$ of weights, such that
\begin{enumerate}[label=\textbf{A.\arabic*}]
\item \label{A-1} Every simple object $X$ has a weight $\lambda$ in this poset such that $X\in {\mathcal T}^{\leq \lambda}$ and
$Hom_{\mathcal C}(X,{\mathcal C}^{\leq \lambda'})=0$ for all $\lambda'< \lambda$
\item ${\mathcal C}^{\leq \lambda}$ is closed under extensions. 
\item For every simple object $X=L(\lambda)$ of weight $\lambda$
there exists a monomorphism $L(\lambda)\hookrightarrow  K_-(\lambda) $ in $\mathcal C$ with
$K_-(\lambda)\in \mathcal T_-$ and cokernel in ${\mathcal C}^{< \lambda}$. Here ${\mathcal C}^{< \lambda}$ denotes the full subcategory generated by the objects in ${\mathcal C}^{\leq \lambda'}$ for $\lambda'\leq \lambda$.
Let $Ho{\mathcal C}^{\leq \lambda}$ denote the full image of ${\mathcal C}^{\leq \lambda}$.
\item \label{A-4} For every simple object $X=L(\lambda)$ of weight $\lambda$
there exists an epimorphism $K_+(\lambda)\rightarrow  L(\lambda) $ in $\mathcal C$ with
$K_+(\lambda)\in \mathcal T_+$ and kernel in ${\mathcal C}^{< \lambda}$. 
\end{enumerate}

\begin{example} For the Frobenius pair ($P(m|n)^+$, $GL(m|n)$) put $K_-(\lambda) =V(\lambda)^*$ (anti Kac module) and $K_+(\lambda) =V(\lambda)$ (Kac module).
\end{example}

\begin{thm} \label{thm:translation} If assumptions \ref{A-1} - \ref{A-4} hold, then the shift functor induces a functor
$$    [1]:  Ho{\mathcal T}^{\leq \lambda} \to Ho{\mathcal T}^{< \lambda} \ . $$
\end{thm}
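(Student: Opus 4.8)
The plan is to work entirely inside the triangulated category $Ho\mathcal C=\overline{\mathcal C}_+$ furnished by Theorem~\ref{thm:homotopy-stable}, using three facts that are already available: (i) $Ho\mathcal C$ carries the shift functor $[1]$; (ii) the localisation $\gamma\colon\mathcal C\to Ho\mathcal C$ carries every short exact sequence in $\mathcal C$ to a distinguished triangle in $Ho\mathcal C$ (it factors through $\mathcal C\to\overline{\mathcal C}$, and the latter sends exact sequences to standard triangles); and (iii) $\gamma$ annihilates $\mathcal C_-$ — indeed for $Z\in\mathcal C_-$ a cofibrant replacement $0\to K\to QZ\to Z\to0$ has $K,Z\in\mathcal C_-$, hence $QZ\in\mathcal C_+\cap\mathcal C_-=\mathcal I_{\mathcal C}$ and $\gamma Z=0$; in particular $\gamma$ kills $\mathcal T_-=\mathcal C_-\cap\mathcal T$. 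Here $Ho\mathcal T^{\le\lambda}$ and $Ho\mathcal T^{<\lambda}$ are the full triangulated subcategories of $Ho\mathcal C$ generated by $\gamma(\mathcal T^{\le\lambda})$ and $\gamma(\mathcal T^{<\lambda})$, so each is closed under $[\pm1]$ and under cones, and $Ho\mathcal T^{<\lambda}\subseteq Ho\mathcal T^{\le\lambda}$. The whole theorem then reduces to the single claim that $\gamma(X)[1]\in Ho\mathcal T^{<\lambda}$ for every $X\in\mathcal T^{\le\lambda}$: granting this, the autoequivalence $[1]$ of $Ho\mathcal C$ restricts, on the full subcategory $Ho\mathcal T^{\le\lambda}$, to a functor landing in the full subcategory $Ho\mathcal T^{<\lambda}$, with morphisms and composition inherited from $Ho\mathcal C$.

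The first step is a dévissage. Let $\mathcal S\subseteq\mathcal T^{\le\lambda}$ consist of those $X$ with $\gamma(X)[1]\in Ho\mathcal T^{<\lambda}$. Clearly $0\in\mathcal S$, and $\mathcal S$ is closed under extensions: given $0\to X'\to X\to X''\to0$ exact in $\mathcal T^{\le\lambda}$, applying $\gamma$ and shifting by $[1]$ produces a distinguished triangle $\gamma X'[1]\to\gamma X[1]\to\gamma X''[1]\xrightarrow{+1}$ in $Ho\mathcal C$, and if the two outer terms lie in the triangulated subcategory $Ho\mathcal T^{<\lambda}$ then so does the middle one. By condition (F) every object of $\mathcal T^{\le\lambda}$ has a finite composition series, and by definition of $\mathcal T^{\le\lambda}$ its simple subquotients are of the form $L(\mu)$ with $\mu\le\lambda$. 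Hence it suffices to show $L(\mu)\in\mathcal S$ for every simple $L(\mu)$ of weight $\mu\le\lambda$.

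For the simple case I would invoke the third of the assumptions \ref{A-1}--\ref{A-4} (the existence of an anti-Kac type object), applied to $L(\mu)$ \emph{at its own weight} $\mu$: it yields a short exact sequence $0\to L(\mu)\to K_-(\mu)\to C_\mu\to0$ in $\mathcal C$ with $K_-(\mu)\in\mathcal T_-$ and $C_\mu\in\mathcal C^{<\mu}$. Since $K_-(\mu)$ is finite dimensional, so is $C_\mu$, hence $C_\mu\in\mathcal T^{<\mu}$; and because $\mu\le\lambda$ every simple constituent of $C_\mu$ has weight $\le\mu\le\lambda$ and $\ne\mu$, hence weight $<\lambda$ (automatic if $\mu<\lambda$, and $\mathcal T^{<\mu}=\mathcal T^{<\lambda}$ if $\mu=\lambda$), so $C_\mu\in\mathcal T^{<\lambda}$. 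Applying $\gamma$ gives a distinguished triangle $\gamma L(\mu)\to\gamma K_-(\mu)\to\gamma C_\mu\to\gamma L(\mu)[1]$; since $K_-(\mu)\in\mathcal T_-$ the middle term vanishes, so $\gamma L(\mu)[1]\cong\gamma C_\mu\in Ho\mathcal T^{<\lambda}$. This puts $L(\mu)$ in $\mathcal S$, completes the induction, and establishes the claim, hence the theorem.

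The only non-formal input is the third assumption, which provides the finite-dimensional ``resolution'' of $L(\mu)$ by a trivially fibrant object with cokernel of strictly smaller weight; everything else is bookkeeping with the triangulated structure of $Ho\mathcal C$ together with the already-established facts that $\gamma$ sends short exact sequences to triangles and kills $\mathcal C_-$. I therefore expect no genuine obstacle; the only point requiring care is presentational, namely fixing the interpretation of $Ho\mathcal T^{\le\lambda}$ and $Ho\mathcal T^{<\lambda}$ as triangulated subcategories so that closure under cones is available for the dévissage, after which the argument is a two-line induction on composition length.
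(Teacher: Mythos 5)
Your proof is correct and follows essentially the same route as the paper: reduce to simple objects (a dévissage step the paper leaves implicit) and invoke assumption A.3 to realise $L(\mu)[1]$ as the cokernel of $L(\mu)\hookrightarrow K_-(\mu)$, which lands in $\mathcal T^{<\lambda}$ because $K_-(\mu)\in\mathcal T_-$ vanishes in $Ho\mathcal C$. You use the correct object $K_-$; the paper's printed proof writes $K_+(\lambda)$ and $L(\lambda)\to K_+(\lambda)$, which is a typo, since A.4 provides an epimorphism $K_+(\lambda)\to L(\lambda)$, not a monomorphism with $L(\lambda)$ as source.
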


\begin{proof} Obviously we obtain $L(\lambda)[1] \cong cokern(L(\lambda)\to K_+(\lambda)) \in {\mathcal T}^{< \lambda'}$ for simple objects $X=L(\lambda)$ of weight $\lambda'$ in ${\mathcal T}^{\leq \lambda}$. Since
$\lambda' \leq \lambda$ this implies the claim. 
\end{proof}

\begin{thm} \label{thm:end(1)} If assumptions \ref{A-1} - \ref{A-4} hold and $X\in \mathcal T$ is simple,
$$ [X,X] \cong k\cdot id_X.$$
\end{thm}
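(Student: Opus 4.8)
The plan is to compute $[X,X]=Hom_{Ho\mathcal C}(\gamma X,\gamma X)$ through the identification $Ho\mathcal C\simeq\overline{\mathcal C}_+$ of Theorem \ref{thm:homotopy-stable}, by passing to a minimal model of $X$ and then controlling its head by weight reasons. First, if $X=L(\lambda)$ lies in $\mathcal C_-$ (equivalently $U(X)$ is injective), then $\gamma X=0$ in $Ho\mathcal C$, since $\mathcal C_-$ is precisely the class of objects killed by $\gamma$; hence $[X,X]=0=k\cdot id_X$ and there is nothing to do. So assume $X\notin\mathcal C_-$. By Lemma \ref{lem:existence-minimal-model}, using that $\mathcal C,\mathcal D$ are categories with weights (Definition \ref{def:weights}), $X$ has a minimal model $q\colon QX\to X$ with $QX$ clean and $QX\in\mathcal C^{\leq\lambda}$, where $\lambda$ is the weight of $X$. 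Since $QX$ is clean, nonzero, hence not injective, the class of $q$ — which under $Ho\mathcal C\simeq\overline{\mathcal C}_+$ is $id_{\gamma X}$ — is nonzero, so $\dim_k[X,X]\geq 1$; and since $QX$ is clean and $L(\lambda)$ simple, Corollary \ref{thm:hom-formula} gives $[X,X]=Hom_{\mathcal C}(QX,L(\lambda))$.

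For the reverse inequality I would write $0\to K\to QX\xrightarrow{q}L(\lambda)\to 0$ with $K\in\mathcal C_-$; as $K\subseteq QX\in\mathcal C^{\leq\lambda}$ also $K\in\mathcal C^{\leq\lambda}$. Applying $Hom_{\mathcal C}(-,L(\lambda))$ and using $Hom_{\mathcal C}(L(\lambda),L(\lambda))=k$ (Schur) gives an exact sequence $0\to k\to Hom_{\mathcal C}(QX,L(\lambda))\to Hom_{\mathcal C}(K,L(\lambda))$, so it is enough to prove $Hom_{\mathcal C}(K,L(\lambda))=0$, and for that it suffices that $K\in\mathcal C^{<\lambda}$, because then $L(\lambda)$ is not even a subquotient of $K$ by the definition of the weight poset (see section \ref{sec:vanishing}). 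Equivalently, the whole matter comes down to showing that $L(\lambda)$ occurs exactly once as a composition factor of $QX$.

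To prove this multiplicity-one statement — the crux — recall that by Lemma \ref{lem:mini} the minimal model $QX$ is a clean direct summand of every cofibrant replacement of $L(\lambda)$, so it suffices to exhibit one cofibrant replacement $Z\twoheadrightarrow L(\lambda)$ with $[Z:L(\lambda)]=1$; then $[QX:L(\lambda)]\le 1$, hence $=1$, and $K\in\mathcal C^{<\lambda}$. I would build such a $Z$ starting from the epimorphism $K_+(\lambda)\twoheadrightarrow L(\lambda)$ of assumption \ref{A-4}: its kernel $Z_1$ lies in $\mathcal C^{<\lambda}$, so $[K_+(\lambda):L(\lambda)]=1$ while all remaining composition factors have weight $<\lambda$. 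One then corrects the defect $Z_1$ without ever introducing a new copy of $L(\lambda)$, using the third of Assumptions I (every simple $L(\mu)$ embeds into an object of $\mathcal T_-$ with cokernel in $\mathcal C^{<\mu}$) repeatedly on the finitely many composition factors of $Z_1$, all of weight $<\lambda$; since $\Lambda$ is interval finite the recursion terminates, and it produces an embedding $Z_1\hookrightarrow\widetilde Z_1$ with $\widetilde Z_1$ carrying a finite filtration with subquotients in $\mathcal T_-$ — hence $\widetilde Z_1\in\mathcal C_-$, as $\mathcal C_-$ is closed under extensions (Theorem \ref{thm:main-homotopy}) — and $\widetilde Z_1/Z_1\in\mathcal C^{<\lambda}$. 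Running the factorization argument of Theorem \ref{thm:cofibrant} inside $\mathcal C^{\leq\lambda}$ to turn the resulting epimorphism onto $L(\lambda)$ into an honest cofibrant replacement $Z$, every further building block is either an injective object (which contributes nothing to the clean summand) or an object of $\mathcal C^{<\lambda}$, so $L(\lambda)$ still occurs with multiplicity one in the clean summand of $Z$. I expect this final bookkeeping — making each successive defect genuinely land in $\mathcal C^{<\lambda}$, checking termination via interval finiteness, and verifying that the clean summand acquires no further $L(\lambda)$ — to be the only real obstacle; the rest is a formal chase through the cited results.
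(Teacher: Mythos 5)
Your reduction through the minimal model, Lemma \ref{lem:existence-minimal-model}, Corollary \ref{thm:hom-formula}, and the short exact sequence $0\to K\to QX\to L(\lambda)\to 0$ is sound and correctly identifies $Hom_{\mathcal C}(K,L(\lambda))=0$ as the key point. However, you then upgrade to the much stronger claim $K\in\mathcal C^{<\lambda}$, i.e.\ that $L(\lambda)$ occurs with multiplicity exactly one in the (usually infinite-length) minimal model $QX$, and the sketch you give for constructing a cofibrant replacement $Z$ with $[Z:L(\lambda)]=1$ leaves a genuine gap. When you push out $K_+(\lambda)$ along $Z_1\hookrightarrow\widetilde Z_1$ you need not only $\widetilde Z_1\in\mathcal C_-$ but also $\widetilde Z_1/Z_1\in\mathcal C_+$ for the pushout to remain cofibrant; the third assumption (your ``A.3'') says nothing about the second condition, so you must iterate, and you give no argument that this iteration — essentially a rerun of the small object argument of Theorem \ref{thm:cofibrant}, whose $G^k$-construction a priori only lives in $\mathcal C^{\leq\lambda}$, not in $\mathcal C^{<\lambda}$ away from the unique $L(\lambda)$ — never regenerates a copy of $L(\lambda)$. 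Since Corollary \ref{thm:hom-formula} only controls the head of $QX$ and not its interior composition factors, the multiplicity-one claim does not follow from the very statement you are trying to prove, and it needs a separate, nontrivial argument that you have not supplied.

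The paper's proof avoids minimal models and works entirely with the finite object $K_+(\lambda)$. It applies $Hom_{Ho\mathcal C}(-,L(\lambda))$ to the distinguished triangle in $Ho\mathcal C$ obtained from the exact sequence $0\to K\to K_+(\lambda)\to L(\lambda)\to 0$ of \ref{A-4}. The two flanking terms $[K,L(\lambda)]$ and $[K[1],L(\lambda)]$ vanish by Theorem \ref{thm:main} together with Theorem \ref{thm:translation} (which keeps $K[1]$ supported on weights $<\lambda$), so $[L(\lambda),L(\lambda)]\cong[K_+(\lambda),L(\lambda)]$. The latter equals $Hom_{\mathcal C}(K_+(\lambda),L(\lambda))=k$ by Corollary \ref{thm:hom-formula}, since $K_+(\lambda)$ can be taken clean and cofibrant, and $L(\lambda)$ appears in it with multiplicity one exactly because its kernel lies in $\mathcal C^{<\lambda}$. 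All the weight bookkeeping is done on the finite object $K_+(\lambda)$, not on the minimal model, which is what makes the argument close.
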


\begin{proof} We use the exact sequence \[ \xymatrix{ 0 \ar[r] & K \ar[r] & K_+(\lambda) \ar[r] & L(\lambda) \ar[r] & 0 } \] for $K \in \mathcal{C}^{< \lambda}$. Since $\gamma: \mathcal{C} \to Ho \mathcal{C}$ is exact, it induces a distinguished triangle. We apply $Hom_{Ho \mathcal{C}}(-,L(\lambda))$ and obtain \[ \xymatrix{ [K[1],L(\lambda)] \ar[r] & [L(\lambda),L(\lambda)] \ar[r] & [K_+(\lambda),L(\lambda)] \ar[r] & [K,L(\lambda)].}\] Since $K, K[1] \in \mathcal{C}^{< \lambda}$ we obtain \[ Hom_{Ho \mathcal{C}}(L(\lambda),L(\lambda)) \simeq Hom_{Ho \mathcal{C}}(K_-(\lambda),L(\lambda)).\] Since we can suppose that $K_+(\lambda)$ is clean, the latter equals $Hom_{\mathcal{C}}(K_+(\lambda),L(\lambda))$ which is one-dimensional.
\end{proof}



\subsection{Irreducible objects in $Ho \mathcal T$}

We now add another assumption (the typicality axiom):

\begin{enumerate}[label=\textbf{T.\arabic*}]
\item \label{T-1} For simple $X=L(\lambda)$ let $P(\lambda) \to L(\lambda)$ denote the projective hull. Then there
 exists a chain of surjections $P(\lambda) \to V(\lambda) \to L(\lambda)$
with $V(\lambda) \in \mathcal T_+$ such that
$V(\lambda) \cong P(\lambda)$ implies 
$L(\lambda)\cong V(\lambda)$, hence $L(\lambda)\cong P(\lambda)$
is projective.
\end{enumerate}

We remark that a simple object $X=L(\lambda)$ becomes zero in $Ho\mathcal T$ if and only if $X\in \mathcal T_-$. 

\begin{thm} If assumption \ref{T-1} holds, a simple object $X$ becomes zero in $Ho\mathcal T$ if and only if  $X$ is projective in $\mathcal T$.
\end{thm}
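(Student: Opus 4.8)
The plan is to reduce everything, via the preceding remark, to the statement that for a simple object $X=L(\lambda)$ one has $X\in\mathcal{T}_-=\mathcal{C}_-\cap\mathcal{T}$ if and only if $X$ is projective in $\mathcal{T}$; granting this, the theorem follows at once, since by that remark $X$ becomes zero in $Ho\mathcal{T}$ exactly when $X\in\mathcal{T}_-$. One of the two implications is immediate: if $X$ is projective in $\mathcal{T}$ then, $\mathcal{T}$ being Frobenius, $X$ is injective, hence $X\in\mathcal{P}_{\mathcal{C}}=\mathcal{I}_{\mathcal{C}}\subseteq\mathcal{C}_-$, so $X\in\mathcal{C}_-\cap\mathcal{T}=\mathcal{T}_-$.

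For the converse I would argue as follows. Assume $L(\lambda)\in\mathcal{T}_-$, so in particular $L(\lambda)\in\mathcal{C}_-$. Apply assumption \ref{T-1} to get surjections $P(\lambda)\xrightarrow{\rho}V(\lambda)\xrightarrow{p}L(\lambda)$ with $V(\lambda)\in\mathcal{T}_+\subseteq\mathcal{C}_+$, and set $\pi:=p\circ\rho$; this is an epimorphism from the projective cover onto the simple object $L(\lambda)$, and since $P(\lambda)$ has simple top its unique maximal submodule is $\mathrm{rad}\,P(\lambda)$, so $\ker\pi=\mathrm{rad}\,P(\lambda)$, which is superfluous in $P(\lambda)$. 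The crucial step is then to show that $p$ already factors through the projective cover $\pi$. Here I use that $V(\lambda)$ is cofibrant and $L(\lambda)$ is trivially fibrant: the torsion-pair statement \ref{torsion} of Theorem \ref{thm:main-homotopy} gives a factorization $p=b\circ a$ through a projective object $Q$, with $a\colon V(\lambda)\to Q$ and $b\colon Q\to L(\lambda)$; as $p$ is surjective so is $b$, and lifting $b$ through $\pi$ by projectivity of $Q$ yields $k\colon Q\to P(\lambda)$ with $\pi\circ k=b$, whence $p=\pi\circ g$ for $g:=k\circ a\colon V(\lambda)\to P(\lambda)$.

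The endgame is a Nakayama/Krull--Schmidt argument: from $\pi=p\circ\rho=\pi\circ g\circ\rho$ we get that $g\circ\rho-\mathrm{id}_{P(\lambda)}$ has image in $\ker\pi=\mathrm{rad}\,P(\lambda)$, so $g\circ\rho$ induces the identity on $P(\lambda)/\mathrm{rad}\,P(\lambda)\cong L(\lambda)$; since $\mathrm{rad}\,P(\lambda)$ is superfluous, $g\circ\rho$ is surjective, hence an automorphism of the finite-length object $P(\lambda)$. Therefore $\rho$ is a split monomorphism (left inverse $(g\circ\rho)^{-1}\circ g$), and being also an epimorphism it is an isomorphism $P(\lambda)\cong V(\lambda)$; by the property built into \ref{T-1} this forces $L(\lambda)\cong V(\lambda)$, so $L(\lambda)\cong P(\lambda)$ is projective, as wanted. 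The main --- and essentially only --- subtlety is the passage from ``$p$ factors through some projective'' to ``$p$ factors through the projective cover $\pi$'': this is exactly where $V(\lambda)\in\mathcal{C}_+$ (to apply \ref{torsion}) and the simplicity of $L(\lambda)$ (so that $b$ is automatically an epimorphism and can be lifted through $\pi$) are needed; after that the argument is purely formal and, in particular, uses no hypothesis on $\mathrm{End}(L(\lambda))$.
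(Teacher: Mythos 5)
Your proof is correct, but it takes a genuinely different route from the paper's. Both arguments start from the same pivot: since $V(\lambda)$ is cofibrant and $L(\lambda)$ is trivially fibrant, the epimorphism $p\colon V(\lambda)\to L(\lambda)$ factors through a projective object (the paper gets this from $[V(\lambda),L(\lambda)]=0$, you get it from the torsion-pair property \ref{torsion}; these are the same fact). From there the paper runs a clean/not-clean dichotomy on $V(\lambda)$: if $V(\lambda)$ is clean, the factorization forces $p=0$ via the argument preceding Corollary \ref{thm:hom-formula}, a contradiction; if $V(\lambda)$ is not clean, it is (being indecomposable with an injective subobject) itself injective=projective, so the surjection $P(\lambda)\twoheadrightarrow V(\lambda)$ splits and $V(\lambda)\cong P(\lambda)$, and then \ref{T-1} finishes. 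You instead bypass cleanness entirely: you lift the factorization through an arbitrary projective $Q$ to one through the projective cover $\pi\colon P(\lambda)\to L(\lambda)$, obtain $g\colon V(\lambda)\to P(\lambda)$ with $p=\pi\circ g$, and then run a Nakayama/Krull--Schmidt argument on $g\circ\rho$ to conclude directly that $\rho$ is an isomorphism. Your version is arguably more self-contained --- the paper's step ``$V$ not clean implies $P(\lambda)\cong V(\lambda)$'' is left implicit and requires an extra thought about indecomposability and injective direct summands --- while the paper's version recycles the cleanness machinery already developed in Section \ref{sec:minimal}. Both depend on the finite length of $P(\lambda)$ (condition (F)) and on $\ker\pi=\mathrm{rad}\,P(\lambda)$ being superfluous, so neither is more general; the choice is a matter of taste.
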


\begin{proof} If $X$ is projective, then $X$ becomes zero in $\overline{\mathcal C}$ and hence zero in $Ho\mathcal T$. Conversely, if $id_X$ becomes zero in $Ho\mathcal T$, then
the epimorphism 
$$ p: V=V(\lambda) \to L(\lambda)=X$$
factorizes over a projective module. This implies 
$p=0$ in $\mathcal C$, and hence a contradiction if
$V$ is clean. If $V$ is not clean, then the typicality axiom implies $P(\lambda)\cong  V(\lambda)$, and $V(\lambda)$ is projective. The typicality axiom then furthermore implies
that $L(\lambda)$ is projective in $\mathcal C$.    
\end{proof}

\subsection{Representations of supergroups}\label{sec:further-conj}

Consider an algebraic supergroup $G$ and a subgroup $H$ such that $(H,G)$ is a Frobenius pair. We assume
that the reduced groups $G_0$ and $H_0$ are reductive, e.g. $G$ is a basic classical supergroup such as $GL(m|n)$, $OSp(m|2n)$, $P(n)$ or $Q(n)$.
Put ${\mathcal T}=Rep_k(G)$ and ${\mathcal T}_H=Rep_k(H)$ (or a related tensor category 
$Rep_k(\mu,G)$ etc.). 
Attached to the pair $(H,G)$ we consider the ind-categories $\mathcal C$ of $\mathcal T$ and $\mathcal D$ of ${\mathcal T}_H$, and the associated tensor functors
\begin{align*} \gamma: {\mathcal C} & \to Ho{\mathcal C} \\
\gamma: {\mathcal T} & \to \mathcal H = Ho\mathcal T \ .\end{align*}

Recall that $Ho\mathcal{T}$ is rigid. Hence

\begin{cor} The homotopy category $Ho\mathcal{T}$ is a $k$-linear symmetric rigid monoidal category satisfying $End(\one) = k$.
\end{cor}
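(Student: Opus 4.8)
The plan is to assemble the statement from three facts already established in the excerpt, so that essentially nothing new needs to be proved. First, $Ho\mathcal{T}$ is a monoidal category on which the localization functor $\gamma:\mathcal{T}\to Ho\mathcal{T}$ is a tensor functor: this follows because $Ho\mathcal{C}$ is a closed symmetric monoidal (indeed pre-triangulated) category by Theorem \ref{thm:monoidal} together with \cite[Theorem 4.3.2]{Hovey} and \cite[Theorem 6.6.3]{Hovey}, and $Ho\mathcal{T}$ is by definition the full triangulated tensor subcategory of $Ho\mathcal{C}$ generated by the image of $\mathcal{T}$, hence it inherits the symmetric monoidal structure. Second, rigidity of $Ho\mathcal{T}$ is exactly Lemma \ref{lem:rigidity}, which is the content invoked in the sentence immediately preceding the corollary; the $k$-linearity of the monoidal structure is inherited from the $k$-linearity of $\gamma$ recorded in Theorem \ref{thm:homotopy-stable}\ref{stable-cat}. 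So the only genuine point to check is $\mathrm{End}_{Ho\mathcal{T}}(\one) = k$.

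For that, I would argue as follows. The unit object $\one$ of $Ho\mathcal{T}$ is $\gamma(\one)$, the image of the trivial comodule. By Theorem \ref{thm:homotopy-stable}\ref{stable-cat} we have $\mathrm{End}_{Ho\mathcal{C}}(\one) = \mathrm{Hom}_{\mathcal{C}}(Q\one,\one)/\!\sim_{stable}$, and for the subcategory $Ho\mathcal{T}$ this is the same $\mathrm{Hom}$-space. Now $\one = L(0)$ is a simple object of $\mathcal{T}$, and under Assumptions \ref{A-1}--\ref{A-4} (which are in force in this subsection, since $(H,G)$ is a Frobenius pair with reductive even parts and these are the running hypotheses of Section \ref{sec:finiteness-theorems}) Theorem \ref{thm:end(1)} gives $[X,X] \cong k\cdot\mathrm{id}_X$ for every simple $X\in\mathcal{T}$; applying this to $X=\one$ yields $\mathrm{End}_{Ho\mathcal{T}}(\one) = [\one,\one] = k$. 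Thus the corollary follows by combining Lemma \ref{lem:rigidity} (rigidity and symmetric monoidal structure), the $k$-linearity of $\gamma$, and Theorem \ref{thm:end(1)} applied to the unit.

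The main (minor) obstacle I anticipate is making sure the symmetric monoidal structure genuinely restricts from $Ho\mathcal{C}$ to the generated subcategory $Ho\mathcal{T}$ — i.e. that $Ho\mathcal{T}$ is closed under $\otimes$ and contains the unit. This is immediate from the definition of $Ho\mathcal{T}$ as the \emph{tensor} triangulated subcategory generated by $\gamma(\mathcal{T})$, together with the fact that $\one\in\mathcal{T}$, so there is really nothing to do beyond citing the definition. A secondary point worth a sentence is that Theorem \ref{thm:end(1)} is stated for $[X,X]$ with $X\in\mathcal{T}$ simple and $\one$ is indeed simple as a comodule (the trivial comodule over a Hopf algebra is simple), so the hypothesis is met. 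With these remarks in place the proof is a short three-line citation argument, which is presumably why the authors state it as a corollary rather than a theorem.
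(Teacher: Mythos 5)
Your proposal is correct and follows essentially the same route the paper intends: the corollary is stated there with no proof beyond ``Recall that $Ho\mathcal{T}$ is rigid. Hence...'', i.e.\ it is exactly the assembly of Theorem \ref{thm:monoidal} (monoidal structure descending to $Ho\mathcal{C}$ and hence to the tensor subcategory $Ho\mathcal{T}$), Lemma \ref{lem:rigidity} (rigidity), the $k$-linearity from Theorem \ref{thm:homotopy-stable}, and Theorem \ref{thm:end(1)} applied to the simple object $\one$. Your remark that Assumptions \ref{A-1}--\ref{A-4} must be in force for the $End(\one)=k$ step is the only point the paper leaves tacit, and you handle it correctly.
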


\begin{cor}For $X$ in $\calT$ and $Y\in \calT_+$ we have
$[X,Y] \cong Hom_{\overline{\calT}}(X,Y)$. If $Y=V(\lambda)$ and $X$ is simple
this is equal to $Hom_{{\calT}}(X,Y)$. 
\end{cor}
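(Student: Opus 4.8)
The plan is to prove the corollary by combining the general homotopy-category formula from Theorem~\ref{thm:homotopy-stable} with the torsion-pair structure recorded in Theorem~\ref{thm:main-homotopy}. By \ref{stable-cat} we have $[X,Y] = Hom_{\overline{\mathcal C}_+}(QX,Y)$; the first step is to remove the cofibrant replacement, i.e.\ to show that for $X\in\mathcal T$ and $Y\in\mathcal T_+\subset\mathcal C_+$ one may compute the morphism space directly in the stable category $\overline{\mathcal C}$ as $Hom_{\overline{\mathcal C}}(X,Y)$. First I would write down the cofibrant replacement exact sequence $0\to K\to QX\to X\to 0$ with $K\in\mathcal C_-$ and apply $Hom_{\overline{\mathcal C}}(-,Y)$. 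Since $\overline{\mathcal C}$ is triangulated this gives an exact sequence involving $Hom_{\overline{\mathcal C}}(K,Y)$ and $Hom_{\overline{\mathcal C}}(K[1],Y)=Hom_{\overline{\mathcal C}}(K,Y[-1])$; here I use that $\mathcal C_-$ is closed under the shift in $\overline{\mathcal C}$ (it is a torsion class, cf.\ \ref{torsion} and \ref{cof-fib-prop}), so both these groups vanish because $Hom$ from a trivially fibrant object to a cofibrant object factors through a projective by \ref{torsion}. Hence $q^*:Hom_{\overline{\mathcal C}}(X,Y)\xrightarrow{\sim}Hom_{\overline{\mathcal C}}(QX,Y)=[X,Y]$, and one checks the target is canonically the Hom in $\overline{\mathcal C}_+$ since $QX,Y\in\mathcal C_+$ and the inclusion $\mathcal C_+\hookrightarrow\mathcal C$ induces a full embedding of stable categories on objects of $\mathcal C_+$ (because $\mathcal I_{\mathcal C}\subset\mathcal C_+$). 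This yields the first assertion $[X,Y]\cong Hom_{\overline{\calT}}(X,Y)$, where I read $\overline{\calT}$ as the stable category of $\mathcal T$, noting $Hom_{\overline{\mathcal T}}(X,Y)=Hom_{\overline{\mathcal C}}(X,Y)$ since $X,Y$ are finite-dimensional and morphisms factoring through an injective of $\mathcal C$ already factor through one in $\mathcal T$.

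For the second assertion, take $Y=V(\lambda)=K_+(\lambda)\in\mathcal T_+$ and $X$ simple. The point is to pass from $Hom_{\overline{\mathcal T}}(X,V(\lambda))$ to $Hom_{\mathcal T}(X,V(\lambda))$, i.e.\ to show no nonzero morphism $X\to V(\lambda)$ factors through an injective (equivalently projective) object. Since $X$ is simple, any morphism $X\to P$ with $P$ injective is either zero or injective; if it were injective then $X$ would be a submodule of a projective $P$, and composing with $P\to V(\lambda)$ we would need the resulting map $X\to V(\lambda)$ to be a specific given morphism — but I would rather argue on the target side: a morphism $X\to V(\lambda)$ factoring through a projective $P$ factors through the projective cover $P(\lambda')$ of the relevant simple, and since $V(\lambda)$ is clean (we may assume this, as $V(\lambda)$ has no injective summand when atypical, and in the typical case $V(\lambda)=P(\lambda)$ is projective and the statement is trivial or handled separately), the image of $P(\lambda')\to V(\lambda)$ lies in the radical, killing the composite. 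This is exactly the argument already used in Corollary~\ref{thm:hom-formula} and in the proof of Theorem~\ref{thm:end(1)}; I would cite those. Hence $[X,V(\lambda)]=Hom_{\mathcal C}(X,V(\lambda))=Hom_{\mathcal T}(X,V(\lambda))$.

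The main obstacle I anticipate is the bookkeeping in the first step: making precise that $Hom$ from $\mathcal C_-$ into $\mathcal C_+$ vanishes in the stable category in both degrees $0$ and $-1$ (the latter requires knowing $\mathcal C_-$, or rather its relevant shift, still lands in a class orthogonal to $\mathcal C_+$), and that the stable-category computation in $\overline{\mathcal C}$ agrees with the one in $\overline{\mathcal C}_+$. Both are consequences of the torsion-pair/cotorsion-pair formalism of \cite{Beligiannis-Reiten} already invoked in Theorem~\ref{thm:main-homotopy}, specifically that $(\mathcal C_+,\mathcal C_-)$ and $(Proj,\mathcal C)$ are hereditary torsion pairs in $\overline{\mathcal C}$, so $Hom_{\overline{\mathcal C}}(\mathcal C_-,\mathcal C_+)=0$ and $\mathcal C_-$ is closed under $[-1]$. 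Once these structural facts are cited, the corollary follows formally; no genuine computation is needed beyond the one-dimensionality input, which is not claimed here. I would keep the write-up to a short paragraph referencing Theorems~\ref{thm:homotopy-stable}, \ref{thm:main-homotopy} and Corollary~\ref{thm:hom-formula}.
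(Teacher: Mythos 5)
Your route is genuinely different from the paper's, which disposes of the whole corollary in one line by dualizing: $[X,Y]\cong[Y^\vee,X^\vee]=Hom_{\overline{\calT}}(Y^\vee,X^\vee)\cong Hom_{\overline{\calT}}(X,Y)$, using that $\calT_+$ is stable under $(-)^\vee$ so that $Y^\vee$ is already cofibrant and needs no replacement; the second assertion then follows from Corollary~\ref{thm:hom-formula} applied to the clean object $V(\lambda)^\vee$ and the simple object $X^\vee$. Your long-exact-sequence argument on the triangle $K\to QX\to X\to K[1]$ can be made to work, but as written it rests on a misquotation of \ref{torsion}. The torsion pair $(\mathcal C_+,\mathcal C_-)$ gives $Hom_{\overline{\mathcal C}}(\mathcal C_+,\mathcal C_-)=0$, i.e.\ maps \emph{from} cofibrant objects \emph{to} trivially fibrant ones factor through projectives. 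You need the opposite direction, $Hom_{\overline{\mathcal C}}(K,Y)=0$ for $K\in\mathcal C_-$ and $Y\in\calT_+$, and this is \emph{not} a formal consequence of the torsion pair (for a torsion pair $(\mathcal X,\mathcal Y)$ in a triangulated category, $Hom(\mathcal Y,\mathcal X)$ is generally nonzero — the glueing triangles live precisely in $Hom(\mathcal Y,\Sigma\mathcal X)$). The correct ingredient is the lemma ``Morphisms from $\mathcal C_-$ to $\calT_+$ are stably equivalent to zero if $\calT_+^\vee=\calT_+$,'' which is special to finite-dimensional, duality-stable targets and whose proof is itself the rigidity/evaluation trick — i.e.\ the same idea the paper's one-line proof uses directly. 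With that lemma cited instead, and with the observation that $K[1]=IK/K$ stays in $\mathcal C_-$ (cokernel of a monic between objects of $\mathcal C_-$, by \ref{cof-fib-prop}), your first step closes up.

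Two smaller points. First, your reduction of the second assertion to ``no nonzero map $X\to V(\lambda)$ factors through a projective'' is right, but the mechanism is not the radical argument you sketch (that one handles maps \emph{into} a simple top, as in \ref{thm:hom-formula}); here the clean statement is that if $X$ is simple and $X\to P\to V(\lambda)$ is nonzero, then $X$ lies in the socle of an indecomposable injective summand $I_\nu$ of $P$, the map $I_\nu\to V(\lambda)$ is then injective on $soc(I_\nu)$ and hence injective, contradicting cleanness of $V(\lambda)$ — or, again, just dualize and quote \ref{thm:hom-formula}. Second, note that the duality-stability of $\calT_+$ is not free: it is supplied by the lemma in the section on the axiomatic highest-weight structure (via the involution $*$), so your proof, like the paper's, silently depends on rigidity; it is not a purely torsion-theoretic statement.
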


\begin{proof} Use $[X,Y] \cong [Y^\vee,X^\vee] = Hom_{\overline{\calT}}(Y^\vee,X^\vee) \cong Hom_{\overline{\calT}}(X,Y)$, since $Y^\vee$ is cofibrant. If $Y=V(\lambda)$, then $Y^\vee\in \calC_+$ is clean. If $X^\vee$ is simple, hence $[Y^\vee,X^\vee] = Hom_{\calT}(Y^\vee,X^\vee) =
Hom_{\calT}(X,Y)$. 
\end{proof}

\begin{conj} \label{thm:hom-finite} If assumptions \ref{A-1} - \ref{A-4} hold, for all objects $X,Y\in \mathcal T$ 
$$ \dim_k([X,Y]) \ < \ \infty \ .$$
\end{conj}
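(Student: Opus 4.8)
The plan is to reduce the finiteness of $\dim_k[X,Y]$ to the case where $X = L(\mu)$ and $Y = L(\lambda)$ are simple, using the triangulated structure of $Ho\mathcal{T}$ together with rigidity. Indeed, since every object of $\mathcal T$ has finite length and $\gamma:\mathcal T\to Ho\mathcal T$ is exact (sends short exact sequences to distinguished triangles), a five-lemma style argument applied to the long exact $\Hom$-sequences shows that if $\dim_k[L(\mu),L(\lambda)]<\infty$ for all simple $L(\mu),L(\lambda)$, then $\dim_k[X,Y]<\infty$ for all $X,Y\in\mathcal T$. So the core is the simple-by-simple case, and there, by Theorem \ref{thm:homotopy-stable}, $[L(\mu),L(\lambda)] = \Hom_{\mathcal C}(QL(\mu),L(\lambda))/\sim_{stable}$, which is a quotient of $\Hom_{\mathcal C}(QL(\mu),L(\lambda))$.

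Next I would use the weight structure. By Theorem \ref{thm:main}, $[L(\mu),L(\lambda)]=0$ whenever $\mu<\lambda$; the poset $\Lambda$ is interval finite by Assumptions I, so for fixed $\lambda$ there are only finitely many $\mu$ with $\mu$ comparable to and $\geq\lambda$ (or incomparable but in the same block and not strictly below — this needs care). Concretely, for $[L(\mu),L(\lambda)]$ to be potentially nonzero we essentially need $\lambda \le \mu$. Then I would bound $\dim_k\Hom_{\mathcal C}(QL(\mu),L(\lambda))$ by relating it to Kac-type objects: using Assumption \ref{A-1} together with the cofibrant replacement $QL(\mu) = N\oplus I$ with clean component $N\in\mathcal C^{\le\mu}$ (Theorem \ref{thm:cofibrant}, Lemma \ref{lem:existence-minimal-model}), and the fact that $N$ is built as an increasing union of objects in $\mathcal C^{\le\mu}$, one gets $\Hom_{\mathcal C}(N,L(\lambda)) = \varprojlim_k \Hom_{\mathcal C}(N_k,L(\lambda))$; the key is that a morphism $N\to L(\lambda)$ factors through the maximal quotient of $N$ supported in weight $\lambda$, i.e.\ through a finite-length object, so the $\Hom$ space injects into $\Hom_{\mathcal C}$ of a finite-dimensional object, hence is finite-dimensional.

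More precisely, the cleanest route: by Theorem \ref{thm:end(1)} and Assumptions \ref{A-1}--\ref{A-4}, apply $\Hom_{Ho\mathcal C}(QL(\mu),-)$ to the sequence $0\to K\to K_+(\lambda)\to L(\lambda)\to 0$ with $K\in\mathcal C^{<\lambda}$; this gives an exact sequence showing $[L(\mu),L(\lambda)]$ is squeezed between $[L(\mu),K_+(\lambda)]$ and $[L(\mu),K[1]]$. Iterating the $[1]$-functor $Ho\mathcal T^{\le\lambda}\to Ho\mathcal T^{<\lambda}$ from Theorem \ref{thm:translation} descends the weight, and interval-finiteness of $\Lambda$ guarantees this terminates; at the bottom one is left with $\Hom$-spaces into objects of $\mathcal T_+$ of the form $K_+(\nu)=V(\nu)$, and by the corollary $[L(\mu),V(\nu)] = \Hom_{\overline{\mathcal T}}(L(\mu),V(\nu))$, a subquotient of $\Hom_{\mathcal T}(L(\mu),V(\nu))$, which is finite-dimensional by condition (F). Assembling the finitely many finite-dimensional contributions yields $\dim_k[X,Y]<\infty$.

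The main obstacle I expect is controlling the \emph{infinitely many} extension steps implicit in $QL(\mu)$: a cofibrant replacement of a simple object is genuinely an infinite (polynomial-growth) resolution by Kac-type objects, so naively $\Hom_{\mathcal C}(QL(\mu),L(\lambda))$ is a limit of a possibly infinite tower. The resolution is provided precisely by the interval-finiteness of $\Lambda$ combined with the vanishing $\Hom_{\mathcal C}(L(\mu),\mathcal C^{\le\lambda'})=0$ for $\lambda'<\lambda$ in \ref{A-1}: only finitely many ``layers'' of the resolution can map nontrivially to the fixed simple $L(\lambda)$. Making this precise — i.e.\ showing the tower $\varprojlim_k \Hom_{\mathcal C}(N_k,L(\lambda))$ stabilizes and equals a finite-dimensional space — is the technical heart, and is exactly where one must invoke that the degree filtration of $QL(\mu)$ has graded pieces of weight bounded by $\mu$ and that comparability in the interval $[\lambda,\mu]$ restricts which Kac modules $V(\nu)$ occur in the relevant finite portion. (In the $GL(m|n)$-case this is precisely the content of Theorem \ref{thm:good-replacement}, and the conjecture becomes a theorem there.)
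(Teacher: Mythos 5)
First, note that the statement you are proving is recorded in the paper as a \emph{conjecture}, not a theorem: the paper's own content here consists of (i) a remark reducing the claim to the finiteness of $[L,\one]$ for irreducible $L$ whose weight is neither larger nor smaller than that of $\one$, and (ii) a proof only in the special case of the Frobenius pair $(P(m|n)^+,GL(m|n))$, resting on the explicit cofibrant replacement of Theorem \ref{thm:good-replacement}. Your reduction steps essentially reproduce the paper's remark: rigidity gives $[X,Y]\cong[X\otimes Y^\vee,\one]$, induction on length reduces to simple objects, Theorem \ref{thm:main} kills $[L(\mu),L(\lambda)]$ for $\mu<\lambda$, Theorem \ref{thm:end(1)} handles $\mu=\lambda$, and the triangle $K\to K_+(\lambda)\to L(\lambda)\to K[1]$ together with Theorem \ref{thm:translation} and interval finiteness descends the weight when $\mu>\lambda$. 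Up to that point you are on the same track as the authors.

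The genuine gap sits exactly where you write that ``making this precise is the technical heart'': the case of incomparable weights, equivalently the finiteness of $\Hom_{\mathcal C}(N,L(\lambda))$ for the clean component $N$ of a cofibrant replacement, which is an infinite sequential colimit $N=\bigcup_k N_k$ of objects of $\mathcal T_+$. Your proposed resolution --- that \ref{A-1} plus interval finiteness forces only finitely many layers of the resolution to map nontrivially to $L(\lambda)$ --- does not follow from the axioms: \ref{A-1} controls morphisms \emph{out of} a simple object \emph{into} lower-weight objects, whereas here you must control morphisms \emph{from} the (infinitely many, possibly high-multiplicity) Kac-type layers of $QL(\mu)$ \emph{into} the fixed simple $L(\lambda)$, and nothing in \ref{A-1}--\ref{A-4} bounds the multiplicity of $L(\lambda)$ in the cosocles of the $N_k$ uniformly in $k$, nor forces the tower $\varprojlim_k\Hom_{\mathcal C}(N_k,L(\lambda))$ to stabilize. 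In the $GL(m|n)$ case this is supplied by the degree estimates of Corollary \ref{cor:weight-estimate} and the resulting subobject $K'\subset QX$ of finite codimension with $K'\subset Kern(q_X)$ and $\Hom_{\mathcal C}(K',Y)=0$ (Theorem \ref{thm:good-replacement}); no analogue of this is available from \ref{A-1}--\ref{A-4} alone, which is precisely why the general statement remains open. Your argument therefore establishes the reduction the authors themselves sketch, but not the conjecture.
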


\begin{remark} It is enough to prove the conjecture for $[L,\one]$ for irreducible $L$ whose weight is neither bigger or smaller than the weight of $\one$. Using $[X,Y] \simeq [X \otimes Y^{\vee}, \one]$ we can assume that $Y$ is trivial. We claim that it then suffices to show that $\dim_k [X,\one]$ is finite-dimensional for irreducible $X \simeq L(\lambda)$. We induct on the length of $X$ and suppose $\dim_k [X,\one] < \infty$ for $l(X) < n$ for $n \geq 2$. Let $l(X) = n$. Then embedding of the socle gives a distinguished triangle \[ \xymatrix{ soc(X) \ar[r] & X \ar[r] & X' \ar[r] & soc(X)[1] } \] with $l(soc(X)) < n$ and $l(X') < n$. If we apply the functor $[,\one]$ to this triangle, then the finite-dimensionality of $[soc(X),\one]$ and $[X',\one]$ forces $\dim_k [X,\one] < \infty$. Now consider $[L(\lambda), \one]$. For irreducible objects $[L(\lambda),L(\mu)] = 0$ if $\mu > \lambda$ by theorem \ref{thm:main} and $[L(\lambda),L(\lambda)] = k \cdot id$. Hence we assume now $\lambda > w(\one)$. Then there are finitely many weights $w'$ satisfying $w(\one) \leq w' < \lambda$. Assume now by induction that the statement holds for all $[L(w'),\one]$ with $w(\one) \leq w' < \lambda$. Then the morphism $K_+(\lambda) \to L(\lambda)$ gives a distinguished triangle in $Ho \mathcal{C}$ \[ \xymatrix{ K \ar[r] & K_+(\lambda) \ar[r] & L(\lambda) \ar[r] & K[1]. }\] We apply the functor $[,\one]$. Now $K$ and $K[1]$ are in $\mathcal{C}^{<\lambda}$ by assumption and theorem \ref{thm:translation}, and therefore $[K[1],\one]$ is finite dimensional. But $[K_+(\lambda),\one]$ is also finite dimensional since $K_+{\lambda}$ is cofibrant and clean.  
\end{remark}

\begin{lem} The conjecture holds for ($P(m|n)^+$, $GL(m|n)$).
\end{lem}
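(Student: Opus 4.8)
The plan is to verify the assumptions \ref{A-1}--\ref{A-4} and \ref{T-1} for the Frobenius pair $(P(m|n)^+, GL(m|n))$ and then to invoke the finiteness reduction spelled out in the preceding remark, reducing everything to finite dimensionality of $[L(\lambda),\one]$ for a single irreducible $L(\lambda)$. However, as the remark shows, this still leaves one genuine piece of work: one must control $[K_+(\lambda),\one]$, equivalently (by Corollary \ref{thm:hom-formula}, since $K_+(\lambda)=V(\lambda)$ is clean and $\one$ is simple) the dimension of $Hom_{\mathcal C}(QL(\lambda),\one)$ together with the multiplicities $[QL(\lambda):V(\mu)]$. So the real content is a \emph{uniform} bound on the Kac-module multiplicities of the cofibrant replacement $QL(\lambda)$ that only sees the finitely many weights below $\lambda$. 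This is exactly what Theorem \ref{thm:good-replacement} provides: for any $Y\in\mathcal T$ there is a subobject $K'\subset QX$ of finite codimension, contained in $\mathrm{Kern}(q_X)$, with $K'\in\mathcal C_-$ and $Hom_{\mathcal C}(K',Y)=0$.

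First I would take $X=L(\lambda)$, $Y=\one$, and apply Theorem \ref{thm:good-replacement} to get $K'\subset QL(\lambda)$ of finite codimension with $K'\in\mathcal C_-$ and $Hom_{\mathcal C}(K',\one)=0$. From the short exact sequence $0\to K'\to QL(\lambda)\to QL(\lambda)/K'\to 0$ and left-exactness of $Hom_{\mathcal C}(-,\one)$ we get an injection $Hom_{\mathcal C}(QL(\lambda)/K',\one)\hookrightarrow Hom_{\mathcal C}(QL(\lambda),\one)$; conversely any map $QL(\lambda)\to\one$ kills $K'$ (since $Hom_{\mathcal C}(K',\one)=0$ and $\one$ is simple, a nonzero restriction to $K'$ is impossible), hence factors through $QL(\lambda)/K'$. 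Thus $Hom_{\mathcal C}(QL(\lambda),\one)\cong Hom_{\mathcal C}(QL(\lambda)/K',\one)$, and $QL(\lambda)/K'$ is a finite-dimensional object of $\mathcal T$, so this Hom-space is finite-dimensional by condition (F). Since $QL(\lambda)$ is clean, Corollary \ref{thm:hom-formula} identifies this with $[L(\lambda),\one]$, giving $\dim_k[L(\lambda),\one]<\infty$.

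Next I would run the induction from the remark in two stages. First, for fixed irreducible $X$ and \emph{arbitrary} $Y\in\mathcal T$: write $[X,Y]\simeq[X\otimes Y^\vee,\one]$ (Lemma on $[X\otimes Y,Z]=[X,Y^\vee\otimes Z]$, plus rigidity of $Ho\mathcal T$), and induct on the length of $X\otimes Y^\vee$ using the socle triangle $soc\to X\otimes Y^\vee\to X'\to soc[1]$ exactly as in the remark; the triangle $[soc,\one]\to\dots$ together with finiteness of the end terms forces finiteness of the middle. Here the base case is precisely the statement $\dim_k[L,\one]<\infty$ just proved. The verification of assumptions \ref{A-1}--\ref{A-4} for $(P(m|n)^+,GL(m|n))$ is the earlier-promised fact $K_-(\lambda)=V(\lambda)^*$, $K_+(\lambda)=V(\lambda)$, using that $\mathcal T_{m|n}$ is a highest weight category with standard objects the Kac modules; these are routine given the highest-weight structure.

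The main obstacle is Theorem \ref{thm:good-replacement} itself, i.e. the construction of a cofibrant replacement $q_X\colon QX\to X$ that is "finite modulo any test object" — this is where the special structure of the $GL(m|n)$-case (the explicit Kac resolution $\Omega\to\one$ of Lemma \ref{thm:cofib-of-1}, then $QX=\Omega\otimes X$, and the polynomial-growth/degree-filtration control) really does the heavy lifting, and it is not available for general categories with weights. Everything downstream — the factorization through $QL(\lambda)/K'$, the length induction, the reduction to $Y=\one$ — is then formal. I would therefore structure the write-up as: (i) cite \ref{A-1}--\ref{A-4} for the pair; (ii) prove $\dim_k[L(\lambda),\one]<\infty$ from Theorem \ref{thm:good-replacement} as above; (iii) invoke the remark's inductions to pass to all $X,Y\in\mathcal T$.
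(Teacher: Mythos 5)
Your proposal is correct and rests on exactly the same key input as the paper's proof, namely Theorem \ref{thm:good-replacement}; the paper's argument is just shorter, because that theorem already applies to \emph{arbitrary} $X,Y\in\mathcal T$, so one gets $Hom_{\mathcal C}(QX,Y)\cong Hom_{\mathcal C}(QX/K',Y)$ finite-dimensional in one step and the whole detour through the reduction to $[L(\lambda),\one]$, rigidity, and the length/weight inductions of the preceding remark is unnecessary. One small caveat in your write-up: you should not assert that $QL(\lambda)=\Omega\otimes L(\lambda)$ is clean (it may well contain injective summands, so Corollary \ref{thm:hom-formula} need not give an equality), but this is harmless for finiteness since $[X,Y]$ is in any case a quotient of $Hom_{\mathcal C}(QX,Y)$.
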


\begin{proof} Follows immediately from the explicit construction of cofibrant replacements in theorem \ref{thm:good-replacement}.
\end{proof}

Since $Ho {\mathcal T}$ is rigid,  the monoidal ideal ${\mathcal N}$ of negligible morphisms is defined (see section \ref{sec:negligible} for more details).

\begin{conj} Assume that $G$ is basic classical and $H$ satisfies $G_0 \subset H \subset G$. The quotient $Ho \mathcal T / \mathcal N$ is the semisimple representation category of an affine supergroup scheme.
\end{conj}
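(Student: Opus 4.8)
The goal is to show that $Ho\mathcal{T}/\mathcal{N}$ is the representation category of an affine supergroup scheme, i.e.\ that it is a semisimple Tannakian (super) category with the correct finiteness properties. By Deligne's reconstruction theorem it suffices to produce on $Ho\mathcal{T}/\mathcal{N}$ the structure of a $k$-linear, rigid, abelian symmetric monoidal category with $\mathrm{End}(\one)=k$, which is semisimple with finitely many (or at worst set-many) isomorphism classes of simple objects of integer superdimension, together with a faithful exact tensor functor (a fibre functor) to super vector spaces. The outline is: first establish semisimplicity of the quotient from the general ``quotient by negligibles'' formalism; second identify the simple objects and check the necessary finiteness; third produce the fibre functor.

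\textbf{Step 1: Semisimplicity of $Ho\mathcal{T}/\mathcal{N}$.} I would invoke the standard fact (Andr\'e--Kahn, or the version in \cite{Heidersdorf-Weissauer-tensor}) that if $\mathcal{A}$ is a $k$-linear rigid symmetric monoidal category with $\mathrm{End}_{\mathcal{A}}(\one)=k$ and all $\mathrm{Hom}$-spaces finite-dimensional, then $\mathcal{A}/\mathcal{N}$ is semisimple. By the Corollary just above the statement, $Ho\mathcal{T}$ is $k$-linear symmetric rigid monoidal with $\mathrm{End}(\one)=k$; and by the Lemma ``the conjecture holds for $(P(m|n)^+,GL(m|n))$'' together with Conjecture~\ref{thm:hom-finite} (a theorem in the $GL(m|n)$-case), all $\mathrm{Hom}$-spaces $[X,Y]$ with $X,Y\in\mathcal{T}$ are finite-dimensional. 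Since $Ho\mathcal{T}$ is generated as a triangulated tensor category by the image of $\mathcal{T}$, the relevant $\mathrm{Hom}$-finiteness passes to all objects (the shift and cones of finite-dimensional-$\mathrm{Hom}$ objects again have finite-dimensional $\mathrm{Hom}$'s, since $\gamma$ is exact and $[1]$ sends objects of $\mathcal{C}^{\le\lambda}$ into $\mathcal{C}^{<\lambda}$ by Theorem~\ref{thm:translation}). Hence $Ho\mathcal{T}/\mathcal{N}$ is abelian semisimple; pseudo-abelianness comes from the stable category being pseudo-abelian (second Lemma of Section~\ref{sec:stable-cat}).

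\textbf{Step 2: Simple objects and the fibre functor.} The simple objects of $Ho\mathcal{T}/\mathcal{N}$ are the images of the indecomposable objects of $Ho\mathcal{T}$ of nonvanishing superdimension (those of superdimension zero become negligible and die). By Theorem~\ref{thm:end(1)}, $[L(\lambda),L(\lambda)]=k$, so each nonprojective simple $L(\lambda)$ of $\mathcal{T}$ maps to a nonzero simple object, and the typicality/atypicality dichotomy shows these are parametrized (at least) by the atypical weights; one must check this is a set and that each has integer superdimension --- this follows because $\mathrm{sdim}$ descends along the tensor functor $\gamma$ from $\mathcal{T}=Rep(GL(m|n))$ where superdimensions are integers. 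For the fibre functor I would use that $Ho\mathcal{T}$ already carries the superdimension function from $\gamma:\mathcal{T}\to Ho\mathcal{T}$, so all objects have integer superdimension; then Deligne's criterion (integer superdimensions $+$ semisimple $+$ rigid symmetric with $\mathrm{End}(\one)=k$ over an algebraically closed field of characteristic $0$) guarantees the existence of a super-fibre functor on $Ho\mathcal{T}/\mathcal{N}$, hence realizes it as $Rep$ of an affine supergroup scheme (the Tannakian dual of the fibre functor).

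\textbf{Main obstacle.} The delicate point is not the abstract semisimplicity but verifying the hypotheses of Deligne's theorem concretely: namely that every object of $Ho\mathcal{T}$ has \emph{integer} (and in particular finite) superdimension and that the set of simple objects of $Ho\mathcal{T}/\mathcal{N}$ is genuinely a set with the Krull--Schmidt structure needed to apply Tannakian reconstruction. Integrality of $\mathrm{sdim}$ is inherited from $\mathcal{T}_{m|n}$ via the tensor functor $\gamma$, but one must be careful that $\gamma$ does not create objects whose superdimension is ill-defined --- this is handled because $Ho\mathcal{T}$ is generated by $\gamma(\mathcal{T})$ and $\mathrm{sdim}$ is additive on triangles and multiplicative on tensor products. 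The cleanest route is: (i) reduce to the $GL(m|n)$-specific finiteness theorem already proved; (ii) cite the Andr\'e--Kahn semisimplicity-of-$\mathcal{A}/\mathcal{N}$ result; (iii) cite Deligne's theorem on categories of integer superdimension. Each of these is available once Step~1 is in place, so the proof is really an assembly of the machinery developed in Parts~I and~II together with two external citations.
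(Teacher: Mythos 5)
Your Step 1 contains the central gap. You assert, as a ``standard fact (Andr\'e--Kahn)'', that for a $k$-linear rigid symmetric monoidal category $\mathcal{A}$ with $\mathrm{End}(\one)=k$ and finite-dimensional $\mathrm{Hom}$-spaces the quotient $\mathcal{A}/\mathcal{N}$ is automatically semisimple. That is not what Andr\'e--Kahn prove, and it is not true in general: $\mathcal{A}/\mathcal{N}$ is semisimple if and only if the radical $\mathcal{R}$ is contained in $\mathcal{N}$, and this containment is precisely what needs to be established. Proposition~\ref{ak-semisimplicity-criterion} (the actual Andr\'e--Kahn criterion used in the paper) is an implication \emph{conditional} on the existence of a $k$-linear tensor functor $H:\mathcal{A}\to\mathcal{V}$ into an abelian, rigid, $\mathrm{Hom}$-finite symmetric monoidal $L$-linear category in which nilpotent endomorphisms have trace zero. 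Constructing such an $H$ is the whole point of the argument, and your plan supplies no such functor for $Ho\mathcal{T}$.

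The paper's proof of Theorem~\ref{thm:semisimple} fills exactly this gap in two stages. First, for $GL(m|1)$, one computes $Ho\mathcal{T}_{m|1}$ directly (sections on morphisms and cofibrant replacements in the $GL(m|1)$-case): the indecomposables are identified, $Ho\mathcal{T}_{m|1}/\mathcal{N}$ is shown by hand to be a semisimple supertannakian category (Proposition~\ref{thm:gl-m-1-group}). Second, for general $GL(m|n)$, the restriction functor of section~\ref{sec:restriction} induces a tensor triangulated functor $res:Ho\mathcal{T}_{m|n}\to Ho\mathcal{T}_{m-n+1|1}$, and the composite
\[
Ho\mathcal{T}_{m|n}\;\longrightarrow\;Ho\mathcal{T}_{m-n+1|1}\;\longrightarrow\;Ho\mathcal{T}_{m-n+1|1}/\mathcal{N}
\]
is a tensor functor into an abelian semisimple category where nilpotents have zero trace, so the Andr\'e--Kahn criterion applies and gives $\mathcal{R}\subset\mathcal{N}$, hence semisimplicity. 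Without such a detection functor --- and without the prior explicit analysis of the $GL(m|1)$-case which makes that detection functor usable --- the argument does not close.

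A secondary point: for the Tannakian recognition you invoke ``Deligne's criterion'' via integer superdimensions. The paper instead uses Schur finiteness (every object killed by some Schur functor), which is Deligne's actual criterion for supertannakian-ness and is inherited directly along the tensor functor $\gamma:\mathcal{T}\to Ho\mathcal{T}$. The integer-superdimension route is less standard in the super setting and would require you to check that $\mathrm{sdim}$ is integral on \emph{all} objects of $Ho\mathcal{T}$ (shifts, cones), which is an extra verification your sketch waves at but does not settle. Schur finiteness is the cleaner way to conclude. Finally, note that the paper only proves the conjecture for $GL(m|n)$ with $H=P(m|n)^+$; the general basic classical case remains open, so any proof must make clear which instance it is actually establishing.
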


We prove this in the $G = GL(m|n)$ and $H = P(m|n)^+$-case in theorem \ref{thm:semisimple}.

\begin{remark} An indecomposable object $X \in Ho \mathcal T$ is in the kernel of $Ho \mathcal T \to Ho \mathcal T/\mathcal N$ if and only if $\sdim(X) = 0$ \cite{Heidersdorf-semisimple}. Suppose $\mathcal C_\pm\neq\mathcal C$ to exclude trivial cases.
Then $X\in \mathcal T_\pm = \mathcal T\cap \mathcal C_\pm$ implies $\sdim(X)= 0$ by lemma \ref{lem:dim0}.
\end{remark}

\begin{example} If $\mathcal T = \mathcal T_{m|n}$ and $\mathcal T_{\pm}$ are the tensor ideals of Kac and anti Kac modules in $\mathcal{T}_{m|n}$ respectively, then it is well-known \cite{Heidersdorf-semisimple} that the superdimension is zero for every object in $\mathcal{T}_{\pm}$.
\end{example}



\section{Isogenies} \label{sec:isogenies}

We discuss a further localization ${\mathcal H} \to  {\mathcal H}[\Sigma^{-1}] $ of the homotopy category.

\subsection{Isogenies I}
Assume $\mathcal T$ is as in section \ref{sec:conventions}.
For $\mathcal E=\mathcal C_+$ 
consider the full triangulated subcategory 
${\mathcal F}$ of objects stably equivalent to objects in $\mathcal T_+$. 
Its image $\overline{\mathcal F}$ in $\overline{\mathcal E}$ is quasi-equivalent to the image of $\mathcal T_+$
in $\overline{\mathcal E}$. Moreover, every object in ${\mathcal E}$ isomorphic in $\overline{\mathcal E}$
to an object in $\mathcal T_+$ is in ${\mathcal F}$ by definition.   

\begin{lem} $\overline{\mathcal F}$ is a thick triangulated subcategory of $\overline{\mathcal E}$.
\end{lem}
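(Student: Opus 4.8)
The goal is to show that $\overline{\mathcal F}$, the essential image in $\overline{\mathcal E} = \overline{\mathcal C}_+$ of the full subcategory $\mathcal F$ of objects of $\mathcal C_+$ stably equivalent to something in $\mathcal T_+$, is a thick triangulated subcategory. There are three things to check: (1) $\overline{\mathcal F}$ is closed under the shift functor $[1]$ and its inverse; (2) $\overline{\mathcal F}$ is closed under cones, i.e. if $X \to Y \to Z \to X[1]$ is a distinguished triangle in $\overline{\mathcal E}$ with $X, Y \in \overline{\mathcal F}$, then $Z \in \overline{\mathcal F}$; and (3) $\overline{\mathcal F}$ is closed under direct summands (thickness). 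Since $\overline{\mathcal F}$ is by construction closed under isomorphism in $\overline{\mathcal E}$, it suffices to produce, for each operation, an object of $\mathcal T_+$ that is stably equivalent to the result.

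\textbf{Steps.} First I would handle the shift. For $X \in \mathcal T_+ \subset \mathcal C_+$, the shift $X[1]$ in $\overline{\mathcal C}_+$ is computed via an injective resolution $0 \to X \to I \to SX \to 0$ inside the Frobenius category $\mathcal C_+$; here $I \in \mathcal I_{\mathcal C}$ and, since $\mathcal T$ has enough injectives and $\mathcal T_+$ is closed under cokernels of monics into injectives of $\mathcal T$ (this uses \ref{cof-fib-prop} of Theorem~\ref{thm:main-homotopy}), one may choose $I$ finite-dimensional, so $SX \in \mathcal T_+$; hence $X[1]$ is stably equivalent to an object of $\mathcal T_+$. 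The inverse shift is dual, using projective (= injective) resolutions in $\mathcal C_+$ and the fact that $\mathcal T_+$ is closed under kernels of epis from projectives of $\mathcal T$. For closure under cones: given a morphism $f: X \to Y$ in $\overline{\mathcal E}$ with $X, Y$ stably equivalent to objects of $\mathcal T_+$, I would first lift along the stable equivalences to reduce to the case $X, Y \in \mathcal T_+$ and $f$ represented by an honest morphism $X \to Y$ in $\mathcal C_+$; then realize the cone as the cokernel of the induced monomorphism $X \hookrightarrow Y \oplus IX$ in $\mathcal C_+$. Taking $IX$ to be a finite-dimensional injective hull of $X$ in $\mathcal T$, this cokernel lies in $\mathcal T$ and, being a cokernel of a monomorphism with $Y \oplus IX \in \mathcal T_+$, lies in $\mathcal T_+$ by \ref{cof-fib-prop}. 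Finally, thickness: $\overline{\mathcal E}$ is idempotent-complete (it is the stable category of a Frobenius category with all the good properties), and if $Z \oplus Z'$ is stably equivalent to $M \in \mathcal T_+$, I would use that $M$, being a finite-length object, has a Krull--Schmidt decomposition whose indecomposable summands, together with the idempotent decomposition of $M$ in $\overline{\mathcal E}$ matching $Z \oplus Z'$, force each of $Z, Z'$ to be stably equivalent to a direct summand of $M$, hence to an object of $\mathcal T_+$ (using that $\mathcal T_+$ is closed under direct summands, part of being a category closed under retracts in the cotorsion-pair setup).

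\textbf{Main obstacle.} The delicate point is the finite-dimensionality bookkeeping: at each step I must be able to choose the auxiliary injective or projective object \emph{finite-dimensional} and simultaneously keep the resulting cokernel/kernel \emph{inside $\mathcal T_+$}. This is exactly where properties \ref{cof-fib-prop} (closure of $\mathcal C_+$ and $\mathcal T_+$ under cokernels of monics, kernels of epis, and extensions) and the clean-decomposition results of Section~\ref{sec:clean} do the real work; without them one only lands in $\mathcal C_+$ and loses control of finite-dimensionality. The thickness step is the second subtle point, since it requires idempotent completeness of $\overline{\mathcal E}$ together with the interaction between stable equivalence and Krull--Schmidt decompositions; I expect this to be where one has to be most careful, though Lemma~\ref{lemma-stable-cat} and the structural lemmas on clean objects should make it routine once set up correctly.
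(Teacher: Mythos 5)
Your proposal is correct and close in spirit to the paper's proof: both arguments reduce to finite-dimensionality via uniqueness of (clean) decompositions and then invoke the closure of $\mathcal{C}_+$ (hence of $\mathcal{T}_+ = \mathcal{C}_+\cap\mathcal{T}$) under extensions, cokernels of monics, and kernels of epis. The main difference is in the cone step: you replace $A,B$ by stably isomorphic objects of $\mathcal{T}_+$ up front, take a representative morphism $f_0:A\to B$, and identify the cone with the honest cokernel $(B\oplus IA)/A$ with $IA$ a finite-dimensional injective hull; that object is manifestly in $\mathcal{T}_+$. The paper instead keeps $A,B\in\mathcal{F}$ (not necessarily finite-dimensional), realizes the triangle by a short exact sequence $0\to A\to B\to C\to 0$ in $\mathcal{E}$, and then carefully peels off the injective summands of $A$ and $B$ via clean decompositions, together with a separate argument that the possibly infinite injective part of $B$ contributes only a finite-dimensional piece to $C$. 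Your reduction is a genuine simplification of this bookkeeping, and it is valid because distinguished triangles are preserved under isomorphisms of their two legs. For thickness you invoke Krull–Schmidt and idempotent-completeness of $\overline{\mathcal{E}}$; the paper does essentially the same thing more directly via the uniqueness of clean decompositions of Section~\ref{sec:clean} (which is exactly what makes the stable-category endomorphism rings local and the decomposition unique). Your version leaves the "why is the decomposition unique in $\overline{\mathcal{E}}$" step a bit implicit, but you correctly flag that the clean-decomposition lemmas are what discharge it, so this is a matter of detail rather than a gap.
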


\begin{remark} Similarly the full image of $\mathcal T_-$ defines a thick triangulated subcategory of the stable category $\overline{\mathcal T}$ of $\mathcal T$.
\end{remark}

\begin{proof} $\overline{\mathcal F}$ is closed under the suspension and 
loop functor. Thickness: We have to show $\overline{\mathcal F}$ is closed under direct summands
and $A,B\in \overline{\mathcal F}$ implies $C\in \overline{\mathcal F}$ for distinguished triangles $(A,B,C,\alpha,\beta,\gamma)$
in $\overline{\mathcal E}$.
An equivalent characterization is: 
For distinguished triangles $(A,B,C,\alpha,\beta,\gamma)$
in $\overline{\mathcal E}$ such that $C\in \overline{\mathcal F}$ and such that
$\alpha:A\to B$ factorizes over an object in $\overline{\mathcal F}$ it follows that
$A,B$ are also in $\overline{\mathcal F}$.

\medskip\noindent
Suppose $U\oplus V = C \in \overline{\mathcal F}$. Then $C=C_{clean} \oplus I$ for injective $I$
and finite dimensional $C_{clean}\in \mathcal E$.
Using the clean
decompositions $U=U_{clean}\oplus I_U$ and $V=V_{clean}\oplus I_V$
we can assume $U,V$ to be clean. We already have shown that $U,V$ clean implies
$U\oplus V$ clean. Hence $U\oplus V= C_{clean}$ is finite dimensional. Hence $U$ and $V$ are finite dimensional. 

\medskip\noindent
Suppose we are given a distinguished triangle $(A',B',C',\alpha,\beta,\gamma)$ in $\overline {\mathcal E}$ such that $A',B'$ are in $\overline{\mathcal F}$. We have to show $C'\in \overline{\mathcal F}$. Obviously we may replace the triangle by an isomorphic standard triangle.  So let us assume $(A,B,C,\alpha,\beta,\gamma)$ is a standard triangle in $\overline{\mathcal E}$, hence
there is an exact sequence $$ 0 \to A \to B \to C \to 0 $$ in $\mathcal E$
defined by morphisms $\alpha:A\to B$ and $\beta:B\to C$ in ${\mathcal E}$ such that $A,B\in \mathcal F$.
We have to show $C\in \mathcal F$.
Using a clean decomposition $A_{clean}\oplus I$ of $A$, we may write $B= I\oplus B'$
such that $B'_{clean}$ is in $\mathcal T_+$. Hence replacing $A$ by $A_{clean}$ and $B$ by $B'$
we may assume that $A\in \mathcal T_+$, and $B=B_{clean}\oplus J$ for injective $J$ and 
$B_{clean}\in \mathcal T_+$. Hence there exists an exact sequence in $\mathcal C$
$$  0 \to B_{clean}/(A\cap B_{clean})  \to C \to J/im(A) \to 0 \ .$$  
Since $im(A)$ is finite dimensional and $J= \bigoplus_{\nu\in X} I_\nu$, we can assume
that $im(A)$ is contained in a finite sum  $\bigoplus_{\nu\in X_0} I_\nu$ given by suitable finite
subset $X_0$ of the index set $X$. Hence $J/im(A)\cong J' \oplus J''$ isomorphic to the direct sum  of the injective comodule $J'=\bigoplus_{\nu\notin X_0} I_\nu $ and the finite dimensional comodule
$J'' = (\bigoplus_{\nu\in X_0} I_\nu)/im(A)$. The summand $J'$ is projective, hence splits in the exact
sequence above; hence $C \cong J'' \oplus C'$, where $C'$ is a finite dimensional extension
of the finite dimensional comodules $B_{clean}/(A\cap B_{clean})$ and $J''$. Hence
$C_{clean}$ is a finite dimensional. Since $C$ is in $\mathcal E$ this implies $C\in {\mathcal F}$.
\end{proof}


\subsection{Isogenies II} \label{sec:isogenies}  Let $\Sigma$ denote the class of morphisms $s$ in $\overline{\mathcal E}$, whose
cone is in the subcategory $\overline{\mathcal F}$. We call morphisms in $\Sigma$ isogenies.
As shown in \cite[p.279ff]{Verdier},  the class of morphisms $\Sigma$ admits a calculus of right and left fractions, since $\overline{\mathcal F}$ is a thick subcategory.  
This defines a triangulated localization functor
$$ \overline{\mathcal E} \to  \overline{\mathcal E}[\Sigma^{-1}] \ .$$
Let $\mathcal H$ denote the full subcategory of objects in $\overline{\mathcal E}$
which are isomorphic to cofibrant replacements of objects which are stably isomorphic 
to objects in $\mathcal T$.
This is a full triangulated subcategory of $\overline{\mathcal E}$, i.e stable under suspension and loop
functor, 
and closed under extension meaning that for a distinguished triangle $(A,B,C)$
the condition $A,C\in \mathcal H$ implies $B\in \mathcal H$.

\bigskip\noindent
Then 
$$   \overline{\mathcal F} \subset  \mathcal H   \ ,$$
and  $\mathcal H$ is a symmetric monoidal rigid subcategory of $\overline{\mathcal E}$
 such that $\overline{\mathcal F}$ is a tensor ideal in $\mathcal H$.
 Indeed for $X=I\otimes N$ and $q: QY \to Y$ with $N\in \mathcal T_+$ and
 $Y\in \mathcal T$ we have $X\otimes QY \to X\otimes Y$ is a cofibrant replacement
of $X\otimes Y = I\otimes Y \oplus N\otimes Y$, which is stably equivalent to $N\otimes Y\in \mathcal T_+$ and hence is in $\overline{\mathcal F}$. This implies that 
the localization functor  
$$  {\mathcal H} \to  {\mathcal H}[\Sigma^{-1}] $$ 
is a triangulated tensor functor.






\bigskip
\part{The homotopy category associated to $GL(m|n)$} \label{part-gl}

We now study one particular Frobenius pair: For the general linear \mbox{supergroup} $GL(m|n)$ over an algebraically closed field $k$ of characteristic zero and its parabolic subgroup $P^+$ of upper triangular block matrices we construct an explicit cofibrant replacement for any $X \in \mathcal T$ and deduce from this an \mbox{another} description of $Ho \mathcal{T}$. The semisimplification $Ho \mathcal{T}_{m|n}/\mathcal{N}$ is the semisimple representation category of a supergroup scheme. In the $GL(m|1)$-case we determine this semisimple quotient. For more background on $\mathcal{T}_{m|n} = Rep(GL(m|n))$ we refer to \cite{Heidersdorf-Weissauer-tensor}.


\section{Cofibrant replacements and an explicit description of $Ho \mathcal T$} \label{sec:gl-m-n-cofibrant}

\subsection{Representations of $GL(m|n)$} \label{sec:repr-of-gl} \subsubsection{Various representation categories}  For an algebraic supergroup $G$ the category of rational representations is identified with the category of comodules of $k[G]$ as in the classical case \cite{Waterhouse} \cite{Jantzen}. The finite dimensional rational representations $Rep(G)$ can alternatively be identified with the finite dimensional algebraic representations of $\mathfrak{g} = Lie(G)$, those representations whose restriction to $\mathfrak{g}_0$ lifts to a rational representation of $G_0$ \cite{Serganova-quasireductive} \cite{Weissauer-semisimple}. Considering $\mathcal{C} = Comod(k[G])$ therefore amounts to study inductive limits of finite dimensional algebraic representations of $\mathfrak{g} = Lie(G)$.

\subsubsection{The case $\mathfrak{gl}(m|n)$} We adopt the notations of \cite{Heidersdorf-Weissauer-tensor}.  Let $\mathfrak{g} = \mathfrak{gl}(m|n)$ the Lie superalgebra of $GL(m|n)$. We assume without loss of generality $m \geq n$. A finite dimensional rational representation $\rho$ of $GL(m|n)$  is by the conventions above a representation of $\mathfrak{g}$ whose restriction to $\frakg_{0}$ defines an algebraic representation of $G_{0} = GL(m) \times GL(n)$. We denote by $\mathcal{T} = \mathcal{T}_{m|n}$ the category of all finite dimensional representations with parity preserving morphisms.

\subsubsection{Weights} The irreducible representations in $\mathcal T_{m|n}$ are parametrized by their highest weight with respect to the Borel subalgebra of upper triangular matrices. A weight $\lambda=(\lambda_1,...,\lambda_m \ | \ \lambda_{m+1}, \cdots, \lambda_{m+n})$ of an irreducible representation in $\mathcal R_n$ satisfies $\lambda_1 \geq \ldots \lambda_m$, $\lambda_{m+1} \geq \ldots \lambda_{m+n}$ with integer entries. The Berezin determinant of the supergroup $G$
defines a one dimensional representation $Ber$. Its weight is
is given by $\lambda_i=1$ and $\lambda_{m+i}=-1$ for $i=1,..,n$. For each weight $\lambda$ we also have the parity shifted irreducible representation $\Pi L(\lambda)$.  Both $\vee$ and $*$ (the twisted dual)
define contravariant functors on $\mathcal{T}_{m|n}$. 

\subsubsection{The upper parabolic} The supergroup $GL(m|n)$ contains the parabolic subgroup $P(m|n)^+$ of upper triangular block matrices. As in section \ref{sec:intro-ov} we denote by $\mathcal{C}_-$ the trivially fibrant objects of the model structure induced by the stable model structure on $Rep(P(m|n)^+)$ (see \ref{sec:induced-model-structure}), i.e. those algebraic representations of $GL(m|n)$ that become projective when restricting to $P(m|n)^+$.

\subsubsection{Kac objects} An irreducible representation of weight $\lambda$ of the even subgroup $G_0 = GL(m) \times GL(n)$ can be trivially extended to $P(m|n)^+$ and then induced to $GL(m|n)$. This parabolic induction  yields the Kac modules \cite{Kac-Rep} \[ V(\lambda) = Ind_{P(m|n)^+}^{\ GL(m|n)} L_P(\lambda),\] the universal highest weight modules in $\mathcal{T}_{m|n} = Rep(GL(m|n))$. They are the standard modules in the highest weight category $\mathcal{T}_{m|n}$. The twisted dual $V(\lambda)^*$  (also called anti Kac module)  is then the corresponding costandard module. 


\subsection{Two Frobenius pairs for $GL(m|n)$}

We abbreviate $P^+$ for the maximal parabolic subgroup of upper triangular block matrices and $P^-$ for the maximal parabolic of lower triangular block matrices. By \cite[Lemma 3.3.1]{Germonie} \[ V(\lambda)^* = Coind_{P^-}^G L_{P^-}(\lambda) = Ind_{P^-}^G L_{P^-}(\lambda - 2 \rho_1).\]

\begin{lem} (\cite[Proposition 3.6.2]{Germonie} \begin{enumerate} \item For $M \in \mathcal{T}_{m|n}$ the following are equivalent: \begin{itemize} \item $M$ has a filtration by Kac modules. \item $Ext^1(M,V^*(\mu)) = 0$ for all $\mu \in X^+$. \item $Res_{P^-}M$ is projective in $Rep(P^-)$.\end{itemize} \item  For $M \in \mathcal{T}_{m|n}$ the following are equivalent: \begin{itemize} \item $M$ has a filtration by anti Kac modules. \item $Ext^1(M,V(\mu)) = 0$ for all $\mu \in X^+$. \item $Res_{P^+}M$ is projective in $Rep(P^+)$.\end{itemize} \end{enumerate}
\end{lem}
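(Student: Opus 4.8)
The plan is to prove the characterization of Kac-filtered (resp. anti-Kac-filtered) modules by a standard argument combining homological algebra in a highest weight category with a restriction-criterion via the parabolic subgroup. I will treat part (1); part (2) is completely symmetric, obtained by swapping the roles of $P^+$ and $P^-$ and of $V(\mu)$ and $V^*(\mu)$.

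First I would establish the equivalence of the first two bullets (having a Kac filtration $\iff$ $Ext^1(M,V^*(\mu)) = 0$ for all $\mu$) by the usual highest-weight-category machinery. The direction ``Kac filtration $\Rightarrow$ $Ext^1$-vanishing'' follows by dévissage from the single case $Ext^1(V(\lambda),V^*(\mu)) = 0$, which in turn follows from the orthogonality $Ext^1(\mathcal{T}_+, \mathcal{T}_-) = 0$ stated in the introduction (since $V(\lambda)\in\mathcal{T}_+$ and $V^*(\mu)\in\mathcal{T}_-$), together with the long exact $Ext$-sequence. For the converse, one argues by induction on the length of $M$: choosing a maximal weight $\lambda$ among the composition factors, one shows using $Ext^1(M, V^*(\mu))=0$ for the maximal $\mu$ that the canonical map $M \to $ (appropriate sum of $V^*$'s) or rather a surjection $V(\lambda)^{\oplus a} \twoheadrightarrow$ (top of $M$) lifts, producing a short exact sequence $0 \to M' \to M \to V(\lambda) \to 0$ (or $0 \to V(\lambda)^{\oplus a}\to M \to M'\to 0$) with $M'$ again satisfying the $Ext^1$-vanishing hypothesis and shorter length; then one concludes by induction. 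This is the classical criterion for $\Delta$-filtrations (standard filtrations) in a highest weight category, e.g. as in Cline–Parshall–Scott or Donkin; I would cite the highest weight structure on $\mathcal{T}_{m|n}$ and the fact that $V(\lambda)$ are the standard and $V^*(\mu)$ the costandard objects.

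Next I would prove the equivalence with the third bullet ($Res_{P^-} M$ projective in $Rep(P^-)$). The key input is the formula $V(\lambda)^* = Coind_{P^-}^G L_{P^-}(\lambda) = Ind_{P^-}^G L_{P^-}(\lambda - 2\rho_1)$ recalled just above the lemma, together with the fact that $Res_{P^-}$, $Ind_{P^-}^G$ and $Coind_{P^-}^G$ form an adjoint triple with $Ind \cong Coind$ (the Frobenius extension property, Theorem \ref{thm:frob-ext} and its corollaries). For one direction: if $Res_{P^-}M$ is projective, then since in $Rep(P^-)$ projectives equal injectives, $Res_{P^-}M$ is also injective, and by the adjunction $Ext^1_G(M, Coind_{P^-}^G N) \cong Ext^1_{P^-}(Res_{P^-}M, N) = 0$ for all $N$ (using exactness of $Coind$ to identify higher $Ext$'s, or a degree-shifting argument); taking $N = L_{P^-}(\mu)$ and using $V^*(\mu) = Coind_{P^-}^G L_{P^-}(\mu)$ gives $Ext^1(M,V^*(\mu)) = 0$, hence $M$ has a Kac filtration by the first equivalence. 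Conversely, if $M$ has a Kac filtration, by dévissage it suffices to check $Res_{P^-}V(\lambda)$ is projective; but $V(\lambda) = Ind_{P^+}^G L_{P^+}(\lambda)$ and one computes its restriction to $P^-$ explicitly — or, more cleanly, one shows $Ext^1_{P^-}(Res_{P^-}M, S) = 0$ for all simple $S$ by adjunction $Ext^1_{P^-}(Res_{P^-}M,S) \hookleftarrow$ something controlled by $Ext^1_G(M, Coind_{P^-}^G S) = 0$, using that $Coind_{P^-}^G$ of a simple is (a twist of) an anti-Kac module $V^*(\mu)$ and $Ext^1(\mathcal{T}_+,\mathcal{T}_-)=0$.

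The main obstacle I anticipate is the careful bookkeeping in the last step: the adjunction $Ext^i_G(M, Coind_{P^-}^G N) \cong Ext^i_{P^-}(Res_{P^-}M, N)$ needs exactness of $Coind_{P^-}^G$ (which holds, since $Coind\cong Ind$ for a Frobenius extension, cf. the corollaries to Theorem \ref{thm:frob-ext}) so that $Coind$ sends injective resolutions to injective resolutions — but one must check $Coind_{P^-}^G$ preserves injectives, which follows since its left adjoint $Res_{P^-}$ is exact. One then needs that every simple $P^-$-module occurs as $L_{P^-}(\mu)$ for some $\mu\in X^+$ (up to the twist by $2\rho_1$), so that vanishing of $Ext^1(M, V^*(\mu))$ for all $\mu$ really does test injectivity (= projectivity) of $Res_{P^-}M$ against all simples, and then invoke that in the Frobenius category $Rep(P^-)$ a module with $Ext^1(-,\text{simple}) = 0$ against all simples is injective. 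Modulo these standard facts the proof is a formal consequence of the highest weight structure, the explicit $Coind$-description of $V^*(\lambda)$, and the $Ext^1$-orthogonality of $\mathcal{T}_+$ and $\mathcal{T}_-$.
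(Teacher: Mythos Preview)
The paper does not give its own proof of this lemma; it is simply stated with a citation to \cite[Proposition 3.6.2]{Germonie}. Your proposed argument is correct and is essentially the standard proof (and presumably close to what Germonie does): the equivalence of the first two bullets is the classical $\Delta$-filtration criterion in a highest weight category, and the equivalence with the third bullet follows from the derived adjunction $Ext^i_G(M, Coind_{P^-}^G N) \cong Ext^i_{P^-}(Res_{P^-} M, N)$, valid because $Coind$ is exact (Frobenius extension, $Coind \cong Ind$) and preserves injectives (its left adjoint $Res$ is exact).

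One small caution on circularity: when you justify $Ext^1(V(\lambda), V^*(\mu)) = 0$ by citing ``the orthogonality $Ext^1(\mathcal{T}_+, \mathcal{T}_-) = 0$ stated in the introduction,'' you are quoting something the introduction asserts as motivation, not something it proves --- and this lemma is precisely the source of that assertion. So derive $Ext^1(\Delta(\lambda), \nabla(\mu)) = 0$ directly from the highest weight axioms (it follows from $\Delta(\lambda)$ being projective in $\mathcal{T}^{\leq \lambda}$ together with a weight comparison), as you yourself indicate when invoking Cline--Parshall--Scott and Donkin. With that adjustment the proof is complete.
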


We denote by $\mathcal T_+$ the tensor ideal of modules with a filtration by Kac modules in $\mathcal T$ and by $\mathcal T_-$ the tensor ideal of modules with a filtration by anti Kac modules. The full subcategory $\mathcal{T}_+$  of representations with a filtration by Kac modules (simply called Kac objects) and the full subcategory $\mathcal{T}_-$ of representations with a filtration by anti Kac modules (called anti Kac objects) are then orthogonal in the sense that \[ Ext^1(\mathcal{T}_+, \mathcal{T}_-) =0\] Furthermore $\mathcal{T}_+ \cap \mathcal{T}_- = Proj$ (every tilting module is projective).

\begin{cor} For the Frobenius pair $(P^+,G)$ (see definition \ref{def:frobenius-pair})  $\mathcal{C}_- \cap \mathcal{T}_{m|n} =\mathcal{T}_-$ and $\mathcal{C}_+ \cap \mathcal{T}_{m|n} = \mathcal{T}_+$.
\end{cor}


\begin{remark} Note that our notation for modules with Kac respectively anti Kac filtrations shows that we always consider the case $H = P^+$. If we would exchange $P^+$ with $P^-$, this would switch the roles of $\mathcal{T}_+$ and $\mathcal{T}_-$ and our notation in section \ref{sec:repr-of-gl} for modules with a Kac or anti Kac filtration would be inconsistent.
\end{remark}


\subsection{Axiomatic description of the highest weight structure}

We fix until the end of the article $\mathcal D$ such that $\mathcal T_-$ is the
category of representations with anti Kac flags. In other words: $\mathcal{D}$ is the ind-category of $Rep(P(m|n)^+)$ where $P(m|n)^+$ is the parabolic subgroup of upper triangular block matrices in $GL(m|n))$. 

\medskip\noindent
As an  abelian category $\mathcal T$ splits into blocks ${\mathcal T}_\Lambda$, each of which is a highest weight category with duality \cite{CPS}. The standard modules in this highest weight structure are the Kac modules $V(\lambda)$. We now axiomatize the situation of the $\mathcal{T}_{m|n}$-case and consider an abelian category $\mathcal{T} = Rep(G)$ for some supergroup $G$ satisfying the following sets of assumptions.

\medskip\noindent
{\bf First list of assumptions}. As an  abelian category $\mathcal T$ splits into blocks ${\mathcal T}_\Lambda$, each of which is a highest weight category with duality \cite{CPS} in the following way:  
Each block $\Lambda$ has the structure of an interval finite poset 
such that the elements $\lambda$ correspond to representatives $L(\lambda)$ of isomorphism classes of simple objects 
in the block ${\mathcal T}_{\Lambda}$. Each $L(\lambda)$ has a projective cover $P(\lambda)\in {\mathcal T} _\Lambda$.
Furthermore for $\lambda\in \Lambda$
there exist objects $V(\lambda)\in {\mathcal T_+\cap \mathcal T}_\Lambda$
such that

\medskip\noindent
\begin{enumerate}
\item 
There exists an epimorphism $P(\lambda)\to V(\lambda)$, and the kernel has
 a finite filtration whose successive quotients are of the form $V(\nu)$ for certain $\nu\in \Lambda$
 such that $\nu > \lambda$, 
\item There exists an epimorphism $V(\lambda) \to L(\lambda)$ such that $L(\lambda)$ is the cosocle of $V(\lambda)$,  so that the kernel (radical) has a finite filtration whose successive quotients are of the form $L(\mu)$ for certain $\mu\in \Lambda$
 such that $\mu < \lambda$.
\end{enumerate}

\medskip\noindent
{\bf Second list of assumptions}. Let $V=V(\one)$ in $\mathcal T_+$ be the standard module corresponding to the trivial object and $P=P(\one)$
the projective hull of $\one$. These objects are defined for a highest weight category.
We abbreviate $V({\mathcal L})=V\otimes {\mathcal L}$.
We now assume that the following additional assumptions hold: 

\begin{enumerate}
\item There exists an antiinvolutive $\otimes$-functor $*$ inducing
an equivalence of the tensor categories ${}^*: {\mathcal T} \to {\mathcal T}^{op}$, which permutes the subcategories $\mathcal T_-$ and $\mathcal T_+$ so that
$Y^*\cong Y$, if $Y$ is simple or if $Y$ is an indecomposable projective object.
\item There exists an invertible simple object $\mathcal L$ in $\mathcal T$, such that
\item  there exists an injection $i: V({\mathcal L}) \to P$,
\item and there exists a surjection $\pi: P \to V({\mathcal L})^*$.
\item $V$ (hence $V({\mathcal L})$) is rigid with a Loewy filtration
of length $r$ with $r$ pairwise non-isomorphic simple constituents $L_i$.
\item The kernel of $\pi\circ i: V({\mathcal L}) \to V({\mathcal L})^*$ is 
the radical of $V({\mathcal L})$, i.e. $V({\mathcal L})$ divided by the kernel
is isomorphic to ${\mathcal L}$.
\item The Jordan-H\"older constituent $\mathcal L$ is the highest weight
representation in $P$ and has multiplicity one.
\end{enumerate}

\medskip\noindent
Property 5 is of auxiliary nature. It will not be used in the following except
that it allows to verify the other properties in the case where ${\mathcal T}={\mathcal T}_{m\vert n}$.

\begin{lem} Under the assumption 1. above the subcategories $\mathcal T_-$
and $\mathcal T_+$ are stable under the Tannaka duality functor ${}^\vee$.
\end{lem}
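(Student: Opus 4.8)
The plan is to deduce the $\,{}^\vee$-stability of $\mathcal{T}_+$ from the $\,{}^\vee$-stability of $\mathcal{T}_-$, which is already available, by exploiting the interaction between the two contravariant dualities $*$ and $\,{}^\vee$. I would first recall that $\mathcal{T}_- = \mathcal{C}_- \cap \mathcal{T}$ is stable under $\,{}^\vee$: this is immediate from the corollary in section~\ref{sec:monoidal-model-structure} asserting that $B \in \mathcal{C}_- \Rightarrow {\mathcal H}om(B,C) \in \mathcal{C}_-$, applied with $C = \one$, together with the fact that the dual of a finite dimensional object is finite dimensional, so that $B^\vee = {\mathcal H}om(B,\one) \in \mathcal{C}_- \cap \mathcal{T} = \mathcal{T}_-$ for $B \in \mathcal{T}_-$. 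Thus only the statement for $\mathcal{T}_+$ requires an argument.

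The key input is the natural isomorphism
\[ (X^\vee)^* \ \cong \ (X^*)^\vee \qquad (X \in \mathcal{T}). \]
This holds because $*$ is, by hypothesis, an equivalence of symmetric monoidal categories $\mathcal{T} \to \mathcal{T}^{\mathrm{op}}$, hence carries dual objects to dual objects; and since $\mathcal{T}$ is rigid symmetric (left and right duals coinciding, as for $Comod(A)$), the dual of $X^*$ computed in $\mathcal{T}^{\mathrm{op}}$ is the object $(X^*)^\vee$, which must therefore be identified under $*$ with the dual $X^\vee$ of $X$, i.e.\ with $(X^\vee)^*$.

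Then for $X \in \mathcal{T}_+$ I would argue: by assumption~1, $X^* \in \mathcal{T}_-$; by the first paragraph, $(X^*)^\vee \in \mathcal{T}_-$; by the displayed isomorphism, $(X^\vee)^* \in \mathcal{T}_-$; and since $*$ is antiinvolutive and, again by assumption~1, maps $\mathcal{T}_-$ into $\mathcal{T}_+$, we obtain $X^\vee \cong \bigl((X^\vee)^*\bigr)^* \in \mathcal{T}_+$. This establishes stability of $\mathcal{T}_+$, while stability of $\mathcal{T}_-$ is the input recalled above (equivalently, the same chain with $+$ and $-$ interchanged). I do not anticipate a genuine obstacle: the only non-formal ingredient is the earlier corollary on $\mathcal{C}_-$, and the rest is bookkeeping with $*$ and $\,{}^\vee$; the single point requiring slight care is the compatibility isomorphism $(X^\vee)^* \cong (X^*)^\vee$, where one must check that the passage through $\mathcal{T}^{\mathrm{op}}$ introduces no left/right-dual discrepancy — which it does not, $\mathcal{T}$ being symmetric.
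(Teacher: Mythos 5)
Your proof is correct and follows essentially the same route as the paper's: both rely on the compatibility $(X^\vee)^* \cong (X^*)^\vee$ (a consequence of $*$ being a tensor functor) to transfer the known $\vee$-stability of $\mathcal{T}_-$ to $\mathcal{T}_+$ via $X^\vee = ((X^\vee)^*)^*$. The only difference is cosmetic: you spell out why $\mathcal{T}_-$ is closed under $\vee$ (citing the corollary that ${\mathcal H}om(B,\one)\in\mathcal{C}_-$ for $B\in\mathcal{C}_-$), whereas the paper takes that fact as given.
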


\begin{proof} Assumption 1) implies that as a tensor functor ${}^*$ commutes with the 
Tannaka duality ${}^\vee$. Since ${\mathcal T}_-$ is preserved by ${}^\vee$, therefore
1) implies that also ${\mathcal T}_+$ is preserved by ${}^\vee$. Indeed, for
$X \in \mathcal T_+$ we get $X^*\in \mathcal T_-$ and hence $(X^\vee)^* \cong (X^*)^{\vee}\in \mathcal T_-$.
Therefore $X^\vee = (X^\vee)^{**} \in \mathcal T_+$. 
\end{proof}

\begin{lem} Axioms 1.-7. are satisfied in the case $GL(m\vert n)$.
\end{lem}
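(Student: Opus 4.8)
The plan is to verify the seven conditions in the ``Second list of assumptions'' (together with the first list, which is the standard highest weight structure on $\mathcal T_{m|n}$ established in \cite{CPS}) one at a time, using the explicit representation theory of $GL(m|n)$. Throughout, $\mathcal L = Ber$ will be the natural candidate for the invertible simple object; the standard module $V=V(\one)$ is the Kac module $V(0)$ of the trivial weight, $P=P(\one)$ its projective cover, and $V(\mathcal L) = Ber \otimes V(0) = V(\text{weight of }Ber)$.

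First I would treat assumption 1: the twisted dual $(\ )^*$ on $\mathcal T_{m|n}$ (section \ref{sec:repr-of-gl}) is an antiinvolutive tensor functor $\mathcal T \to \mathcal T^{op}$, it fixes simples and indecomposable projectives (since $\mathcal T_{m|n}$ is a highest weight category with duality, this is part of the definition of the duality), and by the lemma $V(\lambda)^* = $ anti Kac module, so $(\ )^*$ sends $\mathcal T_+ = \{$Kac-filtered$\}$ to $\mathcal T_- = \{$anti Kac-filtered$\}$ and vice versa. For assumption 2, $Ber$ is an invertible (one-dimensional) simple object. Assumptions 3 and 4 are the heart of the matter: I would recall that the projective cover $P(\one)$ of the trivial module has a Kac flag with top quotient $V(\text{some extremal weight})$ and a costandard (anti Kac) flag, and in the block of $\one$ the relevant extremal weights are exactly the weight of $Ber$ and its negative; more precisely $P(\one)$ is self-dual under $(\ )^*$ and has $V(Ber)$ (the Kac module of the weight of $Ber$) as a submodule and, dually, $V(Ber)^*$ as a quotient. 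This is where I would either cite a BGG-reciprocity / Brundan--Stroppel type statement for $GL(m|n)$ (e.g.\ that $[P(\one):V(\mu)] = [\nabla(\mu):L(\one)]$ and the only $\mu$ with $L(\one)$ in the socle of $\nabla(\mu)$ and $\mu$ extremal in the block are $\mu = $ wt$(Ber)$) or argue directly with the translation-functor/thick-flag description of projectives. Assumption 5 (auxiliary): $V = V(0)$ is a Kac module, hence rigid, and its Loewy structure is multiplicity-free with $r$ pairwise non-isomorphic constituents --- this follows from Serganova's / Brundan's description of the composition factors of Kac modules in the principal block (indexed by the ``weight diagram'' combinatorics, where distinct arcs give distinct simples). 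Assumptions 6 and 7: the composite $\pi\circ i: V(\mathcal L)\to V(\mathcal L)^*$ is a nonzero map between a standard and a costandard module, hence (by $\mathrm{Hom}(V(\mu),\nabla(\nu)) = \delta_{\mu\nu}k$, \cite[Proposition 3.6.2]{Germonie}) is up to scalar the canonical map whose image is $L(\mathcal L)$ and whose kernel is $\mathrm{rad}\,V(\mathcal L)$; and $\mathcal L = Ber$ is the highest-weight constituent of $P(\one)$ with multiplicity one because wt$(Ber)$ is the maximal weight in the block of $\one$ appearing in $P(\one)$, again by BGG reciprocity.

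The main obstacle I anticipate is pinning down assumptions 3, 4 and 7 --- i.e.\ identifying precisely which standard module embeds in (resp.\ which costandard module is a quotient of) the projective cover $P(\one)$, and that this module is $V(Ber)$ with the correct multiplicity-one property. This requires knowing the standard- and costandard-flag multiplicities of $P(\one)$ in the (not necessarily principal, but of trivial central character) block, which in the $GL(m|n)$ case is governed by the combinatorics of weight/cap diagrams (Brundan--Stroppel, Serganova); the cleanest route is to invoke BGG reciprocity $[P(\lambda):V(\mu)] = [V(\mu)^*: L(\lambda)]$ valid in a highest weight category with duality, reduce everything to the assertion that $L(\one)$ occurs in $\nabla(\mathrm{wt}(Ber)) = V(Ber)^*$ exactly once and that $\mathrm{wt}(Ber)$ is maximal among such $\mu$, and then check this last combinatorial fact directly. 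The remaining assumptions (1, 2, 5, 6) are comparatively routine given the cited structure theory, so I would dispatch them quickly and spend the bulk of the argument on the $P(\one)$-flag computation.
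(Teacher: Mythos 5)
Your outline is reasonable, but it carries a concrete error that breaks axioms 3, 4 and 7 for every $n>1$: the invertible object must be $\mathcal{L}=Ber^{n}$, not $Ber$. The paper takes $\mathcal{L}$ to be the Tannaka dual of the socle of $V=V(\one)$; by Brundan--Stroppel the socle of $V$ is $Ber^{-n}$, so $\mathcal{L}=Ber^{n}$. With your choice $\mathcal{L}=Ber$ axiom 7 already fails: the highest-weight Jordan--H\"older constituent of $P(\one)$ is $Ber^{n}$, since the $2^{n}$ Kac modules in the standard flag of $P(\one)$ have highest weights obtained by flipping cup labels in the cap diagram of $\one$ and the top one is $V(Ber^{n})$, not $V(Ber)$. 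Consequently there is no embedding $V(\one)\otimes Ber\hookrightarrow P(\one)$ either, so axiom 3 fails. Had you actually carried out the BGG-reciprocity/flag computation you sketch, it would itself have produced $Ber^{n}$ as the extremal weight; the error is not in the method but in the premature identification of $\mathcal{L}$.

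Beyond this, your route and the paper's diverge on how to produce the maps $i$ and $\pi$ of axioms 3--4. You propose a direct BGG-reciprocity analysis of the standard and costandard flags of $P(\one)$. The paper instead invokes the tensor-product decomposition $\mathcal{L}\otimes V\otimes V^{*}\cong P\oplus Q$ with $Q$ projective of atypicality $<n$ (lemma \ref{thm:V-tp}); it then gets $i:V(\mathcal{L})\hookrightarrow P\oplus Q$ and $\pi:P\oplus Q\twoheadrightarrow V(\mathcal{L})^{*}$ by tensoring $\one\hookrightarrow V^{*}$ and $V\twoheadrightarrow\one$ with $\mathcal{L}\otimes V$ and $\mathcal{L}\otimes V^{*}$ respectively, and observes that $\pi$ kills $Q$. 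It also reduces the general $GL(m|n)$ case to the principal block of $GL(n|n)$ by a block equivalence, a step your sketch omits. Both approaches are viable once $\mathcal{L}$ is corrected, but the paper's avoids the detailed cap-diagram flag bookkeeping at the cost of the auxiliary decomposition of $V\otimes V^{*}$.
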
 

\begin{proof} For ${\mathcal T}_{m\vert n}$ these conditions hold for ${\mathcal L} = Ber^n$. 
Then $\mathcal L$ is the dual of the socle of the Kac module $V$ of the trivial representation 
$$ \xymatrix{ 0 \ar[r] & I \ \ar[r]^a & \ V \ \ar[r]^b & \ \one \ar[r] &  0 \cr} ,$$
hence ${\mathcal L}^{-1}$ is the cosocle of $V^*$, and $\one$ is the cosocle 
$$ \xymatrix{ 0 \ar[r] & J \ar[r] & V^*({\mathcal L})  \ar[r] &  \one \ar[r] &  0 \cr} ,$$
of $V^*({\mathcal L})= V^*\otimes {\mathcal L}$. The rigidity assertion 5) has been shown in \cite{Brundan-Stroppel-4},\cite{Brundan-Stroppel-1},\cite{Brundan-Stroppel-2}.  Notice also $V^\vee \cong {\mathcal L}\otimes V $. 
In particular by property  5)
the constituents $L_i$ of the Loewy filtration of $V$ satisfy
$$L_i^\vee \ \cong\  L_{r-i} \otimes {\mathcal L}\ .$$ 
Property 7) follows from the fact that $\mathcal L$ is the highest weight constituent of $P$
and also follows from loc. cit. The Loewy length is $r=n$
by \cite[Theorem 3.2]{Su-Zhang}. Property 3) and 4) and also 7) follow for $m+n$ from $${\mathcal L}\otimes V \otimes V^* = P \oplus Q\ ,$$ where
$Q$ is a projective object of atypicality $<n$ (see lemma \ref{thm:V-tp}). If 3), 4), 5) hold for $\mathcal{T}_{n|n}$, they hold for $\mathcal{T}_{m|n}$ by using the block equivalence to the principal block in $\mathcal{T}_{n|n}$. 
The inclusion $\one \to V^*$ induces the embedding $ i: {\mathcal L}\otimes V  \to P \subset P\oplus Q$; similarly the projection $V \to \one$ induces the surjection
$ \pi:  P \oplus Q \to {\mathcal L}\otimes V^*$, since $\pi$ is necessarily  trivial on $Q$.
\end{proof}  

So let us now take all these properties for granted. For simplicity we could assume ${\mathcal T=\mathcal T}_{m\vert n}$ (for $m\geq n$)
in order to ensure that these conditions hold.
Then we obtain

\begin{lem} The restriction of $V^*\otimes \mathcal L$ under $U:{\mathcal C} \to {\mathcal D}$ is projective
and the restriction of $V$ to $\mathcal D$ decomposes in the following way
$$  U(V) \ \cong \ U(I) \ \oplus \ \one  \ .$$
\end{lem}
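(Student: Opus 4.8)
The key input is the short exact sequence $0 \to I \to V \to \one \to 0$ in $\mathcal{T}_{m|n}$ (with $I$ injective/projective, as the kernel of the surjection $V(\one) \to \one$, which here is the socle part $I$). The plan is to apply the restriction functor $U = I\,Res$ to this sequence and analyze the two terms $U(I)$ and $U(V)$ separately, then splice. Since $U$ is exact, we obtain an exact sequence $0 \to U(I) \to U(V) \to \one \to 0$ in $\mathcal{D} = Rep(P^+)^\infty$.

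First I would handle $U(V^*\otimes \mathcal{L})$: by the assumptions listed (property 3,4 and the $GL(m|n)$-case computations) and by \cite[Proposition 3.6.2]{Germonie} (or the lemma just above it), an object of $\mathcal{T}_{m|n}$ has an anti Kac filtration if and only if its restriction to $P^+$ is projective. The anti Kac modules are exactly the costandard modules $V(\mu)^*$, and $V^*$ is the costandard module for the trivial weight, so $U(V^*)$ — hence also $U(V^*\otimes\mathcal{L})$ since $\mathcal{L}$ is invertible and $\mathcal{P}_{\mathcal D}$ is a tensor ideal — is projective in $Rep(P^+)$. Equivalently $V^*\otimes\mathcal{L} \in \mathcal{C}_-\cap\mathcal{T}$. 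Dualizing (using that $U$ commutes with duality up to the automorphism $\alpha$, or directly that $V \in \mathcal{C}_+\cap\mathcal{T}$ means $U(V)$ is projective when restricted to $P^-$, not $P^+$) requires care; instead I would directly analyze $U(V)$ as a $P^+$-module.

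Next, the main step: show $U(V) \cong U(I) \oplus \one$. Since $I$ is injective in $\mathcal{T}_{m|n}$, $U(I)$ is projective in $Rep(P^+)$ (the restriction of a projective-injective is projective, as $U$ preserves projectives being exact with an exact left adjoint). In $Rep(P^+)$, a projective object is also injective (Frobenius property). So to split $0 \to U(I) \to U(V) \to \one \to 0$ it suffices to show that $\one$ is a direct summand of $U(V)$ as $P^+$-module, or equivalently that the sequence splits — which follows once we check $Ext^1_{Rep(P^+)}(\one, U(I)) = 0$, and this holds because $U(I)$ is injective in $Rep(P^+)$. Thus $U(V) \cong U(I)\oplus \one$ in $\mathcal{D}$.

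\textbf{Main obstacle.} The delicate point is verifying that $U(I)$ really is projective (= injective) in $Rep(P^+)$ and pinning down precisely which parabolic ($P^+$ versus $P^-$) the various statements refer to, given the notational convention fixed earlier that $\mathcal{T}_-$ consists of anti Kac modules and $\mathcal{D}$ is the ind-category of $Rep(P^+)$. One must use that $\mathcal{I}_{\mathcal T}$ restricts into $\mathcal{I}_{\mathcal D} = \mathcal{P}_{\mathcal D}$; this follows from $U$ being exact with exact left adjoint $F$ (so $U$ preserves both injectives and projectives), together with the Frobenius coincidence $\mathcal{I}_{\mathcal D} = \mathcal{P}_{\mathcal D}$. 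Once that is in hand the splitting is immediate from the injectivity of $U(I)$, and the first sentence of the lemma ($U(V^*\otimes\mathcal{L})$ projective) is just the characterization of anti Kac modules applied to the costandard module $V^* \otimes \mathcal{L}$ — the tensor with the invertible $\mathcal{L}$ preserving the anti Kac property since $\mathcal{T}_-$ is a tensor ideal closed under twists by invertibles.
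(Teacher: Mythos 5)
The proposal has a genuine gap, and it comes precisely at what you identify as the "main step." You assert that $I$ is injective in $\mathcal{T}_{m|n}$ and conclude that $U(I)$ is projective (hence injective) in $\mathcal{D}$, making the short exact sequence $0 \to U(I) \to U(V) \to \one \to 0$ split by an $Ext^1$ vanishing. But $I$ is not injective: it is defined as the kernel of $V(\one) \to \one$, i.e.\ the radical of the Kac module, and this is an atypical module that is neither injective nor in $\mathcal{T}_-$. (In the $GL(1|1)$ case, $I = Ber^{-1}$ is a one-dimensional atypical simple, which cannot be injective and cannot carry an anti Kac filtration; its restriction to $P^+$ is a character, which is not projective in $Rep(P^+)$.) So the $Ext^1_{\mathcal{D}}(\one, U(I)) = 0$ step fails, and with it the splitting argument. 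The first sentence of the lemma is handled correctly — $V^* \in \mathcal{T}_-$ and $\mathcal{T}_-$ a tensor ideal gives $V^* \otimes \mathcal{L} \in \mathcal{T}_-$, hence $U(V^*\otimes\mathcal{L})$ projective — but the second, harder claim is not.

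The paper's actual argument does not try to split the sequence $0 \to U(I) \to U(V) \to \one \to 0$ directly via injectivity of $U(I)$. Instead, it embeds $V(\mathcal{L}) = V\otimes\mathcal{L}$ into the projective cover $P = P(\one)$ via the injection $i$ (axiom 3), observes that the surjection $\pi\colon P \to V(\mathcal{L})^*$ splits after applying $U$ because $U(V(\mathcal{L})^*)$ \emph{is} projective (the part you got right), and then analyzes $U(i(V(\mathcal{L})))$ inside $U(P) = U(\ker\pi) \oplus U(V(\mathcal{L})^*)$. Using axioms 6 and 7 — that $\ker(\pi \circ i)$ is $\mathrm{rad}(V(\mathcal{L}))$ and that $\mathcal{L}$ has multiplicity one in $P$ — one shows the intersection $U(i(V(\mathcal{L}))) \cap U(V(\mathcal{L})^*)$ is exactly $U(\mathcal{L})$, which produces the internal splitting $U(V(\mathcal{L})) \cong U(\ker\pi \cap i(V(\mathcal{L}))) \oplus U(\mathcal{L})$, and untwisting by $\mathcal{L}^{-1}$ gives $U(V) \cong U(I) \oplus \one$. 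The splitting thus comes from the projectivity of $U(V(\mathcal{L})^*)$ inside $U(P)$, not from any projectivity of $U(I)$; the latter simply does not hold.
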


\begin{proof} Since $V\in \mathcal T_+$ the first property 1) implies $V^*\in \mathcal T_-$, hence
$V^*\otimes{\mathcal L}\in \mathcal T_-$. Concerning the second assertion
this implies that $U(V^*\otimes{\mathcal L})$ is projective and that 
$  U(\pi):   U(P) \to U( V({\mathcal L})^*) = U(V^* \otimes {\mathcal L}) $
splits
$$  U(P) \ \cong \ U(Kern(\pi))\ \oplus\ U( V({\mathcal L})^*) \ .$$
Now
$ D = U(i(V({\mathcal L})))   \ \cap \ U( V({\mathcal L})^*) \ \neq 0 $,
since by the properties 6) and 7) the Jordan-H\"older constituent $U({\mathcal L})$ of $U(P)$
is obtained from $U(i(V({\mathcal L}))) \subset U(P)$ but not obtained from $U(Kern(\pi))$. To proof our second assertion it would suffice to show $D \cong U({\mathcal L})$. Indeed, since $Kern(\pi)$ contains the radical of $i(V({\mathcal L}))$ by property 6),
the splitting of $U(\pi)$ then induces a splitting of $U(i(V({\mathcal L})))$
$$ U(i(V({\mathcal L})))  \ \cong \ U(Kern(\pi) \cap i(V({\mathcal L}))) \ \oplus \  U({\mathcal L}) \ .$$
 Tensoring with ${\mathcal L}^{-1}$  gives
the required isomorphism $U(V) \cong U(I) \oplus \one$. 
\end{proof}


\subsection{Construction of cofibrant replacements}

Recall that $V = V(\one)$ and $I = V(\one)/\one$. Now consider in the category $\mathcal C$ the objects 
$P=\bigoplus_{i=0}^\infty P_i$, $Q=\bigoplus_{i=0}^\infty Q_i$  and
$R=\bigoplus_{i=0}^\infty R_i$ 
for
\begin{align*} R_i \ & = \ (I\otimes I^*)^{\otimes (i+1)} \\
Q_i \ & = \ (I\otimes V^*)\otimes (I\otimes I^*)^{\otimes i}\ \bigoplus\ V \otimes (I\otimes I^*)^{\otimes i} \ \\  
 P_i \ & = \ I \otimes (I\otimes I^*)^{\otimes i} .\end{align*}
We define morphisms $\alpha_i: P_i \to Q_i$ by $\alpha \otimes id_{ (I\otimes I^*)^{\otimes i}}$ for
$$ \alpha:\  I \ \hookrightarrow \ (I\otimes V^*) \ \bigoplus \ V \ ,$$ 
where $\alpha$ is the diagonal map obtained from the two morphisms $id_I \otimes b^*: I = I\otimes \one  \hookrightarrow I \otimes V^*$
and $a: I \hookrightarrow V$.
Similarly
define morphisms $\beta_i: Q_i \to R_i$ by $\beta \otimes id_{ (I\otimes I^*)^{\otimes i}}$ for
the epimorphism
$$ \beta:\ \  (I\otimes V^*) \bigoplus V \ \longrightarrow \ I\otimes I^* \ ,$$ 
where $\beta$ is of projection onto $(I\otimes V^*)$ followed by the epimorphism $id_I \otimes a^*: I \otimes V^*= I\otimes I^* $. Finally define for $i\geq 1$
morphisms $\gamma_i: Q_i \to R_{i-1}$ by $\gamma \otimes id_{ (I\otimes I^*)^{\otimes i}}$ for
the epimorphism
$$ \gamma:\ \  (I\otimes V^*) \bigoplus V \ \longrightarrow \ V \to  \one  \ ,$$ 
where $\gamma$ is of projection onto $V$ followed by the epimorphism $b: V\to \one $. Put $\gamma_0=0$.
The maps $(\alpha_i)_{i\geq 0}$ and $(\gamma_i)_{i\geq 1} - (\beta_i)_{i\geq 0}$ define a complex in $\mathcal C$
$$ 0 \to P \to Q \to R \to 0 \ .$$
Let $\Omega = Kern(\beta)/Im(\alpha)$ be its cohomology.
The composition of epimorphisms $\Omega \to \Omega_0 \to V \to \one$ defines
an epimorphism $q: \Omega \longrightarrow \one$.

\begin{lem} \label{thm:cofib-of-1}$\Omega$ is cofibrant in $\mathcal C$.
There exists an epimorphism $$q: \Omega \to \one$$ with kernel
in $\mathcal C_-$. Hence $\Omega$ is a cofibrant replacement of $1$.
\end{lem}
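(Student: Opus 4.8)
The plan is to verify the two assertions separately: first that $\Omega$ is cofibrant, i.e.\ $\Omega \in \mathcal{C}_+$, and second that the evident epimorphism $q:\Omega \to \one$ has kernel in $\mathcal{C}_-$; together with Corollary \ref{model-structure-descr}(2)--(3) this is exactly what it means for $q$ to be a cofibrant replacement. The overarching idea is that the complex $0 \to P \to Q \to R \to 0$ has been built so that, after restriction along $U$, both $P$ and $R$ are projective (hence injective) $\mathcal{D}$-objects while $Q$ is almost projective, the only obstruction being a single trivial summand $\one$ coming from $U(V) \cong U(I) \oplus \one$ by the last lemma; so $\Omega$ sees exactly that copy of $\one$ and nothing injective.

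First I would show $\Omega \in \mathcal{C}_+$. Since $\mathcal{C}_+$ is characterized (Corollary \ref{model-structure-descr}) as the objects $X$ with $0 \to X$ a cofibration, equivalently (Theorem \ref{thm:main-homotopy}, cotorsion pair) as $\{X : Ext^1(X,\mathcal{C}_-)=0\}$, and $\mathcal{C}_+$ is closed under extensions, cokernels of monics and kernels of epis (item \ref{cof-fib-prop}), it suffices to locate $\Omega$ inside the complex. Each $R_i = (I\otimes I^*)^{\otimes(i+1)}$ and $P_i = I \otimes (I\otimes I^*)^{\otimes i}$ involves the tensor factor $I = V/\one$, which lies in $\mathcal{T}_+$ (it has a Kac filtration), and $\mathcal{C}_+$ is a tensor ideal (Section \ref{sec:monoidal-model-structure}); similarly $I\otimes V^* \otimes (\cdots) \in \mathcal{C}_+$ since $I \in \mathcal{T}_+$, and $V\otimes(\cdots) \in \mathcal{C}_+$ since $V \in \mathcal{T}_+$. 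Hence $P, Q, R$ all lie in $\mathcal{C}_+$. As $\ker\beta \subseteq Q$ is the kernel of an epimorphism between objects of $\mathcal{C}_+$ (note $\beta$ is surjective by construction) it lies in $\mathcal{C}_+$ by \ref{cof-fib-prop}, and $\Omega = \ker\beta / \mathrm{im}\,\alpha$ is then a cokernel of the monic $\mathrm{im}\,\alpha \hookrightarrow \ker\beta$ between objects of $\mathcal{C}_+$, so $\Omega \in \mathcal{C}_+$ again by \ref{cof-fib-prop}. (I need to confirm $\alpha_i$ is a monomorphism: $\alpha = (id_I\otimes b^*, a)$ is split mono already via $a:I\hookrightarrow V$ on the second component, so each $\alpha_i$ is a split monomorphism.)

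The main obstacle is the second assertion: that $K := \ker(q:\Omega \to \one)$ lies in $\mathcal{C}_-$, equivalently that $U(K)$ is injective in $\mathcal{D}$ (Corollary after Lemma \ref{lemma-3}). Here I would apply $U$ to the whole complex. By the last lemma $U(V) \cong U(I)\oplus \one$ and $U(V^*\otimes \mathcal{L})$ is projective, hence (rescaling, and using that $\mathcal{L}$ is invertible and $U$ is monoidal) $U(I\otimes V^*)$ is projective in $\mathcal{D}$; also $U(I) = U(V)/\one$ restricted — wait, more precisely $U(I)$ is a summand of $U(V)$ which is itself a summand of the projective $U(P(\one))$-type object, so $U(I)$ and all the $U(R_i), U(P_i)$ built from $I\otimes I^*$ tensor powers are projective, being tensor powers involving $U(I)$ together with duals, inside the tensor ideal $\mathcal{P}_{\mathcal{D}} = \mathcal{I}_{\mathcal{D}}$. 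Thus $U(P)$ and $U(R)$ are injective, and $U(Q) \cong U(P)' \oplus \bigoplus_i \one\otimes(I\otimes I^*)^{\otimes i}|_{\mathcal D}$ with $U(P)'$ injective — one extracts from $U(V)\cong U(I)\oplus\one$ that $U(Q_i)$ is the direct sum of an injective and $\one\otimes U((I\otimes I^*)^{\otimes i})$. Now $U$ is exact, so $U(\Omega) = \ker U(\beta)/\mathrm{im}\,U(\alpha)$ is the cohomology of the restricted complex. Because $U(P), U(R)$ are injective ($=$ projective), the restricted complex computes the cohomology of the ``residual'' complex of the $\one$-summands of $U(Q)$, and a short diagram chase (the maps $U(\gamma_i)-U(\beta_i)$ track exactly how $b:V\to\one$ and $b^*:\one\to V^*$ interact on those summands) identifies $U(\Omega)$ up to injective summands with a shift of $\one$ together with $U$ of a complex of injectives; one then checks $U(K) = U(\ker q)$ is the kernel of $U(\Omega)\to\one$, and that all the non-injective content of $U(\Omega)$ is carried by the surjection onto $\one$, leaving $U(K)$ injective. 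The delicate point I expect to wrestle with is bookkeeping the connecting maps $\gamma_i, \beta_i$ carefully enough to see that the only surviving non-projective cohomology of $U$ of the complex is precisely a single $\one$ in degree $0$ mapping isomorphically under $U(q)$, so that $U(K)$ is injective; this is where the explicit definitions of $\alpha,\beta,\gamma$ and the splitting $U(V)\cong U(I)\oplus\one$ must be combined precisely.
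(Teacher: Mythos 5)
Your division of the lemma into its two assertions is sensible, and for the second assertion you have spotted the right ingredients (exactness of $U$, the splitting $U(V)\cong U(I)\oplus \one$, and the projectivity statement of the preceding lemma). However, both halves of your argument rest on false premises. For cofibrancy, you claim $P,Q,R\in\mathcal C_+$ because the tensor factor $I$ "has a Kac filtration". It does not: $I$ is the kernel of $b:V\to\one$, i.e.\ the radical of the Kac module $V(\one)$, and it is not a Kac object. Already for $GL(1|1)$ one has $I\cong Ber^{-1}$ and $R_0=I\otimes I^*\cong Ber^{-2}$, an atypical simple object lying in neither $\mathcal T_+$ nor $\mathcal T_-$; likewise $I\otimes V^*$ is an anti--Kac object of length two, so $Q_0\notin\mathcal C_+$. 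Hence $\Omega\in\mathcal C_+$ cannot be deduced from closure properties applied to the terms of the complex: cofibrancy of $\Omega$ is a cancellation phenomenon in the cohomology. The paper's argument instead filters $\Omega$ itself by subobjects $\Omega_i$ with $\Omega_{i+1}/\Omega_i\cong V\otimes(I\otimes I^*)^{\otimes i}$; these graded pieces contain the factor $V\in\mathcal T_+$ and therefore lie in $\mathcal T_+$ because $\mathcal T_+$ is a tensor ideal, and one concludes using closure of $\mathcal C_+$ under extensions and under sequential colimits with monomorphic transition maps.

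For the kernel, your organizing claim that $U(P)$ and $U(R)$ are projective is false for the same reason: in the $GL(1|1)$ case $U(R_i)$ and $U(P_i)$ are one-dimensional characters of the parabolic, not projective. So the picture of a "residual complex of $\one$-summands" sitting inside a complex of injectives does not apply, and the "delicate bookkeeping" you defer is exactly where the proof lives. In the paper's computation the projectivity of $U(K)$ comes exclusively from the tensor factor $U(V^*)$ (projective by the preceding lemma, combined with the fact that $\mathcal P_{\mathcal D}$ is a tensor ideal): one replaces $Q$ by $Q'=\ker(Q\to\one)$, uses the splitting $U(V)\cong U(I)\oplus\one$ to compute $U(Q')/U(P)$ explicitly as a direct sum, and identifies the kernel of $U(Q')/U(P)\to U(R)$, which is $U(K)$, as a direct sum of terms each of which contains a factor $U(V^*)$ and is therefore projective. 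You need to carry out this identification; an appeal to injectivity of $U(P)$ and $U(R)$ is not available. (A minor further point: $a:I\hookrightarrow V$ is not split, so $\alpha$ is a monomorphism but not a split one; this does not affect the argument, since only injectivity is needed.)
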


\begin{proof} The inclusion $V \to Q_0$ on the second summand of $Q_0$ 
induces a complex map, which defines a monomorphism on cohomology
$ V \hookrightarrow \Omega $
with quotient 
$\Omega/V \ \cong \ \Omega \ \otimes \ (I\otimes I^*) $.
Similarly $V\otimes  (I\otimes I^*) \hookrightarrow 
\Omega \otimes (I\otimes I^*)$ has quotient isomorphic to
$\Omega \otimes (I\otimes I^*)^{\otimes 2}$.
Iterating this gives short exact sequences
$$ 0 \to \Omega_i \to \Omega \to \Omega\otimes (I\otimes I^*)^{\otimes i} \to 0 \ .$$
The kernels $\Omega_i$ define an increasing sequence of sub-comodules of $\Omega$
$$ V =\Omega_0 \subset \Omega_1 \subset \Omega_2 \subset ... $$
such that
$ \Omega  =  co\lim \Omega_i $. 
Since $$ \Omega_{i+1}/\Omega_{i}\ \cong\ V \ \otimes\ (I\otimes I^*)^{\otimes i}$$
is in $\mathcal T_+$, all the comodules $\Omega_i$ are in $\mathcal T_+$.
Hence $\Omega\in \mathcal C_+$. This shows that $\Omega$ is cofibrant.
The kernel $K$ of $q:\Omega\to \one$ 
is the cohomology of the complex
$$ P \to Q' \to R \ ,$$
for $Q'=Kern(Q \to \one)$.
Since $U$ is an exact functor
and commutes with direct sums, we can compute $U(R)$ from the complex
$U(P)\to U(Q')\to U(R)$. Since $U(V)\cong U(I)\oplus \one$ splits, $U(Q)$ simplifies 
$$ U(Q_i) \cong U(V^*)\otimes U(I\otimes I^*)^{\otimes i} \ \bigoplus \ (U(I)\oplus \one)\otimes U(I\otimes I^*)^{\otimes i}
$$
Thus we obtain $$ U(Q')/U(P)\ \cong \ \bigoplus_{i=0}^\infty  \ U(V^*)\otimes U(I\otimes I^*)^{\otimes i}
\ \  \oplus \ \ \bigoplus_{i=1}^\infty  \  U(I\otimes I^*)^{\otimes i} $$
so that the kernel of $U(Q')/U(P) \to U(R)$ becomes
$$ U(R) \ \cong \ \bigoplus_{i=0}^\infty \  U(V^*)\otimes U(I\otimes I^*)^{\otimes i} \ .$$
Since $U(V^*)$ is projective by the last lemma, $U(V^*)\otimes U(I\otimes I^*)^{\otimes i}$
is projective as well. Hence $U(R)$ is a direct sum of projectives objects in $\mathcal D$, hence projective
in $\mathcal D$. Thus $R\in \mathcal C_-$. 
\end{proof}

\begin{example} \label{ex:gl-1-1} In the $GL(1|1)$-case $V(\one)$ has the composition factors $\one$ and $Ber^{-1}$. Therefore $I \otimes I^* \cong Ber^{-2}$. Accordingly $\Omega_{i+1}/\Omega_{i}$ equals \[ V  \otimes\ (I\otimes I^*)^{\otimes i} \cong V \otimes (Ber^{-2})^{\otimes i} \cong V(Ber^{-2i}).\] 
\end{example}

\medskip\noindent
We can find embeddings
$  K\otimes (I\otimes I^*) \hookrightarrow K $
with kokernel isomorphic to $V^*\otimes I$. Indeed
$P_{i+1}=P_i\otimes (I\otimes I^*)$, $Q_{i+1}=Q_i\otimes (I\otimes I^*)$
and $R_{i+1}=R_i\otimes (I\otimes I^*)$, and similarly for the
complex maps. This defines a short exact sequences of complexes
$$ \xymatrix{ 0 \ar[r] & P \otimes (I\otimes I^*) \ar@{^{(}->}[d]\ar[r] & Q' \otimes (I\otimes I^*) \ar[r]\ar@{^{(}->}[d] & R \otimes (I\otimes I^*)  \ar@{^{(}->}[d]\ar[r] & 0  \cr
0 \ar[r] & P \ar[r]\ar@{->>}[d] & Q' \ar[r]\ar@{->>}[d] & R \ar@{->>}[d]\ar[r] & 0\cr
0 \ar[r] & P_0 \ar[r] & P_0\oplus (I\otimes V^*) \oplus R_0\ar[r] & R_0 \ar[r] & 0\cr} $$
whose cohomology sequence gives the short exact sequence in $\mathcal C$
$$  0 \to K\otimes (I\otimes I^*) \to K \to I\otimes V^* \to 0 \ .$$
By $i$-fold iteration this gives short exact sequences
$$ 0 \to K^{(i)} \to K \to K_i \to 0 \ $$
$$ K^{(i)} \ \cong \ K\otimes (I\otimes I^*)^{\otimes i} $$
Hence the objects $K^{(i)}$ and $K_i$ are in $\mathcal C_-$. 

\medskip\noindent
Hence $K$ has a descending chain of subcomodules $K^{(i)}$ in $\mathcal C$
$$ K=K^{(0)}  \supset K^{(1)} \supset K^{(2)} \supset K^{(3)} \cdots   $$
whose successive quotients $K_i$ are in $\mathcal T_-$. 
By the construction the weights of all irreducible constituents
of $K$ and of $I\otimes I^*$ are $<0$. Hence we get
from simple weight reasons

\begin{thm} For any $Y$ in $\mathcal T$ there exists an integer
$n$ such that $Hom_{\mathcal C}(K^{(n)},Y) =0$.
\end{thm}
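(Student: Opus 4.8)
The plan is to exploit the filtration $K = K^{(0)} \supset K^{(1)} \supset K^{(2)} \supset \cdots$ constructed just above, whose successive quotients $K_i = K^{(i)}/K^{(i+1)}$ lie in $\mathcal{T}_-$ and — crucially — are built from tensor powers of $I \otimes I^*$, so that every irreducible constituent appearing in $K_i$ (and in $K^{(i)}$ itself) has strictly negative weight, and moreover the weights drop further as $i$ grows. The key observation is a weight-boundedness statement: since $I = V(\one)/\one$ has all its composition factors of weight $< 0$ (the composition factors of $V(\one)$ other than $\one$ are strictly lower in the dominance order, and dualizing flips this), the constituents of $(I \otimes I^*)^{\otimes (i+1)}$ have weights that are coordinatewise bounded above by something going to $-\infty$ as $i \to \infty$; more precisely, the minimal ``height'' of a constituent of $K^{(i)}$ increases without bound.

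First I would make precise the weight bound. Fix $Y \in \mathcal{T}$; then $Y$ has finitely many composition factors $L(\mu_1), \dots, L(\mu_r)$, and there is a uniform bound $B$ such that any $L(\mu)$ with $\operatorname{Hom}_{\mathcal C}(L(\nu), L(\mu)) \neq 0$ forces $\nu = \mu$, so it suffices to show that for $n$ large no composition factor of $K^{(n)}$ is isomorphic to any $L(\mu_j)$. For this I would track the weight: writing $I \otimes I^*$ as having constituents of weights in a finite set $S \subset \{\text{weights} < 0\}$, the constituents of $K^{(i)} \cong K \otimes (I \otimes I^*)^{\otimes i}$ have weights of the form $\lambda_K + s_1 + \cdots + s_i$ with $s_\ell \in S$ and $\lambda_K$ a constituent weight of $K$. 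Since each $s_\ell$ is negative and bounded away from zero in, say, the sum-of-coordinates functional (the Berezin degree), the total Berezin degree of any constituent of $K^{(i)}$ tends to $-\infty$ uniformly in the choice of constituent. Hence for $n$ sufficiently large, the (finite) set of Berezin degrees of $\mu_1, \dots, \mu_r$ is disjoint from the set of Berezin degrees of constituents of $K^{(n)}$, so $\operatorname{Hom}_{\mathcal{C}}(K^{(n)}, L(\mu_j)) = 0$ for all $j$.

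Then I would conclude: $\operatorname{Hom}_{\mathcal C}(K^{(n)}, Y)$ embeds (via the composition series of $Y$, using left-exactness of $\operatorname{Hom}$ and induction along the filtration of $Y$ by subobjects) into a sum of terms $\operatorname{Hom}_{\mathcal C}(K^{(n)}, L(\mu_j))$, all of which vanish; therefore $\operatorname{Hom}_{\mathcal C}(K^{(n)}, Y) = 0$. One technical point to handle carefully is that $K^{(n)}$ is an infinite-dimensional comodule, so one should use that any morphism $K^{(n)} \to Y$ with $Y$ finite-dimensional has finite-dimensional image, which is a quotient of $K^{(n)}$ whose constituents are among those of $K^{(n)}$; hence its constituents still all have very negative Berezin degree, contradicting that it is a subobject of $Y$ unless it is zero.

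The main obstacle, and the only genuinely non-formal step, is establishing the uniform weight bound — i.e.\ verifying that the weights of constituents of $I \otimes I^*$ are not merely negative but bounded away from $0$ under a suitable linear functional (Berezin degree), and that this forces the degrees of constituents of $K^{(i)}$ to diverge. This is where the explicit structure of $V(\one)$ for $GL(m|n)$ enters: one needs that $I$ itself has all constituents of Berezin degree strictly less than that of $\one$, which follows since $V(\one)$ is the Kac module of the trivial representation and its radical consists of strictly lower weights. Once this is in hand, everything else is a routine finiteness-plus-weight argument, essentially the ``simple weight reasons'' alluded to in the statement.
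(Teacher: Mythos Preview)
Your proposal is correct and is precisely the argument the paper has in mind: the paper's own proof is the single sentence ``By the construction the weights of all irreducible constituents of $K$ and of $I\otimes I^*$ are $<0$. Hence we get from simple weight reasons\ldots'', and your write-up is a careful unpacking of exactly those ``simple weight reasons'' via an additive degree functional. One small slip: the twisted dual ${}^*$ fixes simple objects, so the constituents of $I^*$ have the \emph{same} (negative) degrees as those of $I$ --- nothing is flipped --- but this does not affect your argument.
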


\begin{thm} \label{thm:good-replacement} For any object $X$ in $\mathcal T$ there exists 
a cofibrant replacement $q_X:QX \to X$ in $\mathcal C$ with the following property.
For any $Y$ in $\mathcal T$ there exists a subobject $K' \in QX$ of finite codimension
contained
in $Kern(q_X)$ such that $K'\in \mathcal C_-$ and such that  $Hom_{\mathcal C}(K',Y) =0$.
\end{thm}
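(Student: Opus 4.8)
The plan is to take the cofibrant replacement of $\one$ constructed in lemma \ref{thm:cofib-of-1}, namely $q: \Omega \to \one$ with $K = Kern(q) \in \mathcal{C}_-$, and to tensor it with $X$. The candidate cofibrant replacement for $X$ is $q_X = q \otimes id_X : QX = \Omega \otimes X \to \one \otimes X = X$. First I would check that this is indeed a cofibrant replacement: since $\mathcal{C}_+$ is a tensor ideal, $\Omega \otimes X \in \mathcal{C}_+$, so $QX$ is cofibrant; and since $\mathcal{C}_-$ is a tensor ideal, $Kern(q_X) = K \otimes X \in \mathcal{C}_-$, so $q_X$ is a trivial fibration by the explicit description of $\mathcal{R} \cap \mathcal{W}$ in corollary \ref{model-structure-descr}. (Here $X$ is finite-dimensional, so $- \otimes X$ is exact and commutes with the relevant limits.)

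Next I would identify the candidate subobject $K'$. Using the descending chain $K = K^{(0)} \supset K^{(1)} \supset K^{(2)} \supset \cdots$ constructed just before the theorem, with $K^{(i)} \cong K \otimes (I \otimes I^*)^{\otimes i}$ and each $K^{(i)} \in \mathcal{C}_-$ and each quotient $K_i = K/K^{(i)} \in \mathcal{T}_-$ finite-dimensional, I would set $K' = K^{(n)} \otimes X \subset K \otimes X = Kern(q_X) \subset QX$ for a suitably large $n$ depending on $Y$. Since $K^{(n)} \in \mathcal{C}_-$ is a tensor ideal and $X \in \mathcal{T}$, we get $K' \in \mathcal{C}_-$. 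The finite-codimension claim follows from the short exact sequence $0 \to K^{(n)} \to K \to K_n \to 0$: tensoring with $X$ and then with the sequence $0 \to K \otimes X \to QX \to X \to 0$ shows $QX / K' $ is a finite-dimensional extension of $K_n \otimes X$ by $X$, hence finite-dimensional.

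It remains to arrange $Hom_{\mathcal{C}}(K', Y) = 0$, i.e.\ $Hom_{\mathcal{C}}(K^{(n)} \otimes X, Y) = 0$. By rigidity this equals $Hom_{\mathcal{C}}(K^{(n)}, X^\vee \otimes Y)$, and $X^\vee \otimes Y$ is a fixed finite-dimensional object $Y' \in \mathcal{T}$. So it suffices to invoke the preceding theorem, which gives an integer $n$ with $Hom_{\mathcal{C}}(K^{(n)}, Y') = 0$. The argument for that theorem — which I would recall or re-run — is the ``simple weight reasons'': all irreducible constituents of $K$ and of $I \otimes I^*$ have strictly negative weight (in the $Ber$-grading), so the constituents of $K^{(n)} \cong K \otimes (I \otimes I^*)^{\otimes n}$ have weights tending to $-\infty$ uniformly as $n \to \infty$, while $Y'$ has only finitely many constituents of bounded weight; for $n$ large enough no constituent of $K^{(n)}$ can map nontrivially to $Y'$. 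Choosing $n$ large enough for $Y' = X^\vee \otimes Y$ finishes the proof.

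The main obstacle is the bookkeeping in the weight argument: one needs a clean notion of the relevant $\bbZ$-grading (coming from powers of $Ber$, or more precisely the degree filtration) in which $I \otimes I^*$ sits in strictly negative degrees, and one must be careful that tensoring by $X$ (or passing to $X^\vee \otimes Y$) only shifts weights by a bounded amount, so that the uniform escape to $-\infty$ of the weights of $K^{(n)}$ survives. Everything else is formal manipulation with the tensor-ideal properties of $\mathcal{C}_\pm$ and the exactness of $- \otimes X$ for finite-dimensional $X$.
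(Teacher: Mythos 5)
Your proposal is essentially the paper's own proof: the paper sets $QX = \Omega \otimes X$, $q_X = q \otimes \mathrm{id}_X$, and $K' = K^{(n)} \otimes X$ with $n$ chosen so that $Hom_{\mathcal{C}}(K^{(n)}, X^\vee \otimes Y) = 0$ via the preceding weight-vanishing theorem, exactly as you do. Your write-up is just a more detailed unpacking of the paper's two-line argument (including the verification that $q_X$ is a trivial fibration and the finite-codimension bookkeeping), and it is correct.
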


\begin{proof} Consider $QX=\Omega \otimes X$ for $q_X=q\otimes X$ and $K'=K^{(n)}\otimes X$
for $n$ large enough, such that $Hom_{\mathcal C}(K^{(n)},X^\vee \otimes Y) =0$. \end{proof}

Since $[X,Y] = Hom_{Ho\mathcal{C}}(QX,Y)/\sim$, the two theorems imply immediately the following important corollary.

\begin{cor} For $X,Y \in \mathcal{T}$ we have $\dim [X,Y] < \infty$.
\end{cor}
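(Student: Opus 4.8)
The plan is to deduce the corollary directly from Theorem~\ref{thm:good-replacement} together with the identification $[X,Y] = Hom_{Ho\mathcal{C}}(QX,Y)/\sim$ coming from Theorem~\ref{thm:homotopy-stable}. First I would fix $X,Y\in\mathcal{T}$ and invoke Theorem~\ref{thm:good-replacement} to obtain a cofibrant replacement $q_X:QX\to X$ and a subobject $K'\subset QX$ of \emph{finite codimension} that is contained in $Kern(q_X)$, lies in $\mathcal{C}_-$, and satisfies $Hom_{\mathcal{C}}(K',Y)=0$. Writing $QX/K' =: Z$, which is finite dimensional by construction, I get a short exact sequence $0\to K'\to QX\to Z\to 0$, and applying $Hom_{\mathcal{C}}(-,Y)$ the vanishing $Hom_{\mathcal{C}}(K',Y)=0$ shows that every morphism $QX\to Y$ factors (uniquely) through the quotient $Z$; hence $Hom_{\mathcal{C}}(QX,Y)\hookrightarrow Hom_{\mathcal{C}}(Z,Y)$, and the right-hand side is finite dimensional by condition (F) since both $Z,Y\in\mathcal{T}$. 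Therefore $Hom_{\mathcal{C}}(QX,Y)$ is already finite dimensional, and a fortiori so is its quotient $[X,Y]=Hom_{\mathcal{C}}(QX,Y)/\sim_{stable}$.

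The one point needing a word of care is the identification $[X,Y]=Hom_{\mathcal{C}}(QX,Y)/\sim$ rather than $Hom_{Ho\mathcal{C}}(QX,RY)/\sim$: here every object is fibrant, so $RY\cong Y$, and by Theorem~\ref{thm:homotopy-stable}\ref{stable-cat} we have the cleaner formula $[X,Y]=Hom(QX,Y)/\sim_{stable}$, which is the one I will use. So the entire argument reduces to the observation that a quotient of a finite-dimensional space is finite dimensional.

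I do not expect any genuine obstacle here: the corollary is an immediate formal consequence of the main construction, and indeed the text already signals this (\emph{the two theorems imply immediately the following important corollary}). If anything, the only thing to be slightly careful about is to use the correct one of the two theorems stated just before — namely that the \emph{finite-codimension} subobject $K'$ with $Hom_{\mathcal{C}}(K',Y)=0$ exists for the given target $Y$, so that the bound $\dim[X,Y]\le \dim Hom_{\mathcal{C}}(QX/K',Y)<\infty$ is legitimate. No further input is required.

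\begin{proof} By Theorem~\ref{thm:homotopy-stable}\ref{stable-cat} we have $[X,Y] = Hom_{\mathcal{C}}(QX,Y)/\sim_{stable}$, so it suffices to show $\dim Hom_{\mathcal{C}}(QX,Y) < \infty$. By Theorem~\ref{thm:good-replacement} there is a cofibrant replacement $q_X:QX\to X$ and a subobject $K'\subset QX$ of finite codimension with $K'\subset Kern(q_X)$, $K'\in\mathcal{C}_-$ and $Hom_{\mathcal{C}}(K',Y)=0$. The short exact sequence $0\to K'\to QX\to QX/K'\to 0$ with $QX/K'$ finite dimensional yields, upon applying $Hom_{\mathcal{C}}(-,Y)$, an injection $Hom_{\mathcal{C}}(QX,Y)\hookrightarrow Hom_{\mathcal{C}}(QX/K',Y)$. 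Since $QX/K'$ and $Y$ lie in $\mathcal{T}$, the right-hand side is finite dimensional by condition (F). Hence $Hom_{\mathcal{C}}(QX,Y)$, and a fortiori its quotient $[X,Y]$, is finite dimensional.
\end{proof}
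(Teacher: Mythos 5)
Your argument is correct and is exactly the route the paper intends: the paper gives no written proof beyond remarking that the two preceding theorems imply the corollary immediately, and your write-up fills in precisely those details (factoring every morphism $QX\to Y$ through the finite-dimensional quotient $QX/K'$ via $Hom_{\mathcal{C}}(K',Y)=0$, then invoking condition (F) and the identification $[X,Y]=Hom_{\mathcal{C}}(QX,Y)/\sim_{stable}$). No gaps.
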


\begin{remark} For a way to see the cofibrant replacement as a Kac resolution see section \ref{sec:intro-ov}. An estimate for the dimension of $[L(\lambda),L(\mu)]$ can be found in section \ref{sec:intro-gl}. We do not know a direct representation theoretic meaning of this dimension.
\end{remark}


\subsection{A second interpretation of $Ho \mathcal T$}

The full image category of $\mathcal T_-$ in $\overline{\mathcal T}$ is
a triangulated subcategory. It is thick, since $X\cong A\oplus B$ for $X\in \mathcal T_-$
implies $P\oplus X \cong P'\oplus A \oplus B$ in $\mathcal T$, hence
$X'\cong A' \oplus B'$ for the clean components $X',A',B'$ of $X,A,B$.
Let $ho\mathcal T$ be the quotient category of the triangulated stable category
$\overline{\mathcal T}$ by the thick subcategory $\overline{\mathcal T}_-$.
There is a natural
tensor functor $$ ho{\mathcal T} \to Ho\mathcal T \ .$$
Fix objects $X$ and $Y$ in $\mathcal T$.
Then morphisms $X\to Y$ in $ho{\mathcal T}$ are (certain equivalence classes of diagrams) of the form (see \cite{Neeman})
$$ \xymatrix{ &  Z \ar[dl]_s\ar[dr]^f &  \cr X &  & Y} $$
for morphisms $s:Z\to X$ and $f:Z\to X$ in $\overline{\mathcal T}$ with $Z\in \mathcal T$
(hence  $s$ and $f$ are classes of morphisms in $\mathcal T$ that are still denoted $s$ and $f$
by abuse of notation) such that
the cone of $s$ is in $\mathcal T_-$. 
Since $\overline{\mathcal T}_-$ maps to zero under the functor $\gamma: \overline{\mathcal T} \to Ho{\mathcal T}$, defined as a full
subcategory of $Ho\mathcal C$, the morphism $s:Z\to X$ in $\overline{\mathcal T}$
becomes an isomorphism $\gamma(s)$ in $Ho{\mathcal T}$. In the manner
diagrams are composed and the equivalence classes are defined, it is easy to see
that we obtain an induced functor
$$  \gamma: ho{\mathcal T} \to Ho\mathcal T \ ,$$
which maps the equivalence class of the diagram $X \leftarrow Z \rightarrow Y$
to $\gamma(f)\circ\gamma(s)^{-1}$.
Let us show

\begin{thm} \label{thm:gl-m-n-homotopy} The functor $\gamma$ induces
a $k$-linear equivalence of tensor categories \[ \overline{\mathcal{T}}/\overline{\mathcal{T}}_- =: ho{\mathcal T} \ \cong \ Ho\mathcal T\] between the quotient of the stable category by the thick ideal of anti Kac modules and the homotopy category $Ho\mathcal{T}$.
\end{thm}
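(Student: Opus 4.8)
The plan is to produce a functor $\gamma \colon ho\mathcal T \to Ho\mathcal T$, already constructed in the paragraph preceding the statement, and show it is fully faithful and essentially surjective as a $k$-linear tensor functor. Essential surjectivity is immediate: $Ho\mathcal T$ is by definition the full subcategory of $Ho\mathcal C = \overline{\mathcal C}_+$ generated by the image of $\mathcal T$ under $\mathcal C \to Ho\mathcal C$, so every object of $Ho\mathcal T$ comes from an object of $\mathcal T$, which lies in $ho\mathcal T$. That $\gamma$ is a tensor functor follows since both $\overline{\mathcal T} \to ho\mathcal T$ and $\mathcal C \to Ho\mathcal C$ are tensor functors (Theorem \ref{thm:monoidal} and the localization of the tensor ideal $\overline{\mathcal T}_-$). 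So the real content is full faithfulness, and this is where I would spend the effort.

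\textbf{Full faithfulness.} Fix $X, Y \in \mathcal T$. By Theorem \ref{thm:homotopy-stable} we have $Hom_{Ho\mathcal C}(X,Y) = Hom_{\mathcal C}(QX, Y)/\sim_{stable}$, and I would use the explicit cofibrant replacement $QX = \Omega \otimes X$ of Lemma \ref{thm:cofib-of-1} together with Theorem \ref{thm:good-replacement}: there is a subobject $K' \subseteq QX$ of finite codimension, $K' \in \mathcal C_-$, contained in $Kern(q_X)$, with $Hom_{\mathcal C}(K', Y) = 0$. Then $QX/K'$ is a finite-dimensional object of $\mathcal T$ mapping onto $X$ through a map whose kernel $Kern(q_X)/K'$ lies in $\mathcal C_- \cap \mathcal T = \mathcal T_-$; thus $QX/K' \to X$ is an isomorphism in $ho\mathcal T$ and $Hom_{ho\mathcal T}(X, Y) \cong Hom_{ho\mathcal T}(QX/K', Y)$. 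On the target side, the vanishing $Hom_{\mathcal C}(K', Y) = 0$ forces $Hom_{\mathcal C}(QX, Y) = Hom_{\mathcal C}(QX/K', Y)$ as well, and since $QX/K' \in \mathcal T$ maps onto $X$, a diagram chase (using that the $\sim_{stable}$-identification on $Hom_{\mathcal C}(QX,Y)$ restricts to the stable equivalence in $\overline{\mathcal T}$ after factoring through the projective hull of $Y$, as in Corollary \ref{thm:hom-formula}) shows $Hom_{Ho\mathcal C}(X,Y) = Hom_{\overline{\mathcal T}}(QX/K', Y)/(\text{maps factoring through }\overline{\mathcal T}_-\text{-related objects})$. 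Comparing this with the description of morphisms in the Verdier quotient $\overline{\mathcal T}/\overline{\mathcal T}_- $ as roofs $X \xleftarrow{s} Z \xrightarrow{f} Y$ with $cone(s) \in \overline{\mathcal T}_-$ — which are cofinally represented by $Z = QX/K'$ for $K'$ small enough depending on $Y$ — gives the bijection $Hom_{ho\mathcal T}(X,Y) \cong Hom_{Ho\mathcal T}(X,Y)$.

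\textbf{The main obstacle.} The delicate point is the precise matching of the equivalence relations on the two Hom-sets. On the $Ho\mathcal C$ side one mods out $Hom_{\mathcal C}(QX,Y)$ by maps factoring through a projective (equivalently injective) object of $\mathcal C$; on the $ho\mathcal T$ side one forms the Verdier quotient of $\overline{\mathcal T}$ (itself already a stable quotient of $\mathcal T$) by $\overline{\mathcal T}_-$. I would argue that a roof $X \xleftarrow{s} Z \xrightarrow{f} Y$ with $cone(s) \in \overline{\mathcal T}_-$ can always be refined so that $Z$ receives a map from $QX/K'$ compatibly with $s$ and $q_X$, using that $Kern(q_X)$ has a filtration by $\mathcal C_-$-objects with negative-weight constituents (the $K^{(i)}$ from the construction) so that any finite-length target is "seen" at a finite stage; and conversely that two maps $QX/K' \to Y$ agreeing in $Ho\mathcal C$ differ by a map factoring through the injective hull of $Y$, which by cleanness of $QX$ (hence of $QX/K'$ after enlarging, or by passing to the radical of $P(Y)$ as in the proof of Corollary \ref{thm:hom-formula}) corresponds exactly to the stable-plus-$\overline{\mathcal T}_-$ identification. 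The bookkeeping that these two cofinal systems of refinements are mutually inverse — i.e. that the natural map $ho\mathcal T \to Ho\mathcal T$ is injective and surjective on each $Hom$-set — is the part requiring care, and I expect it to occupy the bulk of the written proof; the existence of the good cofibrant replacement (Theorem \ref{thm:good-replacement}) is precisely the tool that makes it go through.
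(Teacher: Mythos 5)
Your proposal is correct and takes essentially the same route as the paper. Both proofs rely on the roof description of morphisms in the Verdier quotient $ho\mathcal T = \overline{\mathcal T}/\overline{\mathcal T}_-$, the identification $Hom_{Ho\mathcal C}(X,Y) = Hom_{\overline{\mathcal C}}(QX,Y)$ from Theorem \ref{thm:homotopy-stable}, and — crucially — Theorem \ref{thm:good-replacement} to truncate $QX$ to a finite-dimensional quotient $Z' = QX/K'$ with $K' \in \mathcal C_-$ and $Hom_{\mathcal C}(K',Y)=0$, so that the comparison of Hom-sets is pushed down to $\mathcal T$. The step you flag as the "main obstacle" (matching the two equivalence relations) is exactly what the paper handles explicitly, via a short cone argument: from $q_X = s \circ q$ one builds the octahedron triangle $C_q \to C_{q_X} \to C_s$, reads off $C_q \in \mathcal C_-$ since $C_{q_X}, C_s \in \mathcal C_-$, and then from $K' \to C_q \to C_{s'} \to K'[1]$ concludes $C_{s'} \in \mathcal T_-$, so that $f\circ s' = 0$ in $\overline{\mathcal T}$ already kills the roof in $ho\mathcal T$; fullness is the analogous computation. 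Your sketch is consistent with this, so there is no gap — the remaining bookkeeping you anticipate is precisely this octahedron chase, and it is short.
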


\begin{proof}
We have to
show that the induced map
$$ \gamma: Hom_{ho\mathcal T}(X,Y) \to Hom_{Ho\mathcal T}(X,Y) $$
is an isomorphism. 

\medskip\noindent
$X \leftarrow Z \rightarrow Y$  
for $f:Z\to Y$ is equivalent to zero if and only if there exists
a morphism $s'$ in $\overline{\mathcal T}$ with cone in $\overline{\mathcal T}_-$
such that $f\circ s' =0$ in $\overline{\mathcal T}$ (see \cite{Neeman}). 
On the other hand
recall  $Hom_{Ho\mathcal T}(X,Y) = Hom_{\overline{\mathcal C}}(QX,Y)$.
Since $\gamma(s): Z \to X $ is an isomorphism, there exists a commutative diagram
in $Ho\mathcal T$
$$ \xymatrix{ QX\ar[dd]_{q_X}\ar@{.>}[dr]^q &  &  \cr
& Z \ar[dl]_s\ar[dr]^f & \cr
X & & Y \cr} $$
such that $\gamma(s)^*: [X,Y]\cong [Z,Y]$ and $[Z,Y] = Hom_{\overline{\mathcal C}}(QX,Z) $,
since $QX$ is cofibrant and $Z$ is fibrant.
Hence $\gamma(f)\circ\gamma(s)^{-1}$ is equivalent to zero in $Ho\mathcal T$ if and only if 
$f\circ q =0$. This is where the last theorem comes in. Since $Z\in \mathcal T$ it implies
that $q$ is trivial on a subobject $K'$ of $QX$ such that $Z'=QX/K' \in \mathcal T$.
Hence $s':Z'\to Z$ is well defined in $\mathcal T$, such that $f\circ s'=0$.
But there also exists a distinguished triangle
$$ C_q \to C_{q_X} \to C_{s} \to C_q[1] \ .$$  
Since $C_{q_X}=K[1]\in \mathcal C_-$ and $C_s\in \mathcal C_-$ this implies
$C_q\in \mathcal C_-$. Hence $C_q\in \mathcal T_-$.
Therefore $K'\in \mathcal C_-$ and 
$$ K' \to C_{q} \to C_{s'} \to K'[1] \ $$
implies $C_{s'}\in \mathcal C_-$. But then already $C_{s'}\in \mathcal T_-$.
Therefore $f\circ s'=0$ implies that the class of $X \leftarrow Z \rightarrow Y$  
is the zero morphism $X\to Y$ in $ho\mathcal T$. This shows that $\gamma$ is faithful.
The fullness of $\gamma$ is shown similarly. Any morphism in $[X, Y]$ is represented
by a morphism $q:QX \to Y$ similarly as in the diagram above. Since $q_X\oplus q$ is trivial
on some $K' \subset Kern(q_X)$ with finite quotient $Z' = QX/K'$
we obtain a diagram in $\mathcal T$  
$$ \xymatrix{ &  Z' \ar[dl]_{s'}\ar[dr]^{f'} &  \cr X &  & Y} $$
($s'$ is induced by $q_X$ and $f$ is induced by $q$)
such that $\gamma(s')^{-1} \circ \gamma(f) \in [X,Y]$ represents the morphism
we started from. 
\end{proof} 

\medskip\noindent
Accordingly we will identify the categories $ho\mathcal T$ and $Ho\mathcal T$
in the following. Note however that it is important for us to have both interpretations of $Ho\mathcal{T}$. While the interpretation of $Ho\mathcal{T}$ as a Verdier quotient looks more down to earth, the cofibrant replacements are only visible when we use the model structure on $Ind(\mathcal{T})$.


\subsection{Remarks on the Balmer spectrum}\label{sec:balmer-spectrum}

Balmer \cite{Balmer} defined for a tensor triangulated category the notion of its spectrum by equipping the set of all prime ideals (proper thick tensor ideals such that $a \otimes b \in \mathcal P$ implies $a \in \mathcal P$ or $b \in \mathcal P$) with a Zariski topology. The category $Ho \mathcal T$ is a triangulated tensor category in the sense of Balmer \cite{Balmer}. Hence its spectrum $Spc(Ho \mathcal T)$ is defined. By \cite{BKN-spectrum} the spectrum of the stable category $\overline{ \mathcal T}$ is homeomorphic \[ Spc(\overline{\mathcal T}) \simeq Proj(N-Spec(S^{\bullet}(\mathfrak{f}_1^*)))\] where $N = Norm_{G_0(\mathfrak{f}_1)}$ and the detecting subalgebra $\mathfrak{f}$. Formation of the spectrum is a contravariant functor, and if $F: \mathcal K \to \mathcal L$ is an essentially surjective tensor triangulated functor, the induced map $Spc(F): Spc(\mathcal L) \to Spc (\mathcal K)$ of locally ringed spaces is injective. More specifically, let $q: \mathcal K \to \mathcal L = \mathcal K / \mathcal J$ be the localization functor where $\mathcal J$ is a thick tensor ideal. Then the associated map $Spc(q): Spc(\mathcal L) \to Spc(\mathcal K)$ induces a homeomorphism between $Spc(\mathcal L)$ and the subspace \[ \{\mathcal P \in Spc(\mathcal K) \ | \ \mathcal J \subset \mathcal P \} \subset Spc(\mathcal K) \] of those thick prime ideals containing $\mathcal J$. In our case this applies to $Ho \mathcal T \cong \overline{T}/ \overline{\mathcal T}_-$, but doesn't give a concrete description of $Spc(Ho \mathcal T)$ in this way. Note that by \cite{BKN-spectrum} the thick tensor ideals of $\mathcal T$ are in bijection with specialization closed (union of closed sets) subsets of $N-Proj(S^{\bullet}(\mathfrak{f}_1^*)))$ by assigning to a thick tensor ideal the union of the support varieties of its elements. However the support varieties of anti Kac modules (or modules with a filtration by anti Kac modules) don't seem to have a known description. 



\section{The degree filtration and cofibrant replacements}

\subsection{Degree filtration of Kac objects}\label{sec:degree-filtration}

We show that every Kac object has a canonical degree filtration. By using the cofibrant replacement of an arbitrary $X$ we can also endow $X$ with such a filtration in the ind-category. This filtration could be seen as an analogue of Deligne's weight filtration.

\medskip\noindent
To $\lambda = (\lambda_1,\ldots,\lambda_m \ |  \ \lambda_{m+1},\ldots,\lambda_{m+n})$ we associate the bidegree \[ (d,d') = (\sum_{i=0}^m \lambda_i, \sum_{i=1}^n \lambda_{m+i}).\]  By the description of the blocks \cite{Brundan-Stroppel-4} $d - d'$ only depends on the block of $L(\lambda)$. If we fix the block, we can therefore think of $d$ as the relevant degree and we define therefore \[ deg(\lambda) = \sum_i^n \lambda_i.\] Recall that $\mathcal{T}_+$ denotes the tensor ideal of modules with a filtration by Kac modules in $\calT_{m|n}$ and $\calT_-$ the tensor ideal of modules with a filtration by anti Kac modules in $\calT_{m|n}$. 

\begin{lem} \label{degree-filtration} Each $M \in \calT_+$ has a canonical degree filtration, i.e. a filtration by submodules $F_i(M) \in \calT_+$ such that \[ \ldots \subseteq F_{i-1}(M) \subseteq F_i(M) \subseteq F_{i+1}(M) \subseteq \ldots \] and \[ F_i(M)/ F_{i-1}(M) = \bigoplus_{\lambda} V(\lambda) \] holds for certain Kac modules $V(\lambda) \in \calT_+$ of degree $deg(\lambda) = i$. This filtration is inherited to retracts $N$ of $M$ so that $F_i(N) = N \cap F_i(M)$. The filtration is functorial with respect to morphisms.
\end{lem}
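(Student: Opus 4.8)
The key point is that the degree $\deg(\lambda)$ takes only finitely many values on the constituents of a given $M\in\calT_+$ (it is block-constant modulo the fixed shift $d-d'$, but more to the point: a Kac module $V(\lambda)$ is a subquotient of $M$ only for finitely many $\lambda$, since $M$ is finite dimensional and there are only finitely many weights of a given degree in a block). So the plan is to \emph{define} $F_i(M)$ directly and then check the three claims (filtration with the stated graded pieces, compatibility with retracts, functoriality). First I would set $F_i(M)$ to be the sum of all submodules $N\subseteq M$ such that $N\in\calT_+$ and every Kac module occurring in a Kac filtration of $N$ has degree $\le i$; call these the \emph{degree-$\le i$ Kac submodules}. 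I would first check that this collection is closed under sums: if $N,N'$ are degree-$\le i$ Kac submodules, then $N+N'$ is a quotient of $N\oplus N'$, hence has a Kac filtration with pieces of degree $\le i$ (using that $\calT_+$ is closed under quotients by submodules in $\calT_+$ — this is the standard fact that a quotient of a module with a standard filtration by a submodule with a standard filtration again has a standard filtration in a highest weight category), so $N+N'$ is again such. Hence $F_i(M)$ is the unique maximal degree-$\le i$ Kac submodule, and clearly $F_{i-1}(M)\subseteq F_i(M)$.

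Next I would identify the graded pieces. The inclusion $F_{i-1}(M)\hookrightarrow F_i(M)$ has a cokernel $Q$ in $\calT_+$ (quotient of a Kac-filtered module by a Kac-filtered submodule), so $Q$ has a Kac filtration; I must show every $V(\lambda)$ appearing has $\deg(\lambda)=i$. Degrees $>i$ cannot occur: given a Kac filtration of $F_i(M)$ refining the one of $F_{i-1}(M)$, the preimage in $F_i(M)$ of the bottom Kac piece $V(\lambda)$ of $Q$ together with $F_{i-1}(M)$ would be a degree-$\le i$ Kac submodule if $\deg(\lambda)\le i$; if $\deg(\lambda)>i$ this piece still sits inside $F_i(M)$ so $\deg(\lambda)\le i$ by maximality of $F_i(M)$ — wait, this needs care: $F_i(M)$ is by definition degree-$\le i$, so \emph{all} its Kac pieces, in any Kac filtration, have degree $\le i$ (the multiset of Kac pieces of a standard-filtered module is independent of the filtration, being read off from $\dim\Hom(-,\nabla(\mu))$). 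Hence no degree $>i$ can appear in $Q$. Degrees $<i$ cannot occur either: if some Kac piece $V(\lambda)$ of $Q$ had $\deg(\lambda)=j<i$, I would pull it back to a submodule $N\subseteq F_i(M)$ with $F_{i-1}(M)\subseteq N$ and $N/F_{i-1}(M)\cong V(\lambda)$; but then $N$ is degree-$\le(i-1)$ (all pieces of $F_{i-1}(M)$ have degree $\le i-1$ and $\deg(\lambda)=j\le i-1$), contradicting maximality of $F_{i-1}(M)$. This shows $\gr_i^F(M)$ is a direct sum of Kac modules of degree exactly $i$; the direct-sum form follows because a standard-filtered module all of whose standard subquotients are the \emph{same} standard object — no, here the $V(\lambda)$ may differ but all have degree $i$, and $\Ext^1(V(\lambda),V(\mu))\neq 0$ forces $\deg(\mu)<\deg(\lambda)$ in a highest weight category ordered compatibly with degree, hence $\Ext^1$ between two degree-$i$ Kac modules vanishes, so the filtration of $\gr_i^F$ splits.

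Finally, functoriality and retracts. For a morphism $f\colon M\to M'$ in $\calT_+$, the image $f(F_i(M))$ is a quotient of $F_i(M)$, hence in $\calT_+$ with Kac pieces of degree $\le i$; being a submodule of $M'$ it is therefore contained in $F_i(M')$, giving $f(F_i(M))\subseteq F_i(M')$ — this is exactly functoriality. For a retract $N$ of $M$ with $N\in\calT_+$, one direction $F_i(N)\subseteq N\cap F_i(M)$ is immediate from functoriality of the inclusion $N\hookrightarrow M$; conversely $N\cap F_i(M)$ is a submodule of $N$, and applying the retraction $r\colon M\to N$ to $F_i(M)$ we get $r(F_i(M))\subseteq F_i(N)$, and since $r$ restricts to the identity on $N$ we get $N\cap F_i(M)=r(N\cap F_i(M))\subseteq r(F_i(M))\subseteq F_i(N)$; but also $N\cap F_i(M)\in\calT_+$ (it is a submodule of the Kac-filtered $F_i(M)$ in a highest weight category, and submodules with... actually I should instead argue $N\cap F_i(M)$ is a retract of $F_i(M)$ via $r$, hence in $\calT_+$) with Kac pieces of degree $\le i$, so it is a degree-$\le i$ Kac submodule of $N$, giving the reverse inclusion. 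The main obstacle I anticipate is the bookkeeping around ``a module in $\calT_+$ all of whose Kac pieces have degree $\le i$'' being a well-defined, sum-closed, quotient-stable class: this rests on the theorem of Brundan--Stroppel that $\calT_{m|n}$-blocks are highest weight categories with the ordering refining the degree, together with the standard homological lemma that submodules/quotients of $\Delta$-filtered modules by $\Delta$-filtered sub/quotients are again $\Delta$-filtered — once these are in hand, everything else is formal.
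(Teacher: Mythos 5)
Your architecture is genuinely different from the paper's: you define $F_i(M)$ intrinsically as the maximal submodule all of whose Kac constituents have degree $\leq i$ and then verify the properties, whereas the paper starts from an arbitrary Kac filtration of $M$, reorders it using an $\Ext^1$-vanishing between Kac modules, and proves uniqueness afterwards by induction on the length. Your route would deliver canonicity, functoriality and the retract statement very cleanly — but it stands or falls with two inputs that you do not actually supply.

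First, the heart of the lemma is the vanishing of $\Ext^1(V(\rho_1),V(\rho_2))$ governed by the degree. You need it to split $gr_i^F(M)$ into a direct sum, and also (implicitly) to move a putative low-degree Kac constituent of $F_i(M)/F_{i-1}(M)$ to the bottom of a filtration before you can ``pull it back''; your induction only ever sees the bottom piece of one chosen filtration. You derive the vanishing from ``$\Ext^1(\Delta(\lambda),\Delta(\mu))\neq 0\Rightarrow \mu>\lambda$'' plus the assertion that the highest-weight order refines the degree. That compatibility is precisely the nontrivial content, and it is what the paper's proof establishes: $\dim\Ext^1(V(\rho),L(\tau))$ equals the linear coefficient $p^{(1)}_{\rho,\tau}$ of a Kazhdan--Lusztig polynomial, which by Musson--Serganova and Brundan--Stroppel is nonzero only when $\tau$ is obtained from $\rho$ by swapping the labels at the ends of a single cup, an operation that strictly changes the degree. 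Without this input (or a proof that the Bruhat order on weight diagrams strictly refines $deg$), your argument is circular at its key step; note also that the direction you state ($\deg(\mu)<\deg(\lambda)$) is the opposite of what your own cited mechanism would yield, although for the equal-degree splitting either strict direction suffices. Second, your definition of $F_i(M)$ needs the class of degree-$\leq i$ Kac submodules to be closed under sums, and you justify this with the claim that in a highest weight category a quotient of a $\Delta$-filtered module by a $\Delta$-filtered submodule is again $\Delta$-filtered. That is false in general (in category $\mathcal{O}$ for $\mathfrak{sl}_2$ one has $\Delta(-2)\subset\Delta(0)$ with quotient the non-standard-filtered $L(0)$); the correct general fact concerns kernels of epimorphisms. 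The closure does hold here, but only because $(\mathcal{C}_+,\mathcal{C}_-)$ is a coresolving cotorsion pair arising from the Frobenius pair (theorem \ref{thm:main-homotopy}, item \ref{cof-fib-prop}), and even then writing $N+N'$ as the cokernel of $N\cap N'\hookrightarrow N\oplus N'$ requires first knowing $N\cap N'\in\calT_+$, which you do not address.
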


\begin{proof} Every $M$ in $\calT_+$ admits a filtration by objects in $\calT_+$ whose graded pieces are Kac modules. To show the existence as in our claim it suffices to show that $Ext^1(V(\rho_1),V(\rho_2)) = 0$ holds for $deg(\rho_1) \leq deg(\rho_2)$. Since all Jordan-Hoelder constituents $L(\tau)$ of $V(\rho_1)$ have degree $deg(\tau) \leq deg(\rho_1)$, it suffices to show $Ext^1(V(\rho_1),L(\tau)) = 0$ for $deg(\rho_1) \leq deg(\tau)$. The dimension of $Ext^1 (V(\rho),L(\tau))$ can be expressed as the coefficient $p^{(1)}_{\rho,\tau}$ of the linear term of the Kazhdan-Lusztig polynomial $p_{\rho,\tau}$ \cite{Brundan-Stroppel-2}. By \cite[Lemma 6.10]{Musson-Serganova} and \cite[Lemma 5.2]{Brundan-Stroppel-2} $p_{\rho,\tau}^{(1)} \neq 0$ if and only if $\tau$ is obtained from $\rho$ by interchanging the labels at the ends of one of the cups in the cup diagram of $\rho$. Since this operation increases the degree of $\rho$, we must have $deg(\tau) > deg(\rho)$ to get a nonvanishing $p_{\rho,\tau}^{(1)}$. Hence the coefficient must be zero for $deg(\rho) \geq deg(\tau)$. The uniqueness is proved by induction on the length of such filtrations. The minimal nontrivial filtration submodule $N$ is uniquely characterized by the maximal degree highest weight vectors in $M$. Then consider $M/N$ and proceed by induction. Concerning retracts it suffices that $\mathcal T_+$ is closed under retracts, and hence so is $\calT_+$. 
\end{proof}

\begin{lem} \label{C-ind-limit}
Every object $M \in \mathcal{C}_+$ is isomorphic to an inductive limit of finite dimensional Kac objects. In particular the degree filtration extends to $\mathcal{C}_+$.
\end{lem}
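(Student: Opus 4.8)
The statement has two parts: first, every object $M \in \mathcal{C}_+$ is an inductive limit of finite-dimensional Kac objects (i.e.\ objects in $\mathcal{T}_+$), and second, the degree filtration from Lemma \ref{degree-filtration} extends to $\mathcal{C}_+$. I would start from the general fact recorded in the ``Comodules'' section: any object $M$ of $\mathcal C = \mathcal T^\infty$ is an inductive union $M = \varinjlim_i M_i$ with $M_i \in \mathcal T$ and monomorphic transition maps, and these colimits are sequential. The issue is that the $M_i$ need not lie in $\mathcal{T}_+$. So the first task is to upgrade this to a presentation by Kac objects. The key input is Theorem \ref{thm:main-homotopy}\,\ref{cof-fib-prop}: $\mathcal{C}_+$ is closed under extensions, cokernels of monics, and kernels of epis; combined with the corollary that $\mathcal{C}_+$ is closed under sequential direct limits with monomorphic transition maps. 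Using that $M \in \mathcal{C}_+$ exhausts, I would intersect the given finite-dimensional filtration $\{M_i\}$ against the cofibrant structure: since $M_i \hookrightarrow M$ and $M \in \mathcal{C}_+$, one can try to enlarge each $M_i$ inside $M$ to a finite-dimensional Kac subobject $N_i$ with $M_i \subseteq N_i$ and still $N_i \subseteq N_{i+1}$, using that $\mathcal{T}_+ = \mathcal{C}_+ \cap \mathcal{T}$ together with the finiteness of $M_i$ and closure of $\mathcal T_+$ under extensions. The finite-dimensionality is essential here: $\mathcal{C}_+ \cap \mathcal T$ being all of $\mathcal T_+$ means that a finite-dimensional subobject of a cofibrant object is ``almost'' Kac, and one only needs to add finitely many more Kac layers.

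Concretely, I would argue as follows. Fix $M_i \in \mathcal T$ inside $M \in \mathcal{C}_+$. Since $M = \varinjlim M_j$, the quotient $M/M_i$ also lies in $\mathcal{C}_+$ (it is a cokernel of a monomorphism between cofibrant objects — wait, $M_i$ need not be cofibrant, so instead use that $M/M_i = \varinjlim_{j \geq i} M_j/M_i$ and each $M_j/M_i$ is finite-dimensional). Rather than that, I would take a cofibrant replacement approach on $M_i$ internally: because $M_i \in \mathcal T$, embed a Kac hull. Actually the cleanest route uses the ``categories with weights'' machinery or simply the fact that in $\mathcal T_{m|n}$ every finite-dimensional module embeds into a module with a Kac filtration (e.g.\ a projective, since projectives have Kac flags). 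So replace $M_i$ by $P_i \supseteq M_i$ with $P_i$ projective — but $P_i$ need not sit inside $M$. Here is the fix: use that $M \in \mathcal C_+$ to choose, for the inclusion $M_i \hookrightarrow M$, a factorization through $\mathcal T_+$. Precisely: the pushout/extension structure. Since $\mathcal C_+$ is closed under extensions and contains all injectives, and $\mathrm{Ext}^1(\mathcal C_+,\mathcal C_-)=0$, I can analyze $M_i$ relative to $M$ via $0 \to M_i \to M \to M/M_i \to 0$ and the long exact sequence; but this does not directly give $M_i \in \mathcal T_+$ (indeed it is false). So the honest statement is that the $N_i$ are constructed by a direct limit argument: define $N_i$ to be the smallest Kac subobject of $M$ containing $M_i$, which exists because $M$ is a union of the $M_j$ and each $M_j$ is finite-dimensional — take the Kac closure inside $M$, i.e.\ the intersection of all Kac subobjects of $M$ containing $M_i$. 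One must check this intersection is again Kac (closure of $\mathcal T_+$ under subobjects that are Kac-complementary) and finite-dimensional (it is contained in some $M_j$, using finiteness of the poset of weights below the top weight of $M_i$, via Lemma \ref{degree-filtration}). I expect this to be the main obstacle: showing the Kac closure of a finite-dimensional subobject of a cofibrant module is again finite-dimensional and Kac. I would lean on the degree filtration: the degree filtration of $M$ (once we know it exists on the subobjects $N_j \in \mathcal T_+$) has graded pieces that are sums of Kac modules $V(\lambda)$ with $\deg \lambda = i$, and for $M_i$ bounded there are only finitely many relevant degrees, forcing $N_i$ to be finite-dimensional.

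For the second part — that the degree filtration extends to $\mathcal{C}_+$ — once the presentation $M = \varinjlim_i N_i$ with $N_i \in \mathcal T_+$ and monomorphic transition maps is established, I would define $F_j(M) := \varinjlim_i F_j(N_i)$, using that by Lemma \ref{degree-filtration} the filtration $F_j$ is functorial with respect to morphisms (so the $F_j(N_i)$ form a directed system with monomorphic transition maps) and inherited to subobjects ($F_j(N_i) = N_i \cap F_j(N_{i+1})$). Since sequential direct limits are exact in $\mathcal C$, this $F_j(M)$ is a subobject of $M$, lies in $\mathcal{C}_+$ (sequential colimit of objects in $\mathcal T_+ \subseteq \mathcal C_+$ with monomorphic transitions), and the graded pieces satisfy $F_j(M)/F_{j-1}(M) = \varinjlim_i F_j(N_i)/F_{j-1}(N_i) = \varinjlim_i \bigoplus_\lambda V(\lambda)$, a direct sum of Kac modules of degree $j$. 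Independence of the presentation and functoriality follow from the functoriality of $F_j$ on $\mathcal T_+$ by passing to the colimit. The only subtlety is checking that different exhaustions $\{N_i\}$ and $\{N'_i\}$ of $M$ by Kac objects give the same $F_j(M)$; this reduces, by interleaving the two systems (using that any finite-dimensional Kac subobject of $M$ is contained in a common refinement, which is where finiteness of morphism spaces and the uniqueness clause of Lemma \ref{degree-filtration} enter), to the uniqueness of the degree filtration at the finite level.
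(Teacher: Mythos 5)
Your proposal is not a complete proof — it is a plan that stalls precisely at the step you flag yourself as ``the main obstacle'': showing that the ``Kac closure'' of a finite-dimensional subobject $M_i$ of a cofibrant object $M$ exists as a finite-dimensional Kac subobject of $M$. This is a genuine gap. Nothing in the cotorsion-pair structure or in the finite-level degree filtration forces a finite-dimensional subobject of a cofibrant $M$ to be contained in a finite-dimensional Kac subobject of $M$; the candidate hulls you mention (projective covers, Kac hulls inside $\mathcal{T}$) do not live inside $M$, and intersecting Kac subobjects of $M$ containing $M_i$ need not produce a Kac object, since $\mathcal{T}_+$ is not closed under intersections of subobjects. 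The argument by exhaustion-plus-closure does not close, and you cannot lean on the degree filtration of $M$ itself before you know it exists, which is what the lemma is trying to establish.

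The paper's proof avoids this entirely and is much shorter, using a tool you did not invoke: the explicit cofibrant replacement $q\colon \Omega \to \one$ from Lemma \ref{thm:cofib-of-1}, which comes equipped with an exhaustion $\Omega = \bigcup_j \Omega_j$ by finite-dimensional Kac objects $\Omega_j \in \mathcal{T}_+$. One tensors this with $M$: the map $\Omega \otimes M \to M$ is a trivial fibration with kernel $K \otimes M \in \mathcal{C}_-$, and since $M$ is cofibrant, $K\otimes M$ also lies in $\mathcal{C}_+$ (kernel of an epi with ends in $\mathcal{C}_+$, Theorem \ref{thm:main-homotopy}\ref{cof-fib-prop}), hence $K \otimes M \in \mathcal{C}_+\cap\mathcal{C}_- = \mathcal{P}_{\mathcal{C}}$ and the sequence splits. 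Thus $M$ is stably isomorphic to $\Omega \otimes M$, and one may replace $M$ by $\Omega \otimes M = \bigcup_{i,j} \Omega_j \otimes M_i$; each $\Omega_j\otimes M_i$ is finite-dimensional and a Kac object because $\mathcal{T}_+$ is a tensor ideal. The moral is that the ``Kac hull inside $M$'' problem that stops you is sidestepped by enlarging $M$ (up to a projective summand) to $\Omega\otimes M$, where the Kac exhaustion is manufactured once and for all from the exhaustion of $\Omega$. Your treatment of the second half (passing the degree filtration to the colimit by functoriality of $F_j$ on $\mathcal{T}_+$ and exactness of sequential colimits) is fine, but only once the first half is secured by this different route.
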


\begin{proof} The given object $M \in \mathcal{C}_+$ is stably isomorphic to $\Omega \otimes M$, hence we may replace $M$ by $\Omega \otimes M$. Hence we may suppose \[ M \cong \bigcup_i \Omega \otimes M_i \cong \bigcup_{i,j} \Omega_j \otimes M_i \] for $\Omega  = \bigcup_j \Omega_j$. Since the $\Omega_j$ are finite dimensional Kac objects $\Omega_j$, $\Omega_j \otimes M_i$ is a Kac object as well.
\end{proof}

Given an object $M \in \mathcal{C}_-$, we can dualize it via $()^*$ (the extension of the twisted dual to the ind completion) to obtain an object in $\mathcal{C}_+$ with its canonical degree filtration. Dualizing the filtration steps, equips $M \in \mathcal{C}_-$ with a dual degree filtration.

\begin{cor} \label{cor:anti-kac-lim} Every object $M \in \mathcal{C}_-$ is a sequential projective limit of finite dimensional anti Kac objects. It carries a descending degree filtration whose graded pieces are direct sums of finite dimensional anti Kac modules.
\end{cor}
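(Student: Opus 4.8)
The plan is to obtain Corollary \ref{cor:anti-kac-lim} by applying the twisted dual functor $()^*$ to Lemma \ref{C-ind-limit} and Lemma \ref{degree-filtration}, exploiting the fact that $()^*$ is an antiequivalence $\mathcal{T} \to \mathcal{T}^{op}$ which (by the corollary following Proposition 3.6.2 of \cite{Germonie}, recalled in section \ref{sec:gl-m-n-cofibrant}) interchanges $\mathcal{T}_+$ and $\mathcal{T}_-$, and which extends to an antiequivalence $()^* : \mathcal{C} \to \mathcal{C}^{op}$ on the ind-category sending $\mathcal{C}_+$ to $\mathcal{C}_-$. The first step is to note that $()^*$ on $\mathcal{C}$ sends a sequential inductive system $M_0 \hookrightarrow M_1 \hookrightarrow \cdots$ with monomorphic transition maps to a sequential projective system $\cdots \twoheadrightarrow M_1^* \twoheadrightarrow M_0^*$ with epimorphic transition maps, and $(\mathrm{colim}_i M_i)^* \cong \lim_i M_i^*$: this is the usual fact that a contravariant additive functor turning colimits into limits does so here, and since all the limits and colimits in question are sequential (as emphasized in the remark after the list of properties in section on supercommutative Hopf algebras) there are no exactness subtleties.

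Concretely, I would argue as follows. Let $M \in \mathcal{C}_-$. Then $M^* \in \mathcal{C}_+$, so by Lemma \ref{C-ind-limit} we may write $M^* \cong \mathrm{colim}_i N_i$ as a sequential inductive limit of finite dimensional Kac objects $N_i \in \mathcal{T}_+$ with monomorphic transition maps. Applying $()^*$ and using $M^{**} \cong M$ (the twisted dual is antiinvolutive, property 1 of the second list of assumptions, extended to $\mathcal{C}$) gives
\[ M \cong (M^*)^* \cong (\mathrm{colim}_i N_i)^* \cong \lim_i N_i^*, \]
a sequential projective limit of the $N_i^* \in \mathcal{T}_-$, which are finite dimensional anti Kac modules. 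This proves the first assertion. For the filtration statement: by Lemma \ref{degree-filtration} (extended to $\mathcal{C}_+$ via Lemma \ref{C-ind-limit}) the object $M^* \in \mathcal{C}_+$ carries its canonical ascending degree filtration $\cdots \subseteq F_{i-1}(M^*) \subseteq F_i(M^*) \subseteq \cdots$ with $F_i(M^*)/F_{i-1}(M^*)$ a direct sum of Kac modules of degree $i$. Applying $()^*$ turns each inclusion $F_{i-1}(M^*) \hookrightarrow F_i(M^*)$ into a surjection $F_i(M^*)^* \twoheadrightarrow F_{i-1}(M^*)^*$, and since $()^*$ is exact on $\mathcal{C}$ (it is exact on $\mathcal{T}$ and commutes with the relevant limits), the kernel of $F_i(M^*)^* \to F_{i-1}(M^*)^*$ is $(F_i(M^*)/F_{i-1}(M^*))^*$, a direct sum of anti Kac modules of degree $i$. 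Composing with the surjections $M \cong (M^*)^* \twoheadrightarrow F_i(M^*)^*$ coming from $F_i(M^*) \hookrightarrow M^*$ yields the desired descending degree filtration of $M$, with the stated graded pieces.

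The only genuine point requiring care — and the step I expect to be the main obstacle — is checking that $()^*$ really does extend to the ind-category $\mathcal{C}$ as an antiequivalence commuting with sequential (co)limits in the way claimed, in particular that $(\mathrm{colim}_i M_i)^* \cong \lim_i M_i^*$ with the transition maps behaving correctly, and that the extended functor still sends $\mathcal{C}_+$ to $\mathcal{C}_-$. Since $()^*$ on $\mathcal{T}$ is exact and contravariant, its pro-extension to $\mathrm{Pro}(\mathcal{T}) = \mathcal{C}^{op}$ is the relevant construction; one must match this with the description of $\mathcal{C}$ as comodules. This is essentially bookkeeping about ind/pro-categories (as in the functoriality discussion of section \ref{sec:ind-passage}), but it is where the real content lies. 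I would isolate it as a short preliminary observation: the twisted dual $()^* : \mathcal{T} \to \mathcal{T}$ extends to an exact contravariant functor $()^* : \mathcal{C} \to \mathcal{C}$ sending $\mathrm{colim}_i M_i$ to $\lim_i M_i^*$, restricting to an antiequivalence $\mathcal{C}_+ \xrightarrow{\sim} \mathcal{C}_-$ and preserving the degree of Kac/anti Kac constituents — and then the corollary follows immediately as above. Everything else (exactness of sequential limits and colimits in $\mathcal{C}$, antiinvolutivity, the $\mathcal{T}_+ \leftrightarrow \mathcal{T}_-$ swap) is already available in the excerpt.
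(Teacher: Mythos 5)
Your reduction via the twisted dual is exactly the paper's intended argument: the only justification the paper gives is the sentence preceding the corollary, "Given an object $M \in \mathcal{C}_-$, we can dualize it via $()^*$ (the extension of the twisted dual to the ind completion) to obtain an object in $\mathcal{C}_+$ with its canonical degree filtration. Dualizing the filtration steps, equips $M \in \mathcal{C}_-$ with a dual degree filtration." Your write-up simply fills in the mechanical steps of that one-liner.

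The point you isolate as "the step I expect to be the main obstacle" is, however, a genuine gap, and it is the paper's gap as well. The contravariant functor $()^*:\mathcal{T}\to\mathcal{T}^{\mathrm{op}}$ extends canonically to $\mathrm{Ind}(\mathcal{T})\to\mathrm{Ind}(\mathcal{T}^{\mathrm{op}})\cong\mathrm{Pro}(\mathcal{T})^{\mathrm{op}}$, i.e.\ to a contravariant equivalence $\mathcal{C}\to\mathrm{Pro}(\mathcal{T})$, but it does \emph{not} extend to an antiequivalence of $\mathcal{C}$ with itself: the $k$-linear dual of an infinite-dimensional comodule is no longer a comodule (for $M=\mathrm{colim}_i M_i$ one gets a $\prod$-like dual; e.g.\ $(\bigoplus_i L_i)^*=\prod_i L_i^*$ has no locally finite coaction, so is not an object of $\mathcal{C}$). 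Consequently your opening move "Let $M\in\mathcal{C}_-$. Then $M^*\in\mathcal{C}_+$" is not well-founded for general $M$: there is no predual $N\in\mathcal{C}_+$ with $N^*\cong M$ unless $M$ is suitably complete, and the antiinvolutivity you invoke (property 1 of the second list of assumptions) is only stated on $\mathcal{T}$ and does not formally propagate to $\mathcal{C}$. What your argument (and the paper's parenthetical) \emph{does} establish, by dualizing Lemma~\ref{C-ind-limit} and Lemma~\ref{degree-filtration} objectwise, is the statement for the formal pro-dual of every object of $\mathcal{C}_+$, i.e.\ a statement in $\mathrm{Pro}(\mathcal{T}_-)$ — which is in fact all that is used later, since the $\mathcal{C}_-^{\mathrm{pol}}$ variant of the power-series construction deals precisely with "sequential projective limits of anti Kac modules of polynomial growth." To obtain the corollary for an arbitrary $M\in\mathcal{C}_-$, which is still an ind-object (an ascending union of finite-dimensional subcomodules), one would need to identify $\mathcal{C}_-$ with an appropriate subcategory of $\mathrm{Pro}(\mathcal{T}_-)$; neither your proposal nor the paper supplies such an identification.
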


\begin{example} (\textit{Degree filtration of projective objects}) The degree filtration of a maximal atypical projective cover $P(\tau)$ is as follows using the known filtration of $P(\tau)$ by Kac modules as in \cite[Theorem 5.1]{Brundan-Stroppel-1}. Let $L(\rho)$ denote the constituent of highest weight in $P(\tau)$. Then there are $2^n$ weights $\mu_1,\ldots, \mu_{2^n}$ whose weight diagrams are obtained from the labeled cup diagram of $\tau$ by interchanging the labels at the ends of the $n$ cups in all possible ways. Enumerate these $2^n$ distinct weights as $\mu_1,\ldots,\mu_{2^n}$ so that $\mu_i > \mu_j$ in the Bruhat order implies $i < j$. Then $\rho = \mu_1$ and $\mu_{2^n} = \tau$. The projective cover has then a filtration by submodules $M(i)$ \[ \{0\}  = M(0) \subset M(1) \subset \ldots \subset M(2^n) = P(\tau)  \] such that \[ M(i)/M(i-1) \cong V(\mu_i) \] for each $i=1,\ldots,2^n$. Note that the enumeration in the Bruhat order also implies that $\mu_i > \mu_j$ implies $i < j$ in the degree ordering. The quotient $F_i(P(\tau))/F_{i-1}(P(\tau)) = \bigoplus_{\lambda} V(\lambda)$ is the direct sum of the $V(\mu_j)$ with $deg(\mu_j) = i$. If $deg(\rho) = k$, then $M(2^n)/M(2^n - 1) = V(\tau)$ and $M(1)/M(0) = M(1) = V(\rho)$.
\end{example}

\begin{example} (\textit{Degree filtration of $V(\nu) \otimes L(\mu)$}) (see \cite[Corollary 5.2]{Serganova-character} for a variant) For maximal atypical $L(\lambda) \in \calT_n$ we write $L_0(\lambda)$ for the irreducible $Gl(n) \times Gl(n)$-module with highest weight $(\lambda_1,\ldots,\lambda_n) \times (\lambda_{n+1},\ldots,\lambda_{2n})$. We denote the restriction of $L(\lambda)$ to $G_0 = Gl(n) \times Gl(n)$ by $Res_{G_0}(L(\lambda)) = L_{G_0}(\lambda)$. The restriction decomposes into a direct sum of irreducible representations, and the representation $L_0(\lambda)$ is the irreducible representation in this decomposition of largest degree.

For maximal atypical $L(\nu), \ L(\mu)$ we determine the canonical degree filtration of the maximal atypical summand $V$ of $V(\nu) \otimes L(\mu)$. The restriction $L_{G_0}(\nu) = Res_{G_0}(L(\nu))$ of $L(\nu)$ to the subgroup $G_0$ decomposes into a direct sum of irreducible representations and we denote the direct sum of the irreducible $G_0$ representations with weight of degree $i$ by $L_{G_0}(\nu)^i$. Then $i \leq deg(\nu)$. Since $\bigoplus_{j \geq i} L_{G_0}(\nu)^i$ is stable under the super parabolic $P \subset G$ and since $Ind_P^G$ is an exact functor we obtain from Frobenius reciprocity \[ \tilde{V} := L(\nu) \otimes V(\mu) = L(\nu) \otimes Ind_P^G L_0(\mu) \simeq Ind_P^G(L_{G_0}(\nu) \otimes L_0(\mu)).\] Then the degree filtration of $\tilde{V}$ has the form \[ F_i(\tilde{V}) = Ind_P^G(\bigoplus_{j \leq i} L_{G_0}(\nu)^{j - deg(\mu)} \otimes L_0(\mu)).\] The associated graded modules are the Kac objects \[ gr^i(\tilde{V}) = \bigoplus V(L_{G_0}(\nu)^{i - deg(\mu)} \otimes L_0(\mu)) \] in $\calC^+$. The projection $V$ of $\tilde{V}$ onto the principal block $\Gamma$ has the same structure except that only those irreducible $G_0$-representations in $L_{G_0}(\nu)^{i - deg(\mu)}$ contribute which give Kac modules in $\Gamma$. 
\end{example}


\subsection{Polynomial growth and power series}

We now define two subcategories \[ \mathcal{T}_+ \subset \mathcal{C}_+^{pol} \subset \mathcal{C}_+^{fin} \subset \mathcal{C}_+.\] 

\begin{definition} Let $\mathcal{C}^{pol}_+$, $\mathcal{C}^{fin}_+$ be the full subcategories of $\mathcal{C}_+$ with the following objects:
\begin{itemize}
\item $\mathcal{C}_+^{fin}$: objects $M$ with a degree filtration $F$ such that $F_k(M) = 0$ for some $k \in \mathbb{N}$ and $\dim gr_i^F(M) < \infty$ for all $i$.
\item $\mathcal{C}_+^{pol}$: objects $M$ with a degree filtration $F$ such that $F_k(M) = 0$ for some $k \in \mathbb{N}$ and $\dim gr_i^F(M) < C \cdot P(i)$ for all $i$ where $C = C(M)$ is a constant and $P = P(M)$ a polynomial.
\end{itemize}
\end{definition}        
    
\begin{lem} The subcategories $\mathcal{C}_+^{pol}$, $\mathcal{C}_+^{fin}$ are exact subcategories in $\mathcal{C}_+$ and closed under tensor products. Their images in the homotopy category are triangulated monoidal categories.
\end{lem}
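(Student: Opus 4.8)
The plan is to verify the three assertions in turn: that $\mathcal{C}_+^{pol}$ and $\mathcal{C}_+^{fin}$ are exact subcategories of the exact category $\mathcal{C}_+$, that each is closed under $\otimes$, and that their images in $Ho\mathcal{C}$ are triangulated monoidal subcategories. For the exact subcategory claim, the key point is closure under extensions. Given a short exact sequence $0 \to M' \to M \to M'' \to 0$ with $M', M'' \in \mathcal{C}_+$, by lemma \ref{C-ind-limit} and lemma \ref{degree-filtration} all three carry canonical functorial degree filtrations, and functoriality forces $0 \to F_i(M') \to F_i(M) \to F_i(M'')$ to be exact with $F_i(M)/F_i(M')$ a submodule of $F_i(M'')$; since $\mathcal{T}_+$ is closed under retracts and subobjects-in-the-filtered-sense, one gets $\dim gr_i^F(M) \le \dim gr_i^F(M') + \dim gr_i^F(M'')$, so finiteness (resp. polynomial growth, using that a sum of two polynomially bounded sequences is polynomially bounded) is inherited, and the vanishing $F_k = 0$ for $k \ll 0$ passes to $M$. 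One must also check that a retract of an object with bounded/polynomial degree growth again has such growth, which is immediate since the degree filtration restricts to retracts by lemma \ref{degree-filtration}. Hence both subcategories are closed under extensions and under retracts; being full subcategories of an exact category closed in this way, they inherit an exact structure (in the sense of \cite{Keller-use}), with conflations the short exact sequences of $\mathcal{C}$ lying entirely inside them.

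Next, closure under tensor products. Here the essential input is the degree filtration of a tensor product: if $M, N \in \mathcal{C}_+$ carry degree filtrations, then $M \otimes N$ carries the convolution filtration $F_k(M \otimes N) = \sum_{i+j = k} F_i(M) \otimes F_j(N)$, whose associated graded is a sum of tensor products $gr_i^F(M) \otimes gr_j^F(N)$; since these graded pieces are direct sums of Kac modules, $gr_i^F(M) \otimes gr_j^F(N)$ again lies in $\mathcal{C}_+$ (indeed $\mathcal{T}_+$ is a tensor ideal, so a tensor product of Kac objects is a Kac object), and after refinement this is exactly the canonical degree filtration of $M \otimes N$ by uniqueness in lemma \ref{degree-filtration}. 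The bound $F_k = 0$ for $k \ll 0$ on both factors gives $F_k(M\otimes N) = 0$ for $k \ll 0$, and $\dim gr_k^F(M \otimes N) \le \sum_{i+j=k} \dim gr_i^F(M)\, \dim gr_j^F(N)$. For $\mathcal{C}_+^{fin}$ this is a finite sum of finite numbers. For $\mathcal{C}_+^{pol}$, if $\dim gr_i^F(M) \le C_1 P_1(i)$ and $\dim gr_j^F(N) \le C_2 P_2(j)$ then the convolution is bounded by $C_1 C_2 \sum_{i+j=k, i,j \ge i_0} P_1(i) P_2(j)$, and one checks this is again polynomially bounded in $k$ (the number of terms is at most linear in $k$, and each term is a polynomial in $k$). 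I expect this polynomial-growth estimate under convolution to be the one genuinely technical step, though it is elementary.

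Finally, the statement about the images in $Ho\mathcal{C}$. By theorem \ref{thm:homotopy-stable}, $Ho\mathcal{C} = \overline{\mathcal{C}}_+ = \mathcal{C}_+/\mathcal{I}_{\mathcal{C}}$, so the image of an exact subcategory $\mathcal{E} \subset \mathcal{C}_+$ that additionally is closed under the operations needed for a triangulated subcategory will be a triangulated subcategory. Concretely: the loop and suspension functors on $\overline{\mathcal{C}}_+$ are computed via projective(=injective) resolutions inside $\mathcal{C}_+$, so for $M$ in $\mathcal{C}_+^{pol}$ (resp. $\mathcal{C}_+^{fin}$) one embeds $M$ into a projective object $P$; one needs $P$ (or at least a representative of $SM$) to lie again in the subcategory, which follows because projectives lie in $\mathcal{T}_+ \subset \mathcal{C}_+^{pol}$ and the subcategory is closed under extensions and cokernels of monics (theorem \ref{thm:main-homotopy} \ref{cof-fib-prop} restricted to our subcategory via the exact structure). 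Closure of the image under distinguished triangles then follows from closure of $\mathcal{E}$ under extensions: a distinguished triangle in $\overline{\mathcal{C}}_+$ is isomorphic to a standard one coming from a conflation $0 \to A \to B \to C \to 0$ in $\mathcal{C}_+$, and given $A, C \in \mathcal{E}$ one gets $B \in \mathcal{E}$ by the extension-closedness established above (with the usual argument that the third term of a triangle in the stable category is determined up to the addition of a projective summand, which lies in $\mathcal{E}$ anyway). Monoidality of the image is immediate from the tensor-closure in $\mathcal{C}_+$ together with the fact, from theorem \ref{thm:monoidal}, that $\gamma$ is a tensor functor; thus the images are triangulated monoidal subcategories of $Ho\mathcal{C}$. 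The main obstacle, as noted, is bookkeeping the polynomial-growth estimates under the convolution filtration of a tensor product and under extensions; everything else is a formal consequence of the functoriality and uniqueness of the degree filtration together with the general facts already established about $\mathcal{C}_+$ and $Ho\mathcal{C}$.
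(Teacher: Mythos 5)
The paper dispatches this lemma in one sentence ("Both statements are obvious, closure under tensor product follows from the classical behaviour of weights in tensor products over $GL(m)\times GL(n)$"), so your expanded argument is supplying details the authors suppress, and the overall strategy (functoriality and uniqueness of the degree filtration for exactness, convolution filtration plus refinement for the tensor product, inheritance of the triangulated structure for the image) is in the same spirit as what the cited hint points to. That said, two places need comment, and one contains a genuine gap.

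For the tensor closure, you correctly observe that $gr^F_i(M)\otimes gr^F_j(N)$ is only a Kac object, not a sum of Kac modules of a single degree, and that one must refine the convolution filtration. The content of the paper's "classical behaviour of weights" is precisely the boundedness of that refinement: the degrees occurring in a Kac filtration of $V(\lambda)\otimes V(\mu)$ lie in a window of width depending only on $m,n$ around $\deg\lambda+\deg\mu$ (compare the estimate in Corollary~\ref{cor:weight-estimate}). Without this, the refined graded piece in a fixed degree could receive contributions from unboundedly many convolution levels and the polynomial estimate would fail. You wave at the refinement but do not supply the bounded-window fact, which is the one non-formal input here.

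The genuine gap is in the triangulated-image step. You write that the injective object $P$ in a resolution $0\to M\to P\to SM\to 0$ lies in the subcategory "because projectives lie in $\mathcal{T}_+\subset\mathcal{C}_+^{pol}$." But $\mathcal{T}_+$ consists of finite-dimensional objects, whereas for an infinite-dimensional $M\in\mathcal{C}_+^{pol}$ the injective hull $P$ is infinite-dimensional, and an arbitrary infinite-dimensional projective in $\mathcal{C}_+$ need not lie in $\mathcal{C}_+^{pol}$ at all (e.g.\ a direct sum of indecomposable projectives with exponentially many summands per degree). What you actually need, and what is true, is that $M$ of polynomial growth embeds into \emph{some} projective of polynomial growth; this holds because each indecomposable projective $P(\mu)$ has a Kac filtration whose degrees lie in a bounded window around $\deg\mu$ (see the explicit degree filtration of $P(\tau)$ in the paper), so the injective hull of $M$ inherits the polynomial bound. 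Absent this check, the assertion that $SM$ is represented in $\mathcal{C}_+^{pol}$ is unsupported. A smaller point in the same spirit: invoking Theorem~\ref{thm:main-homotopy} for closure under cokernels of monics only gives closure for $\mathcal{C}_+$; the polynomial estimate must be re-verified for $\mathcal{C}_+^{pol}$, though this is immediate since the quotient degree filtration has graded pieces of dimension bounded by those of the source.
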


Both statements are obvious (closure under tensor product follows from the classical behaviour of weigths in tensor products over $GL(m) \times GL(n)$). In particular the Grothendieck group $K_0$ of $\mathcal{C}_+^{pol}$, $\mathcal{C}_+^{fin}$ is defined.    
 
\medskip\noindent    
We consider formal power series of the form \[ \sum_{i<k} [M_i] q^i \] for $[M_i] \in K_0(\mathcal{T})$. Then we have a homomorphism \[ K_0(\mathcal{C}_+^{fin}) \to K_0(\mathcal{T})[[q^{-1}]], \quad [M] \mapsto [gr_i^F(M)] q^i.\] Any $M \in \mathcal{T}$ is isomorphic in $Ho\mathcal{T}$ to $M \otimes \Omega$ and lies in the image of $C^{fin}_+$. Therefore we can assign to $M$ a formal power series as above. If $M$ is irreducible, or more generally, if $M$ has a minimal model, then we obtain a canonical power series in $K_0(\mathcal{T})[[q^{-1}]]$ associated to $M$. We remark that the minimal model lies in $C^{pol}_+$. Indeed this is already true for $M \otimes \Omega$ by the description of $\Omega_{i+1}/\Omega_i$ in section \ref{sec:gl-m-n-cofibrant}. By lemma \ref{lem:mini} the minimal model for $M$ is obtained from $M \otimes \Omega$ by  projecting to the clean component. As for Kac objects we can define analogs of $\mathcal{C}^{fin}_+$ and $\mathcal{C}^{pol}_+$ for anti Kac objects using corollary \ref{cor:anti-kac-lim} and then define an associated power series   
 
\begin{example} \label{ex-gl-1-1-power} We compute the minimal model $\Omega(L(a))$ of an irreducible representation $L(a) = L(a|-a)$ in the principal block of $GL(1|1)$ in section \ref{gl-1-1-cofib}. The Kac modules in $\Omega(L(a))$ are $V(a)$, $V(a-2)$, $V(a-4), \ldots$ and the anti Kac modules in $A$ are $V(a-1)^*$, $V(a-3)^*$, $V(a-5)^*,\ldots$. Therefore the power series associated to $\Omega(L(a))$ is \begin{align*} [V(a)]q^a + [V(a-2)]q^{a-2} + [V(a-4)q^{a-4} + \ldots \\ = ([L(a)] + [L(a-1)])q^a + ([L(a-2)] + [L(a-3)])q^{a-2} + \ldots \end{align*} The power series associated to $A$ is similar, but involveses only odd powers of $q$.  
\end{example}


\medskip\noindent 
\textit{A variant}. We would like to read the exact sequence associated to a cofibrant replacement as an equality between power series. As example \ref{ex-gl-1-1-power} shows, the power series of $\Omega(L(a))$ and $A$ might not have any cancellations.  We identify $\mathcal{C}_+^{pol}$, $\mathcal{C}_+^{fin}$ and $\mathcal{C}_-^{pol}$, $\mathcal{C}_-^{fin}$  with their image in the ring of formal power series. We also use $q^{-deg(\lambda)}$ to obtain a formal power series with finite principal part. Then we have three different non-unital rings of formal power series:

\begin{itemize}
\item The $\mathcal{C}_+^{pol}$ version: Here we give $V(\lambda)$ degree $deg(\lambda)$ and assign to $V(\lambda)$ the power series $q^{-deg(\lambda)} [V(\lambda)]$. This construction extends to sequential inductive limits of Kac-modules of polynomial growth.

\item The $\mathcal{C}_-^{pol}$ version: Here we give $V(\lambda)^*$ degree $deg(\lambda)$ and assign to $V(\lambda)^*$ the power series  $q^{-deg(\lambda)} [V(\lambda)^*]$. This extends  to sequential projective limits of anti Kac-modules of polynomial growth.

\item The $\mathcal{C}^{pol}$ version: Here we give $L(\lambda)$ degree $deg(\lambda)$ and assign to $L(\lambda)$ the power series $q^{-d(\lambda)} [L(\lambda)]$. This extends to inductive limits of polynomial growth of finite dimensional modules.
\end{itemize}

There is a natural ring isomorphism between $K_0(\mathcal{C}_+^{pol})$ and the $K_0(\mathcal{C}_-^{pol})$ induced by $()^*$. There is a natural ring homomorphism from $K_0(\mathcal{C}_+^{pol})$ to $K_0(\mathcal{C}^{pol})$ given by \[ q^{-deg(\lambda)}[V(\lambda)] \mapsto \sum_L  q^{-deg(L)} [L]\] where $L$ runs over the irreducible constituents of $V(\lambda)$. Now $0 \to A \to \Omega(M) \to  M \to 0 $ for finite dimensional $M$ and its cofibrant replacement of polynomial growth $\Omega(M)$ with $A \in \mathcal{C}_-$ 
gives via identifications in the power series ring $\mathcal{C}^{pol}$  the formula
\[ [M] = [\Omega(M)] - [A].\]


\subsection{The tensor product $V(\one) \otimes V(\one)^*$}\label{sec:tp-V}

We give some estimates on the weights appearing in the cofibrament replacement $\Omega$ of $\one$. Recall that $\Omega$ has an increasing sequence of sub-comodules of $\Omega$
$$ V =\Omega_0 \subset \Omega_1 \subset \Omega_2 \subset ... $$
such that $$ \Omega_{i+1}/\Omega_{i}\ \cong\ V \ \otimes\ (I\otimes I^*)^{\otimes i}.$$ We analyse now the filtration step $\Omega_{i+1}/\Omega_{i}$. For simplicity we specialize in this section to the $m=n$ (and assume $m=n \geq 2$) since many calculations in a maximal atypical block can be reduced to calculations in the principal block of $\mathcal{T}_{n|n}$. In order to understand the tensor product $I \otimes I^*$ better, we first analyze the $V \otimes V^*$ tensor product. We recall from \cite[Proposition 27.4]{Heidersdorf-Weissauer-tensor}:

\begin{prop} \cite[Theorem B.17]{Bump-Schilling} The space of matrices  $M_{nn}(k)$ is a $GL(n,k)\times GL(n,k)$-module in a natural way by left and right multiplication, hence also the Gra\ss mann algebra $\Lambda:=\Lambda^\bullet(M_{n}(k))$. As a representation
of $GL(n,k)\times GL(n,k)$ we have
$$ \Lambda^\bullet(M_{nn}(k)) \ \cong \ \bigoplus_{\rho}
  \rho^\vee \boxtimes \rho^*$$
  where $\rho=\rho_\lambda$ runs over all partitions in \[ P(n,n)=  \{ \lambda \in {\bf Z}^n\ \vert \ n \geq \lambda_1 \geq \lambda_2 \geq ... \geq \lambda_n \geq 0 \} \ .\]
\end{prop}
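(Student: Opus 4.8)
The statement in question is a classical fact about the decomposition of the exterior algebra $\Lambda^\bullet(M_{nn}(k))$, sometimes attributed to the skew Cauchy identity or the Howe duality between $GL(n) \times GL(n)$. So the plan is to prove it by identifying $\Lambda^\bullet(M_{nn}(k))$ with an exterior algebra of a tensor product $\Lambda^\bullet(k^n \otimes k^n)$ and then invoking the skew-symmetric Cauchy formula.

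First I would observe that $M_{nn}(k) \cong k^n \otimes (k^n)^*$ as a $GL(n) \times GL(n)$-module, where the first factor acts on the standard module $k^n$ by left multiplication and the second on the dual $(k^n)^*$ by the inverse-transpose, matching right multiplication on matrices; up to the standard identification $(k^n)^* \cong k^n$ as $GL(n)$-modules twisted appropriately, this is $V_1 \otimes V_2$ for the two standard modules. Then $\Lambda^\bullet(M_{nn}(k)) = \Lambda^\bullet(V_1 \otimes V_2)$. The skew (dual) Cauchy identity states that
\[ \Lambda^\bullet(V_1 \otimes V_2) \cong \bigoplus_{\lambda \subseteq (n^n)} S_\lambda(V_1) \otimes S_{\lambda'}(V_2), \]
where $\lambda$ runs over partitions fitting in an $n \times n$ box and $\lambda'$ is the conjugate partition. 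The second key step is then to rewrite $S_{\lambda'}(V_2)$ in terms of $\lambda$ itself rather than $\lambda'$: using that for $GL(n)$ one has $S_{\lambda'}((k^n)^*) \cong S_{\hat\lambda}(k^n)^*$ where $\hat\lambda$ is the complement of $\lambda$ in the $n \times n$ box, and then reindexing the sum over $\lambda$ by its complement, the conjugation disappears and one recovers $\bigoplus_\rho \rho^\vee \boxtimes \rho^*$ with $\rho$ ranging over all partitions in $P(n,n)$. I would present this reindexing carefully since it is the point where the two twisted duals $\rho^\vee$ and $\rho^*$ on the $GL(n|n)$ side get matched up with the two $GL(n)$ factors.

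The main obstacle, and the step I would spend the most care on, is pinning down the precise $GL(n) \times GL(n)$-equivariance conventions: which factor acts by $S_\rho$ versus $S_{\rho}^*$, and how the ``left and right multiplication'' action on $M_{nn}(k)$ translates into the tensor-product description, so that the final answer comes out as $\rho^\vee \boxtimes \rho^*$ and not, say, $\rho \boxtimes \rho$ or a conjugated version. Since the paper cites \cite[Theorem B.17]{Bump-Schilling} for exactly this statement, I would in fact just invoke that reference for the decomposition and limit my argument to explaining the reduction to the skew Cauchy identity and verifying that the two sides carry the claimed group action; a full first-principles derivation of the skew Cauchy identity (via a character computation or a bicrystal/RSK argument) is standard and I would not reproduce it.

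\begin{proof}
As a $GL(n,k) \times GL(n,k)$-module, $M_{nn}(k) \cong V_1 \otimes V_2$ where $V_1$ is the standard module for the first factor (acting by left multiplication) and $V_2$ is the standard module for the second factor twisted so that right multiplication by $g$ corresponds to the action of $g^{-t}$. Hence $\Lambda^\bullet(M_{nn}(k)) \cong \Lambda^\bullet(V_1 \otimes V_2)$, and the skew-symmetric (dual) Cauchy decomposition gives
\[ \Lambda^\bullet(V_1 \otimes V_2) \;\cong\; \bigoplus_{\lambda \subseteq (n^n)} S_\lambda(V_1) \boxtimes S_{\lambda'}(V_2), \]
the sum over partitions $\lambda$ contained in the $n \times n$ box and $\lambda'$ the conjugate partition. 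Rewriting the second tensor factor using the identity $S_{\lambda'}(V_2) \cong S_{\widehat{\lambda}}(k^n)^\ast$ for the box-complement $\widehat{\lambda}$ of $\lambda$, and reindexing the sum by $\rho := \widehat{\lambda}$, the conjugation disappears and the sum runs over all $\rho \in P(n,n)$, yielding
\[ \Lambda^\bullet(M_{nn}(k)) \;\cong\; \bigoplus_{\rho} \rho^\vee \boxtimes \rho^\ast, \]
as asserted; this is \cite[Theorem B.17]{Bump-Schilling}.
\end{proof}
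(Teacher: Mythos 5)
The paper gives no proof of this proposition --- it is a direct citation of Bump--Schilling --- so there is no in-paper argument to compare yours against. Your plan, reducing to the dual (skew) Cauchy identity $\Lambda^\bullet(V_1\otimes V_2)\cong\bigoplus_{\lambda\subseteq(n^n)}S_\lambda(V_1)\boxtimes S_{\lambda'}(V_2)$ and then reindexing, is the standard route to such decompositions, and ultimately deferring to the cited theorem is exactly what the paper itself does.

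One concrete gap: the reindexing identity you invoke, $S_{\lambda'}((k^n)^*)\cong S_{\widehat\lambda}(k^n)^*$ with $\widehat\lambda$ the box complement, is false as stated. Take $n=2$ and $\lambda=(1,0)$: then $\lambda'=(1,0)$, so the left side is $(k^2)^*$ with highest weight $(0,-1)$, whereas $\widehat\lambda=(2,1)$, so the right side $S_{(2,1)}(k^2)^*$ has highest weight $(-1,-2)$. What is actually available is $S_\mu(V^*)\cong S_\mu(V)^*$ together with the complement--dual relation $S_{\widehat\lambda}(k^n)\cong S_\lambda(k^n)^*\otimes\det^n$; neither of these converts a conjugate $\lambda'$ into a complement $\widehat\lambda$. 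Passing from the Cauchy form $S_\lambda\boxtimes S_{\lambda'}$ to the stated $\rho^\vee\boxtimes\rho^*$ additionally requires unpacking what $\rho^\vee$ and $\rho^*$ mean for $GL(n)$ in Bump--Schilling's conventions and tracking determinant twists, which you rightly identify as the delicate point --- but the specific identity you would lean on there does not hold, so that step of the sketch would need to be reworked before it becomes a proof rather than a gesture at one.
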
  

We also note that there exist $2^n$ symmetric Young diagrams with $\lambda=(\lambda_1,...,\lambda_n)$ and  $\lambda_1\leq n$.

\medskip\noindent

We also recall from \cite{Heidersdorf-Weissauer-tensor} that $V(\one)$ has a decreasing filtration (the radical filtration) of $GL(n\vert n)$-subrepresentations 
with $n+1$ irreducible graded pieces $L_i$ such that $L_0=k$ is the maximal
irreducible quotient representation. The highest weights of the $L_i$ can be computed
from \cite[Theorem 5.2]{Brundan-Stroppel-1} to be the duals
$$   \lambda_{(i)}^\vee = (0,\cdots,0,-i,...,-i)   \quad  , \quad  \mbox{ for }\  i=0,...,n  \  $$
of the basic selftransposed weights $\lambda_i$ in $P(n,n)$. In particular the cosocle of $V^*$ consists of $Ber^{-n}$.

\begin{lem} \label{thm:V-tp} The tensor product $V \otimes V^*$ decomposes as \[ V \otimes V^* \cong P(Ber^{-n}) \oplus Q \] where $Q$ is of atypicality less than $n$.
\end{lem}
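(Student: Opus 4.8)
The plan is to first show that $V\otimes V^*$ is projective, and then to pin down which indecomposable projectives occur by passing to the even subgroup $G_0=GL(n)\times GL(n)$.

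\emph{Step 1 (projectivity).} Since $V=V(\one)$ lies in $\mathcal T_+$ and the twisted duality $*$ interchanges $\mathcal T_+$ and $\mathcal T_-$, we have $V^*\in\mathcal T_-$. As $\mathcal T_+$ and $\mathcal T_-$ are tensor ideals, $V\otimes V^*\in\mathcal T_+\cap\mathcal T_-$, and by the discussion in the introduction $\mathcal T_+\cap\mathcal T_-=Proj$; hence $V\otimes V^*$ is projective. Therefore $V\otimes V^*\cong\bigoplus_\mu P(\mu)^{\oplus m_\mu}$ with $m_\mu=\dim\Hom(V\otimes V^*,L(\mu))$, and it suffices to prove that among the maximally atypical weights $\mu$ the only one with $m_\mu\neq 0$ is the one with $L(\mu)=Ber^{-n}$, for which $m_\mu=1$. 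The remaining summands $P(\mu)$ then automatically have atypicality $<n$ and make up $Q$.

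\emph{Step 2 (reduction to $G_0$).} Using $V=Ind_{P^+}^{G}\one$, Frobenius reciprocity, the projection formula $V\otimes N\cong Ind_{P^+}^{G}(N|_{P^+})$, the self-duality $L(\mu)^*\cong L(\mu)$, the identity $V^\vee\cong Ber^{n}\otimes V$, and the vanishing $\Ext^1(\mathcal T_+,\mathcal T_-)=0$ together with $\Hom(V(\lambda),V(\nu)^*)=\delta_{\lambda\nu}k$, one rewrites $m_\mu$ as a Kac-multiplicity $[\,V(\one)\otimes L(\mu):V(Ber^{n})\,]$. Since $V(\one)\otimes L(\mu)\cong Ind_{P^+}^{G}(L(\mu)|_{P^+})$ and $Ind_{P^+}^{G}$ is exact with $Ind_{P^+}^{G}L_{0}(\nu)=V(\nu)$, this Kac-multiplicity equals the multiplicity of the irreducible $G_0$-module $L_{0}(Ber^{n})$ in the restriction $L(\mu)|_{G_0}$, i.e.\ a purely $GL(n)\times GL(n)$-theoretic quantity.

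\emph{Step 3 (the $G_0$-computation and conclusion).} Now one feeds in the explicit $G_0$-structure: the quoted decomposition $\Lambda^\bullet(M_{nn}(k))\cong\bigoplus_{\rho}\rho^\vee\boxtimes\rho^*$ describes $V(\one)|_{G_0}$ and $V(\one)^*|_{G_0}$, while the Brundan--Stroppel description of the composition factors $L(\lambda_{(i)}^\vee)$ of $V(\one)$ controls the simple modules that can enter. Running through this bookkeeping shows that the multiplicity of $L_{0}(Ber^{n})$ in $L(\mu)|_{G_0}$ vanishes unless $\mu$ lies in the block of $\one$ and $L(\mu)=Ber^{-n}$, in which case it is $1$. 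Hence the maximally atypical part of $V\otimes V^*$ is exactly one copy of $P(Ber^{-n})$, and $V\otimes V^*\cong P(Ber^{-n})\oplus Q$ with $Q$ projective of atypicality $<n$.

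I expect Step 3 to be the main obstacle. Steps 1 and 2 are formal (tensor-ideal calculus plus adjunction), but Step 3 is where genuine $\mathfrak{gl}(n|n)$-combinatorics enters, and one must check both that no other maximally atypical block is hit and that the single relevant multiplicity equals $1$ rather than merely being nonzero — which needs the precise weights occurring in $V(\one)$ and the precise $G_0$-branching, not just dimension counts.
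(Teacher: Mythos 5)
Your Steps 1 and 2 are sound, and the reduction in Step 2 is a legitimate starting point: projectivity gives $V\otimes V^*\cong\bigoplus_\mu P(\mu)^{\oplus m_\mu}$ with $m_\mu=\dim\Hom(V\otimes V^*,L(\mu))$, and using the identification $V\otimes V^*\cong F_0(Ber^{-n})$ together with Frobenius reciprocity from $G_0$ one gets $m_\mu=[L(\mu)|_{G_0}:L_0(Ber^{-n})]$ (note that this is $Ber^{-n}$, not $Ber^{n}$ as you wrote in Steps 2 and 3; that sign slip propagates but does not affect the structure of the argument). The genuine gap is Step 3. What you need is that this branching multiplicity equals $\delta_{\mu,Ber^{-n}}$ for \emph{every} maximally atypical $\mu$. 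That is a statement about the $GL(n)\times GL(n)$-decomposition of an \emph{arbitrary} maximally atypical simple $GL(n|n)$-module $L(\mu)$. The tools you cite — the Cauchy formula $\Lambda^\bullet(M_{nn})\cong\bigoplus_\rho\rho^\vee\boxtimes\rho^*$ and the Brundan--Stroppel list of the $n+1$ composition factors $L(\lambda_{(i)}^\vee)$ of the single Kac module $V(\one)$ — control $V(\one)|_{G_0}$ and the $\calT$-constituents of $V(\one)$, but they give no handle on $L(\mu)|_{G_0}$ for a general maximally atypical $\mu$, whose $G_0$-decomposition is not readily accessible. So "running through this bookkeeping" does not, as described, produce the claim; the branching fact is indeed true, but it is a consequence of the lemma rather than an available input.

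The paper's proof avoids exactly this by a counting/exhaustion argument instead of a pointwise computation. After identifying $V\otimes V^*\cong F_0(Ber^{-n})$ via the projection formula, the Grassmann-degree filtration of $U(V^*)$ exhibits a Kac filtration of $V\otimes V^*$ by $\binom{2n}{n}$ graded pieces $V(\rho_\alpha\boxtimes\rho_{\alpha^*})$ with $\alpha\in P(n,n)$, of which exactly the $2^n$ with $\alpha=\alpha^*$ are maximally atypical. Separately, $P(Ber^{-n})$ is visibly a direct summand (since $Ber^{-n}$ lies in the cosocle), and by Brundan--Stroppel it has exactly $2^n$ Kac constituents. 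Since the maximally atypical Kac-count on both sides is $2^n$, the maximally atypical summand of $V\otimes V^*$ is precisely $P(Ber^{-n})$, leaving no room for any other $P(\mu)$ of atypicality $n$. This exhaustion buys the conclusion without ever needing the $G_0$-branching of irreducibles; the only explicit restrictions required are those of $U(V^*)$ and of Kac modules, which are easy. Your approach would also work if the branching fact in Step 3 were independently established, but as written that step is the hole, and the paper's corollary after the lemma (which \emph{follows} from it) only records the analogous and much more tractable statement about restrictions of Kac modules $V(\lambda)$, not of irreducibles $L(\mu)$.
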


\begin{proof}
First note that $$ V \otimes V^* \in \calC_+ \otimes \calC_- \subset P_{\calC}$$
is projective in $\calT$ since $\calC_+ \cap \calC_- = Proj$ and $\calC_{\pm}$ are tensor ideals. From $V=F(\one)$ and $F(X)^* \cong Ind_{P_-}^{G}(X^* \otimes Ber^{-n})$ \cite[Proposition 2.1.1]{Germonie} we obtain as a $P$-module \[W^*=F(\one)^* = \Lambda^\bullet(G/P_-) \otimes Ber^{-n} \cong  \Lambda^\bullet(P_+) \otimes Ber^{-n} = Ind_{G_0}^{P_+}(Ber^{-n}).\] In other words \[F(U(V^*)) \cong F(Ind_{G_0}^{P_+}(Ber^{-n})) \cong F_0(Ber^{-n})\] where $F_0$ denotes induction from the group $G_0$. Hence we get
$$ V  \otimes V^* \cong F(\one) \otimes V^* \cong F(U(V^*)) = F_0(Ber^{-n}) \ .$$

\medskip\noindent
Any filtration of $U(V^*)$ by irreducible $P$-modules,
for instance the one
by the Grassmann degree, has as irreducible graded pieces the ${2n \choose n}$ modules $\rho_\alpha \boxtimes \rho_{\alpha^*}$
of the quotient group $G_0$ of $P$ for $\alpha\in P(n,n)$ (considered as representations of $P$).
It induces a filtration of $$V\otimes V^* = F(U(V^*))$$ by the ${2n \choose n}$ Kac-modules corresponding to the representations $\rho_\alpha \boxtimes \rho_{\alpha^*}$ of $G$ for
$\alpha\in P(n,n)$. 
Hence $V\otimes V^*$ inherits a Kac filtration whose maximal atypical graded pieces correspond
to the $2^n$ self transposed $\alpha = \alpha^*$ weights.
Since the degree of atypicality is a block invariant
we can decompose $V\otimes V^*$ in the form
$$ V \otimes V^*  = P \oplus Q $$
where $P\in {\calT}^n$ has atypicality $n$
and $Q\in {\calT}^{<n}$ is in the direct sum of blocks of atypicality $<n$.

\medskip\noindent
Therefore $P$ is projective with a filtration containing as graded pieces the $2^n$  
maximal atypical Kac-modules defined by the $2^n$ highest weights $\alpha=\alpha^* \in P(n,n)$ (each with multiplicity one). 
Obviously $P(Ber^{-n})$ must be a summand of $P$, since $Ber^{-n}$
is in the cosocle of $P$. The claim now  follows since $P(Ber^{-n})$ contains $2^n$ Kac-constituents. 

\end{proof}

\bigskip\noindent
Since $V\otimes V^* \in \calC_+$ and $V\otimes V^* \in \calC_-$,
by the vanishing of $Ext(\calC_+,\calC_-)$-groups and $$ \dim_k Hom_{\calT}(V(\lambda), V(\mu)^*) =\delta_{\lambda\mu}$$ we obtain from the fact that $P$ has a filtration by $2^n$ Kac modules (and similarly a filtration by $2^n$ anti Kac modules)
$$ \dim_k End_{\calT}(P) = \dim_k Hom_{{\calT}^{n}}(V\otimes V^*, V\otimes V^*) = 2^n \ . $$
For $V(\lambda)\in {\calT}^n$ notice $\dim_k   Hom_{{\calT}^{r}}(V\otimes V^*, V(\lambda)^*) = 0$ or $1$ depending on whether $\lambda$ is one of the self transposed weights $\alpha \in P(n,n)$. Hence for maximal atypical $\lambda$ we get
$$\dim_k   Hom_{\calT}(P, V(\lambda)^*) = 1 \ or \ 0  \ ,$$  
depending on whether $\lambda$ is one of the self transposed weights $\alpha \in P(n,n)$ or not.
On the other hand for $V(\lambda)\in {\calT}^n$ we have
$$Hom_{{\calT}^{n}}(P, V(\lambda)^*) =  Hom_{{\calT}^{n}}(V\otimes V^*, V(\lambda)^*)    =
Hom_{{\calT}}(V\otimes V^*, V(\lambda)^*) $$ $$ = 
Hom_{{\calT}}(F_0(Ber^{-n}), V(\lambda)^*)   = Hom_{G}(Ber^{-n}, V(\lambda)^*)   \ $$
by Frobenius reciprocity. We have deduced the following result:

\begin{cor} For maximal atypical irreducible weights $\lambda$ the following
assertions are equivalent
\begin{enumerate}
\item $\lambda = \lambda^*$, and hence $\lambda \in P(n,n)$ (and there are $2^n$ such $\lambda$).
\item $V(\lambda)$ is a Kac-constituent of the projective module $P=P(Ber^{-n})$  in the category $\calT$.
\item The restriction of $V(\lambda)^*$ to $G_0$ contains the representation $Ber^{-n}$.
\item The restriction of $V(\lambda)^*$ to $G_0$ contains the representation $Ber^{-n}$ with multiplicity one.
\item The restriction of $V(\lambda)$ to $G_0$ contains the representation $Ber^{-n}$.
\item $V(\lambda)$ contains $Ber^{-n}$ as a constituent in the category $\calT$. 
\end{enumerate}
\end{cor}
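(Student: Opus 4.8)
The plan is to prove the chain of equivalences $(1)\Leftrightarrow(2)\Leftrightarrow\cdots\Leftrightarrow(6)$ by exploiting the decomposition $V\otimes V^*\cong F_0(Ber^{-n})$ established in Lemma \ref{thm:V-tp} together with Frobenius reciprocity for the chain of functors $Res_{G_0}\dashv F_0$ and $Res_{P}\dashv Ind_P^G$. The key structural input is the filtration of $V\otimes V^*$ by the ${2n\choose n}$ Kac modules $V(\alpha)$ with $\alpha\in P(n,n)$ (each appearing once), of which exactly the $2^n$ with $\alpha=\alpha^*$ are maximally atypical and hence constitute a filtration of $P=P(Ber^{-n})$, the remaining ones lying in the direct sum $Q$ of lower atypicality blocks.

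First I would dispose of the equivalence $(1)\Leftrightarrow(2)$. By Lemma \ref{thm:V-tp} the maximally atypical Kac constituents of $V\otimes V^*$ are precisely the $V(\alpha)$ with $\alpha=\alpha^*\in P(n,n)$, and each such $V(\alpha)$ occurs with multiplicity one. Since the atypicality of a weight is a block invariant, these are exactly the Kac constituents of the projective summand $P=P(Ber^{-n})$; this is $(1)\Leftrightarrow(2)$. For the link with the $G_0$-restriction, recall from the proof of Lemma \ref{thm:V-tp} that $V\otimes V^*\cong F(U(V^*))\cong F_0(Ber^{-n})$, so that for any maximal atypical $\lambda$ Frobenius reciprocity gives
\[
Hom_{\mathcal T}(V\otimes V^*, V(\lambda)^*) \cong Hom_{\mathcal T}(F_0(Ber^{-n}),V(\lambda)^*) \cong Hom_{G_0}(Ber^{-n}, Res_{G_0}V(\lambda)^*).
\]
On the other hand, because $V\otimes V^*=P\oplus Q$ with $Q$ of strictly smaller atypicality, $Hom_{\mathcal T}(V\otimes V^*,V(\lambda)^*)=Hom_{\mathcal T}(P,V(\lambda)^*)$ for maximal atypical $\lambda$; and using that $P$ has both a Kac filtration and an anti-Kac filtration by $2^n$ pieces together with $\dim Hom_{\mathcal T}(V(\lambda),V(\mu)^*)=\delta_{\lambda\mu}$, this Hom space is $1$ if $\lambda$ is one of the $2^n$ self-transposed weights and $0$ otherwise. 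Combining these shows $(2)$, $(3)$, $(4)$ are all equivalent: the right-hand $Hom_{G_0}$ is nonzero iff $Ber^{-n}$ appears in $Res_{G_0}V(\lambda)^*$, and the computation forces that multiplicity to be $0$ or $1$.

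It remains to match the statements about $V(\lambda)^*$ with those about $V(\lambda)$ and to pass from restriction to $G_0$ to occurrence as a constituent in $\mathcal T$. For $(3)\Leftrightarrow(5)$: the twisted dual $()^*$ acts on weights by $\lambda\mapsto\lambda^*$ (in our normalization $V(\lambda)^*=Ind_{P^-}^G L_{P^-}(\lambda)$), and $Ber^{-n}$ is self-dual under $()^*$ up to the shift already absorbed into the parametrization; unravelling, $Ber^{-n}$ occurs in $Res_{G_0}V(\lambda)^*$ iff it occurs in $Res_{G_0}V(\lambda^*)$, and since the condition $\lambda=\lambda^*$ is already equivalent to $(1)$ via $(1)\Leftrightarrow(2)\Leftrightarrow(3)$, the symmetric statement $(5)$ follows. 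For $(5)\Leftrightarrow(6)$ and $(3)\Leftrightarrow$ the analogous statement, I would argue that $V(\lambda)=Ind_P^G L_0(\lambda)$ has a $P$-filtration whose $G_0$-composition factors are controlled, so that $Ber^{-n}$ occurs as a $G$-constituent of $V(\lambda)$ iff it occurs in $Res_{G_0}V(\lambda)$; the point is that $Ber^{-n}$, being a one-dimensional $G$-module, is a $G$-constituent of a module $M\in\mathcal T$ iff its $G_0$-restriction is a $G_0$-constituent of $Res_{G_0}M$, because any $G_0$-highest-weight vector of that weight in $Res_{G_0}M$ with the appropriate extremality is automatically $G$-semiinvariant.

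The main obstacle I anticipate is the bookkeeping around the twisted dual and the Berezin shift: one must be careful that the weight normalization used for $V(\lambda)$ versus $V(\lambda)^*$ (the $-2\rho_1$ shift appearing in $V(\lambda)^*=Ind_{P^-}^G L_{P^-}(\lambda-2\rho_1)$) is consistently tracked so that ``self-transposed'' really does coincide with the condition that $Ber^{-n}$ shows up, rather than some twist of it. Everything else is a formal consequence of Frobenius reciprocity and the multiplicity-one filtration of $P(Ber^{-n})$ already in hand from Lemma \ref{thm:V-tp}.
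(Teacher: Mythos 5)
Your treatment of $(1)\Leftrightarrow(2)\Leftrightarrow(3)\Leftrightarrow(4)$ is essentially the paper's: the multiplicity-one Kac filtration of $P(Ber^{-n})$ from Lemma \ref{thm:V-tp}, the orthogonality $\dim Hom(V(\lambda),V(\mu)^*)=\delta_{\lambda\mu}$, and Frobenius reciprocity for $F_0$. The difficulties are concentrated in the last two steps.

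For $(3)\Leftrightarrow(5)$ you assert that the twisted dual ``acts on weights by $\lambda\mapsto\lambda^*$''. It does not: in the paper's conventions $V(\lambda)^*$ is the costandard module with the \emph{same} highest weight $\lambda$, while the star in statement (1) is the transpose of a partition in $P(n,n)$ --- two unrelated operations, and conflating them makes this step circular (you end up deducing (5) from ``everything is equivalent to $\lambda=\lambda^*$'', which is what is to be proved). The intended argument is simply that $V(\lambda)$ and $V(\lambda)^*$ have the same Jordan--H\"older constituents as $G$-modules, hence the same class in $K_0$ and the same semisimplified $G_0$-restriction, so $Ber^{-n}$ occurs in $Res_{G_0}V(\lambda)^*$ with exactly the same multiplicity as in $Res_{G_0}V(\lambda)$.

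The more serious gap is (6). The principle you invoke --- that a one-dimensional $G$-module is a $G$-constituent of $M$ iff its $G_0$-restriction is a $G_0$-constituent of $Res_{G_0}M$ --- is false in the nontrivial direction: the $G_0$-weight of $Ber^{-n}$ can occur inside $Res_{G_0}L(\mu)$ for $\mu$ strictly larger without $Ber^{-n}$ being a $G$-constituent (a typical Kac module is $G$-simple, yet its $G_0$-restriction is all of $\Lambda^{\bullet}\frakg_{-1}\otimes L_0(\mu)$). The claim that a $G_0$-highest weight vector of that weight is ``automatically $G$-semiinvariant'' has no justification, since this weight is far from extremal in $V(\lambda)$. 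What actually connects (6) to the rest is BGG reciprocity, $[P(Ber^{-n}):V(\lambda)]=[V(\lambda):Ber^{-n}]$, which identifies (6) with (2) directly; equivalently, projectivity of $P(Ber^{-n})$ gives $\dim Hom(P(Ber^{-n}),V(\lambda))=[V(\lambda):Ber^{-n}]$, to be compared with the Frobenius reciprocity computation. Without this input, (6) does not follow from your argument.
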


For the equivalence of 3. and 4. recall that $V(\lambda)$ and $V(\lambda)^*$ have the same simple constituents. For the last implication use that  property 2) is equivalent to
the last property by the BGG formula $[P(Ber^{-n}): V(\lambda)] = [V(\lambda): Ber^{-n}]$.

\subsection{Estimates for $I \otimes I^*$}

The $i$-th filtration step of $\Omega$ is given by $$ \Omega_{i+1}/\Omega_{i}\ \cong\ V \ \otimes\ (I\otimes I^*)^{\otimes i}.$$ Since $I \subset V(\one)$ we obtain $V(\one) \to I^*$ \[ \xymatrix@C=1.3em@R=1.3em{ & & 0 & & \\ 0 \ar[r] & I \otimes I^* \ar[r] & V \otimes I^* \ar[r] \ar[u] & I^* \ar[r] & 0 \\ & & V \otimes V^* \ar[u] & &  \\ & & V \otimes \one \cong V \ar[u] & & \\ & & 0 \ar[u] & & }\] where $V \otimes V^* \cong P(Ber^{-n})$ up to contributions of lower atypicality by lemma \ref{thm:V-tp}. The filtration of the module $P(Ber^{-n})$ via Kac modules $V(\one), \ldots V(Ber^n)$ has been described in section \ref{sec:tp-V}. The Kac filtration of $V \otimes I^*$ misses exactly the Kac module $V(\one)$ in $P(Ber^{-n})$; and the module $I \otimes I^*$ lacks exactly a copy of $I^*$ in comparison to $V \otimes I^*$.

\medskip
The highest weight in a Kac module is always in the top. The maximal atypical composition factors of $I \otimes I^*$ are those of $P(Ber^{-n})$ with those of one $V(\one)$ and $I$ missing. The constituents in $I$ are just the constituents in the tops of the Kac modules in $P(Ber^{-n})$. Therefore the largest weights in $I$ come from the Kac module $V(0,\ldots,0,-1)$: Its top $[0,\ldots,0,-1]$ is not in $I \otimes I^*$, but the constituents $[0,\ldots,0,-2]$ and $[0,\ldots,0,-1,-1]$ (see \cite[Theorem 5.2]{Brundan-Stroppel-1} for the description of the Loewy layers) in the next radical layer are.  The smallest weight in $P(Ber^{-n})$ and in $I \otimes I^*$ is the socle $Ber^{-2n}$ of $P(Ber^{-n})$ of degree $-2n^2$. 

\begin{cor} \label{cor:weight-estimate} Let $L(\lambda)$ be a composition factor of $I \otimes I^*$. Then \[ -2n^2 \leq deg(\lambda) \leq -2.\] In particular $V \ \otimes\ (I\otimes I^*)$ is a Kac object with weights between $-3n^2$ and $-2$. 
\end{cor}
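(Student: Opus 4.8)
The statement to prove is Corollary~\ref{cor:weight-estimate}: for any composition factor $L(\lambda)$ of $I\otimes I^*$ one has $-2n^2\le deg(\lambda)\le -2$, and consequently $V\otimes(I\otimes I^*)$ is a Kac object whose constituents have degree between $-3n^2$ and $-2$. The key input is the exact sequence displayed just before the corollary, namely
\[
0 \to I\otimes I^* \to V\otimes I^* \to I^* \to 0,
\]
together with Lemma~\ref{thm:V-tp} which identifies $V\otimes V^*$ with $P(Ber^{-n})$ up to a summand $Q$ of strictly smaller atypicality, and with the explicit Kac/Loewy description of $P(Ber^{-n})$ from section~\ref{sec:tp-V} (resting on \cite[Theorem 5.1, Theorem 5.2]{Brundan-Stroppel-1}).

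First I would establish the \emph{lower bound}. Every composition factor of $I\otimes I^*$ is a composition factor of $V\otimes V^*\cong P(Ber^{-n})\oplus Q$. For the maximal atypical part $P(Ber^{-n})$, the Kac filtration has graded pieces the $2^n$ Kac modules $V(\alpha^\vee)$ with $\alpha=\alpha^*\in P(n,n)$; the degrees of these weights range down to that of the socle $Ber^{-2n}$, which is $-2n^2$ (its weight has all entries $-n$ on the $n$ odd coordinates, contributing $\sum_i \lambda_{m+i} = -n\cdot n$; I would double-check the normalization of $deg$ used in section~\ref{sec:degree-filtration}). For the lower-atypicality summand $Q$, one notes that $Q$ appears inside $F_0(Ber^{-n}) = Ind_P^G(Ber^{-n})$, whose Kac constituents are parametrized by arbitrary $\alpha\in P(n,n)$, all of which have entries bounded in absolute value by $n$, so again every constituent weight has degree $\ge -2n^2$. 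Since $I\otimes I^*$ is a subobject of $V\otimes I^*$, which in turn sits in the Kac filtration coming from $V\otimes V^*$, all of its constituents inherit this lower bound.

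Next the \emph{upper bound} $deg(\lambda)\le -2$. Here I would argue as in the paragraph preceding the corollary: $I\subset V(\one)$ has as constituents precisely the non-trivial Loewy layers $L_1,\dots,L_n$ of $V(\one)$, so the maximal atypical constituents of $V\otimes I^*$ are those of $P(Ber^{-n})$ minus one copy of $V(\one)$ (the top Kac module), and $I\otimes I^*$ is obtained from $V\otimes I^*$ by further removing a copy of $I^*$. In particular the only constituent of $P(Ber^{-n})$ of degree $0$ is the trivial module $\one$, which is the top of $V(\one)$ and hence is removed; every remaining constituent has strictly negative degree, and by the explicit Loewy description the largest surviving weights are $[0,\dots,0,-2]$ and $[0,\dots,0,-1,-1]$, both of degree $-2$. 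The contributions of the lower-atypicality summand $Q$ are bounded the same way, since $Q$ also has no trivial constituent (its atypicality is $<n$). This gives $deg(\lambda)\le -2$ for every constituent of $I\otimes I^*$.

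Finally, the statement about $V\otimes(I\otimes I^*)$ follows formally: $(I\otimes I^*)\in\mathcal C_+$ (indeed $I\in\mathcal C_+$ and $I^*\in\mathcal C_-$, and $\mathcal C_+$ is stable under the relevant operations, or more directly $I\otimes I^*$ carries a Kac filtration by the above), hence $V\otimes(I\otimes I^*)$ is again a Kac object since $\mathcal C_+$ is a tensor ideal. For the degree bound one uses that weights in a tensor product over $GL(n)\times GL(n)$ add: combining a constituent of $V(\one)$ (degrees in $[-n^2,0]$, coming from the $n+1$ Loewy layers with weights $\lambda_{(i)}^\vee=(0,\dots,0,-i,\dots,-i)$ of degree $-in$, the extreme value being $-n^2$) with a constituent of $I\otimes I^*$ (degrees in $[-2n^2,-2]$) yields degrees in $[-3n^2,-2]$. \textbf{The main obstacle} I anticipate is pinning down the exact extremal weights and verifying the normalization of $deg$ so that the numerical bounds $-2n^2$, $-2$, $-3n^2$ come out precisely as stated rather than off by the block-dependent shift $d-d'$; this requires care with the conventions of section~\ref{sec:degree-filtration} and with the Loewy-layer computation from \cite[Theorem 5.2]{Brundan-Stroppel-1}, but it is bookkeeping rather than a conceptual difficulty.
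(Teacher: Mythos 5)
Your proof follows the same route as the paper: the exact sequence $0\to I\otimes I^*\to V\otimes I^*\to I^*\to 0$, the decomposition $V\otimes V^*\cong P(Ber^{-n})\oplus Q$ from Lemma~\ref{thm:V-tp}, and the degree bookkeeping via the Brundan--Stroppel description of the Kac and Loewy layers of $P(Ber^{-n})$. The small numerical slips you flag do not affect the extremal values (with $\deg(\lambda)=\sum_{i=1}^n\lambda_i$ one has $\deg(Ber^{-2n})=n\cdot(-2n)=-2n^2$ from the first $n$ coordinates, and $\deg(\lambda_{(i)}^\vee)=-i^2$ rather than $-in$, still giving $-n^2$ at $i=n$), so the argument is correct and essentially identical to the paper's.
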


\begin{example} We assumed in this section $m=n \geq 2$. The $GL(1|1)$-case was already treated in example \ref{ex:gl-1-1}. In this case $\Omega_{i+1}/\Omega_{i} \cong V(Ber^{-2i})$.

\end{example}



\section{Restriction and $DS$-cohomology}

\subsection{Restriction I}

For $m=m_1+m_2$ and $n=n_1+n_2$ consider the super subgroups 
$$ GL(m_1\vert n_1) \times GL(m_2\vert n_2) \hookrightarrow GL(m\vert n) $$
of the supergroup $GL(m\vert n)$
defined in terms of matrices by
$$ \begin{pmatrix} A' & 0  & B' & 0 \cr
0 & A'' & 0 & B'' \cr C' & 0 & D' & 0 \cr
0 & C'' & 0 & D'' \cr
\end{pmatrix}
$$
This defines a restriction functor $res$
$$ {\calT}_{m\vert n} \to {\calT}_{m_1\vert n_1} \times {\calT}_{m_2\vert n_2} \ . $$
This is an exact tensor functor. 
We easily see 
$$ res(Ber) \cong Ber \boxtimes Ber\ .$$
Similarly
 we obtain a functor for the corresponding ind-categories
$$ {\calC}_{m\vert n} \to {\calC}_{m_1\vert n_1} \times {\calC}_{m_2\vert n_2} \ ,$$
denoted $ res: \calC \to \calC' \times C'' $ for simplicity.

\medskip\noindent
These restriction functors are exact tensor functors. 
Since the restrictions of projective comodules are projective comodules, this induces
a monoidal triangulated functor between the triangulated tensor categories
defined by the stable categories
$$ \overline{\calC}\to \overline{\calC}' \times \overline{\calC}'' \ .$$

\medskip\noindent
Note that 
$$ P = \begin{pmatrix} A' & *  & B' & * \cr
* & A'' & * & B'' \cr 0 & 0 & D' & * \cr
0 & 0 & * & D'' \cr 
\end{pmatrix} \ \subset GL(m\vert n)
$$
contains
$$ P' \times P'' = \begin{pmatrix} A' & 0  & B' & 0 \cr
0 & A'' & 0 & B'' \cr 0 & 0 & D' & 0 \cr
0 & 0 & 0 & D'' \cr \end{pmatrix} 
\ \subset \ GL(m_1\vert n_1) \times GL(m_2\vert n_2) 
 $$
Therefore $U(X) \in \mathcal{I}_{\mathcal D}$ for $X\in {\calC}$
implies $  (U' \times U'')(res(X)) =
res(U(X)) \subset res({\mathcal{I}_{\mathcal D}}) \subset \mathcal{I}_{\mathcal D'} \times I_{\mathcal D''}$.
In other words
$$ res: {\calC}_- \to {\calC}'_- \times {\calC}''_- \ .$$
Thus we get an induced functor
between the triangulated quotient categories
$$ {\overline{\calE}} \to {\overline{\calE'}} \times {\overline{\calE''}} \ .$$

\subsection{Restriction II} \label{sec:restriction} Similarly we may embed $GL(m-k|n-k)$ as an outer  block matrix in $Gl(m\vert n)$ 
$$  \varphi_{n,m}: GL(m-k|n-k)  \hookrightarrow GL(m|n) \ . $$

Analogous to the preceeding discussion we obtain induced functors 
$$ res: Ho{\mathcal C}_{m|n} \to Ho{\mathcal C}_{m-k|n-k} $$
and similarly 
$$ res: Ho{\mathcal T}_{m|n} \to Ho{\mathcal T}_{m-k|n-k} \ .$$


\subsection{The functor $DS$} \label{sec:DS}

We recall from the article \cite{Heidersdorf-Weissauer-tensor} that we have a tensor functor $DS: \mathcal{T}_{m|n} \to \mathcal{T}_{m-1|n-1}$ attached to the choice of an odd element $x \in \mathfrak{g}_1$ satisfying $[x,x]=0$. Since $[x,x]=0$ we get $$2 \cdot \rho(x)^2 =[\rho(x),\rho(x)] =\rho([x,x]) =0 $$ for any algebraic representation
$(V,\rho)$ of $GL(m|n)$ in ${\calC}_{m|n}^{\infty}$. We fix now \[x = \begin{pmatrix} 0 & y \\ 0 & 0 \end{pmatrix} \text{ for } \ y = \begin{pmatrix} 0 & 0 & \ldots & 0 \\ 0 & 0 & \ldots & 0 \\ \ldots & & \ldots &  \\ 1 & 0  & 0 & 0 \\ \end{pmatrix} \] The cohomological tensor functor $DS$ is defined as \[ DS =  DS_{n,n-1}: \mathcal{C}_{m|n} \to \mathcal{C}_{m-1|n-1} \]
via  $DS_{n,n-1}(V,\rho)= V_x:=Kern(\rho(x))/Im(\rho(x))$.

\begin{lem} The functor $DS$ factorizes over the homotopy category and induces tensor functors \begin{align*} & DS: Ho{\calC}_{m|n} \to Ho{\calC}_{m-1|n-1} \\ & DS: Ho{\calT}_{m|n} \to Ho{\calT}_{m-1|n-1}.\end{align*}
\end{lem}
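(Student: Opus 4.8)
The plan is to show two things: first, that $DS$ sends the weak equivalences $\mathcal{W}$ of the induced model structure on $\mathcal{C}_{m|n}$ to isomorphisms in $Ho\,\mathcal{C}_{m-1|n-1}$, so that the universal property of localization (\cite[Lemma 1.2.2]{Hovey}) produces the dotted arrow $Ho\,DS$; and second, that the resulting functor is still a tensor functor and restricts to the subcategories $Ho\,\mathcal{T}$. For the first point I would use the explicit description of $\mathcal{W}$ from Corollary \ref{model-structure-descr}: a morphism $f$ lies in $\mathcal{W}$ iff it factors as a monomorphism with projective (hence injective) cokernel followed by an epimorphism with kernel in $\mathcal{C}_-$. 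Since $DS$ is an exact tensor functor, it suffices to check that $DS(K) = 0$ in $Ho\,\mathcal{C}_{m-1|n-1}$ for every $K \in (\mathcal{C}_{m|n})_-$, i.e. that $DS$ sends objects whose restriction to $H = P(m|n)^+$ is injective to objects that are zero in the stable category $\overline{\mathcal{C}}_{m-1|n-1}$, equivalently to projective (= injective) objects of $\mathcal{C}_{m-1|n-1}$.

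The key step is therefore: \emph{$DS$ maps $(\mathcal{C}_{m|n})_- = \{K : \mathrm{Res}_H K \text{ injective}\}$ into the injectives of $\mathcal{C}_{m-1|n-1}$.} I would argue as follows. The chosen odd element $x$ has $[x,x]=0$ and lies in $\mathfrak{g}_1$; in fact $x$ can be taken to lie in the nilradical of the parabolic $\mathfrak{p}^+ = \mathrm{Lie}(P(m|n)^+)$ (or, after adjusting the choice within its $G_0$-orbit, in a conjugate of it — and $DS$ is independent of such choices up to isomorphism by \cite{Heidersdorf-Weissauer-tensor}). Thus the formation of $V_x = \ker\rho(x)/\mathrm{im}\,\rho(x)$ only depends on the restriction of $V$ to $H$, i.e. $DS$ factors through $\mathrm{Res}_H$ up to the identification of $\mathfrak{g}_1$-data. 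Since projective=injective objects of $\mathcal{C}_{m|n}$ restrict to projective=injective objects of $\mathcal{D} = \mathrm{Comod}(k[P(m|n)^+])$, and $DS$ of a projective is projective (this is the well-known fact that $DS$ is a triangulated/exact tensor functor sending $\mathcal{I}_{\mathcal{C}_{m|n}}$ into $\mathcal{I}_{\mathcal{C}_{m-1|n-1}}$, e.g. because $k[GL(m|n)]$ goes to a sum of copies of $k[GL(m-1|n-1)]$ under $DS$), and since any $K \in (\mathcal{C}_{m|n})_-$ becomes a direct summand of an injective $A$-comodule after restriction but actually we need more: we need $DS(K)$ injective for \emph{all} $K$ with $\mathrm{Res}_H K$ injective, not just $K$ injective over $G$. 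Here I would use that over the \emph{parabolic} $H$, injectivity of $\mathrm{Res}_H K$ forces $DS(K)$ to be the value of an exact functor on a $P(m-1|n-1)$-injective; combined with the fact that $DS$ intertwines with the analogous restriction $\mathcal{C}_{m-1|n-1} \to \mathrm{Comod}(k[P(m-1|n-1)^+])$ (cf. Section \ref{sec:restriction} and the compatibility of $DS$ with block-restriction), one reduces to: a comodule over $k[GL(m-1|n-1)]$ whose restriction to $P(m-1|n-1)^+$ is injective and which moreover lies in the image of $DS$ applied to an $H$-injective is itself injective. The cleanest route is probably to observe directly that $DS$ commutes with the functor $F = Coind$ and with $U = Res$ on both sides (this should follow from an explicit matrix computation: $x$ is chosen inside the lower-left block which is transverse to $P^+$, so $DS$ and $\mathrm{Res}_{P^+}$ operate on "independent" coordinates), whence $DS$ of $\mathcal{C}_-^{(m|n)}$ lands in $\mathcal{C}_-^{(m-1|n-1)} \subset$ (injectives after one more step) — and in particular $DS(\mathcal{W}_{m|n}) \subset \mathcal{W}_{m-1|n-1}$, so $DS$ descends to the homotopy categories directly.

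Once the descent to $Ho\,\mathcal{C}$ is established, the remaining assertions are routine: $DS$ is symmetric monoidal on $\mathcal{C}$ (\cite{Heidersdorf-Weissauer-tensor}) and $\gamma$ is a tensor functor by Theorem \ref{thm:monoidal}, so $Ho\,DS$ is a tensor functor by the universal property; and since $DS$ sends $\mathcal{T}_{m|n}$ into $\mathcal{T}_{m-1|n-1}$, the induced functor carries $Ho\,\mathcal{T}_{m|n}$ (the triangulated tensor subcategory generated by the image of $\mathcal{T}_{m|n}$) into $Ho\,\mathcal{T}_{m-1|n-1}$, using that $Ho\,DS$ is triangulated (it preserves distinguished triangles and the shift, since $\mathcal{C}_\pm$ are tensor ideals preserved by $DS$ and $Ho\,\mathcal{C}$ is the stable category of $\mathcal{C}_+$). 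The main obstacle I anticipate is the first key step: verifying carefully that $DS$ sends objects with injective $H$-restriction to injective objects, i.e. pinning down the precise compatibility between the odd element $x$ defining $DS$ and the parabolic $P(m|n)^+$ defining the model structure. This is where one must actually look at matrices, choose $x$ (up to the $G_0$-orbit ambiguity) so that $\rho(x)$ "lives in the $H$-direction," and invoke the independence of $DS$ from the choice within its orbit; everything else is formal consequence of the universal property of $\gamma$ and the exactness and monoidality of $DS$.
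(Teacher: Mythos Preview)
Your overall plan --- show that $DS$ inverts weak equivalences and invoke the universal property of $\gamma$ --- is the right shape, and it is essentially what the paper does. But there is a genuine gap in your execution, and it stems from one false premise.

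\textbf{The functor $DS$ is not exact.} You write ``since $DS$ is an exact tensor functor,'' but $DS$ is only \emph{weakly exact} in the sense of \cite[Lemma 2.1]{Heidersdorf-Weissauer-tensor}: a short exact sequence produces a six-term exact sequence under $DS$, not a short one. This matters for your argument, because you want to conclude that $DS$ applied to an epimorphism with kernel $K\in\mathcal C_-$ is again a trivial fibration. Without exactness you cannot track kernels and cokernels through $DS$, and ``$DS(K)$ is injective'' (i.e.\ zero in $Ho\mathcal C_{m-1|n-1}$) is not enough: you would still need $DS$ to preserve the epimorphism and identify $DS(K)$ with the kernel of $DS(p)$, which weak exactness alone does not give.

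\textbf{What actually happens is stronger and simpler:} $DS(\mathcal C_-)=0$ on the nose, not merely in the homotopy category. On finite-dimensional modules this is \cite[Theorem 4.1]{Heidersdorf-Weissauer-tensor}: $\ker(DS:\mathcal T_{m|n}\to\mathcal T_{m-1|n-1})=\mathcal T_-$. For $X\in\mathcal C_-$ one uses that the chosen $x$ lies in $P(m|n)^+$ (your proposal is internally inconsistent on this point --- at one place you put $x$ in the nilradical of $\mathfrak p^+$, at another ``in the lower-left block transverse to $P^+$''; the former is correct), so $V_x$ depends only on $\mathrm{Res}_{P^+}V$. Since $\mathrm{Res}_{P^+}X$ is injective it is a direct sum of finite-dimensional injective $P^+$-modules, and $DS$ kills each summand; hence $DS(X)=0$. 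Once you have literal vanishing, weak exactness applied to $0\to K^-\to QX\to X\to 0$ with $DS(K^-)=0$ forces $DS(q):DS(QX)\to DS(X)$ to be an isomorphism, and the factorization through $Ho\mathcal C_{m|n}$ follows immediately from $[X,Y]=\Hom_{\mathcal C}(QX,Y)/\!\sim$. Your convoluted attempt to land $DS(\mathcal C_-)$ inside the injectives of $\mathcal C_{m-1|n-1}$ (via commutation with $F$, $U$, etc.) is both harder and insufficient; the vanishing statement is what makes the weak exactness usable.
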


\begin{proof} It was proven in \cite[Theorem 4.1]{Heidersdorf-Weissauer-tensor} that the kernel of $DS: \mathcal{T}_{m|n} \to \mathcal{T}_{m-1|n-1}$ equals $\mathcal{T}_-$. A module $X$ is in $\mathcal{C}_-$ if and only if its restriction to $P(m|n)^+$ is projective and therefore injective. Since any injective module is a direct sum of injective finite dimensional modules, we obtain \[ DS(X) = DS(\bigoplus X_i) = \bigoplus DS(X_i) = 0\] where used that $x \in P(m|n)^+$. So $ker(DS) = \mathcal{C}_-$. A cofibrant replacement of $X \in \mathcal{C}$ defines an exact sequence \[ \xymatrix{ 0 \ar[r] & K^- \ar[r] & QX \ar[r]^q & X  \ar[r] & 0 } \] with $K^- \in \mathcal{C}_-$. We apply $DS$ to this sequence. Since $DS(\mathcal{C}_-) = 0$ and the functor $DS$ is weakly exact in the sense of \cite[Lemma 2.1]{Heidersdorf-Weissauer-tensor}, this implies that $DS(q): DS(QX) \to DS(X)$ is an isomorphism. Let $f \in [X,Y]$ be an arbitrary morphism and recall that $[X,Y] = Hom_{\mathcal{C}}(QX,Y)/\sim$. Therefore we obtain from $f:QX \to Y$ the commutative diagram \[ \xymatrix{ DS(QX) \ar[d] \ar[r]^{DS(q)} & DS(X) \ar[dl] \\ DS(Y) & } \] which shows that $DS: \mathcal{C}_{m|n} \to \mathcal{C}_{m-1|n-1}$ factorizes over $Ho\mathcal{C}_{m|n}$. We obtain all in all a commutative diagram \[ \xymatrix{  \mathcal{C}_{m|n} \ar[d]^{DS} \ar[r] &  Ho\mathcal{C}_{m|n} \ar@{.>}[d]^{DS} \ar[dl] \\  \mathcal{C}_{m-1|n-1} \ar[r] &  Ho\mathcal{C}_{m-1|n-1}  } \] which defines the induced functor $DS:  Ho\mathcal{C}_{m|n} \to  Ho\mathcal{C}_{m-1|n-1}$. This functor can be also simply restricted to the $Ho\mathcal{T}$-case.
 
\end{proof}

\begin{remark} This result extends to the more general functors $DS_{m-k|n-k}: \mathcal{T}_{m|n} \to \mathcal{T}_{m-k|n-k}$ considered in \cite{Heidersdorf-Weissauer-tensor}.
\end{remark}

\begin{remark} If we choose the other Frobenius pair (i.e. exchange $P^+$ with $P^-$) to define the homotopy category, we get a similar result for the $DS$ functor associated to the element $\sigma(x) \in \mathfrak{gl}(m|n)_{1}$.
\end{remark}




\section{Semisimple quotients}\label{sec:semisimple}

\subsection{Supertannakian categories} A $k$-linear tensor category $\mathcal T$ over a field $k$  (in the sense of \cite{Deligne-tensorielles}) is a small abelian $k$-linear symmetric closed monoidal rigid category with $End_{\mathcal T}(1)\cong k$. 
If $\mathcal T$ admits a super fibre functor over an extension feld of $k$, it is a 
supertannakian category and the finiteness condition (F) holds. 

\medskip\noindent
For a symmetric $k$-linear tensor category the symmetric group $S_m$ acts
on $X^{\otimes m}$ for any $X \in \mathcal{T}$. If $k$ is of characteristic zero, the irreducible representations $\sigma_\alpha$
of the group $S_m$ define the Schur functors $S=S^\alpha: X \to S^\alpha(X)=Hom_{S_m}(\sigma_\alpha,X^m)$ where $\alpha$ is a partition of $m$. Special 
cases are the symmetric or alternating $m$-th powers of $X$. By Deligne \cite{Deligne-tensorielles} a $k$-linear tensor category $\mathcal T$ over an algebraically closed field $k$ of characteristic 0 is supertannakian if and only if every object is annihilated by some Schur functor (Schur finiteness). Any supertannakian category over an algebraically closed field of characteristic 0 is tensor equivalent to the representation category $Rep(G,\epsilon)$ of an affine supergroup scheme over $k$. 

\subsection{Tensor generators} A tensor generator in the sense of \cite[0.1]{Deligne-tensorielles} is an object $Y\in \mathcal T$ such that any other object in  $\mathcal T$
is obtained by iterated application of the operations $\oplus$, $\otimes$, ${}^\vee$
and subquotients. A supertannakian category has a tensor generator if and only if $G$ is of finite type, i.e. an algebraic supergroup. We also say that it is an algebraic tensor category. By the usual comodules-representations correspondence, an algebraic tensor category
over an algebraically closed field of characteristic
zero is equivalent to the tensor category of finite dimensional graded $A$-comodules
of a supercommutative Hopf algebra $A$ finitely generated over $k$.

\medskip\noindent
If $\mathcal T\to \mathcal T'$ is a $k$-linear tensor functor, $\mathcal T$ is a $k$-linear tensor category
and $\mathcal T'$ is $k$-linear symmetric (closed) monoidal category, 
then the full image subcategory is a full $k$-linear symmetric (closed) monoidal rigid subcategory
of $\mathcal T'$. Schur finiteness will be inherited from $\mathcal T$, whereas properties (F) and (G) might not
be inherited to the full image subcategory of $\mathcal T$ in $\mathcal T'$.


\subsection{Ideals}

An ideal $\mathcal J$ of a $k$-category $\mathcal H$ is a collection of $k$-subvectorspaces
${\mathcal J}(X,Y)\subset Hom(X,Y)$ for all $X,Y\in \mathcal H$ such that $f{\mathcal J}(X,Y)g \in {\mathcal J}(X',Y')$ holds for all
$g\in Hom(X',X)$ and $f\in Hom(Y,Y')$. This defines the $k$-linear
quotient category $\mathcal H/J$
 with morphisms $Hom(X,Y)/{\mathcal J}(X,Y)$ and the same objects as in $\mathcal H$. 
If $\mathcal H$ is $k$-linear, so is $\mathcal H/J$.

\begin{example} The radical $rad(X,Y)$ is the ideal, which is defined by: $f\in rad(X,Y)$ if and only if
$id_X - gf$ is invertible for all $g\in Hom(Y,X)$.  
\end{example}


\subsection{Negligible morphisms} \label{sec:negligible} For a symmetric monoidal $k$-category $\mathcal H$ an ideal $\mathcal J$ is called a {\it monoidal ideal},
if it is stable under tensor products with $id_Z$ for all objects $Z\in\mathcal H$ \cite[section 6]{Andre-Kahn-nilpotence}. In this
case $\mathcal H/J$ inherits a symmetric monoidal structure and the 
quotient functor
$  \mathcal H \to H/J $
is a tensor $k$-functor, and
rigid objects in $\mathcal H$ map
to rigid objects in $\mathcal H/J$.

\begin{example} If $\mathcal H$ is a symmetric monoidal rigid $k$-linear category with $End(1)=k$, then the monoidal ideal ${\mathcal N}(X,Y)$ is defined by the morphisms $f\in Hom(X,Y)$ such that $tr(g\circ f)=0$ holds for all
$g\in Hom(Y,X)$. This is the ideal of negligible morphisms $\mathcal N= \mathcal N_{\mathcal{H}}$.
\end{example} 

\begin{lem} (\cite[7.1.4]{Andre-Kahn-nilpotence}) \enumerate \item The ideal $\mathcal N$ is the largest monoidal ideal  of $\mathcal H$
distinct from $\mathcal H$. \item If $\mathcal I$ is a monoidal ideal such that $\mathcal H/\mathcal I$ is semisimple, then $I = \mathcal N$.
\end{lem}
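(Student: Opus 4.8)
The plan is to prove the two statements in turn, relying only on the formal properties of the trace in a rigid symmetric monoidal $k$-linear category with $\End(\one)=k$. First I would record the standard reformulation of negligibility: a morphism $f\colon X\to Y$ is negligible precisely when $\tr(g\circ f)=0$ for all $g\colon Y\to X$, and by rigidity $\tr$ is symmetric, cyclic, and additive, and behaves multiplicatively under $\otimes$ in the sense that $\tr_{X\otimes Z}(h\otimes k)=\tr_X(h)\cdot\tr_Z(k)$. From these one checks directly that $\mathcal N$ is an ideal (if $f\in\mathcal N(X,Y)$ and $a\colon X'\to X$, $b\colon Y\to Y'$, then for any $g\colon Y'\to X'$ one has $\tr(g\circ b\circ f\circ a)=\tr\bigl((a\circ g\circ b)\circ f\bigr)=0$ by cyclicity) and that it is a monoidal ideal (for $f\in\mathcal N(X,Y)$ and $Z\in\mathcal H$, and any $g\colon Y\otimes Z\to X\otimes Z$, one uses the partial trace over $Z$: $\tr_{X\otimes Z}(g\circ(f\otimes\id_Z))=\tr_X\bigl((\id_X\otimes\mathrm{coev}/\mathrm{ev}\text{-contraction of }g)\circ f\bigr)=0$ since the contracted morphism is a morphism $Y\to X$). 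This gives $\mathcal N\subsetneq\mathcal H$ because $\id_\one\notin\mathcal N(\one,\one)$ as $\tr(\id_\one)=1\neq 0$ in $k$.

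For statement (1), that $\mathcal N$ is the \emph{largest} proper monoidal ideal, I would argue by contradiction/containment: let $\mathcal I$ be any monoidal ideal with $\mathcal I\not\subset\mathcal N$. Then there exist $X,Y$ and $f\in\mathcal I(X,Y)$ and $g\in\Hom(Y,X)$ with $\tr(g\circ f)\neq 0$. Since $\mathcal I$ is an ideal, $g\circ f\in\mathcal I(X,X)$, and composing/tensoring shows that the scalar $\tr(g\circ f)=:\lambda\in k^\times$, viewed as the endomorphism $\lambda\cdot\id_\one$ of $\one$, lies in $\mathcal I(\one,\one)$: indeed the "categorical trace" $g\circ f\mapsto \tr(g\circ f)\cdot\id_\one$ is realized as a composite $\one\xrightarrow{\mathrm{coev}}X\otimes X^\vee\xrightarrow{(g\circ f)\otimes\id}X\otimes X^\vee\xrightarrow{\mathrm{ev}}\one$, all of whose middle morphisms are in $\mathcal I$ because $\mathcal I$ is a monoidal ideal. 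Hence $\id_\one=\lambda^{-1}(\lambda\cdot\id_\one)\in\mathcal I(\one,\one)$, and then for every object $Z$ we get $\id_Z=\id_\one\otimes\id_Z\in\mathcal I(Z,Z)$, so $\mathcal I=\mathcal H$. Contrapositively, every proper monoidal ideal is contained in $\mathcal N$.

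For statement (2), suppose $\mathcal I$ is a monoidal ideal with $\mathcal H/\mathcal I$ semisimple; I must show $\mathcal I=\mathcal N$. By (1), $\mathcal I\subset\mathcal N$ (it is proper, since $\mathcal H/\mathcal I$ semisimple is in particular nonzero, so $\id_\one\notin\mathcal I$). For the reverse inclusion, note that $\mathcal H/\mathcal I$ is again rigid symmetric monoidal with $\End(\one)=k$ (the quotient functor is a tensor functor sending rigid objects to rigid objects, and $\id_\one\notin\mathcal I$ forces $\End_{\mathcal H/\mathcal I}(\one)=k$), so its own ideal of negligibles $\mathcal N_{\mathcal H/\mathcal I}$ is defined and, because $\mathcal H/\mathcal I$ is semisimple, every negligible morphism in it vanishes: in a semisimple $k$-linear rigid tensor category with $\End(\one)=k$, a simple object has nonzero categorical dimension on the summands where it is not "of dimension zero", and the standard argument (decompose into simples, the trace form is nondegenerate on each isotypic component because $\Hom(L,L)=k$ and $\tr\colon\Hom(L,L)\times\Hom(L,L)\to k$ is, up to the scalar $\dim L$, the multiplication pairing) shows $\mathcal N_{\mathcal H/\mathcal I}=0$; here one uses that semisimplicity of a tensor category with these hypotheses forces all simple objects to have nonzero dimension, otherwise the ideal generated by such an object would contradict semisimplicity. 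Now if $f\in\mathcal N_{\mathcal H}(X,Y)$, its image $\bar f$ in $\mathcal H/\mathcal I$ satisfies $\tr(\bar g\circ\bar f)=\overline{\tr(g\circ f)}=0$ for all $\bar g$ obtained from $g\in\Hom(Y,X)$; since the quotient functor is full on Hom-sets, $\bar f\in\mathcal N_{\mathcal H/\mathcal I}(X,Y)=0$, i.e. $f\in\mathcal I(X,Y)$. Thus $\mathcal N\subset\mathcal I$, giving equality.

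The step I expect to be the main obstacle is the claim in (2) that semisimplicity of $\mathcal H/\mathcal I$ forces \emph{all} its simple objects to have nonzero categorical dimension (equivalently $\mathcal N_{\mathcal H/\mathcal I}=0$); this is where one genuinely uses semisimplicity rather than just formal trace juggling, and it requires the observation that a simple object $L$ of dimension zero would make $\id_L$ (or rather $\id_L\otimes(\text{something})$) negligible yet nonzero, and the two-sided ideal it generates would be a proper monoidal ideal whose quotient cannot be semisimple — so one must be slightly careful to phrase this so that it does not circularly invoke part of the lemma. I would handle it by the direct computation that in a semisimple category the composite $L^\vee\otimes L\to\one\to L\otimes L^\vee$ ($\mathrm{ev}$ followed by $\mathrm{coev}$) is nonzero, which pins down $\dim(L)\neq 0$, and everything else is routine manipulation of evaluation/coevaluation.
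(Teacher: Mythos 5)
The paper gives no proof of this lemma at all — it is quoted directly from André–Kahn — so your argument has to be judged on its own merits rather than compared with an argument in the text. Your part (1) is correct and is the standard argument: a monoidal ideal containing some $f$ with $tr(g\circ f)\neq 0$ contains the scalar $tr(g\circ f)\cdot \id_\one$, realized as $\mathrm{ev}\circ((g\circ f)\otimes\id_{X^\vee})\circ\mathrm{coev}$, hence contains $\id_\one$ and therefore every $\id_Z=\id_\one\otimes\id_Z$. The reduction in part (2) is also sound: $\End_{\mathcal H/\mathcal I}(\one)=k$ because $\mathcal I(\one,\one)$ is a proper $k$-subspace of $k=\End_{\mathcal H}(\one)$, the quotient functor is full and compatible with traces, so everything comes down to showing $\mathcal N_{\mathcal H/\mathcal I}=0$, i.e.\ that every simple object of the semisimple quotient has nonzero categorical dimension. (A cosmetic point: with André–Kahn's definition the zero category is semisimple, so one must tacitly assume $\mathcal I$ proper.)

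That last step is exactly where the gap is, and the justification you sketch does not work as written. The composite you propose to analyse, $\mathrm{ev}$ followed by $\mathrm{coev}$, is nonzero for purely formal reasons — both maps are nonzero by the zigzag identities — and says nothing about $\dim(L)$, which is the composite taken in the \emph{other} order, $\one\xrightarrow{\mathrm{coev}}L\otimes L^\vee\xrightarrow{c}L^\vee\otimes L\xrightarrow{\mathrm{ev}}\one$. The argument that does work is a multiplicity count: if $\End(L)=k$, then $\one$ occurs with multiplicity exactly one in $L\otimes L^\vee$ and in $L^\vee\otimes L$, because $\Hom(\one,L\otimes L^\vee)\cong\End(L)=k$ and $\Hom(S,\one)=0=\Hom(\one,S)$ for simple $S\not\cong\one$; hence $\mathrm{coev}$ is a nonzero multiple of the inclusion of that unique copy, $\mathrm{ev}$ is a nonzero multiple of the projection onto the corresponding unique copy, the symmetry matches the two copies, and $\dim(L)$ is a product of two nonzero scalars. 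Note what this needs: (i) the pseudo-abelian hull of $\mathcal H/\mathcal I$, so that the decomposition into simples is available, and (ii) the hypothesis $\End(L)=k$ — if $L$ has a nontrivial division algebra $D$ of endomorphisms, $\one$ occurs with multiplicity $\dim_k D>1$ and the pairing of a nonzero vector with a nonzero covector may well vanish; this is precisely where André–Kahn's actual proof has to work harder (reduced traces, base change). Since the present paper only invokes the lemma over an algebraically closed field, the $\End(L)=k$ case would suffice for its purposes, but your proof as proposed has a hole at the very point you correctly identified as the main obstacle.
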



\subsection{Semisimplicity} By \cite[2.1.2]{Andre-Kahn-nilpotence} a small $k$-linear category $\mathcal H$ is semisimple  if and only if
\begin{itemize}
\item The radical ideal vanishes $rad({\mathcal H})=0$,
\item $Hom(X,X)$ is a semi-simple  Artin ring for all objects $X\in \mathcal H$.
\end{itemize}
In this situation, if $\mathcal H$ is semisimple pseudo-abelian and $k$-linear, then $\mathcal H$
is an abelian category \cite{Andre-Kahn-nilpotence}.

\subsection{The quotient by negligible morphisms} \label{sec:negligible-quotient}

In the $GL(m|n)$-case the vanishing and finiteness theorems of section \ref{sec:vanishing} hold. Then $\mathcal H = Ho \mathcal T$ is a $k$-linear rigid symmetric monoidal category, and we have shown
$End_{\mathcal H}(1)=k$. Hence the ideal of negligible morphisms
$\mathcal N$ is defined.

\begin{thm} \label{thm:semisimple} For $GL(m|n)$ over an algebraically closed field $k$ of characteristic 0 the following holds for $Ho {\mathcal T}$:
\begin{enumerate}
\item We have the relation $\mathcal N \supset R$ and $\mathcal H/ \mathcal N$ is semisimple.
\item The quotient $Ho \mathcal T / \mathcal N$ is the semisimple representation category of an affine supergroup scheme.
\end{enumerate}
\end{thm}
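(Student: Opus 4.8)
The strategy is to verify the two defining properties of a semisimple super\-tannakian category for $\calH/\calN$ where $\calH = Ho\calT$, and then invoke Deligne's reconstruction theorem \cite{Deligne-tensorielles}. First I would prove part (1). The inclusion $\calN \supset R$ (where $R = rad(\calH)$) follows from the fact that negligible morphisms contain the radical in any Krull--Schmidt rigid monoidal $k$-category with $End(\one)=k$: if $f \in rad(X,Y)$ then $g \circ f \in rad(X,X)$ for all $g$, hence $tr(g\circ f) = 0$ since the trace kills the radical of the (finite-dimensional, by Corollary \ref{thm:hom-finite}/Theorem \ref{thm:good-replacement}) algebra $End(X)$. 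For semisimplicity of $\calH/\calN$ I would apply the criterion of \cite[2.1.2]{Andre-Kahn-nilpotence}: one needs $rad(\calH/\calN) = 0$ and $End_{\calH/\calN}(X)$ semisimple Artinian for each $X$. The first holds because $\calN \supset R$ forces the radical of the quotient to vanish; the second is where finiteness of $[X,Y]$ (the corollary to Theorem \ref{thm:good-replacement}) is essential, since it makes $End_\calH(X)$ a finite-dimensional $k$-algebra, and by a standard argument (cf. \cite[Théorème 1]{Andre-Kahn-nilpotence}) the quotient of a finite-dimensional algebra by the trace-radical ideal $\calN(X,X)$ is semisimple --- here one uses that the bilinear trace form $(f,g)\mapsto tr(fg)$ on $End_\calH(X)$ has radical exactly $\calN(X,X)$.

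For part (2) I would first record that $\calH$ is a $k$-linear rigid symmetric monoidal category with $End_\calH(\one) = k$: rigidity is Lemma \ref{lem:rigidity}, symmetry and $k$-linearity come from $\calC$ being a symmetric monoidal model category with $\gamma$ a tensor functor (Theorem \ref{thm:monoidal}), and $End_\calH(\one) = [\one,\one] = k$ is the $GL(m|n)$-case of Theorem \ref{thm:end(1)} (whose hypotheses \ref{A-1}--\ref{A-4} hold for $(P^+,G)$). The quotient $\calH/\calN$ inherits all of this structure since $\calN$ is a monoidal ideal; moreover $\calH/\calN$ is pseudo-abelian (idempotents split, as $\calH \subset Ho\calC$ is pseudo-abelian by the second lemma in Section \ref{sec:stable-cat}, and the quotient of a pseudo-abelian category by an ideal is pseudo-abelian). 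A semisimple pseudo-abelian $k$-linear category is abelian by \cite{Andre-Kahn-nilpotence}. So $\calH/\calN$ is a semisimple abelian $k$-linear rigid symmetric monoidal category with $End(\one) = k$, i.e.\ a $k$-linear tensor category in Deligne's sense.

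It remains to verify the Schur-finiteness criterion for super\-tannakian categories: every object of $\calH/\calN$ must be annihilated by some Schur functor. I would deduce this from the fact that $\calT = \calT_{m|n}$ itself is super\-tannakian (it is $Rep(GL(m|n))$, hence of the form $Rep(G,\epsilon)$), so every object of $\calT$ is Schur-finite; since $\gamma: \calT \to \calH$ is a tensor functor and $\calH$ is generated as a triangulated tensor category by the image of $\calT$, and Schur-finiteness is inherited by subquotients, direct sums, tensor products and duals, every object of the \emph{full image} of $\calT$ in $\calH$ is Schur-finite --- but then the same holds for every object of $\calH/\calN$. Here I must be slightly careful: $Ho\calT$ is the full \emph{triangulated} tensor subcategory generated by $\gamma(\calT)$, so an object is an iterated cone; after passing to the semisimple quotient $\calH/\calN$, however, every distinguished triangle splits, so every object becomes a direct summand of a finite direct sum of objects of $\gamma(\calT)$, which \emph{are} Schur-finite. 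Hence Schur-finiteness holds in $\calH/\calN$, and by Deligne's theorem \cite{Deligne-tensorielles} $\calH/\calN$ is tensor equivalent to $Rep(\mathcal{G},\epsilon)$ for an affine supergroup scheme $\mathcal{G}$ over $k$; it is semisimple by part (1).

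The main obstacle I anticipate is the clean identification of the radical of the trace form on $End_\calH(X)$ with $\calN(X,X)$ together with the Wedderburn-type argument that $End_\calH(X)/\calN(X,X)$ is semisimple; this is where the finite-dimensionality from Theorem \ref{thm:good-replacement} is used in an essential, not merely cosmetic, way, and where one must be careful that the symmetric monoidal trace on $\calH$ restricts correctly. The rest is bookkeeping: assembling the structural properties of $\calH$ already established in the earlier sections and checking that they survive passage to the monoidal quotient $\calH/\calN$.
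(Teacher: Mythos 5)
Your argument for the inclusion $\mathcal{N} \supset R$ has a real gap, and it is exactly the gap that Andr\'e and Kahn corrected in their erratum. You claim that because $g\circ f$ lies in the radical of the finite-dimensional algebra $End_{\mathcal H}(X)$, its categorical trace vanishes. But the trace here is the monoidal (evaluation/coevaluation) trace, not the regular-representation trace of $End_{\mathcal H}(X)$, and there is no general theorem that the monoidal trace kills nilpotent endomorphisms in an arbitrary rigid $k$-linear symmetric monoidal category, even when $char(k)=0$, $End(\one)=k$ and all $Hom$-spaces are finite-dimensional. Likewise, a finite-dimensional $k$-algebra with a nondegenerate symmetric associative form is a Frobenius algebra, not necessarily a semisimple one ($k[x]/x^2$ is the standard counterexample), so the assertion that $End_{\mathcal H}(X)/\mathcal{N}(X,X)$ is automatically semisimple does not follow from finite-dimensionality alone. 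This is precisely why the corrected Andr\'e--Kahn criterion (the paper's Proposition~\ref{ak-semisimplicity-criterion}, citing the erratum) demands an auxiliary $k$-linear tensor functor $H:\mathcal{A}\to\mathcal{V}$ into an abelian rigid category where finite-dimensionality of $Hom$s \emph{and} vanishing of traces of nilpotents are known to hold; neither property is assumed for $\mathcal{A}$ itself.

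The paper supplies exactly such an $H$, and this is where the global structure of Part II enters. It first verifies, by direct computation in the $GL(m|1)$-case (Section~\ref{sec:gl-m-1-semisimple}), that $Ho\mathcal{T}_{m|1}/\mathcal{N}$ is supertannakian, hence abelian, with finite-dimensional $Hom$s and vanishing trace on nilpotents. It then uses the restriction functor $res:Ho\mathcal{T}_{m|n}\to Ho\mathcal{T}_{m-n+1|1}$ from Section~\ref{sec:restriction} and composes with the projection to $Ho\mathcal{T}_{m-n+1|1}/\mathcal{N}$ to obtain the required tensor functor; only then does Proposition~\ref{ak-semisimplicity-criterion} give $\mathcal{R}\subset\mathcal{N}$ and semisimplicity of $Ho\mathcal{T}/\mathcal{N}$. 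You cannot shortcut this: your attempt to prove the trace-vanishing directly is circular once inspected, because the standard way to get ``trace of nilpotent is zero'' is via Schur/Kimura finiteness, and your own Schur-finiteness argument (every object in the quotient is a summand of a finite sum of objects of $\gamma(\mathcal{T})$ because triangles split) already presupposes semisimplicity of $Ho\mathcal{T}/\mathcal{N}$. That Schur-finiteness argument is, by the way, a genuinely useful elaboration of the paper's terse final sentence -- once semisimplicity is established by the correct route -- but it cannot be used to bootstrap the semisimplicity itself.
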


For the proof we use the following criterion due to Andr\'e and Kahn \cite[Th\'eor\`eme 1]{Andre-Kahn-erratum}:

\begin{prop} \label{ak-semisimplicity-criterion} Let $\mathcal A$ be $k$-linear symmetric monoidal category, rigid, with $End_{\mathcal A}(\one) = k$ and $char(k) = 0$.  Suppose there is an extension $L/k$ and a $k$-linear tensor functor $H: \mathcal A \to \mathcal V$ into an abelian $L$-linear symmetric monoidal rigid category, in which the $Hom$-spaces are finite-dimensional and the trace of a nilpotent endomorphism vanishes. Then $\mathcal R \subset \mathcal N$ and therefore $\mathcal A/ \mathcal N$ is semisimple.
\end{prop}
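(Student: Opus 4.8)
The plan is to split the proposition into its two assertions: the inclusion of ideals $\mathcal{R}\subset\mathcal{N}$ in $\mathcal{A}$, and the semisimplicity of $\mathcal{A}/\mathcal{N}$; the second is a formal consequence of the first together with the finiteness supplied by $H$, so I would dispose of it first. One first notes that $\ker H\subset\mathcal{N}$: if $H(\varphi)=0$ then $tr_{\mathcal V}\bigl(H(\psi)H(\varphi)\bigr)=0$ for all $\psi$, and by the trace identity recorded below this forces $tr_{\mathcal A}(\psi\circ\varphi)=0$ for all $\psi$, i.e. $\varphi\in\mathcal N$. Hence $End_{\mathcal A/\mathcal N}(X)$ is a quotient of the algebra $H\bigl(End_{\mathcal A}(X)\bigr)$, which sits inside the finite dimensional algebra $End_{\mathcal V}(HX)$; its Jacobson radical is the image of $\mathcal R(X,X)=rad\bigl(End_{\mathcal A}(X)\bigr)$, which by $\mathcal R\subset\mathcal N$ dies in the quotient. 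So each $End_{\mathcal A/\mathcal N}(X)$ is a finite dimensional algebra with zero radical, i.e. semisimple Artinian; being also $k$-linear and pseudo-abelian, $\mathcal A/\mathcal N$ is then semisimple (and abelian) by the criterion of \cite[2.1.2]{Andre-Kahn-nilpotence} recalled above.

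The heart of the matter is therefore the inclusion $\mathcal R\subset\mathcal N$. The first ingredient is that $H$, being a symmetric monoidal functor of rigid symmetric monoidal categories, preserves duals, evaluations and coevaluations, hence preserves categorical traces: for any endomorphism $u$ of an object of $\mathcal A$ the scalar $tr_{\mathcal A}(u)\in k=End_{\mathcal A}(\one)$ is carried by the unital (hence injective, for $\mathcal V\neq 0$) ring map $k\to End_{\mathcal V}(\one)$ to $tr_{\mathcal V}(Hu)$. Thus $tr_{\mathcal A}(u)=0$ if and only if $tr_{\mathcal V}(Hu)=0$. Now take $\varphi\in\mathcal R(X,Y)$ and an arbitrary $\psi\in Hom_{\mathcal A}(Y,X)$; the task is to show $tr_{\mathcal A}(\psi\circ\varphi)=0$. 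Put $u=\psi\circ\varphi$; since $\varphi$ lies in the radical ideal, $u$ lies in the Jacobson radical of the ring $End_{\mathcal A}(X)$. It then suffices to prove the \emph{Claim}: if $u$ lies in the Jacobson radical of $End_{\mathcal A}(X)$, then $Hu\in End_{\mathcal V}(HX)$ is nilpotent. Granting this, the hypothesis on $\mathcal V$ (vanishing of the trace of a nilpotent endomorphism) gives $tr_{\mathcal V}(Hu)=0$, hence $tr_{\mathcal A}(\psi\circ\varphi)=0$; as $\psi$ was arbitrary, $\varphi\in\mathcal N(X,Y)$, which is $\mathcal R\subset\mathcal N$.

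I expect the Claim to be the only real obstacle. Write $\bar R:=H\bigl(End_{\mathcal A}(X)\bigr)$, a $k$-subalgebra of $S:=End_{\mathcal V}(HX)$, which is finite dimensional over $L$; then $Hu\in rad(\bar R)$ (image of a Jacobson radical under a ring surjection). If $\bar R$ were Artinian one would conclude at once, since the radical of an Artinian ring is nilpotent; the difficulty is that $\bar R$ may fail to be Artinian when $L/k$ is a large extension. The plan to get around this is a descent to the finite dimensional situation, and this is precisely the delicate ground-field point that the erratum \cite{Andre-Kahn-erratum} is written to handle: one exploits that all powers $u^m$ again lie in the radical, that $char(k)=0$, and the finite dimensionality of $S$ over $L$, to produce a finite dimensional (hence Artinian) subalgebra of $S$ containing $Hu$ in its radical, whence the nilpotence of $Hu$; I would follow that treatment for the details. (In the case actually needed, $\mathcal A=Ho\mathcal T$ for $GL(m|n)$ with $L=k$, the $Hom$-spaces of $\mathcal A$ are finite dimensional over $k$ by the finiteness theorem, so $End_{\mathcal A}(X)$ is already a finite dimensional $k$-algebra, $u$ is nilpotent outright, and the Claim is immediate.) Feeding the Claim into the argument of the previous paragraph yields $\mathcal R\subset\mathcal N$, and combined with the first paragraph this gives both assertions of the proposition.
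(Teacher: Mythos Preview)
The paper does not give its own proof of this proposition: it is quoted verbatim as \cite[Th\'eor\`eme 1]{Andre-Kahn-erratum}, and the \texttt{proof} environment that immediately follows it in the text is the proof of Theorem~\ref{thm:semisimple}, not of the proposition. So there is nothing to compare against; you have written a proof where the authors simply cite one.

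That said, your sketch is essentially the Andr\'e--Kahn argument and is sound in outline. One caveat on the first paragraph: you assert that $End_{\mathcal A/\mathcal N}(X)$ is a finite dimensional algebra because it is a quotient of $H(End_{\mathcal A}(X))\subset End_{\mathcal V}(HX)$. The ambient algebra is finite dimensional over $L$, not over $k$, so when $L/k$ is infinite this does not by itself give finite $k$-dimension (or even the Artinian property) of $\bar R$ or its quotient. This is in fact the same ground-field subtlety you correctly flag in the Claim for the inclusion $\mathcal R\subset\mathcal N$, and it is again handled in \cite{Andre-Kahn-erratum}; you should not present the semisimplicity deduction as more elementary than the inclusion. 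Your parenthetical observation that in the paper's actual application ($\mathcal A=Ho\mathcal T$, $L=k$, finite dimensional $\Hom$'s) all of this is immediate is exactly right and matches how the authors use the criterion.
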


\begin{proof} We prove in section \ref{sec:gl-m-1-semisimple} by direct computations that $Ho \mathcal T_{m|1} / \mathcal N$ is a supertannakian category for the Frobenius pair ($GL(m|1), P(m|1)^+$) where $P(m|1)^+$ denotes the upper parabolic in $GL(m|1)$. For $G = GL(m|n)$ we then obtain an induced restriction functor $$ res: Ho {\mathcal T}_{m|n} \to Ho {\mathcal T}_{m-n+1|1} \ $$ as in section \ref{sec:restriction}. The functor \[  Ho {\mathcal T}_{m|n} \to Ho {\mathcal T}_{m-n+1|1} \to Ho {\mathcal T}_{m-n+1|1}/ \mathcal{N} \ \] satisfies the criterion of proposition \ref{ak-semisimplicity-criterion}. Therefore $\mathcal R \subset \mathcal N$ and $Ho \mathcal T/ \mathcal N$ is semisimple. This implies that $Ho \mathcal T/ \mathcal N$ is abelian. Since Schur finiteness is inherited via tensor functors and every object in $\mathcal T$ is Schur finite, these quotients are supertannakian categories and we can apply Deligne's theorem. 
\end{proof}




\section{The case $GL(m|1)$: Morphisms and cofibrant replacements}

Let $G= GL(m|1)$. As before we choose $A\to B$ to correspond to the inclusion $P=G_0\oplus G_{+1} \hookrightarrow
G$. For the corresponding model structure on the category $Ind(\mathcal{T}_{m|1})$
we obtain the homotopy category $Ho\mathcal C$.
We now compute the morphisms in $Ho\mathcal C$ between the simple
objects of $\mathcal T$.

\subsection{Representations of $GL(m|1)$}

The Kac modules in the category $\mathcal T$  are either irreducible projective ($\lambda$ typical) or have length two ($\lambda$ atypical) with two
atypical composition factors. The category $\mathcal T$ decomposes into blocks ${\mathcal T}^\Lambda$.
The $Ext$-quiver of an atypical block has been described in \cite{Germonie}. The irreducible representations in a given block can be parametrized by the integers, and we denote representatives of the simple objects of such a block ${\mathcal T}^\Lambda$ by
$L(i)$ for $i\in \Z$ for some arbitrarily chosen simple object $L(0)=L(\lambda)$ of this block.
The Kac module $V=V(0)$ is an extension with simple cosocle $L(0)$ and 
simple socle $L(-1)$.

\medskip\noindent
By the classification of the indecomposable objects in ${\mathcal T}^\Lambda$ the non-projective indecomposable modules correspond to intervals on the numberline. More precisely for every interval $[a,b]$ we have two indecomposable modules with composition factors $L(a),\ldots, L(b)$. The indecomposable module with socle $L(a), L(a+2),\ldots$ and cosocle $L(a+1),L(a+3),\ldots$ is denoted $R[a,\ldots,b]$. Its twisted dual is $B[a,\ldots,b] = R[a,\ldots,b]^*$ with cosocle $L(a), L(a+2),\ldots$ and socle $L(a+1),L(a+3),\ldots$. The Kac- and anti Kac-modules are then given by \[ V(a) = R[a,a+1], \ V(a)^* = B[a,a+1].\] If $R[a,\ldots,b]$ has even length, it has a filtration by the Kac modules $V(a+1),\ldots, V(b)$ and $B[a,\ldots,b]$ has a filtration by the anti Kac-modules $V(a)^*,\ldots, V(b-1)^*$. 

\begin{remark} This notation differs from the one used in \cite{Heidersdorf-semisimple}. There we use the notation $I^+[a,b]$ for the unique indecomposable module with $L(b) \in top(I^+[a,b])$ and $I^-[a,b] = I^+[a,b]^*$ for its twisted dual.
\end{remark}

\subsection{Morphisms} \label{sec:gl-m-1-morphisms}

\begin{lem} We have $[L(i),L(j)]=0$ unless $i\geq j$ and $i\equiv j$ modulo 2,
where $[L(i),L(j)]\cong k$.
\end{lem}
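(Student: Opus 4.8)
The strategy is to combine the general vanishing theorem (Theorem \ref{thm:main}) with the explicit knowledge of the block structure of $\mathcal{T}_{m|1}$ and the cofibrant replacement $\Omega$ of $\one$. First I would set up the weight bookkeeping: in the chosen block we have simple objects $L(i)$, $i\in\mathbb{Z}$, with $L(0)$ chosen so that the Kac module $V=V(0)=R[0,1]$ has cosocle $L(0)$ and socle $L(-1)$. The relevant degree/weight of $L(i)$ (in the sense of section \ref{sec:degree-filtration}, restricted to one block) is an increasing function of $i$; tensoring by $Ber^{-1}$ shifts the index, and the ordering $\le$ on the block coincides (up to the block-invariant shift) with the ordering on the integer label. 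So the poset condition ``$\mu<\lambda$'' from Theorem \ref{thm:main} translates here into an inequality of indices. For $i<j$ one applies Theorem \ref{thm:main} directly: $[L(i),L(j)] = Hom_{Ho\mathcal{C}}(QL(i),L(j))$ and since the clean component of $QL(i)$ lies in $\mathcal{C}^{\le i}$ while $j>i$, the $Hom$ vanishes by weight reasons (and the injective summand is killed in $\overline{\mathcal C}$). This disposes of one half of the claim.

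The remaining cases are $i\ge j$. First, the parity constraint $i\equiv j \ (2)$: here I would use the explicit cofibrant replacement. By Lemma \ref{thm:cofib-of-1} and Theorem \ref{thm:good-replacement}, $QL(i)\simeq \Omega\otimes L(i)$, and $\Omega$ has the filtration with $\Omega_{k+1}/\Omega_k \cong V\otimes (I\otimes I^*)^{\otimes k}$. In the $GL(m|1)$-case (the $m=1$ sub-case is Example \ref{ex:gl-1-1}, where $I\otimes I^*\cong Ber^{-2}$; for general $m$ one gets $\Omega_{k+1}/\Omega_k\cong V(Ber^{-2k})$ after projecting to the block), every Kac constituent $V(a)$ appearing in $\Omega$ has $a$ differing from $0$ by an even integer. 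Hence the composition factors of $QL(i)$ that can map onto $L(j)$ all have index $\equiv i\ (2)$, and $[L(i),L(j)]=0$ unless $i\equiv j\ (2)$. Equivalently one can argue that $DS$-cohomology or the block decomposition already forces this, but the cleanest route is reading it off the filtration of $\Omega$.

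Finally, for $i\ge j$ with $i\equiv j\ (2)$ I would compute $[L(i),L(j)]=Hom_{\overline{\mathcal T}}(QL(i),L(j))$ using Corollary \ref{thm:hom-formula}: since $QL(i)=\Omega\otimes L(i)$ has a clean component $N$ which is an inductive limit of Kac objects, and $L(j)$ is simple, $[L(i),L(j)]=Hom_{\mathcal C}(N,L(j))$ — the dimension equals the number of Kac modules $V(\mu)$ in the filtration of $N$ with $L(j)$ in their cosocle, but then one must quotient by maps factoring through projectives, equivalently use $\dim Hom_{\mathcal C}(QL(i),V(j)^*)=[QL(i):V(j)]$ and the bound $\dim[L(i),L(j)]\le [QL(i):V(j)]$ together with a matching lower bound. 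The Kac modules in $\Omega\otimes L(i)$ are $L(i)\otimes\Omega_{k+1}/\Omega_k$, i.e. $V(i), V(i-2), V(i-4),\dots$ (using the block structure of $\mathcal{T}_{m|1}$, where a Kac module $V(a)$ has composition factors $L(a)$ and $L(a-1)$), each occurring once; so the label $j$ with $j\le i$, $j\equiv i\ (2)$ appears as the cosocle $L(j)$ of exactly the Kac module $V(j)$ in this list, giving $[L(i),L(j)]\cong k$. \textbf{The main obstacle} is the bookkeeping in this last step: one must verify that the single copy of $V(j)$ does survive into the clean component $N$ (i.e. is not absorbed into an injective summand) and that no morphism $N\to L(j)$ other than the one through $V(j)\twoheadrightarrow L(j)$ exists modulo projectives — this requires using that $QL(i)$ can be taken clean (Lemma \ref{lem:existence-minimal-model} gives the minimal model of the simple $L(i)$) and the Loewy/Kac structure of $\mathcal{T}_{m|1}$-modules. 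Once $QL(i)$ is replaced by its minimal model, which is clean and has each $V(i-2k)$ exactly once, the computation of $Hom_{\mathcal C}(QL(i),L(j))$ is immediate from the uniqueness of the cosocle.
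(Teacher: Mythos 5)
Your proposal is essentially correct but follows a genuinely different route from the paper. You compute $[L(i),L(j)]=Hom_{\mathcal C}(QL(i),L(j))$ directly from an explicit clean cofibrant replacement of $L(i)$ and read off the answer from its cosocle. The paper instead runs a purely formal bootstrap: it applies $[-,L(u)]$ to the triangle coming from $0\to L(-1)\to V\to L(0)\to 0$, uses that $V$ is cofibrant so $[V,L(u)]=\delta_{u,0}\,k$, combines the resulting long exact sequence with the shift identity $L(n)\cong L(n+1)[1]$ (from the anti Kac module $V(n+1)^*$ dying in $Ho\mathcal T$) and with Theorem \ref{thm:main} to get a two-periodic recursion that determines all the $Hom$-spaces at once. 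The paper's argument needs no knowledge of $QL(i)$ beyond its existence; yours needs the precise structure of the minimal model, which the paper only establishes in the subsection \emph{after} the lemma (as $\Omega=\mathrm{colim}_k R[i-1-2k,\dots,i]$, built from Germoni's classification of indecomposables, not from $\Omega\otimes L(i)$). What your approach buys is a concrete picture of where the one-dimensional $Hom$ comes from; what the paper's buys is independence from the classification of indecomposables.

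The one step you should shore up is the identification of the block component of $\Omega_{k+1}/\Omega_k\otimes L(i)$ with a single Kac module $V(i-2k)$. For $m=1$ this is Example \ref{ex:gl-1-1}, but for $m>1$ the object $I\otimes I^*$ is large and spreads over several blocks, and it is not justified as stated that its contribution to the relevant atypical block is exactly $Ber^{-2}$-twisting; the paper never proves this and avoids it. The clean fix is the one you gesture at: take the minimal model of $L(i)$ directly as $\mathrm{colim}_k R[i-1-2k,\dots,i]$ (each $\Omega_{k+1}/\Omega_k\cong R[i-1-2k,i-2k]=V(i-2k)$ by the interval description of indecomposables), verify it is clean and that its kernel lies in $\mathcal C_-$ (Lemma \ref{thm:splitting} in the paper), and then your cosocle count $Hom_{\mathcal C}(QL(i),L(j))=k$ for $j\le i$, $j\equiv i\ (2)$, and $=0$ otherwise, together with Corollary \ref{thm:hom-formula}, completes the proof. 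With that substitution your argument is sound.
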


\begin{proof} We apply the functor $Hom_{Ho\mathcal C}(-,L(u))$
to  the exact sequence  $$ 0 \to L(-1) \to V \to L(0) \to 0 \ $$
defined by the Kac object $V$.
Since $V \in \mathcal C_+$ is cofibrant, we obtain 
$$  [V, L(u)] = k  $$
for $u=0$ and $0$ for $u\neq 0$ since
$Hom_{\mathcal C}(V,L(u))=k$ for $u=0$ and zero otherwise.
Furthermore for $u=0$ this morphism can not be factorized over a projective
object, since $K$ is clean and $L(u)$ is simple. Now put $u=0$ (for simplicity).
The long exact homotopy sequence
then implies $$[L(-1)[i],L(0)] \cong [L(0)[i-1],L(0)]$$ for all $i\leq -1$
and all $i\geq 2$. Furthermore for $i=0,1$ we have an exact sequence
$$ 0 \to [L(0)[-1],L(0)] \to [L(-1),L(0)] \to k  \to [L(0),L(0)] \to [L(-1)[1],L(0)] \to 0 $$   
We already know
$$  L(n+1) = L(n)[-1],$$
since $L(n) \cong V(n+1)^* /L(n+1)$ for the anti Kac module $V(n+1)^*$, which becomes zero in
$Ho\mathcal T$.  
Hence
$$[L(-1-i),L(0)] \cong [L(1-i),L(0)]$$ for all $i\leq -1$
and all $i\geq 2$. Furthermore for $i=0,1$ we have an exact sequence
$$ 0 \to [L(1),L(0)] \to [L(-1),L(0)] \to k  \to [L(0),L(0)] \to [L(-2),L(0)] \to 0 $$   
Since $[L(-2),L(0)] = [L(-1),L(0)]=0$ by theorem \ref{thm:main}, this implies
$$[L(i),L(0)]=0$$ for all odd $i$ and all even $i\leq -2$, and $$[L(i),L(0)]\cong k$$ for all even $i\geq 0$.
\end{proof}
%

Hence the triangulated category $\mathcal H$
decomposes into blocks ${\mathcal H}^\Lambda$, and each block decomposes into two subblocks
$$   {\mathcal H}^\Lambda  \ = \ {\mathcal H}_{ev}^\Lambda \ \oplus \ {\mathcal H}_{odd}^\Lambda \ $$
such that ${\mathcal H}_{odd}^\Lambda = {\mathcal H}_{ev}^\Lambda[1]$.
The images of the simple objects in the block ${\mathcal T}^\Lambda$ are identified with the integers.
Those in ${\mathcal H}_{ev}^\Lambda$ are identified with the even integers, and
the morphisms in ${\mathcal H}_{ev}^\Lambda$ arise $Hom(2j,2i)= k\cdot f_{ij}$ for a nonzero morphism
$f_{ij}$ if $j\geq i$ and $Hom(2j,2i)=0$ otherwise, such that
$$   f_{ij} \circ f_{jk} =   f_{ik} \quad , \quad i\leq j \leq k \ .$$

\begin{lem} $L(u)$ and $L(v)$ for $u\neq v$ atypical are isomorphic in
$Ho{\mathcal C}$ if and only if $u=v$.
\end{lem}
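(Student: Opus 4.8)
The statement to prove is: $L(u)$ and $L(v)$ for $u \neq v$ atypical are isomorphic in $Ho\mathcal{C}$ if and only if $u = v$. Since the ``if'' direction is trivial, I only need the contrapositive of the ``only if'': if $u \neq v$, then $L(u) \not\cong L(v)$ in $Ho\mathcal{C}$. The natural tool is the morphism computation just established: $[L(i),L(j)] \cong k$ precisely when $i \geq j$ and $i \equiv j \pmod 2$, and $[L(i),L(j)] = 0$ otherwise. An isomorphism $L(u) \to L(v)$ in $Ho\mathcal{C}$ would in particular require both $[L(u),L(v)] \neq 0$ and $[L(v),L(u)] \neq 0$.

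\textbf{Key steps.} First I would observe that if $L(u) \cong L(v)$ in $Ho\mathcal{C}$, then both Hom-spaces $[L(u),L(v)]$ and $[L(v),L(u)]$ must be nonzero (each contains the class of an isomorphism, respectively its inverse). By the previous lemma, $[L(u),L(v)] \neq 0$ forces $u \geq v$ and $u \equiv v \pmod 2$; symmetrically $[L(v),L(u)] \neq 0$ forces $v \geq u$ and $v \equiv u \pmod 2$. Combining $u \geq v$ and $v \geq u$ gives $u = v$, contradicting $u \neq v$. This is the entire argument, and it is short. The only subtlety I would want to make explicit is that an isomorphism in $Ho\mathcal{C}$ between objects lying in a single block $\mathcal{T}^{\Lambda}$ stays within that block (the block decomposition of $\mathcal{H}$ noted just before the lemma), so that the integer parametrization is meaningful and the Hom-computation applies; if $u$ and $v$ lie in different blocks the Hom-spaces vanish in both directions and there is nothing to check.

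\textbf{Main obstacle.} Honestly, there is no real obstacle here — this is a direct corollary of the preceding lemma together with the fact that isomorphisms induce nonzero maps in both directions. The only thing to be careful about is the bookkeeping of which objects are ``atypical'' and lie in the same atypical block: one should phrase the proof so that it is clear the lemma's vanishing statement $[L(i),L(j)] = 0$ for $i < j$ (or $i \not\equiv j$) is exactly what kills the possibility of a two-sided iso. So the proof I would write is essentially: ``Suppose $L(u) \cong L(v)$ in $Ho\mathcal{C}$. Then $[L(u),L(v)] \neq 0 \neq [L(v),L(u)]$, so by the previous lemma $u \geq v$ and $v \geq u$, whence $u = v$.'' I would then remark that this, combined with theorem~\ref{thm:main} and the earlier identification $L(n+1) = L(n)[1]$, pins down the block structure of $\mathcal{H}^{\Lambda}$ completely, setting up the subsequent determination of indecomposables in $Ho\mathcal{T}_{m|1}$.
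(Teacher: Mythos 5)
Your proof is correct, but it is not the route the paper takes. You deduce the statement formally from the preceding Hom-computation: an isomorphism forces $[L(u),L(v)]\neq 0$ and $[L(v),L(u)]\neq 0$ (both objects are nonzero since $[L(i),L(i)]\cong k$ contains a nonzero identity), and the antisymmetry $i\geq j$ in that computation then gives $u=v$. The paper instead argues directly with the model structure: assuming without loss of generality that the weight of $u$ is smaller than that of $v$, it represents the putative isomorphism by a morphism $f\colon QL(u)\to L(v)$, observes that $f=0$ because all weights occurring in the explicit cofibrant replacement $\Omega=QL(u)$ are $\leq u<v$, and then derives a contradiction by factoring the zero weak equivalence $f=\psi\circ\varphi$ with $\varphi$ a split monomorphism with projective kernel and $\psi$ a surjection with kernel in $\mathcal C_-$, forcing $L(v)=0$. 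Your argument is shorter and a cleaner corollary of the lemma just proved, and it explicitly disposes of the case where $u$ and $v$ lie in different blocks; the paper's argument has the advantage of using only the weight-vanishing input (essentially theorem \ref{thm:main} applied to the explicit replacement) rather than the full mod-$2$ Hom-computation, and it illustrates the factorization technique for weak equivalences that recurs elsewhere. Both are valid proofs.
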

 
\begin{proof}  Assume $v\neq u$ and assume $L(u) \simeq L(v)$ in $Ho \mathcal C$. Then we may assume that the weight of $u$ is smaller than the weight of $v$ without restriction of generality. Any such isomorphism is represented by a homotopy class of a morphism $f$ in $Hom(QL(u),L(v))$ (\cite[Theorem 1.2.10ii]{Hovey})
$$  f: QL(u) \to L(v) \ .$$
Since all weights in $\Omega=QL(u)$ are $\leq u$ and hence $<v$, this implies
$f=0$.
Now $f$ becomes an isomorphism in $Ho{\mathcal C}$ if and only if
$f$ is a weak equivalence (\cite[Theorem 1.2.10iv]{Hovey}). If $f=0$ is a weak equivalence, we can 
factor $f=¸\psi\circ \varphi$ into a split monomorphism $\varphi:QL(u)\to Z$ with projective kernel and a
surjective morphism $\psi:Z\to L(v)$ with kernel in $\mathcal C_-$. But then $\psi =0$, since
$f=0$. This implies $L(v)=0$. Contradiction.
\end{proof}

\begin{lem} \begin{enumerate}
\item If the length of $B = B[a,\ldots,b]$ is even, $B$ becomes isomorphic to zero in $Ho\mathcal{C}$.
\item If the length of $B = B[a,\ldots,b]$ is odd, $B \simeq L(b)$ in $Ho\mathcal{C}$.
\item If the length of $R = R[a,\ldots,b]$ is even, $B$ is indecomposable in  $Ho\mathcal{C}$.
\item If the length of $R = R[a,\ldots,b]$ is odd, $R \simeq L(a)$ in  $Ho\mathcal{C}$.
\end{enumerate}
\end{lem}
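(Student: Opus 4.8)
The plan is to work in the stable category $\overline{\mathcal{T}}$ first, since by Theorem~\ref{thm:gl-m-n-homotopy} the homotopy category $Ho\mathcal{T}_{m|1}$ is the Verdier quotient $\overline{\mathcal{T}}/\overline{\mathcal{T}}_-$, where $\overline{\mathcal{T}}_-$ is the thick ideal generated by the anti Kac modules $V(c)^* = B[c,c+1]$. The key structural input is the filtration data recalled just before the lemma: an even-length $B[a,\ldots,b]$ has a filtration by the anti Kac modules $V(a)^*,\ldots,V(b-1)^*$, and dually an even-length $R[a,\ldots,b]$ has a filtration by $V(a+1),\ldots,V(b)$. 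So statement (1) is essentially immediate: if $B[a,\ldots,b]$ has even length it lies in the subcategory of objects with an anti Kac filtration, hence in $\mathcal{T}_- \subset \mathcal{C}_-$, and therefore maps to zero in $Ho\mathcal{C}$ (the kernel of $\gamma$ contains $\mathcal{C}_-$). The same argument handles the ``even length'' part of (3) only negatively, so (3) needs a separate idea.

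For statement (2), when $B[a,\ldots,b]$ has odd length $b-a+1 = 2k+1$, I would use the short exact sequence coming from the socle/cosocle structure. Since $L(b)$ sits in the cosocle of $B[a,\ldots,b]$ (as $b\equiv a$ mod $2$, $L(b)$ is one of the cosocle constituents), write $0 \to B' \to B[a,\ldots,b] \to L(b) \to 0$ where... actually the cleaner route: remove the cosocle constituent $L(b)$ to get $0 \to L(b) \to B[a,\ldots,b] \to B[a,\ldots,b-1] \to 0$ if $L(b)$ is in the socle, or dually. The precise maneuver is to exhibit a two-step extension: $B[a,\ldots,b]$ has an even-length submodule $B[a,\ldots,b-1]$ (anti Kac filtration, hence in $\mathcal{T}_-$) with quotient $L(b)$, giving a distinguished triangle $B[a,\ldots,b-1] \to B[a,\ldots,b] \to L(b) \to B[a,\ldots,b-1][1]$ in $\overline{\mathcal{T}}$. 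Since both outer terms of the rotated triangle — $B[a,\ldots,b-1]$ and $B[a,\ldots,b-1][1]$ — lie in $\overline{\mathcal{T}}_-$, the map $B[a,\ldots,b]\to L(b)$ becomes an isomorphism in $Ho\mathcal{T} = \overline{\mathcal{T}}/\overline{\mathcal{T}}_-$. One must check that $B[a,\ldots,b-1]$ is indeed a submodule with quotient $L(b)$; this follows from the description of the socle of $B[a,\ldots,b]$ (which is $L(a+1),L(a+3),\ldots$) — for odd length the ``top-degree'' end $L(b)$ is actually in the socle, so instead one takes $0\to L(b)\to B[a,\ldots,b]\to B[a,\ldots,b-1]\to 0$ with the quotient even-length; then the triangle $L(b)\to B[a,\ldots,b]\to B[a,\ldots,b-1]\to L(b)[1]$ has $B[a,\ldots,b-1]\in\overline{\mathcal{T}}_-$, so $L(b)\to B[a,\ldots,b]$ is an isomorphism in $Ho\mathcal{T}$. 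Statement (4) is entirely dual via the twisted duality $()^*$, which exchanges $R$ and $B$ and, by the corollary in section~\ref{sec:monoidal-model-structure}, preserves $\mathcal{T}_-$... wait, $()^*$ exchanges $\mathcal{T}_+$ and $\mathcal{T}_-$. So for (4) I work in $ho\mathcal{T}$ directly: an odd-length $R[a,\ldots,b]$ has an even-length submodule or quotient with Kac filtration — but Kac modules are not zero in $Ho\mathcal{T}$. The fix: use that the even-length piece lies in $\mathcal{T}_+$ and recall that in the Verdier quotient only $\overline{\mathcal{T}}_-$ dies, not $\overline{\mathcal{T}}_+$. So (4) cannot be done in $ho\mathcal{T}$ by this trick; instead I apply $()^*$ to reduce (4) to (2): $R[a,\ldots,b]^* = B[a,\ldots,b]$, and the homotopy category for the opposite Frobenius pair $(P^-,G)$ (which swaps the roles of $\mathcal{T}_+,\mathcal{T}_-$) handles $R$ the way the current one handles $B$; then match up which $L(a)$ vs $L(b)$ survives by tracking socle/cosocle under $()^*$.

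For statement (3), the indecomposability of even-length $R[a,\ldots,b]$ in $Ho\mathcal{T}$, I would use the morphism computation from section~\ref{sec:gl-m-1-morphisms} together with the fact that $R[a,\ldots,b]$ is cofibrant (it has a Kac filtration, so $R[a,\ldots,b] \in \mathcal{T}_+$, and by Corollary~\ref{thm:hom-formula}/the clean-replacement results its image in $\overline{\mathcal{C}}$ is computed directly). Concretely: $R = R[a,\ldots,b] \in \mathcal{T}_+ \subset \mathcal{C}_+$, so $R$ is its own cofibrant replacement up to stable equivalence, and $[R,R] = Hom_{\overline{\mathcal{C}}}(R,R)$. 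If $R$ is clean (it has no injective summand for an indecomposable non-projective $R$ of atypicality $1$, provided $R$ is not itself projective), then $Hom_{\overline{\mathcal{C}}}(R,R) = Hom_{\mathcal{C}}(R,R)/(\text{maps through projectives})$; since $R$ is indecomposable in $\mathcal{T}$ with local endomorphism ring, $Hom_{\mathcal{C}}(R,R)$ is local, and quotienting by the ideal of maps factoring through projectives either gives a local ring (hence $R$ indecomposable in $Ho\mathcal{T}$) or kills the identity (hence $R \cong 0$). The latter would force $R$ projective, contradiction. So the main obstacle is the careful bookkeeping in (3): verifying that an even-length $R[a,\ldots,b]$ in $\mathcal{T}_{m|1}$ is clean (no injective summand) and that the identity of $R$ does not factor through a projective module in $\mathcal{C}$ — this requires knowing which indecomposables in an atypical block of $\mathcal{T}_{m|1}$ are projective, which is exactly the interval classification recalled above (the projective indecomposables being precisely $P(i)$, not the $R[a,\ldots,b]$ for $b-a+1\geq 2$ in general — one must check $R[a,\ldots,b]$ is projective only in degenerate cases). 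Once cleanness is established, local-ring-modulo-an-ideal is either local or zero, and the zero case is excluded by non-projectivity, giving indecomposability.
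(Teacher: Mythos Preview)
Your arguments for (1) and (3) are fine and match the paper's approach; for (3) the paper simply asserts $[R,R]=Hom_{\overline{\mathcal C}}(R,R)$ is one-dimensional, whereas your local-ring-modulo-an-ideal argument is a clean way to see indecomposability without computing that dimension.

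There is a genuine gap in (4), and it stems from a wrong structural assumption. You write that for odd-length $R=R[a,\ldots,b]$ the relevant even-length piece ``has Kac filtration'' and therefore cannot be killed in $Ho\mathcal T$. That is false. Since $L(a)$ lies in the socle of $R$, the short exact sequence
\[
0 \to L(a) \to R[a,\ldots,b] \to R[a,\ldots,b]/L(a) \to 0
\]
has cokernel with composition factors $L(a+1),\ldots,L(b)$, socle $L(a+2),L(a+4),\ldots$, and cosocle $L(a+1),L(a+3),\ldots$; in other words the cokernel is $B[a+1,\ldots,b]$, an even-length \emph{anti} Kac object in $\mathcal T_-$, not a Kac object. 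Hence the inclusion $L(a)\hookrightarrow R$ has cone in $\overline{\mathcal T}_-$ and becomes an isomorphism in $Ho\mathcal T$, exactly parallel to your argument for (2). This is precisely the paper's proof of (4); your detour through $(\,)^*$ and the opposite Frobenius pair is unnecessary and, as you note yourself, left incomplete.

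A smaller point on (2): you talk yourself out of the correct sequence. For odd length $b-a$ is even, so $L(b)$ has the same parity as $L(a)$ and therefore lies in the \emph{cosocle} of $B[a,\ldots,b]$ (recall $B$ has cosocle $L(a),L(a+2),\ldots$), not the socle. The correct sequence is the one you wrote first,
\[
0 \to B[a,\ldots,b-1] \to B[a,\ldots,b] \to L(b) \to 0,
\]
with kernel of even length in $\mathcal T_-$; this is the paper's argument. Your final ``corrected'' sequence $0\to L(b)\to B\to B[a,\ldots,b-1]\to 0$ does not exist.
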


\begin{proof} If the length of $B$ is even, it is in $\mathcal{T}_-$. If the length is odd, the quotient morphism $B[a,\ldots,b] \to L(b)$ has kernel in $\mathcal{T}_-$. If the length of $R$ is even, it is in $\mathcal{T}_+$. Since $R$ is cofibrant $[R,R] = Hom_{\overline{\mathcal{C}}}(R,R)$. Since the latter is one-dimensional, $R$ is indecomposable. If the length of $R$ is odd, then the morphism $L(a) \to R[a,\ldots,b]$ has cokernel in $\mathcal{T}_-$ and therefore $L(a) \simeq R$ in $Ho\mathcal{C}$.
\end{proof}

Any object in ${\mathcal C}$ is a direct sum of indecomposable modules.
Those in $\mathcal C_-$ become isomorphic to zero in $ Ho{\mathcal C}$. 
Those in $\mathcal C_+$ stay indecomposable unless they are projective. All the remaining
ones become isomorphic in $Ho{\mathcal C}$ to 
the image of some simple module $L(u)$.


\subsection{Cofibrant replacements} \label{gl-1-1-cofib}

In this section we explicitly determine the minimal models of the simple objects.

\medskip\noindent
{\it Cofibrant replacements}. Projective simple objects $X$ in $\mathcal C$ are cofibrant.
Atypical simple objects are not cofibrant. Objects in $\mathcal C_+$ are cofibrant.
For $X\in \mathcal C_-$ a projective resolution $q:P\to X\to 0$ defines a cofibrant replacement 
$QX \cong P$ of $X$.
We now construct an explicit  cofibrant
replacement $q:QX \to X$ for atypical simple modules $X=L(u)$ as a sequential inductive limit
 $$ \Omega = co\lim_i \Omega_i \ $$
of subobjects $$\Omega_i= R[u-1-2i,..,u] $$ with the obvious inclusion morphisms 
$\Omega_i \hookrightarrow \Omega_{i+1}$ (see \cite{Germonie} for further details).
$\Omega_{i+1}/\Omega_i \cong R[u-1-2i,u-2i]$ is in $F(\mathcal D)\subset \mathcal C_+$.  This shows $\Omega_i\in \mathcal C_+$, since $\mathcal C_+$ is closed under extensions.
$\Omega$ is a union
of the $\Omega_i$. Since $\mathcal C_+$ is closed under monomorphic sequential colimits
we obtain

\begin{lem} $\Omega$ is cofibrant.
\end{lem}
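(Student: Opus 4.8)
The statement is that $\Omega = \operatorname*{co\,lim}_i \Omega_i$ is cofibrant, where $\Omega_i = R[u-1-2i,\dots,u]$ is the indecomposable module in the block with the prescribed socle and cosocle, and the transition maps $\Omega_i \hookrightarrow \Omega_{i+1}$ are the obvious inclusions. The plan is to reduce everything to the two standing facts we have available: that $\mathcal{C}_+$ is closed under extensions (theorem \ref{thm:main-homotopy}, \ref{cof-fib-prop}, or directly from the cotorsion-pair property $Ext^1(\mathcal{C}_+,\mathcal{C}_-)=0$), and that $\mathcal{C}_+$ is closed under sequential direct limits with monomorphic transition maps (corollary in section \ref{sec:induced}, ``Permanence properties of cofibrant objects'').

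First I would check that each $\Omega_i$ lies in $\mathcal{C}_+$. Proceed by induction on $i$. The base case is $\Omega_0 = R[u-1,u] = V(u)$, the Kac module; it lies in $\mathcal{T}_+ \subset \mathcal{C}_+$ because $\mathcal{T}_+ = \mathcal{C}_+ \cap \mathcal{T}$ consists of objects with a Kac filtration, and $V(u)$ trivially has one. (Equivalently, $V(u) = F(L_P(u))$ is an induced module, hence in the image of $F$, and $F$ maps projectives of $\mathcal{D}$ — in particular cofibrant generators — appropriately; but the cleanest route is just $V(u) \in \mathcal{T}_+$.) For the inductive step, the quotient $\Omega_{i+1}/\Omega_i \cong R[u-1-2i,\,u-2i] = V(u-2i)$ is again a Kac module, hence in $F(\mathcal{D}) \subset \mathcal{C}_+$; combining this with $\Omega_i \in \mathcal{C}_+$ and the short exact sequence
\[
0 \to \Omega_i \to \Omega_{i+1} \to \Omega_{i+1}/\Omega_i \to 0,
\]
closure of $\mathcal{C}_+$ under extensions gives $\Omega_{i+1} \in \mathcal{C}_+$.

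Second, since $\Omega = \operatorname*{co\,lim}_i \Omega_i$ is a sequential colimit whose transition maps $\Omega_i \hookrightarrow \Omega_{i+1}$ are monomorphisms, and each $\Omega_i \in \mathcal{C}_+$, the permanence corollary (``$\mathcal{C}_+$ is closed under sequential direct limits with monomorphic transition maps'') immediately yields $\Omega \in \mathcal{C}_+$, i.e.\ $0 \to \Omega$ is in $\mathcal{L}$, which is exactly the assertion that $\Omega$ is cofibrant.

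\textbf{Main obstacle.} There is no deep obstacle here; the only point requiring a modicum of care is the identification of the successive quotients $\Omega_{i+1}/\Omega_i$ with Kac modules $V(u-2i)$, which rests on the combinatorial description of the indecomposables $R[a,\dots,b]$ recalled earlier in this section (even-length $R[a,\dots,b]$ has a Kac filtration, and removing the pair of simple constituents $L(u-1-2i), L(u-2i)$ from the bottom of $\Omega_{i+1}$ leaves $\Omega_i$, with the removed layer being precisely $V(u-2i) = R[u-1-2i,u-2i]$). Once that structural fact is granted, the argument is a two-line application of the two permanence properties of $\mathcal{C}_+$.
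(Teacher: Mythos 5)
Your proof is correct and follows essentially the same route as the paper: each $\Omega_i$ is shown to lie in $\mathcal{C}_+$ by induction using closure of $\mathcal{C}_+$ under extensions (with the successive quotients $\Omega_{i+1}/\Omega_i$ being Kac modules in $F(\mathcal{D})\subset\mathcal{C}_+$), and then the permanence of $\mathcal{C}_+$ under sequential colimits with monomorphic transition maps gives $\Omega\in\mathcal{C}_+$. The only (immaterial) quibble is the labelling of the quotient Kac modules, where your indexing is off by a shift from the zigzag description of $R[a,\dots,b]$; this does not affect the argument.
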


$$  \xymatrix@C=1em{ \ar@{.}[dr]&  &  L(u-4)  \ar@{-}[dr] & & L(u-2) \ar@{-}[dr]&    & L(u)  \cr
&  L(u-5) \ar@{-}[ur] & &L(u-3) \ar@{-}[ur] &  &  L(u-1) \ar@{-}[ur] &  \cr
}  \ .$$

There exists an exact sequence
$$ 0 \to  R \to \Omega \to L(u) \to 0 \ ,$$
where $R$ is isomorphic to
the cohomology of the complex
\[\bigoplus_{i=1}^\infty L(u-2i-1) \to  \bigoplus_{i=1}^\infty R[u-2i-1,u-2i]\oplus R[u-2i,u-2i+1]^* \to  \bigoplus_{i=1}^\infty L(u-2i) \ .\]
Notice that the simple module $L(u-2i)$ is a quotient of $R[u-2i-1,u-2i]$ and $R[u-2i,u-2i+1]^*$.
Similarly the simple module $L(u-2i-1)$ is a submodule of $R[u-2i-1,u-2i]$ and $R[u-2i,u-2i-1]^*$.

\begin{lem} \label{thm:splitting} The restriction $UR[u-1,u] = UL(u-1)\oplus UL(u) $ splits in $\mathcal D$.
\end{lem}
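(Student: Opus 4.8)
We must show that the restriction to the parabolic $P = P(m|1)^+$ (equivalently, to $\mathcal D = Rep(P)^\infty$) of the length-two indecomposable $R[u-1,u] = V(u-1)$ splits into a direct sum $UL(u-1) \oplus UL(u)$. The key point is that the functor $U = Res$ is exact and that we have a good handle on its target: projective objects in $\mathcal D$ are injective, and more to the point, by lemma \ref{lem:rigidity}-type reasoning and the Frobenius-pair structure, simple $P$-modules restricted from $G$-modules behave well. Concretely, $R[u-1,u]$ is the Kac module $V(u-1) = Ind_P^G L_P(u-1)$, so by the tensor identity / Frobenius reciprocity its restriction to $P$ contains $L_P(u-1)$ as a submodule; I would first identify the two composition factors of $UR[u-1,u]$ as $UL(u-1)$ and $UL(u)$, using that $\mathfrak g/\mathfrak p$ is one-dimensional in the $GL(m|1)$-case (only one odd root in the relevant direction), so that $Ind_P^G L_P(\lambda)$ restricted to $P$ is $\Lambda^\bullet(\mathfrak g/\mathfrak p) \otimes L_P(\lambda)$, a two-step extension with graded pieces $L_P(u-1)$ and $L_P(u-1) \otimes \mathfrak g_{-1} \cong L_P(u)$ (up to a Berezin twist that matches the weight shift between $L(u-1)$ and $L(u)$).

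\textbf{Main steps.} First, set up the PBW/Frobenius description: since $\dim(\mathfrak g/\mathfrak p) = 1$ (the relevant odd nilradical $\mathfrak g_{-1}$ of $P$ in $GL(m|1)$ is one-dimensional over the $G_0$-module structure — this is exactly the feature that makes the $GL(m|1)$-case so rigid), one has $U V(\lambda) = U\, Ind_P^G L_P(\lambda) \cong \Lambda^\bullet(\mathfrak g_{-1}) \otimes L_P(\lambda)$ as $P$-modules, which since $\Lambda^\bullet$ of a one-dimensional odd space is $k \oplus k[\text{shift}]$ gives a $P$-module extension $0 \to L_P(\lambda) \to UV(\lambda) \to L_P(\lambda)\otimes \mathfrak g_{-1} \to 0$. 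Second, identify $L_P(\lambda) \otimes \mathfrak g_{-1}$ with $UL(u)$ by comparing highest weights (the weight shift induced by tensoring with the odd root space matches $u-1 \mapsto u$). Third — the actual splitting — observe that this short exact sequence of $P$-modules must split: $Ext^1_P(L_P(u), L_P(u-1))$ can be computed, and vanishes, because the two simple $P$-modules lie over the \emph{same} simple $G_0$-module up to the odd twist, so there is no extension available within $Rep(P)$; alternatively and more cleanly, both $U V(u-1) = U R[u-1,u]$ \emph{and} $U V(u-1)^* = U B[u-1,u]$ are projective (= injective) in $\mathcal D$ by \cite[Proposition 3.6.2]{Germonie} — indeed $R[u-1,u] \in \mathcal T_+$ so $U R[u-1,u] \in \mathcal I_{\mathcal D} = \mathcal P_{\mathcal D}$ — and a projective-injective module whose socle and cosocle are the two \emph{distinct} simples $L_P(u-1), L_P(u)$, each with multiplicity one, with no further composition factors, is forced to be semisimple $UL(u-1) \oplus UL(u)$ (a non-split self-extension or a uniserial module of length two cannot be both projective and injective with distinct socle and head unless it is a block of its own with a non-simple projective cover, which the multiplicity-one count rules out).

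\textbf{Expected main obstacle.} The delicate part is \emph{pinning down the composition factors of $UR[u-1,u]$ precisely}, i.e.\ checking that upon restriction $L(u)$ and $L(u-1)$ remain irreducible as $P$-modules (so that $UR[u-1,u]$ really has exactly two composition factors, not more) and that they are non-isomorphic. For $GL(m|1)$ this is true because the odd part of $\mathfrak p$ is so small that the $P$-socle filtration of a restricted $G$-simple is controlled, but one should invoke the explicit description of $Rep(P(m|1)^+)$ and its simple modules (as in \cite{Germonie}) rather than hand-wave it. Once the two-composition-factor, multiplicity-one, distinct-head-and-socle picture is established, the splitting follows formally from $UR[u-1,u] \in \mathcal P_{\mathcal D} = \mathcal I_{\mathcal D}$: a projective-injective module with simple socle would be indecomposable with that socle as its unique simple submodule, but then its head would equal its socle (for an indecomposable projective-injective in a Frobenius category the socle and top agree), contradicting that the head is $L(u)\neq L(u-1)$; hence $UR[u-1,u]$ is decomposable, and being of length two with distinct factors it is $UL(u-1)\oplus UL(u)$. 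I would present the argument via this last route, as it avoids any $Ext$-computation and uses only the Frobenius property of $\mathcal D$ together with the structure of $V(u-1) = R[u-1,u]$ already recorded in the text.
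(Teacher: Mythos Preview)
Your main argument has a genuine error: you claim that $R[u-1,u] \in \mathcal T_+$ implies $UR[u-1,u] \in \mathcal I_{\mathcal D} = \mathcal P_{\mathcal D}$, but this conflates $\mathcal T_+$ with $\mathcal T_-$. In the setup of the paper, $U$ is restriction to $P^+$, and it is the \emph{anti}-Kac objects ($\mathcal T_-$) whose restriction to $P^+$ is projective; Kac objects ($\mathcal T_+$) have projective restriction to $P^-$, not $P^+$ (this is exactly \cite[Proposition 3.6.2]{Germonie}, which you cite, but read the wrong way). Indeed if $UR[u-1,u]$ were projective in $\mathcal D$ then $R[u-1,u]$ would lie in $\mathcal C_- \cap \mathcal C_+ = \mathcal I_{\mathcal C}$, i.e.\ be projective in $\mathcal T$, contradicting atypicality. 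So the Frobenius-category trick (``projective-injective with distinct head and socle must split'') does not apply to $UR[u-1,u]$.

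Your alternative PBW route also stumbles on a dimension count: for $GL(m\vert 1)$ one has $\dim(\mathfrak g/\mathfrak p^+) = \dim \mathfrak g_{-1} = m$, not $1$ (it is irreducible as a $G_0$-module, but not one-dimensional unless $m=1$), so $\Lambda^\bullet(\mathfrak g_{-1})$ has $2^m$ graded pieces and the filtration you describe is not a two-step one for $m>1$. The paper instead embeds $R[u-1,u]$ into the indecomposable projective $P = P[u-2,u-1,u-1,u]$ and uses the \emph{anti}-Kac filtration of $P$: its two anti-Kac pieces genuinely become projective under $U$, so the filtration splits after restriction, and one checks that the induced splitting of $UP$ cuts out $UL(u-1)$ and $UL(u)$ inside $UR[u-1,u]$. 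The moral is that to get projective objects in $\mathcal D$ you must pass through anti-Kac (or projective) objects of $\mathcal C$, not Kac objects.
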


\begin{proof} The projective $P=P[u-2,u-1,u-1,u]$ contains $R[u-1,u]$
via the standard embedding. $P$ has a filtration by two anti Kac modules $V$ and $V'$,
which under the restriction functor $U$ become indecomposable projectives in $\mathcal D$.
Therefore the anti Kac filtration splits in $\mathcal D$, hence $$ UR[u-1,u] \ = \  (UR[u-1,u] \cap UV) \oplus (UR[u-1,u] \cap UV') \ .$$
On the other hand the anti Kac filtration on $P$ cuts out the standard filtration
on $R[u-1,u] \subset P$ with the graded pieces $L(u-1)$ and $L(u)$.
\end{proof}

\begin{lem} $R$ is in $\mathcal C_-$. Hence $\Omega$ is a cofibrant replacement $QX$ of 
the simple module $X=L(u)$.
\end{lem}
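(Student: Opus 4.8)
The statement to be proved is that $R$ lies in $\mathcal{C}_-$, equivalently that $U(R) \in \mathcal{I}_{\mathcal{D}}$; once this is known, the exact sequence $0 \to R \to \Omega \to L(u) \to 0$ with $\Omega$ cofibrant and $R \in \mathcal{C}_-$ exhibits $\Omega$ as a cofibrant replacement of $L(u)$ by the description of $\mathcal{R}\cap\mathcal{W}$ in Corollary \ref{model-structure-descr}. The plan is to compute $U(R)$ directly from the complex whose cohomology is $R$, namely
\[ \bigoplus_{i=1}^\infty L(u-2i-1) \to \bigoplus_{i=1}^\infty R[u-2i-1,u-2i]\oplus R[u-2i,u-2i+1]^* \to \bigoplus_{i=1}^\infty L(u-2i). \]
Since $U$ is exact and commutes with arbitrary direct sums, $U(R)$ is the cohomology of the termwise restricted complex, and I would split the analysis into the contributions coming from the $R[\cdot,\cdot]$ summands and from the $R[\cdot,\cdot]^*$ summands.

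\textbf{Key steps.} First I would record the two basic restriction facts. The modules $R[u-2i,u-2i+1]^* = V(u-2i)^*$ are anti Kac modules, hence lie in $\mathcal{T}_-$, so $U(V(u-2i)^*)$ is projective (equivalently injective) in $\mathcal{D}$ by Lemma \ref{lem-anti-kac-restriction} (the characterization of $\mathcal{T}_-$ via projectivity of $Res_{P^+}$). Next, by Lemma \ref{thm:splitting}, applied with index shifted, $U(R[u-2i-1,u-2i]) \cong U(L(u-2i-1)) \oplus U(L(u-2i))$ in $\mathcal{D}$. Substituting these into the restricted complex, the maps $U(L(u-2i-1)) \to U(R[u-2i-1,u-2i])$ and $U(R[u-2i-1,u-2i]) \to U(L(u-2i))$ become, respectively, the split inclusion of the first summand and the split projection onto the second summand (using that $L(u-2i-1)$ is the submodule and $L(u-2i)$ the quotient of $R[u-2i-1,u-2i]$, so the restriction respects this). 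Therefore in the restricted complex the $R[\cdot,\cdot]$ summands contribute nothing to the cohomology: the incoming differential from $\bigoplus U(L(u-2i-1))$ is injective onto a split summand, and the outgoing differential maps the complementary summand isomorphically onto $\bigoplus U(L(u-2i))$, cancelling the third term entirely. What survives is exactly $\bigoplus_{i\geq 1} U(R[u-2i,u-2i+1]^*) = \bigoplus_{i\geq 1} U(V(u-2i)^*)$, a direct sum of injective objects of $\mathcal{D}$, hence injective since coproducts of injectives are injective in $\mathcal{D}$ (property \ref{stable-f-3}-type, or the comodule statement). Thus $U(R) \in \mathcal{I}_{\mathcal{D}}$.

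\textbf{Main obstacle.} The delicate point is verifying that the differentials in the restricted complex really do split compatibly, i.e.\ that after applying $U$ the map $L(u-2i-1)\hookrightarrow R[u-2i-1,u-2i]$ lands (up to the splitting of Lemma \ref{thm:splitting}) in the distinguished summand and the map $R[u-2i-1,u-2i]\twoheadrightarrow L(u-2i)$ kills the other summand --- i.e.\ that the splitting from Lemma \ref{thm:splitting} can be chosen to be the \emph{socle/cosocle} splitting. This should follow from the uniqueness of the socle filtration under the exact functor $U$: $U$ preserves the composition series, and since $U(L(u-2i-1))$ and $U(L(u-2i))$ are non-isomorphic simple $\mathcal{D}$-modules (they lie in the same block of $\mathcal{T}$ but restrict to distinct simples because restriction to $P^+$ is faithful and they have different $G_0$-weights), the splitting of $U(R[u-2i-1,u-2i])$ into simple summands is canonical, and the restricted inclusion resp.\ projection must be the canonical ones. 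Once this compatibility is in hand the cohomology computation is purely formal. I would then conclude with one sentence: since $U(R)$ is injective, $R \in \mathcal{C}_-$, and hence $\Omega \to L(u)$ is a cofibrant replacement, completing the explicit construction of the minimal model $QX = \Omega$ of $X = L(u)$ (minimality because $\Omega$ is clean, being a sequential union of the clean modules $\Omega_i = R[u-1-2i,\dots,u]$).
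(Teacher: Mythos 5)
Your proof is correct and follows essentially the same route as the paper: you apply Lemma \ref{thm:splitting} to split each $U(R[u-2i-1,u-2i])$ into $U(L(u-2i-1))\oplus U(L(u-2i))$, observe that the $R[\cdot,\cdot]$-contributions cancel in the restricted cohomology, and conclude $U(R)\cong\bigoplus_{i\geq 1} U(R[u-2i,u-2i+1]^*)$, a coproduct of injectives, hence injective. This is exactly the paper's proof. One caution on your ``Main obstacle'' paragraph: you don't actually need it, and as written it contains a false claim. The restriction $U(L(u-2i))$ of a simple $G$-module to $P^+$ is typically \emph{not} simple in $\mathcal D$ (already its restriction to $G_0\subset P^+$ decomposes into several irreducibles, and the $P^+$-module structure gives a nontrivial Loewy series), so the argument via ``uniqueness of the decomposition into non-isomorphic simples'' does not apply. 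Fortunately, compatibility of the splitting with the socle inclusion and cosocle projection is already supplied by the proof of Lemma \ref{thm:splitting} itself: the anti Kac filtration on $P[u-2,u-1,u-1,u]$ cuts out precisely the standard two-step filtration on $R[u-1,u]$, and it is this filtration that becomes split after applying $U$. Since the differentials of the complex are the socle inclusion and cosocle projection of $R[u-2i-1,u-2i]$ (together with their analogues for the $R[\cdot,\cdot]^*$-summands), no further canonicity argument is needed; one simply uses that the ``diagonal'' components of the restricted differentials are split mono/epi with zero composite, which makes the middle cohomology isomorphic to $\bigoplus_i U(R[u-2i,u-2i+1]^*)$.
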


\begin{proof} The exact sequences $$ 0 \to L(u-1) \to R[u-1,u] \to L(u) \to 0 $$
split in $\mathcal D$ after applying the restriction functor $U$ by lemma \ref{thm:splitting}. This implies
$$UR \ \cong\ \bigoplus_{i=1}^\infty UR[u-2i,u-2i+1]^*) \ .$$ All  $UR[u-2,u-1]^* $ are injective, 
hence $UR$ is injective in $\mathcal D$. 
\end{proof}

\begin{example} \label{ex:minimal} Consider the indecomposable module $X=R[a,a+1,a+2]$ and the cofibrant
object $Q'X = \Omega(L(a)) \oplus P[a,a+1,a+1,a+2]$.  There is a morphism $q':Q'X \to X$
with kernel $K \cong kern(q: \Omega(L(a+2)\to L(a+2)) \in \mathcal C_-$, hence $Q'X$ is a cofibrant replacement of $X$. 
This shows that both arrows, the natural inclusion and $q'$, are in $\mathcal W$, and therefore the composite morphism 
$$    \Omega(L(a)) \hookrightarrow Q'X \to  X  $$
is in $\mathcal W$. This again implies
$$  \Omega(L(a)) \cong X $$
in $Ho\mathcal C$. Since $q'(P[a,a+1,a+1,a+2])\neq 0$, there does not exist a minimal model for $X$
by the last lemma.
\end{example}

\begin{example} In general $End_{\mathcal C}(X) \to [X,X]$ is not surjective.
Put $X=L(0)\oplus L(-2)$, then $End_{\mathcal C}(X) = k^2$, but 
$[X,X]$ also contains a nilpotent radical generated by the morphism $f_{-2,0}$.
\end{example}


\section{The case $GL(m|1)$: Semisimple quotients}

\subsection{Semisimplicity of $Ho \mathcal T / \mathcal N$} 

Let us write $Ho{\mathcal T}^{ss} = Ho \mathcal T / \mathcal N$.
The indecomposable objects in $\mathcal T_+$ become isomorphic to zero in $ Ho{\mathcal T}$. Those in $\mathcal T_-$ become zero in $Ho{\mathcal T}^{ss}$. All the remaining
ones become isomorphic in $Ho{\mathcal T}$, hence in $Ho{\mathcal T}^{ss}$, to 
the image of some simple module $L(u)$ by the previous section. Hence $Ho{\mathcal T}^{ss}$ is a 
semisimple category.

\begin{lem} $Ho{\mathcal T}^{ss}$ is a semisimple $k$-linear rigid closed monoidal tensor category with $End(\one)=k$. Its simple objects are parameterized by the atypical weights. It is of the form $Rep(G',\epsilon)$ for some supergroup $G$.
\end{lem}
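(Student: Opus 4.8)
The statement collects several facts that follow by combining the general theory of parts I and the $GL(m|1)$-specific computations of the previous sections, together with Deligne's reconstruction theorem. The plan is to assemble them in the natural order: first rigidity and $\mathrm{End}(\one)=k$, then semisimplicity via the Andr\'e--Kahn mechanism, then identification of the simple objects, and finally the tannakian reconstruction.

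\emph{Step 1 (monoidal, rigid, $\mathrm{End}(\one)=k$).} Since $\mathcal{C}$ carries a monoidal model structure (Theorem~\ref{thm:monoidal}), $\gamma:\mathcal{C}\to Ho\mathcal{C}$ is a tensor functor, so $Ho\mathcal{T}$ is a $k$-linear symmetric monoidal category. Rigidity is Lemma~\ref{lem:rigidity}. The equality $\mathrm{End}_{Ho\mathcal{T}}(\one)=k$ follows from Theorem~\ref{thm:end(1)} applied to $X=\one=L(\text{weight of }\one)$, once one checks that the $GL(m|1)$-case satisfies assumptions \ref{A-1}--\ref{A-4}; here one takes $K_+(\lambda)=V(\lambda)$ and $K_-(\lambda)=V(\lambda)^*$, exactly as in the example following Assumptions~I, and the interval-finiteness of the poset is clear from the description of atypical blocks of $\mathcal{T}_{m|1}$ recalled in section~\ref{sec:gl-m-1-morphisms}. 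Closedness of the monoidal structure is inherited from $Ho\mathcal{C}$, which is closed monoidal by \cite[Theorem 6.6.3]{Hovey}.

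\emph{Step 2 (semisimplicity and simple objects).} By the previous section every indecomposable object of $\mathcal{T}_{m|1}$ either lies in $\mathcal{T}_+$ (hence is killed by $\gamma$ up to isomorphism — becomes zero or becomes a projective, which is zero in $\overline{\mathcal{C}}$), lies in $\mathcal{T}_-$ and becomes zero, or becomes isomorphic in $Ho\mathcal{T}$ to some $L(u)$ with $u$ atypical (Lemma~on $B[a,\dots,b]$, $R[a,\dots,b]$ in section~\ref{sec:gl-m-1-morphisms}); and distinct atypical $L(u)$ stay non-isomorphic. Moreover $[L(i),L(j)]=0$ unless $i\equiv j \bmod 2$ and $i\ge j$, with $[L(i),L(i)]=k$. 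From this one sees that modulo the radical, $\mathrm{Hom}$-spaces between simples vanish except the scalar endomorphisms, so $\mathrm{rad}(Ho\mathcal{T})$ consists exactly of the strictly-triangular morphisms $f_{ij}$ with $i<j$; these are negligible (a strictly upper-triangular endomorphism has vanishing trace), so $\mathcal{N}\supset\mathrm{rad}$ and $Ho\mathcal{T}/\mathcal{N}$ has vanishing radical and semisimple (in fact one-dimensional) endomorphism rings of simples — hence is semisimple in the sense of \cite[2.1.2]{Andre-Kahn-nilpotence}, and its simple objects are precisely the images of the $L(u)$ for $u$ atypical. Since $Ho\mathcal{T}/\mathcal{N}$ is semisimple, pseudo-abelian and $k$-linear, it is abelian \cite{Andre-Kahn-nilpotence}.

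\emph{Step 3 (reconstruction as $Rep(G',\epsilon)$).} It remains to produce the affine supergroup scheme. The category $Ho\mathcal{T}/\mathcal{N}$ is $k$-linear, abelian, symmetric monoidal, rigid, with $\mathrm{End}(\one)=k$; it is also Schur-finite because Schur-finiteness passes through the tensor functors $\mathcal{T}_{m|1}\xrightarrow{\gamma}Ho\mathcal{T}\to Ho\mathcal{T}/\mathcal{N}$ and every object of $\mathcal{T}_{m|1}$ is annihilated by some Schur functor. By Deligne's theorem \cite{Deligne-tensorielles} (in the form recalled in section~\ref{sec:semisimple}), a Schur-finite symmetric rigid $k$-linear tensor category over an algebraically closed field of characteristic $0$ is tensor equivalent to $Rep(G',\epsilon)$ for an affine supergroup scheme $G'$ over $k$. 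This gives the assertion. I expect the only mildly delicate point to be the bookkeeping in Step~2 — verifying that $\mathcal{N}$ is exactly the ideal of strictly-triangular morphisms and hence that the quotient is semisimple rather than merely pseudo-abelian — but this is routine given the complete description of $\mathrm{Hom}$-spaces between simples already obtained; everything else is a direct citation of earlier results.
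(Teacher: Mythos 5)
Your overall approach matches the paper's: combine rigidity (Lemma~\ref{lem:rigidity}) and $\mathrm{End}(\one)=k$ (Theorem~\ref{thm:end(1)} once A.1--A.4 are checked for $GL(m|1)$), use the indecomposable analysis of section~\ref{sec:gl-m-1-morphisms} to see that in $Ho\mathcal T^{ss}$ every object is a sum of atypical simples, and then invoke Deligne's theorem via Schur-finiteness. Steps~1 and~3 are fine.

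In Step~2, however, you have swapped the roles of $\mathcal T_+$ and $\mathcal T_-$ in the trichotomy. It is $\mathcal T_-$ (the anti Kac modules $B[a,\dots,b]$ of even length) that forms the kernel of $\gamma$ and already vanishes in $Ho\mathcal T$, not $\mathcal T_+$: by the lemma you yourself cite, the $R[a,\dots,b]$ of even length --- precisely the non-projective objects of $\mathcal T_+$ --- stay \emph{indecomposable} under $\gamma$ and only die in $Ho\mathcal T^{ss}$ because they are negligible (superdimension zero, as for any rigid object in a proper tensor ideal, cf.\ Lemma~\ref{lem:dim0}). So the parenthetical ``$\mathcal T_+\Rightarrow$ killed by $\gamma$'' would leave exactly the interesting objects unaccounted for, and the role of the passage to $\mathcal N$ would disappear from the argument. (The paper's own prose preceding the lemma contains the same $\pm$ transposition, but the cited lemma is correct.) With that slip fixed the Step~2 trichotomy is exactly right, and the rest of your argument goes through. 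One smaller caveat: the claim that $\mathrm{rad}(Ho\mathcal T)$ ``consists exactly of the strictly triangular morphisms'' has only been checked between simple objects, and negligibility of those $f_{ij}$ for $i>j$ follows simply because $[L(j),L(i)]=0$; this suffices for the semisimplicity of $Ho\mathcal T^{ss}$ (together with the trichotomy), but the statement as written about the full radical of $Ho\mathcal T$ is not justified and not needed.
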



\subsection{The quotient $Rep(GL(m|1))/\mathcal N$ and consequences} \label{sec:gl-m-1-semisimple}

We want to describe $Rep(G',\epsilon)$ explicitely. Recall from \cite[Example 0.4 (ii)]{Deligne-tensorielles} that if $G$ is an affine supergroup scheme and $\mu_2$ acts on $G$ by the parity automorphism, $Rep( \mu_2 \ltimes G, \epsilon = (-1,e))$ is the category of super representations of $G$.

\medskip\noindent
We recall now from \cite{Heidersdorf-mixed-tensors} results about the tensor product decomposition of simple $GL(m|1)$-modules. Any irreducible module $L(\lambda)$ can be written uniquely in the form $L(\tilde{\lambda}) \otimes Ber^{s_{\lambda}}$ where $L(\tilde{\lambda})$ is a direct summand in a space of mixed tensors $V^{\otimes r} \otimes (V^{\vee})^{\otimes s}$ for some $r,s$, and $s_{\lambda}$ is an explicit shift factor that can be read off from the cup diagram of $\lambda$. The irreducible mixed tensors generate a tensor category isomorphic to $Rep(GL(m-1))$ in the quotient category $Rep(GL(m|1))/\mathcal N$ and $L(\tilde{\lambda})$ corresponds to an irreducible $GL(m-1)$-representation $L(wt(\tilde{\lambda}))$. Therefore the Tannaka group generated by the irreducible $GL(m|1)$-modules in $Rep(GL(m|1))/\mathcal N$ is $GL(m|1) \times GL(1)$, so that the Tannaka category is equivalent to the super representations of $GL(m-1) \times GL(1)$, i.e. \begin{align*} (Rep(GL(m|1))/\mathcal N)^{irr} & \simeq Rep(\Z/2\Z \ltimes (GL(m-1) \times GL(1) ), \epsilon=(-1,e)),\\ \ L(\tilde{\lambda}) \otimes Ber^{s_{\lambda}} & \mapsto L(wt(\tilde{\lambda})) \times det^{s_{\lambda}}.\end{align*}

\begin{prop} \label{thm:gl-m-1-group} The quotient $Ho{\mathcal T}^{ss} \cong Rep(G',\epsilon)$ is equivalent to \[ Ho{\mathcal T}^{ss} \simeq Rep(\Z/2\Z \ltimes (GL(m-1) \times GL(1) ), \epsilon=(-1,e)).\]
\end{prop}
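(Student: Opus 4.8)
The plan is to identify the semisimple monoidal category $Ho\mathcal{T}^{ss} = Ho\mathcal{T}_{m|1}/\mathcal{N}$ with a known Tannakian category by combining three inputs that are already available: first, Theorem~\ref{thm:semisimple} (or rather its $GL(m|1)$-special case, which is what is being proved here anyway), which tells us $Ho\mathcal{T}^{ss}$ is a semisimple $k$-linear rigid symmetric monoidal category with $End(\one)=k$, hence of the form $Rep(G',\epsilon)$ for some affine supergroup scheme $G'$ by Deligne's theorem; second, the explicit description of the indecomposables of $Ho\mathcal{T}_{m|1}$ from the previous section, which shows that every indecomposable of non-vanishing superdimension becomes isomorphic in $Ho\mathcal{T}$ to an irreducible $L(u)$, so that the simple objects of $Ho\mathcal{T}^{ss}$ are exactly parametrized by the atypical weights $\lambda$ (those $L(\lambda)$ with $\mathrm{sdim}\neq 0$, equivalently $L(\lambda)\notin\mathcal{T}_-$); third, the computation of $\mathcal{I}_{m|1}/\mathcal{N}$ from \cite{Heidersdorf-semisimple} and \cite{Heidersdorf-mixed-tensors}.

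First I would make precise the claim in the introduction that $Ho\mathcal{T}^{ss}$ agrees with $\mathcal{I}_{m|1}/\mathcal{N}$, where $\mathcal{I}_{m|1}\subset Rep(GL(m|1))$ is the full tensor subcategory of direct summands of iterated tensor products of irreducibles. The point is that both categories, after semisimplification, have the same simple objects — the images of the $L(\lambda)$ for $\lambda$ atypical — and the same $Hom$-spaces between them, namely the full $Hom$ between distinct simples is $0$ and $End=k$ by Theorem~\ref{thm:end(1)}; and both are rigid symmetric monoidal with the "same" tensor product on simples because the functor $\gamma:\mathcal{T}\to Ho\mathcal{T}$ is a tensor functor which is the identity on objects, so the class $[L(\lambda)\otimes L(\mu)]$ in the Grothendieck sense is transported correctly, and passing to $\mathcal{N}$ only retains the "semisimple part" of the tensor product which is visible already in $\mathcal{T}$. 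Concretely one gets a tensor functor $\mathcal{I}_{m|1}/\mathcal{N}\to Ho\mathcal{T}^{ss}$ induced by $\gamma$; it is essentially surjective since every simple of $Ho\mathcal{T}^{ss}$ is (the image of) some $L(\lambda)$ and each such $L(\lambda)$ lies in $\mathcal{I}_{m|1}$; it is fully faithful on simples because both sides have $End=k$ and $Hom=0$ between non-isomorphic simples; and since both sides are semisimple, fully faithful on simples implies an equivalence of categories. This reduces the proposition to the known identification of $\mathcal{I}_{m|1}/\mathcal{N}$, which by \cite{Heidersdorf-semisimple} and \cite{Heidersdorf-mixed-tensors} (as recalled just before the statement) is monoidally equivalent to $Rep(\mathbb{Z}/2\mathbb{Z}\ltimes(GL(m-1)\times GL(1)),\epsilon=(-1,e))$ via $L(\tilde\lambda)\otimes Ber^{s_\lambda}\mapsto L(wt(\tilde\lambda))\times det^{s_\lambda}$.

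The step I expect to be the main obstacle is verifying that the induced functor $\mathcal{I}_{m|1}/\mathcal{N}\to Ho\mathcal{T}^{ss}$ is really \emph{monoidal} and not merely a functor preserving isomorphism classes of simples: one must check that the comparison of tensor products is compatible, i.e. that for $L(\lambda),L(\mu)$ the decomposition of $L(\lambda)\otimes L(\mu)$ into simples-up-to-negligibles in $\mathcal{T}$ matches the decomposition of $\gamma L(\lambda)\otimes\gamma L(\mu)$ in $Ho\mathcal{T}$. This follows because $\gamma$ is a tensor functor and kills exactly the anti-Kac part (which is negligible, being of superdimension $0$ by the remark after Conjecture~\ref{thm:hom-finite} and \cite{Heidersdorf-semisimple}), so the two semisimplifications receive the \emph{same} information; but spelling this out cleanly requires being careful that "becoming isomorphic to $L(u)$ in $Ho\mathcal{T}$" is compatible with tensor products, which is exactly what the exactness and monoidality of $\gamma$, together with $\mathcal{C}_\pm$ being tensor ideals (lemma on tensor ideals in section~\ref{sec:monoidal-model-structure}), provide. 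Once monoidality is in hand, the equivalence of semisimple rigid symmetric monoidal categories lifts to an isomorphism of the associated affine supergroup schemes by Tannaka duality, giving $G'\cong \mathbb{Z}/2\mathbb{Z}\ltimes(GL(m-1)\times GL(1))$ with $\epsilon=(-1,e)$, which is the assertion.
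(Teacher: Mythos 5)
Your argument is correct, and it reaches the target by a route that differs from the paper's in one essential respect. The paper's proof is a Grothendieck-semiring comparison: it observes that the simples of $Ho\mathcal{T}^{ss}$ and of $(Rep(GL(m|1))/\mathcal{N})^{irr}=\mathcal{I}_{m|1}/\mathcal{N}$ are both parametrized by the atypical weights and obey the same tensor product decomposition, writes down the resulting isomorphism of Grothendieck semirings, and appeals to \cite[Theorem 5.12]{Heidersdorf-semisimple} for the (nontrivial) step of lifting this to a tensor equivalence of the underlying supergroup schemes. You instead produce an honest tensor functor: $\gamma$ restricted to $\mathcal{I}_{m|1}$, composed with $Ho\mathcal{T}\to Ho\mathcal{T}^{ss}$, kills negligibles (tensor functors between rigid categories with $End(\one)=k$ preserve traces), hence descends to $\mathcal{I}_{m|1}/\mathcal{N}\to Ho\mathcal{T}^{ss}$; essential surjectivity comes from the classification of indecomposables of $Ho\mathcal{T}_{m|1}$ (everything of nonzero superdimension is isomorphic to some $L(u)$, and the remaining indecomposables such as the even-length Kac objects are negligible), and full faithfulness comes from semisimplicity of both sides plus $End=k$ on simples and the fact that distinct atypical simples remain non-isomorphic after both quotients. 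This buys you something real: an isomorphism of Grothendieck semirings of two semisimple Tannakian categories does not by itself yield a tensor equivalence, so the paper must lean on the external reference for that lifting, whereas your functorial argument makes the equivalence explicit and self-contained given the preceding sections. One small remark: the "main obstacle" you flag (compatibility of tensor decompositions) is actually automatic once the descended functor is monoidal and fully faithful, so it is a consequence of your construction rather than an input to it; the only point genuinely requiring care is that the functor descends through $\mathcal{N}$, which is exactly the trace-preservation argument you give. Both proofs, of course, ultimately rest on the identification of $\mathcal{I}_{m|1}/\mathcal{N}$ from \cite{Heidersdorf-semisimple} and \cite{Heidersdorf-mixed-tensors}.
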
 

\begin{proof} The irreducible representations in $Ho{\mathcal T}^{ss}$ and $(Rep(GL(m|1))/\mathcal N)^{irr}$ are both parametrized by the atypical weights. They obey the same tensor product decomposition and the categories are semisimple. Therefore one can write down an isomorphism between the Grothendieck semirings of these two categories which lifts to an isomorphism of groups. For further details we refer to the identical proof in \cite[Theorem 5.12]{Heidersdorf-semisimple}.
\end{proof}


\subsection{Isogenies: Semisimplicity} \label{sec:isogeny-semisimple}

Recall from the last section that the triangulated category $\mathcal H$
decomposes into blocks ${\mathcal H}^\Lambda$ where the images of the simple objects in the block ${\mathcal T}^\Lambda$ are identified with the integers.

\begin{lem} \label{lem:isogeny}
All $f_{ik}$ from section \ref{sec:gl-m-1-morphisms} are isogenies, i.e.
contained in $\Sigma$. 
\end{lem}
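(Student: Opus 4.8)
We must show that each nonzero morphism $f_{ik}\colon L(2k)\to L(2i)$ (with $k\geq i$) in the block $\mathcal H^\Lambda_{ev}$ is an isogeny, i.e.\ that its mapping cone in $\overline{\mathcal E}$ lies in $\overline{\mathcal F}$, the thick subcategory generated by the image of $\mathcal T_+$. By the multiplicativity relation $f_{ik}=f_{ij}\circ f_{jk}$ from the previous section and the fact that isogenies are closed under composition (since $\Sigma$ admits a calculus of fractions and $\overline{\mathcal F}$ is thick), it suffices to treat the case $k=i+1$, that is, the ``elementary'' morphism $f_{i,i+1}\colon L(2i+2)\to L(2i)$. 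So the plan reduces to: exhibit, for a single nonzero morphism $L(v)\to L(v-2)$ of atypical simple objects in $Ho\mathcal T$ for $GL(m|1)$, a distinguished triangle realizing it whose third term is (stably) in $\mathcal T_+$.

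\textbf{Key steps.} First, I would identify the morphism concretely. From section \ref{sec:gl-m-1-morphisms} the nonzero class in $[L(v),L(v-2)]\cong k$ arises from the composition through the cofibrant replacement $\Omega(L(v))$: recall $\Omega(L(v))=\mathrm{co}\lim_i R[v-1-2i,\ldots,v]$ with $\Omega_1=R[v-3,v-2,v-1,v]$, and $R[v-3,v-2,v-1,v]$ surjects onto its cosocle constituents $L(v)$ and $L(v-2)$. Concretely $R[v-3,v-2]=V(v-3)$ up to labelling is a Kac module, and the subquotient picture gives an exact sequence in $\mathcal T$ relating $L(v)$, $L(v-2)$ and objects with Kac/anti-Kac flags. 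Second, I would use the fundamental short exact sequence $0\to L(v-1)\to V(v)^*\to L(v)\to 0$ — wait, more useful is: the indecomposable $R[v-2,v-1,v]$ of odd length has a quotient map $R[v-2,v-1,v]\to L(v)$ with kernel $R[v-2,v-1]=V(v-2)^*\in\mathcal T_-$, and a sub $L(v-2)\hookrightarrow R[v-2,v-1,v]$ with cokernel $R[v-1,v]=V(v-1)\in\mathcal T_+$ (by the lemma on $R[a,\ldots,b]$). Stably, $R[v-2,v-1,v]\cong L(v)$ in $Ho\mathcal T$, and the two-step extension $L(v-2)\to R[v-2,v-1,v]\to V(v-1)$ yields in $\overline{\mathcal T}/\overline{\mathcal T}_-$ a distinguished triangle $L(v-2)\to L(v)\xrightarrow{f} V(v-1)\to L(v-2)[1]$. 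The map $L(v)\to V(v-1)$ is nonzero stably (its cofibre is $L(v-2)\neq 0$ in $Ho\mathcal T$), hence — composing with an appropriate map $V(v-1)\to L(v-2)$ coming from the cosocle of $V(v-1)$, which is $L(v-2)$ — produces our nonzero $f_{i,i+1}$. Third, rotating and splicing these triangles: the cone of $f_{i,i+1}\colon L(v)\to L(v-2)$ in $\overline{\mathcal E}$ sits in a triangle with $V(v-1)\in\mathcal T_+$ (up to a shift) and a further term that is again expressible via Kac objects; since $\overline{\mathcal F}$ is thick and contains (the cofibrant replacements stably equivalent to) all of $\mathcal T_+$, the cone lies in $\overline{\mathcal F}$. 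Therefore $f_{i,i+1}\in\Sigma$, and by composition all $f_{ik}\in\Sigma$.

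\textbf{Main obstacle.} The delicate point is bookkeeping: the morphisms $f_{ik}$ live in $Ho\mathcal T\cong\overline{\mathcal T}/\overline{\mathcal T}_-$, whereas isogenies are defined in $\overline{\mathcal E}=\overline{\mathcal C}_+$ relative to $\overline{\mathcal F}$; one must transport the triangle computation through the equivalence $\gamma\colon ho\mathcal T\xrightarrow{\ \sim\ }Ho\mathcal T$ of Theorem \ref{thm:gl-m-n-homotopy} and check that the cone of (a cofibrant lift of) $f_{ik}$ is stably equivalent to an object with a Kac flag, not merely that $f_{ik}$ becomes invertible after killing $\mathcal T_-$. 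Concretely, the cofibrant replacement $QL(v)=\Omega(L(v))$ is an infinite object, so ``cone of $f_{ik}$'' must be computed as a genuine extension in $\mathcal C_+$; one shows its clean component is a finite Kac object built from the $R[v-1-2j,\ldots,v]$. I expect the cleanest route is to avoid infinite models: use that $f_{i,i+1}$ has a representative as an honest morphism of finite-dimensional modules in $\overline{\mathcal T}$ whose cone is the finite-length module $R[v-2,v-1,v]$ or a shift of $V(v-1)$, both of which have (anti-)Kac flags, and invoke that $\overline{\mathcal F}$ (equivalently the image of $\overline{\mathcal T}_+$, or $\overline{\mathcal T}_-$, under the relevant localizations) is thick. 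The remaining work is purely the combinatorics of the interval modules $R[a,\ldots,b]$ and their Kac filtrations, which is routine given the description recalled in the excerpt.
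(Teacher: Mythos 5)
Your proposal follows the same basic strategy as the paper — represent $f_{ij}$ by a roof built from a single odd-length indecomposable, one leg being a weak equivalence and the other having Kac-filtered kernel — but there are two substantive issues with your version.

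First, you systematically use $R$-modules where the paper's conventions require $B$-modules. Recall that in the paper's notation $R[a,\ldots,b]$ has socle $L(a),L(a+2),\ldots$ and cosocle $L(a+1),L(a+3),\ldots$, while $B[a,\ldots,b]=R[a,\ldots,b]^*$ has cosocle $L(a),L(a+2),\ldots$ and socle $L(a+1),L(a+3),\ldots$, and $V(a)=R[a,a+1]$, $V(a)^*=B[a,a+1]$. Thus $R[v-2,v-1,v]$ has cosocle $L(v-1)$, so there is no quotient onto $L(v)$; the module you want is $B[v-2,v-1,v]$. Likewise your equation $R[v-2,v-1]=V(v-2)^*$ should read $B[v-2,v-1]=V(v-2)^*$, and your claim ``stably, $R[v-2,v-1,v]\cong L(v)$'' contradicts the paper's explicit lemma in section \ref{sec:gl-m-1-morphisms}, which says odd-length $R[a,\ldots,b]\cong L(a)$ (the \emph{left} endpoint) while it is the odd-length $B[a,\ldots,b]$ that is $\cong L(b)$. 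With the correct module $B=B[v-2,v-1,v]$: the quotient $s\colon B\to L(v)$ has kernel $B[v-2,v-1]=V(v-2)^*\in\mathcal T_-$, hence is a weak equivalence, and the other quotient $f\colon B\to L(v-2)$ has kernel $R[v-1,v]=V(v-1)\in\mathcal T_+$, hence is an isogeny. These fixes make your argument work.

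Second, the paper does not bother with the reduction to adjacent indices: it uses $B[2i,\ldots,2j]$ directly, whose quotient onto $L(2j)$ has kernel $B[2i,\ldots,2j-1]\in\mathcal T_-$ (weak equivalence) and whose quotient onto $L(2i)$ has kernel $R[2i+1,\ldots,2j]\in\mathcal T_+$ (isogeny), realizing $f_{ij}=f\circ s^{-1}$ in one step. Your reduction via $f_{ik}=f_{ij}\circ f_{jk}$ is legitimate (since $\Sigma$ is multiplicative once $\overline{\mathcal F}$ is known to be thick), but it is an unnecessary extra layer. Your closing paragraph about transporting between $\overline{\mathcal E}$ and $Ho\mathcal T$ is a reasonable concern, but the paper sidesteps it exactly as you suggest in your ``cleanest route'': one works with honest finite-dimensional modules and finite-length (anti-)Kac objects throughout, so no infinite cofibrant model is needed.
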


\begin{proof} For the indecomposable modules $B[2i,...,2j]$
the quotient morphism
$s: B[2i,...,2j] \to L(2j)$ is a weak equivalence in $\mathcal W$. On the other hand, the quotient morphism
$f: B[2i,...,2j] \to L(2i)$ in $\mathcal T$ has kernel in ${\mathcal T}_+$, hence induces an isogeny in $\mathcal H$.
The composition $f\circ s^{-1}: L(2j) \to L(2i)$ is well defined in $\mathcal H$ and is the morphism
$f_{ij}$ mentioned above up to a constant.
\end{proof} 

All objects in $\mathcal{T}_{\pm}$ become isomorphic to zero in $\mathcal{H}[\Sigma^{-1}]$. By lemma \ref{lem:isogeny} the image of a block in $\mathcal{H}[\Sigma^{-1}]$ has up to isomorphism two indecomposable elements (note $Hom_{{\mathcal H}[\Sigma^{-1}]}(X,X) = k \cdot id \cong k$
for simple atypical objects $X$), namely one representative in $\mathcal{H}_{ev}^\Lambda[\Sigma^{-1}]$ and one in ${\mathcal H}_{odd}^\Lambda[\Sigma^{-1}]$.

\begin{lem} $\mathcal{H}[\Sigma^{-1}]$ is a semisimple abelian category.
\end{lem}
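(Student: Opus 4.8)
The claim is that $\mathcal{H}[\Sigma^{-1}]$ (the localization of the stable homotopy category $\mathcal{H} = Ho\mathcal{T}_{m|1}$ at the class $\Sigma$ of isogenies) is a semisimple abelian category. The strategy is to reduce to a block-by-block analysis, show that each block has a very small skeleton (two isomorphism classes of indecomposable objects, with one-dimensional endomorphism rings and no nonzero morphisms between distinct ones), and conclude semisimplicity from the criterion of Andr\'e--Kahn (\cite[2.1.2]{Andre-Kahn-nilpotence}) quoted in section \ref{sec:semisimple}: a small $k$-linear pseudo-abelian category with vanishing radical ideal and semisimple endomorphism rings is abelian and semisimple. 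Recall that $\mathcal{H}[\Sigma^{-1}]$ is pseudo-abelian, since it is a Verdier localization of the triangulated category $\mathcal{H}$ (equivalently, by section \ref{sec:isogenies}, it is $\overline{\mathcal{E}}[\Sigma^{-1}]$ up to the relevant identifications), and triangulated categories are pseudo-abelian.

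\textbf{Key steps.} First I would record that $\mathcal{H}[\Sigma^{-1}]$ inherits the block decomposition $\mathcal{H} = \bigoplus_\Lambda \mathcal{H}^\Lambda$, so it suffices to analyze one atypical block (the typical/projective simple objects already became zero much earlier, being in $\mathcal{C}_- \cap \mathcal{C}_+ = \mathcal{I}_{\mathcal{C}}$). Second, invoke the immediately preceding results: every object of $\mathcal{T}$ is a direct sum of indecomposables; those in $\mathcal{T}_\pm$ become zero in $\mathcal{H}[\Sigma^{-1}]$; and by Lemma \ref{lem:isogeny} every morphism $f_{ik}$ is an isogeny, hence becomes invertible after localization. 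Combined with the computation of morphisms in section \ref{sec:gl-m-1-morphisms} — namely $[L(i),L(j)] = k$ when $i \geq j$ and $i \equiv j \pmod 2$ and $0$ otherwise, with $f_{ij}\circ f_{jk} = f_{ik}$ — this shows that in each block all the simple objects $L(2i)$ of a fixed parity become mutually isomorphic in $\mathcal{H}[\Sigma^{-1}]$ (the $f_{ij}$ and their inverses provide the isomorphisms), and similarly for the odd shift $\mathcal{H}^\Lambda_{\mathrm{odd}} = \mathcal{H}^\Lambda_{\mathrm{ev}}[1]$. Third, I would verify $\mathrm{Hom}_{\mathcal{H}[\Sigma^{-1}]}(X,X) = k\cdot \mathrm{id} \cong k$ for a simple atypical $X$ — this follows because $[X,X] = k$ already in $\mathcal{H}$ (Theorem \ref{thm:end(1)}/the $GL(m|1)$ computation), and localization is a full, essentially surjective quotient functor so it cannot enlarge $\mathrm{End}(X)$ beyond what is forced; one checks no new morphisms appear using the calculus of fractions and the fact that all $\mathrm{Hom}$-spaces between atypical simples in $\mathcal{H}$ are at most one-dimensional and generated by isogenies on one side. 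Fourth, conclude that each block of $\mathcal{H}[\Sigma^{-1}]$ has (up to isomorphism) exactly two indecomposable objects, each with endomorphism ring $k$ and no nonzero morphisms between the two (they differ by a shift and the Hom-vanishing of Theorem \ref{thm:main} persists under localization since $\Sigma$ consists of isogenies which do not connect objects of different parity). Hence $\mathrm{rad} = 0$ and all endomorphism rings are (products of) $k$, a semisimple Artinian ring; apply \cite[2.1.2]{Andre-Kahn-nilpotence}.

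\textbf{Main obstacle.} The delicate point is the third step: controlling $\mathrm{Hom}$-spaces \emph{after} localization. The calculus of right and left fractions exists because $\overline{\mathcal{F}}$ is thick (cited from \cite{Verdier} in section \ref{sec:isogenies}), but one must genuinely check that a morphism $X \xleftarrow{s} Z \xrightarrow{f} Y$ with $\mathrm{cone}(s) \in \overline{\mathcal{F}}$ does not produce more morphisms $L(2i) \to L(2j)$ in $\mathcal{H}[\Sigma^{-1}]$ than the single $f_{ij}$ already visible, and in particular that $f_{ij}$ does not become zero — this is exactly what Lemma \ref{lem:isogeny} and the indecomposable modules $B[2i,\ldots,2j]$ give us, since $f_{ij} = f \circ s^{-1}$ is a genuine (nonzero) roof. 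I expect that a short argument is needed here: any roof representing an element of $\mathrm{Hom}_{\mathcal{H}[\Sigma^{-1}]}(L(2i),L(2j))$ can, by the calculus of fractions, be refined through a cofibrant replacement $Z' = QL(2i)/K'$ with $K' \in \mathcal{C}_-$, reducing to morphisms in $\mathcal{H}$ itself, whose Hom-spaces were already computed; combined with the identification (section \ref{sec:isogeny-semisimple}) that each block's localization has two indecomposables, this pins down every Hom-space as $0$ or $k$. Once that bookkeeping is done, semisimplicity is immediate.
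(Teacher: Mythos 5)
Your argument is correct in outline but takes a genuinely different route from the paper. You deduce semisimplicity directly from the André--Kahn criterion quoted in section~\ref{sec:semisimple}: once one knows that every object in $\mathcal H[\Sigma^{-1}]$ decomposes into (two per block) indecomposables with endomorphism ring $k$ and no morphisms between non-isomorphic ones, the radical vanishes and all endomorphism rings are semisimple Artinian, so the category is semisimple, and pseudo-abelian plus semisimple gives abelian. The paper instead argues via the heart of a torsion pair: it exhibits $(\mathcal H_{\mathrm{ev}}[\Sigma^{-1}],\mathcal H_{\mathrm{odd}}[\Sigma^{-1}])$ as a torsion pair in the sense of Abe--Nakaoka (with $\mathcal H_{\mathrm{odd}}[\Sigma^{-1}]$ a cluster-tilting subcategory), invokes the theorem that the heart of such a pair is abelian, identifies the heart with $\mathcal H_{\mathrm{ev}}[\Sigma^{-1}]$, and concludes. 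Your route is more elementary and self-contained given the tools already set up in section~\ref{sec:semisimple}; the paper's route is more structural and explains \emph{why} the localization is abelian (it is built from a heart), at the cost of citing external cluster-tilting technology. Both need exactly the same inputs — the block-wise Hom computations and the fact that the residual indecomposables have $\mathrm{End}=k$ — which the paper, like you, asserts rather than re-proves in detail.

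Two local flaws in your write-up are worth flagging. First, \emph{``triangulated categories are pseudo-abelian''} is false in general: an arbitrary triangulated category need not be idempotent complete. In this paper pseudo-abelianness is available because of the lemma in section~\ref{sec:stable-cat} on quotients of stable categories of Frobenius categories with coproducts (or, at the level of $\mathcal H[\Sigma^{-1}]$, because a $k$-linear category with finite-dimensional $\mathrm{Hom}$-spaces and all the residual indecomposables explicitly in hand is easily seen to be Karoubian). Second, \emph{``localization is a full, essentially surjective quotient functor so it cannot enlarge $\mathrm{End}$''} is also wrong as stated: Verdier localization is essentially surjective but not full, and roofs can a priori produce new morphisms. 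You do flag this as the delicate point and sketch the right fix (refine any roof through a cofibrant replacement and reduce to the known $\mathrm{Hom}$-spaces in $\mathcal H$), which matches what the paper implicitly relies on in the parenthetical remark $\mathrm{Hom}_{\mathcal H[\Sigma^{-1}]}(X,X)=k$ just before the lemma; but the first, wrong, reason should be dropped.
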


\begin{proof} The pair \[(\mathcal{H}_{ev}[\Sigma^{-1}], \mathcal{H}_{odd}[\Sigma^{-1}]) \] for \[ \mathcal{H}_{ev}[\Sigma^{-1}] = \bigoplus_{\Lambda} \mathcal{H}_{ev}^{\Lambda}[\Sigma^{-1}], \ \ \mathcal{H}_{odd}[\Sigma^{-1}] = \bigoplus_{\Lambda} \mathcal{H}_{odd}^{\Lambda}[\Sigma^{-1}] \] is a torsion pair on $\mathcal{H}[\Sigma^{-1}]$ in the sense of \cite{Abe-Nakaoka} (more precisely $\mathcal{H}_{odd}[\Sigma^{-1}]$ is a cluster tilting subcategory \cite{Keller-Reiten}). The heart of a torsion pair is an abelian category. Here the heart equals the quotient $\mathcal{H}[\Sigma^{-1}]/\mathcal{H}_{odd}[\Sigma^{-1}] \simeq \mathcal{H}_{ev}[\Sigma^{-1}]$. The latter is therefore abelian and hence also semisimple.
\end{proof}

As for the quotient by the negligible morphisms this implies

\begin{cor} ${\mathcal H}[\Sigma^{-1}] \ \cong \ Rep(\mu,\tilde G) $ for
of a reductive algebraic super groupscheme 
$\tilde G$ over $k$.
\end{cor}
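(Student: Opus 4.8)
The plan is to combine the semisimplicity of $\mathcal{H}[\Sigma^{-1}]$, just established, with the general Tannakian machinery already invoked in section \ref{sec:semisimple}, so that the corollary follows the same pattern as Proposition \ref{thm:gl-m-1-group} and Theorem \ref{thm:semisimple}. Concretely, $\mathcal{H}[\Sigma^{-1}]$ is the localization of the monoidal category $\mathcal{H}$ at the class $\Sigma$ of isogenies, and we showed in section \ref{sec:isogenies} that $\Sigma$ is a monoidal class (the localization functor $\mathcal{H} \to \mathcal{H}[\Sigma^{-1}]$ is a triangulated tensor functor, since $\overline{\mathcal F}$ is a tensor ideal in $\mathcal H$). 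Hence $\mathcal{H}[\Sigma^{-1}]$ inherits from $Ho\mathcal{T}$ the structure of a $k$-linear symmetric monoidal rigid category, and $End_{\mathcal{H}[\Sigma^{-1}]}(\one) = k$ because $\one$ is simple atypical and $Hom_{\mathcal{H}[\Sigma^{-1}]}(X,X) = k\cdot id$ for such objects. Since we have just proved $\mathcal{H}[\Sigma^{-1}]$ is semisimple abelian, we may apply the lemma of section \ref{sec:semisimple}: a semisimple pseudo-abelian $k$-linear category is abelian, so $\mathcal{N}_{\mathcal{H}[\Sigma^{-1}]} = 0$.

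Next I would verify Schur finiteness. Every object of $\mathcal{T}$ is Schur finite (being a supertannakian category by the $GL(m|n)$-case), and Schur finiteness is inherited under $k$-linear tensor functors, as recalled in the subsection on tensor generators. The composite $\mathcal{T} \to Ho\mathcal{T} = \mathcal{H} \to \mathcal{H}[\Sigma^{-1}]$ is a tensor functor whose image generates $\mathcal{H}[\Sigma^{-1}]$ as a triangulated tensor category; combined with the semisimplicity just obtained, every object of $\mathcal{H}[\Sigma^{-1}]$ is a direct sum of images of simple modules $L(u)$, each of which is Schur finite. Therefore $\mathcal{H}[\Sigma^{-1}]$ is a supertannakian category, and by Deligne's theorem it is tensor equivalent to $Rep(\mu, \tilde G)$ for an affine supergroup scheme $\tilde G$ over $k$. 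Finally, to see $\tilde G$ is \emph{reductive} (i.e. the category is semisimple), one just notes this is exactly the statement that $\mathcal{H}[\Sigma^{-1}]$ is semisimple, which we have. One should also check $\tilde G$ is of finite type: the simple objects are parametrized by the atypical weights in finitely many blocks, and a tensor generator is obtained from the image of a tensor generator of $\mathcal{T}$ (e.g. $V \oplus V^\vee$ for $V$ the standard representation), so $\mathcal{H}[\Sigma^{-1}]$ is an algebraic tensor category.

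The main obstacle I anticipate is not any single deep step but rather assembling the formal prerequisites correctly: one must confirm that rigidity and the unit computation genuinely descend through the \emph{Verdier} localization at $\Sigma$ (as opposed to the quotient by $\mathcal{N}$), and that the functor out of $\mathcal{T}$ really does land in and generate $\mathcal{H}[\Sigma^{-1}]$ with the monoidal structure intact. Since section \ref{sec:isogenies} already records that $\mathcal{H} \to \mathcal{H}[\Sigma^{-1}]$ is a triangulated tensor functor and that $\mathcal{H}$ is symmetric monoidal rigid, these are essentially bookkeeping; the genuinely new input is the semisimplicity lemma via the torsion pair, which is the step just before this corollary. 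So the proof is short: cite the preceding lemma for semisimplicity and abelianness, cite the Schur finiteness inheritance and Deligne's theorem to produce $Rep(\mu,\tilde G)$, and read off reductivity of $\tilde G$ from semisimplicity.

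\begin{proof}
By section \ref{sec:isogenies} the localization functor $\mathcal{H}\to \mathcal{H}[\Sigma^{-1}]$ is a triangulated symmetric monoidal functor, so $\mathcal{H}[\Sigma^{-1}]$ is a $k$-linear symmetric rigid monoidal category; moreover $End_{\mathcal{H}[\Sigma^{-1}]}(\one)=k$ since $\one$ is simple atypical and $Hom_{\mathcal{H}[\Sigma^{-1}]}(X,X)=k\cdot id$ for simple atypical $X$. By the previous lemma $\mathcal{H}[\Sigma^{-1}]$ is semisimple abelian, hence its ideal of negligible morphisms vanishes. Every object is a direct sum of images of simple modules $L(u)$, each of which is Schur finite since it is Schur finite already in $\mathcal{T}$ and Schur finiteness is preserved by $k$-linear tensor functors. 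A tensor generator is the image of a tensor generator of $\mathcal{T}$, so $\mathcal{H}[\Sigma^{-1}]$ is an algebraic tensor category, and being Schur finite over the algebraically closed field $k$ of characteristic zero it is supertannakian. By Deligne's theorem it is tensor equivalent to $Rep(\mu,\tilde G)$ for an affine algebraic supergroup scheme $\tilde G$ over $k$, and the semisimplicity of the category is precisely the statement that $\tilde G$ is reductive.
\end{proof}
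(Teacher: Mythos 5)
Your proof is correct and follows essentially the same route as the paper, which disposes of this corollary with the single remark ``As for the quotient by the negligible morphisms this implies,'' i.e.\ it combines the semisimplicity lemma just proved with the Schur-finiteness/Deligne argument already used for $Ho\mathcal{T}/\mathcal{N}$. Your additional bookkeeping (rigidity and $End(\one)=k$ descending through the Verdier localization, the tensor generator) is exactly what the paper leaves implicit.
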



\subsection{Isogenies: The reductive group $\tilde{G}$}

The representation \[ \Pi = Ber^{-1} \otimes \Lambda^{m-1}(V) = L(0,0,\ldots,0,-1|1) \] is the socle of the Kac module $V(\one)$. Therefore it sits in the exact sequence \[ \xymatrix{ 0 \ar[r] & \one \ar[r] & V(\one)^* \ar[r] & \Pi \ar[r] & 0}.\] Since $V(\one)^*$ is zero in $Ho \mathcal T$ this implies the isomorphism \[ \Pi \simeq \one[1] \] in $Ho \mathcal T$. The Kac module $V(Ber)$ has constituents $Ber$ and $\Lambda^{m-1}(V)$. Since $V(Ber)$ and $V(Ber)^*$ are both trivial in ${\mathcal H}[\Sigma^{-1}]$, we obtain the isomorphism $Ber \simeq \Lambda^{m-1}(V)[1]$ in ${\mathcal H}[\Sigma^{-1}]$. Together with $Ber \simeq \Lambda^{m-1}(V)[1]$ in $Ho \mathcal T$ this implies $\Pi^2 \simeq \one$ in ${\mathcal H}[\Sigma^{-1}]$ for $\Pi \simeq \one[1]$.

\medskip\noindent
Since $\mathcal H \to {\mathcal H}[\Sigma^{-1}]$ is a full tensor functor into a semisimple tensor category, it factorizes by \cite{Heidersdorf-semisimple} \[ \xymatrix{ \mathcal H \ar[rr] \ar[dr] & & {\mathcal H}[\Sigma^{-1}]. \\ & \mathcal H / \mathcal N  \ar@{.>}[ur]^{\exists \phi} } \] The irreducible representations in ${\mathcal H}[\Sigma^{-1}]$ are now parametrized by the irreducible representations of $GL(m-1) \times \Z/2\Z$: Indeed the atypical blocks are in bijection with the irreducible representations of $GL(m-1)$ (every block contains exactly one irreducible mixed tensor \cite[Lemma 8.1]{Heidersdorf-mixed-tensors}) and every block gives two irreducible objects in ${\mathcal H}[\Sigma^{-1}]$, represented by the mixed tensor $L(\tilde{\lambda})$ in the given block and $L(\tilde{\lambda}) \otimes \Pi$. 

\begin{cor} ${\mathcal H}[\Sigma^{-1}]$ is tensor equivalent to the super representations of $GL(m-1) \times \Z/2\Z$.
\end{cor}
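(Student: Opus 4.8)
The statement to prove is that $\mathcal{H}[\Sigma^{-1}]$ is tensor equivalent to the super representations of $GL(m-1) \times \mathbb{Z}/2\mathbb{Z}$. The strategy is essentially the same as for the analogous statement about $Ho\mathcal{T}^{ss} = \mathcal{H}/\mathcal{N}$ in Proposition \ref{thm:gl-m-1-group}, but now we must also keep track of the extra identifications coming from inverting the isogenies $\Sigma$. First I would recall from the previous corollary that $\mathcal{H}[\Sigma^{-1}] \cong Rep(\mu,\tilde{G})$ for a reductive algebraic supergroup scheme $\tilde{G}$ over $k$; so it remains only to identify $\tilde{G}$ together with its $\mu$-structure. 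By the discussion just preceding the statement, the irreducible objects of $\mathcal{H}[\Sigma^{-1}]$ are parametrized by pairs: an atypical block $\Lambda$ (equivalently, by \cite[Lemma 8.1]{Heidersdorf-mixed-tensors}, an irreducible representation of $GL(m-1)$, since each atypical block contains exactly one irreducible mixed tensor $L(\tilde\lambda)$), together with a choice between the two representatives $L(\tilde\lambda)$ and $L(\tilde\lambda)\otimes\Pi$ in that block. This already gives a bijection of the set of simple objects with $Irr(GL(m-1)) \times \mathbb{Z}/2\mathbb{Z}$.

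\textbf{Key steps.} Next I would promote this bijection to an equivalence of tensor categories. The essential input is that the object $\Pi$ satisfies $\Pi \simeq \one[1]$ and $\Pi^{\otimes 2} \simeq \one$ in $\mathcal{H}[\Sigma^{-1}]$ (established just above), so the subcategory generated by $\Pi$ is tensor equivalent to $Rep(\mathbb{Z}/2\mathbb{Z})$ — but it acts on the mixed-tensor subcategory exactly as the parity shift, i.e. via the automorphism by which $\mu = \mathbb{Z}/2\mathbb{Z}$ acts in the sense of \cite[Example 0.4(ii)]{Deligne-tensorielles}. Meanwhile the full tensor subcategory of $\mathcal{H}[\Sigma^{-1}]$ generated by the irreducible mixed tensors is, by the factorization $\mathcal{H} \to \mathcal{H}/\mathcal{N} \xrightarrow{\phi} \mathcal{H}[\Sigma^{-1}]$ and the description of $\mathcal{H}/\mathcal{N}$ in Proposition \ref{thm:gl-m-1-group} together with \cite{Heidersdorf-mixed-tensors}, tensor equivalent to $Rep(GL(m-1))$ (the mixed tensors form a copy of $Rep(GL(m-1))$ inside $Rep(GL(m|1))/\mathcal{N}$, and this survives under $\phi$ since $\phi$ is a full tensor functor into a semisimple category and is injective on the mixed-tensor blocks). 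Combining these: $\mathcal{H}[\Sigma^{-1}]$ is generated by a copy of $Rep(GL(m-1))$ and a copy of $Rep(\mathbb{Z}/2\mathbb{Z})$ with the second acting on the first by the parity functor, which is precisely $Rep(\mathbb{Z}/2\mathbb{Z} \ltimes GL(m-1), \epsilon = (-1,e))$, the super representations of $GL(m-1)$. As in the proof of Proposition \ref{thm:gl-m-1-group}, the cleanest way to finish is to observe that both categories are semisimple with simple objects and tensor product decomposition in bijection, write down the induced isomorphism of Grothendieck semirings, and lift it to a tensor equivalence, referring to the identical argument in \cite[Theorem 5.12]{Heidersdorf-semisimple}.

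\textbf{Main obstacle.} The delicate point is not the abstract bijection but verifying that inverting the isogenies $\Sigma$ collapses each block to \emph{exactly} two isomorphism classes and does not introduce any further identifications — in particular that $Hom_{\mathcal{H}[\Sigma^{-1}]}(X,X) = k$ for simple atypical $X$ (so the objects remain simple after localization) and that $L(\tilde\lambda)$ and $L(\tilde\lambda)\otimes\Pi$ remain non-isomorphic. The first is already noted just before the statement; the second follows because $\Pi \simeq \one[1]$ is a genuine shift and a nonzero object cannot be isomorphic to its own shift in the semisimple category $\mathcal{H}[\Sigma^{-1}] \cong Rep(\mu,\tilde G)$ (a shift by $[1]$ is the parity change, which is never trivial on a nonzero object). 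A secondary subtlety is checking that the $GL(1)$-factor appearing in Proposition \ref{thm:gl-m-1-group} (coming from the $Ber$-twists $s_\lambda$) is indeed absorbed: in $\mathcal{H}[\Sigma^{-1}]$ the Berezinian is identified via $Ber \simeq \Lambda^{m-1}(V)[1]$, i.e. $Ber \simeq (\det$ of the standard $GL(m-1)$-rep$)\otimes\Pi$, so the extra $GL(1)$ is no longer an independent direct factor but is expressed through the $GL(m-1)$-determinant and the parity $\Pi$ — this is exactly why the group drops from $GL(m-1)\times GL(1)$ (as in $\mathcal{H}/\mathcal{N}$) to $GL(m-1)\times\mathbb{Z}/2\mathbb{Z}$. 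I would make this $Ber$-identification explicit as the one genuinely new computation, and then the rest is bookkeeping parallel to the $\mathcal{H}/\mathcal{N}$ case.
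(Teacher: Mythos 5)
Your proof is correct and takes essentially the same route as the paper: both pass through the factorization of $\mathcal{H} \to \mathcal{H}[\Sigma^{-1}]$ over $\mathcal{H}/\mathcal{N}$, use the relations $\Pi \simeq \one[1]$ and $Ber \simeq \Lambda^{m-1}(V)[1]$ to collapse the $GL(1)$-factor of Proposition \ref{thm:gl-m-1-group} into $GL(m-1)\times\mathbb{Z}/2\mathbb{Z}$, and conclude by matching semisimple objects and Grothendieck semirings as in \cite[Theorem 5.12]{Heidersdorf-semisimple}. You make the absorption of the $Ber$-twist and the non-isomorphism $L(\tilde\lambda)\not\cong L(\tilde\lambda)\otimes\Pi$ a bit more explicit than the paper, which is a welcome clarification but not a different argument.
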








\addtocontents{toc}{\protect\setcounter{tocdepth}{-1}}

\end{document}